\newtheorem{theorem}{Theorem}[section]
\newtheorem{prop}[theorem]{Proposition}
\newtheorem{lem}[theorem]{Lemma}
\newtheorem*{cor}{Corollary}
\theoremstyle{definition}
\newtheorem{defn}[theorem]{Definition}
\theoremstyle{remark}
\newtheorem*{rem}{Remark}
\numberwithin{equation}{section}
\begin{document}

\title[Locally compact quantum groupoids]
{A class of {$C^*$-algebraic} locally compact quantum groupoids \\
{Part II}. {M}ain theory}

\author{Byung-Jay Kahng}
\date{}
\address{Department of Mathematics and Statistics\\ Canisius College\\
Buffalo, NY 14208, USA}
\email{kahngb@canisius.edu}

\author{Alfons Van Daele}
\date{}
\address{Department of Mathematics\\ University of Leuven\\ Celestijnenlaan 200B\\ 
B-3001 Heverlee, BELGIUM}
\email{Alfons.VanDaele@wis.kuleuven.be}

\subjclass[2010]{46L65, 46L51, 81R50, 16T05, 22A22}
\keywords{Locally compact quantum groupoid, Weak multiplier Hopf algebra, Separability idempotent}

\begin{abstract}
This is Part~II in our multi-part series of papers developing the theory of a subclass 
of locally compact quantum groupoids ({\em quantum groupoids of separable type\/}), 
based on the purely algebraic notion of weak multiplier Hopf algebras.  The definition 
was given in Part~I.  The existence of a certain canonical idempotent element $E$ plays 
a central role.  In this Part~II, we develop the main theory, discussing the structure 
of our quantum groupoids.  We will construct from the defining axioms the right/left 
regular representations and the antipode map. 
\end{abstract}
\maketitle

{\sc Introduction.}

In Part~I of this series \cite{BJKVD_qgroupoid1}, we proposed a definition for a class 
of locally compact quantum groupoids.  This definition (see Definition~\ref{definitionlcqgroupoid} 
below) is motivated by the purely algebraic notion of {\em weak multiplier Hopf algebras\/}, 
introduced by one of us (Alfons Van Daele) and Shuanhong Wang \cite{VDWangwha0}, 
\cite{VDWangwha1}. 

A fundamental example of a weak multiplier Hopf algebra is the algebra $A=K(G)$, where 
$G$ is a groupoid and $K(G)$ is the set of all complex-valued functions on $G$ having 
finite support.  Here, the comultiplication map is not necessarily non-degenerate, while 
the existence of a certain {\em canonical idempotent\/} element $E\in M(A\otimes A)$ is assumed, 
which coincides with $\Delta(1)$ in the unital case. See Introduction of the previous paper 
(Part~I) for further motivation.

In our $C^*$-algebraic framework, building on the authors' previous work at the purely algebraic 
level \cite{BJKVD_LSthm}, we consider a subclass of {\em locally compact quantum groupoids\/} 
determined by the data $(A,\Delta,E,B,\nu,\varphi,\psi)$, where (i) $A$ is a $C^*$-algebra, 
taking the role of the groupoid $G$; (ii) $\Delta:A\to M(A\otimes A)$ is the comultiplication, 
corresponding to the multiplication on $G$; (iii) $B$ is a sub-$C^*$-algebra of $M(A)$, 
taking the role of the unit space $G^{(0)}$; (iv) $\nu$ is a weight on $B$; (v) $E$ is 
the canonical idempotent element motivated by the theory of weak multiplier Hopf algebras; 
and finally, (vi) $\varphi$ and $\psi$ are the left and right invariant Haar weights, 
respectively. 

This framework is closely related to the notion of {\em measured quantum groupoids\/}, 
developed in the von Neumann algebra setting \cite{LesSMF}, \cite{EnSMF}, though not 
quite as general.  It is because requiring the existence of a canonical idempotent 
element restricts the choice of the subalgebra $B$ and the weight $\nu$ on it. 
As our framework is a subclass, we plan to use the terminology {\em locally compact 
quantum groupoids of separable type\/} from now on (The authors thank the referee 
for this suggestion.).

On the other hand, our framework is less burdened with the technical difficulties that 
accompany the measured quantum groupoids, which is based on the notion of a ``fiber product'' 
of von Neumann algebras over a ``relative tensor product'' of Hilbert spaces.  There is 
really no corresponding notion for a fiber product in the $C^*$-algebra theory, and 
therefore, there has been no $C^*$-algebraic theory of quantum groupoids developed for 
non-unital (non-compact) cases.  Our framework, while restrictive, achieves this. 
In addition, as can be seen in what follows, the theory is rich in nature.  The authors 
believe that the current framework will provide a nice bridge, until a more general theory 
of $C^*$-algebraic locally compact quantum groupoids can be developed in the future 
(at the purely algebraic level, some progress has been made in this direction: \cite{TTVD}, 
\cite{TTVD2}).

Here is how this paper is organized.  In Section~\ref{sec1}, we review the notations and state 
our main definition, revisiting Definition~4.8 of Part~I \cite{BJKVD_qgroupoid1}.  As this 
paper is a continuation of the previous paper (Part~I), we refer to that paper for details 
and proofs. 

In Section~2, we further investigate the consequences of the right/left invariance conditions 
of our Haar weights $\psi$, $\varphi$.  We construct four maps, $Q_R$, $Q_{\rho}$, 
$Q_L$, $Q_{\lambda}$, which will play useful roles throughout the paper.

In Section~3, we construct the partial isometries $V$ and $W$, using the invariance properties 
developed in Section~2.  These operators are essentially the right and the left regular representations, 
and they play similar roles as the multiplicative unitary operators in the quantum group case. 

In Section~4, we carry out the construction of the antipode map $S$.  We first construct a Hilbert 
space operator $K$ implementing the antipode.  Both the right and the left Haar weights play 
significant roles.  Similar to the quantum group case, our antipode map $S$ would be defined 
in terms of its polar decomposition.  As the operators $V$ and $W$ are no longer unitaries, 
however, the arguments need to be modified and generalized accordingly.   

The discussion on the antipode map is continued in Section~5, where we collect some 
useful formulas and properties involving $S$ (antipode), $\sigma$ (modular group for 
$\varphi$), $\tau$ (``scaling group'').  Along the way, we obtain some alternative 
characterizations of the antipode map, which do not explicitly involve the weights $\psi$ 
or $\varphi$. We also explore the restrictions of $S$, $\sigma$, $\sigma^{\psi}$, $\tau$ 
to the base algebra level, gathering some useful results.  As a consequence, we establish 
that the weight $\nu$ on $B$ is quasi-invariant.

With the construction of the regular representations and the antipode, we can say that 
what we have is indeed a valid (though restrictive) framework for locally compact quantum groupoids. 
This framework contains all {\em locally compact quantum groups\/} (\cite{KuVa}, \cite{KuVavN}, 
\cite{MNW}, \cite{VDvN}), while extending the notions like {\em weak Hopf $C^*$-algebras\/} 
(\cite{BNSwha1}, \cite{BSwha2}), {\em generalized Kac algebras\/} (\cite{Yagroupoid}), 
and {\em finite quantum groupoids\/} (\cite{NVfqg}, \cite{Valfqg}).  Some other examples include 
the {\em face algebras\/} (\cite{Hsh_fa}), the {\em linking quantum groupoids\/} (\cite{DeComm_galois}, 
\cite{DeComm_LinkingQG}), or the {\em partial compact quantum groups\/} (\cite{DCTT_partialCQG}, 
\cite{DC_C*partialCQG}).  See section~5 of Part~I for some discussion on these examples.

The series will continue in Part~III \cite{BJKVD_qgroupoid3}.  In that paper, we plan to construct 
the dual object, which is also a locally compact quantum groupoid of our type.  We aim to show 
there that our class of quantum groupoids is self-dual, by obtaining a Pontryagin-type duality result.

\bigskip

{\sc Acknowledgments.}

This project began during the first named author (Byung-Jay Kahng)'s visit to University 
of Leuven during 2012/2013.  He is very much grateful to his coauthor (Alfons Van~Daele) 
and the mathematics department at KU Leuven, for their warm support 
and hospitality during his stay.   He also wishes to thank Michel Enock for the hospitality 
he received while he was visiting Jussieu.  The discussions on measured quantum groupoids, 
together with his encouraging words, were all very inspiring.  Discussions with Erik Koelink 
(Radboud) was helpful as he suggested a possible future application toward dynamical 
quantum groupoids. Two occasions to visit Albert Sheu (Kansas) were also helpful, who 
was very  supportive from the beginning of the project. Finally, he also acknowledges 
Thomas Timmermann (M{\"u}nster), as an on-going partner in pursuing the generalizations 
and applications of quantum groupoids. The numerous discussions helped polishing the current 
project as well as shaping the future ones.  Kahng's sabbatical was supported by the 
Fulbright U.S. Scholar program.  This work was partially supported by the Simons 
Fundation grant 346300 and the Polish Government MNiSW 2015-2019 matching fund.

\bigskip

\section{Definition of a locally compact quantum groupoid}\label{sec1}

\subsection{Separability idempotent}\label{sub1.1}

We gather here some results concerning the notion of a {\em separability idempotent\/}, 
which plays a central role in our theory.  We skip the proofs.  Instead, we refer the reader to sections~2 
and 3 of Part~I \cite{BJKVD_qgroupoid1}, as well as the authors' earlier paper on the subject 
\cite{BJKVD_SepId}.

Let $B$ be a $C^*$-algebra and $\nu$ a KMS weight on $B$.  This means that $\nu$ is faithful, 
lower semi-continuous, and semi-finite, together with a certain one-parameter group of automorphisms 
$(\sigma^{\nu}_t)_{t\in\mathbb{R}}$, the ``modular automorphism group'', satisfying the KMS property. 
See \cite{Tk2}.  See also section~1 of Part~I.  Consider another $C^*$-algebra $C\cong B^{\operatorname{op}}$, 
together with a ${}^*$-anti-isomorphism $R=R_{BC}:B\to C$.  

A self-adjoint idempotent element $E\in M(B\otimes C)$ 
is referred to as a {\em separability idempotent\/}, if 
$$(\nu\otimes\operatorname{id})(E)=1 \quad{\text { and }}\quad
(\nu\otimes\operatorname{id})\bigl(E(b\otimes1)\bigr)=R\bigl(\sigma^{\nu}_{i/2}(b)\bigr),
\forall b\in{\mathcal D}(\sigma^{\nu}_{i/2}).$$
See Definition~2.1 in Part~I.  Using the ${}^*$-anti-isomorphism $R$, we can also define a KMS weight 
$\mu$ on $C$, by $\mu=\nu\circ R^{-1}$.  We have $\sigma^{\mu}_t=R\circ\sigma^{\nu}_{-t}\circ R^{-1}$, 
and $(\operatorname{id}\otimes\mu)(E)=1$.

Consider a densely-defined map $\gamma_B:B\to C$, such that ${\mathcal D}(\gamma_B)={\mathcal D}
(\sigma^{\nu}_{i/2})$ and defined by $\gamma_B(b):=(R\circ\sigma^{\nu}_{i/2})(b)=(\sigma^{\mu}_{-i/2}\circ R)(b)$. 
It is a closed injective map, has a dense range, and is anti-multiplicative: $\gamma_B(bb')=\gamma_B(b')
\gamma_B(b)$.  Its inverse map is $\gamma_B^{-1}:C\to B$, such that ${\mathcal D}(\gamma_B^{-1})
=\operatorname{Ran}(\gamma_B)={\mathcal D}(\sigma^{\mu}_{i/2})$ and given by $\gamma_B^{-1}
=\sigma^{\nu}_{-i/2}\circ R^{-1}=R^{-1}\circ\sigma^{\mu}_{i/2}$.  It is also a closed, densely-defined, injective 
map, has a dense range, and is anti-multiplicative.

Similarly, there exists a closed, densely-defined, injective map $\gamma_C$ from $C$ into $B$, 
such that ${\mathcal D}(\gamma_C)={\mathcal D}(\sigma^{\mu}_{-i/2})$ and given by $\gamma_C
=R^{-1}\circ\sigma^{\mu}_{-i/2}=\sigma^{\nu}_{i/2}\circ R^{-1}$.  It also has a dense range and 
is anti-multiplicative.  Its inverse map is $\gamma_C^{-1}=\sigma^{\mu}_{i/2}\circ R=R\circ\sigma^{\nu}_{-i/2}$, 
again closed, densely-defined, injective, having a dense range, and anti-multiplicative.

\begin{lem}\label{gammagamma'}
The maps $\gamma_B:B\to C$ and $\gamma_C:C\to B$ defined above are closed, densely-defined, 
injective anti-homomorphisms, having dense ranges.  Moreover, 
\begin{enumerate}
 \item The $\gamma_B$, $\gamma_C$ maps satisfy the following relations:
\begin{align}
&E(b\otimes1)=E\bigl(1\otimes\gamma_B(b)\bigr),\forall b\in{\mathcal D}(\gamma_B),  \notag \\
&E(1\otimes c)=E\bigl(\gamma_B^{-1}(c)\otimes1\bigr),\forall c\in{\mathcal D}(\gamma_B^{-1}),  
\notag \\
&(1\otimes c)E=\bigl(\gamma_C(c)\otimes1\bigr)E,\forall c\in{\mathcal D}(\gamma_C),  \notag \\
&(b\otimes1)E=\bigl(1\otimes\gamma_C^{-1}(b)\bigr)E,\forall b\in{\mathcal D}(\gamma_C^{-1}).
\notag
\end{align}
 \item We have: $\gamma_C\bigl(\gamma_B(b)^*\bigr)^*=b$, for $b\in{\mathcal D}(\gamma_B)$, 
and $\gamma_B\bigl(\gamma_C(c)^*\bigr)^*=c$, for $c\in{\mathcal D}(\gamma_C)$.
\end{enumerate}
\end{lem}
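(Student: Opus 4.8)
The plan is to derive all the statements of Lemma~\ref{gammagamma'} directly from the definitions of the maps $\gamma_B$, $\gamma_C$ in terms of $R$ and the modular groups, together with the two defining properties of the separability idempotent $E$ and the basic compatibilities between $E$ and the modular automorphisms $\sigma^\nu$, $\sigma^\mu$ that were recorded in Part~I. The closedness, injectivity, anti-multiplicativity and density of range for each of the four maps are immediate: each is a composition of $R^{\pm1}$ (a $*$-anti-isomorphism, hence bounded and bijective) with an analytic generator $\sigma^\nu_{\pm i/2}$ or $\sigma^\mu_{\pm i/2}$ of a KMS modular group, and such analytic generators are closed, densely defined, injective with dense range; anti-multiplicativity follows because $R$ reverses products while the modular automorphisms preserve them. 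So the content of the lemma is really parts (1) and (2).

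For part~(1), I would prove the first identity and then obtain the remaining three formally. Start from the defining relation $(\nu\otimes\operatorname{id})\bigl(E(b\otimes1)\bigr)=R\bigl(\sigma^\nu_{i/2}(b)\bigr)=\gamma_B(b)$ for $b\in\mathcal D(\sigma^\nu_{i/2})=\mathcal D(\gamma_B)$. The claim $E(b\otimes1)=E\bigl(1\otimes\gamma_B(b)\bigr)$ should be extracted by slicing: apply $(\omega\otimes\operatorname{id})$ for $\omega$ a suitable functional coming from $E$ itself — more precisely, one uses that $E$ is an idempotent in $M(B\otimes C)$ with $(\nu\otimes\operatorname{id})(E)=1$, so that $E(b\otimes1)=E\cdot E(b\otimes1)$ and one can move the computation of $(\nu\otimes\operatorname{id})$ onto the second leg. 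This is exactly the type of argument carried out in \cite{BJKVD_SepId} and section~2 of Part~I, so I would cite it; the key input is that elements of the form $(\text{id}\otimes\omega)(E)$ and their analogues are rich enough, together with the idempotent property, to let one cancel the outer $E$ after testing. Once $E(b\otimes1)=E(1\otimes\gamma_B(b))$ is established, substituting $c=\gamma_B(b)$ gives the second identity; applying the adjoint operation $x\mapsto x^*$ to the first identity, using $E^*=E$, turns $E(b\otimes1)=E(1\otimes\gamma_B(b))$ into $(b^*\otimes1)E=(1\otimes\gamma_B(b)^*)E$, and then replacing $b^*$ by a general element and renaming yields the fourth identity with $\gamma_C^{-1}$; and similarly the third identity comes from the $C$-side separability property $(\operatorname{id}\otimes\mu)(E)=1$ together with the definition of $\gamma_C$, or alternatively by taking adjoints in the second identity. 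The bookkeeping between $\gamma_B^{-1}$, $\gamma_C$, $\gamma_C^{-1}$ is handled using the explicit formulas $\gamma_B^{-1}=\sigma^\nu_{-i/2}\circ R^{-1}$, $\gamma_C=\sigma^\nu_{i/2}\circ R^{-1}$, $\gamma_C^{-1}=R\circ\sigma^\nu_{-i/2}$ and the KMS relation $\sigma^\mu_t=R\circ\sigma^\nu_{-t}\circ R^{-1}$.

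For part~(2), I would argue purely at the level of the defining formulas, not using $E$ at all. From $\gamma_B(b)=R\bigl(\sigma^\nu_{i/2}(b)\bigr)$ we get $\gamma_B(b)^*=R\bigl(\sigma^\nu_{i/2}(b)\bigr)^* = R\bigl((\sigma^\nu_{i/2}(b))^*\bigr)$ since $R$ is a $*$-map; and $\bigl(\sigma^\nu_{i/2}(b)\bigr)^* = \sigma^\nu_{\overline{i/2}}(b^*) = \sigma^\nu_{-i/2}(b^*)$ by the standard fact that the analytic generators of a modular group satisfy $\sigma_z(x)^* = \sigma_{\bar z}(x^*)$ on the appropriate domains. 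Then applying $\gamma_C = \sigma^\nu_{i/2}\circ R^{-1}$ to $\gamma_B(b)^* = R\bigl(\sigma^\nu_{-i/2}(b^*)\bigr)$ gives $\gamma_C\bigl(\gamma_B(b)^*\bigr) = \sigma^\nu_{i/2}\bigl(\sigma^\nu_{-i/2}(b^*)\bigr) = b^*$, and taking adjoints once more yields $\gamma_C\bigl(\gamma_B(b)^*\bigr)^* = b$. The second relation in (2) is obtained symmetrically, interchanging the roles of $B$ and $C$ (equivalently, of $\nu$ and $\mu$, and of $\gamma_B$ and $\gamma_C$), or by simply noting it is the inverse statement of the first. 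Throughout, the only slightly delicate point is keeping track of domains: one must check that when $b\in\mathcal D(\gamma_B)$, the element $\gamma_B(b)^*$ indeed lies in $\mathcal D(\gamma_C)$, which follows from the description $\mathcal D(\gamma_C)=\mathcal D(\sigma^\mu_{-i/2})$ together with the computation above showing $\gamma_B(b)^* = R\bigl(\sigma^\nu_{-i/2}(b^*)\bigr) \in \operatorname{Ran}(\sigma^\mu_{i/2}\circ R)$.

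I expect the main obstacle to be part~(1): getting from the ``integrated'' identity $(\nu\otimes\operatorname{id})\bigl(E(b\otimes1)\bigr)=\gamma_B(b)$ to the ``pointwise'' operator identity $E(b\otimes1)=E\bigl(1\otimes\gamma_B(b)\bigr)$ requires the slicing/cancellation argument that genuinely uses the structure of $E$ as a separability idempotent (idempotency, self-adjointness, and the normalization conditions), rather than just formal manipulation of $R$ and the modular groups. Everything else — the topological properties of the four maps and part~(2) — is routine given the explicit formulas and standard KMS theory, and part~(1)'s three remaining identities follow from the first by substitution and taking adjoints. Since this is exactly the material developed in \cite{BJKVD_SepId} and sections~2--3 of Part~I, in the write-up I would state the proof at this level of detail and refer there for the technical slicing lemma.
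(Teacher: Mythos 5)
Your proposal is correct and matches the paper's treatment: the paper itself offers no argument beyond citing Propositions~2.4, 2.5, 2.7 of Part~I and the separability-idempotent paper, and the reductions you supply — identities two through four from the first identity by substitution and by taking adjoints (using $\gamma_C^{-1}(b^*)=\gamma_B(b)^*$ and $\gamma_C(c^*)=\gamma_B^{-1}(c)^*$, both of which check out against the explicit formulas), and part~(2) from $\sigma_z(x)^*=\sigma_{\bar z}(x^*)$ together with the mutual inverseness of the analytic generators $\sigma^{\nu}_{\pm i/2}$ — are accurate, as are the routine closedness/injectivity/anti-multiplicativity claims. The one genuinely technical step, passing from the sliced relation $(\nu\otimes\operatorname{id})\bigl(E(b\otimes1)\bigr)=\gamma_B(b)$ to the operator identity $E(b\otimes1)=E\bigl(1\otimes\gamma_B(b)\bigr)$, you correctly isolate and defer to the same references, which is precisely where the paper leaves it.
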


\begin{proof} 
See Propositions~2.4, 2.5, 2.7 in Part~I.  For more details, see \cite{BJKVD_SepId}.
\end{proof}

It can be shown that $E\in M(B\otimes C)$ is ``full'' (see Propositions~4.3, 4.4, 4.5 
in \cite{BJKVD_SepId}), satisfying certain density conditions.  See also Proposition~2.8 
of Part~I.  This means that the left leg of $E$ is $B$ and the right leg of $E$ is $C$.  

We have: $(\sigma^{\nu}_t\otimes\sigma^{\mu}_{-t})(E)=E$, $\forall t$ (see Proposition~2.10 
in Part~I). Meanwhile, it turns out that $(\gamma_B\otimes\gamma_C)(E)$ is bounded, 
and that $(\gamma_B\otimes\gamma_C)(E)=\varsigma E$ and $(\gamma_C\otimes\gamma_B)
(\varsigma E)=E$, where $\varsigma$ denotes the flip map on $M(B\otimes C)$.  As a consequence, 
we also have: $(R^{-1}\otimes R)(\varsigma E)=E$ and $(R\otimes R^{-1})(E)=\varsigma E$.  
See Proposition~2.12 in Part~I.

The separability idempotent condition for $E$ is actually a condition on the pair $(B,\nu)$, 
as $E$ is uniquely determined by $(B,\nu)$.  See Proposition~2.2 in Part~I. It can be 
shown that $B$ must be postliminal, and the choice of $\nu$ cannot be arbitrary.  
For more details on all these, please refer to \cite{BJKVD_SepId}.

\subsection{Definition: Locally compact quantum groupoid of separable type}

In what follows, we re-state Definition~4.8 from Part~I \cite{BJKVD_qgroupoid1}. 
See paragraphs following the definition for some additional comments.

\begin{defn}\label{definitionlcqgroupoid}
The data $(A,\Delta,E,B,\nu,\varphi,\psi)$ defines a {\em locally compact quantum 
groupoid of separable type\/}, if
\begin{itemize}
  \item $A$ is a $C^*$-algebra.
  \item $\Delta:A\to M(A\otimes A)$ is a comultiplication on $A$.
  \item $B$ is a non-degenerate $C^*$-subalgebra of $M(A)$.
  \item $\nu$ is a KMS weight on $B$.
  \item $E$ is the canonical idempotent of $(A,\Delta)$ as in 
Definition~3.7 of Part~I.  That is,
  \begin{enumerate}
    \item $\Delta(A)(A\otimes A)$ is dense in $E(A\otimes A)$ and $(A\otimes A)\Delta(A)$ 
is dense in $(A\otimes A)E$;
    \item there exists a $C^*$-subalgebra $C\cong B^{\operatorname{op}}$ contained in $M(A)$, 
with a ${}^*$-anti-isomorphism $R=R_{BC}:B\to C$, so that $E\in M(B\otimes C)$ and the triple 
$(E,B,\nu)$ forms a separability triple (see subsection~\S1.1);
    \item $E\otimes1$ and $1\otimes E$ commute, and we have:
$$
(\operatorname{id}\otimes\Delta)(E)=(E\otimes1)(1\otimes E)=(1\otimes E)(E\otimes1)
=(\Delta\otimes\operatorname{id})(E).
$$
  \end{enumerate}
  \item $\varphi$ is a KMS weight, and is left invariant.
  \item $\psi$ is a KMS weight, and is right invariant.
  \item There exists a (unique) one-parameter group of automorphisms 
$(\theta_t)_{t\in\mathbb{R}}$ of $B$ such that $\nu\circ\theta_t=\nu$ and that 
$\sigma^{\varphi}_t|_B=\theta_t$, $\forall t\in\mathbb{R}$.
\end{itemize}
\end{defn}

In the above, the comultiplication $\Delta$ is a ${}^*$-homomorphism from $A$ into $M(A\otimes A)$, 
not necessarily non-degenerate.  It is assumed to satisfy a ``weak coassociativity'' condition, and 
there is also a certain density condition ($\Delta$ is ``full'').  See Definition~3.1 and Lemma~3.2 
in Part~I.

Without the non-degeneracy, there is no straightforward way of extending $\Delta$ to $M(A)$. 
However, it turns out that Condition\,(1) for the idempotent element $E$ can be used to make 
the extension possible, and the following coassociativity condition is shown to hold: 
\begin{equation}\label{(coassociativity)}
(\Delta\otimes\operatorname{id})(\Delta a)=(\operatorname{id}\otimes\Delta)(\Delta a),\quad
\forall a\in A.
\end{equation}
For proof, see Theorem~3.5 in Part~I.  

Among other consequences of Condition\,(1) on $E$, we note that 
\begin{equation}\label{(EDelta)}
E(\Delta a)=\Delta a=(\Delta a)E,\quad\forall a\in A.
\end{equation}
See Proposition~3.3 in Part~I.  In fact, $E=\Delta(1_{M(A)})$, the image of $1\in M(A)$ under 
the extended comultiplication map.  However, without the non-degeneracy of the comultiplication 
map, we have $E\ne1\otimes1$ in general. 

As for Condition\,(2), the ``separability idempotent condition'', refer to the brief discussion 
given in the previous subsection.

Condition\,(3) for $E$ is referred to as the ``weak comultiplicativity of the unit''.
Parts of it can be proved (for instance, Proposition~3.6 in Part~I) and it is automatically true 
in the ordinary groupoid case with $E=\Delta(1)$, but the condition as a whole does not follow 
from other axioms.

As a consequence of the existence of the canonical idempotent $E$, we can show that 
the $C^*$-algebras $B$ and $C$ commute, and so do $M(B)$ and $M(C)$.  See 
Proposition~3.8 in Part~I.  We also have:
\begin{equation}\label{(DeltaonB)}
\Delta y=E(1\otimes y)=(1\otimes y)E,\quad\forall y\in M(B),
\end{equation}
\begin{equation}\label{(DeltaonC)}
\Delta x=(x\otimes1)E=E(x\otimes1),\quad\forall x\in M(C).
\end{equation}
See Proposition~3.9 and its Corollary in Part~I.  The canonical idempotent is uniquely 
determined by the Conditions\,(1), (2), (3) above.

Meanwhile, the left/right invariance of $\varphi$ and $\psi$ mean the following:
\begin{itemize}
  \item For any $a\in{\mathfrak M}_{\varphi}$, we have 
$\Delta a\in\overline{\mathfrak M}_{\operatorname{id}\otimes\varphi}$ and
$(\operatorname{id}\otimes\varphi)(\Delta a)\in M(C)$.
  \item For any $a\in{\mathfrak M}_{\psi}$, we have 
$\Delta a\in\overline{\mathfrak M}_{\psi\otimes\operatorname{id}}$ and
$(\psi\otimes\operatorname{id})(\Delta a)\in M(B)$.
\end{itemize}
See section~4 of Part~I for more discussion on the left/right invariance, and some other 
properties of $\varphi$ and $\psi$.  In particular, we note the following proposition. 
It shows the relationships between the weights $\varphi$, $\psi$ with the expressions 
$(\operatorname{id}\otimes\varphi)(\Delta x)$ and $(\psi\otimes\operatorname{id})(\Delta x)$, 
which are in fact operator-valued weights:

\begin{prop}\label{nuphipsi}
We have:
\begin{itemize}
  \item $\nu\bigl((\psi\otimes\operatorname{id})(\Delta x)\bigr)=\psi(x)$, 
for $x\in{\mathfrak M}_{\psi}$.
  \item $\mu\bigl((\operatorname{id}\otimes\varphi)(\Delta x)\bigr)=\varphi(x)$, 
for $x\in{\mathfrak M}_{\varphi}$.
\end{itemize}
\end{prop}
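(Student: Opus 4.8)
The plan is to prove the first identity, $\nu\bigl((\psi\otimes\operatorname{id})(\Delta x)\bigr)=\psi(x)$ for $x\in\mathfrak{M}_\psi$, and then obtain the second one by the symmetric argument (interchanging the roles of $\varphi$ and $\psi$, of $B$ and $C$, of $\nu$ and $\mu$, and of \eqref{(DeltaonB)} and \eqref{(DeltaonC)}; equivalently, by applying the first identity to the co-opposite quantum groupoid, whose canonical idempotent is $\sigma E$, the separability idempotent of $(C,\mu)$). Write $T_R:=(\psi\otimes\operatorname{id})\circ\Delta$. Right invariance says $T_R$ is a map into $M(B)$, and together with the KMS property of $\psi$ it is a faithful, lower semi-continuous operator-valued weight, so that $\psi':=\nu\circ T_R=\nu\circ(\psi\otimes\operatorname{id})\circ\Delta$ is again a (KMS) weight on $A$. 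The claim is then exactly that $\psi'=\psi$.

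First I would establish a covariance identity for $T_R$. Fix $x\in\mathfrak{M}_\psi^{+}$ and set $b:=T_R(x)=(\psi\otimes\operatorname{id})(\Delta x)\in M(B)^{+}$. Applying the slice $\psi\otimes\operatorname{id}\otimes\operatorname{id}$ to the coassociativity relation \eqref{(coassociativity)} in the form $(\Delta\otimes\operatorname{id})(\Delta x)=(\operatorname{id}\otimes\Delta)(\Delta x)$, and using that a slice on one leg commutes with the comultiplication applied to a disjoint leg, one gets $\Delta b=(T_R\otimes\operatorname{id})(\Delta x)$. On the other hand $b\in M(B)$, so \eqref{(DeltaonB)} gives $\Delta b=E(1\otimes b)=(1\otimes b)E$. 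Now apply $\nu\otimes\operatorname{id}$ to both expressions for $\Delta b$: since $(\nu\otimes\operatorname{id})(E)=1$ and $\nu\otimes\operatorname{id}$ is a module map over the untouched leg, $(\nu\otimes\operatorname{id})\bigl(E(1\otimes b)\bigr)=b$, while $(\nu\otimes\operatorname{id})\bigl((T_R\otimes\operatorname{id})(\Delta x)\bigr)=\bigl((\nu\circ T_R)\otimes\operatorname{id}\bigr)(\Delta x)=(\psi'\otimes\operatorname{id})(\Delta x)$. Hence
$$(\psi'\otimes\operatorname{id})(\Delta x)=(\psi\otimes\operatorname{id})(\Delta x),\qquad\forall\,x\in\mathfrak{M}_\psi,$$
that is, $\psi$ and $\psi'$ have the same associated right operator-valued weight.

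It then remains to upgrade this to $\psi'=\psi$, which yields $\nu\bigl((\psi\otimes\operatorname{id})(\Delta x)\bigr)=\psi'(x)=\psi(x)$. For this I would slice the free leg of the last identity by functionals $\omega$, obtaining $\psi'\bigl((\operatorname{id}\otimes\omega)(\Delta x)\bigr)=\psi\bigl((\operatorname{id}\otimes\omega)(\Delta x)\bigr)$. Since $\Delta$ is full, the elements $(\operatorname{id}\otimes\omega)(\Delta x)$, with $x\in\mathfrak{M}_\psi$ and $\omega\in A^{*}$, are total in $A$; combining this with the lower semi-continuity and the KMS structure of the two weights (so as to identify a common core), one concludes $\psi'=\psi$. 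The second identity follows by running the same argument with $\varphi$, $T_L:=(\operatorname{id}\otimes\varphi)\circ\Delta$, $C$, $\mu$ and \eqref{(DeltaonC)} in place of $\psi$, $T_R$, $B$, $\nu$ and \eqref{(DeltaonB)}.

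I expect this last step to be the real obstacle. The covariance computation by itself only shows that $\psi$ and $\psi'$ share the operator-valued weight $T_R$, and this alone does not fix the scalar normalisation of $\psi$: in the classical groupoid case $T_R$ is integration against the Haar system and is insensitive to the base measure, so there the equality of operator-valued weights is automatic and carries no information. Pinning down the normalisation genuinely uses that $\nu$ is the base weight attached to $E$ (so that $(\nu\otimes\operatorname{id})(E)=1$, already exploited above) together with the compatibility between the Haar weights and the base data $(B,\nu,E)$ built into Definition~\ref{definitionlcqgroupoid}, and making the density-plus-semicontinuity passage in the last paragraph rigorous is where the technical work concentrates.
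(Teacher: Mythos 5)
Note first that this paper does not actually prove Proposition~\ref{nuphipsi}: its ``proof'' is a citation to Proposition~4.9 of Part~I, so there is no argument here to compare with yours step by step; I can only assess your proposal on its own merits. Your covariance computation is correct and is exactly in the spirit of manipulations the paper itself performs (compare the opening of \S2.2, where the same interchange of a weight-slice with $\Delta$ and coassociativity is carried out for $\varphi$): writing $T_R=(\psi\otimes\operatorname{id})\circ\Delta$, one indeed gets $\Delta\bigl(T_R(x)\bigr)=(T_R\otimes\operatorname{id})(\Delta x)=E\bigl(1\otimes T_R(x)\bigr)$ by coassociativity and Equation~\eqref{(DeltaonB)}, and slicing with $\nu$ and using $(\nu\otimes\operatorname{id})(E)=1$ gives $(\psi'\otimes\operatorname{id})(\Delta x)=(\psi\otimes\operatorname{id})(\Delta x)$ for $\psi'=\nu\circ T_R$, modulo the usual Fubini-type justifications for slices by unbounded weights, which are the kind of technicalities the paper routinely handles.

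The genuine gap is the step you yourself flag: passing from equality of the two operator-valued weights to $\psi'=\psi$. The route you sketch --- agreement of $\psi'$ and $\psi$ on the norm-dense span of the elements $(\operatorname{id}\otimes\omega)(\Delta x)$, plus lower semicontinuity and ``the KMS structure'' --- is not an argument: two distinct lower semicontinuous, densely defined, faithful (even tracial, hence KMS) weights can agree on a norm-dense ${}^*$-subalgebra on which both are finite; on $C_0(\mathbb{R})$, Lebesgue measure and Lebesgue measure plus a point mass at $0$ agree on $\{f\in C_c(\mathbb{R}):f(0)=0\}$. To close the gap you would need to show much more: that $\psi'$ is semifinite, faithful and KMS with the same modular group as $\psi$, and that the subspace in question is a common core in the GNS sense, or else invoke a Pedersen--Takesaki/Radon--Nikodym type comparison --- and nothing of this kind is available at this point of the theory (the paper explicitly notes in \S5.6 that right-invariant weights are far from unique, and the Radon--Nikodym/modular-element machinery is deferred to Part~III), so ``identifying a common core'' is essentially the statement being proved. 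A smaller point: your classical-case remark is inaccurate --- the operator-valued weight $(\psi\otimes\operatorname{id})\circ\Delta$ is built from $\psi$, not merely from the Haar system, and in the classical case it does determine $\psi$; the covariance identity is therefore strong, and the real difficulty is only the analytic conversion of it into $\psi'=\psi$ for unbounded weights, which your proposal leaves unproven. As it stands, then, the proposal establishes a correct and useful identity but not the proposition; the paper obtains the proposition from the precise form of the invariance axioms in Part~I rather than by the reduction you attempt.
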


\begin{proof}
See Proposition~4.9 in Part~I.
\end{proof}

Finally, note the last condition of Definition~\ref{definitionlcqgroupoid}, concerning 
$\nu$.  In fact, it is possible to prove using the other axioms that each $\sigma^{\varphi}_t$ 
leaves $B$ invariant and that $(\sigma^{\varphi}_t|_B)_{t\in\mathbb{R}}$ is an automorphism 
group of $B$.  In the definition, we are further requiring that $\nu$ is invariant under 
$\sigma^{\varphi}_t|_B$ (that is, $\nu\circ\sigma^{\varphi}_t|_B=\nu$).  This additional 
condition is necessary later, when we show that $\nu$ is quasi-invariant.  More discussion 
on this aspect will be given in Section~\ref{sec5} below.

\section{Some alternative formulations of the left/right invariance}\label{sec2}

Fix a quantum groupoid of separable type $(A,\Delta,E,B,\nu,\varphi,\psi)$, 
as given in Definition~\ref{definitionlcqgroupoid}.  In what follows, the left/right invariance 
properties of $\varphi$, $\psi$ are needed in a fundamental way.  In preparation, 
it is quite helpful to introduce certain densely-defined maps on $A\otimes A$.

\subsection{Definition of the maps $Q_R$, $Q_{\rho}$, $Q_L$, $Q_{\lambda}$}

Let us begin with a lemma, which is a consequence of the weights $\varphi$, 
$\psi$ being faithful.

\begin{lem}\label{densesubsetA}
The following subspaces are norm-dense in $A$:

$\operatorname{span}\bigl\{(\operatorname{id}\otimes\varphi)(\Delta(a^*)(1\otimes x)):
a,x\in{\mathfrak N}_{\varphi}\bigr\}$,

$\operatorname{span}\bigl\{(\operatorname{id}\otimes\varphi)((1\otimes x^*)(\Delta a)):
a,x\in{\mathfrak N}_{\varphi}\bigr\}$,

$\operatorname{span}\bigl\{(\psi\otimes\operatorname{id})(\Delta(a^*)(y\otimes1)):
a,y\in{\mathfrak N}_{\psi}\bigr\}$,

$\operatorname{span}\bigl\{(\psi\otimes\operatorname{id})((y^*\otimes1)(\Delta a)):
a,y\in{\mathfrak N}_{\psi}\bigr\}$.
\end{lem}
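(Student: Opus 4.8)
The plan is to prove each of the four density statements, and by symmetry it suffices to treat the first one carefully; the other three follow by entirely parallel arguments (the third and fourth by replacing $\varphi$ with $\psi$ and the right leg with the left leg, using right invariance; the second and fourth by adjointing, using that $(\mathrm{id}\otimes\varphi)(\Delta(a^*)(1\otimes x))^* = (\mathrm{id}\otimes\varphi)((1\otimes x^*)(\Delta a))$ when the relevant elements lie in $\mathfrak{N}_\varphi$, together with the fact that the closure of a subspace is the closure of its adjoint). So I focus on showing that
\[
D:=\operatorname{span}\bigl\{(\mathrm{id}\otimes\varphi)(\Delta(a^*)(1\otimes x)):a,x\in\mathfrak{N}_\varphi\bigr\}
\]
is norm-dense in $A$.

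The key idea is a standard faithfulness-of-the-weight argument. First I would check that $D$ is well-defined and indeed contained in $A$: for $a,x\in\mathfrak{N}_\varphi$, the element $\Delta(a^*)(1\otimes x)$ lies in $\overline{\mathfrak{M}}_{\mathrm{id}\otimes\varphi}$ by the left invariance hypothesis (using $a^*\in\mathfrak{N}_\varphi^*$, hence products of the form $\Delta(a^*)(1\otimes x)$ with $x\in\mathfrak{N}_\varphi$ give elements to which the operator-valued weight $\mathrm{id}\otimes\varphi$ applies and produces an element of $M(A)$, in fact of $A$ since $x\in\mathfrak{N}_\varphi$ provides the needed square-integrability in the second leg). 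Next, suppose $D$ is not dense; then there is a nonzero bounded functional $\omega\in A^*$ vanishing on $D$, equivalently (after identifying with a suitable GNS picture) a vector functional or, more robustly, an element witnessing non-density. The cleanest route: pick $\omega\in A^*$ with $\omega|_D=0$ and show $\omega=0$. For such $\omega$ we have $\varphi\bigl((\omega\otimes\mathrm{id})(\Delta(a^*)(1\otimes x))\bigr)=0$ for all $a,x\in\mathfrak{N}_\varphi$, where $(\omega\otimes\mathrm{id})(\Delta(a^*)(1\otimes x)) = (\omega\otimes\mathrm{id})(\Delta(a^*))\,x$. Writing $y_\omega:=(\omega\otimes\mathrm{id})(\Delta(a^*))\in M(A)$, this says $\varphi(y_\omega\, x)=0$ for all $x\in\mathfrak{N}_\varphi$, i.e. $\varphi(y_\omega z)=0$ for all $z$ in the dense subspace $\mathfrak{N}_\varphi\cdot(\text{something})$ — more precisely, by polarization and linearity one deduces $y_\omega=0$ in $M(A)$ from faithfulness of $\varphi$: indeed $\varphi(y_\omega x x^* y_\omega^*)$-type expressions, or simply using that $\{x : x\in\mathfrak{N}_\varphi\}$ is dense and $\varphi$ faithful forces the left multiplier $y_\omega$ to act as zero, hence $y_\omega=0$.

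Then I would use fullness of $\Delta$ (the density condition that $\Delta$ is "full", Lemma~3.2 of Part~I) to conclude. Having $(\omega\otimes\mathrm{id})(\Delta(a^*))=0$ for all $a\in\mathfrak{N}_\varphi$, and since $\mathfrak{N}_\varphi$ is norm-dense in $A$ (as $\varphi$ is a densely-defined, in fact semi-finite, weight) and taking $*$, we get $(\omega\otimes\mathrm{id})(\Delta(a))=0$ for all $a$ in a dense subset, hence for all $a\in A$. But the fullness/non-degeneracy density property of $\Delta$ says precisely that the slices $(\omega\otimes\mathrm{id})(\Delta(a))$, as $\omega$ ranges over $A^*$ and $a$ over $A$, span a dense subspace of $M(A)$ (equivalently, the right leg of $\Delta(A)$ is all of $A$); so $(\omega\otimes\mathrm{id})(\Delta(a))=0$ for all $a$ forces $\omega=0$. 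This contradiction shows $D$ is dense, completing the first case, and the remaining three cases go through verbatim with the obvious substitutions.

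The main obstacle I anticipate is the careful bookkeeping in the middle step: justifying that $(\omega\otimes\mathrm{id})$ commutes with $(\mathrm{id}\otimes\varphi)$ in the sense $\varphi\bigl((\omega\otimes\mathrm{id})(\Delta(a^*)(1\otimes x))\bigr)=\omega\bigl((\mathrm{id}\otimes\varphi)(\Delta(a^*)(1\otimes x))\bigr)$, which requires knowing that $\Delta(a^*)(1\otimes x)\in\overline{\mathfrak{M}}_{\mathrm{id}\otimes\varphi}$ with the value lying in $A$ (not merely $M(A)$) so that the slice maps interchange legitimately — this is where the precise form of the left-invariance axiom and the properties of lower semi-continuous operator-valued weights (approximation by the net $(\mathrm{id}\otimes\varphi_j)$ with $\varphi_j$ normal bounded below $\varphi$) must be invoked. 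Once that Fubini-type interchange is secured, the faithfulness and fullness arguments are routine. An alternative, perhaps cleaner, packaging avoids dual functionals entirely: show directly that if $z\in A$ satisfies $za'=0$ for all $a'$ in the closed span $\overline{D}$, then $z=0$; but the functional-theoretic version above is the most transparent.
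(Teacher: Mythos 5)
Your overall strategy is the same as the paper's: take $\theta\in A^*$ vanishing on the span, interchange the slice with $\varphi$, use faithfulness of $\varphi$ and density of ${\mathfrak N}_{\varphi}$ to kill the first-leg slice, then invoke fullness of $\Delta$ to force $\theta=0$, and handle the other three sets by the symmetric/adjoint arguments. But the final step, as you wrote it, does not follow from the property you quote. You gloss fullness as ``the slices $(\omega\otimes\operatorname{id})(\Delta a)$, $\omega\in A^*$, $a\in A$, span a dense subspace'' --- that is density of the \emph{second} (right) leg --- and then assert that $(\omega\otimes\operatorname{id})(\Delta a)=0$ for all $a$ forces $\omega=0$. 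That implication is not a consequence of right-leg density; it is the dual formulation of fullness in the \emph{first} leg, namely that the elements $(\operatorname{id}\otimes\theta)\bigl(\Delta(a)(1\otimes x)\bigr)$, for $\theta\in A^*$ and $a,x\in A$, span a norm-dense subspace of $A$. The correct route (and the paper's): from $(\omega\otimes\operatorname{id})\bigl(\Delta(a^*)(1\otimes x)\bigr)=0$ apply an arbitrary $\theta\in A^*$ to the second leg to get $\omega\bigl((\operatorname{id}\otimes\theta)(\Delta(a^*)(1\otimes x))\bigr)=0$, and then use first-leg fullness together with density of ${\mathfrak N}_{\varphi}$ to conclude $\omega=0$. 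Note also that the paper deliberately keeps the cutoff $(1\otimes x)$ throughout (it deduces $(\theta\otimes\operatorname{id})\bigl(\Delta(a^*)(1\otimes x)\bigr)=0$ rather than a statement about the bare multiplier $(\theta\otimes\operatorname{id})(\Delta(a^*))$): the fullness condition of Definition~3.1/Lemma~3.2 of Part~I is stated for such cut-off elements, and this also avoids slicing $\Delta(a^*)\in M(A\otimes A)$ on its own. Since fullness in Part~I covers both legs, your argument is repaired simply by citing the correct half, but as written the last inference is a non sequitur.

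A smaller point: your suggested ``cleaner packaging'' at the end --- deducing $z=0$ whenever $za'=0$ for all $a'$ in the closed span --- does not prove density, since a proper closed subspace of a $C^*$-algebra can have trivial left annihilator; the Hahn--Banach functional argument you (and the paper) use is the right tool.
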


\begin{proof}
For $a\in{\mathfrak N}_{\varphi}$, we know 
$\Delta a\in\overline{\mathfrak N}_{\operatorname{id}\otimes\varphi}$ by the left invariance 
of $\varphi$ (See Proposition~4.2 in Part~I.).  For any $x\in{\mathfrak N}_{\varphi}$, 
we have $\Delta(a^*)(1\otimes x)\in{\mathfrak M}_{\operatorname{id}\otimes\varphi}$, 
so  we can see that $(\operatorname{id}\otimes\varphi)\bigl(\Delta(a^*)(1\otimes x)\bigr)\in A$.

Let $\theta\in A^*$ be such that $\theta\bigl((\operatorname{id}\otimes\varphi)
(\Delta(a^*)(1\otimes x))\bigr)=0$, $\forall a,x\in{\mathfrak N}_{\varphi}$. 
But, $\theta\bigl((\operatorname{id}\otimes\varphi)(\Delta(a^*)(1\otimes x))\bigr)
=\varphi\bigl((\theta\otimes\operatorname{id})(\Delta(a^*)(1\otimes x))\bigr)
=\varphi\bigl((\theta\otimes\operatorname{id})(\Delta(a^*))x\bigr)$.  Since 
$\varphi$ is faithful, and since $x$ is arbitrary in ${\mathfrak N}_{\varphi}$, 
a dense subspace of $A$, it follows that
$(\theta\otimes\operatorname{id})\bigl(\Delta(a^*)\bigr)x=0$, or 
$(\theta\otimes\operatorname{id})\bigl(\Delta(a^*)(1\otimes x)\bigr)=0$,
for any $a,x\in{\mathfrak N}_{\varphi}$.
So, for any $\omega\in A^*$ and $a,x\in{\mathfrak N}_{\varphi}$, we will have:
\begin{equation}\label{(densesubsetA_eqn1)}
\theta\bigl((\operatorname{id}\otimes\omega)(\Delta(a^*)(1\otimes x))\bigr)
=\omega\bigl((\theta\otimes\operatorname{id})(\Delta(a^*)(1\otimes x))\bigr)
=0.
\end{equation}

By definition, the comultiplication $\Delta$ is ``full'' (see Definition~3.1 and Lemma~3.2 
in Part~I).  Together with the fact that ${\mathfrak N}_{\varphi}$ is dense in $A$, 
this means that the elements $(\operatorname{id}\otimes\omega)(\Delta(a^*)(1\otimes x))$, 
for $a,x\in{\mathfrak N}_{\varphi}$, span a dense subspace in $A$.  Therefore, 
Equation~\eqref{(densesubsetA_eqn1)} shows that actually $\theta\equiv0$. 

By the Hahn--Banach argument, we can thus conclude that the elements 
$(\operatorname{id}\otimes\varphi)\bigl(\Delta(a^*)(1\otimes x)\bigr)$, 
$a,x\in{\mathfrak N}_{\varphi}$, span a dense subspace in $A$.  The other 
three density results can be proved in the same way.
\end{proof}

Now let $b\in A$ be arbitrary, and consider 
$p=(\operatorname{id}\otimes\varphi)\bigl(\Delta(a^*)(1\otimes x)\bigr)$, 
for $a,x\in{\mathfrak N}_{\varphi}$.  We just saw from Lemma~\ref{densesubsetA} 
that such elements span a dense subset in $A$.  For $p\otimes b
=(\operatorname{id}\otimes\operatorname{id}\otimes\varphi)\bigl(\Delta_{13}(a^*)
(1\otimes b\otimes x)\bigr)$, define:
\begin{align}\label{(defQ_R)}
Q_R(p\otimes b)&=Q_R\bigl((\operatorname{id}\otimes\operatorname{id}\otimes\varphi)
(\Delta_{13}(a^*)(1\otimes b\otimes x))\bigr)  \notag \\
&:=(\operatorname{id}\otimes\operatorname{id}\otimes\varphi)\bigl(\Delta_{13}(a^*)
(1\otimes E)(1\otimes b\otimes x)\bigr).
\end{align}

By the left invariance of $\varphi$, we have $\Delta_{13}(a)
\in{\mathfrak N}_{\operatorname{id}\otimes\operatorname{id}\otimes\varphi}$.  
Since $x\in{\mathfrak N}_{\varphi}$, the expression in the right side is valid. 
Nevertheless, at this stage we do not know whether $Q_R$ determines a well-defined map. 
So, to motivate the choice of $Q_R$ as well as to show that it is indeed a well-defined map, 
let us, for the time being, assume that $b\in{\mathcal T}_{\nu}$. Here, ${\mathcal T}_{\nu}$ 
is the Tomita subalgebra of $B$, which is dense in $B$.

Consider the following lemma, where we are using the $\gamma_B$, $\gamma_C$ maps 
we reviewed in subsection~\ref{sub1.1}.

\begin{lem}
Suppose $b\in{\mathcal T}_{\nu}$.  Then $b\in{\mathcal D}(\gamma_B)\cap{\mathcal D}(\gamma_C^{-1})$, 
and we have: 
$$
E_{13}(1\otimes E)(1\otimes b\otimes1)
=E_{13}\,(\operatorname{id}\otimes\gamma_C\otimes\operatorname{id})
\bigl((1\otimes\gamma_C^{-1}(b)\otimes1)(E\otimes1)\bigr).
$$
\end{lem}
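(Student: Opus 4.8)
I begin with the domain assertion. By definition, ${\mathcal T}_\nu$ consists of the elements $b$ that are entire analytic for the modular automorphism group $(\sigma^\nu_t)$ and such that $\sigma^\nu_z(b)\in{\mathcal T}_\nu$ for every $z\in\mathbb{C}$; in particular $b\in{\mathcal D}(\sigma^\nu_{i/2})\cap{\mathcal D}(\sigma^\nu_{-i/2})$. Since, by \S1.1, $\gamma_B=R\circ\sigma^\nu_{i/2}$ with ${\mathcal D}(\gamma_B)={\mathcal D}(\sigma^\nu_{i/2})$, and $\gamma_C^{-1}=R\circ\sigma^\nu_{-i/2}$ with ${\mathcal D}(\gamma_C^{-1})={\mathcal D}(\sigma^\nu_{-i/2})$, it follows that $b\in{\mathcal D}(\gamma_B)\cap{\mathcal D}(\gamma_C^{-1})$. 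I would also record here that $\gamma_B(b)=R(\sigma^\nu_{i/2}(b))$ and $\gamma_C^{-1}(b)=R(\sigma^\nu_{-i/2}(b))$ are \emph{bounded} elements of $C$, lying in the Tomita subalgebra ${\mathcal T}_\mu$ of $(C,\mu)$; hence $\gamma_C^{-1}(b)\in{\mathcal D}(\gamma_C)$ and $\gamma_C\bigl(\gamma_C^{-1}(b)\bigr)=b$. These boundedness and domain facts are used silently below.

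For the displayed identity my plan is to transform the right-hand side into the left-hand side in three moves. (Starting instead from the left-hand side tends to run in circles: moving $b$ around \emph{inside} the single factor $1\otimes E$ via the relations of Lemma~\ref{gammagamma'} only replaces $b$ by $\gamma_B(b)$ and then by $\gamma_B^{-1}(\gamma_B(b))=b$, so the factor $E_{13}$ never enters; hence I work from the right.) \emph{Move 1}: the middle leg of $(1\otimes\gamma_C^{-1}(b)\otimes1)(E\otimes1)$ is $\gamma_C^{-1}(b)$ times the $C$-leg of $E$, so applying $\gamma_C$ to that leg and using that $\gamma_C$ is an anti-homomorphism with $\gamma_C(\gamma_C^{-1}(b))=b$ produces a factor $b$ on the right: $(\operatorname{id}\otimes\gamma_C\otimes\operatorname{id})\bigl((1\otimes\gamma_C^{-1}(b)\otimes1)(E\otimes1)\bigr)=\bigl[(\operatorname{id}\otimes\gamma_C)(E)\bigr]_{12}\,(1\otimes b\otimes1)$. \emph{Move 2}: from the characterization $(\gamma_B\otimes\gamma_C)(E)=\sigma E$ recalled in \S1.1, applying $\gamma_B^{-1}$ on the first leg gives $(\operatorname{id}\otimes\gamma_C)(E)=(\gamma_B^{-1}\otimes\operatorname{id})(\sigma E)$. \emph{Move 3}: multiplying on the left by $E_{13}$, the second relation of Lemma~\ref{gammagamma'}(1), i.e.\ $E(\gamma_B^{-1}(c)\otimes1)=E(1\otimes c)$ for $c\in{\mathcal D}(\gamma_B^{-1})$, applied in the pair of legs $(1,3)$ carrying the two legs of $E_{13}$, re-absorbs the $\gamma_B^{-1}$ and yields $E_{13}\bigl[(\gamma_B^{-1}\otimes\operatorname{id})(\sigma E)\bigr]_{12}=E_{13}(1\otimes E)$. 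Putting the three moves together, the right-hand side becomes $E_{13}(1\otimes E)(1\otimes b\otimes1)$, which is the left-hand side.

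The one genuinely delicate point is the rigor of \emph{Move 3}. The leg-wise image $(\operatorname{id}\otimes\gamma_C)(E)=(\gamma_B^{-1}\otimes\operatorname{id})(\sigma E)$ is in general \emph{unbounded} — it is $E$ with the bounded first leg of $\sigma E$ pushed through the unbounded map $\gamma_B^{-1}$ — and only after left multiplication by $E_{13}$ does one recover a genuine element of $M(A\otimes A\otimes A)$. Accordingly, the identity $E_{13}\bigl[(\gamma_B^{-1}\otimes\operatorname{id})(\sigma E)\bigr]_{12}=E_{13}(1\otimes E)$ should not be obtained by manipulating $(\operatorname{id}\otimes\gamma_C)(E)$ as a multiplier, but rather by slicing against functionals on the first and third legs and reducing to the single-element relation $E(\gamma_B^{-1}(c)\otimes1)=E(1\otimes c)$, using the closedness and dense range of $\gamma_B^{-1}$ together with the fullness of $E$. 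One must also watch, at each application of Lemma~\ref{gammagamma'}, which two of the three legs are playing the role of the two legs of $E$. This bookkeeping with a separability idempotent is routine in the framework of \cite{BJKVD_SepId}, and it is where essentially all of the work lies; the algebraic skeleton displayed above is short.
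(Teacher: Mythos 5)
Your argument is correct, and it uses exactly the paper's toolbox: the relations of Lemma~\ref{gammagamma'}, the identity $(\gamma_B\otimes\gamma_C)(E)=\sigma E$ (equivalently $(\gamma_C\otimes\gamma_B)(\sigma E)=E$), and anti-multiplicativity of the $\gamma$ maps, all applied leg-wise at the same (formal but standard) level of rigor as the paper's own computation; the only difference is direction, since you transform the right-hand side into the left-hand side while the paper goes the other way. One aside in your proposal is off the mark: the left-to-right direction does not ``run in circles.'' The paper starts from $E_{13}(1\otimes E)(1\otimes b\otimes1)$ and, rather than shuttling $b$ back and forth inside the single factor $1\otimes E$, rewrites that factor itself via $(1\otimes E)(1\otimes b\otimes1)=1\otimes(\gamma_C\otimes\gamma_B)\bigl((1\otimes b)(\sigma E)\bigr)$, so a $\gamma_B$ appears in leg~3; this is then pulled through $E_{13}$ using $E(b'\otimes1)=E(1\otimes\gamma_B(b'))$ in legs $(1,3)$, and finally $(b\otimes1)E=(1\otimes\gamma_C^{-1}(b))E$ gives the stated right-hand side. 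So the two proofs are mirror images using the same three identities, and your domain discussion ($b\in{\mathcal T}_{\nu}\subseteq{\mathcal D}(\sigma^{\nu}_{i/2})\cap{\mathcal D}(\sigma^{\nu}_{-i/2})={\mathcal D}(\gamma_B)\cap{\mathcal D}(\gamma_C^{-1})$, with $\gamma_C(\gamma_C^{-1}(b))=b$) matches what the paper needs; your extra caution about $(\operatorname{id}\otimes\gamma_C)(E)$ being unbounded is consistent with the Remark following the lemma, where this is exactly the point of treating $F_1$ only formally.
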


\begin{proof}
By the property of $E$, it is known that $(\gamma_C\otimes\gamma_B)(\varsigma E)=E$, 
where $\varsigma$ denotes the flip map.  Since $b\in D(\gamma_B)$, and since the maps 
$\gamma_B$, $\gamma_C$ are anti-homomorphisms, we thus have: 
$$
(\gamma_C\otimes\gamma_B)\bigl((1\otimes b)(\varsigma E)\bigr)=
E\bigl(1\otimes\gamma_B(b)\bigr)=E(b\otimes1).
$$
The last equality is using Lemma~\ref{gammagamma'}  It follows that 
$$
E_{13}(1\otimes E)(1\otimes b\otimes1)
=E_{13}\,\bigl(1\otimes(\gamma_C\otimes\gamma_B)((1\otimes b)(\varsigma E))\bigr).
$$
Using again $E(b\otimes1)=E\bigl(1\otimes\gamma_B(b)\bigr)$, but this time 
for $E_{13}$, the right side of the equation becomes: 
$E_{13}\,(\operatorname{id}\otimes\gamma_C\otimes\operatorname{id})
\bigl((b\otimes1\otimes1)(E\otimes1)\bigr)$.  Since $b\in{\mathcal D}(\gamma_C^{-1})$ 
as well, we can now use $(b\otimes1)E=\bigl(1\otimes\gamma_C^{-1}(b)\bigr)E$. 
We thus have:
$$
E_{13}(1\otimes E)(1\otimes b\otimes1)
=E_{13}\,(\operatorname{id}\otimes\gamma_C\otimes\operatorname{id})
\bigl((1\otimes\gamma_C^{-1}(b)\otimes1)(E\otimes1)\bigr).
$$
\end{proof}

\begin{rem}
Formally, the right hand side of the equation in the Lemma may be written 
as $E_{13}\,\bigl((\operatorname{id}\otimes\gamma_C)(E)\otimes1\bigr)
(1\otimes b\otimes1)$.  Since $b$ is arbitrary, the result of the lemma 
would mean that 
$$
E_{13}(1\otimes E)=E_{13}(F_1\otimes1),
$$
where $F_1:=(\operatorname{id}\otimes\gamma_C)(E)$.  This exact formula  
appears in the weak multiplier Hopf algebra theory.  See Proposition~3.17 
in \cite{VDWangwha0} and Proposition~4.6 in \cite{VDWangwha1}.  In our case, 
unlike in the purely algebraic framework, the $F_1$ map would be unbounded 
(because $\gamma_C$ is), and so we needed to treat this with care.
\end{rem}

If we assume that $p\in A$ is as above and $b\in{\mathcal T}_{\nu}$, then 
Equation~\eqref{(defQ_R)} for $Q_R(p\otimes b)$ becomes:
\begin{align}
Q_R(p\otimes b)&=(\operatorname{id}\otimes\operatorname{id}\otimes\varphi)
\bigl(\Delta_{13}(a^*)(1\otimes E)(1\otimes b\otimes x)\bigr)  \notag \\
&=(\operatorname{id}\otimes\operatorname{id}\otimes\varphi)\bigl(\Delta_{13}(a^*)
E_{13}(1\otimes E)(1\otimes b\otimes1)(1\otimes1\otimes x)\bigr) 
\notag \\
&=(\operatorname{id}\otimes\gamma_C\otimes\varphi)\bigl(\Delta_{13}(a^*)
(1\otimes\gamma_C^{-1}(b)\otimes1)(E\otimes1)(1\otimes1\otimes x)\bigr)
\notag \\
&=(\operatorname{id}\otimes\gamma_C)\bigl((\operatorname{id}\otimes\operatorname{id}
\otimes\varphi)(\Delta_{13}(a^*)(1\otimes1\otimes x))(1\otimes\gamma_C^{-1}(b))E\bigr)
\notag \\
&=(\operatorname{id}\otimes\gamma_C)\bigl((p\otimes1)(1\otimes\gamma_C^{-1}(b))E\bigr).
\notag
\end{align}
The third equality is using the previous Lemma.  We also used the fact that $\Delta a
=(\Delta a)E$, as given in Equation~\eqref{(EDelta)}.  

Note that formally, with the notation for $F_1$ as in Remark above, the result 
can be written as: $Q_R(p\otimes b)=(p\otimes1)F_1(1\otimes b)$.  This observation 
explains where the definition of $Q_R$ comes from.  In addition, it is now easy to see 
that if $p\otimes b=0$, then $Q_R(p\otimes b)=0$.  So, for $p$ of the type given 
in Equation~\eqref{(defQ_R)} and $b\in{\mathcal T}_{\nu}$, we see that 
$Q_R:p\otimes b\mapsto Q_R(p\otimes b)$ is a well-defined linear map. 

We can actually go even further.  See next proposition:

\begin{prop}\label{Q_Rproposition}
Consider ${\mathcal D}(Q_R)$, which is dense in $A\otimes A$, as follows:
$$
{\mathcal D}(Q_R):=\operatorname{span}\bigl\{sp\otimes b:p=(\operatorname{id}\otimes\varphi)
(\Delta(a^*)(1\otimes x)),\,a,x\in{\mathfrak N}_{\varphi};\ s\in M(A);\ b\in A\bigr\}.
$$
Then:
\begin{enumerate}
  \item We can extend Equation~\eqref{(defQ_R)} to a well-defined map 
$Q_R:{\mathcal D}(Q_R)\to A\otimes A$, such that
\begin{align}
Q_R(sp\otimes b)&=(s\otimes1)(\operatorname{id}\otimes\operatorname{id}\otimes\varphi)
\bigl(\Delta_{13}(a^*)(1\otimes E)(1\otimes1\otimes x)\bigr)(1\otimes b) \notag \\
&=(\operatorname{id}\otimes\operatorname{id}\otimes\varphi)\bigl((s\otimes1\otimes1)
\Delta_{13}(a^*)(1\otimes E)(1\otimes b\otimes x)\bigr).
\notag
\end{align}
  \item The following properties hold:

$Q_R(sr\otimes b)=(s\otimes 1)\,Q_R(r\otimes b)$, for $s\in M(A)$ and 
$r\otimes b\in{\mathcal D}(Q_R)$.

$Q_R(r\otimes bd)=Q_R(r\otimes b)\,(1\otimes d)$, for $d\in M(A)$ and 
$r\otimes b\in{\mathcal D}(Q_R)$.
\end{enumerate}
\end{prop}

\begin{proof}
From the discussion given in the previous paragraph, we can see quickly that for 
$s\in M(A)$ and $d\in A$, the definition of $Q_R$ can be extended to elements 
of the form $sp\otimes bd\in A\otimes A$, by letting
$$
Q_R(sp\otimes bd)=(s\otimes1)Q_R(p\otimes b)(1\otimes d),
$$
where $p$ is of the given type and $b\in{\mathcal T}_{\nu}$.

Meanwhile, since ${\mathcal T}_{\nu}$ is dense in $B$ and since $B$ is a non-degenerate 
subalgebra of $M(A)$, any element of $A$ can be approximated by the elements of the form 
$bd$, for $b\in{\mathcal T}_{\nu}$, $d\in A$.  In fact, looking at the way the $Q_R$ map is 
defined, it is evident that it can be naturally defined for elements of the form $sp\otimes b
\in A\otimes A$, for any $b\in A$. Such elements form the dense domain ${\mathcal D}(Q_R)$ 
in $A\otimes A$.

Definition of $Q_R$ is now an immediate consequence of Equation~\eqref{(defQ_R)}. 
It is also clear from these discussions that the two properties in (2) hold.
\end{proof}

Observe now that $Q_R$ is an idempotent map:

\begin{prop}\label{Q_Ridempotent}
We have: $Q_RQ_R=Q_R$.
\end{prop}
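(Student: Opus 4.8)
The plan is as follows. By Proposition~\ref{Q_Rproposition}, it is enough to check $Q_RQ_R(p\otimes b)=Q_R(p\otimes b)$ on the spanning elements $p\otimes b$ with $p=(\operatorname{id}\otimes\varphi)\bigl(\Delta(a^*)(1\otimes x)\bigr)$, $a,x\in{\mathfrak N}_{\varphi}$, and $b\in{\mathcal T}_{\nu}$: the two multiplier identities of Proposition~\ref{Q_Rproposition}\,(2), together with the density of such elements, then propagate the statement to all of ${\mathcal D}(Q_R)$. I would start from the formula derived just before the Proposition, $Q_R(p\otimes b)=(\operatorname{id}\otimes\gamma_C)\bigl((p\otimes1)(1\otimes\gamma_C^{-1}(b))E\bigr)$, and rewrite it using $(1\otimes c)E=\bigl(\gamma_C(c)\otimes1\bigr)E$ from Lemma~\ref{gammagamma'}\,(1) with $c=\gamma_C^{-1}(b)$, obtaining
\[
Q_R(p\otimes b)=(\operatorname{id}\otimes\gamma_C)\bigl((pb\otimes1)E\bigr),
\]
the rigorous form of the heuristic expression $(p\otimes1)F_1(1\otimes b)$ of the Remark above, with $F_1=(\operatorname{id}\otimes\gamma_C)(E)$.

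Next I would observe that $pb$ is again of the special form. Writing $pb=(\operatorname{id}\otimes\varphi)\bigl(\Delta(a^*)(1\otimes x)(b\otimes1)\bigr)$ and using $\Delta(a^*)=\Delta(a^*)E$ (see \eqref{(EDelta)}) together with $E(b\otimes1)=E\bigl(1\otimes\gamma_B(b)\bigr)$ (Lemma~\ref{gammagamma'}\,(1); legitimate because $b\in{\mathcal T}_{\nu}\subseteq{\mathcal D}(\gamma_B)$), one gets $pb=(\operatorname{id}\otimes\varphi)\bigl(\Delta(a^*)(1\otimes\gamma_B(b)x)\bigr)$ with $\gamma_B(b)x\in{\mathfrak N}_{\varphi}$ (since $\gamma_B(b)\in C\subseteq M(A)$). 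So the proposition reduces to the claim that for every $q$ of the special form the element $(\operatorname{id}\otimes\gamma_C)\bigl((q\otimes1)E\bigr)$ lies in ${\mathcal D}(Q_R)$ and is fixed by $Q_R$; granting this with $q=pb$ gives $Q_RQ_R(p\otimes b)=Q_R\bigl((\operatorname{id}\otimes\gamma_C)((pb\otimes1)E)\bigr)=(\operatorname{id}\otimes\gamma_C)\bigl((pb\otimes1)E\bigr)=Q_R(p\otimes b)$.

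To prove the claim I would re-present $(\operatorname{id}\otimes\gamma_C)\bigl((q\otimes1)E\bigr)$ by pushing the defining $\varphi$-slice of $q$ back outside and absorbing the extra $E$, so that (up to norm limits, using an approximate unit to land back inside $A\otimes A$) it is exhibited as a combination of members of the spanning set of Proposition~\ref{Q_Rproposition}; this uses \eqref{(EDelta)}, Lemma~\ref{gammagamma'}, and the fact that products $\gamma_B(\cdot)x$ stay in ${\mathfrak N}_{\varphi}$, exactly as for $pb$ above. Applying $Q_R$ to this presentation inserts a second copy of $E$ next to the first; invoking the Lemma just proved --- which is precisely what allows one to trade $(1\otimes E)$ for the $\gamma_C$-twisted $(F_1\otimes1)$ --- the two copies of $E$ multiply, and since $E$ is a self-adjoint idempotent and $\gamma_C$ is an anti-homomorphism we have $(\operatorname{id}\otimes\gamma_C)(E^2)=(\operatorname{id}\otimes\gamma_C)(E)$, so the doubled $E$ collapses to a single $E$ and the element is returned unchanged. (Equivalently, the collapse can be organized through the weak comultiplicativity of the unit, $(\operatorname{id}\otimes\Delta)(E)=(E\otimes1)(1\otimes E)=(1\otimes E)(E\otimes1)$, i.e.\ Condition\,(3) of Definition~\ref{definitionlcqgroupoid}.)

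The crux --- and the main obstacle --- is this last collapse. Because $\gamma_C$, hence $F_1=(\operatorname{id}\otimes\gamma_C)(E)$, is unbounded, ``$F_1F_1=F_1$'' is not an identity of genuine multipliers; the whole argument must therefore be run inside the $\varphi$-slice maps, keeping careful track of the relevant Tomita domains and of membership in ${\mathfrak N}_{\varphi}$ at each step, and one also has to verify cleanly that the slice $(\operatorname{id}\otimes\gamma_C)\bigl((q\otimes1)E\bigr)$ genuinely re-enters ${\mathcal D}(Q_R)$. The Lemma preceding the Proposition is the technical device that makes this bookkeeping go through; once it is in place, the idempotency of $E$ does the rest.
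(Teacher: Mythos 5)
There is a genuine gap, and it sits exactly where you flag it: the claim that $(\operatorname{id}\otimes\gamma_C)\bigl((pb\otimes1)E\bigr)$ lies in ${\mathcal D}(Q_R)$ and is fixed by $Q_R$ is never actually proved. Your reduction up to that point is sound (the rewriting $Q_R(p\otimes b)=(\operatorname{id}\otimes\gamma_C)\bigl((pb\otimes1)E\bigr)$ and the observation that $pb=(\operatorname{id}\otimes\varphi)\bigl(\Delta(a^*)(1\otimes\gamma_B(b)x)\bigr)$ is again of the special form are both correct), but the final ``collapse'' is only a formal $F_1F_1=F_1$ argument, which, as you yourself note, is not an identity of multipliers because $F_1=(\operatorname{id}\otimes\gamma_C)(E)$ is unbounded. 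Two further problems: the ``re-presentation up to norm limits, using an approximate unit'' implicitly passes a limit through $Q_R$, which requires a closedness or continuity property of $Q_R$ that has not been established (Proposition~\ref{Q_Rproposition}\,(2) only gives module properties over $M(A)$, not limits); and the two copies of $E$ you want to multiply do not even sit in the same legs --- after your $\gamma_C$-twist one copy lives in legs $(1,2)$ under the slice $(\operatorname{id}\otimes\gamma_C\otimes\varphi)$, while the copy inserted by the outer $Q_R$ sits in legs $(2,3)$ --- so making them meet requires precisely the $\gamma_B$, $\gamma_C$ domain bookkeeping that your sketch defers. As written, the proposal stops short of a proof at the one step that matters.

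The paper's proof avoids all of this by never leaving the bounded picture. It works directly with the defining formula~\eqref{(defQ_R)}: $Q_R(p\otimes b)=(\operatorname{id}\otimes\operatorname{id}\otimes\varphi)\bigl(\Delta_{13}(a^*)(1\otimes E)(1\otimes b\otimes x)\bigr)$, where the inserted $(1\otimes E)$ is a bounded multiplier whose left leg (in $B\subseteq M(A)$) multiplies $b$ and whose right leg (in $C\subseteq M(A)$) multiplies $x$, keeping the third leg inside ${\mathfrak N}_{\varphi}$; hence $Q_R(p\otimes b)$ is again of the shape to which the defining formula applies, and a second application of $Q_R$ simply inserts another factor $(1\otimes E)$. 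The identity $(1\otimes E)(1\otimes E)=(1\otimes E)$, i.e.\ $E^2=E$, then gives $Q_RQ_R(p\otimes b)=Q_R(p\otimes b)$ in one line, for arbitrary $b\in A$ (no restriction to ${\mathcal T}_{\nu}$ and no density argument needed). Your detour through $\gamma_C$ converts this bounded-idempotent statement into an unbounded-operator problem; if you want to salvage your route, the missing content is a rigorous version of the ``collapse,'' which would in effect reprove the Lemma preceding the Proposition inside the $\varphi$-slice with all domain hypotheses checked --- considerably more work than the direct argument.
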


\begin{proof}
Let $p=(\operatorname{id}\otimes\varphi)(\Delta(a^*)(1\otimes x))$, for 
$a,x\in{\mathfrak N}_{\varphi}$, and let $b\in A$.  We can define $Q_R(p\otimes b)$ 
as above.

Note next that $E(b\otimes x)\in A\otimes{\mathfrak N}_{\varphi}$, because 
${\mathfrak N}_{\varphi}$ is a left ideal.  From this, we can see that 
$Q_R(p\otimes b)\in{\mathcal D}(Q_R)$. Moreover, we have:
\begin{align}
Q_RQ_R(p\otimes b)&=Q_R\bigl((\operatorname{id}\otimes\operatorname{id}\otimes\varphi)
(\Delta_{13}(a^*)(1\otimes E)(1\otimes b\otimes x))\bigr)   \notag \\
&=Q_R\bigl((\operatorname{id}\otimes\operatorname{id}\otimes\varphi)
(\Delta_{13}(a^*)(1\otimes E)(1\otimes E)(1\otimes b\otimes x))\bigr)  \notag \\
&=Q_R(p\otimes b),   \notag
\end{align}
because $E^2=E$.  It follows that $Q_RQ_R=Q_R$.
\end{proof} 

In the next proposition, we write down a characterization of the map $Q_R$, which 
will be useful later:

\begin{prop}\label{Q_Rcharacterization}
If $a,x\in{\mathfrak N}_{\varphi}$ and $b\in A$, then
\begin{enumerate}
  \item $\Delta_{13}(a^*)(1\otimes b\otimes x)\in{\mathcal D}(Q_R\otimes\operatorname{id})$;
  \item $(Q_R\otimes\operatorname{id})\bigl(\Delta_{13}(a^*)(1\otimes b\otimes x)\bigr)
=\Delta_{13}(a^*)(1\otimes E)(1\otimes b\otimes x)$.
\end{enumerate}
\end{prop}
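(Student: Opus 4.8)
The plan is to deduce the statement directly from the definition of $Q_R$ together with the core description in Proposition~\ref{Q_Rproposition} and the multiplier-compatibility properties in its part~(2). First I would observe that the element $\Delta_{13}(a^*)(1\otimes b\otimes x)\in A\otimes A\otimes A$ should be thought of as living in $(A\otimes A)\otimes A$, with the first two legs carrying the map $Q_R$ and the third leg untouched. Writing $\Delta(a^*)=\sum$ (formally, or via a slice-map/approximation argument) as a sum of elementary tensors $a_{(1)}^*\otimes a_{(2)}^*$, the element $\Delta_{13}(a^*)(1\otimes b\otimes x)$ has the form $\sum a_{(1)}^*\otimes b\otimes a_{(2)}^*x$; regrouping the first and third legs, this is of the shape $s\,p'\otimes b$ with the ``$p$-part'' built from $a$ and $x\in\mathfrak N_\varphi$ — precisely the generators listed in the core of Proposition~\ref{Q_Rproposition}(1). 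So membership in $\mathcal D(Q_R\otimes\operatorname{id})$ will follow once this identification is made rigorous.

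The cleaner route, avoiding any Sweedler-type manip­ulation, is to unwind the definition~\eqref{(defQ_R)} one leg further. The point is that $(Q_R\otimes\operatorname{id})$ applied to $\Delta_{13}(a^*)(1\otimes b\otimes x)$ is, by the very definition of $Q_R$ on generators of the form $p\otimes b$ with $p=(\operatorname{id}\otimes\varphi)(\Delta(a^*)(1\otimes x))$, obtained by inserting a factor $1\otimes E$ into the first two legs before applying $\operatorname{id}\otimes\operatorname{id}\otimes\varphi$ in the construction of $p$. Since here the ``$b$'' in the pattern $\Delta_{13}(\cdot)(1\otimes b\otimes x)$ sits in the middle leg and the $\varphi$-slice acts on the last leg, the inserted $1\otimes E$ acts on legs $2$ and $3$ of a four-fold structure and descends, after the $\varphi$-slice, to the factor $(1\otimes E)$ sitting in legs $2$--$3$ of $A\otimes A\otimes A$ — which is exactly the right-hand side of part~(2). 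Concretely: apply $(\operatorname{id}\otimes\operatorname{id}\otimes\operatorname{id}\otimes\varphi)$ to $\Delta_{14}(a^*)(1\otimes E_{23})(1\otimes b\otimes1\otimes x)$ and read off that this equals both $(Q_R\otimes\operatorname{id})\bigl(\Delta_{13}(a^*)(1\otimes b\otimes x)\bigr)$ (by definition of $Q_R$, applied in legs $1,3,4$) and $\Delta_{13}(a^*)(1\otimes E)(1\otimes b\otimes x)$ (by the $\varphi$-slice in leg $4$).

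For part~(1) specifically, I would note that the right-hand side of part~(2) is a well-defined element of $A\otimes A\otimes A$: indeed $\Delta a\in\mathfrak N_{\operatorname{id}\otimes\varphi}$ by left invariance (Proposition~4.2 of Part~I), so $\Delta_{13}(a)\in\mathfrak N_{\operatorname{id}\otimes\operatorname{id}\otimes\varphi}$, and multiplying by $(1\otimes1\otimes x)$ with $x\in\mathfrak N_\varphi$ keeps us in ${\mathfrak M}_{\operatorname{id}\otimes\operatorname{id}\otimes\varphi}$, so the slice is legitimate; the extra bounded factor $1\otimes E$ causes no trouble. Then the containment in $\mathcal D(Q_R\otimes\operatorname{id})$ follows by exhibiting $\Delta_{13}(a^*)(1\otimes b\otimes x)$ as a (possibly infinite, norm-convergent, or weak-limit) combination of core elements $sp\otimes b$ and using that $Q_R\otimes\operatorname{id}$ is defined on such, with the multiplier properties of Proposition~\ref{Q_Rproposition}(2) controlling the ``$s$'' and ``$b$'' slots. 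The main obstacle is precisely this last identification — making the reshuffling of legs $1,3$ versus legs $2$ rigorous without literally using a Sweedler expansion for $\Delta(a^*)$, which is only an element of a multiplier algebra; I expect one handles it by testing against functionals $\omega\in A^*$ on the first leg, reducing everything to the already-established one-variable statements about $Q_R$ via the identity $(\omega\otimes\operatorname{id}\otimes\operatorname{id})\Delta_{13}(a^*)=((\omega\otimes\operatorname{id})\Delta(a^*))\otimes 1$ suitably interpreted, and then invoking that the functionals separate points of $A$.
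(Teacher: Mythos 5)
There is a genuine gap, and it sits exactly at the point your proposal labels as the ``cleaner route''. The whole content of the proposition is to remove the $\varphi$-slice from the defining formula of $Q_R$: part~(2) is an identity between \emph{unsliced} three-leg elements, whereas $Q_R$ is only ever defined through a sliced expression. Your four-leg device does not achieve this. Writing formally $\Delta(a^*)=\sum a^*_{(1)}\otimes a^*_{(2)}$ and $E=\sum E^{(1)}\otimes E^{(2)}$, the element $(\operatorname{id}\otimes\operatorname{id}\otimes\operatorname{id}\otimes\varphi)\bigl(\Delta_{14}(a^*)\,E_{23}\,(1\otimes b\otimes1\otimes x)\bigr)$ equals $\sum a^*_{(1)}\varphi(a^*_{(2)}x)\otimes E^{(1)}b\otimes E^{(2)}=(p\otimes1\otimes1)(1\otimes E)(1\otimes b\otimes1)$, in which $E^{(2)}$ has been decoupled from $a^*_{(2)}x$; the right-hand side of~(2) is $\sum a^*_{(1)}\otimes E^{(1)}b\otimes E^{(2)}a^*_{(2)}x$, with no $\varphi$ appearing at all. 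So the claimed ``read off'' of both identifications fails: your four-leg expression only reproduces the definition of $Q_R(p\otimes b)$, not the statement to be proved. The fallback you sketch does not repair this: $Q_R$ acts on legs $1,2$, so testing against functionals on the \emph{first} leg cannot be commuted past $Q_R\otimes\operatorname{id}$ (also $(\omega\otimes\operatorname{id}\otimes\operatorname{id})\bigl(\Delta_{13}(a^*)\bigr)=1\otimes(\omega\otimes\operatorname{id})(\Delta(a^*))$, not $((\omega\otimes\operatorname{id})\Delta(a^*))\otimes1$), while slicing leg $3$ by a general $\omega\in A^*$ does not land in the core of $Q_R$, which is built from $\varphi$-slices. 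Likewise, the Sweedler regrouping in your first paragraph and the proposed exhibition of $\Delta_{13}(a^*)(1\otimes b\otimes x)$ as a limit of core elements $sp\otimes b$ are neither justified nor needed.

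The missing idea is the deslicing argument the paper uses: slice leg $3$ with $\varphi$ itself. By Proposition~\ref{Q_Rproposition}, $(\operatorname{id}\otimes\operatorname{id}\otimes\varphi)\bigl(\Delta_{13}(a^*)(1\otimes b\otimes x)\bigr)$ is exactly an element $p\otimes b$ of ${\mathcal D}(Q_R)$, which yields part~(1), and then
$(\operatorname{id}\otimes\operatorname{id}\otimes\varphi)\bigl((Q_R\otimes\operatorname{id})(\Delta_{13}(a^*)(1\otimes b\otimes x))\bigr)
=Q_R\bigl((\operatorname{id}\otimes\operatorname{id}\otimes\varphi)(\Delta_{13}(a^*)(1\otimes b\otimes x))\bigr)
=(\operatorname{id}\otimes\operatorname{id}\otimes\varphi)\bigl(\Delta_{13}(a^*)(1\otimes E)(1\otimes b\otimes x)\bigr)$
by the very definition~\eqref{(defQ_R)}. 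Since this holds for \emph{every} $x\in{\mathfrak N}_{\varphi}$ and $\varphi$ is faithful, the two unsliced elements must agree, which is part~(2). Your proposal never performs (or even identifies) this final step, and the route it proposes instead would not produce it.
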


\begin{proof}
By Proposition~\ref{Q_Rproposition}, we know that $(\operatorname{id}\otimes\operatorname{id}
\otimes\varphi)\bigl(\Delta_{13}(a^*)(1\otimes b\otimes x)\bigr)$ is contained in ${\mathcal D}(Q_R)$. 
But, this immediately means that $\Delta_{13}(a^*)(1\otimes b\otimes x)
\in{\mathcal D}(Q_R\otimes\operatorname{id})$ and that $(Q_R\otimes\operatorname{id})
\bigl(\Delta_{13}(a^*)(1\otimes b\otimes x)\bigr)\in{\mathfrak M}_{\operatorname{id}\otimes
\operatorname{id}\otimes\varphi}$.  We have 
\begin{align}
&(\operatorname{id}\otimes\operatorname{id}\otimes\varphi)
\bigl((Q_R\otimes\operatorname{id})(\Delta_{13}(a^*)(1\otimes b\otimes x))\bigr)
\notag \\
&=Q_R\bigl((\operatorname{id}\otimes\operatorname{id}\otimes\varphi)
(\Delta_{13}(a^*)(1\otimes b\otimes x))\bigr)  \notag \\
&=(\operatorname{id}\otimes\operatorname{id}\otimes\varphi)
\bigl(\Delta_{13}(a^*)(1\otimes E)(1\otimes b\otimes x)\bigr).
\notag
\end{align}
This is true for any $x\in{\mathfrak N}_{\varphi}$, while $\varphi$ is a faithful weight. 
It follows that 
$$
(Q_R\otimes\operatorname{id})\bigl(\Delta_{13}(a^*)(1\otimes b\otimes x)\bigr)
=\Delta_{13}(a^*)(1\otimes E)(1\otimes b\otimes x).
$$
\end{proof}

We may modify Equation~\eqref{(defQ_R)} a little to define three other densely-defined 
idempotent maps $Q_{\rho}$, $Q_L$, $Q_{\lambda}$, which satisfy results analogous to 
Propositions~\ref{Q_Rproposition}, \ref{Q_Ridempotent}, \ref{Q_Rcharacterization} above.  
Since the method is essentially no different, we will skip the details and collect the results 
in three propositions below:

\begin{prop}\label{Q_R'}
\begin{enumerate}
  \item Let $p=(\operatorname{id}\otimes\varphi)\bigl((1\otimes x^*)(\Delta a)\bigr)$, 
for $a,x\in{\mathfrak N}_{\varphi}$, and let $b\in A$, $s\in M(A)$.  Define:
$$
Q_{\rho}(ps\otimes b):=(\operatorname{id}\otimes\operatorname{id}\otimes\varphi)
\bigl((1\otimes b\otimes x^*)(1\otimes E)\Delta_{13}(a)\bigr)\,(s\otimes1).
$$
This determines a densely-defined map on $A\otimes A$ into itself.
  \item The following properties hold:

$Q_{\rho}(rs\otimes b)=Q_{\rho}(r\otimes b)\,(s\otimes 1)$, for $s\in M(A)$ and 
$r\otimes b\in{\mathcal D}(Q_{\rho})$.

$Q_{\rho}(r\otimes db)=(1\otimes d)\,Q_{\rho}(r\otimes b)$, for $d\in M(A)$ and 
$r\otimes b\in{\mathcal D}(Q_{\rho})$.
  \item $Q_{\rho}$ is an idempotent map.  That is, $Q_{\rho}Q_{\rho}=Q_{\rho}$.
  \item If $a,x\in{\mathfrak N}_{\varphi}$ and $b\in A$, then $(1\otimes b\otimes x^*)
\Delta_{13}(a)\in{\mathcal D}(Q_{\rho}\otimes\operatorname{id})$, and we have:
$$
(Q_{\rho}\otimes\operatorname{id})\bigl((1\otimes b\otimes x^*)\Delta_{13}(a)\bigr)
=(1\otimes b\otimes x^*)(1\otimes E)\Delta_{13}(a).
$$
\end{enumerate}
\end{prop}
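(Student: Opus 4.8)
The plan is to mirror, essentially verbatim, the development already carried out for $Q_R$ in Propositions~\ref{Q_Rproposition}, \ref{Q_Ridempotent}, and \ref{Q_Rcharacterization}, since $Q_{\rho}$ is obtained from $Q_R$ by replacing $\Delta(a^*)(1\otimes x)$ with its ``mirror'' $(1\otimes x^*)(\Delta a)$ and moving the slot variable $b$ and the multiplier $s$ to the opposite side. First I would establish density: by the second density statement in Lemma~\ref{densesubsetA}, the elements $p=(\operatorname{id}\otimes\varphi)((1\otimes x^*)(\Delta a))$ for $a,x\in{\mathfrak N}_{\varphi}$ span a norm-dense subspace of $A$, so the elements $ps\otimes b$ span a dense subspace of $A\otimes A$. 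To see that the formula for $Q_{\rho}$ does not depend on the chosen representation of its argument, I would run the analogue of the intermediate Lemma and its Remark: using $(\gamma_B\otimes\gamma_C)(E)=\sigma E$ (equivalently $E(b\otimes 1)=E(1\otimes\gamma_B(b))$ and the companion identities from Lemma~\ref{gammagamma'}), together with $E(\Delta a)=\Delta a$ from \eqref{(EDelta)}, one rewrites, for $b\in{\mathcal T}_{\nu}$,
$$
(1\otimes b\otimes x^*)(1\otimes E)\Delta_{13}(a)
=(1\otimes b)(\operatorname{id}\otimes\gamma_B\otimes\operatorname{id})\bigl((E\otimes1)(1\otimes\gamma_B^{-1}(b)^{-1}\cdots)\bigr)\cdots
$$
— more precisely one reduces $(\operatorname{id}\otimes\operatorname{id}\otimes\varphi)$ applied to this expression to $(1\otimes b)\,\widetilde{F}\,(p\otimes 1)$ for an appropriate (unbounded) leg $\widetilde{F}$ built from $(\operatorname{id}\otimes\gamma_B)(E)$, which manifestly vanishes when $ps\otimes b=0$. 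This gives well-definedness on $\{ps\otimes b : b\in{\mathcal T}_{\nu}\}$, and the extension to all $b\in A$ is by the density of ${\mathcal T}_{\nu}$ in $B$ and non-degeneracy of $B\subseteq M(A)$, exactly as in the proof of Proposition~\ref{Q_Rproposition}. This yields (1).

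For (2), the two module properties are immediate from the shape of the defining formula: the factor $(s\otimes1)$ sits on the outside right, so $Q_{\rho}(rs\otimes b)=Q_{\rho}(r\otimes b)(s\otimes1)$; and writing the defining formula as
$$
Q_{\rho}(p\otimes b)=(1\otimes b)\,(\operatorname{id}\otimes\operatorname{id}\otimes\varphi)\bigl((1\otimes1\otimes x^*)(1\otimes E)\Delta_{13}(a)\bigr)
$$
shows the left slot of the left-hand leg is free, giving $Q_{\rho}(r\otimes db)=(1\otimes d)Q_{\rho}(r\otimes b)$. For (3), idempotency follows as in Proposition~\ref{Q_Ridempotent} by inserting a second copy of $E$ via $E^2=E$: one has $(1\otimes E)=(1\otimes E)(1\otimes E)$, so applying $Q_{\rho}$ to $Q_{\rho}(p\otimes b)$ reproduces $Q_{\rho}(p\otimes b)$. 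For (4), the core statement, one argues as in Proposition~\ref{Q_Rcharacterization}: by (1), $(\operatorname{id}\otimes\operatorname{id}\otimes\varphi)((1\otimes b\otimes x^*)\Delta_{13}(a))\in{\mathcal D}(Q_{\rho})$, hence $(1\otimes b\otimes x^*)\Delta_{13}(a)\in{\mathcal D}(Q_{\rho}\otimes\operatorname{id})$; then one applies $(\operatorname{id}\otimes\operatorname{id}\otimes\varphi)$ to both sides of the claimed identity, uses the fact that $Q_{\rho}$ commutes with the slice $(\operatorname{id}\otimes\operatorname{id}\otimes\varphi)$ on this domain (by definition of $Q_{\rho}\otimes\operatorname{id}$), and invokes faithfulness of $\varphi$ together with the arbitrariness of $x\in{\mathfrak N}_{\varphi}$ to cancel the slice.

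The main obstacle is the well-definedness in (1): one must be careful, since the ``leg'' $\widetilde{F}=(\operatorname{id}\otimes\gamma_B)(E)$ is unbounded (because $\gamma_B$ is), so the formal manipulation $Q_{\rho}(p\otimes b)=(1\otimes b)\widetilde{F}(p\otimes1)$ only literally makes sense after restricting to $b\in{\mathcal T}_{\nu}$, where $b\in{\mathcal D}(\gamma_B)\cap{\mathcal D}(\gamma_C^{-1})$ and the domain conditions needed to push $b$ through $E$ via Lemma~\ref{gammagamma'} are met. The key identities needed are the ``mirror'' versions of those used for $Q_R$, namely $(1\otimes c)E=(\gamma_C(c)\otimes1)E$ and $(b\otimes1)E=(1\otimes\gamma_C^{-1}(b))E$ from Lemma~\ref{gammagamma'}(1), together with $(\gamma_B\otimes\gamma_C)(E)=\sigma E$; once these are applied in the right order inside $\Delta_{13}(a)$ the argument closes just as before. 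Everything else — the module properties, idempotency, and the core characterization — is a routine transcription of the $Q_R$ case, since the only structural change is that all module actions and the free slot are reflected from the right to the left.
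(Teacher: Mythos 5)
Your overall strategy is the one the paper intends: the paper omits the proof of this proposition precisely because ``the method is essentially no different'' from the $Q_R$ case, and your transcription of the density argument (second subspace in Lemma~\ref{densesubsetA}), the module properties read off from the shape of the defining formula, idempotency via $E^2=E$, and the slicing-plus-faithfulness argument for part (4) is all sound.

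There is, however, a concrete slip at the one point you yourself single out as the main obstacle, the well-definedness in (1). The formal leg for $Q_{\rho}$ cannot be ``built from $(\operatorname{id}\otimes\gamma_B)(E)$'': the second leg of $E$ lies in $C$, while $\gamma_B$ is defined on a dense subset of $B$, so that expression is meaningless even formally. The correct leg is $(\operatorname{id}\otimes\gamma_B^{-1})(E)$, and the rigorous mirrored identity, for $b\in{\mathcal T}_{\nu}$, reads
$$
Q_{\rho}(p\otimes b)=(\operatorname{id}\otimes\gamma_B^{-1})\bigl(E(1\otimes\gamma_B(b))(p\otimes1)\bigr),
$$
whose derivation uses $E(b\otimes1)=E\bigl(1\otimes\gamma_B(b)\bigr)$ and part (2) of Lemma~\ref{gammagamma'} (to convert $\gamma_C(\,\cdot\,)^*$ into $\gamma_B^{-1}(\,\cdot\,^*)$), not just the two left-multiplication identities you list; your displayed intermediate formula is also garbled and does not constitute a usable step. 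The least painful repair avoids any new $\gamma$-computation: since $(1\otimes b\otimes x^*)(1\otimes E)\Delta_{13}(a)=\bigl[\Delta_{13}(a^*)(1\otimes E)(1\otimes b^*\otimes x)\bigr]^*$ and $p^*=(\operatorname{id}\otimes\varphi)\bigl(\Delta(a^*)(1\otimes x)\bigr)$ is exactly an element of the $Q_R$ type, one has $Q_{\rho}(p\otimes b)=Q_R(p^*\otimes b^*)^*$ (the relation recorded as Proposition~\ref{RR'LL'}\,(1)); with ${\mathcal T}_{\nu}$ being ${}^*$-invariant, well-definedness and then all four statements follow from Propositions~\ref{Q_Rproposition}, \ref{Q_Ridempotent}, \ref{Q_Rcharacterization} by taking adjoints. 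With that correction (or the adjoint route) your argument closes as claimed.
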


\begin{prop}\label{Q_L}
\begin{enumerate}
  \item Let $q=(\psi\otimes\operatorname{id})\bigl(\Delta(a^*)(y\otimes1)\bigr)$, 
for $a,y\in{\mathfrak N}_{\psi}$, and let $c\in A$, $s\in M(A)$.  Define:
$$
Q_L(c\otimes sq):=(1\otimes s)\,(\psi\otimes\operatorname{id}\otimes\operatorname{id})
\bigl(\Delta_{13}(a^*)(E\otimes1)(y\otimes c\otimes1)\bigr).
$$
This determines a densely-defined map on $A\otimes A$ into itself.
  \item The following properties hold:

$Q_L(c\otimes sr)=(1\otimes s)\,Q_L(c\otimes r)$, for $s\in M(A)$ and 
$c\otimes r\in{\mathcal D}(Q_L)$.

$Q_L(cd\otimes r)=Q_L(c\otimes r)\,(d\otimes1)$, for $d\in M(A)$ and 
$c\otimes r\in{\mathcal D}(Q_L)$.
  \item $Q_L$ is an idempotent map: $Q_LQ_L=Q_L$.
  \item If $a,y\in{\mathfrak N}_{\psi}$ and $c\in A$, then $\Delta_{13}(a^*)
(y\otimes c\otimes1)\in{\mathcal D}(\operatorname{id}\otimes Q_L)$, and we have:
$$
(\operatorname{id}\otimes Q_L)\bigl(\Delta_{13}(a^*)(y\otimes c\otimes1)\bigr)
=\Delta_{13}(a^*)(E\otimes1)(y\otimes c\otimes1).
$$
\end{enumerate}
\end{prop}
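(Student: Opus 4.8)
The plan is to mirror, step by step, the development carried out for $Q_R$ in Propositions~\ref{Q_Rproposition}, \ref{Q_Ridempotent}, \ref{Q_Rcharacterization}, since $Q_L$ is the ``right-invariant, left-leg'' analogue of $Q_R$. First I would verify that the defining expression makes sense: by the right invariance of $\psi$ we have $\Delta_{13}(a)\in{\mathfrak N}_{\psi\otimes\operatorname{id}\otimes\operatorname{id}}$, and with $y\in{\mathfrak N}_{\psi}$ the element $\Delta_{13}(a^*)(E\otimes1)(y\otimes c\otimes1)$ lies in ${\mathfrak M}_{\psi\otimes\operatorname{id}\otimes\operatorname{id}}$, so the slice $(\psi\otimes\operatorname{id}\otimes\operatorname{id})$ of it is a well-defined element of $A\otimes A$. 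To see that $c\otimes sq\mapsto Q_L(c\otimes sq)$ is a well-defined \emph{linear} map, I would run the $\gamma_B$/$\gamma_C$ argument exactly as in the unnumbered Lemma and the computation following it, but applied to the \emph{left} leg: the relevant identity is $(E\otimes1)(1\otimes E)(1\otimes b\otimes1)$ rewritten using $(b\otimes1)E=(1\otimes\gamma_C^{-1}(b))E$ together with $E(b\otimes1)=E(1\otim1\otimes\gamma_B(b))$ for $b\in{\mathcal T}_{\nu}$, leading to a formal expression $Q_L(c\otimes q)=(1\otimes c)F'(q\otimes1)$ for an (unbounded) analogue $F'$ of $F_1$ built from $(\gamma_B\otimes\operatorname{id})(E)$ or $(\gamma_C\otimes\operatorname{id})(E)$ as appropriate. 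From this closed form it is immediate that $c\otimes q=0$ implies $Q_L(c\otimes q)=0$, hence well-definedness on the dense span; the approximation of a general $c\in A$ by elements $db$ with $b\in{\mathcal T}_{\nu}$, $d\in A$ (using non-degeneracy of $B\subset M(A)$) then extends it to all of the claimed core, just as in Proposition~\ref{Q_Rproposition}.

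For part~(2), the two covariance properties fall straight out of the defining formula: the $(1\otimes s)$ factor is written on the outside left, so $Q_L(c\otimes sr)=(1\otimes s)Q_L(c\otimes r)$ is visible, and $Q_L(cd\otimes r)=Q_L(c\otimes r)(d\otimes1)$ follows by pulling the $d$ in the middle leg of $(y\otimes c\otimes1)$ out past the slice on the first and third legs --- exactly the observation that lets one rewrite the defining equation with $(c\otimes1)$ factored to the right, as was done for $Q_R$. For part~(3), idempotency, I would copy the proof of Proposition~\ref{Q_Ridempotent} verbatim: applying $Q_L$ again inserts a second $(E\otimes1)$ factor, and $E^2=E$ collapses it, giving $Q_LQ_L=Q_L$. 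For part~(4), I would argue as in Proposition~\ref{Q_Rcharacterization}: since $(\psi\otimes\operatorname{id}\otimes\operatorname{id})$ of $\Delta_{13}(a^*)(y\otimes c\otimes1)$ lies in ${\mathcal D}(Q_L)$ (by the core description in~(1)), we get $\Delta_{13}(a^*)(y\otimes c\otimes1)\in{\mathcal D}(\operatorname{id}\otimes Q_L)$; then slicing the identity $(\operatorname{id}\otimes Q_L)$-applied against $\psi$ on the first leg, using the intertwining of $Q_L$ with the slice $(\psi\otimes\operatorname{id}\otimes\operatorname{id})$ and the faithfulness of $\psi$ with $y$ ranging over the dense set ${\mathfrak N}_{\psi}$, yields the stated formula.

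The main obstacle, as in the $Q_R$ case, is establishing well-definedness in part~(1): one must show that the ``obvious'' formula does not depend on the particular way $c\otimes sq$ is written as a sum of elementary tensors, and the only route I see is the rewriting via the (unbounded) $\gamma_B$, $\gamma_C$ maps, which requires care about domains --- one must restrict to $c\in{\mathcal T}_{\nu}$ (so that $c\in{\mathcal D}(\gamma_B)\cap{\mathcal D}(\gamma_C^{-1})$) to carry out the manipulation, and only afterward extend by density using Proposition~\ref{Q_L}(2). Everything else is a mechanical transcription of the $Q_R$ arguments with the roles of the two legs and of $(\varphi,\gamma_C)$ versus $(\psi,\gamma_B)$ swapped, so I would present only the well-definedness step in any detail and state that the remaining three parts follow exactly as for $Q_R$. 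An entirely parallel discussion handles $Q_{\lambda}$, defined from $q=(\psi\otimes\operatorname{id})((y^*\otimes1)(\Delta a))$, which accounts for the fourth map promised in the text.
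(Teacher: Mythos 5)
Your proposal is correct and follows essentially the same route as the paper, which itself justifies Proposition~\ref{Q_L} only by noting that the method is identical to the $Q_R$ development (Propositions~\ref{Q_Rproposition}, \ref{Q_Ridempotent}, \ref{Q_Rcharacterization}): well-definedness via the $\gamma_B$, $\gamma_C$ manipulation with the free element restricted to ${\mathcal T}_{\nu}$ and then extended by non-degeneracy of $B$, idempotency from $E^2=E$, and the characterization in (4) from faithfulness of $\psi$. The only blemish is cosmetic: your heuristic closed form should read $(1\otimes q)F_4(c\otimes1)$ (as in the paper's Remark) rather than $(1\otimes c)F'(q\otimes1)$, but this plays no role in the actual argument.
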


\begin{prop}\label{Q_L'}
\begin{enumerate}
  \item Let $q=(\psi\otimes\operatorname{id})\bigl((y^*\otimes1)(\Delta a)\bigr)$, 
for $a,y\in{\mathfrak N}_{\psi}$, and let $c\in A$, $s\in M(A)$.  Define:
$$
Q_{\lambda}(c\otimes qs):=(\psi\otimes\operatorname{id}\otimes\operatorname{id})
\bigl((y^*\otimes c\otimes1)(E\otimes1)\Delta_{13}(a)\bigr)\,(1\otimes s).
$$
This determines a densely-defined map on $A\otimes A$ into itself.
  \item The following properties hold:

$Q_{\lambda}(c\otimes rs)=Q_{\lambda}(c\otimes r)\,(1\otimes s)$, for $s\in M(A)$ and 
$c\otimes r\in{\mathcal D}(Q_{\lambda})$.

$Q_{\lambda}(dc\otimes r)=(d\otimes1)\,Q_{\lambda}(c\otimes r)$, for $d\in M(A)$ and 
$c\otimes r\in{\mathcal D}(Q_{\lambda})$.
  \item $Q_{\lambda}$ is an idempotent map: $Q_{\lambda}Q_{\lambda}=Q_{\lambda}$.
  \item If $a,y\in{\mathfrak N}_{\psi}$ and $c\in A$, then $(y^*\otimes c\otimes1)
\Delta_{13}(a)\in{\mathcal D}(\operatorname{id}\otimes Q_{\lambda})$, and we have:
$$
(\operatorname{id}\otimes Q_{\lambda})\bigl((y^*\otimes c\otimes1)\Delta_{13}(a)\bigr)
=(y^*\otimes c\otimes1)(E\otimes1)\Delta_{13}(a).
$$
\end{enumerate}
\end{prop}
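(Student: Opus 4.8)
The plan is to mirror, essentially verbatim, the chain of arguments already carried out for $Q_R$ in Propositions~\ref{Q_Rproposition}, \ref{Q_Ridempotent}, and \ref{Q_Rcharacterization}, applying the analogous manipulations on the left leg of $A\otimes A$ using the right invariance of $\psi$ rather than the left invariance of $\varphi$. The starting point is Lemma~\ref{densesubsetA}, which tells us that elements $q=(\psi\otimes\operatorname{id})\bigl((y^*\otimes1)(\Delta a)\bigr)$, for $a,y\in{\mathfrak N}_{\psi}$, span a dense subspace of $A$; since $\psi$ is right invariant, $\Delta a\in\overline{\mathfrak N}_{\psi\otimes\operatorname{id}}$, so $(y^*\otimes c\otimes1)\Delta_{13}(a)\in{\mathfrak M}_{\psi\otimes\operatorname{id}\otimes\operatorname{id}}$ for $c\in A$ and the defining expression for $Q_{\lambda}(c\otimes q)$ makes sense. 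To motivate the formula and show well-definedness, I would first restrict to $c\in{\mathcal T}_{\nu}$ (or, more precisely, to the appropriate Tomita-type subalgebra so that $c$ lies in the relevant domains of the $\gamma_B,\gamma_C$ maps) and establish an identity of the shape $(E\otimes1)(1\otimes E_{13})=(F\otimes1)(1\otimes E_{13})$ — the mirror image of the Lemma preceding Remark and of the $E_{13}(1\otimes E)=E_{13}(F_1\otimes1)$ formula — using the relations in Lemma~\ref{gammagamma'} together with $(\gamma_B\otimes\gamma_C)(E)=\sigma E$ and the intertwining $(c\otimes1)E=\bigl(1\otimes\gamma_C^{-1}(c)\bigr)E$, $E(1\otimes c)=E\bigl(\gamma_B^{-1}(c)\otimes1\bigr)$, and their mirrors. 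Feeding this identity into the definition of $Q_{\lambda}(c\otimes q)$ and using $(\Delta a)E=\Delta a$ from Equation~\eqref{(EDelta)}, I would rewrite $Q_{\lambda}(c\otimes q)$ in a ``closed form'' of the type $(1\otimes c)\,G\,q$ for a suitable (unbounded) operator $G$ built from $(\gamma_B\otimes\operatorname{id})(E)$, from which it is immediate that $c\otimes q=0$ forces $Q_{\lambda}(c\otimes q)=0$, giving a well-defined linear map on the span of such elements with $c\in{\mathcal T}_{\nu}$.

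Once that is in hand, part (2) is formal: the factorization of the defining expression shows that the $s$-factor can be pulled out on the right of the second tensor leg and a left multiplier $d$ on the first leg can be pulled out on the left, exactly as in the proof of Proposition~\ref{Q_Rproposition}(2). For the extension of the domain, I would use that ${\mathcal T}_{\nu}$ is dense in $B$ and $B$ is non-degenerate in $M(A)$, so elements $dc$ with $c\in{\mathcal T}_{\nu}$, $d\in A$ approximate arbitrary elements of $A$; combined with the multiplier covariance just established, this lets $Q_{\lambda}$ be defined on $\operatorname{span}\{c\otimes qs:c\in A\}$, a dense subset of $A\otimes A$, which serves as a core. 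Part (3), the idempotency $Q_{\lambda}Q_{\lambda}=Q_{\lambda}$, is the exact analogue of Proposition~\ref{Q_Ridempotent}: applying $Q_{\lambda}$ a second time inserts an extra factor $(E\otimes1)$ next to the one already present, and $E^2=E$ collapses it. Part (4) follows the template of Proposition~\ref{Q_Rcharacterization}: from part (1) we know $(\psi\otimes\operatorname{id}\otimes\operatorname{id})\bigl((y^*\otimes c\otimes1)\Delta_{13}(a)\bigr)\in{\mathcal D}(Q_{\lambda})$, so $(y^*\otimes c\otimes1)\Delta_{13}(a)\in{\mathcal D}(\operatorname{id}\otimes Q_{\lambda})$; one then computes $(\operatorname{id}\otimes Q_{\lambda})$ of this element, slices with $(\psi\otimes\operatorname{id}\otimes\operatorname{id})$, recognizes the result as the definition of $Q_{\lambda}$ applied to the sliced element, which by (1) equals $(\psi\otimes\operatorname{id}\otimes\operatorname{id})\bigl((y^*\otimes c\otimes1)(E\otimes1)\Delta_{13}(a)\bigr)$, and finally uses faithfulness of $\psi$ (with $y$ ranging over ${\mathfrak N}_{\psi}$) to strip the slice.

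The one genuinely delicate point — the ``main obstacle'' — is the same one flagged in the Remark after the Lemma preceding it: the operator $G$ (or $F_1$) built out of $(\gamma_B\otimes\operatorname{id})(E)$ is \emph{unbounded}, because $\gamma_B$ and $\gamma_C$ are unbounded maps, so the formal identity $(E\otimes1)(1\otimes E_{13})=(F\otimes1)(1\otimes E_{13})$ cannot simply be read off as an equality of bounded multipliers and then multiplied freely by $(1\otimes c\otimes1)$. One must keep $c$ (equivalently $\gamma_C^{-1}(c)$, or the appropriate $\gamma_B$-image) sitting inside the expression where the domain conditions $c\in{\mathcal D}(\gamma_B)\cap{\mathcal D}(\gamma_C^{-1})$ are available, carry out all the rewriting at that level with $c$ present, and only afterwards appeal to density of ${\mathcal T}_{\nu}$ and non-degeneracy of $B$ to push the well-definedness and the covariance relations out to all $c\in A$. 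Since the authors have already executed exactly this maneuver for $Q_R$ (and asserted it transfers to $Q_{\rho}$ and $Q_L$), the honest statement is that the proof of Proposition~\ref{Q_L'} is obtained from that of Propositions~\ref{Q_Rproposition}--\ref{Q_Rcharacterization} by interchanging the roles of the two legs, replacing $\varphi$ by $\psi$, $(\operatorname{id}\otimes\varphi)$-slices by $(\psi\otimes\operatorname{id})$-slices, and the relation $E_{13}(1\otimes E)=E_{13}(F_1\otimes1)$ by its left-leg mirror image; no new idea is required beyond careful bookkeeping of which $\gamma$-domain each auxiliary element must lie in.
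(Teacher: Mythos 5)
Your proposal is correct and matches the paper's intent exactly: the paper gives no separate proof of Proposition~\ref{Q_L'}, stating only that the method is ``essentially no different'' from the $Q_R$ case, and your argument is precisely that mirrored construction (right invariance of $\psi$ in place of left invariance of $\varphi$, the left-leg analogue of the $E_{13}(1\otimes E)=E_{13}(F_1\otimes1)$ identity via Lemma~\ref{gammagamma'}, density of the Tomita-type subalgebra plus non-degeneracy to extend the domain, $E^2=E$ for idempotency, and faithfulness of $\psi$ for part (4)). Your caution about keeping the auxiliary element inside the correct $\gamma$-domains before extending by density is exactly the delicate point the paper handles for $Q_R$, so no gap remains.
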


These maps are very closely related, as one can imagine.  The following result will be 
useful later.

\begin{prop}\label{RR'LL'}
\begin{enumerate}
 \item Let $p=(\operatorname{id}\otimes\varphi)\bigl((1\otimes x^*)(\Delta a)\bigr)$, 
for $a,x\in{\mathfrak N}_{\varphi}$, and let $b\in A$.  Then we have:
$Q_{\rho}(p\otimes b)=Q_R(p^*\otimes b^*)^*$.
 \item Let $q=(\psi\otimes\operatorname{id})\bigl((y^*\otimes1)(\Delta a)\bigr)$, 
for $a,y\in{\mathfrak N}_{\psi}$, and let $c\in A$.  Then we have:
$Q_{\lambda}(c\otimes q)=Q_L(c^*\otimes q^*)^*$.
\end{enumerate}
\end{prop}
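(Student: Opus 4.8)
The plan is to verify the two identities by computing both sides explicitly using the defining formulas \eqref{(defQ_R)} and the formula in Proposition~\ref{Q_R'}(1), and then matching them via the elementary observation that $(\operatorname{id}\otimes\operatorname{id}\otimes\varphi)(T)^* = (\operatorname{id}\otimes\operatorname{id}\otimes\varphi)(T^*)$ whenever $T$ and $T^*$ both lie in $\mathfrak{M}_{\operatorname{id}\otimes\operatorname{id}\otimes\varphi}$, together with the self-adjointness $E^* = E$. I will do part (1) in detail; part (2) is entirely parallel, using $\psi$ in place of $\varphi$ and the formulas of Propositions~\ref{Q_L}, \ref{Q_L'}.

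For part~(1): fix $a,x\in\mathfrak{N}_\varphi$ and $b\in A$, and set $p=(\operatorname{id}\otimes\varphi)\bigl((1\otimes x^*)(\Delta a)\bigr)$. First I would observe that $p^* = (\operatorname{id}\otimes\varphi)\bigl((\Delta a^*)(1\otimes x)\bigr) = (\operatorname{id}\otimes\varphi)\bigl(\Delta(a^*)(1\otimes x)\bigr)$, which is precisely an element of the type used to define $Q_R$ in \eqref{(defQ_R)}, with the roles of $a$ and $x$ unchanged (both still in $\mathfrak{N}_\varphi$). So $Q_R(p^*\otimes b^*)$ makes sense, and by \eqref{(defQ_R)} it equals $(\operatorname{id}\otimes\operatorname{id}\otimes\varphi)\bigl(\Delta_{13}(a^*)(1\otimes E)(1\otimes b^*\otimes x)\bigr)$. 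Taking adjoints inside the slice — which is legitimate because $\varphi$ is a KMS weight and both the element and its adjoint lie in the relevant $\mathfrak{M}$-space, a point already used implicitly in Proposition~\ref{RR'LL'}'s predecessors and in Lemma~\ref{densesubsetA} — gives
\[
Q_R(p^*\otimes b^*)^* = (\operatorname{id}\otimes\operatorname{id}\otimes\varphi)\bigl((1\otimes b\otimes x^*)(1\otimes E)\,\Delta_{13}(a)\bigr),
\]
where I used $E^*=E$ and that $\bigl(\Delta_{13}(a^*)\bigr)^* = \Delta_{13}(a)$. The right-hand side is, by Proposition~\ref{Q_R'}(1) with $s=1$, exactly $Q_{\rho}(p\otimes b)$. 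This proves (1).

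The one point requiring genuine care — and the place I expect the argument to need a careful word rather than a one-line citation — is the justification that the adjoint commutes with the operator-valued slice $(\operatorname{id}\otimes\operatorname{id}\otimes\varphi)$ on the elements at hand: one must check that $\Delta_{13}(a^*)(1\otimes E)(1\otimes b^*\otimes x)$ lies in $\mathfrak{M}_{\operatorname{id}\otimes\operatorname{id}\otimes\varphi}$ and so does its adjoint. For the first, left invariance of $\varphi$ gives $\Delta_{13}(a^*)\in\overline{\mathfrak{N}}_{\operatorname{id}\otimes\operatorname{id}\otimes\varphi}$ and $(1\otimes1\otimes x)\in\mathfrak{N}_{\operatorname{id}\otimes\operatorname{id}\otimes\varphi}$ since $x\in\mathfrak{N}_\varphi$; the bounded factors $(1\otimes E)(1\otimes b^*\otimes 1)$ cause no trouble, so the product is in $\mathfrak{M}_{\operatorname{id}\otimes\operatorname{id}\otimes\varphi}$. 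For the adjoint, $(1\otimes x^*\otimes1)\cdots$ — wait, rather: the adjoint is $(1\otimes b\otimes x^*)(1\otimes E)\Delta_{13}(a)$, where $\Delta_{13}(a)\in\overline{\mathfrak{N}}_{\operatorname{id}\otimes\operatorname{id}\otimes\varphi}$ again by left invariance and $(1\otimes1\otimes x^*)$ pairs against it from the left; this is the standard ``$\mathfrak{N}^*\,\overline{\mathfrak{N}}\subseteq\mathfrak{M}$'' closedness for operator-valued weights, already invoked in Lemma~\ref{densesubsetA}. Granting this, the computation above goes through verbatim, and part~(2) follows by the identical manipulation with $\psi$, $\mathfrak{N}_\psi$, and the formulas defining $Q_L$, $Q_\lambda$ replacing their $\varphi$-counterparts.
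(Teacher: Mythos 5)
Your proof is correct and is exactly the ``straightforward'' computation the paper has in mind: note $p^*=(\operatorname{id}\otimes\varphi)\bigl(\Delta(a^*)(1\otimes x)\bigr)$ is of the form defining $Q_R$, apply Equation~\eqref{(defQ_R)}, take adjoints through the slice using $E^*=E$, and recognize the result as the defining formula for $Q_{\rho}$ (and likewise with $\psi$, $Q_L$, $Q_{\lambda}$ for part (2)). Your added care about the adjoint commuting with $(\operatorname{id}\otimes\operatorname{id}\otimes\varphi)$ on the relevant $\overline{\mathfrak N}^*\,\overline{\mathfrak N}$-type elements is the only nontrivial point, and you justify it appropriately.
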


\begin{proof}
Straightforward.
\end{proof}

\begin{rem}
(1). A different way to look at all these results is that the case of $Q_{\rho}$ is 
when we instead work with the opposite $C^*$-algebra $A^{\operatorname{op}}$; 
$Q_L$ is when we work with $(A,\Delta^{\operatorname{cop}})$ so that the 
roles of the left/right Haar weights are reversed; $Q_{\lambda}$ is when we work 
with $(A^{\operatorname{op}},\Delta^{\operatorname{cop}})$.

(2). The four maps $Q_R$, $Q_{\rho}$, $Q_L$, $Q_{\lambda}$ are only densely-defined 
on $A\otimes A$.  For those readers who are familiar with the theory of weak multiplier 
Hopf algebras \cite{VDWangwha0}, \cite{VDWangwha1}, we pointed out earlier that 
the $Q_R$ map plays the role of the map $p\otimes b\mapsto (p\otimes1)F_1(1\otimes b)$. 
Similarly, the $Q_{\rho}$ map corresponds to $p\otimes b\mapsto (1\otimes b)F_3(p\otimes1)$; 
the $Q_L$ map corresponds to $c\otimes q\mapsto (1\otimes q)F_4(c\otimes1)$; and 
the $Q_{\lambda}$ map  corresponds to $c\otimes q\mapsto (c\otimes1)F_2(1\otimes q)$. 
However, in our case, these expressions are not very useful and we will not mention them 
except in formal settings, because $F_1$, $F_2$, $F_3$, $F_4$ are unbounded elements.
\end{rem}

\subsection{Left/Right invariance of $\varphi$, $\psi$ and the maps 
$Q_R$, $Q_{\rho}$, $Q_L$, $Q_{\lambda}$}

The four maps $Q_R$, $Q_{\rho}$, $Q_L$, $Q_{\lambda}$ satisfy some 
very useful properties, as a consequence of the invariance properties of the Haar weights 
$\varphi$ and $\psi$.

Let $a\in{\mathfrak M}_{\varphi}$.  Then by the left invariance of $\varphi$, 
we know $\Delta a\in\overline{\mathfrak M}_{\operatorname{id}\otimes\varphi}$ and 
$(\operatorname{id}\otimes\varphi)(\Delta a)\in M(C)$.  Apply $\Delta$ here. 
By Equation~\eqref{(DeltaonC)}, we have: 
$$
\Delta\bigl((\operatorname{id}\otimes\varphi)(\Delta a)\bigr)
=E\bigl((\operatorname{id}\otimes\varphi)(\Delta a)\otimes1\bigr)
=(\operatorname{id}\otimes\operatorname{id}\otimes\varphi)\bigl((E\otimes1)
\Delta_{13}(a)\bigr).
$$
On the other hand, by the coassociativity of $\Delta$, we also have:
$$
\Delta\bigl((\operatorname{id}\otimes\varphi)(\Delta a)\bigr)
=(\operatorname{id}\otimes\operatorname{id}\otimes\varphi)
\bigl((\Delta\otimes\operatorname{id})(\Delta a)\bigr)
=(\operatorname{id}\otimes\operatorname{id}\otimes\varphi)
\bigl((\operatorname{id}\otimes\Delta)(\Delta a)\bigr).
$$
Combining the two results, we see that 
$$
(\operatorname{id}\otimes\operatorname{id}\otimes\varphi)
\bigl((\operatorname{id}\otimes\Delta)(\Delta a)\bigr)
=(\operatorname{id}\otimes\operatorname{id}\otimes\varphi)\bigl((E\otimes1)
\Delta_{13}(a)\bigr),\quad\forall a\in{\mathfrak M}_{\varphi}.
$$
Let $c,y\in A$ and multiply $y\otimes c$ to the equation above, from left.  Then, 
the two sides become: 
\begin{align}
&{\text {(LHS)}}\,=\,(\operatorname{id}\otimes\operatorname{id}\otimes\varphi)
\bigl((1\otimes c\otimes1)(\operatorname{id}\otimes\Delta)((y\otimes1)(\Delta a))\bigr),
\notag \\
&{\text {(RHS)}}\,=\,(\operatorname{id}\otimes\operatorname{id}\otimes\varphi)
\bigl((y\otimes c\otimes1)(E\otimes1)\Delta_{13}(a)\bigr).
\notag
\end{align}

Now, let $\theta\in A^*$ be arbitrary and apply $\theta\otimes\operatorname{id}$ to 
both sides.  Then 
\begin{align}\label{(varphiQ_L')}
&(\operatorname{id}\otimes\varphi)\bigl((c\otimes1)\Delta((\theta\otimes\operatorname{id})
[(y\otimes1)(\Delta a)])\bigr)   \notag \\ 
&=(\operatorname{id}\otimes\varphi)\bigl((\theta\otimes\operatorname{id}\otimes\operatorname{id})
[(y\otimes c\otimes1)(E\otimes1)\Delta_{13}(a)]\bigr).
\end{align}
In the right hand side of Equation~\eqref{(varphiQ_L')}, recall the characterization 
of the $Q_{\lambda}$ map given in Proposition~\ref{Q_L'}\,(4).  Assuming the choice of $a,c,y$ are 
such that $(y\otimes c\otimes1)\Delta_{13}(a)\in{\mathcal D}(\operatorname{id}\otimes Q_{\lambda})$, 
this means that the right side of Equation~\eqref{(varphiQ_L')} is same as 
$$
=(\operatorname{id}\otimes\varphi)\bigl((\theta\otimes Q_{\lambda})
[(y\otimes c\otimes1)\Delta_{13}(a)]\bigr).
$$
So Equation~\eqref{(varphiQ_L')} now becomes:
\begin{align}
&(\operatorname{id}\otimes\varphi)\bigl((c\otimes1)\Delta((\theta\otimes\operatorname{id})
[(y\otimes1)(\Delta a)])\bigr)   \notag \\ 
&=(\operatorname{id}\otimes\varphi)\bigl(Q_{\lambda}(c\otimes(\theta\otimes\operatorname{id})
[(y\otimes1)(\Delta a)])\bigr).
\notag
\end{align}
For convenience, write $q:=(\theta\otimes\operatorname{id})[(y\otimes1)(\Delta a)]$, an 
element in $A$.  We can then write the result above as:
\begin{equation}\label{(Q_L'leftinvariance)}
(\operatorname{id}\otimes\varphi)\bigl((c\otimes1)(\Delta q)\bigr)
=(\operatorname{id}\otimes\varphi)\bigl(Q_{\lambda}(c\otimes q)\bigr).
\end{equation}
The elements of the form $q=(\theta\otimes\operatorname{id})[(y\otimes1)(\Delta a)]$ span 
a dense subset in $A$ (``$\Delta$ is full'').  So Equation~\eqref{(Q_L'leftinvariance)} 
will remain valid as long as the expression makes sense.  This would happen when 
$(c\otimes1)(\Delta q)\in{\mathfrak M}_{\operatorname{id}\otimes\varphi}$; $c\otimes q
\in{\mathcal D}(Q_{\lambda})$; and $Q_{\lambda}(c\otimes q)\in{\mathfrak M}_{\operatorname{id}
\otimes\varphi}$. See Proposition~\ref{Q_L'leftinvariance} below:

\begin{prop}\label{Q_L'leftinvariance}
As a consequence of the left invariance of $\varphi$, we have:
\begin{enumerate}
  \item Let $c,q\in A$ be such that $(c\otimes1)(\Delta q)\in{\mathfrak M}_{\operatorname{id}
\otimes\varphi}$; $c\otimes q\in{\mathcal D}(Q_{\lambda})$; and $Q_{\lambda}(c\otimes q)
\in{\mathfrak M}_{\operatorname{id}\otimes\varphi}$.  Then Equation~\eqref{(Q_L'leftinvariance)} 
holds true.  Namely,
$$
(\operatorname{id}\otimes\varphi)\bigl((c\otimes1)(\Delta q)\bigr)
=(\operatorname{id}\otimes\varphi)\bigl(Q_{\lambda}(c\otimes q)\bigr).
$$
  \item In particular, let $p=(\psi\otimes\operatorname{id})\bigl((x^*\otimes1)
\Delta(r^*w)\bigr)$, for $x,w\in{\mathfrak N}_{\psi}$, $r\in{\mathfrak N}_{\varphi}$, 
and let $s\in{\mathfrak N}_{\varphi}$.  Let $c\in A$ be arbitrary. 
Then we have:
\begin{equation}\label{(Q_L'leftinvariancenew)}
(\operatorname{id}\otimes\varphi)\bigl((c\otimes1)\Delta(ps)\bigr)
=(\operatorname{id}\otimes\varphi)\bigl(Q_{\lambda}(c\otimes ps)\bigr).
\end{equation}
\end{enumerate}
\end{prop}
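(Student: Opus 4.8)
The plan is to derive both parts from the general invariance identity \eqref{(Q_L'leftinvariance)} that was obtained before the statement, by checking that the specific elements satisfy the three hypotheses of part (1). For part (1), the identity \eqref{(Q_L'leftinvariance)} was established by multiplying $y\otimes c$ from the left onto the identity $(\operatorname{id}^{\otimes2}\otimes\varphi)\bigl((\operatorname{id}\otimes\Delta)(\Delta a)\bigr)=(\operatorname{id}^{\otimes2}\otimes\varphi)\bigl((E\otimes1)\Delta_{13}(a)\bigr)$ for $a\in\mathfrak M_\varphi$, then applying $\theta\otimes\operatorname{id}$ and writing $q=(\theta\otimes\operatorname{id})[(y\otimes1)(\Delta a)]$. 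So the content of part (1) is essentially: once the three membership conditions $(c\otimes1)(\Delta q)\in\mathfrak M_{\operatorname{id}\otimes\varphi}$, $c\otimes q\in\mathcal D(Q_\lambda)$, and $Q_\lambda(c\otimes q)\in\mathfrak M_{\operatorname{id}\otimes\varphi}$ hold, a density/closedness argument extends the identity from the spanning elements $q=(\theta\otimes\operatorname{id})[(y\otimes1)(\Delta a)]$ (dense in $A$ by fullness of $\Delta$) to all such $q$. First I would record that $q\mapsto(\operatorname{id}\otimes\varphi)((c\otimes1)(\Delta q))$ and $q\mapsto(\operatorname{id}\otimes\varphi)(Q_\lambda(c\otimes q))$ are both ``closed'' in the appropriate sense (lower semicontinuity of $\varphi$ together with the closedness of $Q_\lambda$ from Proposition~\ref{Q_L'}), so that both sides depend continuously on $q$ along the relevant approximating net; the hypotheses of (1) are exactly what is needed to make both expressions meaningful, and then \eqref{(Q_L'leftinvariance)} follows.

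For part (2), the plan is to specialize $c$ arbitrary and $q=ps$ with $p=(\psi\otimes\operatorname{id})\bigl((x^*\otimes1)\Delta(r^*w)\bigr)$ for $x,w\in\mathfrak N_\psi$, $r\in\mathfrak N_\varphi$, and $s\in\mathfrak N_\varphi$, and then verify the three hypotheses of (1) for this choice. The key observations are: first, $p$ is of the form $(\psi\otimes\operatorname{id})\bigl((y^*\otimes1)(\Delta a)\bigr)$ with $y=x\in\mathfrak N_\psi$ and $a=r^*w$ (noting $r^*w\in\mathfrak M_\psi$ since $r,w$ — wait, more carefully, one needs $a\in\mathfrak N_\psi$ or at least that the relevant expression makes sense; here $a=r^*w$ with $r\in\mathfrak N_\varphi$ is chosen precisely so that $ps=(\psi\otimes\operatorname{id})\bigl((x^*\otimes1)\Delta(r^*w)\bigr)s$ behaves well against $\varphi$ because of the $r\in\mathfrak N_\varphi$ and $s\in\mathfrak N_\varphi$ factors). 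Thus $p$ lies in the span defining the domain of $Q_\lambda$ (Proposition~\ref{Q_L'}(1)), and by property (2) of that proposition $Q_\lambda(c\otimes ps)=Q_\lambda(c\otimes p)(1\otimes s)$, so $c\otimes ps\in\mathcal D(Q_\lambda)$ automatically. Second, the factor $s\in\mathfrak N_\varphi$ (and the $w\in\mathfrak N_\psi$, $r\in\mathfrak N_\varphi$ appearing inside $p$) are exactly what guarantees the two $\mathfrak M_{\operatorname{id}\otimes\varphi}$-memberships: $\Delta(ps)$ has a right leg built from $\Delta(r^*w)$ and then multiplied by $s\in\mathfrak N_\varphi$, which together with left invariance of $\varphi$ (Proposition~4.2 of Part~I, i.e. $\Delta a\in\overline{\mathfrak N}_{\operatorname{id}\otimes\varphi}$ for $a\in\mathfrak N_\varphi$) puts $(c\otimes1)\Delta(ps)$ into $\mathfrak M_{\operatorname{id}\otimes\varphi}$; and similarly the explicit formula for $Q_\lambda(c\otimes ps)$ from Proposition~\ref{Q_L'}(1) exhibits it with a right-hand factor that, after using $E(\Delta a)=\Delta a$ and the $\mathfrak N_\varphi$-factors, again lands in $\mathfrak M_{\operatorname{id}\otimes\varphi}$.

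I expect the main obstacle to be the careful bookkeeping for the $\mathfrak M_{\operatorname{id}\otimes\varphi}$-membership claims, i.e.\ verifying that $(c\otimes1)\Delta(ps)$ and $Q_\lambda(c\otimes ps)$ genuinely lie in $\mathfrak M_{\operatorname{id}\otimes\varphi}$ rather than merely in $\overline{\mathfrak M}_{\operatorname{id}\otimes\varphi}$. The clean way to handle this is to write $ps$ as a (limit of) product(s) involving $\mathfrak N_\varphi$-elements on the right, so that $\Delta(ps)(1\otimes s')$-type expressions factor through $\Delta a\in\overline{\mathfrak N}_{\operatorname{id}\otimes\varphi}$ times $\mathfrak N_\varphi$, which is the textbook situation yielding membership in $\mathfrak M_{\operatorname{id}\otimes\varphi}$; the point of choosing $a=r^*w$ with $r\in\mathfrak N_\varphi$ (not merely $a\in\mathfrak M_\psi$) is precisely to supply the needed square-integrability on the $\varphi$ side. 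Once these three conditions are checked, part (2) is an immediate application of part (1), and the two properties of $Q_\lambda$ in Proposition~\ref{Q_L'}(2) let one move the $s$ factor out of the $Q_\lambda$ if a more explicit form is wanted. I would also double-check, as a sanity test, that the formal identity $Q_\lambda(c\otimes q)\,\text{``}=\text{''}\,(c\otimes1)F_2(1\otimes q)$ from the Remark is consistent with \eqref{(Q_L'leftinvariancenew)} under the heuristic $(\operatorname{id}\otimes\varphi)\circ\Delta = $ (left Haar functional), which it is, giving confidence that no factor has been misplaced.
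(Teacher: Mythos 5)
Your proposal is correct and follows the paper's own route: part (1) is just the identity derived immediately before the statement, valid whenever the three membership conditions make both sides meaningful, and part (2) is obtained by checking those conditions for $q=ps$ and invoking (1). The only details you left unsettled are exactly the ones the paper supplies in one line each: $r^*w\in{\mathfrak N}_{\psi}$ because ${\mathfrak N}_{\psi}$ is a left ideal (so $c\otimes ps\in{\mathcal D}(Q_{\lambda})$ by Proposition~\ref{Q_L'}), and $r^*w\in{\mathfrak N}_{\varphi}^{*}$ so that, by the left invariance of $\varphi$, $p\in{\mathfrak N}_{\varphi}^{*}$ and hence $ps\in{\mathfrak M}_{\varphi}$, from which both ${\mathfrak M}_{\operatorname{id}\otimes\varphi}$-memberships follow at once.
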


\begin{proof}
We already discussed (1) in previous paragraphs.  

As for (2), note first that since ${\mathfrak N}_{\psi}$ is a left ideal in $A$, so $r^*w
\in{\mathfrak N}_{\psi}$, from which it follows that $c\otimes ps\in{\mathcal D}(Q_{\lambda})$ 
by Proposition~\ref{Q_L'}.
Meanwhile, since ${\mathfrak N}_{\varphi}$ is a left ideal in $A$, we have $r^*w
\in{\mathfrak N}_{\varphi}^*$.  So $\Delta(r^*w)\in{\mathfrak N}_{\operatorname{id}
\otimes\varphi}^*$ (by the left invariance of $\varphi$), which in turn means that 
$p\in{\mathfrak N}_{\varphi}^*$.  Since $s\in{\mathfrak N}_{\varphi}$, we have 
$ps\in{\mathfrak M}_{\varphi}$.  From this observation, we can conclude quickly 
that $(c\otimes1)\Delta(ps)\in{\mathfrak M}_{\operatorname{id}\otimes\varphi}$ 
and also that $Q_{\lambda}(c\otimes ps)\in{\mathfrak M}_{\operatorname{id}\otimes\varphi}$. 
By (1), or by Equation~\eqref{(Q_L'leftinvariance)}, the result follows.
\end{proof}

The next three propositions are similar in nature, corresponding to maps 
$Q_L$, $Q_{\rho}$, $Q_R$, respectively.  The proofs are skipped, as they are 
essentially no different from the above discussion.  

\begin{prop}\label{Q_Lleftinvariance}
As a consequence of the left invariance of $\varphi$, we have:
\begin{enumerate}
  \item Let $c,q\in A$ be such that $(\Delta q)(c\otimes1)\in{\mathfrak M}_{\operatorname{id}
\otimes\varphi}$; $c\otimes q\in{\mathcal D}(Q_L)$; and $Q_L(c\otimes q)
\in{\mathfrak M}_{\operatorname{id}\otimes\varphi}$.  Then: 
\begin{equation}\label{(Q_Lleftinvariance)} 
(\operatorname{id}\otimes\varphi)\bigl((\Delta q)(c\otimes1)\bigr)
=(\operatorname{id}\otimes\varphi)\bigl(Q_L(c\otimes q)\bigr).
\end{equation}
  \item In particular, let $p=(\psi\otimes\operatorname{id})\bigl(\Delta(w^*r)(x\otimes1)\bigr)$, 
for $x,w\in{\mathfrak N}_{\psi}$, $r\in{\mathfrak N}_{\varphi}$, and let 
$s\in{\mathfrak N}_{\varphi}$ (so $s^*\in{\mathfrak N}_{\varphi}^*$).  Let 
$c\in A$ be arbitrary.  Then:
\begin{equation}\label{(Q_Lleftinvariancenew)}
(\operatorname{id}\otimes\varphi)\bigl(\Delta(s^*p)(c\otimes1)\bigr)
=(\operatorname{id}\otimes\varphi)\bigl(Q_L(c\otimes s^*p)\bigr).
\end{equation}
\end{enumerate}
\end{prop}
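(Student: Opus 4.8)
The statement is the left-invariant counterpart for $Q_L$ of what was established for $Q_{\lambda}$ in Proposition~\ref{Q_L'leftinvariance}, and the plan is to repeat the computation that precedes that proposition, making two bookkeeping changes forced by the shape of the $Q_L$-characterization in Proposition~\ref{Q_L}\,(4): work with $a^*$ where there one works with $a$, and multiply the auxiliary elements in from the right rather than from the left. Concretely, for an arbitrary $a\in{\mathfrak M}_{\varphi}$, left invariance of $\varphi$ gives $(\operatorname{id}\otimes\varphi)(\Delta a)\in M(C)$; applying $\Delta$ to this element and using Equation~\eqref{(DeltaonC)} in the form $\Delta x=(x\otimes1)E$, so that the result reads $\Delta_{13}(a)(E\otimes1)$, together with the coassociativity of the extended $\Delta$, I obtain
$$
(\operatorname{id}\otimes\operatorname{id}\otimes\varphi)\bigl((\operatorname{id}\otimes\Delta)(\Delta a)\bigr)
=(\operatorname{id}\otimes\operatorname{id}\otimes\varphi)\bigl(\Delta_{13}(a)(E\otimes1)\bigr),
\qquad\forall\,a\in{\mathfrak M}_{\varphi};
$$
since ${\mathfrak M}_{\varphi}$ is self-adjoint, the same identity holds with $a^*$ in place of $a$.

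Multiplying the $a^*$-version on the right by $y\otimes c\otimes1$ ($y,c\in A$), absorbing $y\otimes1$ into $\Delta(a^*)$ exactly as in the cited computation (via $(\Delta a)E=\Delta a$ from Equation~\eqref{(EDelta)}), and then applying $\theta\otimes\operatorname{id}\otimes\operatorname{id}$ for arbitrary $\theta\in A^*$, the left side collapses to $(\operatorname{id}\otimes\varphi)\bigl((\Delta q)(c\otimes1)\bigr)$ with $q:=(\theta\otimes\operatorname{id})[\Delta(a^*)(y\otimes1)]\in A$. On the right side, Proposition~\ref{Q_L}\,(4) identifies $\Delta_{13}(a^*)(E\otimes1)(y\otimes c\otimes1)$ with $(\operatorname{id}\otimes Q_L)\bigl(\Delta_{13}(a^*)(y\otimes c\otimes1)\bigr)$ whenever the argument lies in ${\mathcal D}(\operatorname{id}\otimes Q_L)$, and slicing the first leg by $\theta$ turns that argument into $c\otimes q$; hence the right side becomes $(\operatorname{id}\otimes\varphi)\bigl(Q_L(c\otimes q)\bigr)$. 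This proves Equation~\eqref{(Q_Lleftinvariance)} for $q$ of the above form; such $q$ span a dense subspace of $A$ (fullness of $\Delta$), so the identity persists whenever all three expressions in it are meaningful, which is precisely the hypothesis of part\,(1). For part\,(2) one then checks the three conditions for $s^*p$ as in the proof of Proposition~\ref{Q_L'leftinvariance}\,(2): $w^*r\in{\mathfrak N}_{\psi}$ (left ideal), so $c\otimes s^*p\in{\mathcal D}(Q_L)$; $w^*r\in{\mathfrak N}_{\varphi}$ (left ideal), so $\Delta(w^*r)\in\overline{\mathfrak N}_{\operatorname{id}\otimes\varphi}$ by left invariance and hence $p\in{\mathfrak N}_{\varphi}$; combined with $s\in{\mathfrak N}_{\varphi}$ this gives $s^*p\in{\mathfrak M}_{\varphi}$, from which $\Delta(s^*p)(c\otimes1)$ and $Q_L(c\otimes s^*p)$ lie in ${\mathfrak M}_{\operatorname{id}\otimes\varphi}$, and part\,(1) applies to yield Equation~\eqref{(Q_Lleftinvariancenew)}.

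Alternatively, and perhaps more economically, one can deduce the whole proposition from Proposition~\ref{Q_L'leftinvariance} by taking adjoints: replacing $(c,q)$ by $(c^*,q^*)$ in Equation~\eqref{(Q_L'leftinvariance)} and using that $\operatorname{id}\otimes\varphi$ commutes with taking adjoints, that ${\mathfrak M}_{\varphi}$ and ${\mathfrak M}_{\operatorname{id}\otimes\varphi}$ are self-adjoint (so the hypotheses transform into those of part\,(1)), and the relation $Q_{\lambda}(c\otimes q)=Q_L(c^*\otimes q^*)^*$ from Proposition~\ref{RR'LL'}, one lands exactly on Equation~\eqref{(Q_Lleftinvariance)}. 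Either way, the only genuinely delicate point, as opposed to the leg-relabelling bookkeeping which is routine, is the passage where the functional $\theta\otimes\operatorname{id}\otimes\operatorname{id}$ is interchanged with the merely densely-defined map $\operatorname{id}\otimes Q_L$ and the weight $\varphi$ is extracted past it; this is exactly why Proposition~\ref{Q_L}\,(4) is phrased as a domain membership together with a formula, and why the domain conditions in part\,(1) are assumed outright rather than derived. Tracking these conditions in the concrete setting of part\,(2) is the one spot where care is needed.
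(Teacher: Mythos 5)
Your proposal is correct and is essentially the paper's own argument: the paper skips this proof precisely because it is the $Q_{\lambda}$ derivation preceding Proposition~\ref{Q_L'leftinvariance} with $a$ replaced by $a^*$, the factor $y\otimes c$ multiplied from the right, Equation~\eqref{(DeltaonC)} used in the form $\Delta x=(x\otimes1)E$, and Proposition~\ref{Q_L}\,(4) invoked in place of Proposition~\ref{Q_L'}\,(4); your adjoint shortcut via $Q_{\lambda}(c\otimes q)=Q_L(c^*\otimes q^*)^*$ from Proposition~\ref{RR'LL'} is also a valid alternative. Two small corrections: the membership needed for $c\otimes s^*p\in{\mathcal D}(Q_L)$ is $r^*w\in{\mathfrak N}_{\psi}$ (equivalently $w^*r\in{\mathfrak N}_{\psi}^*$), which is what the left-ideal property of ${\mathfrak N}_{\psi}$ actually gives, not $w^*r\in{\mathfrak N}_{\psi}$ as written; and absorbing $y\otimes1$ into $\Delta(a^*)$ uses only that $\operatorname{id}\otimes\Delta$ leaves the first leg untouched, so the appeal to Equation~\eqref{(EDelta)} is unnecessary (though harmless).
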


\begin{prop}\label{Q_R'rightinvariance}
As a consequence of the right invariance of $\psi$, we have:
\begin{enumerate}
  \item Let $b,p\in A$ be such that $(1\otimes b)(\Delta p)\in{\mathfrak M}_{\psi\otimes
\operatorname{id}}$; $p\otimes b\in{\mathcal D}(Q_{\rho})$; and $Q_{\rho}(p\otimes b)
\in{\mathfrak M}_{\psi\otimes\operatorname{id}}$.  Then:
\begin{equation}\label{(Q_R'rightinvariance)}
(\psi\otimes\operatorname{id})\bigl((1\otimes b)(\Delta p)\bigr)
=(\psi\otimes\operatorname{id})\bigl(Q_{\rho}(p\otimes b)\bigr).
\end{equation}
  \item In particular, let $q=(\operatorname{id}\otimes\varphi)\bigl((1\otimes s^*)
\Delta(y^*r)\bigr)$, for $r,s\in{\mathfrak N}_{\varphi}$, $y\in{\mathfrak N}_{\psi}$, 
and let $w\in{\mathfrak N}_{\psi}$.  Let $b\in A$ be arbitrary. 
Then we have:
\begin{equation}\label{(Q_R'rightinvariancenew)}
(\psi\otimes\operatorname{id})\bigl((1\otimes b)\Delta(qw)\bigr)
=(\psi\otimes\operatorname{id})\bigl(Q_{\rho}(qw\otimes b)\bigr).
\end{equation}
\end{enumerate}
\end{prop}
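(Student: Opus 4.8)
The plan is to mirror, step by step, the derivation given for Proposition~\ref{Q_L'leftinvariance}, but now working with the \emph{right} invariance of $\psi$ in place of the left invariance of $\varphi$. Concretely, I would start from $a\in{\mathfrak M}_{\psi}$, so that by right invariance $\Delta a\in\overline{\mathfrak M}_{\psi\otimes\operatorname{id}}$ and $(\psi\otimes\operatorname{id})(\Delta a)\in M(B)$. Applying $\Delta$ to this element and using Equation~\eqref{(DeltaonB)} (the formula $\Delta y=(1\otimes y)E$ for $y\in M(B)$), together with coassociativity of $\Delta$ at the multiplier level, I obtain the identity
$$
(\psi\otimes\operatorname{id}\otimes\operatorname{id})\bigl((\Delta\otimes\operatorname{id})(\Delta a)\bigr)
=(\psi\otimes\operatorname{id}\otimes\operatorname{id})\bigl((1\otimes E)\Delta_{13}(a)\bigr),
\quad\forall a\in{\mathfrak M}_{\psi}.
$$
This is the exact analogue of the displayed identity preceding Proposition~\ref{Q_L'leftinvariance}, with the legs permuted so that $\psi$ acts on the first leg.

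Next I would multiply on the right by $1\otimes b\otimes x^*$ (with $b\in A$, $x\in{\mathfrak N}_{\psi}$), rewrite the left-hand side using $(\operatorname{id}\otimes\Delta)(\Delta a)$ via coassociativity, then apply $\operatorname{id}\otimes\theta$ for an arbitrary $\theta\in A^*$ to the second (middle) leg. On the right-hand side this produces an expression of the shape $(\psi\otimes\operatorname{id})\bigl((\operatorname{id}\otimes\operatorname{id}\otimes\theta)[(1\otimes b\otimes x^*)(1\otimes E)\Delta_{13}(a)]\bigr)$, which by the characterization of $Q_{\rho}$ in Proposition~\ref{Q_R'}\,(4) is precisely $(\psi\otimes\operatorname{id})\bigl(Q_{\rho}((\operatorname{id}\otimes\theta)[(1\otimes x^*)(\Delta a)]\otimes b)\bigr)$. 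Writing $p:=(\operatorname{id}\otimes\theta)[(1\otimes x^*)(\Delta a)]$, which by fullness of $\Delta$ ranges over a dense subset of $A$ (with an extra $\varphi$-layer as in part (2)), I arrive at Equation~\eqref{(Q_R'rightinvariance)} on the dense set where all three expressions make sense.

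For part (2) I would then verify that the specific choice $q=(\operatorname{id}\otimes\varphi)\bigl((1\otimes s^*)\Delta(y^*r)\bigr)$ with $r,s\in{\mathfrak N}_{\varphi}$, $y\in{\mathfrak N}_{\psi}$, followed by a tail $w\in{\mathfrak N}_{\psi}$, lands in the domain where part (1) applies: since ${\mathfrak N}_{\psi}$ is a left ideal, $y^*r$ can be pushed around so that $q\otimes b\in{\mathcal D}(Q_{\rho})$ by Proposition~\ref{Q_R'}; since ${\mathfrak N}_{\varphi}$ and ${\mathfrak N}_{\psi}$ are left ideals, one gets $q\in{\mathfrak N}_{\psi}^*$ and hence $qw\in{\mathfrak M}_{\psi}$, from which $(1\otimes b)\Delta(qw)\in{\mathfrak M}_{\psi\otimes\operatorname{id}}$ and $Q_{\rho}(qw\otimes b)\in{\mathfrak M}_{\psi\otimes\operatorname{id}}$ follow exactly as in the proof of Proposition~\ref{Q_L'leftinvariance}\,(2). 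The main obstacle, as in the $Q_{\lambda}$ case, is purely the bookkeeping of the integrability/domain conditions: one must be careful that each application of a (partial) weight is to an element of the appropriate $\mathfrak M$-space, and that the intermediate slices by $\theta$ and by the $\gamma$-twisted legs stay inside the domains of $Q_{\rho}$ and of $\psi\otimes\operatorname{id}$; the algebraic identity itself is routine once the coassociativity and the $\Delta$-on-$M(B)$ formula are in hand.
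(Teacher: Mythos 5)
Your plan is correct and is precisely the argument the paper has in mind: Proposition~\ref{Q_R'rightinvariance} is stated with its proof omitted because it is the mirror image of the derivation preceding Proposition~\ref{Q_L'leftinvariance} (right invariance of $\psi$, Equation~\eqref{(DeltaonB)}, coassociativity at the multiplier level, then the characterization of $Q_{\rho}$ from Proposition~\ref{Q_R'}\,(4), and finally the same domain bookkeeping as in Proposition~\ref{Q_L'leftinvariance}\,(2)), which is exactly what you reproduce. Only fix three small label slips, already corrected by your own displayed formulas: the factor $1\otimes b\otimes x^*$ is multiplied from the left, the auxiliary functional $\theta$ slices the third leg (with $x\in{\mathfrak N}_{\varphi}$, not ${\mathfrak N}_{\psi}$), and in part~(2) the membership $qw\otimes b\in{\mathcal D}(Q_{\rho})$ uses that ${\mathfrak N}_{\varphi}$ is a left ideal (so $y^*r\in{\mathfrak N}_{\varphi}$), while ${\mathfrak N}_{\psi}$ being a left ideal is what yields $q\in{\mathfrak N}_{\psi}^*$ and hence $qw\in{\mathfrak M}_{\psi}$.
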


\begin{prop}\label{Q_Rrightinvariance}
As a consequence of the right invariance of $\psi$, we have:
\begin{enumerate}
  \item Let $b,p\in A$ be such that $(\Delta p)(1\otimes b)\in{\mathfrak M}_{\psi\otimes
\operatorname{id}}$; $p\otimes b\in{\mathcal D}(Q_R)$; and $Q_R(p\otimes b)
\in{\mathfrak M}_{\psi\otimes\operatorname{id}}$.  Then: 
\begin{equation}\label{(Q_Rrightinvariance)} 
(\psi\otimes\operatorname{id})\bigl((\Delta p)(1\otimes b)\bigr)
=(\psi\otimes\operatorname{id})\bigl(Q_R(p\otimes b)\bigr).
\end{equation}
  \item In particular, let $q=(\operatorname{id}\otimes\varphi)\bigl(\Delta(r^*y)(1\otimes s)\bigr)$, 
for $r,s\in{\mathfrak N}_{\varphi}$, $y\in{\mathfrak N}_{\psi}$, and let 
$w\in{\mathfrak N}_{\psi}$ (so $w^*\in{\mathfrak N}_{\psi}^*$).  Let 
$b\in A$ be arbitrary.  Then:
\begin{equation}\label{(Q_Rrightinvariancenew)}
(\psi\otimes\operatorname{id})\bigl(\Delta(w^*q)(1\otimes b)\bigr)
=(\psi\otimes\operatorname{id})\bigl(Q_R(w^*q\otimes b)\bigr).
\end{equation}
\end{enumerate}
\end{prop}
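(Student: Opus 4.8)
\emph{Proof proposal.}

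The plan is to repeat, essentially verbatim, the argument that produced Proposition~\ref{Q_L'leftinvariance}, but passed through the left/right (``first leg $\leftrightarrow$ second leg'') symmetry: replace the left invariance of $\varphi$ by the right invariance of $\psi$, the map $Q_{\lambda}$ by $Q_R$, the identity $\Delta x=E(x\otimes1)=(x\otimes1)E$ for $x\in M(C)$ (Equation~\eqref{(DeltaonC)}) by $\Delta y=E(1\otimes y)=(1\otimes y)E$ for $y\in M(B)$ (Equation~\eqref{(DeltaonB)}), and the characterization of $Q_{\lambda}$ in Proposition~\ref{Q_L'}(4) by the characterization of $Q_R$ in Proposition~\ref{Q_Rcharacterization}. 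Alternatively, one may simply deduce the whole statement from Proposition~\ref{Q_L'leftinvariance} applied to the locally compact quantum groupoid $(A^{\operatorname{op}},\Delta^{\operatorname{cop}},\dots)$, via the identifications described in the Remark following Proposition~\ref{RR'LL'}, which turn right invariance of $\psi$ into left invariance and $Q_R$ into $Q_{\lambda}$. I outline the direct route.

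First I would establish the global identity playing the role of the one obtained just before Proposition~\ref{Q_L'leftinvariance}. For $a\in{\mathfrak M}_{\psi}$, the right invariance of $\psi$ gives $\Delta a\in\overline{\mathfrak M}_{\psi\otimes\operatorname{id}}$ and $(\psi\otimes\operatorname{id})(\Delta a)\in M(B)$. Applying the extended comultiplication to $(\psi\otimes\operatorname{id})(\Delta a)$, and using Equation~\eqref{(DeltaonB)} on the one side and coassociativity (Equation~\eqref{(coassociativity)}, extended to $M(A)$) on the other, exactly as in the $Q_{\lambda}$ derivation one arrives at
\[
(\psi\otimes\operatorname{id}\otimes\operatorname{id})\bigl((\Delta\otimes\operatorname{id})(\Delta a)\bigr)
=(\psi\otimes\operatorname{id}\otimes\operatorname{id})\bigl(\Delta_{13}(a)(1\otimes E)\bigr),\qquad a\in{\mathfrak M}_{\psi},
\]
the mirror image of the identity preceding Proposition~\ref{Q_L'leftinvariance}: the two legs are interchanged, $\varphi$ is replaced by $\psi$, and ``$E$ on the left'' becomes ``$E$ on the right''. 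Next I would multiply on the right by $1\otimes b\otimes x$ (with $b,x\in A$) and then slice the last leg with an arbitrary $\theta\in A^*$ (apply $\operatorname{id}\otimes\operatorname{id}\otimes\theta$). The left side becomes $(\psi\otimes\operatorname{id})\bigl((\Delta p)(1\otimes b)\bigr)$, where $p:=(\operatorname{id}\otimes\theta)\bigl[(\Delta a)(1\otimes x)\bigr]$; such $p$ span a norm-dense subset of $A$ since $\Delta$ is full. On the right side the factor $1\otimes E$ sits on the last two legs in front of $1\otimes b\otimes x$, so Proposition~\ref{Q_Rcharacterization} rewrites it as $(Q_R\otimes\operatorname{id})$ applied to $\Delta_{13}(a)(1\otimes b\otimes x)$; commuting $\theta$ past $Q_R$ and $\psi$, and using the module properties of $Q_R$ from Proposition~\ref{Q_Rproposition}(2), the right side collapses to $(\psi\otimes\operatorname{id})\bigl(Q_R(p\otimes b)\bigr)$. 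The three membership conditions in part~(1) are exactly what makes every expression in this chain meaningful, so (as in Proposition~\ref{Q_L'leftinvariance}(1)) the identity holds for all such $p$, not only the dense family.

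For part~(2), I would check, just as in the proof of Proposition~\ref{Q_L'leftinvariance}(2), that the explicit $q=(\operatorname{id}\otimes\varphi)\bigl(\Delta(r^*y)(1\otimes s)\bigr)$ (with $r,s\in{\mathfrak N}_{\varphi}$, $y\in{\mathfrak N}_{\psi}$) together with $w\in{\mathfrak N}_{\psi}$ makes the pair $(w^*q,b)$ satisfy those three conditions: $w^*q\otimes b\in{\mathcal D}(Q_R)$ because $q$ is of the base form of Proposition~\ref{Q_Rproposition} (since ${\mathfrak N}_{\varphi}$ is a left ideal, $y^*r,s\in{\mathfrak N}_{\varphi}$), while the two integrability conditions follow from the ideal properties of ${\mathfrak N}_{\varphi}$, ${\mathfrak N}_{\psi}$ and the left/right invariance of $\varphi$, $\psi$ (giving $w^*q\in{\mathfrak M}_{\psi}$, hence $\Delta(w^*q)(1\otimes b)\in{\mathfrak M}_{\psi\otimes\operatorname{id}}$ and $Q_R(w^*q\otimes b)\in{\mathfrak M}_{\psi\otimes\operatorname{id}}$); then part~(1) applies.

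The algebra is routine; the part that requires care — and the reason the proof is deliberately skipped in the text after being done once for $Q_{\lambda}$ — is the double layer of bookkeeping: tracking which of the three tensor legs each of $\Delta_{13}$, $1\otimes E$, $\psi$, $\theta$, $b$, $x$ acts on after the coassociativity rearrangement, and verifying at each step that the intermediate elements actually lie in the relevant ${\mathfrak M}$/${\mathfrak N}$ spaces, so that $(\psi\otimes\operatorname{id})$ may legitimately be pulled through the products and Proposition~\ref{Q_Rcharacterization} may legitimately be invoked.
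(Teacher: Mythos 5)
Your proposal is correct and is essentially the proof the paper intends: the text skips this argument precisely because it is the left/right mirror of the derivation carried out for $Q_{\lambda}$ in Proposition~\ref{Q_L'leftinvariance}, and your version — the identity $(\psi\otimes\operatorname{id}\otimes\operatorname{id})\bigl((\Delta\otimes\operatorname{id})(\Delta a)\bigr)=(\psi\otimes\operatorname{id}\otimes\operatorname{id})\bigl(\Delta_{13}(a)(1\otimes E)\bigr)$ from Equation~\eqref{(DeltaonB)} and coassociativity, the identification of the right-hand side via Propositions~\ref{Q_Rcharacterization} and \ref{Q_Rproposition}, and the membership checks for part~(2) using the ideal properties of ${\mathfrak N}_{\varphi}$, ${\mathfrak N}_{\psi}$ — reproduces that template faithfully (the alternative route through $(A^{\operatorname{op}},\Delta^{\operatorname{cop}})$ is likewise consistent with the Remark after Proposition~\ref{RR'LL'}). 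No gaps to report.
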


The results collected in Propositions~\ref{Q_L'leftinvariance}, \ref{Q_Lleftinvariance}, 
\ref{Q_R'rightinvariance}, \ref{Q_Rrightinvariance} are all consequences of the 
left/right invariance properties of $\varphi$ and $\psi$.  They will play useful roles 
in what follows.  Before we wrap up this subsection, we prove the following result, 
which sharpen the right/left invariance conditions given in Section~\ref{sec1}.

\begin{prop}\label{idphiBpsiidC}
We have:
\begin{enumerate}
  \item $B=\overline{\operatorname{span}\bigl\{(\psi\otimes\operatorname{id})
(\Delta k):k\in{\mathfrak M}_{\psi}\bigr\}}^{\|\ \|}$
  \item $C=\overline{\operatorname{span}\bigl\{(\operatorname{id}\otimes\varphi)
(\Delta k):k\in{\mathfrak M}_{\varphi}\bigr\}}^{\|\ \|}$
\end{enumerate}
\end{prop}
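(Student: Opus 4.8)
The plan is to prove (1); statement (2) then follows by the symmetry discussed in the Remark after Proposition~\ref{RR'LL'} (passing to $(A,\Delta^{\operatorname{cop}})$, which exchanges the roles of $\psi$ and $\varphi$ and of $B$ and $C$). For (1), one inclusion is essentially already available: the right invariance of $\psi$ says precisely that for $k\in{\mathfrak M}_{\psi}$ we have $(\psi\otimes\operatorname{id})(\Delta k)\in M(B)$. What we must upgrade is, first, that these elements in fact lie in $B$ (not merely $M(B)$), and second, that their closed linear span is \emph{all} of $B$.

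For the first point, I would use the operator-valued weight identity from Proposition~\ref{nuphipsi}: $\nu\bigl((\psi\otimes\operatorname{id})(\Delta x)\bigr)=\psi(x)$ for $x\in{\mathfrak M}_{\psi}$. Taking $x=a^*a$ with $a\in{\mathfrak N}_{\psi}$ shows that the positive element $(\psi\otimes\operatorname{id})(\Delta(a^*a))\in M(B)^+$ is $\nu$-integrable, i.e. lies in ${\mathfrak M}_{\nu}$; since $\nu$ is a faithful semifinite weight on $B$, the elements of ${\mathfrak M}_{\nu}$ and their spans sit inside $B$, so $(\psi\otimes\operatorname{id})(\Delta(a^*a))\in B$. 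Polarizing over $a,b\in{\mathfrak N}_{\psi}$ gives $(\psi\otimes\operatorname{id})(\Delta(b^*a))\in B$, and since such $b^*a$ span a dense subspace of ${\mathfrak M}_{\psi}$ and $\psi\otimes\operatorname{id}$ is suitably continuous, we get $(\psi\otimes\operatorname{id})(\Delta k)\in B$ for all $k\in{\mathfrak M}_{\psi}$. This settles ``$\subseteq$'': $\overline{\operatorname{span}}\{(\psi\otimes\operatorname{id})(\Delta k)\}\subseteq B$.

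For the reverse inclusion, let $D:=\overline{\operatorname{span}\{(\psi\otimes\operatorname{id})(\Delta k):k\in{\mathfrak M}_{\psi}\}}$, a closed subspace of $B$; I would show $D=B$ by pairing against functionals. Suppose $\omega\in B^*$ annihilates $D$. Then for all $k\in{\mathfrak M}_{\psi}$, $0=\omega\bigl((\psi\otimes\operatorname{id})(\Delta k)\bigr)=\psi\bigl((\operatorname{id}\otimes\omega)(\Delta k)\bigr)$ — here one extends $\omega$ to a functional on $M(A)$ and uses that, in the slice $(\operatorname{id}\otimes\omega)(\Delta k)$, the first leg is still governed by $\psi$ via the right-invariance machinery. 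Taking $k=a^*a$ with $a\in{\mathfrak N}_{\psi}$, and using the faithfulness of $\psi$ together with the density (``$\Delta$ is full'') of the elements $(\operatorname{id}\otimes\omega)(\Delta a)$ in $A$ as $a$ and $\omega$ vary, one deduces in the now-familiar way (compare the argument of Lemma~\ref{densesubsetA}) that $\omega$ must vanish on a dense subspace of $B$, hence $\omega\equiv0$. By Hahn--Banach, $D=B$.

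The main obstacle I anticipate is the careful bookkeeping in the reverse inclusion: one is slicing $\Delta k$ with a functional $\omega$ coming only from $B^*$ (equivalently, a functional on $M(A)$ that is ``supported on $B$''), and one has to be sure that $(\operatorname{id}\otimes\omega)(\Delta k)$ lands back in ${\mathfrak M}_{\psi}$ and that the density statement one needs (that these slices are dense, not just in $A$, but richly enough to detect all of $B$ after reintroducing $\psi$) really follows from fullness of $\Delta$ plus Proposition~\ref{nuphipsi}. A clean way to organize this is to reduce to $k=a^*a$, $a\in{\mathfrak N}_{\psi}$, and to express $\psi\bigl((\operatorname{id}\otimes\omega)(\Delta(a^*a))\bigr)$ as a vector-functional expression $\langle\,\cdot\,\rangle$ in the GNS space of $\psi$, where faithfulness becomes positivity/nondegeneracy and the conclusion $\omega=0$ is transparent. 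The other steps — the $\subseteq$ inclusion via ${\mathfrak M}_{\nu}\subseteq B$, polarization, and the Hahn--Banach wrap-up — are routine.
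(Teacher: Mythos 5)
The step that breaks is your proof of the inclusion $(\psi\otimes\operatorname{id})(\Delta k)\in B$. From Proposition~\ref{nuphipsi} you do get that the positive multiplier $x=(\psi\otimes\operatorname{id})(\Delta(a^*a))\in M(B)^+$ has finite value under (the extension of) $\nu$, but finiteness of a weight on a positive element of $M(B)$ does not place that element in $B$: writing ``$x\in{\mathfrak M}_{\nu}$'' already presupposes $x\in B$, which is exactly what has to be shown, and for the extended weight on $M(B)$ the implication is false for general $(B,\nu)$ (take $B=c_0$ and $\nu$ given by a summable sequence of positive weights; the unit of $M(B)$ is integrable but not in $B$). Any rescue would have to invoke the special structure of the separability triple $(E,B,\nu)$, which you do not use. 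The paper avoids this issue altogether: for $q$ of the dense form from Lemma~\ref{densesubsetA} and an approximate unit $(b_{\lambda})$, it uses Proposition~\ref{Q_Rrightinvariance} and the definition \eqref{(defQ_R)} of $Q_R$ to compute $(\psi\otimes\operatorname{id})(\Delta q)=(\operatorname{id}\otimes\omega)\bigl(E(1\otimes c)\bigr)$ with $c\in C$, $\omega\in C^*$, and this lies in $B$ because the left leg of $E$ is $B$ (fullness of $E$). That computation, not the operator-valued weight identity, is what puts the slices inside $B$.

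The density direction also has a gap. After the swap you are left with $\psi\bigl((\operatorname{id}\otimes\omega)(\Delta k)\bigr)=0$ for a \emph{fixed}, generally non-positive $\omega$; faithfulness of $\psi$ says nothing about non-positive elements, and fullness of $\Delta$ lets you vary $k$ but not $\omega$, so the ``now-familiar'' argument of Lemma~\ref{densesubsetA} does not transfer. Your proposed GNS reformulation with $k=a^*a$ only treats positive (vector-type) functionals, which is not enough for the Hahn--Banach step over all of $B^*$. The missing idea --- and the paper's key mechanism --- is to test against functionals of the special form $\theta=\nu(\,\cdot\,b)$ with $b\in{\mathfrak M}_{\nu}$, and then to use $\Delta b=E(1\otimes b)$ from Equation~\eqref{(DeltaonB)} together with Proposition~\ref{nuphipsi} to convert the pairing into $\theta\bigl((\psi\otimes\operatorname{id})(\Delta k)\bigr)=\psi(kb)$; faithfulness of $\psi$ then applies and forces $b=0$. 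Your symmetry remark reducing (2) to (1) is fine and matches the paper, but both halves of the proof of (1) need repair along the lines just described.
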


\begin{proof}
(1). Consider $q=(\operatorname{id}\otimes\varphi)\bigl(\Delta(r^*y)(1\otimes s)\bigr)$, 
where $r,s\in{\mathfrak N}_{\varphi}$, $y\in{\mathfrak M}_{\psi}$.  Since $C$ is 
a non-degenerate subalgebra in $M(A)$, we may, without loss of generality, 
assume that $s=ck$, for $c\in C$ and $k\in{\mathfrak N}_{\varphi}$.  In addition, 
let $(b_{\lambda})$ be an approximate unit for $A$.  Then we have:
$$
(\psi\otimes\operatorname{id})(\Delta q)=\lim_{\lambda}(\psi\otimes\operatorname{id})
\bigl((\Delta q)(1\otimes b_{\lambda})\bigr)
=\lim_{\lambda}(\psi\otimes\operatorname{id})\bigl(Q_R(q\otimes b_{\lambda})\bigr),
$$
by Proposition~\ref{Q_Rrightinvariance}.  Computing further, now using the definition 
of the $Q_R$ map given in Proposition~\ref{Q_Rproposition}, this becomes 
\begin{align}
(\psi\otimes\operatorname{id})(\Delta q)
&=\lim_{\lambda}(\psi\otimes\operatorname{id})\bigl((\operatorname{id}\otimes
\operatorname{id}\otimes\varphi)[\Delta_{13}(r^*y)(1\otimes E)(1\otimes b_{\lambda}
\otimes s)]\bigr)   \notag \\
&=(\psi\otimes\operatorname{id})\bigl((\operatorname{id}\otimes\operatorname{id}
\otimes\varphi)[\Delta_{13}(r^*y)(1\otimes E)(1\otimes1\otimes ck)]\bigr)  \notag \\
&=(\operatorname{id}\otimes\varphi)\bigl((\psi\otimes\operatorname{id}\otimes
\operatorname{id})(\Delta_{13}(r^*y))E(1\otimes ck)\bigr)   \notag \\
&=(\operatorname{id}\otimes\varphi)\bigl((1\otimes x)E(1\otimes ck)\bigr),
\notag
\end{align}
where in the last line, we wrote $x=(\psi\otimes\operatorname{id})\bigl(\Delta(r^*y)\bigr)$, 
which is a valid element contained in ${\mathfrak N}_{\varphi}^*$, because 
$r\in{\mathfrak N}_{\varphi},y\in{\mathfrak M}_{\psi}$.  Next, write $\omega:=\varphi(x\,\cdot\,k)$. 
This is a linear functional in $M(A)^*\subseteq C^*$.  In this way,  we see that
$$
(\psi\otimes\operatorname{id})(\Delta q)=(\operatorname{id}\otimes\omega)
\bigl(E(1\otimes c)\bigr).
$$
Since $c\in C$ and $\omega\in C^*$, this is an element contained in $B$ (Here, we 
are using the fact that the left leg of $E$ is $B$.  See Proposition~2.10 in Part~I.). 
By Lemma~\ref{densesubsetA}, we know the elements of the form $q$ above are 
dense in ${\mathfrak M}_{\psi}$. It follows that $(\psi\otimes\operatorname{id})(\Delta k)\in B$, 
for any $k\in{\mathfrak M}_{\psi}$.

For the opposite inclusion, suppose $\theta\in B^*$ is such that 
\begin{equation}\label{(idphiBpsiidCeqn1)}
\theta\bigl((\psi\otimes\operatorname{id})(\Delta k)\bigr)=0,\forall 
k\in{\mathfrak M}_{\psi}.
\end{equation}
Without loss of generality, we may assume that $\theta=\nu(\,\cdot\,b)$, $b\in{\mathfrak M}_{\nu}$, 
where $\nu$ is the KMS weight on $B$.  Note that $\bigl\{\nu(\,\cdot\,b):b\in{\mathfrak M}_{\nu}\bigr\}$ 
is dense in $B^*$.  Meanwhile, since $b\in B$, 
we know from Equation~\eqref{(DeltaonB)} that $\Delta b=E(1\otimes b)$.
Putting these together, the left side of Equation~\eqref{(idphiBpsiidCeqn1)} becomes:
$$
{\text {(LHS)}}=\nu\bigl((\psi\otimes\operatorname{id})((\Delta k)(1\otimes b))\bigr) 
=\nu\bigl((\psi\otimes\operatorname{id})(\Delta(kb))\bigr)  
=\psi(kb).
$$
In the last equality, we used the result of Proposition~\ref{nuphipsi}.  In other words, 
Equation~\eqref{(idphiBpsiidCeqn1)} is none other than saying that $\psi(kb)=0$, $\forall 
k\in{\mathfrak M}_{\psi}$.  By the faithfulness of $\psi$, this would mean that $b=0$. 
So $\theta\equiv0$.  This shows that the elements of the form $(\psi\otimes\operatorname{id})
(\Delta k)$ are dense in $B$.

(2). The result about $C$ is similarly proved.
\end{proof}

\section{The operators $V$ and $W$: The right and left regular representations}\label{sec3}

In the theory of locally compact quantum groups \cite{KuVa}, \cite{KuVavN}, \cite{MNW}, 
\cite{VDvN}, a fundamental role is played by the ``multiplicative unitary operators'' 
(in the sense of Baaj and Skandalis \cite{BS}; see also \cite{Wr7}), as they are 
essentially the right/left regular representations.  In the setting of measured quantum 
groupoids \cite{LesSMF}, \cite{EnSMF}, a similar role is played by the ``pseudo-multiplicative 
unitary operators'' (see also \cite{Timmbook}).

In our setting, an analogous role will be played by certain partial isometries 
(the {\em multiplicative partial isometries\/}).  However, unlike in the cases of quantum groups 
or measured quantum groupoids, these operators are in general not unitaries. 
This causes some subtle issues that are not present in the quantum group theory. 
The tools developed in Section~\ref{sec2} will be useful.  

\subsection{Defining the operators  $V$ and $W$}

For our discussion, we will assume the existence of a proper weight $\eta$ on our $C^*$-algebra 
$A$ and fix a GNS-construction $({\mathcal H},\pi,\Lambda)$ corresponding to $\eta$.  
We will identify $A=\pi(A)\,\subseteq{\mathcal B}({\mathcal H})$.  Depending on the context, 
we may later let $\eta=\psi$, $\eta=\varphi$, or something else.  Let $(e_j)_{j\in J}$ be 
an orthonormal basis for ${\mathcal H}$.  We will also use the standard notation 
$\omega_{\xi,\zeta}$, for $\xi,\zeta\in{\mathcal H}$, to denote the linear form defined 
by $\omega_{\xi,\zeta}(T)=\langle T\xi,\zeta\rangle$, $\forall T\in{\mathcal B}({\mathcal H})$. 
Note that any $\omega\in{\mathcal B}({\mathcal H})_*$ can be approximated by the 
$\omega_{\xi,\zeta}$.  See Lemma~\ref{omega_xizetaLem} below for some useful results: 

\begin{lem}\label{omega_xizetaLem}
Let $\xi,\zeta\in{\mathcal H}$.  Then: 
\begin{enumerate}
  \item $\overline{\omega_{\xi,\zeta}}=\omega_{\zeta,\xi}$;
  \item $(\operatorname{id}\otimes\omega_{\xi,\zeta})(T)^*
=(\operatorname{id}\otimes\omega_{\zeta,\xi})(T^*)$, for $T\in{\mathcal B}
({\mathcal H}\otimes{\mathcal H})$;
  \item For $S,T\in{\mathcal B}({\mathcal H})$, we have:
$$
\sum_{j\in J}\omega_{e_j,\zeta}(S)\omega_{\xi,e_j}(T)=\omega_{\xi,\zeta}(ST);
$$
  \item For $S,T\in{\mathcal B}({\mathcal H}\otimes{\mathcal H})$, we have:
$$
\sum_{j\in J}(\operatorname{id}\otimes\omega_{e_j,\zeta})(S)(\operatorname{id}
\otimes\omega_{\xi,e_j})(T)=(\operatorname{id}\otimes\omega_{\xi,\zeta})(ST).
$$
\end{enumerate}
\end{lem}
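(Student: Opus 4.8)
The plan is to prove the four items in turn; each is an elementary computation with vector functionals on a Hilbert space, and the only one demanding a little care is part~(4).

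\emph{Part (1).} I would just unwind the definition of the adjoint of a functional: for every $T\in\mathcal{B}(\mathcal{H})$,
$\overline{\omega_{\xi,\zeta}}(T)=\overline{\omega_{\xi,\zeta}(T^*)}=\overline{\langle T^*\xi,\zeta\rangle}=\langle\zeta,T^*\xi\rangle=\langle T\zeta,\xi\rangle=\omega_{\zeta,\xi}(T)$.

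\emph{Part (2).} I would test the claimed identity against rank-one forms on the first leg. For $\eta,\eta'\in\mathcal{H}$,
$\langle(\operatorname{id}\otimes\omega_{\xi,\zeta})(T)^*\eta,\eta'\rangle=\overline{\langle(\operatorname{id}\otimes\omega_{\xi,\zeta})(T)\eta',\eta\rangle}=\overline{\langle T(\eta'\otimes\xi),\eta\otimes\zeta\rangle}=\langle T^*(\eta\otimes\zeta),\eta'\otimes\xi\rangle=\langle(\operatorname{id}\otimes\omega_{\zeta,\xi})(T^*)\eta,\eta'\rangle$.
Since $\eta,\eta'$ are arbitrary, the operator identity follows; alternatively one may phrase this as the general slice identity $(\operatorname{id}\otimes\omega)(T)^*=(\operatorname{id}\otimes\overline\omega)(T^*)$ combined with part~(1).

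\emph{Part (3).} I would expand the left side using $\omega_{e_j,\zeta}(S)=\langle Se_j,\zeta\rangle$ and $\omega_{\xi,e_j}(T)=\langle T\xi,e_j\rangle$, so that $\sum_{j\in J}\omega_{e_j,\zeta}(S)\omega_{\xi,e_j}(T)=\sum_{j\in J}\langle T\xi,e_j\rangle\langle Se_j,\zeta\rangle=\langle S\bigl(\sum_{j\in J}\langle T\xi,e_j\rangle e_j\bigr),\zeta\rangle=\langle ST\xi,\zeta\rangle=\omega_{\xi,\zeta}(ST)$, where I use that $\sum_{j}\langle T\xi,e_j\rangle e_j=T\xi$ by completeness of the orthonormal basis, and that the boundedness of $S$ together with $\sum_j|\langle T\xi,e_j\rangle|^2=\|T\xi\|^2<\infty$ licenses pulling $S$ through the norm-convergent sum.

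\emph{Part (4).} I would reduce this to part~(3) by slicing the first leg. Fixing $\eta,\eta'\in\mathcal{H}$, the right side paired with $\langle\,\cdot\,\eta,\eta'\rangle$ equals $\langle ST(\eta\otimes\xi),\eta'\otimes\zeta\rangle$. Inserting the resolution of the identity $1\otimes 1=\sum_{j\in J}(1\otimes|e_j\rangle\langle e_j|)$ between $S$ and $T$ (convergent in the strong operator topology, so this is legitimate after composing with the bounded operator $S$), and using the two elementary identities $(1\otimes|e_j\rangle\langle e_j|)T(\eta\otimes\xi)=\bigl((\operatorname{id}\otimes\omega_{\xi,e_j})(T)\eta\bigr)\otimes e_j$ and $\langle S(v\otimes e_j),\eta'\otimes\zeta\rangle=\langle(\operatorname{id}\otimes\omega_{e_j,\zeta})(S)v,\eta'\rangle$, one obtains $\langle ST(\eta\otimes\xi),\eta'\otimes\zeta\rangle=\sum_{j\in J}\langle(\operatorname{id}\otimes\omega_{e_j,\zeta})(S)(\operatorname{id}\otimes\omega_{\xi,e_j})(T)\eta,\eta'\rangle$. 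As $\eta,\eta'$ are arbitrary, the stated operator identity holds, with the sum understood in the weak (equivalently strong) operator topology. The only point requiring attention is the interchange of the sum with the inner products and the insertion of the resolution of the identity, which is handled exactly as in part~(3) via square-summability of $\|(1\otimes|e_j\rangle\langle e_j|)T(\eta\otimes\xi)\|$ over $j$; this is the (minor) main obstacle, and no deeper difficulty arises.
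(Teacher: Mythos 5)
Your proof is correct, and since the paper deliberately omits the proof of this lemma (it is dismissed there as ``essentially basic linear algebra''), your computations are exactly the standard argument the authors have in mind: parts (1)--(3) are the usual manipulations with vector functionals, and your reduction of (4) to (3) by inserting the resolution of the identity $1\otimes 1=\sum_{j}(1\otimes p_j)$, with $p_j$ the rank-one projection onto $\mathbb{C}e_j$, between $S$ and $T$ is the natural route.

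One point of genuine divergence is worth recording. The remark following the lemma in the paper asserts that the net of finite sums in (4) converges in \emph{operator norm}, whereas you only obtain convergence in the weak (and, as you note, also the strong) operator topology. Here your weaker formulation is in fact the correct one: norm convergence fails in general. For example, take $S=T=\Sigma$, the tensor flip on ${\mathcal H}\otimes{\mathcal H}$; then $(\operatorname{id}\otimes\omega_{\xi,e_j})(\Sigma)\eta=\langle\eta,e_j\rangle\xi$ and $(\operatorname{id}\otimes\omega_{e_j,\zeta})(\Sigma)\eta=\langle\eta,\zeta\rangle e_j$, so the $j$-th summand equals $\langle\xi,\zeta\rangle\,p_j$ and the partial sums are $\langle\xi,\zeta\rangle P_{\Gamma}$, which converge to $(\operatorname{id}\otimes\omega_{\xi,\zeta})(\Sigma^2)=\langle\xi,\zeta\rangle\,1$ strongly but not in norm when $\dim{\mathcal H}=\infty$ and $\langle\xi,\zeta\rangle\neq0$. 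Since the lemma is invoked later only against fixed vectors, the strong-operator reading suffices for all its applications, so nothing downstream is affected; but you should state explicitly in which topology your unordered sum in (4) is understood (and justify the parenthetical ``equivalently strong'' by the estimate $\|(1\otimes(1-P_{\Gamma}))T(\eta\otimes\xi)\|\to0$ for fixed $\eta$), since the bare equality sign leaves this ambiguous.
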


\begin{rem}
We skip the proof of the lemma, as these are essentially basic linear algebra results. 
Here, the complex conjugate $\overline{\omega}$ for $\omega\in{\mathcal B}({\mathcal H})_*$ 
is given by $\overline{\omega}(T):=\overline{\omega(T^*)}$, for $T\in{\mathcal B}({\mathcal H})$. 
Also, the result in (4) is to be understood that the net of finite sums 
$\left(\sum_{j\in I}(\operatorname{id}\otimes\omega_{e_j,\zeta})(S)
(\operatorname{id}\otimes\omega_{\xi,e_j})(T)\right)_{I\in F(J)}$ converges, in operator norm, 
to $(\operatorname{id}\otimes\omega_{\xi,\zeta})(ST)$. 
\end{rem}

Let us first construct the right regular representation, in terms of a certain operator $V$.  For this, 
consider the right Haar weight $\psi$ and let $({\mathcal H}_{\psi},\pi_{\psi},\Lambda_{\psi})$ 
be its GNS-triple.   See below, where we can recognize the resemblance to the corresponding 
definition in the case of locally compact quantum groups \cite{KuVa}, \cite{KuVavN}, \cite{VDvN}.

\begin{prop}\label{Vdefn}
\begin{enumerate}
  \item There exists a bounded operator $V\in{\mathcal B}({\mathcal H}_{\psi}\otimes{\mathcal H})$ 
satisfying
$$
\bigl((\operatorname{id}\otimes\omega)(V)\bigr)\Lambda_{\psi}(p)
=\Lambda_{\psi}\bigl((\operatorname{id}\otimes\omega)(\Delta p)\bigr),
$$
for $p\in{\mathfrak N}_{\psi}$ and $\omega\in{\mathcal B}({\mathcal H})_*$.
  \item If $p\in{\mathfrak N}_{\psi}$ and $a\in{\mathfrak N}_{\eta}$, then we have:
\begin{equation}\label{(VT_1)}
V\bigl(\Lambda_{\psi}(p)\otimes\Lambda(a)\bigr)=(\Lambda_{\psi}\otimes\Lambda)
\bigl((\Delta p)(1\otimes a)\bigr).
\end{equation}
\end{enumerate}
\end{prop}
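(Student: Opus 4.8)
The plan is to construct $V$ via its "slices" $(\operatorname{id}\otimes\omega)(V)$ and then assemble these into a single bounded operator using the standard trick: define a densely-defined operator on $\mathcal{H}_\psi\otimes\mathcal{H}$ by the formula \eqref{(VT_1)}, prove it is bounded (in fact a partial isometry, since $V$ is not unitary here), and then read off part (1) as a consequence. First I would fix $p\in\mathfrak{N}_\psi$. By the right invariance of $\psi$, $\Delta p\in\overline{\mathfrak{N}}_{\psi\otimes\operatorname{id}}$, so for $\omega\in\mathcal{B}(\mathcal{H})_*$ the element $(\operatorname{id}\otimes\omega)(\Delta p)$ lies in $\mathfrak{N}_\psi$ and $\Lambda_\psi\bigl((\operatorname{id}\otimes\omega)(\Delta p)\bigr)$ makes sense; this is what allows us to define a map $\Lambda_\psi(p)\mapsto \Lambda_\psi\bigl((\operatorname{id}\otimes\omega)(\Delta p)\bigr)$ on the dense subspace $\Lambda_\psi(\mathfrak{N}_\psi)$.

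The main step is the boundedness/norm estimate. Following the quantum group blueprint (Kustermans–Vaes), I would compute, for $p\in\mathfrak{N}_\psi$ and $a\in\mathfrak{N}_\eta$,
\[
\bigl\| (\Lambda_\psi\otimes\Lambda)\bigl((\Delta p)(1\otimes a)\bigr)\bigr\|^2
= (\psi\otimes\eta)\bigl((1\otimes a^*)(\Delta p)^*(\Delta p)(1\otimes a)\bigr),
\]
and then apply the right invariance of $\psi$ to the inner slice. Precisely, $(\psi\otimes\operatorname{id})\bigl((\Delta p)^*(\Delta p)\bigr)=(\psi\otimes\operatorname{id})\bigl(\Delta(p^*p)\bigr)$ via \eqref{(EDelta)} (so that $E$ can be absorbed) together with the fact that $\Delta$ is a $*$-homomorphism, and this equals an element of $M(B)$ dominated in norm by $\psi(p^*p)\,1$ — here I would use Proposition~\ref{nuphipsi}, namely $\nu\bigl((\psi\otimes\operatorname{id})(\Delta(p^*p))\bigr)=\psi(p^*p)$, and the fact that the operator-valued weight is completely positive and "$\nu$-preserving", to get the bound $\|(\psi\otimes\operatorname{id})(\Delta(p^*p))\|\le \psi(p^*p)$. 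Substituting back gives
\[
\bigl\| (\Lambda_\psi\otimes\Lambda)\bigl((\Delta p)(1\otimes a)\bigr)\bigr\|^2
\le \psi(p^*p)\,\eta(a^*a)
= \bigl\|\Lambda_\psi(p)\otimes\Lambda(a)\bigr\|^2.
\]
Since $\Lambda_\psi(\mathfrak{N}_\psi)\odot\Lambda(\mathfrak{N}_\eta)$ is dense in $\mathcal{H}_\psi\otimes\mathcal{H}$, this produces a well-defined contraction $V$ on $\mathcal{H}_\psi\otimes\mathcal{H}$ satisfying \eqref{(VT_1)}, which is part (2). (A parallel lower estimate using the "fullness" of $\Delta$ and faithfulness of the weights, or the formulas from Section~2, shows $V$ is in fact a partial isometry; but for the statement as given, the contraction bound suffices.)

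For part (1), I would verify directly that this $V$ has the advertised slices. Apply $\operatorname{id}\otimes\omega_{\Lambda(a),\Lambda(a')}$ (for $a,a'\in\mathfrak{N}_\eta$) to \eqref{(VT_1)} and pair against $\Lambda_\psi(p)$: the left-hand side gives $\bigl\langle (\operatorname{id}\otimes\omega_{\Lambda(a),\Lambda(a')})(V)\,\Lambda_\psi(p),\,\cdot\,\bigr\rangle$, while the right-hand side, using that $\Lambda_\psi\otimes\Lambda$ is the GNS map for $\psi\otimes\eta$ and the definition of the slice of an operator-valued expression, rearranges to $\bigl\langle \Lambda_\psi\bigl((\operatorname{id}\otimes\omega_{\Lambda(a),\Lambda(a')})(\Delta p)\bigr),\,\cdot\,\bigr\rangle$. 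Since the $\omega_{\Lambda(a),\Lambda(a')}$ with $a,a'\in\mathfrak{N}_\eta$ span a dense subspace of $\mathcal{B}(\mathcal{H})_*$ and both sides are norm-continuous in $\omega$, the identity $\bigl((\operatorname{id}\otimes\omega)(V)\bigr)\Lambda_\psi(p)=\Lambda_\psi\bigl((\operatorname{id}\otimes\omega)(\Delta p)\bigr)$ follows for all $\omega$. The step I expect to be the main obstacle is the norm estimate: one must be careful that $(\psi\otimes\operatorname{id})\bigl(\Delta(p^*p)\bigr)$ is a genuine element of $M(B)$ (not merely an extended-valued weight), that its norm is controlled by $\psi(p^*p)$, and that $E$ may be inserted/removed without harm via \eqref{(EDelta)}; this is where the groupoid setting genuinely differs from the group case, and where Proposition~\ref{nuphipsi} and the separability-idempotent machinery of Section~1 do the essential work.
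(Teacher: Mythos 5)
There is a genuine gap, and it sits exactly where you predicted the main obstacle would be: the norm estimate. You claim $\bigl\|(\psi\otimes\operatorname{id})(\Delta(p^*p))\bigr\|\le\psi(p^*p)$, justified by Proposition~\ref{nuphipsi} together with positivity of the operator-valued weight. But Proposition~\ref{nuphipsi} only says $\nu\bigl((\psi\otimes\operatorname{id})(\Delta(p^*p))\bigr)=\psi(p^*p)$, and $\nu$ is a KMS \emph{weight} on the nontrivial base algebra $B$, not a state: for a positive element $X\in M(B)$, finiteness or smallness of $\nu(X)$ gives no control whatsoever on $\|X\|$ (think of $B=C_0(\mathbb{R})$ with $\nu$ Lebesgue integration and tall, thin bumps); even for a state the inequality would run in the wrong direction. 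Consequently your chain $\|(\Lambda_\psi\otimes\Lambda)((\Delta p)(1\otimes a))\|^2=\eta\bigl(a^*\,(\psi\otimes\operatorname{id})(\Delta(p^*p))\,a\bigr)\le\psi(p^*p)\,\eta(a^*a)$ is unjustified, and with it the claim that the map defined by \eqref{(VT_1)} on $\Lambda_\psi(\mathfrak{N}_\psi)\odot\Lambda(\mathfrak{N}_\eta)$ is a contraction. This is precisely the point where the groupoid setting departs from Kustermans--Vaes: there $(\psi\otimes\operatorname{id})(\Delta(p^*p))=\psi(p^*p)1$, here it is merely some positive element of $M(B)$. In fact, contractivity of $V$ is not available at this stage of the paper at all; it only emerges much later, when $V$ is shown to be a partial isometry with $VV^*=E$ (Theorem~\ref{K_psi=H_psi}), after the whole $Q_R$/$G_R$ machinery has been developed.

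The paper's own route is the reverse of yours and is designed to avoid this issue: it first defines $V$ through its slices, estimating $\sum_j\|\Lambda_\psi((\operatorname{id}\otimes\omega_{\xi,e_j})(\Delta p))\|^2$ over an orthonormal basis $(e_j)$ and using \emph{only} the fact, built into the right-invariance axiom, that $(\psi\otimes\operatorname{id})(\Delta(p^*p))$ is a bounded element of $M(B)$; this yields, for each fixed $p$, the bound $\|V(\Lambda_\psi(p)\otimes\xi)\|\le\|(\psi\otimes\operatorname{id})(\Delta(p^*p))\|^{1/2}\|\xi\|$, and formula \eqref{(VT_1)} is then \emph{derived} from part (1) by the pairing computation you sketch for the converse direction. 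A secondary remark: your deduction of (1) from (2) also needs norm-continuity of $\omega\mapsto\Lambda_\psi((\operatorname{id}\otimes\omega)(\Delta p))$ to pass from $\omega_{\Lambda(a),\Lambda(a')}$ to general normal functionals, and that continuity is again exactly the kind of estimate furnished by the paper's basis-sum argument (or by the already-constructed bounded $V$), so it cannot simply be taken for granted in your order of reasoning. To repair your proposal you would either have to drop the contraction claim and prove boundedness the paper's way, or find a genuinely new argument for a uniform bound --- the latter is essentially proving $V^*V=G_R$ ahead of schedule.
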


\begin{proof}
(1). By the right invariance of $\psi$, we know $(\operatorname{id}\otimes\omega)(\Delta p)
\in\overline{\mathfrak N}_{\psi}$.  So the expression makes sense.  Now, let $\xi$ be arbitrary, 
and consider $\omega_{\xi,e_j}\in{\mathcal B}({\mathcal H})_*$, where $(e_j)_{j\in J}$ is 
an orthonormal basis for ${\mathcal H}$.  Then:
\begin{align}
\sum_{j\in J}\bigl\|\Lambda_{\psi}((\operatorname{id}\otimes\omega_{\xi,e_j})(\Delta p))\bigr\|^2
&=\sum_{j\in J}\psi\bigl((\operatorname{id}\otimes\omega_{\xi,e_j})(\Delta p)^*
(\operatorname{id}\otimes\omega_{\xi,e_j})(\Delta p)\bigr)
\notag \\
&=\sum_{j\in J}\psi\bigl((\operatorname{id}\otimes\omega_{e_j,\xi})(\Delta (p^*))
(\operatorname{id}\otimes\omega_{\xi,e_j})(\Delta p)\bigr)   \notag \\
&\le\psi\bigl((\operatorname{id}\otimes\omega_{\xi,\xi})(\Delta(p^*p))\bigr),
\notag
\end{align}
where we are using (2), (4) of Lemma~\ref{omega_xizetaLem} and the lower semi-continuity 
of the weight $\psi$.  As $\psi\bigl((\operatorname{id}\otimes\omega_{\xi,\xi})(\Delta(p^*p))\bigr)
=\omega_{\xi,\xi}\bigl((\psi\otimes\operatorname{id})(\Delta(p^*p))\bigr)$, this then becomes: 
$$
\sum_{j\in J}\bigl\|\Lambda_{\psi}((\operatorname{id}\otimes\omega_{\xi,e_j})(\Delta p))\bigr\|^2
\le\bigl\langle(\psi\otimes\operatorname{id})(\Delta(p^*p))\,\xi,\xi\bigr\rangle
\le\bigl\|(\psi\otimes\operatorname{id})(\Delta(p^*p))\bigr\|\|\xi\|^2.
$$
Because of the way $V$ was defined, the left side of this equation is actually 
$\sum_{j\in J}\bigl\|\langle V(\Lambda_{\psi}(p)\otimes\xi),\cdot\otimes e_j\rangle\bigr\|^2$, 
while $(\psi\otimes\operatorname{id})(\Delta(p^*p))$ in the right side is a bounded element 
in $M(B)$, due to the right invariance of $\psi$.  We can thus see that $V:\Lambda_{\psi}(p)
\otimes\xi\mapsto V(\Lambda_{\psi}(p)\otimes\xi)$ is a bounded operator.

(2). Let $a,b\in{\mathfrak N}_{\eta}$ be arbitrary and consider $\omega=\omega_{\Lambda(a),\Lambda(b)}$. 
Then, for any $p,q\in{\mathfrak N}_{\psi}$, we have:
\begin{align}
\bigl\langle V(\Lambda_{\psi}(p)\otimes\Lambda(a)),\Lambda_{\psi}(q)\otimes\Lambda(b)\bigr\rangle
&=\bigl\langle(\operatorname{id}\otimes\omega_{\Lambda(a),\Lambda(b)})(V)\Lambda_{\psi}(p),
\Lambda_{\psi}(q)\bigr\rangle   \notag \\
&=\bigl\langle\Lambda_{\psi}((\operatorname{id}\otimes\omega_{\Lambda(a),\Lambda(b)})(\Delta p)),
\Lambda_{\psi}(q)\bigr\rangle   \notag \\
&=(\psi\otimes\eta)\bigl((q^*\otimes b^*)(\Delta p)(1\otimes a)\bigr)  \notag \\
&=\bigl\langle(\Lambda_{\psi}\otimes\Lambda)((\Delta p)(1\otimes a)),\Lambda_{\psi}(q)\otimes
\Lambda(b)\bigr\rangle.
\notag
\end{align}
This is true for arbitrary $q\in{\mathfrak N}_{\psi}$ and $b\in{\mathfrak N}_{\eta}$, so we have: 
$$
V\bigl(\Lambda_{\psi}(p)\otimes\Lambda(a)\bigr)=(\Lambda_{\psi}\otimes\Lambda)
\bigl((\Delta p)(1\otimes a)\bigr).
$$
\end{proof}

In the below are some immediate properties for $V$.  Here, for convenience of the notation, 
we just wrote $x\in A$ to represent $\pi_{\psi}(x)\in{\mathcal B}({\mathcal H}_{\psi})$.  As it will 
eventually turn out that we can regard ${\mathcal H}_{\psi}={\mathcal H}$ (see last Remark 
in Section~\ref{sec5}), this casual bookkeeping is not too harmful.  Similarly, we regard 
$\Delta x=(\pi_{\psi}\otimes\pi)(\Delta x)$ and $E=(\pi_{\psi}\otimes\pi)(E)$, as contained 
in ${\mathcal B}({\mathcal H}_{\psi}\otimes{\mathcal H})$.

\begin{prop}\label{Vprop}
\begin{enumerate}
  \item $V(x\otimes 1)=(\Delta x)V$, for any $x\in A$;
  \item $EV=V$;
  \item $\overline{\operatorname{Ran}(V)}\subseteq\operatorname{Ran}(E)$, 
in ${\mathcal H}_{\psi}\otimes{\mathcal H}$.
\end{enumerate}
\end{prop}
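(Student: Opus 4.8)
The plan is to verify the three assertions in sequence, each time working on the dense subspaces of the form $\Lambda_\psi(p)\otimes\Lambda(a)$, for $p\in{\mathfrak N}_\psi$ and $a\in{\mathfrak N}_\eta$, where Equation~\eqref{(VT_1)} gives the explicit formula $V(\Lambda_\psi(p)\otimes\Lambda(a))=(\Lambda_\psi\otimes\Lambda)\bigl((\Delta p)(1\otimes a)\bigr)$. For part~(1), I would take $x\in A$ and compute $V(x\otimes1)\bigl(\Lambda_\psi(p)\otimes\Lambda(a)\bigr)$. Since $xp\in{\mathfrak N}_\psi$ (as ${\mathfrak N}_\psi$ is a left ideal) and $\Lambda_\psi(xp)=\pi_\psi(x)\Lambda_\psi(p)$, this equals $V\bigl(\Lambda_\psi(xp)\otimes\Lambda(a)\bigr)=(\Lambda_\psi\otimes\Lambda)\bigl((\Delta(xp))(1\otimes a)\bigr)=(\Lambda_\psi\otimes\Lambda)\bigl((\Delta x)(\Delta p)(1\otimes a)\bigr)$. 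Now $\Delta x\in M(A\otimes A)$, and $(\Delta x)$ acting on $(\Lambda_\psi\otimes\Lambda)\bigl((\Delta p)(1\otimes a)\bigr)$ is $(\Delta x)V\bigl(\Lambda_\psi(p)\otimes\Lambda(a)\bigr)$ by the defining formula again. Since such vectors are dense, $V(x\otimes1)=(\Delta x)V$ follows. The one point needing a little care is the interpretation of $(\Lambda_\psi\otimes\Lambda)\bigl((\Delta x)(\Delta p)(1\otimes a)\bigr)=(\pi_\psi\otimes\pi)(\Delta x)\cdot(\Lambda_\psi\otimes\Lambda)\bigl((\Delta p)(1\otimes a)\bigr)$, i.e. the compatibility of the GNS map with left multiplication by multipliers, which holds because $(\Delta p)(1\otimes a)\in{\mathfrak N}_{\psi\otimes\eta}$ and this $\mathfrak N$-space is a left module over $M(A\otimes A)$.

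For part~(2), I want $EV=V$, i.e. $\bigl((\pi_\psi\otimes\pi)(E)\bigr)V=V$. Applying both sides to $\Lambda_\psi(p)\otimes\Lambda(a)$ and using \eqref{(VT_1)}, it suffices to show $E\cdot(\Lambda_\psi\otimes\Lambda)\bigl((\Delta p)(1\otimes a)\bigr)=(\Lambda_\psi\otimes\Lambda)\bigl((\Delta p)(1\otimes a)\bigr)$. But by Equation~\eqref{(EDelta)} we have $E(\Delta p)=\Delta p$ in $M(A\otimes A)$, so $E(\Delta p)(1\otimes a)=(\Delta p)(1\otimes a)$ as elements of $A\otimes A$, and applying $\Lambda_\psi\otimes\Lambda$ (together with the same left-multiplier compatibility as above, now for the multiplier $E$) gives the claim. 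Density of the vectors $\Lambda_\psi(p)\otimes\Lambda(a)$ then yields $EV=V$ on all of ${\mathcal H}_\psi\otimes{\mathcal H}$.

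Part~(3) is an immediate consequence of part~(2): from $EV=V$ we get $\operatorname{Ran}(V)=\operatorname{Ran}(EV)\subseteq\operatorname{Ran}(E)$, and since $E$ is a self-adjoint idempotent (hence $\operatorname{Ran}(E)$ is a closed subspace of ${\mathcal H}_\psi\otimes{\mathcal H}$), taking closures gives $\overline{\operatorname{Ran}(V)}\subseteq\operatorname{Ran}(E)$.

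The main obstacle, such as it is, is purely bookkeeping rather than conceptual: one must be careful that $E=(\pi_\psi\otimes\pi)(E)$ and $\Delta x=(\pi_\psi\otimes\pi)(\Delta x)$ are being interpreted consistently as operators on ${\mathcal H}_\psi\otimes{\mathcal H}$, and that left multiplication by these multipliers intertwines correctly with the GNS maps $\Lambda_\psi\otimes\Lambda$ on the relevant square-integrable elements. Once the identities \eqref{(EDelta)} and $V(\Lambda_\psi(p)\otimes\Lambda(a))=(\Lambda_\psi\otimes\Lambda)((\Delta p)(1\otimes a))$ are in hand, the algebra is routine, and no use of the deeper invariance machinery of Section~2 is needed for this particular proposition.
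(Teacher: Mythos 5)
Your proposal is correct and follows essentially the same route as the paper's proof: verify (1) and (2) on the dense set of vectors $\Lambda_{\psi}(p)\otimes\Lambda(a)$ using Equation~\eqref{(VT_1)}, the GNS-module property, and $E(\Delta p)=\Delta p$ from Equation~\eqref{(EDelta)}, then deduce (3) from $EV=V$ together with the closedness of $\operatorname{Ran}(E)$. The extra remarks on the multiplier/GNS compatibility are exactly the implicit bookkeeping the paper glosses over.
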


\begin{proof}
(1). For any $p\in{\mathfrak N}_{\psi}$ and any $a\in{\mathfrak N}_{\eta}$, we have: 
\begin{align}
V(x\otimes1)\bigl(\Lambda_{\psi}(p)\otimes\Lambda(a)\bigr)
&=V\bigl(\Lambda_{\psi}(xp)\otimes\Lambda(a)\bigr) 
=(\Lambda_{\psi}\otimes\Lambda)\bigl(\Delta (xp)(1\otimes a)\bigr)  \notag \\
&=(\Delta x)(\Lambda_{\psi}\otimes\Lambda)\bigl((\Delta p)(1\otimes a)\bigr) \notag \\
&=(\Delta x)V\bigl(\Lambda_{\psi}(p)\otimes\Lambda(a)\bigr),
\notag
\end{align}
where we used the property of the GNS representation and Equation~\eqref{(VT_1)}.

(2). As $E(\Delta p)=\Delta p$, we have for any $p\in{\mathfrak N}_{\psi}$ and any 
$a\in{\mathfrak N}_{\eta}$, 
\begin{align}
EV\bigl(\Lambda_{\psi}(p)\otimes\Lambda(a)\bigr)
&=E(\Lambda_{\psi}\otimes\Lambda)\bigl((\Delta p)(1\otimes a)\bigr)
=(\Lambda_{\psi}\otimes\Lambda)\bigl(E(\Delta p)(1\otimes a)\bigr)  \notag \\
&=(\Lambda_{\psi}\otimes\Lambda)\bigl((\Delta p)(1\otimes a)\bigr)
=V\bigl(\Lambda_{\psi}(p)\otimes\Lambda(a)\bigr).
\notag
\end{align}

(3). Since $EV=V$, the result is immediate.  Note here that the space 
$\operatorname{Ran}(E)$ is already closed in ${\mathcal H}_{\psi}\otimes
{\mathcal H}$ because $E$ is a projection.
\end{proof}

In an analogous way, we can also construct the left regular representation, in terms 
of an operator $W$.  For this, consider now the left Haar weight $\varphi$ and let 
$({\mathcal H}_{\varphi},\pi_{\varphi},\Lambda_{\varphi})$ be its GNS-triple.   We 
actually define $W^*$ first, as is typically done in the quantum group case: 

\begin{prop}\label{Wdefn}
There exists a bounded operator $W\in{\mathcal B}({\mathcal H}\otimes{\mathcal H}_{\varphi})$ 
characterized and defined by
$$
\bigl((\theta\otimes\operatorname{id})(W^*)\bigr)\Lambda_{\varphi}(a)
=\Lambda_{\varphi}\bigl((\theta\otimes\operatorname{id})(\Delta a)\bigr), 
\quad {\text { for }}a\in{\mathfrak N}_{\varphi},\theta\in{\mathcal B}({\mathcal H})_*.
$$

If $a\in{\mathfrak N}_{\varphi}$ and $p\in{\mathfrak N}_{\eta}$, then we have:
\begin{equation}\label{(WT_4)}
W^*\bigl(\Lambda(p)\otimes\Lambda_{\varphi}(a)\bigr)=(\Lambda\otimes\Lambda_{\varphi})
\bigl((\Delta a)(p\otimes 1)\bigr).
\end{equation}
\end{prop}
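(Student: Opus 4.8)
The plan is to mirror the construction of $V$ in Proposition~\ref{Vdefn}, but now using the left-invariance of $\varphi$ and defining $W^*$ directly (so that $W$ itself will be recovered by adjoint). First I would fix an arbitrary $\zeta \in {\mathcal H}$ and, for the orthonormal basis $(e_j)_{j\in J}$, estimate the sum $\sum_{j\in J}\bigl\|\Lambda_{\varphi}\bigl((\omega_{e_j,\zeta}\otimes\operatorname{id})(\Delta a)\bigr)\bigr\|^2$ for $a\in{\mathfrak N}_{\varphi}$. Using the analogue of parts (2)--(4) of Lemma~\ref{omega_xizetaLem} applied to the \emph{first} leg (the ``$(\theta\otimes\operatorname{id})$'' versions, which hold by the same linear algebra), together with the lower semi-continuity of $\varphi$, this sum is dominated by $\varphi\bigl((\omega_{\zeta,\zeta}\otimes\operatorname{id})(\Delta(a^*a))\bigr) = \omega_{\zeta,\zeta}\bigl((\operatorname{id}\otimes\varphi)(\Delta(a^*a))\bigr)$. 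By the left invariance of $\varphi$, the element $(\operatorname{id}\otimes\varphi)(\Delta(a^*a))$ lies in $M(C)$ and is bounded, so this last quantity is at most $\bigl\|(\operatorname{id}\otimes\varphi)(\Delta(a^*a))\bigr\|\,\|\zeta\|^2$. Since the left-hand sum is exactly $\sum_{j\in J}\bigl|\langle W^*(\Lambda(p)\otimes\Lambda_{\varphi}(a)),\, e_j\otimes\cdot\,\rangle\bigr|^2$ when tested against $\Lambda(p)$'s, this bound shows $W^* : \Lambda(p)\otimes\Lambda_{\varphi}(a)\mapsto W^*(\Lambda(p)\otimes\Lambda_{\varphi}(a))$ extends to a bounded operator on ${\mathcal H}\otimes{\mathcal H}_{\varphi}$; its adjoint $W$ is then the desired operator, and the defining formula $\bigl((\theta\otimes\operatorname{id})(W^*)\bigr)\Lambda_{\varphi}(a)=\Lambda_{\varphi}\bigl((\theta\otimes\operatorname{id})(\Delta a)\bigr)$ holds by construction, for $\theta$ of the form $\omega_{\xi,\zeta}$ and hence for all $\theta\in{\mathcal B}({\mathcal H})_*$ by density.

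For the explicit formula \eqref{(WT_4)}, I would argue exactly as in part (2) of Proposition~\ref{Vdefn}: take $a\in{\mathfrak N}_{\varphi}$, $p\in{\mathfrak N}_{\eta}$, and test $W^*(\Lambda(p)\otimes\Lambda_{\varphi}(a))$ against a typical vector $\Lambda(q)\otimes\Lambda_{\varphi}(r)$ with $q\in{\mathfrak N}_{\eta}$, $r\in{\mathfrak N}_{\varphi}$. Using $\theta = \omega_{\Lambda(p),\Lambda(q)}$, the left side becomes $\bigl\langle (\omega_{\Lambda(p),\Lambda(q)}\otimes\operatorname{id})(W^*)\Lambda_{\varphi}(a),\Lambda_{\varphi}(r)\bigr\rangle = \bigl\langle \Lambda_{\varphi}\bigl((\omega_{\Lambda(p),\Lambda(q)}\otimes\operatorname{id})(\Delta a)\bigr),\Lambda_{\varphi}(r)\bigr\rangle$, which unwinds to $(\eta\otimes\varphi)\bigl((q^*\otimes r^*)(\Delta a)(p\otimes 1)\bigr)$, and this is precisely $\bigl\langle (\Lambda\otimes\Lambda_{\varphi})\bigl((\Delta a)(p\otimes 1)\bigr),\Lambda(q)\otimes\Lambda_{\varphi}(r)\bigr\rangle$. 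Since $q,r$ range over dense sets, \eqref{(WT_4)} follows. One small point to check along the way is that $(\Delta a)(p\otimes 1)$ is indeed in ${\mathfrak N}_{\eta\otimes\varphi}$ so that $(\Lambda\otimes\Lambda_{\varphi})$ applies; this is where the left-invariance of $\varphi$ enters again, guaranteeing $\Delta a\in\overline{\mathfrak N}_{\operatorname{id}\otimes\varphi}$ and hence $(\Delta a)(p\otimes 1)\in{\mathfrak N}_{\operatorname{id}\otimes\varphi}$, while $p\otimes 1$ handles the first leg.

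The main obstacle I anticipate is purely bookkeeping rather than conceptual: one must be careful about which leg carries which weight and which GNS map, since here $\eta$ (used for $\Lambda$) is a priori an independent proper weight, and the first tensor leg is acted on by functionals $\theta$ while $\varphi$ acts on the second leg — the mirror-image of the $V$ situation. In particular the ``$(\theta\otimes\operatorname{id})$'' analogues of Lemma~\ref{omega_xizetaLem}(2),(4) are what is needed, and one should either invoke them as the obvious flip of the stated versions or note they hold by the same computation. Apart from that, every step is a direct transcription of the proof of Proposition~\ref{Vdefn}, so no genuinely new difficulty arises.
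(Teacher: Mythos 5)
Your overall route is exactly the paper's: the paper states Proposition~\ref{Wdefn} with the remark that the proof is essentially the same as that of Proposition~\ref{Vdefn}, and your mirroring of that proof --- a boundedness estimate for $W^*$ via basis-vector slices on the first leg, then verification of \eqref{(WT_4)} by pairing against $\Lambda(q)\otimes\Lambda_{\varphi}(r)$, with left invariance of $\varphi$ and $p\in{\mathfrak N}_{\eta}$ guaranteeing that $(\Delta a)(p\otimes1)$ lies in the domain of $\Lambda\otimes\Lambda_{\varphi}$ --- is the intended argument. The second half of your proposal is fine as written.

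In the boundedness estimate, however, the basis vector has landed in the wrong slot. Since $\bigl\langle W^*(\zeta\otimes\Lambda_{\varphi}(a)),e_j\otimes\eta\bigr\rangle=\bigl\langle(\omega_{\zeta,e_j}\otimes\operatorname{id})(W^*)\Lambda_{\varphi}(a),\eta\bigr\rangle$, the quantity you must control is $\sum_{j}\bigl\|\Lambda_{\varphi}\bigl((\omega_{\zeta,e_j}\otimes\operatorname{id})(\Delta a)\bigr)\bigr\|^2$, not $\sum_{j}\bigl\|\Lambda_{\varphi}\bigl((\omega_{e_j,\zeta}\otimes\operatorname{id})(\Delta a)\bigr)\bigr\|^2$ as you wrote. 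The distinction is not cosmetic: with your indices each summand is $\varphi\bigl((\omega_{\zeta,e_j}\otimes\operatorname{id})(\Delta(a^*))\,(\omega_{e_j,\zeta}\otimes\operatorname{id})(\Delta a)\bigr)$, where $e_j$ sits in the second slot of the first factor and the first slot of the second factor; the first-leg analogue of Lemma~\ref{omega_xizetaLem}\,(4) requires the opposite pattern, so the sum does not collapse to a slice of $\Delta(a^*)\Delta a$ (and for a generic bounded operator in place of $\Delta a$, e.g.\ the flip, that wrong-slot sum even diverges). Moreover that sum is not $\bigl\|W^*(\zeta\otimes\Lambda_{\varphi}(a))\bigr\|^2$ in the first place. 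With the correct slices $\omega_{\zeta,e_j}$, taking adjoints produces $(\omega_{e_j,\zeta}\otimes\operatorname{id})(\Delta(a^*))$ in front, the pattern matches the lemma exactly as in the proof of Proposition~\ref{Vdefn}, and you get $\sum_{j}\bigl\|\Lambda_{\varphi}\bigl((\omega_{\zeta,e_j}\otimes\operatorname{id})(\Delta a)\bigr)\bigr\|^2\le\omega_{\zeta,\zeta}\bigl((\operatorname{id}\otimes\varphi)(\Delta(a^*a))\bigr)\le\bigl\|(\operatorname{id}\otimes\varphi)(\Delta(a^*a))\bigr\|\,\|\zeta\|^2$, after which the rest of your argument goes through verbatim.
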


Proof is essentially no different than in the case of Proposition~\ref{Vdefn}.  We also have 
the following result, analogous to  Proposition~\ref{Vprop}.  With a similar comment as before, 
we are regarding $x=\pi_{\varphi}(x)\in{\mathcal B}({\mathcal H}_{\varphi})$, while 
$\Delta x=(\pi\otimes\pi_{\varphi})(\Delta x)$ and $E=(\pi\otimes\pi_{\varphi})(E)$, 
as elements contained in ${\mathcal B}({\mathcal H}\otimes{\mathcal H}_{\varphi})$.

\begin{prop}\label{Wprop}
\begin{enumerate}
  \item $W^*(1\otimes x)=(\Delta x)W^*$, for any $x\in A$;
  \item $EW^*=W^*$;
  \item $\overline{\operatorname{Ran}(W^*)}\subseteq\operatorname{Ran}(E)$, 
in ${\mathcal H}\otimes{\mathcal H}_{\varphi}$.
\end{enumerate}
\end{prop}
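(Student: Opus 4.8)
The plan is to mirror exactly the proof of Proposition~\ref{Vprop}, using the defining relation for $W^*$ from Proposition~\ref{Wdefn} in place of the one for $V$, and relying on the left invariance of $\varphi$ as well as the basic identities $E(\Delta a)=\Delta a=(\Delta a)E$ from Equation~\eqref{(EDelta)}.

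\smallskip
\emph{Part (1).} First I would verify $W^*(1\otimes x)=(\Delta x)W^*$ on the dense set of vectors $\Lambda(p)\otimes\Lambda_{\varphi}(a)$, for $p\in{\mathfrak N}_{\eta}$ and $a\in{\mathfrak N}_{\varphi}$. Applying Equation~\eqref{(WT_4)} twice and using the fact that $\Lambda_{\varphi}$ intertwines the GNS representation (so $\Lambda_{\varphi}(xa)=x\Lambda_{\varphi}(a)$, hence $\Delta(xa)=(\Delta x)(\Delta a)$ acting on the appropriate vector), one gets
$$
W^*(1\otimes x)\bigl(\Lambda(p)\otimes\Lambda_{\varphi}(a)\bigr)
=W^*\bigl(\Lambda(p)\otimes\Lambda_{\varphi}(xa)\bigr)
=(\Lambda\otimes\Lambda_{\varphi})\bigl(\Delta(xa)(p\otimes1)\bigr),
$$
and the right side equals $(\Delta x)(\Lambda\otimes\Lambda_{\varphi})\bigl((\Delta a)(p\otimes1)\bigr)=(\Delta x)W^*\bigl(\Lambda(p)\otimes\Lambda_{\varphi}(a)\bigr)$, using $\Delta(xa)=(\Delta x)(\Delta a)$. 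Since such vectors are dense, the operator identity follows.

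\smallskip
\emph{Part (2).} Next I would show $EW^*=W^*$. On the same dense set, using Equation~\eqref{(WT_4)} and then $E(\Delta a)=\Delta a$ from Equation~\eqref{(EDelta)},
$$
EW^*\bigl(\Lambda(p)\otimes\Lambda_{\varphi}(a)\bigr)
=E(\Lambda\otimes\Lambda_{\varphi})\bigl((\Delta a)(p\otimes1)\bigr)
=(\Lambda\otimes\Lambda_{\varphi})\bigl(E(\Delta a)(p\otimes1)\bigr),
$$
which equals $(\Lambda\otimes\Lambda_{\varphi})\bigl((\Delta a)(p\otimes1)\bigr)=W^*\bigl(\Lambda(p)\otimes\Lambda_{\varphi}(a)\bigr)$. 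Here one must be slightly careful that $E$ commutes past the $\Lambda\otimes\Lambda_{\varphi}$ in the sense that, since $E\in M(A\otimes A)$ and $(\Delta a)(p\otimes1)\in A\otimes A$ (at least after approximation), $E\cdot(\Lambda\otimes\Lambda_{\varphi})(z)=(\Lambda\otimes\Lambda_{\varphi})(Ez)$ for $z\in{\mathfrak N}_{\eta\otimes\varphi}$ — the same step that is used silently in Proposition~\ref{Vprop}(2).

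\smallskip
\emph{Part (3).} Finally, part (3) is immediate: since $EW^*=W^*$, every vector in $\operatorname{Ran}(W^*)$ lies in $\operatorname{Ran}(E)$, and $\operatorname{Ran}(E)$ is closed because $E$ is a projection, so $\overline{\operatorname{Ran}(W^*)}\subseteq\operatorname{Ran}(E)$. The only genuinely non-routine point anywhere is making sure the identity $E\cdot(\Lambda\otimes\Lambda_{\varphi})(z)=(\Lambda\otimes\Lambda_{\varphi})(Ez)$ is legitimate for the relevant $z$; this is handled exactly as in the $V$ case, so I expect no real obstacle. The whole argument is the $(A,\Delta^{\operatorname{cop}})$ version of Proposition~\ref{Vprop}, as already hinted in the Remark following Proposition~\ref{RR'LL'}.
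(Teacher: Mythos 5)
Your proof is correct and is exactly the argument the paper intends: the paper omits the proof of this proposition, saying it is proved just like Propositions~\ref{Vdefn} and \ref{Vprop}, and your computation on the dense set of vectors $\Lambda(p)\otimes\Lambda_{\varphi}(a)$ using Equation~\eqref{(WT_4)}, the left ideal property of ${\mathfrak N}_{\varphi}$, the GNS intertwining, and $E(\Delta a)=\Delta a$ is precisely that mirrored argument. Nothing further is needed.
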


In the quantum group case \cite{KuVa}, \cite{KuVavN}, \cite{VDvN}, we have $E=1\otimes1$ 
and it is known that $V$ and $W$ are unitaries.  In our case ($E\ne1\otimes1$), this is 
no longer expected.  It turns out that $V$ and $W$ are partial isometries such that 
$\operatorname{Ran}(V)=\operatorname{Ran}(E)$ for $V$, and $\operatorname{Ran}(W^*)
=\operatorname{Ran}(E)$ for $W$.  However, the proof of ``$\supseteq$'' is rather complicated 
and needs more work.  This is what we aim to establish in the following two subsections.

\subsection{$V$ is a partial isometry}

Let us begin with a result that will later help us understand how the operator $V^*$ 
behaves:

\begin{prop}\label{V^*prep}
Consider $a=(\operatorname{id}\otimes\varphi)\bigl(\Delta(r^*y)(1\otimes s)\bigr)$, where 
$r,s\in{\mathfrak N}_{\varphi}$ and $y\in{\mathfrak N}_{\psi}$.  Then $a\in{\mathfrak N}_{\psi}$. 
Consider also $b\in{\mathfrak N}_{\eta}$.

Let $z=(\operatorname{id}\otimes\operatorname{id}\otimes\varphi)
\bigl(\Delta_{13}(r^*y)\Delta_{23}(s)\bigr)\in\overline{\mathfrak N}_{\psi\otimes\operatorname{id}}$. 
Then we have:
$$
z(1\otimes b)=(\operatorname{id}\otimes\operatorname{id}\otimes\varphi)
\bigl(\Delta_{13}(r^*y)\Delta_{23}(s)(1\otimes b\otimes1)\bigr).
$$
With the notation as above, we have: 
$$
V\bigl((\Lambda_{\psi}\otimes\Lambda)(z(1\otimes b))\bigr)=E\bigl(\Lambda_{\psi}(a)
\otimes\Lambda(b)\bigr).
$$
\end{prop}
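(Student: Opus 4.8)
The plan is to verify the stated formula by testing it against vectors $\Lambda_\psi(q)\otimes\Lambda(d)$ with $q\in{\mathfrak N}_\psi$ and $d\in{\mathfrak N}_\eta$, which span a dense subspace, and to use the defining formula \eqref{(VT_1)} for $V$ together with the characterization of $Q_R$ and the left-invariance relation of $\varphi$ from Section~2. First I would record that $a=(\operatorname{id}\otimes\varphi)\bigl(\Delta(r^*y)(1\otimes s)\bigr)$ is exactly the kind of element appearing in Proposition~\ref{Q_Rcharacterization} and in Proposition~\ref{Q_Rrightinvariance}, so $a\in{\mathfrak N}_\psi$ follows from the right invariance of $\psi$ (as in Proposition~\ref{V^*prep}'s hypotheses, this is the element $q$ of Proposition~\ref{Q_Rrightinvariance}(2) with $r^*y$ in place of $r^*y$ and $s$ in place of $s$). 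The element $z$ lies in $\overline{\mathfrak N}_{\psi\otimes\operatorname{id}}$ because $\Delta_{13}(r^*y)\in{\mathfrak N}_{\operatorname{id}\otimes\operatorname{id}\otimes\varphi}$ (left invariance of $\varphi$ applied in the third leg) and because applying $\psi$ in the first leg of $\Delta(r^*y)$ stays bounded, again by right invariance; the displayed formula $z(1\otimes b)=(\operatorname{id}\otimes\operatorname{id}\otimes\varphi)\bigl(\Delta_{13}(r^*y)\Delta_{23}(s)(1\otimes b\otimes1)\bigr)$ is then just pulling the bounded operator $1\otimes b\otimes1$ through the slice map $\operatorname{id}\otimes\operatorname{id}\otimes\varphi$, since $1\otimes b\otimes1$ commutes with everything in the third leg.

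The core computation is to evaluate $\bigl\langle V\bigl((\Lambda_\psi\otimes\Lambda)(z(1\otimes b))\bigr),\ \Lambda_\psi(q)\otimes\Lambda(d)\bigr\rangle$. The idea is that $z$ itself is of the form $(\psi\otimes\operatorname{id}\otimes\cdots)$-type, so that $z(1\otimes b)$ should be recognized as $(\Delta a')(1\otimes b)$ up to a further slice; more precisely I would write $z(1\otimes b)$ using Proposition~\ref{Q_Rcharacterization}: since $(Q_R\otimes\operatorname{id})\bigl(\Delta_{13}(r^*y)(1\otimes b\otimes s)\bigr)=\Delta_{13}(r^*y)(1\otimes E)(1\otimes b\otimes s)$, applying $\operatorname{id}\otimes\operatorname{id}\otimes\varphi$ and using the definition \eqref{(defQ_R)} of $Q_R$ shows that $Q_R(a\otimes b)=(\operatorname{id}\otimes\operatorname{id}\otimes\varphi)\bigl(\Delta_{13}(r^*y)(1\otimes E)(1\otimes b\otimes s)\bigr)$. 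The key point is then to identify $z(1\otimes b)$ with $(\Delta\otimes\operatorname{id})$ applied to something, via the coassociativity relation $(\Delta\otimes\operatorname{id})(\Delta x)=(\operatorname{id}\otimes\Delta)(\Delta x)$ and the identity $(\operatorname{id}\otimes\Delta)(E)=(E\otimes1)(1\otimes E)$: concretely, $\Delta_{13}(r^*y)\Delta_{23}(s)$ should be massaged into $(\operatorname{id}\otimes\Delta)\bigl(\Delta(r^*y)(1\otimes s)\bigr)$ after inserting the idempotent $E$ in the right place, using $E\Delta(\cdot)=\Delta(\cdot)$ from \eqref{(EDelta)} and the weak comultiplicativity of the unit. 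Once $z(1\otimes b)$ is written as $(\operatorname{id}\otimes\Delta)\bigl((\text{stuff involving }a)\bigr)(1\otimes1\otimes b)$, one slices the third leg by $\varphi$ and recognizes $(\Lambda_\psi\otimes\Lambda)(z(1\otimes b))$, so that applying $V$ via \eqref{(VT_1)} and using $EV=V$ (Proposition~\ref{Vprop}(2)) produces $E(\Lambda_\psi(a)\otimes\Lambda(b))$.

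The main obstacle, I expect, is the bookkeeping needed to legitimately move the unbounded slice map $\operatorname{id}\otimes\operatorname{id}\otimes\varphi$ past $V$ and the various multiplications: all of $z$, $z(1\otimes b)$, and $\Delta a$ live only in closures $\overline{\mathfrak N}$ or $\overline{\mathfrak M}$, so one must argue by approximation (using, e.g., an approximate unit and the lower semi-continuity/normality of $\varphi$ and $\psi$, together with Lemma~\ref{omega_xizetaLem}) rather than by a formal manipulation, and one must check at each stage that the relevant elements lie in the correct ${\mathfrak N}$-domains so that $\Lambda_\psi$ and $\Lambda$ may be applied. The algebraic heart, by contrast — rewriting $\Delta_{13}(r^*y)\Delta_{23}(s)$ in terms of $(\operatorname{id}\otimes\Delta)(\Delta(r^*y)(1\otimes s))$ via coassociativity and the $E$-identities — is a direct application of Equations~\eqref{(coassociativity)}, \eqref{(EDelta)}, and Condition~(3) on $E$ from Definition~\ref{definitionlcqgroupoid}, and should go through cleanly once the right parenthesization is found. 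I would organize the write-up so that the domain/approximation checks are dispatched first (citing the left invariance of $\varphi$ and the right invariance of $\psi$, and Proposition~\ref{Q_Rrightinvariance}), and the identification of $z(1\otimes b)$ with $V^{-1}$-preimage of $E(\Lambda_\psi(a)\otimes\Lambda(b))$ is carried out last as a computation tested against the dense set of vectors $\Lambda_\psi(q)\otimes\Lambda(d)$.
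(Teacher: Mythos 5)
Your framing (test against the dense set of vectors $\Lambda_{\psi}(w)\otimes\Lambda(c)$, use Equation~\eqref{(VT_1)}, invoke coassociativity, and bring in the Section~2 machinery) is the right shape, and your preliminary claims ($a\in{\mathfrak N}_{\psi}$, $z\in\overline{\mathfrak N}_{\psi\otimes\operatorname{id}}$, the formula for $z(1\otimes b)$) are fine. But the mechanism you propose for producing the factor $E$ is not correct, and this is the heart of the proposition. First, there is no identity that turns $\Delta_{13}(r^*y)\Delta_{23}(s)$ into $(\operatorname{id}\otimes\Delta)\bigl(\Delta(r^*y)(1\otimes s)\bigr)$ ``after inserting $E$ in the right place'': these two elements differ by an application of $\Delta$ to a leg, not by multiplication with the idempotent, so no combination of \eqref{(EDelta)} and the weak comultiplicativity of the unit will convert one into the other. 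In the actual argument that conversion is not done at the algebra level at all; it is the operator $V$ itself which, when paired against $\Lambda_{\psi}(w)\otimes\Lambda(c)$, implements ``$\Delta$ on the first leg'' of $z(1\otimes b)$, after which coassociativity rewrites $(\Delta\otimes\operatorname{id})[\Delta(r^*y)]$ as $(\operatorname{id}\otimes\Delta)[\Delta(r^*y)]$. Second, $EV=V$ cannot finish the proof: it only shows the vector lies in $\operatorname{Ran}(E)$, not that it equals $E\bigl(\Lambda_{\psi}(a)\otimes\Lambda(b)\bigr)$.

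Where the factor $E$ really comes from is the (sharpened) \emph{left} invariance of $\varphi$, through $Q_{\lambda}$ --- not $Q_R$ and the right invariance of $\psi$ as you suggest. After slicing the first leg with $\psi(w^*\,\cdot\,)$ one is left with $(\operatorname{id}\otimes\varphi)\bigl((c^*\otimes1)\Delta(ps)\bigr)$, where $p=(\psi\otimes\operatorname{id})\bigl[(w^*\otimes1)\Delta(r^*y)\bigr]$; Proposition~\ref{Q_L'leftinvariance} converts this into $(\operatorname{id}\otimes\varphi)\bigl(Q_{\lambda}(c^*\otimes ps)\bigr)$, and unfolding $Q_{\lambda}$ (Proposition~\ref{Q_L'}) inserts $(E\otimes1)$ in front of $\Delta_{13}(r^*y)$, i.e.\ places $E$ in legs $1$--$2$, which after re-slicing is exactly $(\psi\otimes\eta)\bigl((w^*\otimes c^*)E(a\otimes b)\bigr)$. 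The $Q_R$ identity you quote instead inserts $(1\otimes E)$ in legs $2$--$3$, which is the wrong position and cannot yield $E(a\otimes b)$. So the domain bookkeeping you describe is the easy part here; the missing ingredient is the $Q_{\lambda}$/left-invariance step, without which the computation does not close.
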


\begin{proof}
As $y\in{\mathfrak N}_{\psi}$, it is easy to observe, by the right invariance of $\psi$, that 
$a\in{\mathfrak N}_{\psi}$ and $z\in\overline{\mathfrak N}_{\psi\otimes\operatorname{id}}$. 
We also have $z(1\otimes b)\in\overline{\mathfrak N}_{\psi\otimes\eta}$.

Suppose $w\in{\mathfrak N}_{\psi}$ (so $w^*\in{\mathfrak N}_{\psi}^*$) and $c\in{\mathfrak N}_{\eta}$ 
(so $c\in{\mathfrak N}_{\eta}^*$).  Then by the definition of the operator $V$, as in Equation~\eqref{(VT_1)}, 
we have:
\begin{align}
&\bigl\langle V((\Lambda_{\psi}\otimes\Lambda)[z(1\otimes b)]),\Lambda_{\psi}(w)\otimes
\Lambda(c)\bigr\rangle   \notag \\
&=(\psi\otimes\eta)\bigl((w^*\otimes c^*)(\operatorname{id}\otimes\operatorname{id}\otimes\varphi)
((\Delta\otimes\operatorname{id})[\Delta(r^*y)]\Delta_{23}(s)(1\otimes b\otimes1))\bigr).
\notag
\end{align}
By the coassociativity of $\Delta$, the right side becomes:
$$
=(\psi\otimes\eta)\bigl((w^*\otimes c^*)(\operatorname{id}\otimes\operatorname{id}\otimes\varphi)
((\operatorname{id}\otimes\Delta)[\Delta(r^*y)]\Delta_{23}(s)(1\otimes b\otimes1))\bigr).
$$
This is equal to 
\begin{align}
&=(\psi\otimes\eta\otimes\varphi)\bigl((w^*\otimes c^*\otimes1)(\operatorname{id}\otimes\Delta)
[\Delta(r^*y)(1\otimes s)](1\otimes b\otimes1)\bigr)   \notag \\
&=(\eta\otimes\varphi)\bigl((c^*\otimes1)\Delta((\psi\otimes\operatorname{id})[(w^*\otimes1)
\Delta(r^*y)]s)(b\otimes1)\bigr)   \notag \\
&=(\eta\otimes\varphi)\bigl((c^*\otimes1)\Delta(ps)(b\otimes1)\bigr)
=\eta\bigl((\operatorname{id}\otimes\varphi)((c^*\otimes1)\Delta(ps))\,b\bigr),
\notag
\end{align}
where we wrote $p=(\psi\otimes\operatorname{id})[(w^*\otimes1)\Delta(r^*y)]$.  

Apply here the result of Proposition~\ref{Q_L'leftinvariance}, a consequence of the left invariance 
of $\varphi$.  All the conditions for Equation~\eqref{(Q_L'leftinvariancenew)} hold, 
so we have:
$$
(\operatorname{id}\otimes\varphi)\bigl((c^*\otimes1)\Delta(ps)\bigr)
=(\operatorname{id}\otimes\varphi)\bigl(Q_{\lambda}(c^*\otimes ps)\bigr).
$$
But by the definition of the $Q_{\lambda}$ map as given in Proposition~\ref{Q_L'}, 
and remembering the definition of $p$ above, we have:
$$
Q_{\lambda}(c^*\otimes ps)
=(\psi\otimes\operatorname{id}\otimes\operatorname{id})\bigl((w^*\otimes c^*\otimes1)
(E\otimes1)\Delta_{13}(r^*y)\bigr)(1\otimes s).
$$

Therefore, combining all these together, we have:
\begin{align}
&\bigl\langle V((\Lambda_{\psi}\otimes\Lambda)[z(1\otimes b)]),\Lambda_{\psi}(w)\otimes
\Lambda(c)\bigr\rangle   \notag \\
&=\eta\bigl((\psi\otimes\operatorname{id}\otimes\varphi)[(w^*\otimes c^*\otimes1)(E\otimes1)
\Delta_{13}(r^*y)(1\otimes1\otimes s)]\,b\bigr)  \notag \\
&=(\psi\otimes\eta)\bigl((w^*\otimes c^*)E\,(\operatorname{id}\otimes\operatorname{id}\otimes
\varphi)[\Delta_{13}(r^*y)(1\otimes1\otimes s)]\,(1\otimes b)\bigr)  \notag \\
&=(\psi\otimes\eta)\bigl((w^*\otimes c^*)E(a\otimes b)\bigr) 
=\bigl\langle(\Lambda_{\psi}\otimes\Lambda)(E(a\otimes b)),
\Lambda_{\psi}(w)\otimes\Lambda(c)\bigr\rangle     \notag \\ 
&=\bigl\langle E(\Lambda_{\psi}(a)\otimes\Lambda(b)),
\Lambda_{\psi}(w)\otimes\Lambda(c)\bigr\rangle.
\notag
\end{align}
Third equality is just remembering that $a=(\operatorname{id}\otimes\varphi)
\bigl(\Delta(r^*y)(1\otimes s)\bigr)$, and the last equality is the property of the GNS-representation. 
Since this is true for arbitrary $w\in{\mathfrak N}_{\psi}$ and $c\in{\mathfrak N}_{\eta}$, we have 
shown the desired result.
\end{proof}

\begin{rem}
In the quantum group theory, a result analogous to the above proposition helps us  see that the map 
$\Lambda_{\psi}(a)\otimes\Lambda(b)\mapsto(\Lambda_{\psi}\otimes\Lambda)\bigl(z(1\otimes b)\bigr)$ 
determines the (adjoint) operator $V^*$.  It will turn out later that this fact is still true even in our more 
general setting, but at this stage (with $E\ne1\otimes1$), this is not so obvious.
\end{rem}

Define now a subspace ${\mathcal K}_{\psi}$ of ${\mathcal H}_{\psi}$, as follows:
\begin{equation}\label{(K_psi)}
{\mathcal K}_{\psi}:=\overline{\operatorname{span}}^{\|\ \|}\bigl\{\Lambda_{\psi}((\operatorname{id}
\otimes\varphi)[\Delta(r^*y)(1\otimes s)]):y\in{\mathfrak N}_{\psi},\,r,s\in{\mathfrak N}_{\varphi}\bigr\}.
\end{equation}
Then the result of Proposition~\ref{V^*prep} implies the following:

\begin{prop}\label{RanEsubsetRanV}
$E({\mathcal K}_{\psi}\otimes{\mathcal H})\subseteq\overline{V({\mathcal K}_{\psi}\otimes{\mathcal H})}$.
\end{prop}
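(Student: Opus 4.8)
The plan is to read off the inclusion directly from Proposition~\ref{V^*prep}. That proposition tells us that for every $a$ of the form $a=(\operatorname{id}\otimes\varphi)\bigl(\Delta(r^*y)(1\otimes s)\bigr)$ with $r,s\in{\mathfrak N}_{\varphi}$, $y\in{\mathfrak N}_{\psi}$, and every $b\in{\mathfrak N}_{\eta}$, the vector $E\bigl(\Lambda_{\psi}(a)\otimes\Lambda(b)\bigr)$ lies in the range of $V$; indeed it equals $V\bigl((\Lambda_{\psi}\otimes\Lambda)(z(1\otimes b))\bigr)$, where $z$ is the element displayed there. So the first step is simply to note that vectors of the form $\Lambda_{\psi}(a)\otimes\Lambda(b)$, with $a,b$ as above, are dense in ${\mathcal K}_{\psi}\otimes{\mathcal H}$: density of the $\Lambda_{\psi}(a)$ in ${\mathcal K}_{\psi}$ is the very definition~\eqref{(K_psi)} of ${\mathcal K}_{\psi}$, and density of the $\Lambda(b)$ for $b\in{\mathfrak N}_{\eta}$ in ${\mathcal H}=\overline{\Lambda({\mathfrak N}_{\eta})}$ is the GNS construction.

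The second step is a continuity argument. Both $E$ and $V$ are bounded operators (on ${\mathcal H}_{\psi}\otimes{\mathcal H}$, after the identifications $E=(\pi_{\psi}\otimes\pi)(E)$ and $V$ as in Proposition~\ref{Vdefn}), so the map $\eta\mapsto E\eta$ is norm-continuous and $\overline{V({\mathcal K}_{\psi}\otimes{\mathcal H})}$ is a closed subspace of ${\mathcal H}_{\psi}\otimes{\mathcal H}$. Fix an arbitrary $\xi\in{\mathcal K}_{\psi}$ and $\zeta\in{\mathcal H}$. Choose sequences (or nets) of the special vectors $\Lambda_{\psi}(a_n)\to\xi$ in ${\mathcal K}_{\psi}$ and $\Lambda(b_n)\to\zeta$ in ${\mathcal H}$; then $\Lambda_{\psi}(a_n)\otimes\Lambda(b_n)\to\xi\otimes\zeta$, hence $E\bigl(\Lambda_{\psi}(a_n)\otimes\Lambda(b_n)\bigr)\to E(\xi\otimes\zeta)$ by boundedness of $E$. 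By Proposition~\ref{V^*prep} each $E\bigl(\Lambda_{\psi}(a_n)\otimes\Lambda(b_n)\bigr)$ lies in $V({\mathcal K}_{\psi}\otimes{\mathcal H})$, so the limit $E(\xi\otimes\zeta)$ lies in its closure $\overline{V({\mathcal K}_{\psi}\otimes{\mathcal H})}$. Finally, since vectors $\xi\otimes\zeta$ of this form have dense span in ${\mathcal K}_{\psi}\otimes{\mathcal H}$ and $E$ is bounded, it follows that $E({\mathcal K}_{\psi}\otimes{\mathcal H})\subseteq\overline{V({\mathcal K}_{\psi}\otimes{\mathcal H})}$, which is the assertion.

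I do not anticipate any real obstacle here: all the work has already been done in establishing Proposition~\ref{V^*prep}, whose proof required the coassociativity of $\Delta$, the left invariance of $\varphi$ through Proposition~\ref{Q_L'leftinvariance}, and the algebra-level description of $Q_{\lambda}$. The only point that deserves a word of care is the passage from the ``generating'' vectors $\Lambda_{\psi}(a)\otimes\Lambda(b)$ to a dense subset of the full tensor product ${\mathcal K}_{\psi}\otimes{\mathcal H}$ — one should observe that this is not automatic from density of each factor separately but follows because finite sums of elementary tensors $\xi\otimes\zeta$ with $\xi\in{\mathcal K}_{\psi}$, $\zeta\in{\mathcal H}$ are dense, and each such $\xi\otimes\zeta$ is a limit of the special vectors by the factorwise approximation above. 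With that in hand the result is immediate.
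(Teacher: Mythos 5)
The density-and-continuity part of your argument is fine, but there is a genuine gap at the step where you assert that each special vector $E\bigl(\Lambda_{\psi}(a)\otimes\Lambda(b)\bigr)$ ``lies in $V({\mathcal K}_{\psi}\otimes{\mathcal H})$.'' Proposition~\ref{V^*prep} does not say that: it exhibits this vector as $V\bigl((\Lambda_{\psi}\otimes\Lambda)(z(1\otimes b))\bigr)$, where $(\Lambda_{\psi}\otimes\Lambda)\bigl(z(1\otimes b)\bigr)$ is a priori only an element of ${\mathcal H}_{\psi}\otimes{\mathcal H}$. At this point of the paper it is not yet known that ${\mathcal K}_{\psi}={\mathcal H}_{\psi}$ (that is exactly what the subsection is building toward, Theorem~\ref{K_psi=H_psi}), so the difference between $\overline{V({\mathcal K}_{\psi}\otimes{\mathcal H})}$ and $\overline{\operatorname{Ran}(V)}$ is precisely the crux, and your argument as written only yields the weaker inclusion $E({\mathcal K}_{\psi}\otimes{\mathcal H})\subseteq\overline{\operatorname{Ran}(V)}$.

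What is missing is a proof that the preimage vector itself lies in ${\mathcal K}_{\psi}\otimes{\mathcal H}$. The paper supplies this by expanding in an orthonormal basis $(e_j)_{j\in J}$ of ${\mathcal H}$,
\begin{equation*}
(\Lambda_{\psi}\otimes\Lambda)\bigl(z(1\otimes b)\bigr)
=\sum_{j\in J}\Lambda_{\psi}\bigl((\operatorname{id}\otimes\omega_{\Lambda(b),e_j})(z)\bigr)\otimes e_j
=\sum_{j\in J}\Lambda_{\psi}\bigl((\operatorname{id}\otimes\varphi)
[\Delta(r^*y)(1\otimes(\omega_{\Lambda(b),e_j}\otimes\operatorname{id})(\Delta s))]\bigr)\otimes e_j,
\end{equation*}
and observing that each component is of the form defining ${\mathcal K}_{\psi}$ in Equation~\eqref{(K_psi)} (the second slot $(\omega_{\Lambda(b),e_j}\otimes\operatorname{id})(\Delta s)$ lies in $\overline{\mathfrak N}_{\varphi}$ by the left invariance of $\varphi$), so the sum lies in ${\mathcal K}_{\psi}\otimes{\mathcal H}$. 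Without this computation, or some substitute for it, your chain of approximations lands only in the closed range of $V$ on the whole space, not in $\overline{V({\mathcal K}_{\psi}\otimes{\mathcal H})}$, and the subsequent bootstrapping in Equations~\eqref{(RangeVinRangeE)} and \eqref{(RangeV)} would not go through as stated.
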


\begin{proof}
With the notation as in Proposition~\ref{V^*prep}, we see that $\Lambda_{\psi}(a)
\in{\mathcal K}_{\psi}$, and such elements span a dense subspace of ${\mathcal K}_{\psi}$. 
Write $z\in\overline{\mathfrak N}_{\psi\otimes\operatorname{id}}$ as in that proposition, 
and let $b\in{\mathfrak N}_{\eta}$.

Note that using basically the same (tedious but straightforward) method as in 
Proposition~A.9 in \cite{KuVa}, we can show:
$$
(\Lambda_{\psi}\otimes\Lambda)\bigl(z(1\otimes b)\bigr)
=\sum_{j\in J}\Lambda_{\psi}\bigl((\operatorname{id}\otimes\omega_{\Lambda(b),e_j})(z)\bigr)
\otimes e_j,
$$
where $(e_j)_{j\in J}$ is an orthonormal basis for ${\mathcal H}$.  With the definition of $z$, 
this becomes:
$$
=\sum_{j\in J}\Lambda_{\psi}\bigl((\operatorname{id}\otimes\varphi)[\Delta(r^*y)
(1\otimes(\omega_{\Lambda(b),e_j}\otimes\operatorname{id})(\Delta s))]\bigr)
\otimes e_j.
$$
Expressed in this way, we can see that $(\Lambda_{\psi}\otimes\Lambda)\bigl(z(1\otimes b)\bigr)
\in{\mathcal K}_{\psi}\otimes{\mathcal H}$.

By the result $E\bigl(\Lambda_{\psi}(a)\otimes\Lambda(b)\bigr)
=V\bigl((\Lambda_{\psi}\otimes\Lambda)(z(1\otimes b))\bigr)$ from Proposition~\ref{V^*prep}, 
we see that $E({\mathcal K}_{\psi}\otimes{\mathcal H})
\subseteq\overline{V({\mathcal K}_{\psi}\otimes{\mathcal H})}$.
\end{proof}

\begin{rem}
While we know from Lemma~\ref{densesubsetA} that the elements of the form 
$(\operatorname{id}\otimes\varphi)[\Delta(r^*y)(1\otimes s)]$ span a dense subspace 
in $A$, this does not necessarily mean that the $\Lambda_{\psi}((\operatorname{id}\otimes
\varphi)[\Delta(r^*y)(1\otimes s)])$ span a dense subspace in ${\mathcal H}_{\psi}$. 
The norms involved (operator norm versus Hilbert space norm) are different.  So, 
at this stage, the result of the proposition saying that $E({\mathcal K}_{\psi}
\otimes{\mathcal H})\subseteq\overline{V({\mathcal K}_{\psi}\otimes{\mathcal H})}$ 
is not yet enough to argue that $\overline{\operatorname{Ran}(V)}\supseteq
\operatorname{Ran}(E)$.  We will have to give a separate proof that we indeed have 
${\mathcal K}_{\psi}={\mathcal H}_{\psi}$.  This is what we will achieve in this subsection.
\end{rem}

\begin{prop}\label{K_psinew}
Let ${\mathcal K}_{\psi}\subseteq{\mathcal H}_{\psi}$ be as defined in Equation~\eqref{(K_psi)}. 
Then
$$
{\mathcal K}_{\psi}=\overline{\operatorname{span}}^{\|\ \|}\bigl\{\Lambda_{\psi}((\operatorname{id}
\otimes\omega)(\Delta x)):x\in{\mathfrak N}_{\psi},\omega\in A^*\bigr\}.
$$
\end{prop}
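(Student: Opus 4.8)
The goal is to identify the two closed spans: the one defining ${\mathcal K}_{\psi}$, generated by vectors $\Lambda_{\psi}\bigl((\operatorname{id}\otimes\varphi)[\Delta(r^*y)(1\otimes s)]\bigr)$ with $y\in{\mathfrak N}_{\psi}$, $r,s\in{\mathfrak N}_{\varphi}$, and the span of $\Lambda_{\psi}\bigl((\operatorname{id}\otimes\omega)(\Delta x)\bigr)$ with $x\in{\mathfrak N}_{\psi}$, $\omega\in A^*$. The plan is to prove the two inclusions separately.

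For the inclusion ``$\subseteq$'': Given generators of ${\mathcal K}_{\psi}$, I would first argue that it suffices to take $\omega$ of the form $\omega_{\xi,\zeta}$ with $\xi,\zeta\in\mathcal H$, since these approximate arbitrary $\omega\in A^*$ in norm and $\Lambda_\psi\circ(\operatorname{id}\otimes\,\cdot\,)(\Delta x)$ is continuous in $\omega$ on a fixed $x\in{\mathfrak N}_\psi$ (boundedness coming from the right invariance, as in the norm estimate in the proof of Proposition~\ref{Vdefn}). The key computational step is then to recognize $(\operatorname{id}\otimes\varphi)[\Delta(r^*y)(1\otimes s)]$ as a combination of slices $(\operatorname{id}\otimes\omega)(\Delta x)$: writing $\varphi$ via its GNS map as $\varphi(c^*d)=\langle\Lambda_\varphi(d),\Lambda_\varphi(c)\rangle$, one gets, for $x = r^*y\in{\mathfrak N}_\psi$ (noting $r\in{\mathfrak N}_\varphi$ gives $r^*y\in{\mathfrak N}_\psi$ since ${\mathfrak N}_\psi$ is a left ideal), that $(\operatorname{id}\otimes\varphi)[\Delta(r^*y)(1\otimes s)](1\otimes t^*) = (\operatorname{id}\otimes\omega_{\Lambda_\varphi(s),\Lambda_\varphi(t)})(\Delta(r^*y))$ for suitable $t$, and by approximating one concludes $(\operatorname{id}\otimes\varphi)[\Delta(r^*y)(1\otimes s)]$ lies in the norm-closed span of the slices $(\operatorname{id}\otimes\omega)(\Delta x)$. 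Applying $\Lambda_\psi$ (and using the boundedness just mentioned to pass the closure through $\Lambda_\psi$) gives the inclusion $\subseteq$.

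For the inclusion ``$\supseteq$'': Fix $x\in{\mathfrak N}_\psi$ and $\omega = \omega_{\xi,\zeta}$ with, say, $\xi=\Lambda_\varphi(s)$, $\zeta = \Lambda_\varphi(r)$ for $r,s\in{\mathfrak N}_\varphi$ (such vectors span a dense subspace of $\mathcal H_\varphi$, and the left side is continuous in $\omega$ for fixed $x$). Then $(\operatorname{id}\otimes\omega_{\Lambda_\varphi(s),\Lambda_\varphi(r)})(\Delta x) = (\operatorname{id}\otimes\varphi)\bigl((1\otimes r^*)(\Delta x)(1\otimes s)\bigr)$. The issue is that this has the ``wrong shape'' — the $\Delta$ is applied to $x$, not to a product $r^*y$. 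I would fix this by using that $x\in{\mathfrak N}_\psi$ can be approximated in the relevant sense (i.e.\ $\Lambda_\psi(r^*x)\to\Lambda_\psi(x)$ along an approximate identity $r^*$ drawn from ${\mathfrak N}_\varphi$, available since ${\mathfrak N}_\varphi$ is dense and $\Lambda_\psi$ is closed / using boundedness of left multiplication on $\Lambda_\psi$), reducing to $x = r^*y$ with $r\in{\mathfrak N}_\varphi$, $y\in{\mathfrak N}_\psi$; then $(\operatorname{id}\otimes\varphi)\bigl((1\otimes r'^*)(\Delta(r^*y))(1\otimes s)\bigr)$ must be massaged — absorbing the leftover $1\otimes r'^*$ — to match the form $(\operatorname{id}\otimes\varphi)[\Delta(r''^*y')(1\otimes s'')]$ appearing in \eqref{(K_psi)}. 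Here one uses the left invariance of $\varphi$ (so that $(1\otimes r'^*)(\Delta(r^*y))\in{\mathfrak M}_{\operatorname{id}\otimes\varphi}$ and one may manipulate the slice) together with the fact that such products are again of the admissible type.

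\textbf{Main obstacle.} The genuinely delicate point is the $\supseteq$ direction: matching the bilateral-slice form $(\operatorname{id}\otimes\varphi)\bigl((1\otimes r^*)(\Delta x)(1\otimes s)\bigr)$ to the one-sided form $(\operatorname{id}\otimes\varphi)[\Delta(r^*y)(1\otimes s)]$ requires moving the factor $1\otimes r^*$ \emph{inside} $\Delta$, which is not literally possible since $\Delta$ is only a homomorphism. The resolution is to work at the level of $\Lambda_\psi$-images rather than algebra elements, using the closedness of $\Lambda_\psi$ and an approximate-identity argument on the left, exactly the kind of care already flagged in the Remark after Proposition~\ref{RanEsubsetRanV} concerning the discrepancy between operator norm and Hilbert space norm; all boundedness facts needed are supplied by the right-invariance estimate established in the proof of Proposition~\ref{Vdefn}.
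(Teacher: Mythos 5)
Your outline shares two steps with the paper's argument --- reducing to vector functionals via the boundedness of $\omega\mapsto\Lambda_{\psi}\bigl((\operatorname{id}\otimes\omega)(\Delta x)\bigr)=(\operatorname{id}\otimes\omega)(V)\Lambda_{\psi}(x)$ for fixed $x$, and reducing $x\in{\mathfrak N}_{\psi}$ to products $r^*y$ by approximating $\Lambda_{\psi}(x)$ --- but the point you yourself flag as the main obstacle is a genuine gap, and the idea that closes it in the paper is missing from your proposal: the KMS/Tomita-algebra identity. For $a,b\in{\mathcal T}_{\varphi}$ and $s:=a\sigma^{\varphi}_{-i}(b^*)\in{\mathcal T}_{\varphi}\subseteq{\mathfrak N}_{\varphi}$ one has $\varphi(b^*za)=\varphi(zs)$ for all $z$, i.e.\ $\omega_{\Lambda_{\varphi}(a),\Lambda_{\varphi}(b)}=\varphi(\,\cdot\,s)$ as functionals. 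This converts the bilateral slice $(\operatorname{id}\otimes\varphi)\bigl[(1\otimes b^*)\Delta(r^*y)(1\otimes a)\bigr]$ into the unilateral form $(\operatorname{id}\otimes\varphi)\bigl[\Delta(r^*y)(1\otimes s)\bigr]$ \emph{exactly}: nothing ever has to be moved inside $\Delta$ or removed by a limit, and the only approximations left are in the variables where boundedness is actually available ($\omega$ in norm for fixed $x$, and $\Lambda_{\psi}(x)$ in Hilbert norm for fixed bounded $\omega$).

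Your substitute mechanisms do not deliver this conversion. In your ``$\subseteq$'' direction you produce an operator-norm approximation of $(\operatorname{id}\otimes\varphi)\bigl[\Delta(r^*y)(1\otimes s)\bigr]$ by slices $(\operatorname{id}\otimes\omega_{\Lambda_{\varphi}(s),\Lambda_{\varphi}(t)})(\Delta(r^*y))$ and then want to ``apply $\Lambda_{\psi}$ and pass the closure through''; but $\Lambda_{\psi}$ is not continuous from the operator norm to the Hilbert norm (exactly the warning in the Remark after Proposition~\ref{RanEsubsetRanV}), and the boundedness you cite (in $\|\omega\|$, from the estimate in Proposition~\ref{Vdefn}) does not apply, because the limiting functional $\varphi(\,\cdot\,s)$ is unbounded, so the functionals $\omega_{\Lambda_{\varphi}(s),\Lambda_{\varphi}(t)}$ do not converge to it in norm as $t^*\to1$. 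Symmetrically, in your ``$\supseteq$'' direction the leftover factor $1\otimes t^*$ sits in the sliced second leg, so ``closedness of $\Lambda_{\psi}$ plus an approximate identity on the left'' cannot absorb it; making such a limit work would require a Hilbert-norm bound of the type $\|\Lambda_{\psi}\bigl((\operatorname{id}\otimes\varphi)[\Delta(r^*y)(1\otimes s'')]\bigr)\|\le\|\Lambda_{\varphi}(s'')\|\,\|\Lambda_{\varphi}(r)\|\,\|(\psi\otimes\operatorname{id})(\Delta(y^*y))\|^{1/2}$, which is nowhere available at this stage (it is essentially Lemma~\ref{lemmaidomegacW}, proved only later via $W$ and Proposition~\ref{DeltaW}). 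With the KMS identity the whole issue disappears: for $s$ of the special form the generator of ${\mathcal K}_{\psi}$ in Equation~\eqref{(K_psi)} \emph{is} a vector $\Lambda_{\psi}\bigl((\operatorname{id}\otimes\omega)(\Delta x)\bigr)$ with $\omega$ a bounded vector functional, and conversely, which is how the paper obtains both inclusions.
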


\begin{proof}
Let $x\in{\mathfrak N}_{\psi}$.  Then, for $\omega\in A^*$, we know from the definition of $V$ 
(see Proposition~\ref{Vdefn}) that $\Lambda_{\psi}\bigl((\operatorname{id}\otimes\omega)
(\Delta x)\bigr)=(\operatorname{id}\otimes\omega)(V)\Lambda_{\psi}(x)$. 
As $(\operatorname{id}\otimes\omega)(V)$ is a bounded operator, the map 
$\omega\mapsto\Lambda_{\psi}\bigl((\operatorname{id}\otimes\omega)(\Delta x)\bigr)$ 
is a bounded map.  It can be extended, as long as the expression remains valid.

Note that any $\omega\in A^*$ can be approximated by the $\omega_{\Lambda(a),
\Lambda(b)}$, for $a,b\in{\mathcal T}_{\varphi}$ (the Tomita subalgebra). 
It follows that any $\Lambda_{\psi}((\operatorname{id}\otimes\omega)(\Delta x))$, for 
$x\in{\mathfrak N}_{\psi}$, $\omega\in A^*$, can be approximated by the 
$\Lambda_{\psi}((\operatorname{id}\otimes\omega_{\Lambda(a),\Lambda(b)})(\Delta(r^*y))$, 
for $y\in{\mathfrak N}_{\psi}$, $r\in{\mathfrak N}_{\varphi}$.  

Meanwhile, it is easy to see that $\omega_{\Lambda(a),\Lambda(b)}=\varphi(b^*\,\cdot\,a)$, 
because for any $x\in A$ we have: $\omega_{\Lambda(a),\Lambda(b)}(x)=\bigl\langle x\Lambda(a),
\Lambda(b)\bigr\rangle=\varphi(b^*xa)$.  So we have 
\begin{align}
\Lambda_{\psi}((\operatorname{id}\otimes\omega_{\Lambda(a),\Lambda(b)})(\Delta(r^*y))
&=\Lambda_{\psi}\bigl((\operatorname{id}\otimes\varphi)[(1\otimes b^*)\Delta(r^*y)(1\otimes a)]\bigr)
\notag \\
&=\Lambda_{\psi}\bigl((\operatorname{id}\otimes\varphi)[\Delta(r^*y)(1\otimes s)]\bigr).
\notag
\end{align}

In this way, we just showed that 
\begin{align}
{\mathcal K}_{\psi}&=\overline{\operatorname{span}}^{\|\ \|}\bigl\{\Lambda_{\psi}((\operatorname{id}
\otimes\varphi)[\Delta(r^*y)(1\otimes s)]):y\in{\mathfrak N}_{\psi},\,r,s\in{\mathfrak N}_{\varphi}\bigr\}
\notag \\
&=\overline{\operatorname{span}}^{\|\ \|}\bigl\{\Lambda_{\psi}((\operatorname{id}\otimes
\omega_{\Lambda(a),\Lambda(b)})(\Delta x)):x\in{\mathfrak N}_{\psi},a,b\in{\mathcal T}_{\varphi}\bigr\}
\notag  \\
&=\overline{\operatorname{span}}^{\|\ \|}\bigl\{\Lambda_{\psi}((\operatorname{id}\otimes\omega)
(\Delta x)):x\in{\mathfrak N}_{\psi},\omega\in A^*\bigr\}.
\notag 
\end{align}
\end{proof}

\begin{cor}
By the new characterization of the subspace ${\mathcal K}_{\psi}$, we have 
$$
\overline{\operatorname{Ran}(V)}=\overline{V({\mathcal H}_{\psi}\otimes{\mathcal H})}
\subseteq{\mathcal K}_{\psi}\otimes{\mathcal H}. 
$$
\end{cor}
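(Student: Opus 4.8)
The plan is to reduce the claimed inclusion to a statement about a spanning set of $\operatorname{Ran}(V)$, and then to combine the basis expansion of $V$ with the new description of ${\mathcal K}_{\psi}$ obtained in Proposition~\ref{K_psinew}.  Since $\operatorname{Ran}(V)=V({\mathcal H}_{\psi}\otimes{\mathcal H})$ by definition, only the inclusion into ${\mathcal K}_{\psi}\otimes{\mathcal H}$ needs proof.

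First I would note that, because $\Lambda_{\psi}({\mathfrak N}_{\psi})$ is dense in ${\mathcal H}_{\psi}$ and $\Lambda({\mathfrak N}_{\eta})$ is dense in ${\mathcal H}$, the elementary tensors $\Lambda_{\psi}(p)\otimes\Lambda(a)$, with $p\in{\mathfrak N}_{\psi}$ and $a\in{\mathfrak N}_{\eta}$, span a dense subspace of ${\mathcal H}_{\psi}\otimes{\mathcal H}$.  As $V$ is bounded and ${\mathcal K}_{\psi}\otimes{\mathcal H}$ is closed, it is enough to check that $V\bigl(\Lambda_{\psi}(p)\otimes\Lambda(a)\bigr)\in{\mathcal K}_{\psi}\otimes{\mathcal H}$ for all such $p,a$.

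Next, fixing $p$, $a$ and an orthonormal basis $(e_j)_{j\in J}$ of ${\mathcal H}$, I would expand $V\bigl(\Lambda_{\psi}(p)\otimes\Lambda(a)\bigr)$ along this basis in the second leg.  Pairing with $\zeta\otimes e_k$ and using the defining property of $V$ from Proposition~\ref{Vdefn}, namely $(\operatorname{id}\otimes\omega_{\Lambda(a),e_k})(V)\Lambda_{\psi}(p)=\Lambda_{\psi}\bigl((\operatorname{id}\otimes\omega_{\Lambda(a),e_k})(\Delta p)\bigr)$, yields
$$
V\bigl(\Lambda_{\psi}(p)\otimes\Lambda(a)\bigr)=\sum_{j\in J}\Lambda_{\psi}\bigl((\operatorname{id}\otimes\omega_{\Lambda(a),e_j})(\Delta p)\bigr)\otimes e_j,
$$
the series converging in ${\mathcal H}_{\psi}\otimes{\mathcal H}$; this is the same bookkeeping as was already used for $V$ in the proof of Proposition~\ref{RanEsubsetRanV} (cf.\ Proposition~A.9 of \cite{KuVa}).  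Each summand is of the form $\Lambda_{\psi}\bigl((\operatorname{id}\otimes\omega)(\Delta x)\bigr)\otimes e_j$ with $x=p\in{\mathfrak N}_{\psi}$ and $\omega=\omega_{\Lambda(a),e_j}$ (restricted to $A$) an element of $A^*$, so it lies in ${\mathcal K}_{\psi}\otimes{\mathcal H}$ by Proposition~\ref{K_psinew}; since that subspace is closed, the limit of the series lies there as well.  Together with the first step this gives $\overline{\operatorname{Ran}(V)}\subseteq{\mathcal K}_{\psi}\otimes{\mathcal H}$.

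There is no serious obstacle here: the only point requiring attention is the convergence of the basis expansion in the previous step, but this is precisely the argument already invoked for $V$ in Proposition~\ref{RanEsubsetRanV}, so it introduces nothing new.  Everything else is a direct application of Proposition~\ref{K_psinew} together with the density of the algebraic tensor product of $\Lambda_{\psi}({\mathfrak N}_{\psi})$ and $\Lambda({\mathfrak N}_{\eta})$ in ${\mathcal H}_{\psi}\otimes{\mathcal H}$.
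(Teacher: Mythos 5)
Your proposal is correct and follows essentially the same route as the paper: both arguments slice the second leg of $V$, use the defining relation $(\operatorname{id}\otimes\omega)(V)\Lambda_{\psi}(p)=\Lambda_{\psi}\bigl((\operatorname{id}\otimes\omega)(\Delta p)\bigr)$ together with Proposition~\ref{K_psinew} to see that these slices land in ${\mathcal K}_{\psi}$, and then conclude by density and closedness. Your orthonormal-basis expansion merely makes explicit the step (implicit in the paper's shorter proof) that a vector all of whose second-leg components lie in ${\mathcal K}_{\psi}$ belongs to ${\mathcal K}_{\psi}\otimes{\mathcal H}$.
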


\begin{proof}
For any $x\in{\mathfrak N}_{\psi}$, recall that 
$(\operatorname{id}\otimes\omega)(V)\Lambda_{\psi}(x)
=\Lambda_{\psi}\bigl((\operatorname{id}\otimes\omega)(\Delta x)\bigr)$, which is contained in 
${\mathcal K}_{\psi}$ by Proposition~\ref{K_psinew}.  This is true for any $\omega\in{\mathcal B}
({\mathcal H})_*$.  As the elements $\Lambda_{\psi}(x)$, $x\in{\mathfrak N}_{\psi}$, are dense 
in ${\mathcal H}_{\psi}$, it follows that $\overline{V({\mathcal H}_{\psi}\otimes{\mathcal H})}
\subseteq{\mathcal K}_{\psi}\otimes{\mathcal H}$.
\end{proof}

Combine the result of the Corollary, together with our earlier result in Proposition~\ref{RanEsubsetRanV} 
that $E({\mathcal K}_{\psi}\otimes{\mathcal H})\subseteq\overline{V({\mathcal K}_{\psi}\otimes
{\mathcal H})}$.  We then have:
\begin{equation}\label{(RangeVinRangeE)}
E({\mathcal K}_{\psi}\otimes{\mathcal H})\subseteq\overline{V({\mathcal K}_{\psi}\otimes{\mathcal H})}
\subseteq\overline{V({\mathcal H}_{\psi}\otimes{\mathcal H})}
\subseteq{\mathcal K}_{\psi}\otimes{\mathcal H}.
\end{equation}
Next, apply $E$ from the left, and use the fact that $EV=V$ (from Proposition~\ref{Vprop}) and that 
$E^2=E$.  Then: 
$$
E({\mathcal K}_{\psi}\otimes{\mathcal H})\subseteq\overline{V({\mathcal K}_{\psi}\otimes{\mathcal H})}
\subseteq\overline{V({\mathcal H}_{\psi}\otimes{\mathcal H})}
\subseteq E({\mathcal K}_{\psi}\otimes{\mathcal H}).
$$
From this, we observe the following: 
\begin{equation}\label{(RangeV)}
\overline{V({\mathcal K}_{\psi}\otimes{\mathcal H})}
=\overline{V({\mathcal H}_{\psi}\otimes{\mathcal H})}
=E({\mathcal K}_{\psi}\otimes{\mathcal H}).
\end{equation}

Motivated by Equations~\eqref{(RangeVinRangeE)} and \eqref{(RangeV)}, let us consider 
$U:=V|_{{\mathcal K}_{\psi}\otimes{\mathcal H}}$, which is an operator contained in ${\mathcal B}
({\mathcal K}_{\psi}\otimes{\mathcal H})$.  Equation~\eqref{(RangeV)} means that 
$\overline{\operatorname{Ran}(U)}=E({\mathcal K}_{\psi}\otimes{\mathcal H})$. 
See also the next result: 

\begin{prop}\label{U^*}
Let $U:=V|_{{\mathcal K}_{\psi}\otimes{\mathcal H}}$.  Then $U^*\in{\mathcal B}({\mathcal K}_{\psi}
\otimes{\mathcal H})$ is characterized by
$$
\Lambda_{\psi}(a)\otimes\Lambda(b)\mapsto(\Lambda_{\psi}\otimes\Lambda)\bigl(z(1\otimes b)\bigr),
$$
where $a$, $z$, $b$ are as in Proposition~\ref{V^*prep}.  That is, 
$a=(\operatorname{id}\otimes\varphi)\bigl(\Delta(r^*y)(1\otimes s)\bigr)$, for 
$y\in{\mathfrak N}_{\psi}$, $r,s\in{\mathfrak N}_{\varphi}$ and $z=(\operatorname{id}\otimes
\operatorname{id}\otimes\varphi)\bigl(\Delta_{13}(r^*y)\Delta_{23}(s)\bigr)$.  Also 
$b\in{\mathfrak N}_{\eta}$.
\end{prop}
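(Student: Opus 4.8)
The plan is to pin down $U^*$ by a direct inner‑product computation, reducing everything to one scalar identity that is then handled with the invariance machinery of Section~2, along the same lines as the proof of Proposition~\ref{V^*prep}.

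First, some bookkeeping. By the Corollary to Proposition~\ref{K_psinew} we have $\overline{\operatorname{Ran}(V)}\subseteq{\mathcal K}_{\psi}\otimes{\mathcal H}$, so $U=V|_{{\mathcal K}_{\psi}\otimes{\mathcal H}}$ really does map ${\mathcal K}_{\psi}\otimes{\mathcal H}$ into itself; hence $U$ and $U^*$ belong to ${\mathcal B}({\mathcal K}_{\psi}\otimes{\mathcal H})$. Writing $P$ for the orthogonal projection of ${\mathcal H}_{\psi}\otimes{\mathcal H}$ onto ${\mathcal K}_{\psi}\otimes{\mathcal H}$, the identity $\langle U^*\xi,\zeta\rangle=\langle\xi,V\zeta\rangle$ for $\xi,\zeta\in{\mathcal K}_{\psi}\otimes{\mathcal H}$ shows $U^*\xi=P\,V^*\xi$. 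The vectors $\Lambda_{\psi}(a)\otimes\Lambda(b)$, with $a=(\operatorname{id}\otimes\varphi)(\Delta(r^*y)(1\otimes s))$ as in Proposition~\ref{V^*prep} and $b\in{\mathfrak N}_{\eta}$, span a dense subspace of ${\mathcal K}_{\psi}\otimes{\mathcal H}$ (by the definition~\eqref{(K_psi)} of ${\mathcal K}_{\psi}$ together with density of $\Lambda({\mathfrak N}_{\eta})$ in ${\mathcal H}$), so it suffices to compute $U^*$ on these. Moreover the corresponding vector $\eta_0:=(\Lambda_{\psi}\otimes\Lambda)(z(1\otimes b))$ already lies in ${\mathcal K}_{\psi}\otimes{\mathcal H}$; this was shown inside the proof of Proposition~\ref{RanEsubsetRanV}, where $\eta_0$ was rewritten as $\sum_{j}\Lambda_{\psi}\bigl((\operatorname{id}\otimes\varphi)[\Delta(r^*y)(1\otimes(\omega_{\Lambda(b),e_j}\otimes\operatorname{id})(\Delta s))]\bigr)\otimes e_j$. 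Therefore it is enough to prove $V^*\bigl(\Lambda_{\psi}(a)\otimes\Lambda(b)\bigr)=\eta_0$; then $U^*\bigl(\Lambda_{\psi}(a)\otimes\Lambda(b)\bigr)=P\eta_0=\eta_0$, and since this assignment then agrees with the bounded operator $U^*$ on a dense set it is automatically well defined and extends to $U^*$.

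To prove $V^*\bigl(\Lambda_{\psi}(a)\otimes\Lambda(b)\bigr)=\eta_0$ I would test against the total family $\{\Lambda_{\psi}(w)\otimes\Lambda(c):w\in{\mathfrak N}_{\psi},\,c\in{\mathfrak N}_{\eta}\}$ of ${\mathcal H}_{\psi}\otimes{\mathcal H}$. Using \eqref{(VT_1)} to replace $V(\Lambda_{\psi}(w)\otimes\Lambda(c))$ by $(\Lambda_{\psi}\otimes\Lambda)\bigl((\Delta w)(1\otimes c)\bigr)$, and expanding $z(1\otimes b)$ as in Proposition~\ref{V^*prep}, the desired equality $\langle\eta_0,\Lambda_{\psi}(w)\otimes\Lambda(c)\rangle=\langle\Lambda_{\psi}(a)\otimes\Lambda(b),V(\Lambda_{\psi}(w)\otimes\Lambda(c))\rangle$ becomes the scalar identity
\[
(\psi\otimes\eta\otimes\varphi)\bigl((w^*\otimes c^*\otimes1)\,\Delta_{13}(r^*y)\,\Delta_{23}(s)\,(1\otimes b\otimes1)\bigr)
=(\psi\otimes\eta)\bigl((1\otimes c^*)\Delta(w^*)(a\otimes b)\bigr),
\]
to be checked for all $w\in{\mathfrak N}_{\psi}$, $c\in{\mathfrak N}_{\eta}$, with $a=(\operatorname{id}\otimes\varphi)(\Delta(r^*y)(1\otimes s))$.

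Establishing this identity is the main step, and I would do it exactly as in the proof of Proposition~\ref{V^*prep}: substitute the defining expression for $a$ on the right-hand side to open up a third leg carrying $\varphi$, use the coassociativity~\eqref{(coassociativity)} of $\Delta$ (together with $E(\Delta p)=\Delta p=(\Delta p)E$ from~\eqref{(EDelta)}) to bring $\Delta_{13}(r^*y)\Delta_{23}(s)$ and $\Delta(w^*)$ to a common bracketing, and then transport the $\varphi$-leg across by means of the left-invariance consequence of Section~2 --- concretely Proposition~\ref{Q_L'leftinvariance} and the algebra-level description of $Q_{\lambda}$ in Proposition~\ref{Q_L'}(4) --- after which the two sides coincide. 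The main obstacle is purely one of bookkeeping: one has to keep track of which of $\psi$, $\eta$, $\varphi$ acts on which leg at every stage, and, more importantly, verify at each step that the element being manipulated sits in the appropriate ${\mathfrak M}$- or ${\mathfrak N}$-space so that weights may be pulled through, the $Q$-maps applied, and the invariance identities invoked --- and this is precisely where $a\in{\mathfrak N}_{\psi}$ (from Proposition~\ref{V^*prep}), $b,c\in{\mathfrak N}_{\eta}$, $r,s\in{\mathfrak N}_{\varphi}$ and $y\in{\mathfrak N}_{\psi}$ enter. It is worth emphasising that this identity cannot be obtained from Proposition~\ref{V^*prep} by soft Hilbert-space arguments alone: that proposition determines $V\eta_0$, whereas what is needed here is information about $\eta_0$ itself (equivalently, that $\eta_0\perp\ker V$), and this forces the fresh --- though structurally parallel --- computation above.
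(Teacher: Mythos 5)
Your reduction is exactly the one the paper makes: the vectors $\Lambda_{\psi}(a)\otimes\Lambda(b)$ are total in ${\mathcal K}_{\psi}\otimes{\mathcal H}$, the vector $\eta_0=(\Lambda_{\psi}\otimes\Lambda)\bigl(z(1\otimes b)\bigr)$ lies in ${\mathcal K}_{\psi}\otimes{\mathcal H}$ by the computation inside Proposition~\ref{RanEsubsetRanV}, and everything comes down to your scalar identity, which is the paper's Equation~\eqref{(eqn_U^*)} up to relabeling; your closing remark that this cannot be squeezed out of Proposition~\ref{V^*prep} by soft Hilbert-space arguments is also correct.

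The gap is in your plan for proving that identity. You propose to do it ``exactly as in the proof of Proposition~\ref{V^*prep}'', citing only coassociativity, Equation~\eqref{(EDelta)}, and the left-invariance machinery (Proposition~\ref{Q_L'leftinvariance} and the $Q_{\lambda}$ map of Proposition~\ref{Q_L'}). That toolkit is insufficient here, and this is precisely where the present proposition is \emph{not} structurally parallel to Proposition~\ref{V^*prep}: there the coproduct on the $\psi$-leg was supplied for free by applying $V$, so only the left invariance of $\varphi$ was needed. In your identity the two sides carry their coproducts on different legs: the $a$-side has $\Delta(w^*)$ linking the $\psi$- and $\eta$-legs and no coproduct on $s$, while the $z$-side has $w^*$ alone in the $\psi$-leg and $\Delta(s)$ linking the $\eta$- and $\varphi$-legs. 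The left-invariance step (with coassociativity and \eqref{(EDelta)}) does bring the $a$-side to the paper's common expression \eqref{(LHSnew)}, but that expression still contains $\Delta(w^*)$ and is not the $z$-side; to match them one must trade the coproduct on the $\psi$-leg, and the only tool for that is the right invariance of $\psi$. This is how the paper treats the $z$-side: insert $(1\otimes E)$ via $\Delta s=E(\Delta s)$, recognize the $Q_R$ map, and invoke Propositions~\ref{Q_Rproposition} and~\ref{Q_Rrightinvariance}, i.e.\ $(\psi\otimes\operatorname{id})\bigl((w^*\otimes1)\,Q_R(p\otimes b)\bigr)=(\psi\otimes\operatorname{id})\bigl(\Delta(w^*p)(1\otimes b)\bigr)$, which produces \eqref{(RHSnew)} and makes the two sides coincide. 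Coassociativity and $E\,\Delta=\Delta$ cannot create $\Delta(w^*)$ out of $w^*$ sitting under $\psi$, so as written your computation stalls one step short; the repair is to add this right-invariance manipulation of the $z$-side, so that both invariance results of Section~2 are used, one on each side, exactly as in the paper's proof.
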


\begin{proof}
By the definition of the subspace ${\mathcal K}_{\psi}$ given in Equation~\eqref{(K_psi)}, we know 
that the elements $\Lambda_{\psi}(a)\otimes\Lambda(b)$ span a dense subset in 
${\mathcal K}_{\psi}\otimes{\mathcal H}$.  We would prove the result if we can show that
\begin{equation}\label{(eqn_U^*)}
\bigl\langle\Lambda_{\psi}(a)\otimes\Lambda(b),V(\Lambda_{\psi}(c)\otimes\Lambda(d))\bigr\rangle
=\bigl\langle(\Lambda_{\psi}\otimes\Lambda)(z(1\otimes b)),\Lambda_{\psi}(c)\otimes\Lambda(d)
\bigr\rangle,
\end{equation}
for any $c\in{\mathfrak N}_{\psi}$ and any $d\in{\mathfrak N}_{\eta}$.  This is actually a stronger 
result than is necessary because $\Lambda_{\psi}(c)\otimes\Lambda(d)\in{\mathcal H}_{\psi}
\otimes{\mathcal H}$, but convenient because $V(\zeta\otimes\xi)=U(\zeta\otimes\xi)$, 
if $\zeta\in{\mathcal K}_{\psi}$, $\xi\in{\mathcal H}$.  

For convenience, denote the left side of Equation~\eqref{(eqn_U^*)} as (LHS) and the right side 
as (RHS).  We have:
\begin{align}
{\text {(RHS)}}\,&=(\psi\otimes\eta)\bigl((c^*\otimes d^*)z(1\otimes b)\bigr)  \notag \\
&=(\psi\otimes\eta\otimes\varphi)\bigl((c^*\otimes d^*\otimes1)\Delta_{13}(r^*y)\Delta_{23}(s)
(1\otimes b\otimes1)\bigr)   \notag \\
&=(\psi\otimes\eta\otimes\varphi)\bigl((c^*\otimes d^*\otimes1)\Delta_{13}(r^*y)(1\otimes E)\Delta_{23}(s)
(1\otimes b\otimes1)\bigr),
\notag
\end{align}
because $\Delta s=E(\Delta s)$.  Meanwhile, $(\operatorname{id}\otimes\operatorname{id}
\otimes\varphi)[\Delta_{13}(r^*y)\Delta_{23}(s)(1\otimes b\otimes 1)]\in{\mathcal D}(Q_R)$, because 
$(\Delta s)(b\otimes 1)\in{\mathfrak N}_{\operatorname{id}\otimes\varphi}$ and 
$r^*y\in{\mathfrak N}_{\varphi}^*$.  By the definition of the $Q_R$ map, we can write:
$$
{\text {(RHS)}}\,=(\psi\otimes\eta)\bigl((c^*\otimes d^*)\,Q_R((\operatorname{id}\otimes\operatorname{id}
\otimes\varphi)[\Delta_{13}(r^*y)\Delta_{23}(s)(1\otimes b\otimes 1)])\bigr).
$$

Recall Proposition~\ref{Q_Rproposition} and Proposition~\ref{Q_Rrightinvariance}, where we have:
$$(\psi\otimes\operatorname{id})\bigl((c^*\otimes1)\,Q_R(p\otimes\tilde{b})\bigr)
=(\psi\otimes\operatorname{id})\bigl(Q_R(c^*p\otimes\tilde{b})\bigr)=(\psi\otimes\operatorname{id})
\bigl(\Delta(c^*p)(1\otimes\tilde{b})\bigr),$$
for an appropriate $p\otimes\tilde{b}$.  Applying this result to our case, we obtain:
$$
{\text {(RHS)}}=\eta\bigl(d^*(\psi\otimes\operatorname{id}\otimes\varphi)[(\Delta\otimes\operatorname{id})
((c^*\otimes1)\Delta(r^*y))\Delta_{23}(s)(1\otimes b\otimes1)]\bigr).
$$
By the coassociativity of $\Delta$ and the fact that $\Delta$ is a homomorphism, this becomes:
\begin{align}\label{(RHSnew)}
{\text {(RHS)}}\,&=\eta\bigl(d^*(\psi\otimes\operatorname{id}\otimes\varphi)[(\Delta(c^*)\otimes1)
(\operatorname{id}\otimes\Delta)(\Delta(r^*y))\Delta_{23}(s)(1\otimes b\otimes1)]\bigr)
\notag \\
&=\eta\bigl(d^*(\psi\otimes\operatorname{id}\otimes\varphi)((\Delta(c^*)\otimes1)(\operatorname{id}\otimes\Delta)
[\Delta(r^*y)(1\otimes s)])b\bigr).
\end{align}

Let us now turn our attention to computing the (LHS).  Note first that by Proposition~\ref{Vdefn}, we have: 
$$
{\text {(LHS)}}=\bigl\langle\Lambda_{\psi}(a)\otimes\Lambda(b),(\Lambda_{\psi}\otimes\Lambda)
((\Delta c)(1\otimes d))\bigr\rangle
=(\psi\otimes\eta)\bigl((1\otimes d^*)\Delta(c^*)(a\otimes b)\bigr).
$$
Since $a=(\operatorname{id}\otimes\varphi)\bigl(\Delta(r^*y)(1\otimes s)\bigr)$, 
this becomes:
$$
{\text {(LHS)}}\,=(\psi\otimes\eta\otimes\varphi)\bigl((1\otimes d^*\otimes1)(\Delta(c^*)\otimes1)
\Delta_{13}(r^*y)(1\otimes1\otimes s)(1\otimes b\otimes 1)\bigr).
$$

Note here that $(\psi\otimes\operatorname{id}\otimes\operatorname{id})
\bigl((1\otimes d^*\otimes 1)(\Delta(c^*)\otimes1)\Delta_{13}(r^*y)\bigr)\in{\mathcal D}(Q_{\lambda})$, 
because $(1\otimes d^*)\Delta(c^*)\in{\mathfrak N}_{\psi\otimes\operatorname{id}}^*$ 
and $r^*y\in{\mathfrak N}_{\psi}$. Applying the $Q_{\lambda}$ map and using $\Delta(c^*)E
=\Delta(c^*)$, we can see easily that (LHS) now becomes:
$$
=\eta\bigl((\operatorname{id}\otimes\varphi)[Q_{\lambda}((\psi\otimes\operatorname{id}
\otimes\operatorname{id})[(1\otimes d^*\otimes 1)(\Delta(c^*)\otimes1)\Delta_{13}(r^*y)])
(1\otimes s)]b\bigr).
$$
Next, use the fact (from Proposition~\ref{Q_L'}) that $Q_{\lambda}(\tilde{c}\otimes q)(1\otimes s)
=Q_{\lambda}(\tilde{c}\otimes qs)$,  and also that $(\operatorname{id}\otimes\varphi)
\bigl(Q_{\lambda}(\tilde{c}\otimes qs)\bigr)=(\operatorname{id}\otimes\varphi)
\bigl((\tilde{c}\otimes1)\Delta(qs)\bigr)$, from Proposition~\ref{Q_L'leftinvariance}.  Then 
we have:
\begin{equation}\label{(LHSnew)}
{\text {(LHS)}}
=\eta\bigl((\psi\otimes\operatorname{id}\otimes\varphi)((1\otimes d^*\otimes 1)
(\Delta(c^*)\otimes1)(\operatorname{id}\otimes\Delta)[\Delta(r^*y)(1\otimes s)])b\bigr).
\end{equation}

Comparing Equation~\eqref{(RHSnew)} with Equation~\eqref{(LHSnew)}, we can 
finally see that (RHS)=(LHS), thereby finishing the proof.
\end{proof}

The operators $U$ and $U^*$ are adjoints of each other, and we now know how they 
behave.  In addition, with the notation as above, we have:
\begin{align}\label{(UU*=E)}
UU^*\bigl(\Lambda_{\psi}(a)\otimes\Lambda(b)\bigr)
&=U\bigl((\Lambda_{\psi}\otimes\Lambda)[z(1\otimes b)]\bigr)  \notag \\
&=V\bigl((\Lambda_{\psi}\otimes\Lambda)[z(1\otimes b)]\bigr)
=E\bigl(\Lambda_{\psi}(a)\otimes\Lambda(b)\bigr).
\end{align}
The first equality is the result of Proposition~\ref{U^*} and the last equality is using
Proposition~\ref{V^*prep}.  Since the $\Lambda_{\psi}(a)\otimes\Lambda(b)$ span 
a dense subset in ${\mathcal K}_{\psi}\otimes{\mathcal H}$, this shows that we have: 
$UU^*=E|_{{\mathcal K}_{\psi}\otimes{\mathcal H}}$.  See below:

\begin{prop}\label{UU^*}
Recall that $U=V|_{{\mathcal K}_{\psi}\otimes{\mathcal H}}$.  We have: 
\begin{enumerate}
  \item $UU^*=E|_{{\mathcal K}_{\psi}\otimes{\mathcal H}}$;
  \item $UU^*U=U$.
\end{enumerate}
This shows that $U$ is a partial isometry in ${\mathcal B}({\mathcal K}_{\psi}\otimes{\mathcal H})$.
\end{prop}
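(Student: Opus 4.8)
The plan is to assemble pieces already in place rather than to start fresh, since the substantive work was done in Propositions~\ref{V^*prep} and~\ref{U^*} and in Equation~\eqref{(UU*=E)}. First I would record that the closed subspace ${\mathcal K}_{\psi}\otimes{\mathcal H}$ is invariant under $E$: this is part of the chain in Equation~\eqref{(RangeVinRangeE)}, which in particular gives $E({\mathcal K}_{\psi}\otimes{\mathcal H})\subseteq{\mathcal K}_{\psi}\otimes{\mathcal H}$. Since $E$ is a self-adjoint projection on ${\mathcal H}_{\psi}\otimes{\mathcal H}$ leaving a closed subspace invariant, it also leaves the orthogonal complement invariant, so $E|_{{\mathcal K}_{\psi}\otimes{\mathcal H}}$ is a genuine self-adjoint projection on ${\mathcal K}_{\psi}\otimes{\mathcal H}$; this is what is needed for the statement to even make sense.

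For part~(1), I would invoke the computation already carried out in Equation~\eqref{(UU*=E)}: for $a=(\operatorname{id}\otimes\varphi)\bigl(\Delta(r^*y)(1\otimes s)\bigr)$ with $y\in{\mathfrak N}_{\psi}$, $r,s\in{\mathfrak N}_{\varphi}$, and $b\in{\mathfrak N}_{\eta}$, combining Proposition~\ref{U^*} (the formula for $U^*$) with Proposition~\ref{V^*prep} (the formula for $V$ on the relevant vector) yields $UU^*\bigl(\Lambda_{\psi}(a)\otimes\Lambda(b)\bigr)=E\bigl(\Lambda_{\psi}(a)\otimes\Lambda(b)\bigr)$. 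By the definition~\eqref{(K_psi)} of ${\mathcal K}_{\psi}$, such vectors $\Lambda_{\psi}(a)\otimes\Lambda(b)$ span a dense subspace of ${\mathcal K}_{\psi}\otimes{\mathcal H}$, and both $UU^*$ and $E|_{{\mathcal K}_{\psi}\otimes{\mathcal H}}$ are bounded, so they agree everywhere, giving (1).

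For part~(2), from (1) we get $UU^*U=E|_{{\mathcal K}_{\psi}\otimes{\mathcal H}}\,U$. Now $U=V|_{{\mathcal K}_{\psi}\otimes{\mathcal H}}$ and $EV=V$ by Proposition~\ref{Vprop}(2), hence $EU=U$ and therefore $UU^*U=U$. Finally, since $UU^*=E|_{{\mathcal K}_{\psi}\otimes{\mathcal H}}$ is a self-adjoint projection and $UU^*U=U$, the operator $U$ is by definition a partial isometry in ${\mathcal B}({\mathcal K}_{\psi}\otimes{\mathcal H})$, with final projection $UU^*=E|_{{\mathcal K}_{\psi}\otimes{\mathcal H}}$ (so final space $\operatorname{Ran}(E|_{{\mathcal K}_{\psi}\otimes{\mathcal H}})$) and initial space $(\ker U)^{\perp}$.

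I do not expect a genuine obstacle here; the only points requiring a moment's care are the invariance of ${\mathcal K}_{\psi}\otimes{\mathcal H}$ under $E$ so that $E|_{{\mathcal K}_{\psi}\otimes{\mathcal H}}$ is meaningful as a projection on that space, and the routine density-plus-boundedness passage from the spanning vectors $\Lambda_{\psi}(a)\otimes\Lambda(b)$ to all of ${\mathcal K}_{\psi}\otimes{\mathcal H}$.
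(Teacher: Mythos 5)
Your proposal is correct and follows essentially the same route as the paper: part (1) is exactly the density argument based on Equation~\eqref{(UU*=E)} (combining Propositions~\ref{U^*} and \ref{V^*prep}), and part (2) is the same deduction $UU^*U=EU=U$ from $EV=V$ together with the invariance $E({\mathcal K}_{\psi}\otimes{\mathcal H})\subseteq{\mathcal K}_{\psi}\otimes{\mathcal H}$ from Equation~\eqref{(RangeVinRangeE)}. The extra remark that $E$ restricts to a genuine projection on ${\mathcal K}_{\psi}\otimes{\mathcal H}$ is a harmless elaboration of what the paper leaves implicit.
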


\begin{proof}
(1). Already shown above.  See Equation~\eqref{(UU*=E)}.

(2). We know from Proposition~\ref{Vprop} that $EV=V$.  So $EU=U$, because we saw 
from Equation~\eqref{(RangeVinRangeE)} that $E({\mathcal K}_{\psi}\otimes{\mathcal H})
\subseteq{\mathcal K}_{\psi}\otimes{\mathcal H}$.  It follows that $UU^*U=EU=U$.
\end{proof}

It is clear that $U^*$ is also a partial isometry (as $U^*UU^*=U^*$).  By the general theory on 
partial isometries, $UU^*$ is a projection onto $\operatorname{Ran}(UU^*)=\operatorname{Ran}(U)
=\operatorname{Ker}(U^*)^\perp$, while $U^*U$ is a projection onto $\operatorname{Ran}(U^*U)
=\operatorname{Ran}(U^*)=\operatorname{Ker}(U)^\perp$.  These spaces are necessarily closed 
in ${\mathcal K}_{\psi}\otimes{\mathcal H}$.  This means that we can write: ${\mathcal K}_{\psi}
\otimes{\mathcal H}=\operatorname{Ker}(U)\oplus\operatorname{Ran}(U^*U)$, as well as 
${\mathcal K}_{\psi}\otimes{\mathcal H}=\operatorname{Ker}(U^*)\oplus\operatorname{Ran}(UU^*)$. 
In addition, $U$ is an isometry from $\operatorname{Ran}(U^*U)$ onto $\operatorname{Ran}(UU^*)$, 
and similarly, $U^*$ is an isometry from $\operatorname{Ran}(UU^*)$ onto $\operatorname{Ran}(U^*U)$. 
All these are standard results.

We next turn our attention to showing that ${\mathcal K}_{\psi}={\mathcal H}_{\psi}$.  We need some 
preparation.  Note first that 
\begin{equation}\label{(proofeq1)}
{\mathcal H}_{\psi}\otimes{\mathcal H}=\operatorname{Ker}(V)\oplus\operatorname{Ker}(V)^\perp
=\operatorname{Ker}(V)\oplus\overline{\operatorname{Ran}(V^*)}.
\end{equation}
Similarly, 
\begin{equation}\label{(proofeq2)}
{\mathcal H}_{\psi}\otimes{\mathcal H}=\operatorname{Ker}(V^*)\oplus\overline{\operatorname{Ran}(V)}.
\end{equation}

Let us look more carefully about the subspaces $\overline{\operatorname{Ran}(V)}$ and $\overline
{\operatorname{Ran}(V^*)}$.  First, by Equation~\eqref{(RangeV)} and Proposition~\ref{UU^*}, 
we have: 
$$
\overline{\operatorname{Ran}(V)}=\overline{V({\mathcal H}_{\psi}\otimes{\mathcal H})}
=\overline{V({\mathcal K}_{\psi}\otimes{\mathcal H})}
=E({\mathcal K}_{\psi}\otimes{\mathcal H})
=\operatorname{Ran}(UU^*).
$$
Meanwhile, considering Equation~\eqref{(proofeq2)} and the previous equation, we have:
$$
\overline{\operatorname{Ran}(V^*)}=\overline{V^*({\mathcal H}_{\psi}\otimes{\mathcal H})}
=\overline{V^*(\operatorname{Ran}(V))}
=\overline{V^*(\operatorname{Ran}(UU^*))},
$$
which is actually no different than $\overline{U^*(\operatorname{Ran}(UU^*))}=
\operatorname{Ran}(U^*)$, by the property of the partial isometry $U^*$.  In particular, 
we see that $\overline{\operatorname{Ran}(V^*)}\subseteq{\mathcal K}_{\psi}\otimes{\mathcal H}$.

Here is our main result of this subsection: 

\begin{theorem}\label{K_psi=H_psi}
(1). Let ${\mathcal K}_{\psi}$ be the subspace of ${\mathcal H}_{\psi}$, as in Equation~\eqref{(K_psi)} 
and Proposition~\ref{K_psinew}.  We actually have: ${\mathcal K}_{\psi}={\mathcal H}_{\psi}$.

(2). $V$ is a partial isometry such that $VV^*=E$, where we regard $E=(\pi_{\psi}\otimes\pi)(E)$. 
In particular, we see that $\operatorname{Ran}(V)$ is closed, and that
$$
\operatorname{Ran}(V)=\operatorname{Ran}(E).
$$
\end{theorem}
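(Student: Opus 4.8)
The plan is to first pin down the structure of $V$, then prove (1), and then read off the rest of (2). First I would show that $V$ annihilates $\mathcal{K}_\psi^\perp\otimes\mathcal{H}$, so that, with respect to the decomposition $\mathcal{H}_\psi\otimes\mathcal{H}=(\mathcal{K}_\psi\otimes\mathcal{H})\oplus(\mathcal{K}_\psi^\perp\otimes\mathcal{H})$, the operator $V$ is block-diagonal and equals $U\oplus 0$, where $U=V|_{\mathcal{K}_\psi\otimes\mathcal{H}}$. Indeed, since $\overline{\operatorname{Ran}(V^*)}\subseteq\mathcal{K}_\psi\otimes\mathcal{H}$ and $V$ agrees with $U$ on $\mathcal{K}_\psi\otimes\mathcal{H}$, a short pairing argument gives $V^*\eta=U^*\eta$ for every $\eta\in\mathcal{K}_\psi\otimes\mathcal{H}$; then for $\xi\in\mathcal{K}_\psi^\perp\otimes\mathcal{H}$ we have $V\xi\in\overline{\operatorname{Ran}(V)}=E(\mathcal{K}_\psi\otimes\mathcal{H})\subseteq\mathcal{K}_\psi\otimes\mathcal{H}$, hence $V^*V\xi=U^*(V\xi)\in\mathcal{K}_\psi\otimes\mathcal{H}$, so $\|V\xi\|^2=\langle V^*V\xi,\xi\rangle=0$. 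Being a direct sum of the partial isometries $U$ (Proposition~\ref{UU^*}) and $0$, the operator $V$ is then a partial isometry, with $VV^*=UU^*\oplus 0=\bigl(E|_{\mathcal{K}_\psi\otimes\mathcal{H}}\bigr)\oplus 0$.

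The substance of the theorem is therefore assertion (1), $\mathcal{K}_\psi=\mathcal{H}_\psi$; granting it, $V=U$ is a partial isometry with $VV^*=UU^*=E|_{\mathcal{K}_\psi\otimes\mathcal{H}}=E$, and consequently $\operatorname{Ran}(V)=\operatorname{Ran}(VV^*)=\operatorname{Ran}(E)$ is closed, which completes (2). To prove (1) (the inclusion $\subseteq$ being trivial), I would use the description $\mathcal{K}_\psi=\overline{\operatorname{span}}\{\Lambda_\psi((\operatorname{id}\otimes\omega)(\Delta x)):x\in\mathfrak{N}_\psi,\ \omega\in A^*\}$ from Proposition~\ref{K_psinew} and build an ``approximate right counit'': a net $(\omega_\alpha)$ in $A^*$ such that $(\operatorname{id}\otimes\omega_\alpha)(\Delta x)\to x$ in the graph topology of $\Lambda_\psi$ for each $x$ in a $\Lambda_\psi$-dense subset of $\mathfrak{N}_\psi$. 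Since every $\Lambda_\psi((\operatorname{id}\otimes\omega_\alpha)(\Delta x))$ lies in the closed subspace $\mathcal{K}_\psi$, the limit $\Lambda_\psi(x)$ does too, and $\mathcal{K}_\psi\supseteq\overline{\Lambda_\psi(\mathfrak{N}_\psi)}=\mathcal{H}_\psi$. The natural candidate for such a net comes from the weight $\mu$ on $C$: the separability-idempotent identity $(\operatorname{id}\otimes\mu)(E)=1$, combined with $\Delta p=(\Delta p)E$ and the fact that the right leg of $E$ is $C$, should give a counit-type identity $(\operatorname{id}\otimes\mu)\circ\Delta=\operatorname{id}$, and truncating $\mu$ by an approximate unit $(e_\alpha)$ of $C$ inside the domain on which $\mu$ is finite should produce the bounded functionals $\omega_\alpha$.

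The hard part is making this rigorous in the weight setting. Because $\mu$ is an unbounded weight on $C\subseteq M(A)$ while $\Delta p$ lives in $M(A\otimes A)$, even writing down $(\operatorname{id}\otimes\mu)(\Delta p)$ requires using $\Delta p=(\Delta p)E$ and the structure of $E\in M(B\otimes C)$ to force the second leg of $\Delta p$ into $C$; and one must then control the passage $\omega_\alpha\to$ ``$(\operatorname{id}\otimes\mu)$'' not merely in the norm topology but in the $\Lambda_\psi$-graph topology, which is where the lower semicontinuity and faithfulness of $\psi$ and the boundedness of the operators $(\operatorname{id}\otimes\omega_\alpha)(V)$ come into play. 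One can partially reduce the problem by noting that $\overline{\operatorname{Ran}(V^*)}$ is invariant under $\pi_\psi(A)\otimes 1$ (since $(x\otimes 1)V^*=V^*\Delta(x)$) and that its first legs span $\mathcal{K}_\psi$, so that $\mathcal{K}_\psi$ is a $\pi_\psi(A)$-invariant subspace of $\mathcal{H}_\psi$; but concluding that this invariant subspace is everything still requires an input that is not internal to $V$, so a counit-type identity (or an equivalent substitute) seems unavoidable, and supplying it is the main obstacle.
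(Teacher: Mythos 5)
Your reduction of (2) to (1) is correct and is essentially the paper's own route: from $\overline{\operatorname{Ran}(V^*)}\subseteq{\mathcal K}_{\psi}\otimes{\mathcal H}$ one gets $({\mathcal H}_{\psi}\ominus{\mathcal K}_{\psi})\otimes{\mathcal H}\subseteq\operatorname{Ker}(V)$, so $V=U\oplus 0$, and once ${\mathcal K}_{\psi}={\mathcal H}_{\psi}$ is known, Proposition~\ref{UU^*} yields $VV^*=E$ and $\operatorname{Ran}(V)=\operatorname{Ran}(E)$. The genuine gap is that you never prove (1), which is the substance of the theorem. What you offer is a plan --- an ``approximate right counit'' built from the weight $\mu$ and $(\nu\otimes\operatorname{id})$-type identities for $E$, truncated by an approximate unit of $C$ --- and you yourself concede that making $(\operatorname{id}\otimes\mu)\circ\Delta=\operatorname{id}$ rigorous and upgrading the convergence to the $\Lambda_{\psi}$-graph topology is ``the main obstacle.'' In this framework that obstacle is real: there is no bounded counit, $\mu$ is an unbounded weight on $C\subseteq M(A)$, and nothing established so far supplies the graph-topology control you would need. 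Moreover, your closing claim that some counit-type identity ``or an equivalent substitute'' is unavoidable is mistaken.

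The paper proves (1) with ingredients you already have in hand, and no counit. Suppose $x\in{\mathfrak N}_{\psi}$ with $\Lambda_{\psi}(x)\in{\mathcal H}_{\psi}\ominus{\mathcal K}_{\psi}$. Since $({\mathcal H}_{\psi}\ominus{\mathcal K}_{\psi})\otimes{\mathcal H}\subseteq\operatorname{Ker}(V)$, we get $V\bigl(\Lambda_{\psi}(x)\otimes\xi\bigr)=0$ for every $\xi$, hence $\bigl((\operatorname{id}\otimes\omega_{\xi,\zeta})(V)\bigr)\Lambda_{\psi}(x)=0$ for all $\xi,\zeta\in{\mathcal H}$; by the defining property of $V$ (Proposition~\ref{Vdefn}) this says $\Lambda_{\psi}\bigl((\operatorname{id}\otimes\omega_{\xi,\zeta})(\Delta x)\bigr)=0$, and the faithfulness of $\psi$ forces $(\operatorname{id}\otimes\omega_{\xi,\zeta})(\Delta x)=0$ for all $\xi,\zeta$, i.e. $\Delta x=0$; then $\psi(x^*x)=\nu\bigl((\psi\otimes\operatorname{id})(\Delta(x^*x))\bigr)=0$ by Proposition~\ref{nuphipsi}, so $x=0$, and ${\mathcal H}_{\psi}\ominus{\mathcal K}_{\psi}=\{0\}$. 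In other words, the only ``external input'' needed is the relation $(\operatorname{id}\otimes\omega)(V)\Lambda_{\psi}(\,\cdot\,)=\Lambda_{\psi}\bigl((\operatorname{id}\otimes\omega)(\Delta\,\cdot\,)\bigr)$ together with the faithfulness of the Haar weights; with this argument in place of your counit construction, the rest of your write-up of (2) goes through.
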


\begin{proof}
(1). 
Suppose ${\mathcal K}_{\psi}\ne{\mathcal H}_{\psi}$, and let $x\in{\mathfrak N}_{\psi}$ be 
such that $\Lambda_{\psi}(x)\in{\mathcal H}_{\psi}\ominus{\mathcal K}_{\psi}$.  Then for any 
$\xi\in{\mathcal H}$, we have $\Lambda_{\psi}(x)\otimes\xi\in({\mathcal H}_{\psi}\ominus
{\mathcal K}_{\psi})\otimes{\mathcal H}$.  Since ${\mathcal H}_{\psi}\otimes{\mathcal H}
={\mathcal K}_{\psi}\otimes{\mathcal H}\,\oplus\,({\mathcal H}_{\psi}\ominus{\mathcal K}_{\psi})
\otimes{\mathcal H}$, and since we saw above that $\overline{\operatorname{Ran}(V^*)}\subseteq
{\mathcal K}_{\psi}\otimes{\mathcal H}$, the observation in Equation~\eqref{(proofeq1)} means 
that we must have $({\mathcal H}_{\psi}\ominus{\mathcal K}_{\psi})\otimes{\mathcal H}
\subseteq\operatorname{Ker}(V)$. So $V\bigl(\Lambda_{\psi}(x)\otimes\xi)
=0_{{\mathcal H}_{\psi}\otimes{\mathcal H}}$.

It follows that for any $\zeta\in{\mathcal H}$, we will have:
$$
\bigl((\operatorname{id}\otimes\omega_{\xi,\zeta})(V)\bigr)\Lambda_{\psi}(x)
=\bigl\langle V\bigl(\Lambda_{\psi}(x)\otimes\xi),\,\cdot\,\otimes\zeta\bigr\rangle=0.
$$
By Definition~\ref{Vdefn}, this means that
$$
\Lambda_{\psi}\bigl((\operatorname{id}\otimes\omega_{\xi,\zeta})(\Delta x)\bigr)=0.
$$
Since $\xi,\zeta\in{\mathcal H}$ are arbitrary, this means that $x=0$.  This in turn means 
that ${\mathcal H}_{\psi}\ominus{\mathcal K}_{\psi}=\{0\}$, or ${\mathcal K}_{\psi}
={\mathcal H}_{\psi}$.

(2). Since ${\mathcal K}_{\psi}={\mathcal H}_{\psi}$, it follows that $U=V\in{\mathcal B}
({\mathcal H}_{\psi}\otimes{\mathcal H})$.  By Proposition~\ref{UU^*}, we thus conclude 
that $V$ is a partial isometry such that $VV^*=E$.  In particular, 
$\operatorname{Ran}(V)$ is closed, and $\operatorname{Ran}(V)=\operatorname{Ran}(E)$.
\end{proof}

Since $V$ is a partial isometry, it follows that the operator $V^*V$ is also a projection. 
To learn more about this projection, let $r,s\in{\mathfrak N}_{\varphi}$, $y\in{\mathfrak N}_{\psi}$, 
and let $p=(\operatorname{id}\otimes\varphi)\bigl(\Delta(r^*y)(1\otimes s)\bigr)$. Then 
consider $\Lambda_{\psi}(p)\otimes\Lambda(b)\in{\mathcal H}_{\psi}\otimes{\mathcal H}$, 
where $b\in{\mathfrak N}_{\eta}$.  By Theorem~\ref{K_psi=H_psi}, such elements span 
a dense subspace in the Hilbert space ${\mathcal H}_{\psi}\otimes{\mathcal H}$.  Meanwhile, 
recall from Proposition~\ref{Q_Rproposition} that $p\otimes b\in{\mathcal D}(Q_R)$.  Now define:
\begin{equation}\label{(G_R)}
G_R\bigl(\Lambda_{\psi}(p)\otimes\Lambda(b)\bigr):=(\Lambda_{\psi}\otimes\Lambda)
\bigl(Q_R(p\otimes b)\bigr).
\end{equation}
Here are the properties of $G_R$:

\begin{prop}\label{G_R}
Let the notation be as in the previous paragraph.  Then:
\begin{enumerate}
  \item $G_R$ is self-adjoint and idempotent, so it determines a bounded operator 
$G_R\in{\mathcal B}({\mathcal H}_{\psi}\otimes{\mathcal H})$;
  \item The operator $G_R$ is characterized by 
$$
(G_R\otimes\operatorname{id})\bigl((\Lambda_{\psi}\otimes\Lambda\otimes\Lambda)
(\Delta_{13}(a)(1\otimes b\otimes x))\bigr)
=(\Lambda_{\psi}\otimes\Lambda\otimes\Lambda)\bigl(\Delta_{13}(a)(1\otimes E)
(1\otimes b\otimes x)\bigr),
$$  
for $a\in{\mathfrak N}_{\psi}$, $b,x\in{\mathfrak N}_{\eta}$.
  \item $G_R=V^*V$;
  \item $\operatorname{Ran}(V^*)$ is a closed subspace in ${\mathcal H}_{\psi}\otimes
{\mathcal H}$, and we have: $\operatorname{Ran}(V^*)=\operatorname{Ran}(G_R)$.
\end{enumerate}
\end{prop}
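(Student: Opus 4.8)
The plan is to prove part~(3) first — the identity $G_R = V^*V$ — because parts~(1), (2), (4) then follow with little extra work. The essential input is Theorem~\ref{K_psi=H_psi}: since $\mathcal{K}_\psi = \mathcal{H}_\psi$, the vectors $\xi = \Lambda_\psi(p)\otimes\Lambda(b)$ with $p = (\operatorname{id}\otimes\varphi)\bigl(\Delta(r^*y)(1\otimes s)\bigr)$ ($r,s\in\mathfrak{N}_\varphi$, $y\in\mathfrak{N}_\psi$) and $b\in\mathfrak{N}_\eta$ span a dense subspace of $\mathcal{H}_\psi\otimes\mathcal{H}$; moreover $p\in\mathfrak{N}_\psi$ by Proposition~\ref{V^*prep} and $p\otimes b\in\mathcal{D}(Q_R)$ by Proposition~\ref{Q_Rproposition}, so the right-hand side of \eqref{(G_R)} is meaningful. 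I would fix two such vectors $\xi = \Lambda_\psi(p)\otimes\Lambda(b)$ and $\zeta = \Lambda_\psi(p')\otimes\Lambda(b')$ and compare $\bigl\langle(\Lambda_\psi\otimes\Lambda)(Q_R(p\otimes b)),\zeta\bigr\rangle$ with $\langle V\xi, V\zeta\rangle$.

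For the former, I would rewrite it as $(\psi\otimes\eta)\bigl(((p')^*\otimes(b')^*)\,Q_R(p\otimes b)\bigr)$, pull $(p')^*$ through the first leg using the module property $Q_R(sr\otimes b) = (s\otimes1)Q_R(r\otimes b)$ of Proposition~\ref{Q_Rproposition}(2) (with $s = (p')^*\in M(A)$), so that it becomes $(\psi\otimes\eta)\bigl((1\otimes(b')^*)\,Q_R((p')^*p\otimes b)\bigr)$, and then apply Proposition~\ref{Q_Rrightinvariance}, the right-invariance identity for $\psi$, to $\tilde{p} := (p')^*p$ and $b$. The three hypotheses of part~(1) there must be checked: $(\Delta\tilde{p})(1\otimes b)\in\mathfrak{M}_{\psi\otimes\operatorname{id}}$ (immediate since $p,p'\in\mathfrak{N}_\psi$), $\tilde{p}\otimes b\in\mathcal{D}(Q_R)$ (from the core description in Proposition~\ref{Q_Rproposition}(1), as $p$ has the required form and $(p')^*\in M(A)$), and $Q_R(\tilde{p}\otimes b)\in\mathfrak{M}_{\psi\otimes\operatorname{id}}$. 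This turns the first quantity into $(\psi\otimes\eta)\bigl((1\otimes(b')^*)\,\Delta((p')^*p)(1\otimes b)\bigr)$. On the other hand, since $p,p'\in\mathfrak{N}_\psi$ and $b,b'\in\mathfrak{N}_\eta$, Proposition~\ref{Vdefn}(2) gives $V\xi = (\Lambda_\psi\otimes\Lambda)\bigl((\Delta p)(1\otimes b)\bigr)$ and likewise for $V\zeta$, whence $\langle V\xi, V\zeta\rangle = (\psi\otimes\eta)\bigl((1\otimes(b')^*)\Delta((p')^*p)(1\otimes b)\bigr)$ — the same expression. Thus $\bigl\langle(\Lambda_\psi\otimes\Lambda)(Q_R(p\otimes b)),\zeta\bigr\rangle = \langle V^*V\xi,\zeta\rangle$ for $\zeta$ in a dense set, which simultaneously shows that \eqref{(G_R)} defines a vector independent of the chosen representation of $\xi$ and that the densely-defined $G_R$ agrees with the bounded operator $V^*V$, hence extends to it.

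Granting $G_R = V^*V$, the rest is formal. Since $V$ is a partial isometry (Theorem~\ref{K_psi=H_psi}(2)), $V^*V$ is a self-adjoint projection, giving part~(1) (boundedness, self-adjointness, idempotency) and part~(3). For part~(4), $\operatorname{Ran}(G_R) = \operatorname{Ran}(V^*V)$ is closed, being the range of a projection, and $\operatorname{Ran}(V^*V) = \operatorname{Ran}(V^*)$ because $V^* = V^*E = V^*(VV^*) = (V^*V)V^*$ (using $VV^* = E$ and $EV = V$ from Theorem~\ref{K_psi=H_psi}). For part~(2), I would slice the third leg: applying $\operatorname{id}\otimes\operatorname{id}\otimes\omega_{\Lambda(x),e_j}$ to $\Delta_{13}(a)(1\otimes b\otimes x)$ yields $(\operatorname{id}\otimes\omega_{\Lambda(x),e_j})(\Delta a)\otimes b$, so by Lemma~\ref{omega_xizetaLem}(4) the vector $(\Lambda_\psi\otimes\Lambda\otimes\Lambda)\bigl(\Delta_{13}(a)(1\otimes b\otimes x)\bigr)$ is the sum over $j$ of $\bigl(\Lambda_\psi(c_j)\otimes\Lambda(b)\bigr)\otimes e_j$ with $c_j = (\operatorname{id}\otimes\omega_{\Lambda(x),e_j})(\Delta a)$; applying $G_R\otimes\operatorname{id}$, using that the identity $G_R(\Lambda_\psi(c)\otimes\Lambda(b)) = (\Lambda_\psi\otimes\Lambda)(Q_R(c\otimes b))$ extends to every $c\in\mathfrak{N}_\psi$ with $c\otimes b\in\mathcal{D}(Q_R)$ (by the very computation of part~(3)), and reassembling — either via the operator-level characterization of $Q_R$ in Proposition~\ref{Q_Rcharacterization}, or by re-running the part~(3) argument with an inert spectator leg — produces $(\Lambda_\psi\otimes\Lambda\otimes\Lambda)\bigl(\Delta_{13}(a)(1\otimes E)(1\otimes b\otimes x)\bigr)$.

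I expect the main obstacle to be the domain bookkeeping in the computation of part~(3): verifying that the three hypotheses of Proposition~\ref{Q_Rrightinvariance}(1) really do hold for $\tilde{p} = (p')^*p$ — in particular that $Q_R(\tilde{p}\otimes b)\in\mathfrak{M}_{\psi\otimes\operatorname{id}}$ — and that the slices $(\Lambda_\psi\otimes\Lambda)(Q_R(p\otimes b))$ are genuine vectors, i.e.\ $Q_R(p\otimes b)\in\overline{\mathfrak{N}}_{\psi\otimes\eta}$, keeping careful track throughout of which of the ideals $\mathfrak{N}_\varphi$, $\mathfrak{N}_\psi$, $\mathfrak{N}_\eta$ each factor belongs to; the same bookkeeping recurs, a little more intricately, in part~(2) (in particular one needs $c_j\in\mathfrak{N}_\psi$ and $c_j\otimes b\in\mathcal{D}(Q_R)$). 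By contrast, everything structural is inherited for free from $V$ being a partial isometry, and the only substantive external input is the density furnished by Theorem~\ref{K_psi=H_psi}.
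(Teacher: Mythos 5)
Your proposal is correct and follows essentially the same route as the paper: the central computation — pulling $(p')^*$ through the first leg via the module property $((p')^*\otimes1)Q_R(p\otimes b)=Q_R((p')^*p\otimes b)$, converting $(\psi\otimes\operatorname{id})\bigl(Q_R((p')^*p\otimes b)\bigr)$ into $(\psi\otimes\operatorname{id})\bigl(\Delta((p')^*p)(1\otimes b)\bigr)$ by the right-invariance result, and matching with $\langle V\xi,V\zeta\rangle$ via Proposition~\ref{Vdefn} — is exactly the paper's, as is the density input from Theorem~\ref{K_psi=H_psi} and the use of Proposition~\ref{Q_Rcharacterization} for part~(2). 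The only difference is organizational (you prove (3) first and deduce (1), (4) from $V$ being a partial isometry, whereas the paper first checks symmetry by the same inner-product computation and idempotency from $Q_RQ_R=Q_R$), and the domain bookkeeping you flag as the main obstacle is already packaged in Proposition~\ref{Q_Rrightinvariance}\,(2), which covers precisely the elements $w^*q$ with $q$ of the special $\varphi$-type that you use.
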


\begin{proof}
(1). $G_RG_R=G_R$ is clear, because we know $Q_RQ_R=Q_R$ from Proposition~\ref{Q_Ridempotent}. 
To show that it is self-adjoint, suppose $q\in{\mathfrak N}_{\psi}$ is of a similar type 
as $p$, that is $q=(\operatorname{id}\otimes\varphi)\bigl(\Delta(u^*x)(1\otimes v)\bigr)$, 
for $u,v\in{\mathfrak N}_{\varphi}$, $x\in{\mathfrak N}_{\psi}$, and consider 
$\Lambda_{\psi}(q)\otimes\Lambda(d)\in{\mathcal H}_{\psi}\otimes{\mathcal H}$, where 
$d\in{\mathfrak N}_{\eta}$.  Then: 
\begin{align}
&\bigl\langle G_R(\Lambda_{\psi}(p)\otimes\Lambda(b)),\Lambda_{\psi}(q)\otimes\Lambda(d)\bigr\rangle
\notag \\
&=(\psi\otimes\eta)\bigl((q^*\otimes d^*)Q_R(p\otimes b)\bigr)
=\eta\bigl(d^*\,(\psi\otimes\operatorname{id})[(q^*\otimes1)Q_R(p\otimes b)]\bigr).
\notag
\end{align}
But by Proposition~\ref{Q_Rproposition} and Equation~\eqref{(Q_Rrightinvariancenew)}, we have: 
$$
(\psi\otimes\operatorname{id})\bigl((q^*\otimes1)Q_R(p\otimes b)\bigr)
=(\psi\otimes\operatorname{id})\bigl(Q_R(q^*p\otimes b)\bigr)
=(\psi\otimes\operatorname{id})\bigl(\Delta(q^*p)(1\otimes b)\bigr).
$$
Returning to the earlier equation, we now have:
\begin{align}
&\bigl\langle G_R(\Lambda_{\psi}(p)\otimes\Lambda(b)),\Lambda_{\psi}(q)\otimes\Lambda(d)\bigr\rangle
\notag \\
&=\eta\bigl(d^*\,(\psi\otimes\operatorname{id})[\Delta(q^*p)(1\otimes b)]\bigr)
=(\psi\otimes\eta)\bigl((1\otimes d^*)\Delta(q^*)\Delta(p)(1\otimes b)\bigr).
\notag
\end{align}
Using the same idea, we also have:
\begin{align}
&\bigl\langle\Lambda_{\psi}(p)\otimes\Lambda(b),G_R(\Lambda_{\psi}(q)\otimes\Lambda(d))\bigr\rangle
\notag \\
&=\overline{\bigl\langle G_R(\Lambda_{\psi}(q)\otimes\Lambda(d)),\Lambda_{\psi}(p)\otimes\Lambda(b)\bigr\rangle}
=\overline{(\psi\otimes\eta)\bigl((1\otimes b^*)\Delta(p^*q)(1\otimes d)\bigr)}
\notag \\
&=(\psi\otimes\eta)\bigl((1\otimes d^*)\Delta(q^*)\Delta(p)(1\otimes b)\bigr).
\notag
\end{align}
In this way, we showed that
$$
\bigl\langle G_R(\Lambda_{\psi}(p)\otimes\Lambda(b)),
\Lambda_{\psi}(q)\otimes\Lambda(d)\bigr\rangle=\bigl\langle\Lambda_{\psi}(p)\otimes\Lambda(b),
G_R(\Lambda_{\psi}(q)\otimes\Lambda(d))\bigr\rangle.
$$

We thereby observe that $G_R$ is symmetric and idempotent, defined on a dense subspace of 
the Hilbert space ${\mathcal H}_{\psi}\otimes{\mathcal H}$.  This means $G_R$ extends to an orthogonal 
projection, a bounded self-adjoint operator in ${\mathcal B}({\mathcal H}_{\psi}\otimes{\mathcal H})$.

(2). This follows from the characterization of the $Q_R$ map, given in 
Proposition~\ref{Q_Rcharacterization}.  Unlike the case of $Q_R$, there is no 
issue about the domain in this case, because $G_R$ is a bounded operator in 
${\mathcal B}({\mathcal H}_{\psi}\otimes{\mathcal H})$.

(3). From the computations in (1) above, and using the definition of the operator $V$ earlier 
(Proposition~\ref{Vdefn}), we can see that 
\begin{align}
&\bigl\langle G_R(\Lambda_{\psi}(p)\otimes\Lambda(b)),\Lambda_{\psi}(q)\otimes\Lambda(d)\bigr\rangle
=(\psi\otimes\eta)\bigl((1\otimes d^*)\Delta(q^*)\Delta(p)(1\otimes b)\bigr)
\notag \\
&=\bigl\langle(\Lambda_{\psi}\otimes\Lambda)(\Delta(p)(1\otimes b)),
(\Lambda_{\psi}\otimes\Lambda)(\Delta(q)(1\otimes d))\bigr\rangle
\notag \\
&=\bigl\langle V(\Lambda_{\psi}(p)\otimes\Lambda(b)),V(\Lambda_{\psi}(q)\otimes\Lambda(d))\bigr\rangle
\notag \\
&=\bigl\langle V^*V(\Lambda_{\psi}(p)\otimes\Lambda(b)),\Lambda_{\psi}(q)\otimes\Lambda(d)\bigr\rangle.
\notag
\end{align}
Since the $\Lambda_{\psi}(p)\otimes\Lambda(b)$ span a dense subset in the Hilbert space 
${\mathcal H}_{\psi}\otimes{\mathcal H}$, we conclude that $G_R=V^*V\in{\mathcal B}
({\mathcal H}_{\psi}\otimes{\mathcal H})$.

(4). This follows from the fact that $V^*$ is a partial isometry such that $V^*V=G_R$.  See the discussion 
immediately following Proposition~\ref{UU^*}.
\end{proof}

\begin{cor}
In a similar way, we can define a self-adjoint idempotent operator $G_{\rho}\in{\mathcal B}
({\mathcal H}_{\psi}\otimes{\mathcal H})$, given by the $Q_{\rho}$ map at the algebra level.
\end{cor}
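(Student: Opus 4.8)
The plan is to repeat the construction of $G_R$ in Proposition~\ref{G_R}, with the idempotent map $Q_\rho$ of Proposition~\ref{Q_R'} in place of $Q_R$ and the right-invariance identity of Proposition~\ref{Q_R'rightinvariance} in place of Proposition~\ref{Q_Rrightinvariance}. Concretely, I would set
\[
G_\rho\bigl(\Lambda_\psi(p)\otimes\Lambda(b)\bigr):=(\Lambda_\psi\otimes\Lambda)\bigl(Q_\rho(p\otimes b)\bigr),
\]
for $b\in{\mathfrak N}_\eta$ and for $p$ ranging over a suitable dense family of elements of ${\mathfrak N}_\psi$ of the $Q_\rho$-type occurring in Proposition~\ref{Q_R'}(1), so that $p\otimes b\in{\mathcal D}(Q_\rho)$ and $(\Lambda_\psi\otimes\Lambda)$ applies to $Q_\rho(p\otimes b)$. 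Idempotency $G_\rho G_\rho=G_\rho$ is then immediate from $Q_\rho Q_\rho=Q_\rho$ (Proposition~\ref{Q_R'}(3)). For self-adjointness I would imitate the inner-product computation of the proof of Proposition~\ref{G_R}(1): expand $\bigl\langle G_\rho(\Lambda_\psi(p)\otimes\Lambda(b)),\Lambda_\psi(q)\otimes\Lambda(d)\bigr\rangle$ as a value of $\psi\otimes\eta$, use the module laws of Proposition~\ref{Q_R'}(2) together with the relation $Q_\rho(p\otimes b)=Q_R(p^*\otimes b^*)^*$ of Proposition~\ref{RR'LL'}(1) to move $q$ and $d$ into the expression, apply Proposition~\ref{Q_R'rightinvariance} to turn the $Q_\rho$-term back into a $\Delta$-term, and arrive at an expression manifestly invariant under $(p,b)\leftrightarrow(q,d)$. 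Since a symmetric, bounded, idempotent operator defined on a dense subspace extends to an orthogonal projection, this would give $G_\rho\in{\mathcal B}({\mathcal H}_\psi\otimes{\mathcal H})$, and one records along the way the $Q_\rho$-analogue of the characterization in Proposition~\ref{G_R}(2).

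Two preliminary points have to be settled first. First, one needs the $Q_\rho$-counterparts of the density machinery of Section~3 — the analogues of Propositions~\ref{V^*prep} and \ref{K_psinew} and of Theorem~\ref{K_psi=H_psi} — so as to know that the relevant vectors $\Lambda_\psi(p)\otimes\Lambda(b)$ span a dense subspace of ${\mathcal H}_\psi\otimes{\mathcal H}$; these go through with the same arguments, now driven by the right invariance of $\psi$ and Proposition~\ref{Q_R'rightinvariance}. Second, one must be careful with left/right placements: since $Q_\rho$ is the ``transpose'' of $Q_R$ (compare $Q_\rho(rs\otimes b)=Q_\rho(r\otimes b)(s\otimes1)$ and $Q_\rho(r\otimes db)=(1\otimes d)\,Q_\rho(r\otimes b)$ with the corresponding laws for $Q_R$), every manipulation that for $G_R$ used left multiplication in the first leg and right multiplication in the second leg has to be carried out on the opposite side; in particular, the step that for $G_R$ simply absorbed $q^*$ into $Q_R$ must be routed through the adjoint relation of Proposition~\ref{RR'LL'}(1), and possibly through the modular automorphism of $\psi$.

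The main obstacle, and presumably the reason the statement is recorded as a one-line corollary rather than proved in detail, is precisely this two-sided bookkeeping. The obvious $Q_\rho$-type elements — for instance $(\operatorname{id}\otimes\varphi)\bigl((1\otimes s^*)\Delta(y^*r)\bigr)$ with $r,s\in{\mathfrak N}_\varphi$ and $y\in{\mathfrak N}_\psi$, as in Proposition~\ref{Q_R'rightinvariance}(2) — lie in ${\mathfrak N}_\psi^*$ rather than in ${\mathfrak N}_\psi$ (their adjoints are exactly the $Q_R$-type elements of Proposition~\ref{V^*prep}), so $\Lambda_\psi$ cannot be applied to them as they stand. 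One therefore has to isolate the sub-family of $Q_\rho$-type elements that genuinely sits inside ${\mathfrak N}_\psi$ and re-run the density argument of Section~3 on that family to reach the analogue of ${\mathcal K}_\psi={\mathcal H}_\psi$. The observation in the Remark following Proposition~\ref{RR'LL'} — that $Q_\rho$ is the $Q_R$-map of the opposite $C^*$-algebra $(A^{\operatorname{op}},\Delta)$ — is the correct guiding principle and explains why all of this must work out, but converting it into a proof still requires matching up the various GNS representations so that the resulting projection does act on ${\mathcal H}_\psi\otimes{\mathcal H}$ as claimed. Once the domain and the one-sided conventions are in place, the remaining symmetry-and-idempotency argument proceeds along the lines of Proposition~\ref{G_R}, with the left/right roles interchanged.
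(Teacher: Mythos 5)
Your plan is indeed the ``similar way'' the paper has in mind: define $G_{\rho}$ on vectors $\Lambda_{\psi}(p)\otimes\Lambda(b)$ by the $Q_{\rho}$-analogue of Equation~\eqref{(G_R)}, get idempotency from $Q_{\rho}Q_{\rho}=Q_{\rho}$, get symmetry from the right-invariance identity (Proposition~\ref{Q_R'rightinvariance}) together with the adjoint relation of Proposition~\ref{RR'LL'}, and conclude by the symmetric-idempotent-densely-defined argument of Proposition~\ref{G_R}. The first-variable point you stress is real but easy: Proposition~\ref{Q_R'}\,(1) already allows $pw\otimes b$ with $w\in M(A)$ in the domain, so taking $w\in{\mathfrak N}_{\psi}$ gives $pw\in{\mathfrak N}_{\psi}$ (left ideal) with $\Lambda_{\psi}(pw)=\pi_{\psi}(p)\Lambda_{\psi}(w)$, and density of these vectors in ${\mathcal H}_{\psi}$ follows from Lemma~\ref{densesubsetA} and the non-degeneracy of $\pi_{\psi}$; this is exactly the family appearing in Proposition~\ref{Q_R'rightinvariance}\,(2), and no re-run of the Section~3 density machinery is needed.

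The genuine gap is in the second variable, which you treat as if it behaved exactly as for $G_R$. For $G_R$ the displayed formula makes sense because the second leg of $Q_R(p\otimes b)$ has the form (left leg of $E$)$\,\cdot\,b$: since ${\mathfrak N}_{\eta}$ is a left ideal and $\Lambda(xb)=\pi(x)\Lambda(b)$, one gets $Q_R(p\otimes b)\in\overline{\mathfrak N}_{\psi\otimes\eta}$ and can read the second factor of $G_R$ as bounded operators acting on $\Lambda(b)$. For $Q_{\rho}$ the sides are reversed: its second leg has the form $b\,\cdot\,$(left leg of $E$), i.e.\ $b$ multiplied on the right by elements of $B$. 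Because ${\mathfrak N}_{\eta}$ is only a left ideal, $b\in{\mathfrak N}_{\eta}$ does not give $Q_{\rho}(p\otimes b)\in\overline{\mathfrak N}_{\psi\otimes\eta}$, so the defining expression $(\Lambda_{\psi}\otimes\Lambda)\bigl(Q_{\rho}(p\otimes b)\bigr)$ --- and likewise the pairings $(\psi\otimes\eta)\bigl((q^*\otimes d^*)Q_{\rho}(p\otimes b)\bigr)$ in your symmetry computation --- is not yet defined as it stands. This is where extra input must enter: either restrict $b$ further and mirror the paper's well-definedness lemma for $Q_R$ (the one with $b\in{\mathcal T}_{\nu}$ and the $\gamma_B$, $\gamma_C$ maps), converting the right multiplications by the leg of $E$ into left multiplications at the cost of modular-type corrections --- this is where your passing remark about the modular automorphism of $\psi$ actually belongs, not the absorption step --- or first define $G_{\rho}$ through the bounded sesquilinear form supplied by $V$ and the invariance identities, and only afterwards record the $Q_{\rho}$-formula on a core. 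Note also that the way the paper later uses $G_{\rho}$ (proof of Proposition~\ref{Kwelldef}) has a general vector in the first slot and $\Lambda_{\psi}(q)$, $q\in{\mathfrak N}_{\psi}$, in the second, so the identification of the second factor through $\Lambda_{\psi}$ runs through this same right-multiplication issue; your definition as written does not yet cover it.
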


\subsection{$W$ is a partial isometry}

In Proposition~\ref{Wdefn} and Proposition~\ref{Wprop}, we gave the definition and some 
basic properties of $W\in{\mathcal B}({\mathcal H}\otimes{\mathcal H}_{\varphi})$, 
which is the operator corresponding to the left regular representation.  All the earlier 
results concerning the operator $V$ will have analogous results for the operator $W$. 
The argument is essentially no different.  However, for the purpose of clarifying the 
notation for the later sections, we will gather the relevant results here (without proof).

\begin{prop}\label{K_phi}
We have the following characterizations for the Hilbert space ${\mathcal H}_{\varphi}$.
\begin{align}
{\mathcal H}_{\varphi}&=\overline{\operatorname{span}}^{\|\ \|}\bigl\{\Lambda_{\varphi}
((\psi\otimes\operatorname{id})[\Delta(z^*r)(y\otimes1)]):y,z\in{\mathfrak N}_{\psi},
r\in{\mathfrak N}_{\varphi}\bigr\}   \notag \\
&=\overline{\operatorname{span}}^{\|\ \|}\bigl\{\Lambda_{\varphi}((\theta\otimes\operatorname{id})
(\Delta x)):x\in{\mathfrak N}_{\varphi},\theta\in A^*\bigr\}.
\notag
\end{align}
\end{prop}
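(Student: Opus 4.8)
The plan is to obtain Proposition~\ref{K_phi} by running the entire discussion of subsection~3.2 --- Propositions~\ref{V^*prep} through \ref{G_R} and Theorem~\ref{K_psi=H_psi} --- with the operator $V$ replaced by $W^*$ and the two tensor legs interchanged; equivalently, by applying that subsection to $(A,\Delta^{\operatorname{cop}})$, whose right regular representation is built from $\varphi$ and, up to a flip, coincides with $W^*$. Accordingly, I would introduce the subspace
$$
{\mathcal K}_{\varphi}:=\overline{\operatorname{span}}^{\|\ \|}\bigl\{\Lambda_{\varphi}((\psi\otimes\operatorname{id})[\Delta(z^*r)(y\otimes1)]):y,z\in{\mathfrak N}_{\psi},\,r\in{\mathfrak N}_{\varphi}\bigr\},
$$
the mirror of ${\mathcal K}_{\psi}$ from Equation~\eqref{(K_psi)}, and prove two things: (a)~${\mathcal K}_{\varphi}$ equals the second span in the statement, and (b)~${\mathcal K}_{\varphi}={\mathcal H}_{\varphi}$.

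For (a) I would imitate the proof of Proposition~\ref{K_psinew}. From Proposition~\ref{Wdefn}, $\bigl((\theta\otimes\operatorname{id})(W^*)\bigr)\Lambda_{\varphi}(x)=\Lambda_{\varphi}\bigl((\theta\otimes\operatorname{id})(\Delta x)\bigr)$ for $x\in{\mathfrak N}_{\varphi}$, and since $(\theta\otimes\operatorname{id})(W^*)$ is bounded of norm at most $\|\theta\|$, the assignment $\Lambda_{\varphi}(x)\mapsto\Lambda_{\varphi}\bigl((\theta\otimes\operatorname{id})(\Delta x)\bigr)$ is a bounded operator on ${\mathcal H}_{\varphi}$; hence it suffices to let $x$ range over a set with $\Lambda_{\varphi}$-dense image and $\theta$ over a norm-dense subset of $A^*$. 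Taking the first-leg GNS space to be ${\mathcal H}_{\psi}$ (i.e.\ $\eta=\psi$) and $\theta=\omega_{\Lambda_{\psi}(y),\Lambda_{\psi}(z)}=\psi(z^*\,\cdot\,y)$ for $y,z$ in the Tomita subalgebra ${\mathcal T}_{\psi}$, the KMS property of $\psi$ lets one absorb the $z^*$: $(\theta\otimes\operatorname{id})(\Delta x)=(\psi\otimes\operatorname{id})\bigl((\Delta x)(y\,\sigma^{\psi}_{-i}(z^*)\otimes1)\bigr)$ with $y\,\sigma^{\psi}_{-i}(z^*)\in{\mathcal T}_{\psi}$. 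Specializing to $x=z^*r$ with $z\in{\mathfrak N}_{\psi}$, $r\in{\mathfrak N}_{\varphi}$ (so $x\in{\mathfrak N}_{\varphi}$, and the $\Lambda_{\varphi}(z^*r)$ are dense in ${\mathcal H}_{\varphi}$) and unwinding the definitions then identifies the two spans.

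For (b) I would transcribe the chain Propositions~\ref{V^*prep}, \ref{RanEsubsetRanV}, the Corollary to Proposition~\ref{K_psinew}, Propositions~\ref{U^*}, \ref{UU^*}, and Theorem~\ref{K_psi=H_psi}(1). The $W$-version of Proposition~\ref{V^*prep} is a formula of the shape $W^*\bigl((\Lambda\otimes\Lambda_{\varphi})(z'(b\otimes1))\bigr)=E\bigl(\Lambda(b)\otimes\Lambda_{\varphi}(a)\bigr)$, for suitable $a\in{\mathcal K}_{\varphi}$, an element $z'$, and $b\in{\mathfrak N}_{\eta}$ built exactly as in that proposition; its proof is the same computation, now invoking the \emph{right} invariance of $\psi$ (Propositions~\ref{Q_R'rightinvariance} and \ref{Q_Rrightinvariance}, and the mirrored maps among $Q_R,Q_{\rho},Q_L,Q_{\lambda}$) where the $V$-argument used the left invariance of $\varphi$. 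This yields $E({\mathcal H}\otimes{\mathcal K}_{\varphi})\subseteq\overline{W^*({\mathcal H}\otimes{\mathcal K}_{\varphi})}$; combined with $\overline{\operatorname{Ran}(W^*)}\subseteq{\mathcal H}\otimes{\mathcal K}_{\varphi}$ (the Corollary analogue) and $EW^*=W^*$, it shows that $U':=W^*|_{{\mathcal H}\otimes{\mathcal K}_{\varphi}}$ is a partial isometry with $U'U'^*=E|_{{\mathcal H}\otimes{\mathcal K}_{\varphi}}$, and hence that $\overline{\operatorname{Ran}(W)}\subseteq{\mathcal H}\otimes{\mathcal K}_{\varphi}$. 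Finally, the orthogonality argument of Theorem~\ref{K_psi=H_psi}(1): if $\Lambda_{\varphi}(x)\perp{\mathcal K}_{\varphi}$ for some $x\in{\mathfrak N}_{\varphi}$, then ${\mathcal H}\otimes({\mathcal H}_{\varphi}\ominus{\mathcal K}_{\varphi})\subseteq\operatorname{Ker}(W^*)$, so $(\theta\otimes\operatorname{id})(W^*)\Lambda_{\varphi}(x)=0$, hence $\Lambda_{\varphi}\bigl((\theta\otimes\operatorname{id})(\Delta x)\bigr)=0$ for all $\theta\in{\mathcal B}({\mathcal H})_*$, forcing $(\theta\otimes\operatorname{id})(\Delta x)=0$ for all $\theta$ and so $x=0$; thus ${\mathcal K}_{\varphi}={\mathcal H}_{\varphi}$, and the second characterization follows from (a).

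I expect step (b) --- specifically establishing ${\mathcal K}_{\varphi}={\mathcal H}_{\varphi}$ --- to be the main obstacle, for the reason highlighted in the Remark after Proposition~\ref{RanEsubsetRanV}: norm-density in $A$ of the elements generating ${\mathcal K}_{\varphi}$ does \emph{not} give Hilbert-space density of their $\Lambda_{\varphi}$-images, so the full partial-isometry machinery for $W$ (the $W$-analogues of Propositions~\ref{U^*} and \ref{UU^*}) must be set up before the complement argument applies. The one point demanding care is the bookkeeping of which invariance feeds which computation --- throughout, the left invariance of $\varphi$ used for $V$ is replaced by the right invariance of $\psi$ for $W$ (and the four maps $Q_R,Q_{\rho},Q_L,Q_{\lambda}$ by their mirror images) --- which is most safely organized by phrasing everything in terms of $(A,\Delta^{\operatorname{cop}})$.
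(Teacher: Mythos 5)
Your proposal is correct and is exactly the route the paper intends: Proposition~\ref{K_phi} is stated without proof precisely because it is the mirror image (left/right legs and the roles of $\varphi$, $\psi$ interchanged, i.e.\ the argument for $(A,\Delta^{\operatorname{cop}})$) of the chain Proposition~\ref{V^*prep} through Theorem~\ref{K_psi=H_psi} and Proposition~\ref{K_psinew}, with the left invariance of $\varphi$ replaced by the right invariance of $\psi$ and the maps $Q_R,Q_{\rho},Q_L,Q_{\lambda}$ replaced by their mirrors. Your bookkeeping of these substitutions, including the KMS manipulation identifying the two spans, matches the paper's argument for the $V$/$\mathcal{K}_{\psi}$ case.
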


\begin{prop}\label{Woperator}
Let $p=(\psi\otimes\operatorname{id})\bigl(\Delta(z^*r)(y\otimes1)\bigr)$, where 
$y,z\in{\mathfrak N}_{\psi}$ and $r\in{\mathfrak N}_{\varphi}$.  Also let 
$c\in{\mathfrak N}_{\eta}$.  Then 
$W\in{\mathcal B}({\mathcal H}\otimes{\mathcal H}_{\varphi})$ is characterized by
$$
W:\Lambda(c)\otimes\Lambda_{\varphi}(p)\mapsto(\Lambda\otimes\Lambda_{\varphi})
\bigl(w(c\otimes1)\bigr),
$$
where $w=(\psi\otimes\operatorname{id}\otimes\operatorname{id})\bigl(\Delta_{13}
(z^*r)\Delta_{12}(y)(1\otimes c\otimes 1)\bigr)$.
\end{prop}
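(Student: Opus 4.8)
The proof of Proposition~\ref{Woperator} is the mirror image, under the substitution $(A,\Delta)\mapsto(A,\Delta^{\operatorname{cop}})$, of the analysis of $V$ carried out in subsection~3.2 (Propositions~\ref{V^*prep}, \ref{RanEsubsetRanV}, \ref{K_psinew}, \ref{U^*}, and Theorem~\ref{K_psi=H_psi}).  So the plan is to follow that same sequence of steps, with $\psi$ and $\varphi$ interchanged in their roles, $V$ replaced by $W^*$, and the maps $Q_R$, $Q_{\lambda}$ replaced by their $Q_L$, $Q_{\rho}$ counterparts.  I would first note that, just as ${\mathcal K}_{\psi}$ was shown in Theorem~\ref{K_psi=H_psi} to equal all of ${\mathcal H}_{\psi}$, the subspace
$$
{\mathcal K}_{\varphi}:=\overline{\operatorname{span}}^{\|\ \|}\bigl\{\Lambda_{\varphi}((\psi\otimes\operatorname{id})[\Delta(z^*r)(y\otimes1)]):y,z\in{\mathfrak N}_{\psi},\,r\in{\mathfrak N}_{\varphi}\bigr\}
$$
coincides with ${\mathcal H}_{\varphi}$; this is exactly Proposition~\ref{K_phi}, whose proof parallels that of Proposition~\ref{K_psinew} (using the definition of $W^*$ from Proposition~\ref{Wdefn} to see $\Lambda_{\varphi}((\theta\otimes\operatorname{id})(\Delta x))=(\theta\otimes\operatorname{id})(W^*)\Lambda_{\varphi}(x)$, hence the map $\theta\mapsto\Lambda_{\varphi}((\theta\otimes\operatorname{id})(\Delta x))$ is bounded and extends, while functionals $\theta=\psi(z^*\,\cdot\,y)$ are weak$^*$-dense in $A^*$).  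Granting Proposition~\ref{K_phi}, the elements $\Lambda(c)\otimes\Lambda_{\varphi}(p)$ of the stated form span a dense subspace of ${\mathcal H}\otimes{\mathcal H}_{\varphi}$, so it suffices to identify the action of $W$ on them.

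The computational core is the analogue of Proposition~\ref{V^*prep}: with $p=(\psi\otimes\operatorname{id})(\Delta(z^*r)(y\otimes1))$ and $w=(\psi\otimes\operatorname{id}\otimes\operatorname{id})(\Delta_{13}(z^*r)\Delta_{12}(y)(1\otimes c\otimes1))$, I would show for arbitrary $d\in{\mathfrak N}_{\eta}$ and $u\in{\mathfrak N}_{\varphi}$ that
$$
\bigl\langle W(\Lambda(c)\otimes\Lambda_{\varphi}(p)),\Lambda(d)\otimes\Lambda_{\varphi}(u)\bigr\rangle
=\bigl\langle(\Lambda\otimes\Lambda_{\varphi})(w(c\otimes1)),\Lambda(d)\otimes\Lambda_{\varphi}(u)\bigr\rangle.
$$
I would expand the left side using the defining property of $W^*$ from Proposition~\ref{Wdefn}, namely $W^*(\Lambda(d)\otimes\Lambda_{\varphi}(u))=(\Lambda\otimes\Lambda_{\varphi})((\Delta u)(d\otimes1))$, turning the left side into a value of $\psi\otimes\eta\otimes\operatorname{id}$ (or rather of $\eta\otimes\varphi$ after pairing) on a product involving $\Delta_{13}(z^*r)$, $\Delta_{12}(y)$, and $(\Delta u^*)(d\otimes1)$.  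Then I would push the right side through the same shape: unravel $w(c\otimes1)$, apply coassociativity of $\Delta$ to rewrite $(\Delta\otimes\operatorname{id})$ as $(\operatorname{id}\otimes\Delta)$ on the relevant leg, use $E(\Delta(\cdot))=\Delta(\cdot)$ (Equation~\eqref{(EDelta)}) to insert an $E$, and then invoke the characterization of the map $Q_L$ from Proposition~\ref{Q_L}\,(4) together with the right-invariance identity for $Q_L$ from Proposition~\ref{Q_Lleftinvariance} (the left-invariance consequence for $Q_L$) to collapse the $(\psi\otimes\operatorname{id})$-expressions against $\Delta u^*$.  Matching the two resulting expressions term by term — both should reduce to $\eta\otimes\varphi$ applied to $(d^*\otimes1)(\operatorname{id}\otimes\Delta)[\Delta(z^*r)(\cdots)]$-type products — finishes the verification.

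The main obstacle, as in subsection~3.2, is bookkeeping rather than conceptual: one must be scrupulous about which leg $\Delta_{13}$, $\Delta_{12}$, $\Delta_{23}$ act on, about the precise domains ($\Delta u\in\overline{\mathfrak N}_{\operatorname{id}\otimes\varphi}$ by left invariance, $z^*r\in{\mathfrak N}_{\psi}^*$ so that $\Delta(z^*r)\in\overline{\mathfrak N}_{\psi\otimes\operatorname{id}}^*$, and $\Delta(y)(c\otimes1)\in{\mathfrak N}_{\psi\otimes\operatorname{id}}$, so that all the $Q_L$-expressions lie where the invariance propositions require), and about the order in which the operator-valued weights are applied.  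The flip from $V$ to $W$ also means one should double-check that $W$ (not $W^*$) is the operator characterized by the stated formula; this is consistent with Proposition~\ref{Wdefn}, where $W^*$ carries $\Lambda(p)\otimes\Lambda_{\varphi}(a)$ to $(\Lambda\otimes\Lambda_{\varphi})((\Delta a)(p\otimes1))$, so $W$ is the one implementing the ``$Q_L$-twisted'' action, exactly as $V$ (rather than $V^*$) implemented the ``$E$-twisted'' action in Proposition~\ref{V^*prep}.  Once Proposition~\ref{K_phi} is in hand, no genuinely new analytic input is needed beyond the $Q_L$-analogues already recorded in Propositions~\ref{Q_L} and \ref{Q_Lleftinvariance}.
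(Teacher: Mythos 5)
Your proposal is correct and is exactly what the paper has in mind: Proposition~\ref{Woperator} (like the rest of \S3.3) is stated there without proof precisely because the arguments of \S3.2 for $V$ carry over under the interchange $\varphi\leftrightarrow\psi$ and reversal of the tensor legs, with $W^*$ playing the role of $V$, $W$ that of $V^*$ (so the proposition is the mirror of Proposition~\ref{U^*} combined with the density statement, here Proposition~\ref{K_phi}), and $Q_L$, $Q_{\rho}$ replacing $Q_R$, $Q_{\lambda}$ — which is the mirroring you describe. One small point: as in the proof of Proposition~\ref{U^*}, \emph{both} sides of your displayed equality need massaging — the side coming from the definition of $W^*$ requires the $Q_{\rho}$ map and Proposition~\ref{Q_R'rightinvariance} (right invariance of $\psi$), while the $w$-side uses $Q_L$ and Proposition~\ref{Q_Lleftinvariance} — and your general plan includes this even though your detailed sketch only spells out the $Q_L$ half.
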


\begin{theorem}\label{Wpartialisometry}
$W$ and $W^*$ are partial isometries in ${\mathcal B}({\mathcal H}\otimes
{\mathcal H}_{\varphi})$, with the following properties:
\begin{enumerate}
  \item $W^*W=E$, where we regard $E=(\pi\otimes\pi_{\varphi})(E)$, as an element 
in ${\mathcal B}({\mathcal H}\otimes{\mathcal H}_{\varphi})$.  In particular, 
the subspace $\operatorname{Ran}(W^*)$ is closed and we have:
$$
\operatorname{Ran}(W^*)=\operatorname{Ran}(E)\,(\subseteq{\mathcal H}\otimes
{\mathcal H}_{\varphi}).
$$
  \item $WW^*=G_L$, where $G_L\in{\mathcal B}({\mathcal H}\otimes{\mathcal H}_{\varphi})$ 
is a (bounded) self-adjoint idempotent operator given by
$$
G_L:\Lambda(c)\otimes\Lambda_{\varphi}(p)\mapsto(\Lambda\otimes\Lambda_{\varphi})
\bigl(Q_L(c\otimes p)\bigr).
$$
Here, we are using the same notation as in Proposition~\ref{Woperator}.  By definition of 
the $Q_L$ map (see Proposition~\ref{Q_L}), the operator $G_L$ is thus characterized by 
$$
(\operatorname{id}\otimes G_L)\bigl((\Lambda\otimes\Lambda\otimes\Lambda_{\varphi})
(\Delta_{13}(a)(y\otimes c\otimes 1))\bigr)=(\Lambda\otimes\Lambda\otimes\Lambda_{\varphi})
\bigl(\Delta_{13}(a)(E\otimes1)(y\otimes c\otimes 1)\bigr),
$$
for $a\in{\mathfrak N}_{\varphi}$, $y,c\in{\mathfrak N}_{\eta}$.
  \item We also have: $\operatorname{Ran}(W)=\operatorname{Ran}(G_L)$, as a closed 
  subspace in ${\mathcal H}\otimes{\mathcal H}_{\varphi}$.
  \item In a similar way, we can define a self-adjoint idempotent operator $G_{\lambda}
\in{\mathcal B}({\mathcal H}\otimes{\mathcal H}_{\varphi})$, given by the $Q_{\lambda}$ map 
at the algebra level.
\end{enumerate}
\end{theorem}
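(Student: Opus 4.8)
The plan is to transcribe, \emph{mutatis mutandis}, the entire development of Subsection~3.2, interchanging the roles of $\varphi$ and $\psi$ (equivalently, passing to $(A,\Delta^{\operatorname{cop}})$). Under this dictionary the operator $W^*$ plays the role previously played by $V$: it satisfies the $\Delta$-intertwining identity of Proposition~\ref{Wdefn} in the same way $V$ satisfies the one of Proposition~\ref{Vdefn}, so that $W$ corresponds to $V^*$, $W^*W$ to $VV^*$, $WW^*$ to $V^*V$, the map $Q_L$ to $Q_R$, and $Q_{\lambda}$ to $Q_{\rho}$. First I would introduce the subspace $\mathcal{K}_{\varphi}\subseteq\mathcal{H}_{\varphi}$ spanned by the vectors $\Lambda_{\varphi}\bigl((\psi\otimes\operatorname{id})[\Delta(z^*r)(y\otimes1)]\bigr)$ for $y,z\in\mathfrak{N}_{\psi}$, $r\in\mathfrak{N}_{\varphi}$, which is the $\Delta^{\operatorname{cop}}$-analogue of $\mathcal{K}_{\psi}$ from Equation~\eqref{(K_psi)}. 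Rerunning the computation of Proposition~\ref{V^*prep} --- now invoking the left invariance of $\varphi$ through Proposition~\ref{Q_Lleftinvariance} together with the right invariance of $\psi$ --- yields, for $p$ of the above form and $c\in\mathfrak{N}_{\eta}$, the identity $W^*\bigl((\Lambda\otimes\Lambda_{\varphi})(w(c\otimes1))\bigr)=E\bigl(\Lambda(c)\otimes\Lambda_{\varphi}(p)\bigr)$ with $w$ as in Proposition~\ref{Woperator}; this is precisely the content already recorded in Propositions~\ref{K_phi} and \ref{Woperator}. From it one deduces $E(\mathcal{H}\otimes\mathcal{K}_{\varphi})\subseteq\overline{W^*(\mathcal{H}\otimes\mathcal{K}_{\varphi})}$, the analogue of Proposition~\ref{RanEsubsetRanV}, and --- re-expressing $\mathcal{K}_{\varphi}$ via the slices $(\theta\otimes\operatorname{id})(W^*)\Lambda_{\varphi}(x)=\Lambda_{\varphi}((\theta\otimes\operatorname{id})(\Delta x))$ exactly as in Proposition~\ref{K_psinew} --- also $\overline{\operatorname{Ran}(W^*)}\subseteq\mathcal{H}\otimes\mathcal{K}_{\varphi}$, the analogue of the Corollary to that proposition.

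Next I would chain these inclusions and apply $E$ on the left, using $EW^*=W^*$ from Proposition~\ref{Wprop} and $E^2=E$, to obtain $\overline{W^*(\mathcal{H}\otimes\mathcal{K}_{\varphi})}=\overline{W^*(\mathcal{H}\otimes\mathcal{H}_{\varphi})}=E(\mathcal{H}\otimes\mathcal{K}_{\varphi})$, the counterpart of Equations~\eqref{(RangeVinRangeE)}--\eqref{(RangeV)}. Setting $U:=W^*|_{\mathcal{H}\otimes\mathcal{K}_{\varphi}}$, the formula of Proposition~\ref{Woperator} identifies $U^*$ (as in Proposition~\ref{U^*}), and then $UU^*=E|_{\mathcal{H}\otimes\mathcal{K}_{\varphi}}$ and $UU^*U=EU=U$ (as in Proposition~\ref{UU^*}), so that $U$ is a partial isometry; moreover, arguing as in the discussion preceding Theorem~\ref{K_psi=H_psi}, one has $\overline{\operatorname{Ran}(W)}=\operatorname{Ran}(U^*)\subseteq\mathcal{H}\otimes\mathcal{K}_{\varphi}$.

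The decisive step --- which I expect to be the only genuinely non-formal point, exactly as it was for $V$ --- is to upgrade this to $\mathcal{K}_{\varphi}=\mathcal{H}_{\varphi}$, thereby resolving the mismatch between the operator norm on $A$ and the Hilbert-space norm on $\mathcal{H}_{\varphi}$ flagged in the remark following Proposition~\ref{RanEsubsetRanV}. Here I would argue by contradiction, mimicking Theorem~\ref{K_psi=H_psi}(1): from $\mathcal{H}\otimes\mathcal{H}_{\varphi}=\operatorname{Ker}(W^*)\oplus\overline{\operatorname{Ran}(W)}$ together with $\overline{\operatorname{Ran}(W)}\subseteq\mathcal{H}\otimes\mathcal{K}_{\varphi}$ one gets $\mathcal{H}\otimes(\mathcal{H}_{\varphi}\ominus\mathcal{K}_{\varphi})\subseteq\operatorname{Ker}(W^*)$, so if $x\in\mathfrak{N}_{\varphi}$ with $\Lambda_{\varphi}(x)\in\mathcal{H}_{\varphi}\ominus\mathcal{K}_{\varphi}$ then $W^*(\xi\otimes\Lambda_{\varphi}(x))=0$ for every $\xi\in\mathcal{H}$, whence $\Lambda_{\varphi}((\theta\otimes\operatorname{id})(\Delta x))=(\theta\otimes\operatorname{id})(W^*)\Lambda_{\varphi}(x)=0$ for all $\theta\in\mathcal{B}(\mathcal{H})_*$, and faithfulness of $\varphi$ forces $x=0$. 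Hence $\mathcal{K}_{\varphi}=\mathcal{H}_{\varphi}$, $U=W^*$, and $W^*W=UU^*=E$, which is part~(1); since $W^*$ is then a partial isometry, $\operatorname{Ran}(W^*)$ is closed and equals $\overline{\operatorname{Ran}(W^*)}=\operatorname{Ran}(E)$.

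For (2) and (3) I would define $G_L$ on the dense set of vectors $\Lambda(c)\otimes\Lambda_{\varphi}(p)$ by $G_L\bigl(\Lambda(c)\otimes\Lambda_{\varphi}(p)\bigr):=(\Lambda\otimes\Lambda_{\varphi})\bigl(Q_L(c\otimes p)\bigr)$ and check, following Proposition~\ref{G_R}, that it is symmetric and idempotent --- hence extends to an orthogonal projection in $\mathcal{B}(\mathcal{H}\otimes\mathcal{H}_{\varphi})$ --- using $Q_LQ_L=Q_L$ (Proposition~\ref{Q_L}), Proposition~\ref{Q_L}(2), and the left-invariance identity~\eqref{(Q_Lleftinvariancenew)} to rewrite $\langle G_L(\Lambda(c)\otimes\Lambda_{\varphi}(p)),\Lambda(d)\otimes\Lambda_{\varphi}(q)\rangle$ symmetrically in $(c,p)$ and $(d,q)$ in terms of $(\Delta q^*)(\Delta p)$. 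Comparing this with the formula for $W^*$ in Equation~\eqref{(WT_4)} gives $\langle G_L\,\cdot\,,\,\cdot\,\rangle=\langle WW^*\,\cdot\,,\,\cdot\,\rangle$ on the dense set, so $WW^*=G_L$; the stated characterization of $G_L$ through $\Delta_{13}(a)(E\otimes1)(y\otimes c\otimes1)$ is then Proposition~\ref{Q_L}(4). Statement (3) is the general fact that $\operatorname{Ran}(W)=\operatorname{Ran}(WW^*)=\operatorname{Ran}(G_L)$ is closed. Finally, for (4), running the same construction with $Q_{\lambda}$ in place of $Q_L$ --- equivalently, applying everything above to $(A^{\operatorname{op}},\Delta^{\operatorname{cop}})$ --- produces the self-adjoint idempotent $G_{\lambda}\in\mathcal{B}(\mathcal{H}\otimes\mathcal{H}_{\varphi})$.
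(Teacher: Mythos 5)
Your proposal is correct and is exactly the route the paper intends: the paper states Theorem~\ref{Wpartialisometry} without proof precisely because the argument is the mirror image (swapping $\varphi\leftrightarrow\psi$, $V\leftrightarrow W^*$, $Q_R\leftrightarrow Q_L$, $Q_\rho\leftrightarrow Q_\lambda$) of the development in \S3.2, and your transcription — the subspace ${\mathcal K}_\varphi$, the identity $W^*\bigl((\Lambda\otimes\Lambda_\varphi)(w(c\otimes1))\bigr)=E\bigl(\Lambda(c)\otimes\Lambda_\varphi(p)\bigr)$, the restriction $U=W^*|_{{\mathcal H}\otimes{\mathcal K}_\varphi}$, the density argument forcing ${\mathcal K}_\varphi={\mathcal H}_\varphi$, and the $G_L$ computation via $Q_L$ and Equation~\eqref{(Q_Lleftinvariancenew)} — reproduces that argument faithfully.
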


\subsection{Multiplicativity properties of $W$ and $V$}

We indicated earlier that the operators $V$ and $W$ behave much like the multiplicative 
unitary operators in the case of locally compact quantum groups.  In this subsection, 
we give results that strengthen this point.   For convenience, we give here the results 
regarding the operator $W$.  Of course, similar results exist for the operator $V$.

Let us begin with a result that will be useful down the road.

\begin{prop}\label{idomegaW}
If $r,s\in{\mathfrak N}_{\varphi}$.  Then we have:
$$
(\operatorname{id}\otimes\omega_{\Lambda_{\varphi}(s),\Lambda_{\varphi}(r)})(W)
=(\operatorname{id}\otimes\varphi)\bigl((\Delta(r^*)(1\otimes s)\bigr).
$$
\end{prop}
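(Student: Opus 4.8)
The plan is to compute the operator $(\operatorname{id}\otimes\omega_{\Lambda_{\varphi}(s),\Lambda_{\varphi}(r)})(W)$ by testing it against a dense set of vectors and using the defining formula for $W$ (equivalently $W^*$) from Proposition~\ref{Wdefn}. Recall that $W^*$ is characterized by $\bigl((\theta\otimes\operatorname{id})(W^*)\bigr)\Lambda_{\varphi}(a)=\Lambda_{\varphi}\bigl((\theta\otimes\operatorname{id})(\Delta a)\bigr)$ for $a\in{\mathfrak N}_{\varphi}$, $\theta\in{\mathcal B}({\mathcal H})_*$, and that $W^*\bigl(\Lambda(p)\otimes\Lambda_{\varphi}(a)\bigr)=(\Lambda\otimes\Lambda_{\varphi})\bigl((\Delta a)(p\otimes1)\bigr)$ for $a\in{\mathfrak N}_{\varphi}$, $p\in{\mathfrak N}_{\eta}$. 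The idea is that $\omega_{\Lambda_{\varphi}(s),\Lambda_{\varphi}(r)}$ on the right-hand leg of $W$ should, after the GNS identifications, turn into the slice $(\operatorname{id}\otimes\varphi)$ applied to $\Delta(r^*)(1\otimes s)$, much as in the quantum group case.

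First I would fix arbitrary $\xi,\zeta\in{\mathcal H}$ and compute the matrix coefficient $\bigl\langle(\operatorname{id}\otimes\omega_{\Lambda_{\varphi}(s),\Lambda_{\varphi}(r)})(W)\,\xi,\zeta\bigr\rangle=\bigl\langle W(\xi\otimes\Lambda_{\varphi}(s)),\zeta\otimes\Lambda_{\varphi}(r)\bigr\rangle$. Writing this as $\bigl\langle\xi\otimes\Lambda_{\varphi}(s),W^*(\zeta\otimes\Lambda_{\varphi}(r))\bigr\rangle$ and choosing $\xi=\Lambda(p)$, $\zeta=\Lambda(q)$ with $p,q\in{\mathfrak N}_{\eta}\cap{\mathfrak N}_{\varphi}$ (a dense enough supply after taking limits — or directly $p\in{\mathfrak N}_\eta$, $q\in{\mathfrak N}_\eta$), I can use $W^*(\Lambda(q)\otimes\Lambda_{\varphi}(r))=(\Lambda\otimes\Lambda_{\varphi})\bigl((\Delta r)(q\otimes1)\bigr)$. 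Here $r\in{\mathfrak N}_\varphi$ so the expression is valid. This yields $\bigl\langle\Lambda(p)\otimes\Lambda_{\varphi}(s),(\Lambda\otimes\Lambda_{\varphi})((\Delta r)(q\otimes1))\bigr\rangle = (\eta\otimes\varphi)\bigl((q^*\otimes1)(\Delta(r^*))(p\otimes s)\bigr)$, after expanding the inner product via the GNS maps. Then I rewrite the $\varphi$-slice: this equals $\eta\bigl(q^*\,(\operatorname{id}\otimes\varphi)(\Delta(r^*)(1\otimes s))\,p\bigr) = \bigl\langle (\operatorname{id}\otimes\varphi)(\Delta(r^*)(1\otimes s))\,\Lambda(p),\Lambda(q)\bigr\rangle$, using that $(\operatorname{id}\otimes\varphi)(\Delta(r^*)(1\otimes s))\in M(A)$ (left invariance of $\varphi$, noting $r^*\in{\mathfrak N}_\varphi^*$, $s\in{\mathfrak N}_\varphi$, so $\Delta(r^*)(1\otimes s)\in{\mathfrak M}_{\operatorname{id}\otimes\varphi}$). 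Comparing the two matrix-coefficient computations over the dense set of $\Lambda(p),\Lambda(q)$ gives the claimed identity.

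The step I expect to be the main obstacle is the careful bookkeeping around the two distinct Hilbert spaces ${\mathcal H}={\mathcal H}_\eta$ and ${\mathcal H}_\varphi$ and the domains: making sure the element $(\operatorname{id}\otimes\varphi)(\Delta(r^*)(1\otimes s))$ really does act as a bounded operator on ${\mathcal H}$ (this is where left invariance of $\varphi$ and the fact that $(\operatorname{id}\otimes\varphi)(\Delta a)\in M(C)\subseteq M(A)$ for $a\in{\mathfrak M}_\varphi$ enter, applied to $a=r^*s$ or via the polarization/slice argument), and that the slice manipulation $(\eta\otimes\varphi)\bigl((q^*\otimes1)\Delta(r^*)(p\otimes s)\bigr)=\eta\bigl(q^*(\operatorname{id}\otimes\varphi)(\Delta(r^*)(1\otimes s))p\bigr)$ is legitimate — this is a standard Fubini-type statement for the slice of a weight, but it must be invoked with the right integrability hypotheses. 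Once those technical points are in place, the identity follows by density of $\{\Lambda(p):p\in{\mathfrak N}_\eta\}$ in ${\mathcal H}$ and the fact that both sides are bounded operators.

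Alternatively, and perhaps more cleanly, I would use the first characterization in Proposition~\ref{Wdefn} directly: for $a\in{\mathfrak N}_\varphi$ and $\theta=\omega_{\Lambda(p),\Lambda(q)}\in{\mathcal B}({\mathcal H})_*$, one has $(\theta\otimes\operatorname{id})(W^*)\Lambda_\varphi(a)=\Lambda_\varphi\bigl((\theta\otimes\operatorname{id})(\Delta a)\bigr)$, hence $\bigl\langle(\operatorname{id}\otimes\omega_{\Lambda_\varphi(s),\Lambda_\varphi(r)})(W)\Lambda(p),\Lambda(q)\bigr\rangle = \overline{\bigl\langle(\theta\otimes\operatorname{id})(W^*)\Lambda_\varphi(r),\Lambda_\varphi(s)\bigr\rangle} = \overline{\varphi\bigl(s^*(\theta\otimes\operatorname{id})(\Delta r)\bigr)} = \varphi\bigl((\overline\theta\otimes\operatorname{id})(\Delta(r^*))\,s\bigr)$, using $\overline{\omega_{\Lambda(p),\Lambda(q)}}=\omega_{\Lambda(q),\Lambda(p)}$ from Lemma~\ref{omega_xizetaLem}(1) and part (2) of that lemma. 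Expanding $(\overline\theta\otimes\operatorname{id})(\Delta(r^*))=(\operatorname{id}\otimes\varphi)$-free, this is $\omega_{\Lambda(p),\Lambda(q)}\bigl((\operatorname{id}\otimes\varphi)(\Delta(r^*)(1\otimes s))\bigr)$ after moving the $p,q$ back inside, which is exactly $\bigl\langle(\operatorname{id}\otimes\varphi)(\Delta(r^*)(1\otimes s))\Lambda(p),\Lambda(q)\bigr\rangle$. Either route gives the result; I would present whichever keeps the slice-of-weight manipulations most transparent.
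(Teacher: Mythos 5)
Your proposal is correct and follows essentially the same route as the paper's proof: test the operator against vectors $\Lambda(c)\otimes\Lambda(d)$ with $c,d\in{\mathfrak N}_{\eta}$, pass to $W^*$ via Lemma~\ref{omega_xizetaLem}, apply the defining formula $W^*\bigl(\Lambda(d)\otimes\Lambda_{\varphi}(r)\bigr)=(\Lambda\otimes\Lambda_{\varphi})\bigl((\Delta r)(d\otimes1)\bigr)$, and rewrite the resulting $(\eta\otimes\varphi)$-expression as $\eta\bigl(d^*\,(\operatorname{id}\otimes\varphi)[\Delta(r^*)(1\otimes s)]\,c\bigr)$ before concluding by density. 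The technical points you flag (boundedness of the sliced element via left invariance and the Fubini-type slice manipulation) are exactly the ones the paper relies on implicitly, so there is no gap.
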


\begin{proof}
Let $c,d\in{\mathfrak N}_{\eta}$ be arbitrary.  Then
\begin{align}
\bigl\langle(\operatorname{id}\otimes\omega_{\Lambda_{\varphi}(s),\Lambda_{\varphi}(r)})(W)
\Lambda(c),\Lambda(d)\bigr\rangle
&=\bigl\langle\Lambda(c),(\operatorname{id}\otimes\omega_{\Lambda_{\varphi}(r),
\Lambda_{\varphi}(s)})(W^*)\Lambda(d)\bigr\rangle  \notag \\
&=\bigl\langle\Lambda(c)\otimes\Lambda_{\varphi}(s),W^*(\Lambda(d)\otimes\Lambda_{\varphi}(r))
\bigr\rangle,
\notag
\end{align}
where we used the result of Lemma~\ref{omega_xizetaLem}\,(2).  By using the definition 
of $W^*$ (see Proposition~\ref{Wdefn}), this becomes:
\begin{align}
&=\bigl\langle\Lambda(c)\otimes\Lambda_{\varphi}(s),(\Lambda\otimes\Lambda_{\varphi})
((\Delta r)(d\otimes 1))\bigr\rangle
\notag \\
&=(\eta\otimes\varphi)\bigl((d^*\otimes1)\Delta(r^*)(c\otimes s)\bigr)
=\eta\bigl(d^*(\operatorname{id}\otimes\varphi)[\Delta(r^*)(1\otimes s)]c\bigr)
\notag \\
&=\bigl\langle\Lambda((\operatorname{id}\otimes\varphi)[\Delta(r^*)(1\otimes s)]c,
\Lambda(d)\bigr\rangle   \notag \\
&=\bigl\langle\pi((\operatorname{id}\otimes\varphi)[\Delta(r^*)(1\otimes s)])
\Lambda(c),\Lambda(d)\bigr\rangle.
\notag
\end{align}
This is true for any $c,d\in{\mathfrak N}_{\eta}$, which is dense in ${\mathcal H}$. 
It follows that 
$$
(\operatorname{id}\otimes\omega_{\Lambda_{\varphi}(s),\Lambda_{\varphi}(r)})(W)
=(\operatorname{id}\otimes\varphi)\bigl((\Delta(r^*)(1\otimes s)\bigr),
$$
with the convention that $\pi(A)=A$.
\end{proof}

For any $\omega\in{\mathcal B}({\mathcal H}_{\varphi})_*$, since it can be approximated 
by the $\omega_{\Lambda_{\varphi}(s),\Lambda_{\varphi}(r)}$, we see from 
Proposition~\ref{idomegaW} that $(\operatorname{id}\otimes\omega)(W)\in \pi(A)=A$. 
Furthermore, we have the following useful characterization for our $C^*$-algebra $A$, 
which resembles the result from the quantum group theory.

\begin{prop}\label{idomegaWclosureA}
$$
A=\pi(A)=\overline{\bigl\{(\operatorname{id}\otimes\omega)(W):
\omega\in{\mathcal B}({\mathcal H}_{\varphi})_*\bigr\}}^{\|\ \|}\,\bigl(\subseteq{\mathcal B}
({\mathcal H})\bigr).
$$
\end{prop}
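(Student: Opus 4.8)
The plan is to prove the two inclusions separately, with Proposition~\ref{idomegaW} serving as the dictionary between functionals on $\mathcal{B}(\mathcal{H}_{\varphi})$ and the slices $(\operatorname{id}\otimes\varphi)\bigl(\Delta(r^*)(1\otimes s)\bigr)$ that already appeared in Lemma~\ref{densesubsetA}.

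For the inclusion ``$\supseteq$'', that is $A\subseteq\overline{\bigl\{(\operatorname{id}\otimes\omega)(W):\omega\in\mathcal{B}(\mathcal{H}_{\varphi})_*\bigr\}}^{\|\ \|}$, I would invoke Lemma~\ref{densesubsetA}: the span of the elements $(\operatorname{id}\otimes\varphi)\bigl(\Delta(a^*)(1\otimes x)\bigr)$, with $a,x\in\mathfrak{N}_{\varphi}$, is norm-dense in $A$. By Proposition~\ref{idomegaW} (applied with $r=a$ and $s=x$), each such element equals $(\operatorname{id}\otimes\omega_{\Lambda_{\varphi}(x),\Lambda_{\varphi}(a)})(W)$, hence lies in the set whose closure appears on the right-hand side. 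Passing to the norm-closure gives the claimed inclusion.

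For the inclusion ``$\subseteq$'', I would first observe that the slicing map $\omega\mapsto(\operatorname{id}\otimes\omega)(W)$ is linear and norm-continuous from $\mathcal{B}(\mathcal{H}_{\varphi})_*$ into $\mathcal{B}(\mathcal{H})$, with norm at most $\|W\|\le1$. Since $\Lambda_{\varphi}(\mathfrak{N}_{\varphi})$ is dense in $\mathcal{H}_{\varphi}$ and the rank-one functionals $\omega_{\xi,\zeta}$ span a norm-dense subspace of $\mathcal{B}(\mathcal{H}_{\varphi})_*$, the functionals $\omega_{\Lambda_{\varphi}(s),\Lambda_{\varphi}(r)}$ with $r,s\in\mathfrak{N}_{\varphi}$ span a norm-dense subspace of $\mathcal{B}(\mathcal{H}_{\varphi})_*$. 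On this dense subspace the slicing map takes values in $\pi(A)=A$ by Proposition~\ref{idomegaW}; since $A$ is norm-closed in $\mathcal{B}(\mathcal{H})$ and the slicing map is continuous, $(\operatorname{id}\otimes\omega)(W)\in A$ for every $\omega\in\mathcal{B}(\mathcal{H}_{\varphi})_*$, and therefore the whole closure on the right-hand side is contained in $A$. This is exactly the remark made right after Proposition~\ref{idomegaW}.

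Combining the two inclusions yields the equality. The only point requiring slight care --- though it is entirely routine --- is the norm-density of $\operatorname{span}\bigl\{\omega_{\Lambda_{\varphi}(s),\Lambda_{\varphi}(r)}:r,s\in\mathfrak{N}_{\varphi}\bigr\}$ in $\mathcal{B}(\mathcal{H}_{\varphi})_*$, for which one combines the density of $\Lambda_{\varphi}(\mathfrak{N}_{\varphi})$ in $\mathcal{H}_{\varphi}$ with the standard fact that the rank-one trace-class operators span a dense subspace of the predual, using joint continuity of $(\xi,\zeta)\mapsto\omega_{\xi,\zeta}$. No other obstacle arises; the remaining steps are direct applications of Lemma~\ref{densesubsetA} and Proposition~\ref{idomegaW}.
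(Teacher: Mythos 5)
Your proof is correct and follows essentially the same route as the paper: the inclusion $\supseteq$ is Lemma~\ref{densesubsetA} combined with Proposition~\ref{idomegaW}, and the inclusion $\subseteq$ is exactly the remark preceding the proposition (norm-continuity of $\omega\mapsto(\operatorname{id}\otimes\omega)(W)$ together with density of the functionals $\omega_{\Lambda_{\varphi}(s),\Lambda_{\varphi}(r)}$ in the predual). No gaps.
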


\begin{proof}
We saw that $\operatorname{span}\bigl\{(\operatorname{id}\otimes\varphi)
(\Delta(a^*)(1\otimes x)):a,x\in{\mathfrak N}_{\varphi}\bigr\}$ is norm-dense 
in $A$ (see Lemma~\ref{densesubsetA}).  But by Proposition~\ref{idomegaW} above, 
we know that $(\operatorname{id}\otimes\varphi)(\Delta(a^*)(1\otimes x))
=(\operatorname{id}\otimes\omega_{\Lambda_{\varphi}(x),\Lambda_{\varphi}(a)})(W)$. 
The result follows immediately.
\end{proof}

The next result also looks quite familiar:

\begin{prop}\label{DeltaW}
We have:
$$(\pi\otimes\pi_{\varphi})(\Delta x)=W^*\bigl(1\otimes \pi_{\varphi}(x)\bigr)W,
$$ 
for all $x\in A$.
\end{prop}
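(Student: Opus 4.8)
The plan is to read the identity directly off the intertwining relation for $W^*$ established in Proposition~\ref{Wprop}. With the identifications used there (namely $x=\pi_\varphi(x)$, $\Delta x=(\pi\otimes\pi_\varphi)(\Delta x)$, and $E=(\pi\otimes\pi_\varphi)(E)$, all viewed as elements of $\mathcal{B}(\mathcal{H}\otimes\mathcal{H}_\varphi)$), that proposition gives
$$
W^*\bigl(1\otimes\pi_\varphi(x)\bigr)=(\pi\otimes\pi_\varphi)(\Delta x)\,W^*,\qquad x\in A.
$$
First I would multiply this equation on the right by $W$ and invoke Theorem~\ref{Wpartialisometry}\,(1), which gives $W^*W=(\pi\otimes\pi_\varphi)(E)$; this yields
$$
W^*\bigl(1\otimes\pi_\varphi(x)\bigr)W=(\pi\otimes\pi_\varphi)(\Delta x)\,(\pi\otimes\pi_\varphi)(E)=(\pi\otimes\pi_\varphi)\bigl((\Delta x)E\bigr).
$$
Finally, Equation~\eqref{(EDelta)} says $(\Delta x)E=\Delta x$, which is exactly the assertion. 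Since every operator involved is bounded, the computed identity holds on all of $\mathcal{H}\otimes\mathcal{H}_\varphi$ and there are no domain subtleties.

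If one wishes to avoid citing Proposition~\ref{Wprop}, an equivalent route is to verify the claim on the dense set of vectors $\Lambda(p)\otimes\Lambda_\varphi(a)$ with $p\in{\mathfrak N}_{\eta}$, $a\in{\mathfrak N}_{\varphi}$: using the explicit formula $W^*\bigl(\Lambda(p)\otimes\Lambda_\varphi(a)\bigr)=(\Lambda\otimes\Lambda_\varphi)\bigl((\Delta a)(p\otimes1)\bigr)$ from Equation~\eqref{(WT_4)}, one pairs $W^*(1\otimes\pi_\varphi(x))W$ against two such vectors, relocates $\pi_\varphi(x)$ past $W^*$ by the GNS relation defining $W$, and reads off $(\pi\otimes\pi_\varphi)(\Delta x)$. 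This just unwinds the same computation.

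The proof is therefore essentially a one-line manipulation, so there is no genuine obstacle; the one point that requires attention---and the only place where our setting differs from the locally compact quantum group case---is that $W$ is merely a partial isometry, so $W^*W=E\ne 1$ in general. This is harmless precisely because $E$ acts as a ``unit'' on the range of $\Delta$, i.e.\ $(\Delta x)E=\Delta x$, so the factor of $E$ produced by $W^*W$ drops out automatically. The remaining work is pure bookkeeping: keeping the GNS representations $\pi$, $\pi_\varphi$ and the various identifications of Propositions~\ref{Vprop} and \ref{Wprop} straight.
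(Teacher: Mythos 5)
Your proposal is correct and follows essentially the same route as the paper: multiply the intertwining relation $W^*\bigl(1\otimes\pi_\varphi(x)\bigr)=(\pi\otimes\pi_\varphi)(\Delta x)W^*$ from Proposition~\ref{Wprop}\,(1) on the right by $W$, use $W^*W=E$, and absorb $E$ via $(\Delta x)E=\Delta x$. Nothing further is needed.
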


\begin{proof}
We saw in Proposition~\ref{Wprop}\,(1) that $W^*\bigl(1\otimes\pi_{\varphi}(x)\bigr)
=(\pi\otimes\pi_{\varphi})(\Delta x)W^*$.  It follows that
\begin{align}
W^*\bigl(1\otimes\pi_{\varphi}(x)\bigr)W&=(\pi\otimes\pi_{\varphi})(\Delta x)W^*W
=(\pi\otimes\pi_{\varphi})(\Delta x)(\pi\otimes\pi_{\varphi})(E) \notag \\
&=(\pi\otimes\pi_{\varphi})\bigl((\Delta x)E\bigr)
=(\pi\otimes\pi_{\varphi})(\Delta x).
\notag
\end{align}
\end{proof}

In what follows, we show some properties that support the idea that $W$ is, in a sense, 
a ``multiplicative partial isometry''.  This notion is not as completely axiomatized as in 
\cite{BS}, \cite{Wr7} for multiplicative unitaries, but some work in this direction has appeared 
in \cite{BSzMultIso} in the finite-dimensional case.  The second-named author (Kahng) is 
currently working on the axiomatic approach \cite{BJK_mpi}.

For the rest of this section, let us take ${\mathcal H}={\mathcal H}_{\varphi}$ (so $\eta=\varphi$), 
so that $W\in{\mathcal B}({\mathcal H}\otimes{\mathcal H})$.  This assumption is not as restrictive 
as it may sound, because later (Section~5 and Part~III), we will be able to identify the 
Hilbert spaces ${\mathcal H}={\mathcal H}_{\psi}={\mathcal H}_{\varphi}$.  

\begin{prop}\label{Wpentagon}
In ${\mathcal B}({\mathcal H}\otimes{\mathcal H}\otimes{\mathcal H})$, we have: 
$$
W_{12}W_{13}W_{23}=W_{23}W_{12}.
$$
\end{prop}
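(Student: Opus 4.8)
The plan is to establish the adjoint identity $W_{23}^*W_{13}^*W_{12}^*=W_{12}^*W_{23}^*$ in $\mathcal B(\mathcal H\otimes\mathcal H\otimes\mathcal H)$ and then take adjoints. It is best to work with $W^*$ rather than $W$, because $W^*$ has the clean formula of Proposition~\ref{Wdefn}: taking $\eta=\varphi$ and $\Lambda=\Lambda_\varphi$, one has $W^*\bigl(\Lambda_\varphi(p)\otimes\Lambda_\varphi(a)\bigr)=(\Lambda_\varphi\otimes\Lambda_\varphi)\bigl((\Delta a)(p\otimes1)\bigr)$ for suitable $a,p\in\mathfrak N_\varphi$. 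First I would fix a dense subspace $\mathcal D\subseteq\mathcal H$ spanned by vectors $\Lambda_\varphi(a)$ with $a$ running over a core adapted to this formula (for instance elements as in Proposition~\ref{K_phi}, or elements of the Tomita subalgebra $\mathcal T_\varphi$), and verify the identity on the algebraic tensor cube of $\mathcal D$. Since $W$ and $W^*$ are bounded and $\mathcal D$ is dense (Proposition~\ref{K_phi}, Theorem~\ref{Wpartialisometry}), this is enough.

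The computation itself is three applications of the $W^*$-formula on each side, organized by coassociativity. Using an informal sumless-Sweedler notation for $\Delta$, on the right-hand side $W_{23}^*\bigl(\Lambda_\varphi(p)\otimes\Lambda_\varphi(q)\otimes\Lambda_\varphi(a)\bigr)=\Lambda_\varphi(p)\otimes\Lambda_\varphi(a_{(1)}q)\otimes\Lambda_\varphi(a_{(2)})$, and then applying $W_{12}^*$, together with the fact that $\Delta$ is a ${}^*$-homomorphism (so $\Delta(a_{(1)}q)=\Delta(a_{(1)})\Delta(q)$), yields $\Lambda_\varphi(a_{(1)(1)}q_{(1)}p)\otimes\Lambda_\varphi(a_{(1)(2)}q_{(2)})\otimes\Lambda_\varphi(a_{(2)})$. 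On the left-hand side, applying $W_{12}^*$, then $W_{13}^*$, then $W_{23}^*$ produces $\Lambda_\varphi(a_{(1)}q_{(1)}p)\otimes\Lambda_\varphi(a_{(2)(1)}q_{(2)})\otimes\Lambda_\varphi(a_{(2)(2)})$. The contributions of $p$ and $q$ coincide on the two sides, and the two $a$-parts agree precisely because $(\Delta\otimes\operatorname{id})(\Delta a)=(\operatorname{id}\otimes\Delta)(\Delta a)$, i.e.\ the coassociativity relation~\eqref{(coassociativity)}. Hence the adjoint pentagon holds on the dense subspace, therefore on all of $\mathcal H\otimes\mathcal H\otimes\mathcal H$, and taking adjoints gives $W_{12}W_{13}W_{23}=W_{23}W_{12}$.

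The real work, and the main obstacle, is the domain bookkeeping concealed by the Sweedler shorthand: $\Delta a$ lies in $M(A\otimes A)$, not $A\otimes A$, so after the first application of the $W^*$-formula one obtains a vector of the form $(\Lambda_\varphi\otimes\Lambda_\varphi)\bigl((\Delta q)(p\otimes1)\bigr)$, which must be re-expanded as a norm-convergent sum $\sum_j\Lambda_\varphi(n_j)\otimes e_j$ with $n_j\in\mathfrak N_\varphi$ along an orthonormal basis $(e_j)$ of $\mathcal H$ — the same maneuver used in the proof of Proposition~\ref{RanEsubsetRanV} (modeled in turn on Proposition~A.9 of \cite{KuVa}) — before the next $W^*$ can be fed to it; and at each stage one must check that the products appearing, such as $(\Delta q)(p\otimes1)$, $(\Delta a)(q\otimes1)$ and $\Delta(a_{(1)}q)(p\otimes1)$, lie in the appropriate square-integrability domains, which follows from the left invariance of $\varphi$. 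One cannot simply read the identity off something like $(\Delta\otimes\operatorname{id})(W)=W_{13}W_{23}$, both because making sense of that expression already requires the same slicing machinery and because, $W$ being only a partial isometry and not a unitary, the usual unitary-case shortcuts leave a residual $WW^*$ factor; the vector computation above circumvents all of this. Modulo these routine, if lengthy, basis expansions — entirely parallel to the arguments already given in Section~3 — coassociativity of $\Delta$ (with $\Delta$ a homomorphism) is the whole content of the statement.
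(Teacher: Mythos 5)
Your proposal is correct and follows essentially the same route as the paper: the paper also proves the adjoint identity $W_{12}^*W_{23}^*=W_{23}^*W_{13}^*W_{12}^*$ by applying the defining relation $W^*\bigl(\Lambda(p)\otimes\Lambda_{\varphi}(a)\bigr)=(\Lambda\otimes\Lambda_{\varphi})\bigl((\Delta a)(p\otimes1)\bigr)$ to vectors $\Lambda(a)\otimes\Lambda(b)\otimes\Lambda(c)$ with $a,b,c\in{\mathfrak N}_{\varphi}$, invoking the homomorphism property of $\Delta$ and coassociativity, and then taking adjoints. The only difference is presentational: the paper writes the intermediate vectors directly as $(\Lambda\otimes\Lambda\otimes\Lambda)$ applied to products like $(\Delta\otimes\operatorname{id})[(\Delta c)(b\otimes1)](a\otimes1\otimes1)$ rather than spelling out the basis-expansion bookkeeping you describe, which is indeed the routine justification underlying that step.
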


\begin{proof}
For $a,b,c\in{\mathfrak N}_{\varphi}$, by definition of $W$ (Proposition~\ref{Wdefn}), 
we have:
\begin{align}
&W_{12}^*W_{23}^*\bigl(\Lambda(a)\otimes\Lambda(b)\otimes\Lambda(c)\bigr)
=W_{12}^*\bigl((\Lambda\otimes\Lambda\otimes\Lambda)(a\otimes[(\Delta c)(b\otimes1)])\bigr)
\notag \\
&=(\Lambda\otimes\Lambda\otimes\Lambda)\bigl((\Delta\otimes\operatorname{id})
[(\Delta c)(b\otimes1)](a\otimes1\otimes1)\bigr),
\notag
\end{align}
while 
\begin{align}
&W_{23}^*W_{13}^*W_{12}^*\bigl(\Lambda(a)\otimes\Lambda(b)\otimes\Lambda(c)\bigr)
=W_{23}^*W_{13}^*\bigl((\Lambda\otimes\Lambda\otimes\Lambda)([(\Delta b)(a\otimes1)]
\otimes c)\bigr)    \notag \\
&=W_{23}^*\bigl((\Lambda\otimes\Lambda\otimes\Lambda)(\Delta_{13}(c)\Delta_{12}(b)
(a\otimes1\otimes1)\bigr)   \notag \\
&=(\Lambda\otimes\Lambda\otimes\Lambda)\bigl((\operatorname{id}\otimes\Delta)(\Delta c)
\Delta_{12}(b)(a\otimes1\otimes1)\bigr)   \notag \\
&=(\Lambda\otimes\Lambda\otimes\Lambda)\bigl((\Delta\otimes\operatorname{id})(\Delta c)
\Delta_{12}(b)(a\otimes1\otimes1)\bigr)  \notag \\
&=(\Lambda\otimes\Lambda\otimes\Lambda)\bigl((\Delta\otimes\operatorname{id})
[(\Delta c)(b\otimes1)](a\otimes1\otimes1)\bigr).
\notag
\end{align}
In the next to last equation, we used the coassociativity property of $\Delta$.  Comparing, 
since $a,b,c\in{\mathfrak N}_{\varphi}$ are arbitrary, we see that $W_{12}^*W_{23}^*
=W_{23}^*W_{13}^*W_{12}^*$.  Or, $W_{12}W_{13}W_{23}=W_{23}W_{12}$.
\end{proof}

The above result resembles the ``pentagon equation'' for the multiplicative unitaries. 
As in that case, we needed to use the coassociativity of $\Delta$.  However, since 
the operator $W$ is no longer unitary, many results that would automatically 
follow from the pentagon equation need separate proofs in our case.  There are 
several such results and we collect some of them here, which will be useful later.  

\begin{prop}\label{Wpentagon_alt}
In ${\mathcal B}({\mathcal H}\otimes{\mathcal H}\otimes{\mathcal H})$, we have: 
\begin{enumerate}
  \item $W_{13}W_{23}W_{23}^*=W_{12}^*W_{12}W_{13}$
  \item $W_{12}^*W_{23}W_{12}=W_{13}W_{23}$
  \item $W_{23}W_{12}W_{23}^*=W_{12}W_{13}$
  \item $W_{12}W_{12}^*W_{23}=W_{23}W_{12}W_{12}^*$
  \item $W_{12}W_{23}^*W_{23}=W_{23}^*W_{23}W_{12}$
\end{enumerate}
\end{prop}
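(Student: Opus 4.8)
The plan is to measure how far each of the five identities sits from the pentagon equation of Proposition~\ref{Wpentagon}. In the quantum group case ($W$ unitary, $E=1\otimes1$, $G_L=1$) all five are formal consequences of the pentagon; in our setting the obstruction is precisely that $W^*W=E$ and $WW^*=G_L$ are honest projections (Theorem~\ref{Wpartialisometry}), so one may not cancel a factor of $W$ or $W^*$. I would therefore treat (5) and (4) first, as consequences of the partial-isometry relations together with two commutation facts; then (2) and (3), which require returning to the GNS level; and finally (1), which drops out of (2) and (3).

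For (5) and (4) I would record two commutation facts. First, for $y\in M(B)$ one has $(1\otimes y)W=W(1\otimes y)$: indeed $(1\otimes y)W=W\Delta(y)$ (the adjoint of Proposition~\ref{Wprop}\,(1)), $\Delta(y)=E(1\otimes y)$ by Equation~\eqref{(DeltaonB)}, and $WE=W$ (the adjoint of Proposition~\ref{Wprop}\,(2)). Second, for $a\in A$, $WW^*$ commutes with $1\otimes a$: both $(1\otimes a)WW^*$ and $WW^*(1\otimes a)$ equal $W\Delta(a)W^*$, the first by $(1\otimes a)W=W\Delta(a)$, the second by $W^*(1\otimes a)=(\Delta a)W^*$ (Proposition~\ref{Wprop}\,(1)). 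Now for (5): since $E_{23}=W_{23}^*W_{23}$ (Theorem~\ref{Wpartialisometry}\,(1)) and the left leg of $E$ is $B$, slicing the third leg of $E_{23}$ produces operators of the form $1\otimes y\otimes1$ with $y\in M(B)$, with each of which $W_{12}$ commutes by the first fact; hence $W_{12}$ commutes with $E_{23}$, which is (5). For (4): by the second fact $G_L=WW^*$ commutes with $1\otimes a$ for every $a\in A$, so $(G_L)_{12}$ lies in the commutant of the von Neumann algebra generated by $\{1\otimes a\otimes1:a\in A\}$ (whose weak closure is $1\otimes A''\otimes1$) and $\{1\otimes1\otimes T:T\in{\mathcal B}({\mathcal H})\}$; since the left leg of $W$ lies in $A$ by Proposition~\ref{idomegaWclosureA}, $W_{23}$ belongs to that von Neumann algebra, so $[(G_L)_{12},W_{23}]=0$, which is (4).

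For (2) and (3) I would argue at the GNS level, taking ${\mathcal H}={\mathcal H}_\varphi$ and $\eta=\varphi$, exactly in the spirit of the proof of Proposition~\ref{Wpentagon} and of the computations in Section~2. For (3) I would verify the adjoint identity $W_{23}W_{12}^*W_{23}^*=W_{13}^*W_{12}^*$ by pairing both sides against vectors $\Lambda_\varphi(a)\otimes\Lambda_\varphi(b)\otimes\Lambda_\varphi(c)$ with $a,b,c\in{\mathfrak N}_\varphi$, applying the defining formula $W^*(\Lambda(p)\otimes\Lambda_\varphi(a))=(\Lambda\otimes\Lambda_\varphi)((\Delta a)(p\otimes1))$ of Proposition~\ref{Wdefn} repeatedly and collapsing both sides to $(\Lambda_\varphi\otimes\Lambda_\varphi\otimes\Lambda_\varphi)\bigl(\Delta_{13}(c)\Delta_{12}(b)(a\otimes1\otimes1)\bigr)$ using the coassociativity of $\Delta$ (Equation~\eqref{(coassociativity)}) and that $\Delta$ is a ${}^*$-homomorphism. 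For (2) I would instead slice the third leg: using Lemma~\ref{omega_xizetaLem}\,(4) and Proposition~\ref{DeltaW}, the identity reduces to $\Delta\bigl((\operatorname{id}\otimes\omega_{\xi,\zeta})(W)\bigr)=\sum_{j\in J}(\operatorname{id}\otimes\omega_{e_j,\zeta})(W)\otimes(\operatorname{id}\otimes\omega_{\xi,e_j})(W)$, which one then checks via Proposition~\ref{idomegaW}, coassociativity, multiplicativity of $\Delta$, and the absorption rule $E(\Delta a)=\Delta a=(\Delta a)E$ (Equation~\eqref{(EDelta)}) needed to swallow the projection produced by $W_{12}^*W_{12}=E$; the left invariance of $\varphi$ enters here through the construction of $W$. (Alternatively, (3) may be obtained from (2) by running the same argument for the co-opposite $(A,\Delta^{\operatorname{cop}})$, whose left regular representation is built out of $W$.) This GNS computation, in which the projection $E$ must be tracked by hand precisely because $W$ is not invertible, is the step I expect to be the main obstacle.

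Item (1) is then immediate: by (2), $W_{13}W_{23}W_{23}^*=W_{12}^*W_{23}W_{12}W_{23}^*=W_{12}^*\bigl(W_{23}W_{12}W_{23}^*\bigr)$, and by (3) the bracket equals $W_{12}W_{13}$, so $W_{13}W_{23}W_{23}^*=W_{12}^*W_{12}W_{13}$.
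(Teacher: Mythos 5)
Your items (4) and (5) are fine, and your treatment of (5) is a genuinely different route from the paper's: you get $[W_{12},E_{23}]=0$ from the commutation $(1\otimes y)W=W(1\otimes y)$ for $y\in M(B)$ (via Proposition~\ref{Wprop}\,(1), Equation~\eqref{(DeltaonB)} and $WE=W$), whereas the paper computes on GNS vectors and uses the weak comultiplicativity of the unit $(\Delta\otimes\operatorname{id})(E)=(1\otimes E)(E\otimes1)$; your argument only needs the minor repair that slices of $E$ by arbitrary normal functionals land in the weak closure of $B$ rather than in $M(B)$, which is harmless because the commutant is weakly closed. Your (4) is the paper's argument in von Neumann language, and your deduction of (1) from (2) and (3) is formally correct.

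The genuine gap is in your proposed proofs of (2) and (3), which is where all the analytic content of the proposition sits. For (2): slicing the third leg and using Proposition~\ref{DeltaW} turns the left-hand side into $\Delta\bigl((\operatorname{id}\otimes\omega)(W)\bigr)$, so your ``reduced identity'' $\Delta\bigl((\operatorname{id}\otimes\omega_{\xi,\zeta})(W)\bigr)=\sum_j(\operatorname{id}\otimes\omega_{e_j,\zeta})(W)\otimes(\operatorname{id}\otimes\omega_{\xi,e_j})(W)$ is not a simplification of (2) but is equivalent to it --- it is Proposition~\ref{Deltaidomega}, which the paper \emph{deduces from} (2). The ingredients you list (Proposition~\ref{idomegaW}, coassociativity, multiplicativity of $\Delta$, and $E(\Delta a)=\Delta a$) do not establish it: coassociativity only yields $\Delta\bigl((\operatorname{id}\otimes\varphi)(\Delta(r^*)(1\otimes s))\bigr)$ expressed through $(\operatorname{id}\otimes\Delta)\bigl[\Delta(r^*)(1\otimes s)\bigr]$, and passing from that to $\Delta_{13}(r^*)\Delta_{23}(s)$ under the slice $(\operatorname{id}\otimes\operatorname{id}\otimes\varphi)$ is exactly the nontrivial step, which in the paper is carried out with the $Q$-map machinery of Section~2 (compare the proofs of Propositions~\ref{V^*prep} and \ref{U^*}), i.e.\ with genuine use of the invariance properties, not with coassociativity alone. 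Equivalently, given the pentagon and $W^*W=E$, identity (2) amounts to the absorption $E_{12}W_{13}W_{23}=W_{13}W_{23}$, and nothing in your list produces it. The same defect affects (3): after the first $W_{23}^*$ the legs $2$ and $3$ are entangled, so the defining GNS formula no longer applies directly to $W_{12}^*$, and the final factor $W_{23}$ (not $W_{23}^*$) has no GNS formula at all; unwinding via the pentagon, the adjoint of (3) reduces to $(W_{23}W_{23}^*)W_{13}^*W_{12}^*=W_{13}^*W_{12}^*$, which requires the characterization of $G_L=WW^*$ in Theorem~\ref{Wpartialisometry}\,(2) --- precisely the ingredient the paper's direct proof of (1) invokes (namely that $W_{23}W_{23}^*$ applied to $(\Lambda\otimes\Lambda\otimes\Lambda)\bigl(\Delta_{13}(c)(a\otimes b\otimes1)\bigr)$ inserts $(E\otimes1)$) and that your sketch never uses. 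The repair is to reverse your order: prove (1) first by the two GNS computations using $W^*W=E$ and the $G_L$-characterization, and then obtain (2) and (3) from the pentagon, (1), and $WW^*W=W$ by the one-line algebra you already have.
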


\begin{proof}
(1). For $a,b,c\in{\mathfrak N}_{\varphi}$, using the fact that $W^*W=E$ and the definition 
of $W$, we have: 
\begin{align}
&W_{13}^*W_{12}^*W_{12}\bigl(\Lambda(a)\otimes\Lambda(b)\otimes\Lambda(c)\bigr)
=W_{13}^*\bigl((\Lambda\otimes\Lambda\otimes\Lambda)(E(a\otimes b)\otimes c)\bigr)
\notag \\
&=(\Lambda\otimes\Lambda\otimes\Lambda)\bigl(\Delta_{13}(c)(E\otimes1)(a\otimes b\otimes1)\bigr).
\notag
\end{align}
Meanwhile, using the fact that $WW^*=G_L$ (Theorem~\ref{Wpartialisometry}), we have:
\begin{align}
&W_{23}W_{23}^*W_{13}^*\bigl(\Lambda(a)\otimes\Lambda(b)\otimes\Lambda(c)\bigr)
=W_{23}W_{23}^*\bigl((\Lambda\otimes\Lambda\otimes\Lambda)(\Delta_{13}(c)(a\otimes b\otimes1))\bigr)
\notag \\
&=(\Lambda\otimes\Lambda\otimes\Lambda)\bigl(\Delta_{13}(c)(E\otimes1)(a\otimes b\otimes1)\bigr).
\notag
\end{align}
Since $a,b,c\in{\mathfrak N}_{\varphi}$ are arbitrary, we see from above that 
$W_{13}^*W_{12}^*W_{12}=W_{23}W_{23}^*W_{13}^*$.  Or, by taking adjoints, 
$W_{13}W_{23}W_{23}^*=W_{12}^*W_{12}W_{13}$.

(2). By Proposition~\ref{Wpentagon} (pentagon equation) and from (1) above, we have:
$$
W_{12}^*W_{23}W_{12}=W_{12}^*W_{12}W_{13}W_{23}=W_{13}W_{23}W_{23}^*W_{23}=W_{13}W_{23},
$$
where we used the fact that $WW^*W=W$ in the last equality.

(3). Again by Proposition~\ref{Wpentagon} and from (1) above, we have:
$$
W_{23}W_{12}W_{23}^*=W_{12}W_{13}W_{23}W_{23}^*=W_{12}W_{12}^*W_{12}W_{13}=W_{12}W_{13}.
$$

(4). Let $\omega\in{\mathcal B}({\mathcal H})_*$ be arbitrary.  Then
$$
(\operatorname{id}\otimes\operatorname{id}\otimes\omega)(W_{12}W_{12}^*W_{23})
=WW^*(1\otimes x),
$$
where $x=(\operatorname{id}\otimes\omega)(W)$.  It is an element of $A$ (see the discussion 
right above Proposition~\ref{idomegaWclosureA}).  But, by Proposition~\ref{Wprop}\,(1), we know that 
$W^*(1\otimes x)=(\Delta x)W^*$, while the same proposition implies that $W(\Delta x)=(1\otimes x)W$. 
Combining, we have: $WW^*(1\otimes x)=(1\otimes x)WW^*$.  It follows that
$$
(\operatorname{id}\otimes\operatorname{id}\otimes\omega)(W_{12}W_{12}^*W_{23})
=(\operatorname{id}\otimes\operatorname{id}\otimes\omega)(W_{23}W_{12}W_{12}^*).
$$
Since $\omega$ was arbitrary, this proves the result: $W_{12}W_{12}^*W_{23}=W_{23}W_{12}W_{12}^*$.

(5). For $a,b,c\in{\mathfrak N}_{\varphi}$, since $W^*W=E$, we have: 
\begin{align}
&W_{12}^*W_{23}^*W_{23}\bigl(\Lambda(a)\otimes\Lambda(b)\otimes\Lambda(c)\bigr)
=W_{12}^*\bigl((\Lambda\otimes\Lambda\otimes\Lambda)(a\otimes E(b\otimes c))\bigr)
\notag \\
&=(\Lambda\otimes\Lambda\otimes\Lambda)\bigl((\Delta\otimes\operatorname{id})
(E(b\otimes c))(a\otimes1\otimes1)\bigr)    \notag \\
&=(\Lambda\otimes\Lambda\otimes\Lambda)\bigl((\Delta\otimes\operatorname{id})(E)
(\Delta\otimes\operatorname{id})(b\otimes c)(a\otimes1\otimes1)\bigr)    \notag \\
&=(\Lambda\otimes\Lambda\otimes\Lambda)\bigl((1\otimes E)(E\otimes1)\Delta_{12}(b)
(a\otimes1\otimes c)\bigr)   \notag \\
&=(\Lambda\otimes\Lambda\otimes\Lambda)\bigl((1\otimes E)\Delta_{12}(b)
(a\otimes1\otimes c)\bigr).
\notag
\end{align}
Next to last equality is using the Property\,(3) of $E$ given in Definition~\ref{definitionlcqgroupoid}. 
On the other hand, we have:
\begin{align}
W_{23}^*W_{23}W_{12}^*\bigl(\Lambda(a)\otimes\Lambda(b)\otimes\Lambda(c)\bigr)
&=W_{23}^*W_{23}(\Lambda\otimes\Lambda\otimes\Lambda)\bigl(\Delta_{12}(b)(a\otimes1\otimes c)\bigr) 
\notag \\
&=(\Lambda\otimes\Lambda\otimes\Lambda)\bigl((1\otimes E)\Delta_{12}(b)(a\otimes1\otimes c)\bigr).
\notag
\end{align}
We see that $W_{12}^*W_{23}^*W_{23}=W_{23}^*W_{23}W_{12}^*$. 
Or, $W_{12}W_{23}^*W_{23}=W_{23}^*W_{23}W_{12}$.
\end{proof}

Often, we may wish to consider the von Neumann algebra $\pi_{\varphi}(A)''$.  From the characterization 
of the $C^*$-algebra $A$ given in Proposition~\ref{idomegaWclosureA}, we have: 
\begin{equation}\label{(idomegaWvNclosureA)}
\pi_{\varphi}(A)''=\bigl\{\pi_{\varphi}((\operatorname{id}\otimes\omega)(W)):
\omega\in{\mathcal B}({\mathcal H}_{\varphi})_*\bigr\}''\,\bigl(\subseteq{\mathcal B}
({\mathcal H}_{\varphi})\bigr).
\end{equation}

We can extend $\Delta$ on $A$ to the level of the von Neumann algebra $\pi_{\varphi}(A)''$, 
by taking advantage of the characterization given in Proposition~\ref{DeltaW}.

\begin{prop}\label{vNqgroupoid}
Denote by $M$ the von Neumann algebra $\pi_{\varphi}(A)''$.  For $x\in M$, define: 
$$
\tilde{\Delta}(x)=W^*(1\otimes x)W.
$$
Then $\tilde{\Delta}$ is a ${}^*$-homomorphism from $M$ into $M\otimes M$, extending 
$\Delta$.  And, the coassociativity property holds: 
$(\tilde{\Delta}\otimes\operatorname{id})\tilde{\Delta}=(\operatorname{id}\otimes\tilde{\Delta})\tilde{\Delta}$.
\end{prop}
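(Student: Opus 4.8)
The plan is to imitate the construction of the comultiplication from a multiplicative unitary in the locally compact quantum group theory (see \cite{KuVa}), while keeping track of the fact that $W$ is only a partial isometry, so that $\tilde{\Delta}(1)=W^*W=E\neq1\otimes1$. First I would verify that $\tilde{\Delta}$ is a normal ${}^*$-homomorphism. Linearity and $\tilde{\Delta}(x^*)=\bigl(W^*(1\otimes x)W\bigr)^*=\tilde{\Delta}(x)^*$ are immediate, and $\tilde{\Delta}$ is $\sigma$-weakly continuous since $x\mapsto1\otimes x$ is normal and conjugation by the bounded operators $W,W^*$ is normal. For multiplicativity the key point is that $G_L=WW^*$ commutes with $1\otimes x$ for every $x\in M$: for $x\in\pi_{\varphi}(A)$, Proposition~\ref{Wprop}\,(1) gives $W^*(1\otimes x)=(\Delta x)W^*$, hence, taking adjoints, $(1\otimes x)W=W(\Delta x)$, and therefore $WW^*(1\otimes x)=W(\Delta x)W^*=(1\otimes x)WW^*$; thus $G_L\in(1\otimes\pi_{\varphi}(A))'=(1\otimes M)'$. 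Using this together with $G_LW=WW^*W=W$ (as $W$ is a partial isometry, Theorem~\ref{Wpartialisometry}), I get
\[
\tilde{\Delta}(x)\tilde{\Delta}(y)=W^*(1\otimes x)\,G_L\,(1\otimes y)\,W=W^*(1\otimes xy)\,G_LW=W^*(1\otimes xy)W=\tilde{\Delta}(xy).
\]

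Next, $\tilde{\Delta}$ maps $M$ into $M\otimes M$ (von Neumann tensor product) and extends $\Delta$. For $x=a\in\pi_{\varphi}(A)$, Proposition~\ref{DeltaW} gives $\tilde{\Delta}(a)=(\pi\otimes\pi_{\varphi})(\Delta a)$, an element of the represented multiplier algebra of $A\otimes A$, hence of the weak closure $(\pi_{\varphi}(A)\otimes\pi_{\varphi}(A))''=M\otimes M$; this same identity shows that $\tilde{\Delta}|_{\pi_{\varphi}(A)}$ is the represented comultiplication, so $\tilde{\Delta}$ extends $\Delta$. Since $\pi_{\varphi}(A)$ is $\sigma$-weakly dense in $M$, and $\tilde{\Delta}$ is normal while $M\otimes M$ is $\sigma$-weakly closed, it follows that $\tilde{\Delta}(M)\subseteq M\otimes M$.

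For coassociativity, I would work in ${\mathcal B}({\mathcal H}\otimes{\mathcal H}\otimes{\mathcal H})$ with leg-numbering subscripts. Computing on simple tensors and extending by normality, one has $(\operatorname{id}\otimes\tilde{\Delta})(T)=W_{23}^*\,T_{13}\,W_{23}$ and $(\tilde{\Delta}\otimes\operatorname{id})(T)=W_{12}^*\,T_{23}\,W_{12}$; applying these to $T=\tilde{\Delta}(x)=W^*(1\otimes x)W$ (legitimate since $\tilde{\Delta}(x)\in M\otimes M$) yields
\[
(\operatorname{id}\otimes\tilde{\Delta})\tilde{\Delta}(x)=W_{23}^*W_{13}^*\,x_3\,W_{13}W_{23},\qquad(\tilde{\Delta}\otimes\operatorname{id})\tilde{\Delta}(x)=W_{12}^*W_{23}^*\,x_3\,W_{23}W_{12}.
\]
Now I would use the pentagon identity $W_{23}W_{12}=W_{12}W_{13}W_{23}$ (Proposition~\ref{Wpentagon}) together with its adjoint $W_{12}^*W_{23}^*=W_{23}^*W_{13}^*W_{12}^*$ to rewrite the right-hand expression as $W_{23}^*W_{13}^*W_{12}^*\,x_3\,W_{12}W_{13}W_{23}$; since $x_3$ commutes with $W_{12}$, the middle factor becomes $W_{12}^*x_3W_{12}=W_{12}^*W_{12}\,x_3=E_{12}x_3=x_3E_{12}$. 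Finally, using $EW^*=W^*$ (Proposition~\ref{Wprop}\,(2)) and the identity $W_{13}W_{23}=W_{12}^*W_{23}W_{12}$ (Proposition~\ref{Wpentagon_alt}\,(2)), one checks $E_{12}W_{13}W_{23}=(E_{12}W_{12}^*)W_{23}W_{12}=W_{12}^*W_{23}W_{12}=W_{13}W_{23}$, so the right-hand expression collapses back to $W_{23}^*W_{13}^*\,x_3\,W_{13}W_{23}$, i.e.\ to the left-hand one. (Alternatively, coassociativity follows because both sides are normal maps on $M$ agreeing on the $\sigma$-weakly dense subalgebra $\pi_{\varphi}(A)$, on which they reduce, after extending $\Delta\otimes\operatorname{id}$ and $\operatorname{id}\otimes\Delta$ to $M(A\otimes A)$, to the coassociativity~\eqref{(coassociativity)} of Part~I.)

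I expect the main obstacle to be precisely this bookkeeping caused by $E\neq1\otimes1$: every step that is automatic for a multiplicative unitary — the multiplicativity of $\tilde{\Delta}$ and the cancellations inside the pentagon manipulation — here requires one of the source/range relations $W^*W=E$, $WW^*=G_L$, $EW^*=W^*$, or one of the identities of Proposition~\ref{Wpentagon_alt}, and one must take care that $W$ is applied only where the projections $E$ and $G_L$ do not obstruct the computation.
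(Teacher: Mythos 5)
Your proposal is correct and follows essentially the same route as the paper: commutation of $G_L=WW^*$ with $1\otimes x$ for $x\in M$ (the paper passes from $\pi_\varphi(A)$ to $M$ via slice functionals $(\theta\otimes\operatorname{id})(WW^*)\in\pi_\varphi(A)'$, you via the commutant identity — the same double-commutant idea), multiplicativity from $G_LW=W$, the extension and range via Proposition~\ref{DeltaW} plus density, and coassociativity via the pentagon relation together with the identities of Proposition~\ref{Wpentagon_alt}. Your use of $EW^*=W^*$ with Proposition~\ref{Wpentagon_alt}\,(2) to get $E_{12}W_{13}W_{23}=W_{13}W_{23}$ is only a cosmetic variant of the paper's appeal to Proposition~\ref{Wpentagon_alt}\,(1) and $WW^*W=W$.
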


\begin{proof}
If $a\in\pi_{\varphi}(A)$, by twice using the result of Proposition~\ref{Wprop}\,(1), we see that 
$WW^*(1\otimes a)=(1\otimes a)WW^*$.  Therefore, for any $\theta\in{\mathcal B}({\mathcal H})_*$, 
we have $(\theta\otimes\operatorname{id})(WW^*)a=a(\theta\otimes\operatorname{id})(WW^*)$. 
So $(\theta\otimes\operatorname{id})(WW^*)\in\pi_{\varphi}(A)'$.  It follows that 
for $x\in M=\pi_{\varphi}(A)''$, we have: $(\theta\otimes\operatorname{id})(WW^*)x
=x(\theta\otimes\operatorname{id})(WW^*)$.  Or, equivalently, 
$(\theta\otimes\operatorname{id})\bigl(WW^*(1\otimes x)\bigr)
=(\theta\otimes\operatorname{id})\bigl((1\otimes x)WW^*\bigr)$.
Since $\theta$ is arbitrary, we conclude that $WW^*(1\otimes x)=(1\otimes x)WW^*$, 
true for any $x\in M$.  So, if $x_1,x_2\in M$, we have:
\begin{align}
\tilde{\Delta}(x_1)\tilde{\Delta}(x_2)&=W^*(1\otimes x_1)WW^*(1\otimes x_2)W
\notag \\
&=W^*(1\otimes x_1)(1\otimes x_2)WW^*W
=W^*(1\otimes x_1x_2)W=\tilde{\Delta}(x_1x_2).
\notag
\end{align}
It is clear that $\tilde{\Delta}(x)^*=\tilde{\Delta}(x^*)$.  So we see that $\tilde{\Delta}$ is 
a ${}^*$-homomorphism of $M$ into ${\mathcal B}({\mathcal H}_{\varphi}\otimes{\mathcal B}_{\varphi})$. 
By Proposition~\ref{DeltaW}, it is also clear that $\tilde{\Delta}|_{\pi_{\varphi}(A)}=\Delta$.

Since the elements of the form $(\operatorname{id}\otimes\omega)(W)\in\pi_{\varphi}(A)$ are 
dense in $M$, while $\tilde{\Delta}\bigl(\pi_{\varphi}(A)\bigr)=\Delta\bigl(\pi_{\varphi}(A)\bigr)
\subseteq(\pi_{\varphi}\otimes\pi_{\varphi})\bigl(M(A\otimes A)\bigr)$, which is contained in $M\otimes M$, 
we can say that in fact, $\tilde{\Delta}:M\to M\otimes M$.

For the coassociativity, suppose $x\in M$.  Then, by the pentagon relation (Proposition~\ref{Wpentagon}), 
we have: 
\begin{align}
W_{12}^*W_{23}^*(1\otimes1\otimes x)W_{23}W_{12}
&=W_{23}^*W_{13}^*W_{12}^*(1\otimes1\otimes x)W_{12}W_{13}W_{23}  \notag \\
&=W_{23}^*W_{13}^*(1\otimes1\otimes x)W_{12}^*W_{12}W_{13}W_{23}  \notag \\
&=W_{23}^*W_{13}^*(1\otimes1\otimes x)W_{13}W_{23}W_{23}^*W_{23}  \notag \\
&=W_{23}^*W_{13}^*(1\otimes1\otimes x)W_{13}W_{23},
\notag
\end{align}
where we used Proposition~\ref{Wpentagon_alt}\,(1), for the third equality.  It follows that
$$
(\tilde{\Delta}\otimes\operatorname{id})(\tilde{\Delta}x)
=(\operatorname{id}\otimes\tilde{\Delta})(\tilde{\Delta}x).
$$
\end{proof}

There exist results similar to all of the above propositions in this subsection, in terms of 
the operator $V$. See Proposition~\ref{omegaidV} later.

\section{Antipode}\label{sec4}

The aim of this section is to construct the antipode map $S$ for a given locally compact 
quantum groupoid of separable type $(A,\Delta,E,B,\nu,\varphi,\psi)$.  Since $S$ is in general 
unbounded even in the quantum group case, this needs some care.  The standard approach 
nowadays is to use the ``polar decomposition'':  The idea originally goes back to an unpublished 
work by Eberhard Kirchberg, which was adopted by the authors of the quantum group literature. 
See \cite{KuVa}, \cite{KuVavN}, \cite{MNW}, \cite{VDvN}.  Similar strategy can be employed 
in our case as well (see \S\ref{subsec4.4} below), with some necessary modifications to incorporate 
the fact that the operators $V$ and $W$ are not unitaries.

But first, we will construct an (unbounded) involutive operator $K$.  This is the operator 
implementing the antipode at the Hilbert space level, and the underlying idea here is essentially 
same as in the  quantum group case (in particular, see section~2 of \cite{VDvN}).

\subsection{The involutive operator $K$}\label{subsec4.1}

Keep the notation as in the previous sections, while we take ${\mathcal H}={\mathcal H}_{\psi}$. 
So we will have $A=\pi_{\psi}(A)\subseteq{\mathcal B}({\mathcal H}_{\psi})$ via the GNS 
representation.  The operator $V$, defined in Proposition~\ref{Vdefn}, will be contained 
in ${\mathcal B}({\mathcal H}_{\psi}\otimes{\mathcal H}_{\psi})$.

\begin{defn}\label{D(K)}
Let $\xi\in{\mathcal H}_{\psi}$.  We will say $\xi\in{\mathcal D}(K)$, if there exists a vector 
$\tilde{\xi}\in{\mathcal H}_{\psi}$ such that for any $\varepsilon>0$ and any vectors 
$\zeta_1,\zeta_2,\dots,\zeta_n\in{\mathcal H}_{\psi}$, we can find elements 
$p_1,p_2,\dots,p_m;q_1,q_2,\dots,q_m\in{\mathfrak N}_{\psi}$ satisfying
\begin{align}
&\left\|V\left(\sum_{j=1}^m\Lambda_{\psi}(p_j)\otimes q_j^*\zeta_k\right)
-E(\xi\otimes\zeta_k)\right\|\,<\,\varepsilon \qquad {\text {and}}    \notag \\
&\left\|V\left(\sum_{j=1}^m\Lambda_{\psi}(q_j)\otimes p_j^*\zeta_k\right)
-E(\tilde{\xi}\otimes\zeta_k)\right\|\,<\,\varepsilon,
\notag
\end{align}
for all $k=1,2,\dots,n$.  Here, we wrote $E=(\pi_{\psi}\otimes\pi_{\psi})(E)
\in{\mathcal B}({\mathcal H}_{\psi}\otimes{\mathcal H}_{\psi})$.
\end{defn}

Our aim is to define a linear operator $K$ by the map ${\mathcal D}(K)\ni\xi\mapsto\tilde{\xi}$.  
However, at present we do not know whether this is a well-defined map. See the next proposition.

\begin{prop}\label{Kwelldef}
If $\xi=0$, then $\xi\in{\mathcal D}(K)$ and $\tilde{\xi}=0$.
\end{prop}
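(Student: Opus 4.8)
The plan is to show that $\xi=0$ satisfies the defining conditions of $\mathcal{D}(K)$ with witness $\tilde{\xi}=0$. Since $E(0\otimes\zeta_k)=0$ for every $k$, what I need is, given $\varepsilon>0$ and $\zeta_1,\dots,\zeta_n\in\mathcal{H}_\psi$, to produce elements $p_1,\dots,p_m;q_1,\dots,q_m\in\mathfrak{N}_\psi$ such that \emph{both} $\bigl\|V\bigl(\sum_j\Lambda_\psi(p_j)\otimes q_j^*\zeta_k\bigr)\bigr\|<\varepsilon$ and $\bigl\|V\bigl(\sum_j\Lambda_\psi(q_j)\otimes p_j^*\zeta_k\bigr)\bigr\|<\varepsilon$ for all $k$. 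The obvious first attempt is to take everything to be zero, i.e. $m=1$ and $p_1=q_1=0$; then all the sums are genuinely the zero vector, both inequalities read $0<\varepsilon$, and we are done. So the real content is purely bookkeeping: one must check that $0\in\mathfrak{N}_\psi$ (true, since $\mathfrak{N}_\psi$ is a subspace of $A$), that $\Lambda_\psi(0)=0$, and that the expressions in Definition~\ref{D(K)} make sense in this degenerate case.

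First I would spell out that for $\xi=0$ the candidate $\tilde{\xi}=0$ is forced if $K$ is to be linear, so this is the natural choice to verify. Then I would simply invoke the choice $p_j=q_j=0$ (a single term, $m=1$) and observe that $\sum_{j=1}^1\Lambda_\psi(p_j)\otimes q_j^*\zeta_k = \Lambda_\psi(0)\otimes 0 = 0$, and likewise for the second sum, so that applying $V$ gives $0$, and $E(\xi\otimes\zeta_k)=E(0\otimes\zeta_k)=0$, $E(\tilde\xi\otimes\zeta_k)=0$. Hence both norms in the definition equal $\|0\|=0<\varepsilon$ for every $k=1,\dots,n$, regardless of $\varepsilon$ and the $\zeta_k$. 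This establishes $0\in\mathcal{D}(K)$ with the stated $\tilde\xi$.

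There is essentially no obstacle here; the ``hard part,'' such as it is, is just making sure the reader sees that the definition is not vacuous and that the trivial approximants are legitimate (i.e.\ that $\mathfrak{N}_\psi$ contains $0$ and the GNS map sends it to $0$). The point of stating this as a separate proposition is presumably to flag that the zero vector lies in the domain, which is needed before one can even discuss whether $K$ is densely defined or linear; the proof itself is a one-liner once the degenerate choice of approximants is exhibited.

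Concretely, the proof would read: \emph{Take $\tilde\xi=0$. For any $\varepsilon>0$ and any $\zeta_1,\dots,\zeta_n\in\mathcal{H}_\psi$, choose $m=1$ and $p_1=q_1=0\in\mathfrak{N}_\psi$. Then $\Lambda_\psi(p_1)\otimes q_1^*\zeta_k=0$ and $\Lambda_\psi(q_1)\otimes p_1^*\zeta_k=0$ for all $k$, so $V\bigl(\sum_{j=1}^1\Lambda_\psi(p_j)\otimes q_j^*\zeta_k\bigr)=0=E(\xi\otimes\zeta_k)$ and similarly $V\bigl(\sum_{j=1}^1\Lambda_\psi(q_j)\otimes p_j^*\zeta_k\bigr)=0=E(\tilde\xi\otimes\zeta_k)$. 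Both norms are $0<\varepsilon$. Hence $\xi=0\in\mathcal{D}(K)$ and $\tilde\xi=0$.} This is the entire argument.
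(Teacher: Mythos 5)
Your proof establishes only the trivial half of the statement and misses its actual content. The point of Proposition~\ref{Kwelldef} is not that the zero vector admits \emph{some} witness $\tilde{\xi}$ (taking $\tilde{\xi}=0$ and $p_j=q_j=0$ is indeed immediate, as the paper itself notes in one line); it is that \emph{any} vector $\tilde{\xi}$ satisfying the conditions of Definition~\ref{D(K)} for $\xi=0$ must itself be zero. This is precisely the well-definedness of the assignment $\xi\mapsto\tilde{\xi}$, which is needed before $K$ can be defined in Definition~\ref{Kdefn}. Your remark that ``$\tilde{\xi}=0$ is forced if $K$ is to be linear'' is circular: linearity (indeed, single-valuedness) of $K$ is exactly what is at stake, and Definition~\ref{D(K)} by itself does not rule out that some nonzero $\tilde{\xi}$ could also serve as a witness for $\xi=0$. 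If that happened, $K$ would not be a map at all.

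The missing argument is the substantial one. In the paper, one assumes $\xi=0$ with an arbitrary admissible $\tilde{\xi}$, picks $p_j,q_j\in\mathfrak{N}_{\psi}$ realizing the two $\varepsilon$-approximations (with the $p_j$ chosen of the special form $(\operatorname{id}\otimes\varphi)\bigl(\Delta(r_j^*y_j)(1\otimes s_j)\bigr)$ so that $p_j\otimes b\in\mathcal{D}(Q_R)$), and then pairs $E(\tilde{\xi}\otimes\zeta_2)$ against vectors of the form $V\bigl(\pi_R(\rho_2)^*\zeta_1\otimes\rho_1\bigr)$ built from right bounded vectors. Using $V^*V=G_R$, the relation $Q_{\rho}(p^*\otimes b^*)=Q_R(p\otimes b)^*$ from Proposition~\ref{RR'LL'}, and the norm identities between $G_R$ and $G_{\rho}$, one bounds this pairing by a multiple of $\varepsilon$; letting $\varepsilon\to0$ gives $E(\tilde{\xi}\otimes\zeta_2)\in\operatorname{Ran}(V)^{\perp}$, and since $\operatorname{Ran}(V)=\operatorname{Ran}(E)$ (Theorem~\ref{K_psi=H_psi}) this forces $E(\tilde{\xi}\otimes\zeta_2)=0$ for all $\zeta_2$. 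Finally, the fullness of $E$ (its left leg being $B$) together with the non-degeneracy of $B$ in $M(A)$ yields $\tilde{\xi}=0$. None of this is present in your proposal, so as written it does not prove the proposition.
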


\begin{proof}
It is clear that $0\in{\mathcal D}(K)$, because we can just let $\tilde{\xi}$ and the $p_j$ 
and the $q_j$  all zero.  But the main issue here is to show whether $\tilde{\xi}=0$ 
necessarily.

So suppose $\xi=0$, and let $\tilde{\xi}$ be as in Definition~\ref{D(K)}. Then for arbitrary 
$\zeta_1,\zeta_2\in{\mathcal H}_{\psi}$ and $\varepsilon>0$, we can find elements 
$p_1,p_2,\dots,p_m;q_1,q_2,\dots,q_m\in{\mathfrak N}_{\psi}$, such that 

\begin{equation}\label{(K1)}
\left\|V\left(\sum_{j=1}^m\Lambda_{\psi}(p_j)\otimes q_j^*\zeta_1\right)\right\|\,<\varepsilon,
\end{equation}
and
\begin{equation}\label{(K2)}
\left\|V\left(\sum_{j=1}^m\Lambda_{\psi}(q_j)\otimes p_j^*\zeta_2\right)
-E(\tilde{\xi}\otimes\zeta)\right\|\,<\varepsilon.
\end{equation}
Here, we may assume without loss of generality that the elements $p_j$ are of the form 
$p_j=(\operatorname{id}\otimes\varphi)\bigl(\Delta(r_j^*y_j)(1\otimes s_j)\bigr)$, for 
$r_j,s_j\in{\mathfrak N}_{\varphi}$, $y_j\in{\mathfrak N}_{\psi}$, so that $p_j\otimes b
\in{\mathcal D}(Q_R)$, for $b\in A$. 

Consider two ``right bounded'' vectors $\rho_1$, $\rho_2$ in ${\mathcal H}={\mathcal H}_{\psi}$. 
Recall that a vector $\rho$ is {\em right bounded\/} in ${\mathcal H}$, if there is a (unique) 
bounded operator, denoted as $\pi_R(\rho)$, satisfying $x\rho=\pi_R(\rho)\Lambda_{\psi}(x)$ 
for all $x\in{\mathfrak N}_{\psi}$.  See Definition~1.7 of \cite{Tk2}.  Such elements are dense 
in ${\mathcal H}_{\psi}$.  Meanwhile, recall that $V$ is a partial isometry, with the terminal projection 
$V^*V=G_R$ (see Theorem~\ref{K_psi=H_psi} and Proposition~\ref{G_R}).  By the definition of the 
operator $G_R$ as given in Equation~\eqref{(G_R)}, we have:
\begin{align}\label{(K3)}
&\left\langle V\left(\sum_j\Lambda_{\psi}(p_j)\otimes q_j^*\zeta_1\right),
V\bigl(\pi_R(\rho_1)^*\zeta_2\otimes\rho_2\bigr)\right\rangle
\notag \\
&=\sum_j\bigl\langle(\pi_R(\rho_1)\otimes1)G_R\bigl(\Lambda_{\psi}(p_j)\otimes q_j^*\zeta_1\bigr),
\zeta_2\otimes\rho_2\bigr\rangle
\notag \\
&=\sum_j\bigl\langle Q_R(p_j\otimes q_j^*)(\rho_1\otimes\zeta_1),
\zeta_2\otimes\rho_2\bigr\rangle
\notag  \\
&=\sum_j\bigl\langle\rho_1\otimes\zeta_1,
Q_R(p_j\otimes q_j^*)^*(\zeta_2\otimes\rho_2)\bigr\rangle
=\sum_j\bigl\langle\rho_1\otimes\zeta_1,
Q_{\rho}(p_j^*\otimes q_j)(\zeta_2\otimes\rho_2)\bigr\rangle
\notag \\
&=\sum_j\bigl\langle\rho_1\otimes\pi_R(\rho_2)^*\zeta_1,
G_{\rho}(p_j^*\zeta_2\otimes\Lambda_{\psi}(q_j))\bigr\rangle.
\end{align}
We used the result of Proposition~\ref{RR'LL'}, and in the last line, 
we are using the projection operator $G_{\rho}$ defined in Corollary of Proposition~\ref{G_R}.

Meanwhile, observe that we have 
\begin{align}\label{(K4)}
&\bigl|\langle E(\tilde{\xi}\otimes\zeta_2),V(\pi_R(\rho_2)^*\zeta_1\otimes\rho_1)\rangle\bigr|  \notag \\
&\le\left|\left\langle E(\tilde{\xi}\otimes\zeta_2)-V\left(\sum_j\Lambda_{\psi}(q_j)\otimes p_j^*\zeta_2\right),
V\bigl(\pi_R(\rho_2)^*\zeta_1\otimes\rho_1\bigr)\right\rangle\right|  \notag \\
&\qquad+\left|\left\langle V\left(\sum_j\Lambda_{\psi}(q_j)\otimes p_j^*\zeta_2\right),
V\bigl(\pi_R(\rho_2)^*\zeta_1\otimes\rho_1\bigr)\right\rangle\right|
\notag \\
&\le\varepsilon\bigl\|\pi_R(\rho_2)^*\zeta_1\otimes\rho_1\bigr\|
+\left\|\sum_j\bigl\langle G_R(\Lambda_{\psi}(q_j)\otimes p_j^*\zeta_2),
\pi_R(\rho_2)^*\zeta_1\otimes\rho_1\bigr\rangle\right\|,
\end{align}
by Equation~\eqref{(K2)}, the fact that $\|V\|\le1$,  and by noting that $V^*V=G_R$.

Because of the way the projection operators $G_R$ and $G_{\rho}$ are defined and by 
Proposition~\ref{RR'LL'}, it is evident that $\bigl\|G_R(\eta_1\otimes\eta_2)\bigr\|
=\bigl\|G_{\rho}(\eta_1\otimes\eta_2)\bigr\|$, and also $\bigl\|G_R(\eta_1\otimes\eta_2)\bigr\|
=\bigl\|G_{\rho}(\eta_2\otimes\eta_1)\bigr\|$, for any $\eta_1,\eta_2\in{\mathcal H}_{\psi}$.
Therefore, 
\begin{align}
&\bigl\|\sum_j\bigl\langle G_R(\Lambda_{\psi}(q_j)\otimes p_j^*\zeta_2),
\pi_R(\rho_2)^*\zeta_1\otimes\rho_1\bigr\rangle\bigr\|    \notag \\
&=\bigl\|\sum_j\langle\rho_1\otimes\pi_R(\rho_2)^*\zeta_1,
G_{\rho}(p_j^*\zeta_2\otimes\Lambda_{\psi}(q_j))\rangle\bigr\|. 
\notag
\end{align}
It follows that by Equation~\eqref{(K3)}, we have:
\begin{align}\label{(K5)}
&\bigl\|\sum_j\bigl\langle G_R(\Lambda_{\psi}(q_j)\otimes p_j^*\zeta_2),
\pi_R(\rho_2)^*\zeta_1\otimes\rho_1\bigr\rangle\bigr\|    \notag \\
&=\left\|\left\langle V\left(\sum_j\Lambda_{\psi}(p_j)\otimes q_j^*\zeta_1\right),
V\bigl(\pi_R(\rho_1)^*\zeta_2\otimes\rho_2\bigr)\right\rangle\right\|
\le\varepsilon\bigl\|\pi_R(\rho_1)^*\zeta_2\otimes\rho_2\bigr\|.
\end{align}
The last inequality is due to Equation~\eqref{(K1)} and the fact that $\|V\|\le1$.

Combining Equations~\eqref{(K4)} and \eqref{(K5)}, we now conclude that 
$$
\bigl|\langle E(\tilde{\xi}\otimes\zeta_2),V(\pi_R(\rho_2)^*\zeta_1\otimes\rho_1)\rangle\bigr|
\le\varepsilon\bigl\|\pi_R(\rho_2)^*\zeta_1\bigr\|\bigl\|\rho_1\bigr\|
+\varepsilon\bigl\|\pi_R(\rho_1)^*\zeta_2\bigr\|\bigl\|\rho_2\bigr\|.
$$
As this is true for any $\varepsilon$, it follows that
$$
\bigl\langle E(\tilde{\xi}\otimes\zeta_2),V(\pi_R(\rho_2)^*\zeta_1\otimes\rho_1)\bigr\rangle=0, 
$$
true for any right bounded vectors $\rho_1,\rho_2$, and any $\zeta_1\in{\mathcal H}_{\psi}$. 
Since the $\pi_R(\rho_2)^*\zeta_1\otimes\rho_1$ span a dense subspace of ${\mathcal H}_{\psi}
\otimes{\mathcal H}_{\psi}$, we see that $E(\tilde{\xi}\otimes\zeta_2)\in\operatorname{Ran}
(V)^{\perp}$.  Since $\operatorname{Ran}(V)=\operatorname{Ran}(E)$, this means 
that $E(\tilde{\xi}\otimes\zeta_2)=0_{{\mathcal H}_{\psi}\otimes{\mathcal H}_{\psi}}$, where 
$\zeta_2$ is arbitrary in ${\mathcal H}_{\psi}$.

We claim that this implies $\tilde{\xi}=0_{{\mathcal H}_{\psi}}$.  To see this, take, without loss 
of generality $\zeta_2=\Lambda_{\psi}(a)$, for an arbitrary $a\in{\mathfrak N}_{\psi}$.  Note that 
for any $c\in C$, we have: $E(\tilde{\xi}\otimes c\zeta_2)=E\bigl(\tilde{\xi}\otimes\Lambda_{\psi}(ca)\bigr)
=0_{{\mathcal H}_{\psi}\otimes{\mathcal H}_{\psi}}$.  So for any $d\in{\mathfrak N}_{\psi}$, 
we have: 
$$
(\operatorname{id}\otimes\omega_{\Lambda_{\psi}(a),\Lambda_{\psi}(d)})\bigl(E(1\otimes c)\bigr)\tilde{\xi}
=\bigl\langle E(\tilde{\xi}\otimes c\Lambda_{\psi}(a)),\,\cdot\,\otimes\Lambda_{\psi}(d)\bigr\rangle
=0_{{\mathcal H}_{\psi}}.  
$$
Recall that the elements of the form $(\operatorname{id}\otimes\omega)\bigl(E(1\otimes c)\bigr)$ span 
a dense subspace in $B$ (``$E$ is full'').  It follows that $b\tilde{\xi}=0$ for any $b\in B$. 
Since $B$ is a non-degenerate subalgebra of $M(A)$, this is enough to show that $\tilde{\xi}
=0_{{\mathcal H}_{\psi}}$.  This finishes the proof.
\end{proof}

We can now give the definition of our operator $K$.  Proposition~\ref{Kwelldef} assures 
us that it is well-defined.

\begin{defn}\label{Kdefn}
For $\xi\in{\mathcal D}(K)$ and $\tilde{\xi}$ as in Definition~\ref{D(K)}, write $K\xi:=\tilde{\xi}$. 
\end{defn}

\begin{rem}
The definitions of ${\mathcal D}(K)$ and $K$ are analogous to the definitions given in section~2 of 
\cite{VDvN}, in the quantum group case.  However, with $E\ne1\otimes1$ and the operator $V$ 
not unitary (only a partial isometry), the proof became more complicated.
\end{rem}

\begin{prop}\label{Koperator}
$K$, as defined above, is a closed operator on ${\mathcal H}_{\psi}$, in general unbounded. 
Moreover, if $\xi\in{\mathcal D}(K)$, then $K\xi\in{\mathcal D}(K)$ and $K(K\xi)=\xi$.
\end{prop}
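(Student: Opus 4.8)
The plan is to establish the three claims in order: $K$ is closed, $K$ maps $\mathcal{D}(K)$ into itself, and $K$ is an involution on its domain. The symmetry between the two defining conditions in Definition~\ref{D(K)} (interchanging the roles of the $p_j$ and the $q_j$) is what makes the involutivity essentially automatic once closedness is in place, so the bulk of the work is the closedness statement.

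First I would prove that $K$ is closed. Suppose $\xi_n \in \mathcal{D}(K)$ with $\xi_n \to \xi$ and $K\xi_n \to \chi$ in $\mathcal{H}_\psi$. I must show $\xi \in \mathcal{D}(K)$ with $K\xi = \chi$, i.e.\ that $\chi$ serves as the vector $\tilde{\xi}$ required by Definition~\ref{D(K)}. Given $\varepsilon > 0$ and vectors $\zeta_1,\dots,\zeta_n \in \mathcal{H}_\psi$, I would first choose $N$ large enough that $\|\xi_N - \xi\| \cdot \|\zeta_k\|$ and $\|K\xi_N - \chi\| \cdot \|\zeta_k\|$ are both smaller than $\varepsilon/2$ for all $k$ (using that $E$ has norm $1$, so $\|E(\eta\otimes\zeta_k)\| \le \|\eta\|\|\zeta_k\|$), and then apply the definition of $\mathcal{D}(K)$ to $\xi_N$ with tolerance $\varepsilon/2$ and the same finite family of $\zeta_k$'s. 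A triangle-inequality estimate then gives the approximating elements $p_j, q_j \in \mathfrak{N}_\psi$ witnessing $\xi \in \mathcal{D}(K)$ with associated vector $\chi$. Hence $K$ is closed.

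Next, the involutivity. The two inequalities in Definition~\ref{D(K)} are completely symmetric under the simultaneous swap $p_j \leftrightarrow q_j$ and $\xi \leftrightarrow \tilde{\xi}$. Therefore, if $\xi \in \mathcal{D}(K)$ with witness $\tilde{\xi} = K\xi$, then $\tilde{\xi} \in \mathcal{D}(K)$ as well, with the roles reversed, so $\tilde{\xi}$ has $\xi$ as a valid "$\tilde{\tilde{\xi}}$''. To conclude $K(K\xi) = \xi$, I need the uniqueness of the witness vector: this is exactly the content of Proposition~\ref{Kwelldef} (well-definedness of $K$), applied now to $K\xi$. Since any two vectors satisfying the defining condition for a given $\xi$ must be equal (apply Proposition~\ref{Kwelldef} to their difference, using linearity of the conditions in $\xi$ and $\tilde{\xi}$), we get $K(K\xi) = \xi$.

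Finally, unboundedness in general: this is not something to prove from the axioms but rather a statement that we should not expect $K$ to be bounded — I would simply remark that in the quantum group case $K$ is already unbounded (it is the closure of $a\mapsto$ the relevant implementation of the antipode, cf.\ \cite{VDvN}), so no boundedness is asserted. The main obstacle I anticipate is purely bookkeeping in the closedness argument: one must be careful that the finite family $\zeta_1,\dots,\zeta_n$ is fixed \emph{before} choosing $N$, so that a single $N$ works uniformly across $k = 1,\dots,n$, and that the $\varepsilon$-budget is split correctly between the approximation $\xi_N \approx \xi$ and the approximation internal to $\xi_N \in \mathcal{D}(K)$. There is no analytic subtlety beyond $\|E\| \le 1$ and the triangle inequality; the conceptual work was already done in Proposition~\ref{Kwelldef}.
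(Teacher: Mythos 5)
Your proposal is correct and follows essentially the same route as the paper: closedness by the standard triangle-inequality argument (using $\|E\|\le 1$ and a fixed finite family of $\zeta_k$'s with an $\varepsilon$-split), and involutivity from the symmetry of Definition~\ref{D(K)} together with the uniqueness of the witness vector supplied by Proposition~\ref{Kwelldef}. The paper merely states these two steps more tersely; your write-up fills in the same details.
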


\begin{proof}
(1). Suppose $(\xi_l)\to\xi$, with each $\xi_l\in{\mathcal D}(K)$, and $(K\xi_l)\to\zeta$.  We need to 
show that $\xi\in{\mathcal D}(K)$ and that $\zeta=K\xi=\tilde{\xi}$.  This can be done by a standard 
argument using triangle inequality.  So $K$ is a closed operator.

(2). Second claim is also immediate from the symmetry of Definition~\ref{D(K)}, which implies 
that $\tilde{\tilde{\xi}}=\xi$.
\end{proof}

We now have a valid operator $K$, but at this stage, we do not know whether it is densely-defined. 
This is the problem we will turn to next.  For this part, it is useful to work with the operator 
$W\in{\mathcal B}({\mathcal H}\otimes{\mathcal H}_{\varphi})$, where we still let ${\mathcal H}
={\mathcal H}_{\psi}$ as we did at the beginning of this subsection.

\begin{lem}\label{lemmaidomegacW}
Let $c\in{\mathfrak N}_{\psi}$ and let $\omega\in{\mathcal B}({\mathcal H}_{\varphi})_*$. 
Then we have: 
$$\bigl(\operatorname{id}\otimes\omega(c\,\cdot\,)\bigr)(W)\in{\mathfrak N}_{\psi}.$$
\end{lem}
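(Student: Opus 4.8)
The plan is to combine the intertwining relation for $W$ with the fact that $W$ is a partial isometry, writing the element in question as a norm-convergent sum of elements of $\mathfrak{N}_\psi$ whose $\Lambda_\psi$-images form a square-summable family.

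First I would reduce to $\omega=\omega_{\xi,\zeta}$ with $\xi,\zeta\in\mathcal{H}_\varphi$; the general case is recovered by replacing $\mathcal{H}_\varphi$ by $\mathcal{H}_\varphi\otimes\ell^2$ and $W$ by $W\otimes1$, which alters nothing below (in particular $(W\otimes1)(W\otimes1)^*$ is still a contraction). Taking adjoints in Proposition~\ref{Wprop}\,(1) gives $(1\otimes c)W=W\,\Delta c$ as operators on $\mathcal{H}_\psi\otimes\mathcal{H}_\varphi$ (with $\Delta c=(\pi_\psi\otimes\pi_\varphi)(\Delta c)$), so that
$$
\bigl(\operatorname{id}\otimes\omega(c\,\cdot\,)\bigr)(W)=(\operatorname{id}\otimes\omega)\bigl((1\otimes c)W\bigr)=(\operatorname{id}\otimes\omega)\bigl(W\,\Delta c\bigr).
$$
Fixing an orthonormal basis $(e_j)_{j\in J}$ of $\mathcal{H}_\varphi$ and applying Lemma~\ref{omega_xizetaLem}\,(4) with $S=W$ and $T=\Delta c$, the net of finite sums
$$
a_I:=\sum_{j\in I}(\operatorname{id}\otimes\omega_{e_j,\zeta})(W)\,c_j,\qquad c_j:=(\operatorname{id}\otimes\omega_{\xi,e_j})(\Delta c),
$$
converges in norm to $\bigl(\operatorname{id}\otimes\omega(c\,\cdot\,)\bigr)(W)$.

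Next I would control the $c_j$. Since $c\in\mathfrak{N}_\psi$, the right invariance of $\psi$ gives $\Delta c\in\overline{\mathfrak{N}}_{\psi\otimes\operatorname{id}}$, so each $c_j\in\overline{\mathfrak{N}}_\psi$ with $\Lambda_\psi(c_j)$ defined; moreover $\Delta(c^*c)\in\overline{\mathfrak{M}}_{\psi\otimes\operatorname{id}}$ with $(\psi\otimes\operatorname{id})(\Delta(c^*c))\in M(B)$. Applying Lemma~\ref{omega_xizetaLem}\,(4) once more to $\Delta(c^*)\,\Delta(c)=\Delta(c^*c)$, one finds that $\sum_j c_j^*c_j$ converges to $(\operatorname{id}\otimes\omega_{\xi,\xi})(\Delta(c^*c))$, whence
$$
\sum_{j\in J}\psi(c_j^*c_j)=\bigl\langle(\psi\otimes\operatorname{id})(\Delta(c^*c))\,\xi,\xi\bigr\rangle\le\bigl\|(\psi\otimes\operatorname{id})(\Delta(c^*c))\bigr\|\,\|\xi\|^2<\infty .
$$
Thus $c_j\in\mathfrak{N}_\psi$ and $\sum_j\|\Lambda_\psi(c_j)\|^2<\infty$. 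Since each $(\operatorname{id}\otimes\omega_{e_j,\zeta})(W)\in A$ (Proposition~\ref{idomegaWclosureA}) and $\mathfrak{N}_\psi$ is a left ideal, we get $a_I\in\mathfrak{N}_\psi$ with $\Lambda_\psi(a_I)=\sum_{j\in I}(\operatorname{id}\otimes\omega_{e_j,\zeta})(W)\,\Lambda_\psi(c_j)$.

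Finally, I would show $(\Lambda_\psi(a_I))_I$ is Cauchy, which is where the partial isometry property of $W$ is used: for a unit vector $\xi'\in\mathcal{H}_\psi$, Cauchy--Schwarz together with Lemma~\ref{omega_xizetaLem}\,(4) and $WW^*=G_L$ (hence $\|WW^*\|\le1$) yield
$$
\Bigl\|\sum_{j\in I}(\operatorname{id}\otimes\omega_{e_j,\zeta})(W)\,\Lambda_\psi(c_j)\Bigr\|\le\Bigl(\sum_{j\in I}\|\Lambda_\psi(c_j)\|^2\Bigr)^{1/2}\|\zeta\|,
$$
using $\sum_{j}\|(\operatorname{id}\otimes\omega_{\zeta,e_j})(W^*)\xi'\|^2=\bigl\langle(\operatorname{id}\otimes\omega_{\zeta,\zeta})(WW^*)\xi',\xi'\bigr\rangle\le\|\zeta\|^2$. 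By the square-summability above, $(\Lambda_\psi(a_I))_I$ converges; since also $a_I\to\bigl(\operatorname{id}\otimes\omega(c\,\cdot\,)\bigr)(W)$ in norm, the closedness of $\Lambda_\psi$ (lower semicontinuity of $\psi$) gives $\bigl(\operatorname{id}\otimes\omega(c\,\cdot\,)\bigr)(W)\in\mathfrak{N}_\psi$. The main obstacle is the bookkeeping around the slices $c_j$: checking that they genuinely lie in $\mathfrak{N}_\psi$, and that the two convergences ($a_I\to a$ in $A$, and $\Lambda_\psi(a_I)\to$ a vector in $\mathcal{H}_\psi$) run over the same net, so that the closedness of $\Lambda_\psi$ applies — both of which rest on the right invariance of $\psi$ (for the square-summability) and on $W$ being a partial isometry (for the Cauchy estimate).
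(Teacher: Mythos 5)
Your proof is correct, but it takes a genuinely different route from the paper's. The paper argues entirely at the operator level: writing $x=(\operatorname{id}\otimes\omega)\bigl((1\otimes c)W\bigr)$, it invokes the Cauchy--Schwarz-type inequality of Lemma~4.4 in Part~I to get $x^*x\le\|\omega\|\,(\operatorname{id}\otimes|\omega|)\bigl(W^*(1\otimes c^*c)W\bigr)$, identifies $W^*(1\otimes c^*c)W=\Delta(c^*c)$ via Proposition~\ref{DeltaW}, and then applies $\psi$, using right invariance to conclude $\psi(x^*x)\le\|\omega\|\,|\omega|\bigl((\psi\otimes\operatorname{id})(\Delta(c^*c))\bigr)<\infty$; this treats every $\omega\in{\mathcal B}({\mathcal H}_{\varphi})_*$ at once and needs no GNS-level bookkeeping. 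You instead work in the Hilbert space: reduce to vector functionals by amplification, decompose $(\operatorname{id}\otimes\omega)(W\,\Delta c)$ through an orthonormal basis via Lemma~\ref{omega_xizetaLem}\,(4), prove square-summability of the $\Lambda_{\psi}(c_j)$ exactly as in the boundedness argument of Proposition~\ref{Vdefn}\,(1), use the contraction $WW^*=G_L$ for the Cauchy estimate, and finish with the closedness of $\Lambda_{\psi}$. Both proofs ultimately rest on the same Section~3 inputs (your intertwining $(1\otimes c)W=W\,\Delta c$ from Proposition~\ref{Wprop} and the paper's $W^*(1\otimes c^*c)W=\Delta(c^*c)$ are essentially the same fact) and on the right invariance of $\psi$ applied to $\Delta(c^*c)$; what your longer route buys is an explicit series expression for $\Lambda_{\psi}\bigl((\operatorname{id}\otimes\omega(c\,\cdot\,))(W)\bigr)$ together with the bound $\bigl\|(\psi\otimes\operatorname{id})(\Delta(c^*c))\bigr\|^{1/2}\|\xi\|\,\|\zeta\|$, which the paper's argument also yields but does not display. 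One small touch-up: in the square-summability step the claimed equality $\sum_j\psi(c_j^*c_j)=\bigl\langle(\psi\otimes\operatorname{id})(\Delta(c^*c))\xi,\xi\bigr\rangle$ should be stated as the inequality $\le$ (obtained from positivity of the partial sums and lower semicontinuity, as in Proposition~\ref{Vdefn}), which is all you use.
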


\begin{proof}
Write $x=\bigl(\operatorname{id}\otimes\omega(c\,\cdot\,)\bigr)(W)
=(\operatorname{id}\otimes\omega)\bigl((1\otimes c)W\bigr)$.  By Propositions~\ref{idomegaW} 
and \ref{idomegaWclosureA}, we already know that $x\in A$.   We then have: 
\begin{align}
x^*x&=(\operatorname{id}\otimes\omega)\bigl((1\otimes c)W\bigr)^*\,(\operatorname{id}
\otimes\omega)\bigl((1\otimes c)W\bigr)  \notag \\
&\le\|\omega\|\,\bigl(\operatorname{id}\otimes|\omega|\bigr)\bigl(W^*(1\otimes c^*)
(1\otimes c)W\bigr),
\notag
\end{align}
where we used the result of Lemma~4.4 in Part~I \cite{BJKVD_qgroupoid1}.  Meanwhile, 
note that $W^*(1\otimes c^*)(1\otimes c)W=W^*(1\otimes c^*c)W=\Delta(c^*c)$, 
by Proposition~\ref{DeltaW}.  Also, since $c\in{\mathfrak N}_{\psi}$, we have 
$c^*c\in{\mathfrak M}_{\psi}$ and $\Delta(c^*c)\in{\mathfrak M}_{\psi\otimes\operatorname{id}}$, 
by the right invariance of $\psi$.  It follows that 
$$
\psi(x^*x)\le\psi\bigl(\|\omega\|\,(\operatorname{id}\otimes|\omega|)
(\Delta(c^*c))\bigr)
=\|\omega\|\,|\omega|\bigl((\psi\otimes\operatorname{id})(\Delta(c^*c))\bigr)<\infty.
$$
Therefore, $x\in{\mathfrak N}_{\psi}$.
\end{proof}

In the below is a result for the case when the comultiplication is applied to the 
element of the type given in the previous lemma:

\begin{prop}\label{Deltaidomega}
Let $x=\bigl(\operatorname{id}\otimes\omega(c\,\cdot\,)\bigr)(W)\in{\mathfrak N}_{\psi}$ as above, 
where $c\in{\mathfrak N}_{\psi}$, $\omega\in{\mathcal B}({\mathcal H}_{\varphi})_*$. 
Then we have:
$$
\Delta x=\bigl(\operatorname{id}\otimes\operatorname{id}\otimes\omega(c\,\cdot\,)\bigr)
(W_{13}W_{23}),
$$
which is contained in ${\mathfrak N}_{\psi\otimes\operatorname{id}}$.
\end{prop}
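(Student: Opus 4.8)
The plan is to compute $\Delta x$ directly from the formula $x=(\operatorname{id}\otimes\omega(c\,\cdot\,))(W)=(\operatorname{id}\otimes\omega)\bigl((1\otimes c)W\bigr)$, using the comultiplication formula from Proposition~\ref{DeltaW}, namely $(\pi\otimes\pi_{\varphi})(\Delta y)=W^*(1\otimes\pi_{\varphi}(y))W$ for $y\in A$. Applying $\Delta$ to $x$ formally, the key move will be to pull $\Delta$ through the slice $(\operatorname{id}\otimes\omega)$: one expects
\[
\Delta x=(\operatorname{id}\otimes\operatorname{id}\otimes\omega(c\,\cdot\,))\bigl((\Delta\otimes\operatorname{id})(W)\bigr),
\]
so the problem reduces to identifying $(\Delta\otimes\operatorname{id})(W)$, suitably interpreted on the Hilbert space level, with $W_{13}W_{23}$.

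First I would justify the slicing step. Since $x\in{\mathfrak N}_{\psi}$ by Lemma~\ref{lemmaidomegacW}, and $\Delta$ is a ${}^*$-homomorphism that on $\pi_{\varphi}(A)''$ (hence on the relevant elements) is implemented by $\tilde{\Delta}(y)=W^*(1\otimes y)W$ as in Proposition~\ref{vNqgroupoid}, I would write, for the appropriate leg numbering,
\[
\Delta x=(\operatorname{id}\otimes\operatorname{id}\otimes\omega)\bigl((1\otimes1\otimes c)\,W_{13}^*W_{23}^*(1\otimes1\otimes(\text{1}))W_{23}W_{13}\bigr)
\]
— more precisely, I would compute $(\Delta\otimes\operatorname{id})(W)$ by slicing $W$ in its second leg against $\omega\in\mathcal B(\mathcal H_{\varphi})_*$ and applying $\tilde\Delta$ in the first leg, using that $\tilde\Delta$ is normal and $\sigma$-weakly continuous so it commutes with the slice. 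The identity $(\tilde\Delta\otimes\operatorname{id})(W)=W_{13}W_{23}$ is then essentially a reformulation of the pentagon equation $W_{12}W_{13}W_{23}=W_{23}W_{12}$ from Proposition~\ref{Wpentagon}: writing $\tilde\Delta(\cdot)=W_{12}^*(1\otimes\cdot)W_{12}$ on the first two legs and applying it to $W_{13}$ gives $W_{12}^*W_{13}W_{12}$, which by Proposition~\ref{Wpentagon_alt}(2) (after relabeling, $W_{12}^*W_{23}W_{12}=W_{13}W_{23}$) equals $W_{13}W_{23}$ once the leg numbering is matched up. Thus $\Delta x=(\operatorname{id}\otimes\operatorname{id}\otimes\omega)\bigl((1\otimes1\otimes c)W_{13}W_{23}\bigr)=\bigl(\operatorname{id}\otimes\operatorname{id}\otimes\omega(c\,\cdot\,)\bigr)(W_{13}W_{23})$.

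For the membership $\Delta x\in{\mathfrak M}_{\psi\otimes\operatorname{id}}$ — in fact in $\overline{\mathfrak N}_{\psi\otimes\operatorname{id}}$, since $x\in{\mathfrak N}_{\psi}$ — I would invoke the right invariance of $\psi$ directly: $x\in{\mathfrak N}_{\psi}$ implies $\Delta x\in\overline{\mathfrak N}_{\psi\otimes\operatorname{id}}$ by Proposition~4.2 of Part~I (as already used in Lemma~\ref{densesubsetA} and Proposition~\ref{Vdefn}). So this part is immediate from what is available and needs no new work.

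The main obstacle will be making the slicing argument rigorous given that $W$ is only a partial isometry and that the GNS spaces $\mathcal H_{\psi}$ and $\mathcal H_{\varphi}$ have not yet been identified; one must be careful about which leg lives in which Hilbert space and about whether $(\Delta\otimes\operatorname{id})(W)$ genuinely makes sense as the operator $W_{13}W_{23}$ rather than merely formally. I expect this is handled by testing against vectors: evaluate $\bigl\langle\Delta x\,(\xi_1\otimes\xi_2),\,\eta_1\otimes\eta_2\bigr\rangle$ for $\xi_i,\eta_i$ in the relevant dense domains ($\Lambda_{\psi}$- or $\Lambda_{\varphi}$-images of elements of ${\mathfrak N}_{\psi}$, ${\mathfrak N}_{\varphi}$, ${\mathfrak N}_{\eta}$), expand $x$ and $W$ via their defining formulas in Propositions~\ref{Vdefn}, \ref{Wdefn}, \ref{DeltaW}, and reduce the pentagon-type rearrangement to Propositions~\ref{Wpentagon} and \ref{Wpentagon_alt}, exactly as in the proofs of those propositions. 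Everything else is routine bookkeeping with leg numbering and slice maps.
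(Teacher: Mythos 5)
Your proposal is correct and follows essentially the same route as the paper: write $\Delta x=W^*(1\otimes x)W$ via Proposition~\ref{DeltaW}, slice the last leg against $\omega(c\,\cdot\,)$, and identify $W_{12}^*W_{23}W_{12}=W_{13}W_{23}$ by Proposition~\ref{Wpentagon_alt}\,(2), with the membership in ${\mathfrak N}_{\psi\otimes\operatorname{id}}$ coming from the right invariance of $\psi$. The only difference is cosmetic: where you propose to handle the ${\mathcal H}_{\psi}$ versus ${\mathcal H}_{\varphi}$ bookkeeping by testing against vectors, the paper simply introduces the auxiliary operator ${}_{\varphi}\!W$ on ${\mathcal H}_{\varphi}\otimes{\mathcal H}_{\varphi}$ and writes $\pi_{\varphi}(x)=\bigl(\operatorname{id}\otimes\omega(c\,\cdot\,)\bigr)({}_{\varphi}\!W)$, which makes the leg placement unambiguous without any extra estimates.
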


\begin{proof}
As we are taking ${\mathcal H}={\mathcal H}_{\psi}$, we have $W\in{\mathcal B}
({\mathcal H}_{\psi}\otimes{\mathcal H}_{\varphi})$.  For this discussion, introduce also 
${}_{\varphi}\!W\in{\mathcal B}({\mathcal H}_{\varphi}\otimes{\mathcal H}_{\varphi})$, 
which is defined in exactly the same way as the operator $W$ but by taking ${\mathcal H}
={\mathcal H}_{\varphi}$ in Proposition~\ref{Wdefn}.   It is clear that 
$\pi_{\varphi}(x)=\bigl(\operatorname{id}\otimes\omega(c\,\cdot\,)\bigr)({}_{\varphi}\!W)$.

By Proposition~\ref{DeltaW}, we then have:
$$
(\pi\otimes\pi_{\varphi})(\Delta x)=W^*\bigl(1\otimes\pi_{\varphi}(x)\bigr)W
=\bigl(\operatorname{id}\otimes\operatorname{id}\otimes\omega(c\,\cdot\,)\bigr)
(W_{12}^*\,{}_{\varphi}\!W_{23}W_{12}).
$$
From Proposition~\ref{Wpentagon_alt}\,(2), it is evident that 
$W_{12}^*\,{}_{\varphi}\!W_{23}W_{12}=W_{13}\,{}_{\varphi}\!W_{23}$, as operators 
on ${\mathcal H}_{\psi}\otimes{\mathcal H}_{\varphi}\otimes{\mathcal H}_{\varphi}$. 
So $(\pi\otimes\pi_{\varphi})(\Delta x)=\bigl(\operatorname{id}\otimes\operatorname{id}
\otimes\omega(c\,\cdot\,)\bigr)(W_{13}\,{}_{\varphi}\!W_{23})$.  With our convention 
that $A=\pi(A)\,\subseteq{\mathcal B}({\mathcal H})$, we can see that 
$$
\Delta x=(\pi\otimes\pi)(\Delta x)=\bigl(\operatorname{id}\otimes\operatorname{id}
\otimes\omega(c\,\cdot\,)\bigr)(W_{13}W_{23})\,\in M(A\otimes A).
$$
By the right invariance of $\psi$, it is in fact contained in ${\mathfrak N}_{\psi\otimes\operatorname{id}}$.
\end{proof}

\begin{rem}
Eventually, we will see that ${\mathcal H}_{\psi}$ and ${\mathcal H}_{\varphi}$ can be identified. 
Then, the result like the previous proposition would become unnecessary, as it is just a corollary 
of Proposition~\ref{DeltaW}.  Until then, similar technique as in the above proof can be used 
when we work with elements of the form $(\operatorname{id}\otimes\omega)(W)$, to be viewed 
as in $\pi(A)$ or in $\pi_{\varphi}(A)$.  
\end{rem}

The next proposition shows that there are sufficiently many elements contained in ${\mathcal D}(K)$.

\begin{prop}\label{Kdensedomain}
Let $c,d\in{\mathfrak N}_{\psi}$ and let $\omega\in{\mathcal B}({\mathcal H}_{\varphi})_*$. 
Consider:
$$
\xi=\Lambda_{\psi}\bigl((\operatorname{id}\otimes\omega(c\,\cdot\,d^*))(W)\bigr).
$$
Then we have $\xi\in{\mathcal D}(K)$ and $K\xi=\tilde{\xi}=\Lambda_{\psi}\bigl((\operatorname{id}
\otimes\bar{\omega}(d\,\cdot\,c^*))(W)\bigr)$.
\end{prop}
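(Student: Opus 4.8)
The plan is to verify the two approximation conditions of Definition~\ref{D(K)} directly, after a preliminary reduction. First, $K$ is closed (Proposition~\ref{Koperator}) and conjugate-linear, and a Lemma~\ref{lemmaidomegacW}-type estimate --- writing $x=(\operatorname{id}\otimes\omega)\bigl((1\otimes c)W(1\otimes d^*)\bigr)$ and using $W^*(1\otimes c^*c)W=\Delta(c^*c)$ together with the right invariance of $\psi$ --- gives $\bigl\|\Lambda_\psi\bigl((\operatorname{id}\otimes\omega(c\,\cdot\,d^*))(W)\bigr)\bigr\|\le C_{c,d}\|\omega\|$ with $C_{c,d}$ independent of $\omega$; hence $\xi$ (linearly in $\omega$) and $\tilde\xi$ (conjugate-linearly in $\omega$) are norm-continuous in $\omega$, and it suffices to treat $\omega=\omega_{\Lambda_\varphi(v),\Lambda_\varphi(u)}$ with $u,v\in{\mathfrak N}_\varphi$, whose span is norm-dense in ${\mathcal B}({\mathcal H}_\varphi)_*$. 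For such $\omega$ one has $\omega(c\,\cdot\,d^*)=\omega_{\Lambda_\varphi(d^*v),\Lambda_\varphi(c^*u)}$, so Proposition~\ref{idomegaW} gives $\xi=\Lambda_\psi(x)$ with $x=(\operatorname{id}\otimes\varphi)\bigl(\Delta(u^*c)(1\otimes d^*v)\bigr)$ and $\tilde\xi=\Lambda_\psi(\tilde x)$ with $\tilde x=(\operatorname{id}\otimes\varphi)\bigl(\Delta(v^*d)(1\otimes c^*u)\bigr)$; since $c,d\in{\mathfrak N}_\psi$ and $u,v\in{\mathfrak N}_\varphi$, both $x$ and $\tilde x$ are exactly of the form treated in Proposition~\ref{V^*prep}. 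Finally, as $\|V\|\le1$ and $\|E\|\le1$, we may take the test vectors $\zeta_k=\Lambda_\psi(b_k)$ with $b_k\in{\mathfrak N}_\psi$.

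Proposition~\ref{V^*prep}, applied to $x$ and to $\tilde x$, then gives for every $b\in{\mathfrak N}_\psi$
\[
E\bigl(\xi\otimes\Lambda_\psi(b)\bigr)=V\bigl((\Lambda_\psi\otimes\Lambda_\psi)(z_x(1\otimes b))\bigr),\qquad
E\bigl(\tilde\xi\otimes\Lambda_\psi(b)\bigr)=V\bigl((\Lambda_\psi\otimes\Lambda_\psi)(z_{\tilde x}(1\otimes b))\bigr),
\]
with $z_x=(\operatorname{id}\otimes\operatorname{id}\otimes\varphi)\bigl(\Delta_{13}(u^*c)\Delta_{23}(d^*v)\bigr)\in\overline{\mathfrak N}_{\psi\otimes\operatorname{id}}$ and $z_{\tilde x}=(\operatorname{id}\otimes\operatorname{id}\otimes\varphi)\bigl(\Delta_{13}(v^*d)\Delta_{23}(c^*u)\bigr)$. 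A short leg-bookkeeping computation, using that $\operatorname{id}\otimes\operatorname{id}\otimes\varphi$ is a $*$-map on its domain, yields the key symmetry $z_{\tilde x}=\Sigma(z_x^*)$, $\Sigma$ the flip on $A\otimes A$ (so also $z_x^*\in\overline{\mathfrak N}_{\operatorname{id}\otimes\psi}$). For $T=\sum_j p_j\otimes q_j^*$ with $p_j,q_j\in{\mathfrak N}_\psi$ we have $\sum_j q_j\otimes p_j^*=\Sigma(T^*)$, whence $V\bigl(\sum_j\Lambda_\psi(p_j)\otimes q_j^*\zeta_k\bigr)=V\bigl((\Lambda_\psi\otimes\Lambda_\psi)(T(1\otimes b_k))\bigr)$ and $V\bigl(\sum_j\Lambda_\psi(q_j)\otimes p_j^*\zeta_k\bigr)=V\bigl((\Lambda_\psi\otimes\Lambda_\psi)(\Sigma(T^*)(1\otimes b_k))\bigr)$. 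Since $\|V\|\le1$ and the flip on ${\mathcal H}_\psi\otimes{\mathcal H}_\psi$ is isometric, Definition~\ref{D(K)} will hold --- with this $\tilde\xi$ --- once, for the given finite $\{b_k\}$ and $\varepsilon>0$, we produce $T$ with
\[
\bigl\|(\Lambda_\psi\otimes\Lambda_\psi)\bigl((T-z_x)(1\otimes b_k)\bigr)\bigr\|<\varepsilon
\quad\text{and}\quad
\bigl\|(\Lambda_\psi\otimes\Lambda_\psi)\bigl((T^*-z_x^*)(b_k\otimes1)\bigr)\bigr\|<\varepsilon
\]
for all $k$.

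For this two-sided approximation I would pass to bounded Hilbert-space operators. The assignment $\Lambda_\psi(b)\mapsto(\Lambda_\psi\otimes\Lambda_\psi)(z_x(1\otimes b))$ extends to a bounded $R_x\colon{\mathcal H}_\psi\to{\mathcal H}_\psi\otimes{\mathcal H}_\psi$ --- indeed $R_x\zeta=V^*(\Lambda_\psi(x)\otimes\zeta)$ by Proposition~\ref{V^*prep} --- and $\Lambda_\psi(b)\mapsto(\Lambda_\psi\otimes\Lambda_\psi)(z_x^*(b\otimes1))$ to a bounded $R'$; moreover the operator arising from $T$ in the first estimate is $\sum_j|\Lambda_\psi(p_j)\rangle\otimes\pi_\psi(q_j^*)$ and in the second $\sum_j\pi_\psi(p_j^*)\otimes|\Lambda_\psi(q_j)\rangle$. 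Expanding $R_x=\sum_i|g_i\rangle\otimes(R_x)_i$ over an orthonormal basis of the first leg, and $R'$ similarly over the second, one checks from Proposition~\ref{V^*prep} and $z_x\in\overline{\mathfrak N}_{\psi\otimes\operatorname{id}}$ that every $(R_x)_i$, and every operator-leg of $R'$, lies in $\pi_\psi(A)''$. Then, using density of $\Lambda_\psi({\mathfrak N}_\psi)$ in ${\mathcal H}_\psi$, density of ${\mathfrak N}_\psi^*$ in $A$, Kaplansky's density theorem, and strong convergence of these expansions, one truncates and replaces the $g_i$ by vectors $\Lambda_\psi(p_j)$ and the $(R_x)_i$ by operators $\pi_\psi(q_j^*)$; the identity $z_{\tilde x}=\Sigma(z_x^*)$ is precisely what forces the first-leg data of $R_x$ to match (after conjugation) the operator-leg data of $R'$, so that one and the same $\{p_j,q_j\}$ serves both. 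Arranging this compatibility --- getting a single $T$ that meets both $\varepsilon$-estimates for every $b_k$ at once --- is the main obstacle; the rest is bookkeeping on top of Propositions~\ref{idomegaW} and \ref{V^*prep} and the established properties of $V$, $E$, $K$. With $T$ in hand, $\xi\in{\mathcal D}(K)$ and $K\xi=\tilde\xi$, as claimed.
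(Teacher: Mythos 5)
Your reductions are fine as far as they go: identifying $\xi=\Lambda_\psi(x)$ with $x=(\operatorname{id}\otimes\varphi)\bigl(\Delta(u^*c)(1\otimes d^*v)\bigr)$ via Proposition~\ref{idomegaW}, invoking Proposition~\ref{V^*prep} for $x$ and $\tilde x$, the flip symmetry $z_{\tilde x}=\Sigma(z_x^*)$, and the observation that $\|V\|\le1$ reduces Definition~\ref{D(K)} to a two-sided approximation of $z_x$ by a single finite sum $T=\sum_j p_j\otimes q_j^*$. But that two-sided approximation \emph{is} the proposition; you have reformulated the problem rather than solved it, and you say so yourself (``the main obstacle''). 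The sketched strategy --- expand the bounded operators $R_x$, $R'$, truncate, and use Kaplansky density to replace the pieces by $\Lambda_\psi(p_j)$ and $\pi_\psi(q_j^*)$ --- does not close the gap, because each $p_j$ (and each $q_j$) must play two incompatible roles at once: it enters the first estimate as a GNS vector $\Lambda_\psi(p_j)$ and the second estimate as an operator $p_j^*$ acting on the test vectors $\zeta_k$. Norm-approximating a vector $g_i$ by some $\Lambda_\psi(p_j)$ gives no control on the operator $\pi_\psi(p_j^*)$ (the correspondence $\Lambda_\psi(p)\leftrightarrow\pi_\psi(p)$ is unbounded --- it is essentially governed by the Tomita operator), and strong approximation of an operator leg gives no control on the corresponding GNS vector. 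This incompatibility is exactly why ${\mathcal D}(K)$ is a nontrivial domain and why Proposition~\ref{Kwelldef} needed care; a generic density/truncation argument cannot produce one family $\{p_j,q_j\}$ meeting both $\varepsilon$-estimates.

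The paper resolves this not by abstract approximation but by an explicit choice: with $\omega=\omega_{\xi',\zeta'}$ and an orthonormal basis $(e_j)$ of ${\mathcal H}_\varphi$, it sets $p_j=(\operatorname{id}\otimes\omega_{e_j,\zeta'}(c\,\cdot\,))(W)$ and $q_j=(\operatorname{id}\otimes\omega_{e_j,\xi'}(d\,\cdot\,))(W)$ (elements of ${\mathfrak N}_\psi$ by Lemma~\ref{lemmaidomegacW}), and then computes both nets exactly: using Proposition~\ref{Deltaidomega} and Lemma~\ref{omega_xizetaLem}\,(4), the sums converge to $(\Lambda_\psi\otimes\Lambda_\psi)\bigl((\operatorname{id}\otimes\operatorname{id}\otimes\omega_{d^*\xi',c^*\zeta'})(W_{13}W_{23}W_{23}^*)(1\otimes a)\bigr)$ and to the analogous expression with the two vector states swapped, and the relation $W_{13}W_{23}W_{23}^*=E_{12}W_{13}$ from Proposition~\ref{Wpentagon_alt}\,(1) turns these into $E(\xi\otimes\zeta)$ and $E(\tilde\xi\otimes\zeta)$ respectively. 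It is this structural identity for $W$, applied to one and the same family $\{p_j,q_j\}$, that makes the two estimates hold simultaneously; your proof needs either this construction or some substitute for it, and without it the argument is incomplete.
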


\begin{proof}
Without loss of generality, we may assume $\omega=\omega_{\xi',\zeta'}$, for $\xi',\zeta'
\in{\mathcal H}_{\varphi}$.  Let $(e_j)_{j\in J}$ be an orthonormal basis for ${\mathcal H}_{\varphi}$, 
and define:
\begin{align}
p_j&:=(\operatorname{id}\otimes\omega_{e_j,c^*\zeta'})(W)=(\operatorname{id}\otimes
\omega_{e_j,\zeta'}(c\,\cdot\,))(W)\in{\mathfrak N}_{\psi},   \notag \\
q_j&:=(\operatorname{id}\otimes\omega_{e_j,d^*\xi'})(W)=(\operatorname{id}\otimes
\omega_{e_j,\xi'}(d\,\cdot\,))(W)\in{\mathfrak N}_{\psi}.
\notag
\end{align}

Suppose $\zeta=\Lambda_{\psi}(a)\in{\mathcal H}$, for an arbitrary $a\in{\mathfrak N}_{\psi}$.  
In view of Definition~\ref{D(K)}, the proof will be done by verifying the following:
\begin{align}
&V\left(\sum_{j\in J}\Lambda_{\psi}(p_j)\otimes q_j^*\zeta\right)\longrightarrow
E(\xi\otimes\zeta), \label{(conv1)}\\
&V\left(\sum_{j\in J}\Lambda_{\psi}(q_j)\otimes p_j^*\zeta\right)\longrightarrow
E(\tilde{\xi}\otimes\zeta).  \label{(conv2)}
\end{align}
This notation is being understood that the net of sums over the finite sets $\Gamma\subseteq J$ 
converges, in norm.

To verify Equation~\eqref{(conv1)}, note first that by Proposition~\ref{Deltaidomega}, we have:
$\Delta(p_j)=(\operatorname{id}\otimes\operatorname{id}\otimes
\omega_{e_j,c^*\zeta'})(W_{13}W_{23})$.
By Lemma~\ref{omega_xizetaLem}\,(2), we also know that $q_j^*=(\operatorname{id}
\otimes\omega_{d^*\xi',e_j})(W^*)$.  Then 
\begin{align}
&V\left(\sum_{j\in J}\Lambda_{\psi}(p_j)\otimes q_j^*\zeta\right)
=\sum_{j\in J}(\Lambda_{\psi}\otimes\Lambda_{\psi})\bigl(\Delta(p_j)(1\otimes q_j^*)(1\otimes a)\bigr)
\notag \\
&=\sum_{j\in J}(\Lambda_{\psi}\otimes\Lambda_{\psi})\bigl((\operatorname{id}\otimes\operatorname{id}
\otimes\omega_{e_j,c^*\zeta'})(W_{13}W_{23})(\operatorname{id}\otimes\operatorname{id}\otimes
\omega_{d^*\xi',e_j})(W_{23}^*)(1\otimes a)\bigr).
\notag
\end{align}
By Lemma~\ref{omega_xizetaLem}\,(4), this sum converges to
$$
(\Lambda_{\psi}\otimes\Lambda_{\psi})\bigl((\operatorname{id}\otimes\operatorname{id}\otimes
\omega_{d^*\xi',c^*\zeta'})(W_{13}W_{23}W_{23}^*)\,(1\otimes a)\bigr).
$$
Now use Proposition~\ref{Wpentagon_alt}\,(1), namely $W_{13}W_{23}W_{23}^*
=W_{12}^*W_{12}W_{13}=E_{12}W_{13}$.  Here again, there is the subtle issue concerning 
the Hilbert spaces ${\mathcal H}$ and ${\mathcal H}_{\varphi}$, which can be easily managed 
by working with ${}_{\varphi}\!W$ as in the proof of Proposition~\ref{Deltaidomega}.  It follows 
that 
\begin{align}
V\left(\sum_{j\in J}\Lambda_{\psi}(p_j)\otimes q_j^*\zeta\right)
\longrightarrow\,&(\Lambda_{\psi}\otimes\Lambda_{\psi})\bigl(E(\operatorname{id}\otimes
\operatorname{id}\otimes\omega_{d^*\xi',c^*\zeta'})(W_{13})(1\otimes a)\bigr)   \notag \\
&=E\bigl(\xi\otimes\Lambda_{\psi}(a)\bigr)=E(\xi\otimes\zeta).
\notag
\end{align}
Note here that $\Lambda_{\psi}
\bigl((\operatorname{id}\otimes\omega_{d^*\xi',c^*\zeta'})(W)\bigr)
=\Lambda_{\psi}\bigl((\operatorname{id}\otimes\omega_{\xi',\zeta'}(c\,\cdot\,d^*))(W)\bigr)=\xi$, 
because $\omega=\omega_{\xi',\zeta'}$.  In this way, we verified Equation~\eqref{(conv1)}.

By using the same notation and same method, we can show that 
$$
V\left(\sum_{j\in J}\Lambda_{\psi}(q_j)\otimes p_j^*\zeta\right)
=\sum_{j\in J}(\Lambda_{\psi}\otimes\Lambda_{\psi})\bigl(\Delta(q_j)(1\otimes p_j^*)(1\otimes a)\bigr)
$$
converges to 
\begin{align}
&(\Lambda_{\psi}\otimes\Lambda_{\psi})\bigl((\operatorname{id}\otimes\operatorname{id}\otimes
\omega_{c^*\zeta',d^*\xi'})(W_{13}W_{23}W_{23}^*)\,(1\otimes a)\bigr)  \notag \\
&=(\Lambda_{\psi}\otimes\Lambda_{\psi})\bigl(E(\operatorname{id}\otimes\operatorname{id}
\otimes\omega_{c^*\zeta',d^*\xi'})(W_{13})(1\otimes a)\bigr)
=E\bigl(\tilde{\xi}\otimes\Lambda_{\psi}(a)\bigr),
\notag
\end{align}
where $\tilde{\xi}=\Lambda_{\psi}\bigl((\operatorname{id}\otimes\omega_{c^*\zeta',d^*\xi'})(W)\bigr)
=\Lambda_{\psi}\bigl((\operatorname{id}\otimes\bar{\omega}(d\,\cdot\,c^*))(W)\bigr)$.  Note here 
that $\bar{\omega}=\overline{\omega_{\xi',\zeta'}}=\omega_{\zeta',\xi'}$.  This verifies 
Equation~\eqref{(conv2)}.
\end{proof}

\begin{cor}
${\mathcal D}(K)$ is dense in ${\mathcal H}_{\psi}$.  Moreover, the elements of the form $\Lambda_{\psi}
\bigl((\operatorname{id}\otimes\omega(c\,\cdot\,d^*))(W)\bigr)$, $\omega\in{\mathcal B}
({\mathcal H}_{\varphi})_*$, $c,d\in{\mathfrak N}_{\psi}$, form a core for $K$.
\end{cor}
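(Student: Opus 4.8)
The inclusion $\mathcal D_0\subseteq{\mathcal D}(K)$ of the displayed span, together with the formula $K\Lambda_{\psi}\big((\operatorname{id}\otimes\omega(c\,\cdot\,d^*))(W)\big)=\Lambda_{\psi}\big((\operatorname{id}\otimes\bar{\omega}(d\,\cdot\,c^*))(W)\big)$, is exactly the content of Proposition~\ref{Kdensedomain}, so two things remain: (i) the linear span $\mathcal D_0$ of these elements is norm-dense in ${\mathcal H}_{\psi}$ (which then gives density of ${\mathcal D}(K)$), and (ii) $\mathcal D_0$ is a core for $K$. Reading the defining formula off, the plan begins by observing that $\mathcal D_0$ is stable under $K$ (interchanging $c\leftrightarrow d$ and passing from $\omega$ to $\bar{\omega}$ sends a generator to a generator, so $K(\mathcal D_0)=\mathcal D_0$) and that $K$ is conjugate-linear on $\mathcal D_0$ (scaling $\omega$ by $\lambda$ scales the generator by $\lambda$ and its $K$-image by $\bar{\lambda}$).

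For (i), I would compare $\mathcal D_0$ with the subspace ${\mathcal K}_{\psi}={\mathcal H}_{\psi}$ of Theorem~\ref{K_psi=H_psi}. Writing $(\operatorname{id}\otimes\omega(c\,\cdot\,d^*))(W)=(\operatorname{id}\otimes\omega)\big((1\otimes c)W(1\otimes d^*)\big)$ and specialising to $\omega=\omega_{\Lambda_{\varphi}(s'),\Lambda_{\varphi}(r')}$ with $r',s'\in{\mathfrak N}_{\varphi}$, $c,d\in{\mathfrak N}_{\psi}$, one gets $(\operatorname{id}\otimes\omega(c\,\cdot\,d^*))(W)=(\operatorname{id}\otimes\omega_{\Lambda_{\varphi}(d^*s'),\Lambda_{\varphi}(c^*r')})(W)$, which by Proposition~\ref{idomegaW} equals $(\operatorname{id}\otimes\varphi)\big(\Delta(r'^{*}c)(1\otimes d^*s')\big)$. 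These are precisely the elements generating ${\mathcal K}_{\psi}$ in Equation~\eqref{(K_psi)}, except that the outer factor ``$s$'' is replaced by $d^*s'\in{\mathfrak N}_{\psi}^{*}{\mathfrak N}_{\varphi}$. It then remains to check that this replacement does not shrink the closed span; for this I would take a self-adjoint approximate unit $(u_i)$ in ${\mathfrak N}_{\psi}\cap{\mathfrak N}_{\psi}^{*}$, so that $u_is\to s$ in norm and $\Lambda_{\varphi}(u_is)=\pi_{\varphi}(u_i)\Lambda_{\varphi}(s)\to\Lambda_{\varphi}(s)$ in ${\mathcal H}_{\varphi}$, and use the estimate underlying Lemma~\ref{lemmaidomegacW} (namely $\psi(x^*x)\le\|\omega\|\,|\omega|\big((\psi\otimes\operatorname{id})(\Delta(c^*c))\big)$ for $x=(\operatorname{id}\otimes\omega(c\,\cdot\,))(W)$) to see that, for fixed $c,r'$, the map $\xi'\mapsto\Lambda_{\psi}\big((\operatorname{id}\otimes\omega_{\xi',\Lambda_{\varphi}(c^*r')})(W)\big)$ is a bounded linear map ${\mathcal H}_{\varphi}\to{\mathcal H}_{\psi}$. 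Feeding $\xi'=\Lambda_{\varphi}(u_is)\to\Lambda_{\varphi}(s)$ through it exhibits each generator of ${\mathcal K}_{\psi}$ as an ${\mathcal H}_{\psi}$-limit of elements of $\mathcal D_0$, whence $\overline{\mathcal D_0}\supseteq{\mathcal K}_{\psi}={\mathcal H}_{\psi}$.

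For (ii), the plan is to show that the graph $\{\xi\oplus K\xi:\xi\in\mathcal D_0\}$ is dense, for the real inner product on ${\mathcal H}_{\psi}\oplus{\mathcal H}_{\psi}$, in the graph of $K$. Using the conjugate-linearity of $K$ and the stability $K(\mathcal D_0)=\mathcal D_0$, orthogonality of $(\xi,K\xi)$ to the graph of $K|_{\mathcal D_0}$ reduces to the single family of scalar relations $\langle\xi,K\zeta\rangle=-\langle\zeta,K\xi\rangle$ for all $\zeta\in\mathcal D_0$; since $K$ is a closed involution (Proposition~\ref{Koperator}) this just records that $\xi$ lies in the domain of the adjoint of $K|_{\mathcal D_0}$ with a prescribed value. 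To turn this into orthogonality to the full graph of $K$ — that is, into $(K|_{\mathcal D_0})^{\dagger}=K^{\dagger}$ — one needs genuinely more than ``dense $+$ $K$-invariant'' (for an unbounded involution these do not suffice, and at this stage we do not yet have the modular or scaling one-parameter groups that would usually supply the required smoothing operators). I would supply the extra input from Definition~\ref{D(K)} itself: for a general $\xi\in{\mathcal D}(K)$ the approximating elements $p_j,q_j\in{\mathfrak N}_{\psi}$ may, by the reduction already used in the proof of Proposition~\ref{Kwelldef}, be taken of the special form $(\operatorname{id}\otimes\varphi)\big(\Delta(r_j^{*}y_j)(1\otimes s_j)\big)$; the machinery of part (i) (the Lemma~\ref{lemmaidomegacW} estimate together with the approximate unit) then shows that the corresponding vectors of ${\mathcal H}_{\psi}$ are ${\mathcal H}_{\psi}$-close to elements of $\mathcal D_0$, and — exploiting the symmetry of Definition~\ref{D(K)} in the pairs $(p_j)$ and $(q_j)$ and the involutivity $K^2=\operatorname{id}$ — one propagates this to a \emph{simultaneous} graph-norm approximation of $(\xi,K\xi)$. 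I expect this last step, controlling $\xi$ and $K\xi$ at once, to be the technical crux; the density in (i) is comparatively routine once Theorem~\ref{K_psi=H_psi} and the Lemma~\ref{lemmaidomegacW} estimate are available.
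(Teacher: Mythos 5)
Your part (i) is correct and is essentially the paper's own argument: the paper, too, obtains density by combining Proposition~\ref{Kdensedomain} (membership of these vectors in ${\mathcal D}(K)$), Proposition~\ref{idomegaW}, Equation~\eqref{(K_psi)} and Theorem~\ref{K_psi=H_psi}; your use of the estimate behind Lemma~\ref{lemmaidomegacW} together with a self-adjoint approximate unit in ${\mathfrak N}_{\psi}\cap{\mathfrak N}_{\psi}^{*}$ is a sensible way of filling in the approximation step that the paper compresses into the phrase ``since ${\mathfrak N}_{\psi}$ is dense in $A$.''  In fact, the paper's proof of this corollary consists of exactly that density argument and nothing more: no separate argument for the core assertion is offered there.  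So your part (i) already reproduces everything the paper actually proves.

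The genuine gap is in your part (ii), and it is not merely a technicality left to the reader.  Definition~\ref{D(K)} gives, for $\xi\in{\mathcal D}(K)$, approximations of the vectors $E(\xi\otimes\zeta_k)$ and $E(K\xi\otimes\zeta_k)$ by $V\bigl(\sum_j\Lambda_{\psi}(p_j)\otimes q_j^{*}\zeta_k\bigr)$ and $V\bigl(\sum_j\Lambda_{\psi}(q_j)\otimes p_j^{*}\zeta_k\bigr)$; it never produces vectors of ${\mathcal H}_{\psi}$ lying in your span ${\mathcal D}_0$ that are norm-close to $\xi$.  The vectors $\Lambda_{\psi}(p_j)$ have no reason to be anywhere near $\xi$, and normalizing the $p_j$ to the special form used in the proof of Proposition~\ref{Kwelldef} does not change this: the two-sided $\varepsilon$-conditions of Definition~\ref{D(K)} are conditions on two-legged vectors cut by $E$ and $V$, and there is no mechanism in your sketch that converts them into a graph-norm approximation of $(\xi,K\xi)$ by pairs $(\eta,K\eta)$ with $\eta\in{\mathcal D}_0$.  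So the ``propagation'' step has nothing to propagate, and, as you yourself observe, density plus $K$-invariance of ${\mathcal D}_0$ is not sufficient for a core of an unbounded conjugate-linear involution.  The standard way to obtain such a core statement would be to show that the span is invariant under the unitaries $L^{it}$ coming from the polar decomposition $K=IL^{\frac12}$ (which makes it a core for $L^{\frac12}$ and hence for $K$), but the operator $L$ and the relevant commutation relations only appear in \S4.3, so that tool is not available at this point and your substitute for it does not close the hole.  In short: the density half is fine and matches the paper; the core half remains unproved both in your proposal and, strictly speaking, in the paper's own two-sentence proof.
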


\begin{proof}
By Proposition~\ref{Kdensedomain},  the elements of the form 
$\Lambda_{\psi}\bigl((\operatorname{id}\otimes\omega(c\,\cdot\,d^*))(W)\bigr)$, 
$c,d\in{\mathfrak N}_{\psi}$, are contained in ${\mathcal D}(K)$.  Since ${\mathfrak N}_{\psi}$ is 
dense in $A$, these elements are dense in the space generated by $\Lambda_{\psi}
\bigl((\operatorname{id}\otimes\omega)(W)\bigr)$, $\omega\in{\mathcal B}({\mathcal H}_{\varphi})_*$.

Meanwhile, by Proposition~\ref{idomegaW}, for $\xi'=\Lambda_{\varphi}(s)$, $\zeta'=\Lambda_{\varphi}(r)$,  
$s,r\in{\mathfrak N}_{\varphi}$, we have 
$\Lambda_{\psi}\bigl((\operatorname{id}\otimes\omega_{\xi',\zeta'})(W)\bigr)=\Lambda_{\psi}
\bigl((\operatorname{id}\otimes\varphi)(\Delta(r^*)(1\otimes s))\bigr)$.  By Equation~\eqref{(K_psi)} 
and by Theorem~\ref{K_psi=H_psi}, we know such elements are dense in ${\mathcal H}_{\psi}$.  
Combining the results, we can see that ${\mathcal D}(K)$ is dense in the Hilbert space ${\mathcal H}_{\psi}$.
\end{proof}

So far, we have established that $K$ is a well-defined, closed, densely-defined operator on 
${\mathcal H}_{\psi}$.  We wrap up this section with the following result, involving the operator $K$ 
and one leg of the operator $W$.

\begin{prop}\label{idomegaWK}
For any $\xi\in{\mathcal D}(K)$ and any $\omega\in{\mathcal B}({\mathcal H}_{\varphi})_*$, we have: 
$(\operatorname{id}\otimes\omega)(W)\xi\in{\mathcal D}(K)$, and 
$$
K\bigl((\operatorname{id}\otimes\omega)(W)\xi\bigr)=(\operatorname{id}\otimes\bar{\omega})(W)(K\xi).
$$
\end{prop}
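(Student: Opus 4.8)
The plan is to reduce the assertion, using the closedness of $K$, to the case where $\omega$ is a vector functional $\omega_{\alpha,\beta}$ ($\alpha,\beta\in{\mathcal H}_\varphi$) and $\xi$ is one of the explicit core elements of Proposition~\ref{Kdensedomain}, and then to verify Definition~\ref{D(K)} directly for those, imitating the proof of Proposition~\ref{Kdensedomain} but with one extra factor of $W$ in play.

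First I would set up the reductions. The maps $\omega\mapsto(\operatorname{id}\otimes\omega)(W)$ and $\omega\mapsto(\operatorname{id}\otimes\bar\omega)(W)=\bigl((\operatorname{id}\otimes\omega)(W^*)\bigr)^*$ are bounded (in norm by $\|\omega\|$, since $\|W\|\le1$), and the $\omega_{\alpha,\beta}$ approximate any $\omega\in{\mathcal B}({\mathcal H}_\varphi)_*$ in norm; since $K$ is closed (Proposition~\ref{Koperator}), it therefore suffices to treat $\omega=\omega_{\alpha,\beta}$, the general $\omega$ then following. Likewise, by the Corollary to Proposition~\ref{Kdensedomain} the elements $\xi=\Lambda_\psi\bigl((\operatorname{id}\otimes\omega_{\xi',\zeta'}(c\,\cdot\,d^*))(W)\bigr)$, with $c,d\in{\mathfrak N}_\psi$ and $\xi',\zeta'\in{\mathcal H}_\varphi$, form a core for $K$; so once the statement is known for these it follows for every $\xi\in{\mathcal D}(K)$, again because $K$ is closed and $(\operatorname{id}\otimes\omega)(W)$, $(\operatorname{id}\otimes\bar\omega)(W)$ are bounded.

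For the core case, fix $\xi=\Lambda_\psi(x)$ with $x=(\operatorname{id}\otimes\omega_{\xi',\zeta'}(c\,\cdot\,d^*))(W)\in{\mathfrak N}_\psi$ and put $a=(\operatorname{id}\otimes\omega_{\alpha,\beta})(W)\in A$. Then $ax\in{\mathfrak N}_\psi$ (left ideal), $(\operatorname{id}\otimes\omega)(W)\xi=\Lambda_\psi(ax)$, and the asserted value is $(\operatorname{id}\otimes\bar\omega)(W)K\xi=\Lambda_\psi\bigl((\operatorname{id}\otimes\omega_{\beta,\alpha})(W)\,\tilde x\bigr)$ with $\tilde x=(\operatorname{id}\otimes\omega_{\zeta',\xi'}(d\,\cdot\,c^*))(W)$, as in Proposition~\ref{Kdensedomain}. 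I would verify Definition~\ref{D(K)} for the vector $(\operatorname{id}\otimes\omega)(W)\xi$ with $(\operatorname{id}\otimes\bar\omega)(W)K\xi$ playing the role of $\tilde\xi$. For this, combine the multiplicativity of $W$ — the elementary identity $(\operatorname{id}\otimes\mu_1)(W)(\operatorname{id}\otimes\mu_2)(W)=(\operatorname{id}\otimes\mu_1\otimes\mu_2)(W_{12}W_{13})$ — with $\Delta(ax)=\Delta(a)\Delta(x)$ and the formula $\Delta\bigl((\operatorname{id}\otimes\mu)(W)\bigr)=(\operatorname{id}\otimes\operatorname{id}\otimes\mu)(W_{13}W_{23})$ (Propositions~\ref{DeltaW} and \ref{Deltaidomega}), to express $E\bigl((\operatorname{id}\otimes\omega)(W)\xi\otimes\zeta\bigr)$ as a \emph{doubly-indexed} norm-limit of vectors $V\bigl(\sum_{j,k}\Lambda_\psi(p'_{jk})\otimes(q'_{jk})^*\zeta\bigr)$, where the $p'_{jk},q'_{jk}\in{\mathfrak N}_\psi$ arise from the families $p_j,q_j$ of Proposition~\ref{Kdensedomain} by additionally slicing $\omega_{\alpha,\beta}$ along an orthonormal basis of ${\mathcal H}_\varphi$ and inserting the leg of $W$ coming from $a$. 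Summing over one index collapses a pair of the form $W_{13}W_{23}W_{23}^*=W_{12}^*W_{12}W_{13}=E_{12}W_{13}$ (Proposition~\ref{Wpentagon_alt}(1), together with $W^*W=E$), exactly as in the proof of Proposition~\ref{Kdensedomain}; summing over the remaining index does this once more, and the two resulting copies of $E$ coalesce since $E^2=E$. Interchanging the roles of $p'_{jk}$ and $q'_{jk}$ in this computation amounts to interchanging $\xi'\leftrightarrow\zeta'$ and $\alpha\leftrightarrow\beta$ — i.e. to replacing $\omega$ and $\omega_{\alpha,\beta}$ by their complex conjugates — and therefore yields $E\bigl((\operatorname{id}\otimes\bar\omega)(W)K\xi\otimes\zeta\bigr)$ in the limit; by Definition~\ref{D(K)} this gives $(\operatorname{id}\otimes\omega)(W)\xi\in{\mathcal D}(K)$ with $K\bigl((\operatorname{id}\otimes\omega)(W)\xi\bigr)=(\operatorname{id}\otimes\bar\omega)(W)(K\xi)$.

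Throughout, the standing ambiguity about whether a particular copy of $W$ should be the operator on ${\mathcal H}_\psi\otimes{\mathcal H}_\varphi$ or the operator ${}_\varphi\!W$ on ${\mathcal H}_\varphi\otimes{\mathcal H}_\varphi$ is resolved exactly as in the proofs of Propositions~\ref{Deltaidomega} and \ref{Kdensedomain}. I expect the one genuinely delicate point to be the bookkeeping of the doubly-indexed sum in the core case and checking that two successive applications of the pentagon relation really telescope it to a single factor of $E$; everything else is formal once the reductions are in place.
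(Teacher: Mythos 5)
Your overall strategy is in fact the one the paper uses: reduce, via closedness of $K$ and boundedness of the slice maps, to $\omega=\omega_{\alpha,\beta}$ and to the core vectors of Proposition~\ref{Kdensedomain}, and then verify Definition~\ref{D(K)} directly with a doubly indexed family $p_{jl},q_{jl}\in{\mathfrak N}_{\psi}$ obtained by slicing both legs along an orthonormal basis. The reductions, the identification $(\operatorname{id}\otimes\omega)(W)\xi=\Lambda_{\psi}(ax)$, and the symmetric treatment of the second convergence are all fine. The gap is exactly at the point you flag as delicate: the telescoping of the four-leg product. With your grouping, $\Delta(p_{jl})(1\otimes q_{jl}^*)$ is the slice of $W_{13}W_{23}W_{14}W_{24}W_{24}^*W_{23}^*$, and the first (inner) collapse $W_{14}W_{24}W_{24}^*=E_{12}W_{14}$ already deposits a copy of $E_{12}$ \emph{between} the outer pair, leaving $W_{13}W_{23}E_{12}W_{14}W_{23}^*$. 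Your claim that ``summing over the remaining index does this once more, and the two resulting copies of $E$ coalesce'' presupposes that $E_{12}$ can be moved past $W_{23}$ (or $W_{23}^*$) so as to reconstitute the pattern $W_{13}W_{23}W_{23}^*$. That commutation fails in general: leg $2$ of $E$ lies in $M(C)$, while leg $2$ of $W_{23}$ is the $A$-valued leg of $W$, and $(c\otimes1)$ commutes with $W^*$ precisely when it commutes with $\Delta(A)$; but for $c\in M(C)$ one has $(c\otimes1)\Delta(a)=\Delta(c)\Delta(a)=\Delta(ca)$ and $\Delta(a)(c\otimes1)=\Delta(ac)$, which differ unless $c$ commutes with $a$. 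So no second clean collapse is available, and no ``second copy of $E$'' ever appears; as written, the key identity is unproved.

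The identity you need, $W_{13}W_{23}W_{14}W_{24}W_{24}^*W_{23}^*=E_{12}W_{13}W_{14}$, is nevertheless true, and the paper proves it by a different regrouping: write the outer comultiplication as conjugation, $\Delta(p_{jl})=W_{12}^*(1\otimes p_{jl})W_{12}$ (Proposition~\ref{DeltaW}), so that the summed product becomes
\begin{equation*}
W_{12}^*W_{23}W_{24}W_{12}W_{24}^*W_{23}^*
=W_{12}^*W_{23}W_{12}W_{14}W_{23}^*
=W_{12}^*W_{23}W_{12}W_{23}^*W_{14}
=W_{12}^*W_{12}W_{13}W_{14}
=E_{12}W_{13}W_{14},
\end{equation*}
using Proposition~\ref{Wpentagon_alt}\,(3) twice ($W_{24}W_{12}W_{24}^*=W_{12}W_{14}$, then $W_{23}W_{12}W_{23}^*=W_{12}W_{13}$), so that a single $E_{12}=W_{12}^*W_{12}$ appears only at the final step. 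If you replace your two-collapse telescoping by this regrouping, the rest of your argument (including the interchange $\alpha\leftrightarrow\beta$, $\xi'\leftrightarrow\zeta'$ giving $E\bigl((\operatorname{id}\otimes\bar{\omega})(W)K\xi\otimes\zeta\bigr)$, and the final appeal to Definition~\ref{D(K)} and closedness of $K$) goes through and coincides with the paper's proof.
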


\begin{proof}
Without loss of generality, we may consider $(\operatorname{id}\otimes\omega_{\xi'',\zeta''})(W)\xi$, 
with $\xi'',\zeta''\in{\mathcal H}_{\varphi}$, while $\xi=\Lambda_{\psi}\bigl((\operatorname{id}\otimes
\omega_{\xi',\zeta'}(c\,\cdot\,d^*))(W)\bigr)$, with $\xi',\zeta'\in{\mathcal H}_{\varphi}$, 
$c,d\in{\mathfrak N}_{\psi}$.  We may imitate the proof of Proposition~\ref{Kdensedomain}.
  
Let $(e_j)_{j\in J}$ be an orthonormal basis for ${\mathcal H}_{\varphi}$.  For $j,l\in J$, define:
\begin{align}
p_{jl}&:=(\operatorname{id}\otimes\omega_{e_j,\zeta''}\otimes\omega_{e_l,c^*\zeta'})(W_{12}W_{13}), 
\notag \\
q_{jl}&:=(\operatorname{id}\otimes\omega_{e_j,\xi''}\otimes\omega_{e_l,d^*\xi'})(W_{12}W_{13}).
\notag
\end{align}
Note that we can write $p_{jl}=(\operatorname{id}\otimes\omega_{e_j,\zeta''})(W)\bigl(\operatorname{id}
\otimes\omega_{e_l,\zeta'}(c\,\cdot\,)\bigr)(W)$.  Since $\bigl(\operatorname{id}\otimes\omega_{e_l,\zeta'}
(c\,\cdot\,)\bigr)(W)\in{\mathfrak N}_{\psi}$ (by Lemma~\ref{lemmaidomegacW}) and since 
${\mathfrak N}_{\psi}$ is a left ideal in $M(A)$, we can see that $p_{jl}\in{\mathfrak N}_{\psi}$. 
Similarly, $q_{jl}\in{\mathfrak N}_{\psi}$. 

Note that we have:
$$
\Delta(p_{jl})=(\operatorname{id}\otimes\operatorname{id}\otimes\omega_{e_j,\zeta''}\otimes
\omega_{e_l,c^*\zeta'})(W_{12}^*W_{23}W_{24}W_{12}).
$$
Therefore, if we let $\zeta=\Lambda_{\psi}(a)$, for an arbitrary $a\in{\mathfrak N}_{\psi}$, 
we have:
\begin{align}
&V\left(\sum_{j,l\in J}\Lambda_{\psi}(p_{jl})\otimes q_{jl}^*\zeta\right)
=\sum_{j,l\in J}(\Lambda_{\psi}\otimes\Lambda_{\psi})\bigl(\Delta(p_{jl})(1\otimes q_{jl}^*)(1\otimes a)\bigr)
\notag \\
&=\sum_{j,l\in J}(\Lambda_{\psi}\otimes\Lambda_{\psi})\bigl((\operatorname{id}\otimes\operatorname{id}\otimes
\omega_{e_j,\zeta''}\otimes\omega_{e_l,c^*\zeta'})(W_{12}^*W_{23}W_{24}W_{12}) \notag \\
&\qquad\qquad\times(\operatorname{id}\otimes\operatorname{id}\otimes\omega_{\xi'',e_j}\otimes\omega_{d^*\xi',e_l})
(W_{24}^*W_{23}^*)\,(1\otimes a)\bigr).
\notag
\end{align}
For the characterization of $q_{jl}^*$, we used Lemma~\ref{omega_xizetaLem}\,(2).

By twice applying Lemma~\ref{omega_xizetaLem}\,(4), this sum converges to
\begin{equation}\label{(conv11a)}
(\Lambda_{\psi}\otimes\Lambda_{\psi})\bigl((\operatorname{id}\otimes\operatorname{id}\otimes
\omega_{\xi'',\zeta''}\otimes\omega_{d^*\xi',c^*\zeta'})(W_{12}^*W_{23}W_{24}W_{12}W_{24}^*W_{23}^*)
(1\otimes a)\bigr).
\end{equation}
But note that 
\begin{align}
W_{12}^*W_{23}W_{24}W_{12}W_{24}^*W_{23}^*&=W_{12}^*W_{23}W_{12}W_{14}W_{23}^*
=W_{12}^*W_{23}W_{12}W_{23}^*W_{14}
 \notag \\
&=W_{12}^*W_{12}W_{13}W_{14} =E_{12}W_{13}W_{14},
\notag
\end{align}
by twice using Proposition~\ref{Wpentagon_alt}\,(3).  Therefore, the limit given in Equation~\eqref{(conv11a)} 
becomes
$$
E\,\bigl[\Lambda_{\psi}\bigl((\operatorname{id}\otimes\omega_{\xi'',\zeta''})
(W)(\operatorname{id}\otimes\omega_{d^*\xi',c^*\zeta'})(W)\bigr)\otimes
\Lambda_{\psi}(a)\bigr].
$$
In other words, we have shown that 
\begin{equation}\label{(conv11)}
V\left(\sum_{j\in J, l\in J}\Lambda_{\psi}(p_{jl})\otimes q_{jl}^*\zeta\right)
\longrightarrow
E\,\bigl[(\operatorname{id}\otimes\omega_{\xi'',\zeta''})(W)\xi\otimes\zeta].
\end{equation}

In a similar way, we can also show that 
\begin{equation}\label{(conv22)}
V\left(\sum_{j\in J, l\in J}\Lambda_{\psi}(q_{jl})\otimes p_{jl}^*\zeta\right)
\longrightarrow
E\,\bigl[(\operatorname{id}\otimes\omega_{\zeta'',\xi''})(W)\tilde{\xi}\otimes\zeta],
\end{equation}
where $\tilde{\xi}=\Lambda_{\psi}\bigl((\operatorname{id}\otimes\omega_{c^*\zeta',d^*\xi'})
(W)\bigr)=K\xi$, as observed in Proposition~\ref{Kdensedomain}.

Since $\xi'',\zeta'',\xi',\zeta',c,d,a$ are arbitrary, Equations~\eqref{(conv11)} 
and \eqref{(conv22)} show that $(\operatorname{id}\otimes\omega)(W)\xi\in{\mathcal D}(K)$, 
$\forall\xi\in{\mathcal D}(K),\forall\omega\in{\mathcal B}({\mathcal H})_*$, by Definition~\ref{D(K)}.   Also, 
$K\bigl((\operatorname{id}\otimes\omega)(W)\xi\bigr)=(\operatorname{id}\otimes\bar{\omega})(W)
(K\xi)$.
\end{proof}

\begin{cor}
\begin{enumerate}
  \item The result of Proposition~\ref{idomegaWK} means that we have the inclusion: 
$(\operatorname{id}\otimes\bar{\omega})(W)\,K\subseteq K\,(\operatorname{id}\otimes\omega)(W)$, 
$\forall\omega\in{\mathcal B}({\mathcal H_{\varphi}})_*$.
  \item By taking adjoints, we also have: $(\operatorname{id}\otimes\bar{\omega})(W^*)\,K^*\subseteq 
K^*\,(\operatorname{id}\otimes\omega)(W^*)$.  Phrased in a different way, this means that for 
$\xi'\in{\mathcal D}(K^*)$ and $\omega\in{\mathcal B}({\mathcal H}_{\varphi})_*$, we have: 
$(\operatorname{id}\otimes\omega)(W^*)\xi'\in{\mathcal D}(K^*)$, and 
$$
K^*\bigl((\operatorname{id}\otimes\omega)(W^*)\xi'\bigr)=(\operatorname{id}\otimes\bar{\omega})(W^*)(K^*\xi').
$$
\end{enumerate}
\end{cor}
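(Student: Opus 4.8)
The plan is to read part~(1) off directly from Proposition~\ref{idomegaWK}, and then to derive part~(2) by a routine adjoint manipulation, using Lemma~\ref{omega_xizetaLem}\,(2) to identify the adjoint of a slice of $W$ as the corresponding slice of $W^*$.

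For part~(1), I would first recall that an inclusion $S\subseteq T$ of (possibly unbounded) operators means ${\mathcal D}(S)\subseteq{\mathcal D}(T)$ together with $T\xi=S\xi$ for all $\xi\in{\mathcal D}(S)$. The operator $(\operatorname{id}\otimes\bar\omega)(W)\,K$ has domain exactly ${\mathcal D}(K)$, since $(\operatorname{id}\otimes\bar\omega)(W)$ is bounded and everywhere defined, whereas $K\,(\operatorname{id}\otimes\omega)(W)$ has domain $\{\xi:(\operatorname{id}\otimes\omega)(W)\xi\in{\mathcal D}(K)\}$. Proposition~\ref{idomegaWK} says precisely that ${\mathcal D}(K)$ is contained in this latter domain, and that on ${\mathcal D}(K)$ the two operators agree, via $K\bigl((\operatorname{id}\otimes\omega)(W)\xi\bigr)=(\operatorname{id}\otimes\bar\omega)(W)(K\xi)$. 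That is the asserted inclusion, so part~(1) requires no further work.

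For part~(2), I would abbreviate $w=(\operatorname{id}\otimes\omega)(W)$ and $w'=(\operatorname{id}\otimes\bar\omega)(W)$, both bounded, so that part~(1) reads $w'K\subseteq Kw$. Since $K$ is closed and densely defined (the corollary following Proposition~\ref{Kdensedomain}), its adjoint $K^*$ exists, and passing to adjoints reverses the inclusion: $(Kw)^*\subseteq(w'K)^*$. Here I would use the standard facts that for a bounded everywhere-defined operator $b$ one has $(bT)^*=T^*b^*$ with equality, while $S^*b^*\subseteq(Sb)^*$ always holds; applied to our situation these give $w^*K^*\subseteq(Kw)^*\subseteq(w'K)^*=K^*(w')^*$, hence $w^*K^*\subseteq K^*(w')^*$. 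Finally Lemma~\ref{omega_xizetaLem}\,(2) (together with $\overline{\bar\omega}=\omega$) yields $w^*=\bigl((\operatorname{id}\otimes\omega)(W)\bigr)^*=(\operatorname{id}\otimes\bar\omega)(W^*)$ and $(w')^*=(\operatorname{id}\otimes\omega)(W^*)$, which converts the inclusion into $(\operatorname{id}\otimes\bar\omega)(W^*)\,K^*\subseteq K^*\,(\operatorname{id}\otimes\omega)(W^*)$. Unwinding this operator inclusion exactly as in part~(1) gives the pointwise statement: for $\xi'\in{\mathcal D}(K^*)$ one has $(\operatorname{id}\otimes\omega)(W^*)\xi'\in{\mathcal D}(K^*)$ and $K^*\bigl((\operatorname{id}\otimes\omega)(W^*)\xi'\bigr)=(\operatorname{id}\otimes\bar\omega)(W^*)(K^*\xi')$.

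I do not expect any genuine obstacle here. The only thing to be careful about is the bookkeeping for adjoints of products in which one factor is bounded and the other unbounded --- keeping the inclusions pointing the right way --- and tracking which of $W$ versus $W^*$ and $\omega$ versus $\bar\omega$ appears after conjugation; the density of ${\mathcal D}(K)$ is what makes $K^*$, and hence the whole argument, meaningful.
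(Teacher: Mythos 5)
Your proposal is correct and follows essentially the same route the paper intends: part (1) is just Proposition~\ref{idomegaWK} restated as an operator inclusion, and part (2) is obtained by taking adjoints, using $\bigl((\operatorname{id}\otimes\omega)(W)\bigr)^*=(\operatorname{id}\otimes\bar{\omega})(W^*)$ from Lemma~\ref{omega_xizetaLem}\,(2). The only point worth noting is that $K$ (hence $K^*$) is conjugate linear, but the adjoint rules you invoke --- $(bK)^*=K^*b^*$ for bounded $b$, $b^*K^*\subseteq(Kb)^*$, and reversal of inclusions under adjoints --- remain valid with the standard convention for adjoints of conjugate-linear operators, so your bookkeeping goes through unchanged.
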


\subsection{The operator $T$ and some modular theory}\label{subsec4.2}

Consider the operator $T$ on ${\mathcal H}_{\varphi}$, which is the closure of the map $\Lambda_{\varphi}(x)
\mapsto\Lambda_{\varphi}(x^*)$, for $x\in{\mathfrak N}_{\varphi}\cap{\mathfrak N}_{\varphi}^*$.  This is 
exactly the operator studied in the Tomita--Takesaki modular theory \cite{Tk2}.  The usual notation 
in the modular theory for this operator is $S$, but we use $T$ in our case to avoid confusion with the 
antipode map. 

To be precise, denote by $\tilde{\varphi}$ the lift of $\varphi$ to the von Neumann algebra 
$M=\pi_{\varphi}(A)''$.  It is a normal, semi-finite, faithful (n.s.f.) weight.  While the operator $T$ 
is typically associated with the von Neumann algebra weight, it is known that the GNS Hilbert spaces 
${\mathcal H}_{\tilde{\varphi}}$ and ${\mathcal H}_{\varphi}$ are same.  The operator $T$ is conjugate 
linear and involutive.  There exists the polar decomposition $T=J\nabla^{\frac12}$, where $J$ is 
a conjugate linear isomorphism (called the ``modular conjugation'') and $\nabla=T^*T$ is a positive 
operator (called the ``modular operator'').  We have $J=J^*$, $J^2=1$, and $J\nabla J=\nabla^{-1}$. 
If we regard the elements of $M$ as operators on ${\mathcal H}_{\varphi}$, then it is known that 
$JMJ=M'$.  For all $t\in\mathbb{R}$, we also have $\nabla^{it}M\nabla^{-it}=M$.  From this last 
observation, one defines the ``modular automorphism group'' $(\sigma_t^{\tilde{\varphi}})_{t\in\mathbb{R}}$, 
by 
$$\sigma_t^{\tilde{\varphi}}(x):=\nabla^{it}x\nabla^{-it},\quad x\in M.$$  
We have $\tilde{\varphi}=\tilde{\varphi}\circ\sigma_t^{\tilde{\varphi}}$, and the modular 
automorphism group satisfies the ``KMS condition''.  All these are standard results.  Refer to \cite{Tk2}, \cite{Str}. 
In our case, our $\varphi$ being a KMS weight implies that $(\sigma_t^{\tilde{\varphi}})$ leaves the 
$C^*$-algebra $A$ invariant, and the restriction of $(\sigma_t^{\tilde{\varphi}})$ to $A$ coincides 
with the automorphism group $(\sigma_t^{\varphi})$ associated with $\varphi$.

Our aim here is to show that there is a result similar to Proposition~\ref{idomegaWK}, 
involving the operator $T$ and the other leg of the operator $W$. 

\begin{prop}\label{omegaidW*T}
Let $T$ be as above. 
\begin{enumerate}
  \item For $\zeta\in{\mathcal D}(T)$ and for any $\theta\in{\mathcal B}({\mathcal H})_*$, 
we have: $(\theta\otimes\operatorname{id})(W^*)\zeta\in{\mathcal D}(T)$, and
$$
T\bigl((\theta\otimes\operatorname{id})(W^*)\zeta\bigr)=(\bar{\theta}\otimes\operatorname{id})(W^*)(T\zeta).
$$
So we have: $(\bar{\theta}\otimes\operatorname{id})(W^*)\,T\subseteq 
T\,(\theta\otimes\operatorname{id})(W^*)$.
  \item For $\zeta'\in{\mathcal D}(T^*)$ and $\theta\in{\mathcal B}({\mathcal H})_*$, 
we have: $(\theta\otimes\operatorname{id})(W)\zeta'\in{\mathcal D}(T^*)$, and
$$
T^*\bigl((\theta\otimes\operatorname{id})(W)\zeta'\bigr)=(\bar{\theta}\otimes\operatorname{id})(W)(T^*\zeta').
$$
So we have: $(\bar{\theta}\otimes\operatorname{id})(W)\,T^*\subseteq 
T^*\,(\theta\otimes\operatorname{id})(W)$.
\end{enumerate}
\end{prop}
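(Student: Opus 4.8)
The plan is to establish the identity in~(1) on a core for $T$, extend it by the closedness of $T$, and then deduce~(2) by a formal adjoint argument. Recall that $T$ is the closure of the conjugate-linear map $\Lambda_\varphi(x)\mapsto\Lambda_\varphi(x^*)$ and that the Tomita subalgebra $\mathcal{T}_\varphi$, a $*$-subalgebra of $\mathfrak{N}_\varphi\cap\mathfrak{N}_\varphi^*$, is a core for $T$, with $T\Lambda_\varphi(x)=\Lambda_\varphi(x^*)$ for every $x\in\mathcal{T}_\varphi$. Fix $\theta\in\mathcal{B}(\mathcal{H})_*$ and $x\in\mathcal{T}_\varphi$. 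By the defining property of $W^*$ in Proposition~\ref{Wdefn},
\[
(\theta\otimes\operatorname{id})(W^*)\,\Lambda_\varphi(x)=\Lambda_\varphi\bigl((\theta\otimes\operatorname{id})(\Delta x)\bigr),
\]
and the same formula applied to $x^*\in\mathfrak{N}_\varphi$ shows that $(\bar\theta\otimes\operatorname{id})(\Delta(x^*))$ also lies in the domain of $\Lambda_\varphi$. Since $\Delta$ is a $*$-homomorphism and $\bigl((\theta\otimes\operatorname{id})(Y)\bigr)^*=(\bar\theta\otimes\operatorname{id})(Y^*)$ for $Y\in\mathcal{B}(\mathcal{H}\otimes\mathcal{H}_\varphi)$ (the left-leg analogue of Lemma~\ref{omega_xizetaLem}(2)), we get $(\theta\otimes\operatorname{id})(\Delta x)^*=(\bar\theta\otimes\operatorname{id})(\Delta(x^*))$. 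Thus $(\theta\otimes\operatorname{id})(\Delta x)$ lies, together with its adjoint, in the domain of $\Lambda_\varphi$, so $\Lambda_\varphi\bigl((\theta\otimes\operatorname{id})(\Delta x)\bigr)\in\mathcal{D}(T)$ and
\[
T\bigl((\theta\otimes\operatorname{id})(W^*)\Lambda_\varphi(x)\bigr)=\Lambda_\varphi\bigl((\bar\theta\otimes\operatorname{id})(\Delta(x^*))\bigr)=(\bar\theta\otimes\operatorname{id})(W^*)\,\Lambda_\varphi(x^*)=(\bar\theta\otimes\operatorname{id})(W^*)\,T\Lambda_\varphi(x).
\]

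Next I would close off. For an arbitrary $\zeta\in\mathcal{D}(T)$, choose $x_n\in\mathcal{T}_\varphi$ with $\Lambda_\varphi(x_n)\to\zeta$ and $T\Lambda_\varphi(x_n)=\Lambda_\varphi(x_n^*)\to T\zeta$, which is possible because $\mathcal{T}_\varphi$ is a core for $T$. Since $(\theta\otimes\operatorname{id})(W^*)$ and $(\bar\theta\otimes\operatorname{id})(W^*)$ are bounded, the previous computation gives $(\theta\otimes\operatorname{id})(W^*)\Lambda_\varphi(x_n)\to(\theta\otimes\operatorname{id})(W^*)\zeta$ and $T\bigl((\theta\otimes\operatorname{id})(W^*)\Lambda_\varphi(x_n)\bigr)=(\bar\theta\otimes\operatorname{id})(W^*)T\Lambda_\varphi(x_n)\to(\bar\theta\otimes\operatorname{id})(W^*)(T\zeta)$. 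As $T$ is closed, $(\theta\otimes\operatorname{id})(W^*)\zeta\in\mathcal{D}(T)$ with $T\bigl((\theta\otimes\operatorname{id})(W^*)\zeta\bigr)=(\bar\theta\otimes\operatorname{id})(W^*)(T\zeta)$; equivalently, $(\bar\theta\otimes\operatorname{id})(W^*)\,T\subseteq T\,(\theta\otimes\operatorname{id})(W^*)$, which is~(1). For~(2) I would pass to adjoints: using $\bigl((\theta\otimes\operatorname{id})(W^*)\bigr)^*=(\bar\theta\otimes\operatorname{id})(W)$ and the usual rules for adjoints of products with bounded, everywhere-defined operators, the inclusion from~(1) yields $\bigl(T\,(\theta\otimes\operatorname{id})(W^*)\bigr)^*\subseteq\bigl((\bar\theta\otimes\operatorname{id})(W^*)\,T\bigr)^*=T^*\,(\theta\otimes\operatorname{id})(W)$; since the left-hand side contains $(\bar\theta\otimes\operatorname{id})(W)\,T^*$, we obtain $(\bar\theta\otimes\operatorname{id})(W)\,T^*\subseteq T^*\,(\theta\otimes\operatorname{id})(W)$, which is~(2).

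The only genuinely delicate point is the interface between the $C^*$-level data and the von Neumann algebra $M=\pi_\varphi(A)''$: one must check that $\mathcal{T}_\varphi$ is a core for the modular operator $T$ of the lifted weight $\tilde\varphi$, that the defining formula for $W^*$ in Proposition~\ref{Wdefn} is compatible with the GNS map $\Lambda_\varphi$ on the relevant multiplier-algebra elements, and that the characterization $T\Lambda_\varphi(y)=\Lambda_\varphi(y^*)$ remains valid whenever both $y$ and $y^*$ lie in the domain of $\Lambda_\varphi$. These are standard features of KMS weights and their lifts, and once they are in place the argument above is routine; I expect no further substantial obstacle. (Note also that this argument is considerably lighter than the one for the companion statement Proposition~\ref{idomegaWK}, precisely because here both $T$ and $W^*$ are given by explicit formulas rather than by the limiting procedure used to define $K$.)
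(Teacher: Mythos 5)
Your proposal is correct and follows essentially the same route as the paper: compute $T\bigl((\theta\otimes\operatorname{id})(W^*)\Lambda_{\varphi}(x)\bigr)$ on GNS vectors using the defining formula for $W^*$, the left invariance of $\varphi$, and $\bigl((\theta\otimes\operatorname{id})(\Delta x)\bigr)^*=(\bar{\theta}\otimes\operatorname{id})\bigl(\Delta(x^*)\bigr)$, then obtain (2) by taking adjoints. The only difference is cosmetic: the paper works directly with $x\in{\mathfrak N}_{\varphi}\cap{\mathfrak N}_{\varphi}^*$ (whose image under $\Lambda_{\varphi}$ is a core for $T$ by definition) and leaves the closedness/extension step implicit, whereas you use the Tomita subalgebra and write out the limiting argument explicitly.
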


\begin{proof}
Without loss of generality, let $\zeta=\Lambda_{\varphi}(x)$, for some $x\in{\mathfrak N}_{\varphi}
\cap{\mathfrak N}_{\varphi}^*$.  By the left invariance of $\varphi$, we know 
$(\theta\otimes\operatorname{id})(\Delta x)\in{\mathfrak N}_{\varphi}\cap{\mathfrak N}_{\varphi}^*$. 
Also, by definition of $W$, we have: $(\theta\otimes\operatorname{id})(W^*)\Lambda_{\varphi}(x)
=\Lambda_{\varphi}\bigl((\theta\otimes\operatorname{id})(\Delta x)\bigr)$.  Therefore, 
\begin{align}
T\bigl((\theta\otimes\operatorname{id})(W^*)\Lambda_{\varphi}(x)\bigr)
&=T\bigl(\Lambda_{\varphi}[(\theta\otimes\operatorname{id})(\Delta x)]\bigr)
=\Lambda_{\varphi}\bigl([(\theta\otimes\operatorname{id})(\Delta x)]^*\bigr)   \notag \\
&=\Lambda_{\varphi}\bigl((\bar{\theta}\otimes\operatorname{id})(\Delta(x^*))\bigr)
=(\bar{\theta}\otimes\operatorname{id})(W^*)\Lambda_{\varphi}(x^*)   \notag \\
&=(\bar{\theta}\otimes\operatorname{id})(W^*)\bigl(T\Lambda_{\varphi}(x)\bigr). 
\notag
\end{align}
Since $\zeta=\Lambda_{\varphi}(x)$ is arbitrary, this proves the first result.
The second result is obtained by taking the adjoints.
\end{proof}

\subsection{Some technical results concerning the polar decompositions of the 
operators $K$ and $T$.}\label{subsec4.3}

In \S\ref{subsec4.2}, we considered the operator $T$ on ${\mathcal H}_{\varphi}$, its 
polar decomposition $T=J\nabla^{\frac12}$, the modular automorphism group 
$(\sigma^{\tilde{\varphi}}_t)_{t\in\mathbb{R}}$, and their properties. 

In \S\ref{subsec4.1}, we defined the operator $K$ on ${\mathcal H}={\mathcal H}_{\psi}$. 
As in the case of $T$, since $K$ is also an involutive conjugate linear operator, we can consider 
its polar decomposition, given by $K=IL^{\frac12}$, where $L=K^*K$ is a positive operator 
and $I$ is a conjugate linear isomorphism.  We will have: $I^*=I$, $I^2=1$, and $ILI=L^{-1}$. 
In addition, we can consider the ``scaling group'' $(\tau_t)_{t\in\mathbb{R}}$, which is 
a norm-continuous automorphism group on ${\mathcal B}({\mathcal H})$ given by 
$\tau_t(\,\cdot\,):=L^{it}\,\cdot\,L^{-it}$.

Main goal in this subsection is to show that $(L^{it}\otimes\nabla^{it})W
(L^{-it}\otimes\nabla^{-it})=W$, $t\in\mathbb{R}$ (Proposition~\ref{LNablaequality}) and 
that $(I\otimes J)W(I\otimes J)=W^*$ (Proposition~\ref{IJWequality}).  From these results, 
we can show later (in \S\ref{subsec4.4}) that the map defined by $S:=R\circ\tau_{-\frac{i}{2}}$, 
where $R(x)=Ix^*I$ and $\tau_{-\frac{i}{2}}$ is the analytic generator of $(\tau_t)$ at $t=-\frac{i}{2}$, 
is our antipode. 

In fact, this is exactly how things are done in the quantum group case (see Proposition~2.20 
in \cite{VDvN}, for instance).  Our situation is more complicated, however, because the 
methods that worked in the quantum group case cannot be applied, due to the fact that 
$W$ is not a unitary (it is only a partial isometry).  This makes our discussion below 
to appear somewhat long-winded, but the overall strategy is still the same.

As before, regard $A=\pi(A)\in{\mathcal B}({\mathcal H})$, via the GNS 
representation.  Let us first begin by collecting some technical results, given in the 
following two lemmas.  Lemma~\ref{aplemma1} below is analogous to Lemma~3.14 of \cite{KuVa}. 
Here, ${\mathcal T}_{\tilde{\varphi}}$ denotes the Tomita ${}^*$-algebra, a dense subalgebra 
in $\pi_{\varphi}(A)''$. 

\begin{lem}\label{aplemma1}
Let $a,b\in{\mathcal T}_{\tilde{\varphi}}$ and $p\in{\mathfrak N}_{\varphi}$.  Then
$$
(\operatorname{id}\otimes\omega_{\Lambda_{\tilde{\varphi}}(a\sigma^{\tilde{\varphi}}_{-i}(b^*)),
\Lambda_{\varphi}(p)})(W)=\pi\bigl((\operatorname{id}\otimes\omega_{\Lambda_{\tilde{\varphi}}(a),
\Lambda_{\tilde{\varphi}}(b)})(\Delta(p^*))\bigr).
$$
\end{lem}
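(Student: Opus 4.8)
The plan is to prove the identity first for $a,b$ in the Tomita subalgebra $\mathcal{T}_{\varphi}$ of the $C^{*}$-algebra $A$, where the argument stays at the $C^{*}$-level and the earlier results apply verbatim, and then to extend it to the Tomita ${}^{*}$-algebra $\mathcal{T}_{\tilde{\varphi}}$ of $M=\pi_{\varphi}(A)''$: in the variable $a$ by a bounded approximation, and in the variable $b$ by the standard regularization of $\mathcal{T}_{\tilde{\varphi}}$ by $\mathcal{T}_{\varphi}$, which is where the real work lies.

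For $a,b\in\mathcal{T}_{\varphi}$, recall the elementary identity $\omega_{\Lambda_{\varphi}(a),\Lambda_{\varphi}(b)}=\varphi(\,\cdot\,a\,\sigma^{\varphi}_{-i}(b^{*}))$ on $A$ (cf.\ the proof of Proposition~\ref{K_psinew}). Put $s:=a\,\sigma^{\varphi}_{-i}(b^{*})$; since $a$ and $\sigma^{\varphi}_{-i}(b^{*})$ both lie in the ${}^{*}$-algebra $\mathfrak{N}_{\varphi}\cap\mathfrak{N}_{\varphi}^{*}$ we have $s\in\mathfrak{N}_{\varphi}$, and then $\Delta(p^{*})(1\otimes s)\in\mathfrak{M}_{\operatorname{id}\otimes\varphi}$ exactly as in the proof of Lemma~\ref{densesubsetA}. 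Inserting $1\otimes s$ and using the identity above to slice the second leg gives $(\operatorname{id}\otimes\omega_{\Lambda_{\varphi}(a),\Lambda_{\varphi}(b)})(\Delta(p^{*}))=(\operatorname{id}\otimes\varphi)(\Delta(p^{*})(1\otimes s))$ in $M(A)$, while Proposition~\ref{idomegaW}, applied with the pair $(s,p)$ in place of $(s,r)$, gives $(\operatorname{id}\otimes\omega_{\Lambda_{\varphi}(s),\Lambda_{\varphi}(p)})(W)=(\operatorname{id}\otimes\varphi)(\Delta(p^{*})(1\otimes s))$. Since $\sigma^{\tilde{\varphi}}_{t}|_{A}=\sigma^{\varphi}_{t}$ and the elements of $\mathcal{T}_{\varphi}$ are entire, for $a,b\in\mathcal{T}_{\varphi}$ one has $\Lambda_{\tilde{\varphi}}(a\,\sigma^{\tilde{\varphi}}_{-i}(b^{*}))=\Lambda_{\varphi}(s)$, $\Lambda_{\tilde{\varphi}}(a)=\Lambda_{\varphi}(a)$ and $\Lambda_{\tilde{\varphi}}(b)=\Lambda_{\varphi}(b)$; comparing the two expressions (and applying $\pi$) proves the Lemma for $a,b\in\mathcal{T}_{\varphi}$.

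Next, fix $p\in\mathfrak{N}_{\varphi}$ and $b\in\mathcal{T}_{\varphi}$, and vary $a$. The vector $\Lambda_{\tilde{\varphi}}(\sigma^{\tilde{\varphi}}_{-i}(b^{*}))$ is right-bounded, so $a\mapsto\Lambda_{\tilde{\varphi}}(a\,\sigma^{\tilde{\varphi}}_{-i}(b^{*}))$ is a fixed bounded (right-multiplication) operator applied to $\Lambda_{\tilde{\varphi}}(a)$; combined with the estimates $\|(\operatorname{id}\otimes\omega_{\xi,\zeta})(W)\|\le\|\xi\|\,\|\zeta\|$ and $\|(\operatorname{id}\otimes\omega_{\xi,\zeta})(\Delta(p^{*}))\|\le\|\Delta(p^{*})\|\,\|\xi\|\,\|\zeta\|$, this shows that both sides of the asserted equality are norm-continuous in $\Lambda_{\tilde{\varphi}}(a)$. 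As $\{\Lambda_{\varphi}(a):a\in\mathcal{T}_{\varphi}\}$ is dense in $\mathcal{H}_{\varphi}=\mathcal{H}_{\tilde{\varphi}}$, the equality extends to all $a\in\mathcal{T}_{\tilde{\varphi}}$ and $b\in\mathcal{T}_{\varphi}$.

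Finally, fix $a\in\mathcal{T}_{\tilde{\varphi}}$ and extend in $b$. Here one cannot argue by bare continuity, since the $b$-dependence of the left side enters through $\Lambda_{\tilde{\varphi}}(\sigma^{\tilde{\varphi}}_{-i}(b^{*}))=J\nabla^{-1/2}\Lambda_{\tilde{\varphi}}(b)$, which is unbounded in $\Lambda_{\tilde{\varphi}}(b)$; instead, for $b\in\mathcal{T}_{\tilde{\varphi}}$ one picks $y_{\alpha}\in\mathfrak{N}_{\varphi}\cap\mathfrak{N}_{\varphi}^{*}$ with $\Lambda_{\varphi}(y_{\alpha})\to\Lambda_{\tilde{\varphi}}(b)$ and $\Lambda_{\varphi}(y_{\alpha}^{*})\to\Lambda_{\tilde{\varphi}}(b^{*})$ (possible because $T$ is the closure of $\Lambda_{\varphi}(y)\mapsto\Lambda_{\varphi}(y^{*})$) and applies the usual Gaussian regularization, $\sqrt{n/\pi}\int e^{-nt^{2}}\sigma^{\tilde{\varphi}}_{t}(y_{\alpha})\,dt$ followed by a diagonal passage, to obtain a net $b_{\beta}\in\mathcal{T}_{\varphi}$ with $\Lambda_{\tilde{\varphi}}(\sigma^{\tilde{\varphi}}_{z}(b_{\beta}))\to\Lambda_{\tilde{\varphi}}(\sigma^{\tilde{\varphi}}_{z}(b))$ and $\Lambda_{\tilde{\varphi}}(\sigma^{\tilde{\varphi}}_{z}(b_{\beta}^{*}))\to\Lambda_{\tilde{\varphi}}(\sigma^{\tilde{\varphi}}_{z}(b^{*}))$ for every $z\in\mathbb{C}$. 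In particular $\Lambda_{\tilde{\varphi}}(b_{\beta})\to\Lambda_{\tilde{\varphi}}(b)$, $\Lambda_{\tilde{\varphi}}(\sigma^{\tilde{\varphi}}_{-i}(b_{\beta}^{*}))\to\Lambda_{\tilde{\varphi}}(\sigma^{\tilde{\varphi}}_{-i}(b^{*}))$, and hence $\Lambda_{\tilde{\varphi}}(a\,\sigma^{\tilde{\varphi}}_{-i}(b_{\beta}^{*}))=\pi(a)\Lambda_{\tilde{\varphi}}(\sigma^{\tilde{\varphi}}_{-i}(b_{\beta}^{*}))\to\Lambda_{\tilde{\varphi}}(a\,\sigma^{\tilde{\varphi}}_{-i}(b^{*}))$. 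Applying the equality of the previous paragraph to the pairs $(a,b_{\beta})$ and letting $\beta$ run, the two boundedness estimates force both sides to converge to the corresponding expressions for $(a,b)$, which completes the proof. This regularization --- approximating an element of $\mathcal{T}_{\tilde{\varphi}}$ by elements of $\mathcal{T}_{\varphi}$ with simultaneous control of all the analytic continuations $\Lambda_{\tilde{\varphi}}\circ\sigma^{\tilde{\varphi}}_{z}$ --- is the only delicate point, but it is entirely standard in the theory of KMS weights on von Neumann algebras (cf.\ the analogous arguments in \cite{KuVa} and \cite{VDvN}).
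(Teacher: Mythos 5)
Your proof is correct, and its first step rests on the same two ingredients as the paper's argument: Proposition~\ref{idomegaW} identifying $(\operatorname{id}\otimes\omega_{\Lambda_{\varphi}(s),\Lambda_{\varphi}(p)})(W)$ with $(\operatorname{id}\otimes\varphi)\bigl(\Delta(p^*)(1\otimes s)\bigr)$, and the KMS relation converting $\varphi(b^*\,\cdot\,a)$ into $\varphi(\,\cdot\,a\sigma_{-i}(b^*))$. Where you diverge is in how the statement is pushed from the $C^*$-level to ${\mathcal T}_{\tilde{\varphi}}$. The paper first dualizes with an arbitrary functional $\theta$, rewrites the slice of $W$ as $\tilde{\varphi}\bigl(\pi[(\theta\otimes\operatorname{id})(\Delta(p^*))]\,x\bigr)$ and extends this in the single combined variable $x\in{\mathfrak N}_{\tilde{\varphi}}$ (continuity in $\Lambda_{\tilde{\varphi}}(x)$ alone suffices); only then does it substitute $x=a\sigma^{\tilde{\varphi}}_{-i}(b^*)$ and invoke the KMS property of the n.s.f.\ weight $\tilde{\varphi}$ directly on ${\mathcal T}_{\tilde{\varphi}}$, so no approximation of $b$ ever enters. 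You instead perform the KMS manipulation at the $C^*$-level, where it is only available for $b\in{\mathcal T}_{\varphi}$, and then extend separately in $a$ (via the right-bounded vector $\Lambda_{\tilde{\varphi}}(\sigma^{\tilde{\varphi}}_{-i}(b^*))$, which is fine) and in $b$, the latter requiring the simultaneous control of $\Lambda_{\tilde{\varphi}}(b_{\beta})\to\Lambda_{\tilde{\varphi}}(b)$ and $\Lambda_{\tilde{\varphi}}(\sigma^{\tilde{\varphi}}_{-i}(b_{\beta}^*))\to\Lambda_{\tilde{\varphi}}(\sigma^{\tilde{\varphi}}_{-i}(b^*))$ through the Gaussian smoothing plus a diagonal argument. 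That regularization is standard (it is the same device used in the paper's proof of Lemma~\ref{aplemma2}) and your sketch of it is sound, but it is extra machinery that the paper's ordering of steps renders unnecessary: by extending before specializing $x$, the full ${\mathcal T}_{\tilde{\varphi}}$-statement falls out of the weight theory of $\tilde{\varphi}$ for free. Your route buys a more hands-on, $C^*$-first argument at the cost of the two-stage approximation; the paper's buys brevity by letting the von Neumann--level KMS condition do all the work.
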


\begin{proof}
For any $q\in{\mathfrak N}_{\varphi}$, by an earlier result in Proposition~\ref{idomegaW}, we have
$$
(\operatorname{id}\otimes\omega_{\Lambda_{\varphi}(q),\Lambda_{\varphi}(p)})(W)
=(\operatorname{id}\otimes\varphi)\bigl(\Delta(p^*)(1\otimes q)\bigr).
$$
Let $\theta$ be an arbitrary linear functional.  Then from the above, we have 
\begin{align}
\theta\bigl((\operatorname{id}\otimes\omega_{\Lambda_{\varphi}(q),\Lambda_{\varphi}(p)})(W)\bigr)
&=\theta\bigl((\operatorname{id}\otimes\varphi)(\Delta(p^*)(1\otimes q))\bigr)
=\varphi\bigl((\theta\otimes\operatorname{id})(\Delta(p^*))q\bigr)   \notag \\
&=\varphi\bigl(((\bar{\theta}\otimes\operatorname{id})(\Delta p))^*q\bigr) 
=\bigl\langle\Lambda_{\varphi}(q),\Lambda_{\varphi}((\bar{\theta}\otimes\operatorname{id})
(\Delta p))\bigr\rangle.
\notag
\end{align}
In terms of the lifted weight $\tilde{\varphi}$ (recall that ${\mathcal H}_{\tilde{\varphi}}
={\mathcal H}_{\varphi}$), this is same as
\begin{equation}\label{(aplemma1eqn1)}
\theta\bigl((\operatorname{id}\otimes\omega_{\Lambda_{\varphi}(q),\Lambda_{\varphi}(p)})(W)\bigr)
=\bigl\langle\Lambda_{\tilde{\varphi}}(\pi(q)),\Lambda_{\tilde{\varphi}}
(\pi[(\bar{\theta}\otimes\operatorname{id})(\Delta p)])\bigr\rangle,
\end{equation}
where we wrote $\pi(\,\cdot\,)$ to distinguish the elements in $A$ from the generic 
elements in the von Neumann algebra $\pi_{\varphi}(A)''$.

Equation~\eqref{(aplemma1eqn1)} holds true for any $q\in{\mathfrak N}_{\varphi}$.  This means that 
for any $x\in{\mathfrak N}_{\tilde{\varphi}}$, we will also have
\begin{equation}\label{(aplemma1eqn2)}
\theta\bigl((\operatorname{id}\otimes\omega_{\Lambda_{\tilde{\varphi}}(x),\Lambda_{\varphi}(p)})(W)\bigr)
=\bigl\langle\Lambda_{\tilde{\varphi}}(x),\Lambda_{\tilde{\varphi}}(\pi[(\bar{\theta}\otimes
\operatorname{id})(\Delta p)])\bigr\rangle.
\end{equation}

In particular, let $x=a\sigma^{\tilde{\varphi}}_{-i}(b^*)$, for $a,b\in{\mathcal T}_{\tilde{\varphi}}$. 
Then Equation~\eqref{(aplemma1eqn2)} becomes
\begin{align}
&\theta\bigl((\operatorname{id}\otimes\omega_{\Lambda_{\tilde{\varphi}}(a\sigma^{\tilde{\varphi}}_{-i}(b^*)),
\Lambda_{\varphi}(p)})(W)\bigr)
=\tilde{\varphi}\bigl(\pi[(\theta\otimes\operatorname{id})(\Delta(p^*))]x\bigr) \notag \\
&=\tilde{\varphi}\bigl(\pi[(\theta\otimes\operatorname{id})(\Delta(p^*))]
a\sigma^{\tilde{\varphi}}_{-i}(b^*)\bigr) 
=\tilde{\varphi}\bigl(b^*\pi[(\theta\otimes\operatorname{id})(\Delta(p^*))]a\bigr) \notag \\
&=\bigl\langle\pi[(\theta\otimes\operatorname{id})(\Delta(p^*))]\Lambda_{\tilde{\varphi}}(a),
\Lambda_{\tilde{\varphi}}(b)\bigr\rangle
=\omega_{\Lambda_{\tilde{\varphi}}(a),\Lambda_{\tilde{\varphi}}(b)}
\bigl(\pi[(\theta\otimes\operatorname{id})(\Delta(p^*))]\bigr)   \notag \\
&=\theta\bigl(\pi[(\operatorname{id}\otimes
\omega_{\Lambda_{\tilde{\varphi}}(a),\Lambda_{\tilde{\varphi}}(b)})(\Delta(p^*))]\bigr).
\notag
\end{align}
This result is true for any functional $\theta$.  It follows that 
$$
(\operatorname{id}\otimes\omega_{\Lambda_{\tilde{\varphi}}(a\sigma^{\tilde{\varphi}}_{-i}(b^*)),
\Lambda_{\varphi}(p)})(W)=\pi\bigl((\operatorname{id}\otimes\omega_{\Lambda_{\tilde{\varphi}}(a),
\Lambda_{\tilde{\varphi}}(b)})(\Delta(p^*))\bigr),
$$
as elements in $A=\pi(A)$.
\end{proof}

\begin{rem}
By the property of the Tomita ${}^*$-algebra, we know that the elements of the form 
$\Lambda_{\tilde{\varphi}}(a\sigma^{\tilde{\varphi}}_{-i}(b^*))$, $a,b\in{\mathcal T}_{\tilde{\varphi}}$, 
span a dense subset in ${\mathcal H}_{\tilde{\varphi}}={\mathcal H}_{\varphi}$.  Meanwhile, now that 
we have proved the result, and since we can see that the equality holds as elements in $A=\pi(A)$, 
we may as well write the result of the lemma as
\begin{equation}\label{(aplemma1eqn3)}
(\operatorname{id}\otimes\omega_{\Lambda_{\tilde{\varphi}}(a\sigma^{\tilde{\varphi}}_{-i}(b^*)),
\Lambda_{\varphi}(p)})(W)=(\operatorname{id}\otimes\omega_{\Lambda_{\tilde{\varphi}}(a),
\Lambda_{\tilde{\varphi}}(b)})(\Delta(p^*)),
\end{equation}
for $a,b\in{\mathcal T}_{\tilde{\varphi}}$ and $p\in{\mathfrak N}_{\varphi}$.
\end{rem}

The following Lemma~\ref{aplemma2}, which is analogous to Lemma~5.7 of \cite{KuVa}, is a consequence of 
the previous result.  

\begin{lem}\label{aplemma2}
Let $a\in{\mathcal T}_{\tilde{\varphi}}$ and $x\in{\mathfrak N}_{\tilde{\varphi}}
\cap{\mathfrak N}_{\tilde{\varphi}}^*$.  Then
$$
\bigl((\operatorname{id}\otimes\omega_{\Lambda_{\tilde{\varphi}}(x),
\Lambda_{\tilde{\varphi}}(a)})(W^*)\bigr)^*
=(\operatorname{id}\otimes\omega_{\Lambda_{\tilde{\varphi}}(x^*),
\Lambda_{\tilde{\varphi}}(\sigma^{\tilde{\varphi}}_{-i}(a^*))})(W^*),
$$
which can be also written as
$$
(\operatorname{id}\otimes\omega_{\Lambda_{\tilde{\varphi}}(a),
\Lambda_{\tilde{\varphi}}(x)})(W)
=(\operatorname{id}\otimes\omega_{\Lambda_{\tilde{\varphi}}(x^*),
\Lambda_{\tilde{\varphi}}(\sigma^{\tilde{\varphi}}_{-i}(a^*))})(W^*).
$$
\end{lem}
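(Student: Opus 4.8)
The plan is to derive Lemma~\ref{aplemma2} directly from Lemma~\ref{aplemma1} (more precisely, from the reformulation \eqref{(aplemma1eqn3)}), combined with the basic properties of the operator $W$ and the modular theory of $\tilde{\varphi}$. First I would observe that the two displayed formulas in the statement are equivalent: taking adjoints of the first identity and using Lemma~\ref{omega_xizetaLem}\,(2), which says $(\operatorname{id}\otimes\omega_{\xi,\zeta})(T)^*=(\operatorname{id}\otimes\omega_{\zeta,\xi})(T^*)$, turns $\bigl((\operatorname{id}\otimes\omega_{\Lambda_{\tilde{\varphi}}(x),\Lambda_{\tilde{\varphi}}(a)})(W^*)\bigr)^*$ into $(\operatorname{id}\otimes\omega_{\Lambda_{\tilde{\varphi}}(a),\Lambda_{\tilde{\varphi}}(x)})(W)$ and, on the right-hand side, converts the $W^*$-form into the $W$-form. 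So it suffices to establish one of the two; I would aim at the second (the $W$-version), since Lemma~\ref{aplemma1} is stated in terms of $W$.

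The key computational input is \eqref{(aplemma1eqn3)}: for $u,v\in{\mathcal T}_{\tilde{\varphi}}$ and $p\in{\mathfrak N}_{\varphi}$,
$$
(\operatorname{id}\otimes\omega_{\Lambda_{\tilde{\varphi}}(u\sigma^{\tilde{\varphi}}_{-i}(v^*)),\Lambda_{\varphi}(p)})(W)
=(\operatorname{id}\otimes\omega_{\Lambda_{\tilde{\varphi}}(u),\Lambda_{\tilde{\varphi}}(v)})(\Delta(p^*)).
$$
The crucial symmetry here is that the right-hand side is ``almost symmetric'' in the two Hilbert-space vectors: applying the $*$-operation on $A\otimes A$ to $\Delta(p^*)$ gives $\Delta(p)$, and by Lemma~\ref{omega_xizetaLem}\,(2) this amounts to swapping $u$ and $v$ together with the $*$. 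Concretely I would compute, for fixed $a\in{\mathcal T}_{\tilde{\varphi}}$ and $x\in{\mathfrak N}_{\tilde{\varphi}}\cap{\mathfrak N}_{\tilde{\varphi}}^*$, both sides of the claimed identity tested against elements $\Lambda_{\tilde{\varphi}}(p)$ (for $p$ in a suitable dense set, which one can take inside ${\mathcal T}_{\tilde{\varphi}}$ by density, so that $\Lambda_{\varphi}(p)=\Lambda_{\tilde{\varphi}}(p)$ makes sense) and reduce everything via \eqref{(aplemma1eqn3)} to comparing two expressions of the form $(\operatorname{id}\otimes\omega_{\cdot,\cdot})(\Delta(p^*))$. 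At that point the identity becomes a purely modular-theoretic statement: one needs $\Lambda_{\tilde{\varphi}}(a)$ to play the role that, after conjugation, is played by $\Lambda_{\tilde{\varphi}}(\sigma^{\tilde{\varphi}}_{-i}(a^*))$, which is exactly the relation $T^*\Lambda_{\tilde{\varphi}}(a)=\Lambda_{\tilde{\varphi}}(\sigma^{\tilde{\varphi}}_{-i}(a^*))$ coming from $T^*=\nabla^{1/2}J=J\nabla^{-1/2}$ and the analytic continuation of the modular group (valid precisely because $a\in{\mathcal T}_{\tilde{\varphi}}$). The element $x\in{\mathfrak N}_{\tilde{\varphi}}\cap{\mathfrak N}_{\tilde{\varphi}}^*$ is in the domain of $T$ with $T\Lambda_{\tilde{\varphi}}(x)=\Lambda_{\tilde{\varphi}}(x^*)$, which supplies the $x\mapsto x^*$ swap on the other leg.

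The main obstacle I expect is bookkeeping rather than conceptual: \eqref{(aplemma1eqn3)} is written with the parametrization $\Lambda_{\tilde{\varphi}}(u\sigma^{\tilde{\varphi}}_{-i}(v^*))$ in the \emph{first} slot of $\omega$ and $\Lambda_{\varphi}(p)$ in the \emph{second}, whereas in Lemma~\ref{aplemma2} the roles of ``the $a$-type vector'' and ``the $x$-type vector'' are not both of that special product form, so I cannot apply \eqref{(aplemma1eqn3)} symmetrically on both slots at once. The way around this is to expand $\omega_{\Lambda_{\tilde{\varphi}}(a),\Lambda_{\tilde{\varphi}}(x)}$ using an orthonormal basis and Lemma~\ref{omega_xizetaLem}\,(3)--(4): insert a resolution of the identity so that one rewrites $(\operatorname{id}\otimes\omega_{\Lambda_{\tilde{\varphi}}(a),\Lambda_{\tilde{\varphi}}(x)})(W)$ as a strongly convergent sum of products $(\operatorname{id}\otimes\omega_{\Lambda_{\tilde{\varphi}}(a),e_j})(W)(\operatorname{id}\otimes\omega_{e_j,\cdots})(\cdots)$, use the pentagon-type relations for $W$ from Proposition~\ref{Wpentagon_alt} together with \eqref{(aplemma1eqn3)} to handle each factor, then re-sum. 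This is exactly the strategy of Lemma~5.7 in \cite{KuVa} that the remark points to; in our setting the only extra care needed is that $W$ is a partial isometry rather than a unitary, so whenever a cancellation $WW^*=1$ or $W^*W=1$ would be used in the quantum-group proof one must instead invoke $W^*W=E$ (Theorem~\ref{Wpartialisometry}) and the fact $E(\Delta p)=\Delta p$ from \eqref{(EDelta)}, checking that the idempotent $E$ harmlessly passes through. Once all factors are identified, the asserted $*$-symmetry follows by matching terms, using the two modular facts $T\Lambda_{\tilde{\varphi}}(x)=\Lambda_{\tilde{\varphi}}(x^*)$ and $T^*\Lambda_{\tilde{\varphi}}(a)=\Lambda_{\tilde{\varphi}}(\sigma^{\tilde{\varphi}}_{-i}(a^*))$, and the proof is complete.
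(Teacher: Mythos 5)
Your preliminary reductions are fine: the two displayed identities are equivalent under taking adjoints together with Lemma~\ref{omega_xizetaLem}\,(2), the second slot can indeed be handled by a density/normality argument (the paper does it by extending \eqref{(aplemma1eqn3)} to $x\in{\mathfrak N}_{\tilde{\varphi}}$ via $\tilde{\Delta}$, Equation~\eqref{(aplemma2eqn0)}), and the modular identities $T\Lambda_{\tilde{\varphi}}(x)=\Lambda_{\tilde{\varphi}}(x^*)$ and $T^*\Lambda_{\tilde{\varphi}}(a)=\Lambda_{\tilde{\varphi}}(\sigma^{\tilde{\varphi}}_{-i}(a^*))$ are correct. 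The gap is at the one step that carries the actual content: removing the special product form $\Lambda_{\tilde{\varphi}}(u\sigma^{\tilde{\varphi}}_{-i}(v^*))$ that Lemma~\ref{aplemma1} forces into the first slot of the functional. Your proposed mechanism --- expand with an orthonormal basis via Lemma~\ref{omega_xizetaLem}\,(3)--(4) and invoke the relations of Proposition~\ref{Wpentagon_alt} --- cannot achieve this. The basis expansion only splits a functional across a product $ST$ of operators, and every factorization of $W$ available here ($W=WE=G_LW=WW^*W$, from Theorem~\ref{Wpartialisometry}) produces factors whose $\omega$-vectors are arbitrary basis vectors $e_j$, never vectors of the form $\Lambda_{\tilde{\varphi}}(u\sigma^{\tilde{\varphi}}_{-i}(v^*))$; moreover the identities in Proposition~\ref{Wpentagon_alt} encode only the comultiplicativity of $W$ and contain no modular data, so no resummation of such factors can generate the automorphism $\sigma^{\tilde{\varphi}}_{-i}$ that must appear in the conclusion. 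Your attribution of this scheme to Lemma~5.7 of \cite{KuVa} is also off: that proof, like the paper's, removes the auxiliary element not by a basis/pentagon expansion but by approximation.

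Concretely, after taking adjoints in \eqref{(aplemma1eqn3)} and passing to the von Neumann level (Equation~\eqref{(aplemma2eqn1)}), the paper chooses a bounded net $(b_k)$ in ${\mathcal T}_{\tilde{\varphi}}$ with $b_k\to1$ and $\sigma^{\tilde{\varphi}}_{\frac{i}{2}}(b_k)\to1$ strongly${}^*$ (a Gaussian smoothing of an approximate unit), so that $\Lambda_{\tilde{\varphi}}\bigl(a\sigma^{\tilde{\varphi}}_{-i}(b_k^*)\bigr)=J_{\tilde{\varphi}}\pi_{\tilde{\varphi}}\bigl(\sigma^{\tilde{\varphi}}_{\frac{i}{2}}(b_k)\bigr)J_{\tilde{\varphi}}\Lambda_{\tilde{\varphi}}(a)\to\Lambda_{\tilde{\varphi}}(a)$, and the lemma follows in the limit; your sketch contains no substitute for this approximation step. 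If you wish to avoid the net, the modular facts you quote do suffice, but only when combined with Proposition~\ref{omegaidW*T} rather than with Lemma~\ref{aplemma1}: for $\xi,\eta\in{\mathcal H}$ one has
\begin{align*}
\bigl\langle W(\xi\otimes\Lambda_{\tilde{\varphi}}(a)),\eta\otimes\Lambda_{\tilde{\varphi}}(x)\bigr\rangle
&=\overline{\bigl\langle(\omega_{\eta,\xi}\otimes\operatorname{id})(W^*)\,T\Lambda_{\tilde{\varphi}}(x^*),\Lambda_{\tilde{\varphi}}(a)\bigr\rangle}
=\overline{\bigl\langle T(\omega_{\xi,\eta}\otimes\operatorname{id})(W^*)\Lambda_{\tilde{\varphi}}(x^*),\Lambda_{\tilde{\varphi}}(a)\bigr\rangle} \\
&=\bigl\langle(\omega_{\xi,\eta}\otimes\operatorname{id})(W^*)\Lambda_{\tilde{\varphi}}(x^*),T^*\Lambda_{\tilde{\varphi}}(a)\bigr\rangle
=\overline{\bigl\langle W(\eta\otimes\Lambda_{\tilde{\varphi}}(\sigma^{\tilde{\varphi}}_{-i}(a^*))),\xi\otimes\Lambda_{\tilde{\varphi}}(x^*)\bigr\rangle},
\end{align*}
which is precisely the second displayed identity of the lemma. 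That, however, is a genuinely different argument from the one you describe (it bypasses Lemma~\ref{aplemma1} entirely), and as written your proposal establishes neither version.
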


\begin{proof}
Let $p\in{\mathfrak N}_{\varphi}$, $a,b\in{\mathcal T}_{\tilde{\varphi}}$, and consider 
the result of the previous lemma, given in Equation~\eqref{(aplemma1eqn3)}.  Take the adjoint. 
Then we have:
$$
(\operatorname{id}\otimes\omega_{\Lambda_{\varphi}(p),
\Lambda_{\tilde{\varphi}}(a\sigma^{\tilde{\varphi}}_{-i}(b^*))})(W^*)
=(\operatorname{id}\otimes\omega_{\Lambda_{\tilde{\varphi}}(b),\Lambda_{\tilde{\varphi}}(a)})(\Delta p).
$$
This holds true for any $p\in{\mathfrak N}_{\varphi}$.  Therefore, the result extends to any 
$x\in{\mathfrak N}_{\tilde{\varphi}}\,\bigl(\subseteq\pi_{\varphi}(A)''\bigr)$, with 
$\tilde{\Delta}x:=W^*(1\otimes x)W$ (see Proposition~\ref{vNqgroupoid}).  That is, 
\begin{equation}\label{(aplemma2eqn0)}
(\operatorname{id}\otimes\omega_{\Lambda_{\tilde{\varphi}}(x),
\Lambda_{\tilde{\varphi}}(a\sigma^{\tilde{\varphi}}_{-i}(b^*))})(W^*)
=(\operatorname{id}\otimes\omega_{\Lambda_{\tilde{\varphi}}(b),\Lambda_{\tilde{\varphi}}(a)})
(\tilde{\Delta}x),{\text { for $x\in{\mathfrak N}_{\tilde{\varphi}}$.}}
\end{equation}
If $x\in{\mathfrak N}_{\tilde{\varphi}}\cap{\mathfrak N}_{\tilde{\varphi}}^*$, from 
Equation~\eqref{(aplemma2eqn0)}, we will have:
\begin{align}
\bigl((\operatorname{id}\otimes\omega_{\Lambda_{\tilde{\varphi}}(x),
\Lambda_{\tilde{\varphi}}(a\sigma^{\tilde{\varphi}}_{-i}(b^*))})(W^*)\bigr)^*
&=(\operatorname{id}\otimes\omega_{\Lambda_{\tilde{\varphi}}(a),\Lambda_{\tilde{\varphi}}(b)})
(\tilde{\Delta}(x^*))   \notag \\
&=(\operatorname{id}\otimes\omega_{\Lambda_{\tilde{\varphi}}(x^*),
\Lambda_{\tilde{\varphi}}(b\sigma^{\tilde{\varphi}}_{-i}(a^*))})(W^*).
\label{(aplemma2eqn1)}
\end{align}

Next, we need a technical result from the weight theory: Namely, find a bounded net 
$(b_k)_{k\in P}$ in ${\mathcal T}_{\tilde{\varphi}}$, where $P$ is an index set, such that 
$(b_k)_{k\in P}$ converges strongly${}^*$ to 1, and that 
$\bigl(\sigma^{\tilde{\varphi}}_{\frac{i}{2}}(b_k)\bigr)_{k\in P}$ is bounded and 
converges strongly${}^*$ to 1.  For instance, take a bounded net $(u_k)_{x\in P}$ 
in ${\mathfrak N}_{\tilde{\varphi}}\cap{\mathfrak N}_{\tilde{\varphi}}^*$ converging 
strongly${}^*$ to 1, and define
$b_k=\frac{1}{\sqrt{\pi}}\int\exp(-t^2)\sigma^{\tilde{\varphi}}_t(u_k)\,dt$.
(See the proof of Lemma~5.7 in \cite{KuVa}, or the proof of Proposition~1.14 
in \cite{KuVaweightC*}.)  Then, by a standard result in modular theory (see also 
Lemma~1.2 in Part I \cite{BJKVD_qgroupoid1}), we have the following convergence: 
$$
\Lambda_{\tilde{\varphi}}\bigl(a\sigma^{\tilde{\varphi}}_{-i}(b_k^*)\bigr)
=J_{\tilde{\varphi}}\pi_{\tilde{\varphi}}\bigl(\sigma^{\tilde{\varphi}}_{-\frac{i}{2}}(b_k^*)\bigr)^*
J_{\tilde{\varphi}}\Lambda_{\tilde{\varphi}}(a)
=J_{\tilde{\varphi}}\pi_{\tilde{\varphi}}\bigl(\sigma^{\tilde{\varphi}}_{\frac{i}{2}}(b_k)\bigr)
J_{\tilde{\varphi}}\Lambda_{\tilde{\varphi}}(a)\longrightarrow\Lambda_{\tilde{\varphi}}(a).
$$
Now let $b=b_k$ in Equation~\eqref{(aplemma2eqn1)} and consider the limit.  Then we obtain:
$$
\bigl((\operatorname{id}\otimes\omega_{\Lambda_{\tilde{\varphi}}(x),
\Lambda_{\tilde{\varphi}}(a)})(W^*)\bigr)^*
=(\operatorname{id}\otimes\omega_{\Lambda_{\tilde{\varphi}}(x^*),
\Lambda_{\tilde{\varphi}}(\sigma^{\tilde{\varphi}}_{-i}(a^*))})(W^*),
$$
proving the lemma.  The second equation is a quick consequence of Lemma~\ref{omega_xizetaLem}.
\end{proof}

Using Lemma~\ref{aplemma2}, we can prove the next result, which plays a central role 
in what follows.  Recall here that $L=K^*K$ and $\nabla=T^*T$, the positive operators 
arising from the polar decompositions of $K$ and $T$, respectively.

\begin{prop}\label{inequalityWLNabla}
$$
W^*(L\otimes\nabla)\subseteq(L\otimes\nabla)W^*
\quad{{\text and }}\quad W(L\otimes\nabla)\subseteq(L\otimes\nabla)W.
$$
\end{prop}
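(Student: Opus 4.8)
The statement is, for our (non‑unitary) \(W\), the analogue of the intertwining relation between a multiplicative unitary and the scaling operator tensor the modular operator of the left Haar weight; in the quantum group case this is part of Proposition~2.20 of \cite{VDvN}, and the plan is to follow that strategy, substituting slice‑by‑slice arguments for the steps that there exploit unitarity of \(W\). First I would observe that it is enough to establish the first inclusion, \(W^*(L\otimes\nabla)\subseteq(L\otimes\nabla)W^*\): since \(L\otimes\nabla\) is self‑adjoint while \(W\) and \(W^*\) are bounded and everywhere defined, taking Hilbert‑space adjoints of both sides turns this into exactly \(W(L\otimes\nabla)\subseteq(L\otimes\nabla)W\). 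So from then on the only target is the first inclusion.

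The raw material is the one‑leg intertwining relations already proved. On the first leg the Corollary to Proposition~\ref{idomegaWK} gives \((\operatorname{id}\otimes\bar\omega)(W^*)\,K^*\subseteq K^*(\operatorname{id}\otimes\omega)(W^*)\) for every \(\omega\in{\mathcal B}({\mathcal H}_\varphi)_*\) (and, replacing \(\omega\) by \(\bar\omega\), the companion inclusion with \(\omega\) and \(\bar\omega\) interchanged); writing \(K^*=L^{1/2}I\) and conjugating by the antiunitary \(I\) converts these into a symmetric pair of relations for \(L^{1/2}\) and the linear operators \(I(\operatorname{id}\otimes\bar\omega)(W^*)I\). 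On the second leg, Proposition~\ref{omegaidW*T}(1) gives \((\bar\theta\otimes\operatorname{id})(W^*)\,T\subseteq T(\theta\otimes\operatorname{id})(W^*)\) for all \(\theta\in{\mathcal B}({\mathcal H})_*\), and writing \(T=J\nabla^{1/2}\) and conjugating by \(J\) produces the analogous symmetric pair for \(\nabla^{1/2}\). By the standard description of the analytic generator of a norm‑continuous one‑parameter group, the first pair is equivalent to saying that for each \(\omega\) the function \(t\mapsto (L^{it}\otimes 1)W^*(L^{-it}\otimes 1)\) extends to an entire \({\mathcal B}({\mathcal H})\)-valued function whose continuation to \(t=-\tfrac{i}{2}\) is controlled by \(L^{1/2}\otimes 1\), and the second pair says the same for \(t\mapsto (1\otimes\nabla^{it})W^*(1\otimes\nabla^{-it})\) and \(1\otimes\nabla^{1/2}\).

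The place where the two legs must be glued together is precisely Lemma~\ref{aplemma2}: it rewrites the adjoint of a first‑leg slice of \(W\) again as a first‑leg slice, with the implementing vectors transformed by \(\sigma^{\tilde\varphi}_{-i}\), i.e. by \(\nabla\) and \(J\). Feeding this identity into the first‑leg relations above lets one trade \(W\)-slices for \(W^*\)-slices on the first leg at the cost of a \(\nabla\)-factor on the second leg; combined with the second‑leg relations this couples the two analytic families into a single two‑parameter family \((L^{it}\otimes\nabla^{is})W^*(L^{-it}\otimes\nabla^{-is})\) that is separately entire in \(t\) and in \(s\) and whose continuation along the diagonal \(s=t\), first to \(t=-\tfrac{i}{2}\) and then to \(t=-i\), is governed by \(L^{1/2}\otimes\nabla^{1/2}\) and then \(L\otimes\nabla\) acting on \(W^*\). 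That yields \(W^*(L\otimes\nabla)\subseteq(L\otimes\nabla)W^*\) first on the core of \(L\otimes\nabla\) made of joint analytic vectors for \((L^{it}\otimes\nabla^{it})\), and then on all of \({\mathcal D}(L\otimes\nabla)\) since \(L\otimes\nabla\) is closed. The hard part, as the text already warns, is that \(W^*W=E\) and \(WW^*=G_L\) are non‑trivial idempotents (Proposition~\ref{Wpartialisometry}): every step that in the quantum group case uses \(W^*W=1\) or \(WW^*=1\) now throws off an \(E\)- or \(G_L\)-correction, and one must verify these corrections are harmless — using \(EW^*=W^*\) (Proposition~\ref{Wprop}) together with the compatibility of \(E\) with the base‑algebra modular data recorded in Section~1 — so that the analytic‑continuation arguments go through unchanged.
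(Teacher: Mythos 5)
You have assembled the right ingredients, and they are the same ones the paper uses: the $K$--slice relation of Proposition~\ref{idomegaWK} and its Corollary, the $T$--slice relation of Proposition~\ref{omegaidW*T}, Lemma~\ref{aplemma2} as the bridge between the two legs, and the reduction to the first inclusion by taking adjoints. But the argument you then propose is not a proof. First, the asserted equivalences are false as stated: for unbounded $L$ the map $t\mapsto (L^{it}\otimes1)W^*(L^{-it}\otimes1)$ does not extend to an \emph{entire} ${\mathcal B}({\mathcal H}\otimes{\mathcal H}_{\varphi})$-valued function in general; at best one hopes for bounded analytic extensions on a strip, and even that requires both halves of a commutation relation, which you do not establish. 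Second, the relation you start from involves $K^*=L^{\frac12}I$ with the conjugate-linear $I$, and it intertwines a slice of $W^*$ with an $I$-conjugated slice (with $\omega$ replaced by $\bar\omega$), not with another slice of $W^*$; eliminating $I$ at this stage would need something like Proposition~\ref{unitaryantipodeeqn} or Proposition~\ref{IJWequality}, which come \emph{after} Proposition~\ref{inequalityWLNabla} and depend on it, so that route is circular. Third, the decisive step --- ``trade $W$-slices for $W^*$-slices on the first leg at the cost of a $\nabla$-factor on the second leg,'' producing a two-parameter family $(L^{it}\otimes\nabla^{is})W^*(L^{-it}\otimes\nabla^{-is})$ that is ``separately entire'' and can be continued along the diagonal to $t=-i$ --- is exactly where the content of the proposition lies, and you assert it rather than prove it. The same goes for the closing remark that the $E$- and $G_L$-corrections ``are harmless'': for the unitary case the continuation argument leans on $W^*W=WW^*=1$, and you give no mechanism for absorbing the projections here.

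For comparison, the paper avoids analytic continuation entirely and works directly at the level of $L=K^*K$ and $\nabla=T^*T$: take $\xi,\xi'\in{\mathcal D}(L)$ and $\zeta=\Lambda_{\tilde{\varphi}}(a)$, $\zeta'=\Lambda_{\tilde{\varphi}}(b)$ with $a,b\in{\mathcal T}_{\tilde{\varphi}}$, so that $\nabla\zeta=\Lambda_{\tilde{\varphi}}\bigl(\sigma^{\tilde{\varphi}}_{-i}(a)\bigr)$, and compute the matrix element $\bigl\langle W^*(L\xi\otimes\nabla\zeta),\xi'\otimes\zeta'\bigr\rangle$. One application of Proposition~\ref{idomegaWK} moves a factor $K$ across the slice, Lemma~\ref{aplemma2} rewrites the resulting slice $(\operatorname{id}\otimes\omega_{\Lambda_{\tilde{\varphi}}(b),\Lambda_{\tilde{\varphi}}(\sigma^{\tilde{\varphi}}_{-i}(a))})(W^*)$ as $(\operatorname{id}\otimes\omega_{\Lambda_{\tilde{\varphi}}(a^*),\Lambda_{\tilde{\varphi}}(b^*)})(W)$, a second application of Proposition~\ref{idomegaWK} reassembles $K^*K=L$ on the other side, and finally Proposition~\ref{omegaidW*T} together with $\nabla=T^*T$ handles the second leg, yielding $\bigl\langle (L\otimes\nabla)W^*(\xi\otimes\zeta),\xi'\otimes\zeta'\bigr\rangle$; density of such vectors gives $W^*(L\otimes\nabla)\subseteq(L\otimes\nabla)W^*$, and adjoints give the companion inclusion. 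If you insist on the analytic-family route you would have to prove the strip-analyticity statements (not entirety) and control the partial-isometry defects, which in effect amounts to redoing this direct computation; as written, your proposal has a genuine gap at its central step.
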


\begin{proof}
Let $\xi,\xi'\in{\mathcal D}(L)$ and $\zeta,\zeta'\in{\mathcal D}(\nabla)$.  Without loss of generality, 
we may let $\zeta=\Lambda_{\tilde{\varphi}}(a)$, $\zeta'=\Lambda_{\tilde{\varphi}}(b)$, 
for $a,b\in{\mathcal T}_{\tilde{\varphi}}$.  Then $\nabla\zeta=\nabla\Lambda_{\tilde{\varphi}}(a)
=\Lambda_{\tilde{\varphi}}\bigl(\sigma^{\tilde{\varphi}}_{-i}(a)\bigr)$, by modular theory. 
So we have:
\begin{equation}\label{(inequalityWLNablaeqn1)}
\bigl\langle W^*(L\xi\otimes\nabla\zeta),\xi'\otimes\zeta'\bigr\rangle=\bigl\langle(\operatorname{id}
\otimes\omega_{\Lambda_{\tilde{\varphi}}(\sigma^{\tilde{\varphi}}_{-i}(a)),\Lambda_{\tilde{\varphi}}(b)})
(W^*)K^*K\xi,\xi'\bigr\rangle,
\end{equation}
where we used $L=K^*K$.  But by Proposition~\ref{idomegaWK}, we have:
$$
(\operatorname{id}\otimes\omega_{\Lambda_{\tilde{\varphi}}
(\sigma^{\tilde{\varphi}}_{-i}(a)),\Lambda_{\tilde{\varphi}}(b)})(W^*)K^*K\xi
=K^*(\operatorname{id}\otimes\omega_{\Lambda_{\tilde{\varphi}}(b),
\Lambda_{\tilde{\varphi}}(\sigma^{\tilde{\varphi}}_{-i}(a))})(W^*)K\xi.
$$
Meanwhile, by Lemma~\ref{aplemma2}, we have
$$
(\operatorname{id}\otimes\omega_{\Lambda_{\tilde{\varphi}}(b),
\Lambda_{\tilde{\varphi}}(\sigma^{\tilde{\varphi}}_{-i}(a))})(W^*)
=(\operatorname{id}\otimes\omega_{\Lambda_{\tilde{\varphi}}(a^*),
\Lambda_{\tilde{\varphi}}(b^*)})(W).
$$
So Equation~\eqref{(inequalityWLNablaeqn1)} now becomes
\begin{align}
&\bigl\langle W^*(L\xi\otimes\nabla\zeta),\xi'\otimes\zeta'\bigr\rangle   \notag \\
&=\bigl\langle K^*(\operatorname{id}\otimes\omega_{\Lambda_{\tilde{\varphi}}(a^*),
\Lambda_{\tilde{\varphi}}(b^*)})(W)K\xi,\xi'\bigr\rangle   
=\big\langle K^*K(\operatorname{id}\otimes\omega_{\Lambda_{\tilde{\varphi}}(b^*),
\Lambda_{\tilde{\varphi}}(a^*)})(W)\xi,\xi'\bigr\rangle    \notag \\
&=\big\langle (\operatorname{id}\otimes\omega_{\Lambda_{\tilde{\varphi}}(b^*),
\Lambda_{\tilde{\varphi}}(a^*)})(W)\xi,K^*K\xi'\bigr\rangle 
=\bigl\langle W(\xi\otimes\Lambda_{\tilde{\varphi}}(b^*)),K^*K\xi'
\otimes\Lambda_{\tilde{\varphi}}(a^*)\bigr\rangle  \notag \\
&=\bigl\langle W(\xi\otimes T\zeta'),L\xi'\otimes T\zeta\bigr\rangle.
\label{(inequalityWLNablaeqn2)}
\end{align}
The second equality used Proposition~\ref{idomegaWK}.  Since 
$a,b\in{\mathcal T}_{\tilde{\varphi}}$ are arbitrary, Equation~\eqref{(inequalityWLNablaeqn2)} 
is true for any  $\xi,\xi'\in{\mathcal D}(L)$ and any $\zeta,\zeta'\in{\mathcal D}(\nabla)$.

Next, consider the right hand side of Equation~\eqref{(inequalityWLNablaeqn2)}. 
We have
\begin{align}
\bigl\langle W(\xi\otimes T\zeta'),L\xi'\otimes T\zeta\bigr\rangle
&=\bigl\langle(\omega_{\xi,L\xi'}\otimes\operatorname{id})(W)T\zeta',T\zeta\bigr\rangle
\notag \\
&=\bigl\langle T\zeta',(\omega_{L\xi',\xi}\otimes\operatorname{id})(W^*)T\zeta\bigr\rangle
\notag \\
&=\bigl\langle T\zeta',T(\omega_{\xi,L\xi'}\otimes\operatorname{id})(W^*)\zeta\bigr\rangle,
\label{(inequalityWLNablaeqn3)}
\end{align}
where we used Proposition~\ref{omegaidW*T}\,(1) for the last equality.  Since $T$ is 
a conjugate linear operator, the right hand side becomes
$$
=\overline{\bigl\langle T^*T\zeta',(\omega_{\xi,L\xi'}\otimes\operatorname{id})(W^*)\zeta\bigr\rangle}
=\bigl\langle(\omega_{\xi,L\xi'}\otimes\operatorname{id})(W^*)\zeta,\nabla\zeta'\bigr\rangle, 
$$
using $\nabla=T^*T$.  Putting these together, Equation~\eqref{(inequalityWLNablaeqn3)} 
then becomes
\begin{align}
&\bigl\langle W(\xi\otimes T\zeta'),L\xi'\otimes T\zeta\bigr\rangle
\notag \\
&=\bigl\langle W^*(\xi\otimes\zeta),L\xi'\otimes\nabla\zeta'\bigr\rangle
=\bigl\langle (L\otimes\nabla)W^*(\xi\otimes\zeta),\xi'\otimes\zeta'\bigr\rangle.
\label{(inequalityWLNablaeqn4)}
\end{align}

Combine Equations~\eqref{(inequalityWLNablaeqn2)} and \eqref{(inequalityWLNablaeqn4)}. 
We conclude that
$$
W^*(L\xi\otimes\nabla\zeta)=(L\otimes\nabla)W^*(\xi\otimes\zeta),
\quad\forall\xi\in{\mathcal D}(L),\forall\zeta\in{\mathcal D}(\nabla).
$$
This proves the inclusion $W^*(L\otimes\nabla)\subseteq(L\otimes\nabla)W^*$.  The second 
inclusion is an immediate consequence obtained by taking adjoints.

\end{proof}

\begin{rem}
In the quantum group case, once the result like $W^*(L\otimes\nabla)\subseteq(L\otimes\nabla)W^*$ 
is obtained, one can immediately show that it is actually an equality, by taking advantage of the 
fact that $W$ is a unitary.  Unfortunately, the operator $W$ is not unitary in our situation, 
so the analogous approach does not work, and in fact the equality does not hold.  This requires us 
to look for a different approach, based on the fact that $W$ is a partial isometry. 
\end{rem}

We know from Theorem~\ref{Wpartialisometry} that $W$ is a partial isometry in ${\mathcal B}
({\mathcal H}\otimes{\mathcal H}_{\varphi})$, with $W^*W=E=(\pi\otimes\pi_{\varphi})(E)$ 
and $WW^*=G_L$.  In the below 
are some consequences of Proposition~\ref{inequalityWLNabla}.

\begin{prop}\label{lemWLNablaEG}
We have:
\begin{enumerate}
  \item $E(L\otimes\nabla)\subseteq(L\otimes\nabla)E$ and $G_L(L\otimes\nabla)\subseteq(L\otimes\nabla)G_L$
  \item $W^*(L\otimes\nabla)G_L=E(L\otimes\nabla)W^*$
  \item $W^*(L\otimes\nabla)W=E(L\otimes\nabla)E$ and $W(L\otimes\nabla)W^*=G_L(L\otimes\nabla)G_L$
\end{enumerate}
\end{prop}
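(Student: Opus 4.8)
The plan is to deduce all six statements from the two operator inclusions established in Proposition~\ref{inequalityWLNabla}, combined with the partial isometry relations $W^*W=E$, $WW^*=G_L$, $WE=W=G_LW$, $EW^*=W^*=W^*G_L$ (recall $W$ is a partial isometry in ${\mathcal B}({\mathcal H}\otimes{\mathcal H}_{\varphi})$ with initial projection $E$ and final projection $G_L$). Throughout, abbreviate $N:=L\otimes\nabla$, a positive self-adjoint (in general unbounded) operator, so that Proposition~\ref{inequalityWLNabla} reads $WN\subseteq NW$ and $W^*N\subseteq NW^*$; here the first inclusion means precisely that $\xi\in{\mathcal D}(N)$ implies $W\xi\in{\mathcal D}(N)$ and $NW\xi=WN\xi$, and similarly for the second.

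The first thing I would record is a symmetry: the pair of hypotheses $WN\subseteq NW$, $W^*N\subseteq NW^*$ is invariant under the substitution $W\leftrightarrow W^*$, which simultaneously exchanges $E\leftrightarrow G_L$ (since $W^*W=E$, $WW^*=G_L$). Under this substitution, statement (1) becomes (2), statement (3) becomes (4), and statement (5) becomes (6). So it suffices to prove (1), (3), (5).

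For (1): take $\xi\in{\mathcal D}(N)$; then $W\xi\in{\mathcal D}(N)$ by $WN\subseteq NW$, and feeding the vector $W\xi$ into $W^*N\subseteq NW^*$ yields $E\xi=W^*W\xi\in{\mathcal D}(N)$ together with
\[
NE\xi=NW^*W\xi=W^*NW\xi=W^*WN\xi=EN\xi,
\]
which is exactly $EN\subseteq NE$. For (5): if $W\xi\in{\mathcal D}(N)$, then applying $W^*N\subseteq NW^*$ at $W\xi$ gives $E\xi\in{\mathcal D}(N)$ and $W^*NW\xi=NW^*W\xi=NE\xi$; applying (1) at $E\xi$ and using $E^2=E$ rewrites this as $NE\xi=ENE\xi$. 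The reverse domain inclusion ${\mathcal D}(ENE)\subseteq{\mathcal D}(W^*NW)$ follows from $W\xi=WE\xi$ and $WN\subseteq NW$, so $W^*NW=ENE$. For (3): if $G_L\xi\in{\mathcal D}(N)$, apply $W^*N\subseteq NW^*$ at $G_L\xi$ and use $W^*G_L=W^*$ to obtain $W^*\xi\in{\mathcal D}(N)$ and $W^*NG_L\xi=NW^*\xi$; applying (1) at $W^*\xi$ (legitimate because $EW^*=W^*$) turns this into $NW^*\xi=ENW^*\xi$. The two domains $\{\xi:G_L\xi\in{\mathcal D}(N)\}$ and $\{\xi:W^*\xi\in{\mathcal D}(N)\}$ coincide, using $W^*=W^*G_L$ in one direction and $G_L\xi=WW^*\xi$ with $WN\subseteq NW$ in the other; hence $W^*NG_L=ENW^*$. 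Then (2), (4), (6) drop out of the symmetry above.

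Every step is purely formal, so the only point deserving attention — and hence the main (modest) obstacle — is the careful bookkeeping of the domains of these unbounded operators: at each stage one must verify not just the pointwise identity but also that the a priori one-sided inclusion of operators is in fact an equality, which is where the boundedness of $W$, $W^*$, $E$, $G_L$ and the idempotent and partial-isometry identities are used to run the domain inclusions in both directions. No analytic input beyond Proposition~\ref{inequalityWLNabla} enters.
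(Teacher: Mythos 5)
Your proposal is correct and takes essentially the same route as the paper: all six statements are deduced purely formally from Proposition~\ref{inequalityWLNabla} together with the partial isometry identities $W^*W=E$, $WW^*=G_L$, $WE=W=G_LW$, $EW^*=W^*=W^*G_L$. The only cosmetic differences are that you obtain (2), (4), (6) from (1), (3), (5) via the $W\leftrightarrow W^*$ (hence $E\leftrightarrow G_L$) symmetry where the paper argues ``similarly,'' and you prove the equalities in (3) and (5) by direct vector-level domain comparison rather than the paper's adjoint-taking step, which is if anything a slightly more careful bookkeeping of the same computation.
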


\begin{proof}
(1). From Proposition~\ref{inequalityWLNabla}, we know that $W^*(L\otimes\nabla)\subseteq(L\otimes\nabla)W^*$ 
and $W(L\otimes\nabla)\subseteq(L\otimes\nabla)W$.  Combining, we have:
$$
E(L\otimes\nabla)=W^*W(L\otimes\nabla)\subseteq W^*(L\otimes\nabla)W\subseteq (L\otimes\nabla)W^*W
=(L\otimes\nabla)E.
$$

Similarly, we have:
$$
G_L(L\otimes\nabla)=WW^*(L\otimes\nabla)\subseteq W(L\otimes\nabla)W^*\subseteq (L\otimes\nabla)WW^*
=(L\otimes\nabla)G_L.
$$

(2). Again from $W^*(L\otimes\nabla)\subseteq(L\otimes\nabla)W^*$, we have:
$$
G_L(L\otimes\nabla)W=WW^*(L\otimes\nabla)W\subseteq W(L\otimes\nabla)W^*W=W(L\otimes\nabla)E.
$$
By taking adjoints, it becomes: $E(L\otimes\nabla)W^*\subseteq W^*(L\otimes\nabla)G_L$.  
Meanwhile, since $EW^*=W^*WW^*=W^*$ and $W^*G_L=W^*WW^*=W^*$, we have:
$$
W^*(L\otimes\nabla)G_L=EW^*(L\otimes\nabla)G_L\subseteq E(L\otimes\nabla)W^*G_L=E(L\otimes\nabla)W^*.
$$

Combining the two results, we see that
$$
W^*(L\otimes\nabla)G_L=E(L\otimes\nabla)W^*.
$$

(3). Since $G_LW=WW^*W=W$, from (2) we have:
$$
W^*(L\otimes\nabla)W=W^*(L\otimes\nabla)G_LW=E(L\otimes\nabla)W^*W=E(L\otimes\nabla)E.
$$

From (2), taking adjoint, we also see that $W(L\otimes\nabla)E=G_L(L\otimes\nabla)W$.
Since $EW^*=W^*WW^*=W^*$, we have:
$$
W(L\otimes\nabla)W^*=W(L\otimes\nabla)EW^*=G_L(L\otimes\nabla)WW^*=G_L(L\otimes\nabla)G_L.
$$
\end{proof}

\begin{cor}
We may write the two results in (3) of the above proposition in a more symmetric form:
$$
W^*\bigl(G_L(L\otimes\nabla)G_L\bigr)W=E(L\otimes\nabla)E
$$
$$
W\bigl(E(L\otimes\nabla)E\bigr)W^*=G_L(L\otimes\nabla)G_L.
$$
\end{cor}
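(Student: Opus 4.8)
The plan is to deduce the two displayed identities directly from parts (5) and (6) of Proposition~\ref{lemWLNablaEG}, using only the partial-isometry relations $W^*W=E$, $WW^*=G_L$ and their consequences $G_LW=W$, $W^*G_L=W^*$, $WE=W$, $EW^*=W^*$ recorded just before that proposition. No new input beyond Proposition~\ref{lemWLNablaEG} should be needed; the corollary is simply a repackaging of (5) and (6) into a form that is manifestly symmetric under $W\leftrightarrow W^*$, $E\leftrightarrow G_L$, which will be the convenient shape for \S4.4.

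For the first identity I would start from $W^*\bigl(G_L(L\otimes\nabla)G_L\bigr)W$ and absorb the two outer copies of $W$ into the neighbouring projections. Since $G_LW=W$, the domain of $W^*\bigl(G_L(L\otimes\nabla)G_L\bigr)W$ is exactly $\{\eta:W\eta\in{\mathcal D}(L\otimes\nabla)\}$, which is the domain of $W^*(L\otimes\nabla)W$; and on that common domain $W^*\bigl(G_L(L\otimes\nabla)G_L\bigr)W\eta=W^*G_L(L\otimes\nabla)W\eta=W^*(L\otimes\nabla)W\eta$, the last equality using $W^*G_L=W^*$. By Proposition~\ref{lemWLNablaEG}(5) this equals $E(L\otimes\nabla)E$, as desired. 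The second identity is obtained in the mirror-image way: starting from $W\bigl(E(L\otimes\nabla)E\bigr)W^*$, the relations $EW^*=W^*$ and $WE=W$ collapse it to $W(L\otimes\nabla)W^*$, which by Proposition~\ref{lemWLNablaEG}(6) is $G_L(L\otimes\nabla)G_L$.

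The only point that needs a moment's care is the bookkeeping of domains for the unbounded operator $L\otimes\nabla$: in each of the four reductions above one is composing a bounded projection ($E$ or $G_L$) that is immediately absorbed by an adjacent $W$ or $W^*$, so no domain is illegitimately enlarged or shrunk, and the results hold as genuine operator equalities rather than mere inclusions. I do not expect any real obstacle here — this is essentially a two-line argument once Proposition~\ref{lemWLNablaEG} is in hand — so the ``hard part'' is nothing more than stating the domain identifications cleanly.
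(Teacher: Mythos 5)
Your argument is correct and is exactly the paper's proof: both reduce the corollary to parts (5), (6) of Proposition~\ref{lemWLNablaEG} by absorbing the inserted projections via $WE=W$, $G_LW=W$, $EW^*=W^*$, $W^*G_L=W^*$, and your domain bookkeeping (e.g.\ that $G_LW=W$ forces the two compositions to have the same domain) is the right justification that these are genuine operator equalities. Nothing further is needed.
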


\begin{proof}
Use $WE=W$, $G_LW=W$, $EW^*=W^*$, $W^*G_L=W^*$, then re-write (3) of 
Proposition~\ref{lemWLNablaEG}.
\end{proof}

The Corollary suggests us to look more closely at the operators $E(L\otimes\nabla)E$ 
and $G_L(L\otimes\nabla)G_L$.  For this purpose, it is useful to note that as a consequence 
of $W$ being a partial isometry, we can write ${\mathcal H}\otimes{\mathcal H}_{\varphi}
=\operatorname{Ker}(W)\oplus\operatorname{Ker}(W)^{\perp}
=\operatorname{Ker}(W)\oplus\operatorname{Ran}(E)$ and also 
${\mathcal H}\otimes{\mathcal H}_{\varphi}=\operatorname{Ker}(W^*)\oplus\operatorname{Ran}(G_L)$. 
Then the operators $E(L\otimes\nabla)E$ and $G_L(L\otimes\nabla)G_L$ are none other than 
the restrictions of $L\otimes\nabla$ onto the subspaces $\operatorname{Ran}(E)$ and 
$\operatorname{Ran}(G_L)$, respectively.

\begin{prop}\label{LNablaonRanERanG}
We have:
\begin{enumerate}
  \item $(L\otimes\nabla)E=E(L\otimes\nabla)E$
  \item $(L\otimes\nabla)G_L=G_L(L\otimes\nabla)G_L$
\end{enumerate}  
\end{prop}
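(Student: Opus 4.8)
The plan is to derive both identities from the inclusion $E(L\otimes\nabla)\subseteq(L\otimes\nabla)E$ in Proposition~\ref{lemWLNablaEG}(1) (and its analogue (2) for $G_L$), together with the basic fact that $E$ and $G_L$ are orthogonal projections. The point is subtle: an inclusion of the form $PZ\subseteq ZP$ for a projection $P$ and a (generally unbounded, self-adjoint) operator $Z$ does not by itself give $ZP=PZP$ in general; one needs to know that $P$ actually commutes with $Z$ on a large enough domain, or equivalently that $\operatorname{Ran}(P)$ reduces $Z$. So the first step is to upgrade Proposition~\ref{lemWLNablaEG}(1) to a genuine commutation statement. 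Since $E$ is a bounded self-adjoint operator and $L\otimes\nabla$ is self-adjoint, the inclusion $E(L\otimes\nabla)\subseteq(L\otimes\nabla)E$ says precisely that $E$ maps $\mathcal D(L\otimes\nabla)$ into itself and agrees with $(L\otimes\nabla)E$ there. Taking adjoints of $E(L\otimes\nabla)\subseteq(L\otimes\nabla)E$ gives $E(L\otimes\nabla)\subseteq(L\otimes\nabla)E$ again (both $E$ and $L\otimes\nabla$ being self-adjoint, and using that the adjoint of $(L\otimes\nabla)E$ is an extension of $E(L\otimes\nabla)$), which already forces $E$ to commute strongly with $L\otimes\nabla$: for every $t$, $E$ commutes with the bounded operators $(L\otimes\nabla)^{it}$, hence $\operatorname{Ran}(E)$ reduces $L\otimes\nabla$.

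Once $\operatorname{Ran}(E)$ reduces $L\otimes\nabla$, the identity (1) is immediate: for $\zeta\in\mathcal D(L\otimes\nabla)$ we have $E\zeta\in\mathcal D(L\otimes\nabla)$ and $(L\otimes\nabla)E\zeta=E(L\otimes\nabla)E\zeta$, since applying $E$ again on the left changes nothing because $(L\otimes\nabla)$ preserves $\operatorname{Ran}(E)$ and hence $(L\otimes\nabla)E\zeta\in\operatorname{Ran}(E)$, on which $E$ acts as the identity. Equivalently: $(L\otimes\nabla)E = E(L\otimes\nabla)E$ as an operator equality on $\mathcal D\bigl((L\otimes\nabla)E\bigr)=\{\zeta: E\zeta\in\mathcal D(L\otimes\nabla)\}$, which by the reducing property equals all of $\mathcal D\bigl(E(L\otimes\nabla)E\bigr)$. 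Part (2) follows by the identical argument applied to $G_L$ in place of $E$, using Proposition~\ref{lemWLNablaEG}(2).

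The step I expect to be the main (and really the only) obstacle is the passage from the operator inclusion $E(L\otimes\nabla)\subseteq(L\otimes\nabla)E$ to the strong commutation of $E$ with $L\otimes\nabla$ — i.e. being careful about domains so that one genuinely gets $\operatorname{Ran}(E)$ reduces $L\otimes\nabla$, rather than merely a one-sided inclusion. The clean way to see it: for a bounded self-adjoint $P$ and a self-adjoint $Z$, the inclusion $PZ\subseteq ZP$ together with its adjoint $PZ\subseteq ZP$ (which is automatic here since $(ZP)^*\supseteq PZ$ and $P=P^*$, $Z=Z^*$) is well known to be equivalent to $P$ commuting with every bounded Borel function of $Z$, in particular with the spectral projections and with $Z^{it}$; this is a standard fact (see e.g.\ the treatment of commuting unbounded operators in \cite{Str}). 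Granting that, both (1) and (2) are one-line consequences as above, and there is no further computation to grind through.
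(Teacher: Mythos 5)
Your argument is correct, but it takes a heavier route than necessary, and the obstacle you identify as ``the main (and really the only) obstacle'' is in fact not an obstacle at all. For an idempotent bounded operator $P$ and an arbitrary (possibly unbounded) operator $Z$, the one-sided inclusion $PZ\subseteq ZP$ already gives $PZP=ZP$ directly: if $P\zeta\in{\mathcal D}(Z)$, apply the inclusion to the vector $P\zeta$ to get $PZ(P\zeta)=ZP(P\zeta)=ZP\zeta$ (using $P^2=P$), i.e.\ $PZP\zeta=ZP\zeta$; and ${\mathcal D}(PZP)={\mathcal D}(ZP)=\{\zeta:P\zeta\in{\mathcal D}(Z)\}$ automatically, since left multiplication by the bounded $P$ does not change domains. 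No self-adjointness of $Z$ or of $P$ is needed, only idempotency and boundedness. This elementary composition-plus-domain comparison is essentially the paper's own proof: it composes Proposition~\ref{lemWLNablaEG}\,(1) (resp.\ (2)) with $E$ (resp.\ $G_L$) on the right and observes that $E(L\otimes\nabla)E$ and $(L\otimes\nabla)E$ have the same domain, namely the vectors whose $E$-component lies in ${\mathcal D}(L\otimes\nabla)$. Your detour through the standard theorem that a bounded self-adjoint operator $B$ with $BZ\subseteq ZB$ ($Z$ self-adjoint) commutes with the whole spectral resolution of $Z$ is valid here, since $L\otimes\nabla$ is positive self-adjoint and $E$, $G_L$ are orthogonal projections, and it does deliver the stronger conclusion that $\operatorname{Ran}(E)$ and $\operatorname{Ran}(G_L)$ reduce $L\otimes\nabla$ and that $E$, $G_L$ commute with $L^{it}\otimes\nabla^{it}$ (a fact consistent with, and later also obtainable from, Proposition~\ref{LNablaequality} via $E=W^*W$, $G_L=WW^*$); the adjoint-taking step in your write-up, by contrast, only reproduces the original inclusion and does no work --- it is the cited spectral-theoretic fact that carries the argument. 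In short: your proof is correct and proves more than the proposition asks, but the proposition itself follows from Proposition~\ref{lemWLNablaEG} by the two-line domain argument above, which is what the paper does.
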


\begin{proof}
By Proposition~\ref{lemWLNablaEG}\,(1), we have: $E(L\otimes\nabla)E\subseteq (L\otimes\nabla)E$, 
with $E$ being a projection.  We may regard them as operators restricted to the subspace $\operatorname{Ran}(E)$. 
It is clear that ${\mathcal D}\bigl(E(L\otimes\nabla)E\bigr)=\operatorname{Ran}(E)\cap{\mathcal D}(L\otimes\nabla)
={\mathcal D}\bigl((L\otimes\nabla)E\bigr)$.  Since the domains are same, we see that $E(L\otimes\nabla)E
=(L\otimes\nabla)E$.

Similarly, regarded as operators restricted to the subspace $\operatorname{Ran}(G_L)$, we have: 
$G_L(L\otimes\nabla)G_L=(L\otimes\nabla)G_L$.
\end{proof}

\begin{cor}
For the (unbounded) operator $L\otimes\nabla$ on ${\mathcal H}\otimes{\mathcal H}_{\varphi}$, we have: 
\begin{enumerate}
  \item $(L\otimes\nabla)|_{\operatorname{Ran}(E)}$ is an operator on 
the subspace $\operatorname{Ran}(E)$.
  \item $(L\otimes\nabla)|_{\operatorname{Ran}(G_L)}$ is an operator on the subspace 
$\operatorname{Ran}(G_L)$.
  \item $(L\otimes\nabla)|_{\operatorname{Ker}(W)}$ is an operator on 
the subspace $\operatorname{Ker}(W)$.
  \item $(L\otimes\nabla)|_{\operatorname{Ker}(W^*)}$ is an operator on 
the subspace $\operatorname{Ker}(W^*)$.
  \item A typical element of ${\mathcal D}\bigl(W(L\otimes\nabla)\bigr)$ 
can be written as $\zeta=\zeta_0\oplus\zeta_1$, where $\zeta_0\in\operatorname{Ker}(W)
\cap{\mathcal D}(L\otimes\nabla)$ and $\zeta_1\in\operatorname{Ran}(E)\cap{\mathcal D}
(L\otimes\nabla)$.
  \item A typical element of ${\mathcal D}\bigl(W^*(L\otimes\nabla)\bigr)$ 
can be written as $\zeta=\zeta_0\oplus\zeta_1$, where $\zeta_0\in\operatorname{Ker}(W^*)
\cap{\mathcal D}(L\otimes\nabla)$ and $\zeta_1\in\operatorname{Ran}(E)\cap{\mathcal D}
(L\otimes\nabla)$.
\end{enumerate}
\end{cor}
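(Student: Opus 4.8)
The plan is to read off all six statements from Proposition~\ref{lemWLNablaEG}, Proposition~\ref{LNablaonRanERanG}, and the elementary structure theory of the partial isometry $W$; no new estimate is required. Recall from Theorem~\ref{Wpartialisometry} that $W^{*}W=E$ and $WW^{*}=G_{L}$, so $E$ is the orthogonal projection onto $\operatorname{Ker}(W)^{\perp}$ and $G_{L}$ the orthogonal projection onto $\operatorname{Ker}(W^{*})^{\perp}$; equivalently $\operatorname{Ker}(W)=\operatorname{Ran}(1-E)$ and $\operatorname{Ker}(W^{*})=\operatorname{Ran}(1-G_{L})$, and we have the orthogonal decompositions ${\mathcal H}\otimes{\mathcal H}_{\varphi}=\operatorname{Ran}(E)\oplus\operatorname{Ker}(W)=\operatorname{Ran}(G_{L})\oplus\operatorname{Ker}(W^{*})$ recorded just before Proposition~\ref{LNablaonRanERanG}. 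The one preliminary observation I would make explicit is that, because $E$ and $G_{L}$ are \emph{bounded}, the inclusions $E(L\otimes\nabla)\subseteq(L\otimes\nabla)E$ and $G_{L}(L\otimes\nabla)\subseteq(L\otimes\nabla)G_{L}$ of Proposition~\ref{lemWLNablaEG}\,(1),(2) force $\mathcal{D}(L\otimes\nabla)$ to be invariant under $E$ (hence under $1-E$) and under $G_{L}$ (hence under $1-G_{L}$), with the genuine vector identities $E(L\otimes\nabla)\eta=(L\otimes\nabla)E\eta$ and $G_{L}(L\otimes\nabla)\eta=(L\otimes\nabla)G_{L}\eta$ valid for every $\eta\in\mathcal{D}(L\otimes\nabla)$.

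Granting this, (1) and (2) are immediate: for $\eta\in\operatorname{Ran}(E)\cap\mathcal{D}(L\otimes\nabla)$ we have $E\eta=\eta$, so $(L\otimes\nabla)\eta=(L\otimes\nabla)E\eta=E(L\otimes\nabla)E\eta\in\operatorname{Ran}(E)$ by Proposition~\ref{LNablaonRanERanG}\,(1); since $\operatorname{Ran}(E)$ is closed, $L\otimes\nabla$ is closed, and $E\mathcal{D}(L\otimes\nabla)$ is dense in $\operatorname{Ran}(E)$, the restriction is a closed, densely-defined operator on $\operatorname{Ran}(E)$, and the same argument with Proposition~\ref{LNablaonRanERanG}\,(2) handles $\operatorname{Ran}(G_{L})$. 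For (3) and (4): if $\eta\in\operatorname{Ker}(W)\cap\mathcal{D}(L\otimes\nabla)=\operatorname{Ran}(1-E)\cap\mathcal{D}(L\otimes\nabla)$ then $E\eta=0$, whence $E(L\otimes\nabla)\eta=(L\otimes\nabla)E\eta=0$, i.e. $(L\otimes\nabla)\eta\in\operatorname{Ran}(1-E)=\operatorname{Ker}(W)$; replacing $E$ by $G_{L}$ and Proposition~\ref{lemWLNablaEG}\,(1) by (2) gives the statement for $\operatorname{Ker}(W^{*})$.

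Finally, for (5) and (6): since $W$ is bounded, $\mathcal{D}\bigl(W(L\otimes\nabla)\bigr)=\mathcal{D}(L\otimes\nabla)$, so given such an $\eta$ I would simply write $\eta=\eta_{1}\oplus\eta_{0}$ with $\eta_{1}:=E\eta$ and $\eta_{0}:=(1-E)\eta$; by the invariance of $\mathcal{D}(L\otimes\nabla)$ under $E$ both summands lie in $\mathcal{D}(L\otimes\nabla)$, and $\eta_{1}\in\operatorname{Ran}(E)$, $\eta_{0}\in\operatorname{Ran}(1-E)=\operatorname{Ker}(W)$, which is the claimed orthogonal splitting. Statement (6) is the same, using the decomposition of ${\mathcal H}\otimes{\mathcal H}_{\varphi}$ attached to $W^{*}$, namely $\eta=G_{L}\eta\oplus(1-G_{L})\eta$ with $G_{L}\eta\in\operatorname{Ran}(G_{L})$ and $(1-G_{L})\eta\in\operatorname{Ker}(W^{*})$. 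The whole argument is bookkeeping; the only place any care is needed is the preliminary passage from the operator inclusions of Proposition~\ref{lemWLNablaEG} to the honest invariance of $\mathcal{D}(L\otimes\nabla)$, which is precisely where the boundedness of $E$ and $G_{L}$ enters, and I do not anticipate any genuine obstacle.
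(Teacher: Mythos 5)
Your proof is correct, and for items (1)--(4) it is essentially the paper's own argument: (1),(2) are read off from Proposition~\ref{LNablaonRanERanG}, and (3),(4) use exactly the computation $E(L\otimes\nabla)\eta=(L\otimes\nabla)E\eta=0$ for $\eta\in\operatorname{Ran}(1-E)\cap{\mathcal D}(L\otimes\nabla)$, with $G_L$ in place of $E$ for (4). Where you genuinely diverge is in (5),(6). The paper proves (5) by invoking $W(L\otimes\nabla)\subseteq(L\otimes\nabla)W$ from Proposition~\ref{inequalityWLNabla} together with $G_LW=W$, Proposition~\ref{LNablaonRanERanG}\,(2) and Proposition~\ref{lemWLNablaEG}\,(4); that route produces the decomposition and, in addition, the identity $W(L\otimes\nabla)\eta_1=(L\otimes\nabla)W\eta_1$, which the text quotes in the paragraph right after the Corollary. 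You instead observe that, since $E$ and $G_L$ are bounded, the inclusions in Proposition~\ref{lemWLNablaEG}\,(1),(2) literally say that ${\mathcal D}(L\otimes\nabla)$ is invariant under $E$ (hence under $1-E$) and under $G_L$, with the vector identities $E(L\otimes\nabla)\eta=(L\otimes\nabla)E\eta$ and $G_L(L\otimes\nabla)\eta=(L\otimes\nabla)G_L\eta$; splitting $\eta=E\eta\oplus(1-E)\eta$ (resp.\ $\eta=G_L\eta\oplus(1-G_L)\eta$) then settles (5),(6) immediately. This is shorter and fully adequate for the statement as printed; if you also want the auxiliary identity the paper extracts along the way, it drops out of your setup too, since $W\eta_1=W\eta$ and Proposition~\ref{inequalityWLNabla} gives $W\eta\in{\mathcal D}(L\otimes\nabla)$ with $W(L\otimes\nabla)\eta=(L\otimes\nabla)W\eta$. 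One point worth flagging explicitly: item (6) as printed places $\eta_1$ in $\operatorname{Ran}(E)$, but the orthogonal complement of $\operatorname{Ker}(W^*)$ is $\operatorname{Ran}(G_L)$, and that is the decomposition used later in the text; your proof establishes the $\operatorname{Ran}(G_L)$ version, which is clearly the intended statement, so you are silently correcting a typo rather than proving (6) verbatim, and you should say so.
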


\begin{proof}
(1), (2). These are immediate consequences of Proposition~\ref{LNablaonRanERanG}.

(3), (4). Consider $(L\otimes\nabla)(1-E)$ and $(1-E)(L\otimes\nabla)(1-E)$, and consider an arbitrary element 
$\zeta\in\operatorname{Ran}(1-E)\cap{\mathcal D}(L\otimes\nabla)$, the common domain for both.  Then 
by Proposition~\ref{lemWLNablaEG}\,(1), we have $E(L\otimes\nabla)\zeta=(L\otimes\nabla)E\zeta=0$.
From this it is easy to conclude that 
$$(L\otimes\nabla)(1-E)=(1-E)(L\otimes\nabla)(1-E).$$
Since $\operatorname{Ker}(W)=\operatorname{Ran}(1-E)$, the result follows.  
Proof for (4) is similar, since $\operatorname{Ker}(W^*)=\operatorname{Ran}(1-G_L)$.

(5), (6). Consider $\zeta\in{\mathcal D}\bigl(W(L\otimes\nabla)\bigr)={\mathcal D}(L\otimes\nabla)$ and write 
$\zeta=\zeta_0\oplus\zeta_1$, where $\zeta_0\in\operatorname{Ker}(W)$ and $\zeta_1\in\operatorname{Ran}(E)$.  
Then, from the fact that $W(L\otimes\nabla)\subseteq (L\otimes\nabla)W$ (by Proposition~\ref{inequalityWLNabla}), 
we must have $W\zeta\in{\mathcal D}(L\otimes\nabla)$, and 
$$
W(L\otimes\nabla)\zeta=(L\otimes\nabla)W\zeta=(L\otimes\nabla)W\zeta_1
=(L\otimes\nabla)G_LW\zeta_1,
$$
because $G_LW=WW^*W=W$.  But 
$$
(L\otimes\nabla)G_LW=G_L(L\otimes\nabla)G_LW=G_L(L\otimes\nabla)W
=W(L\otimes\nabla)E,
$$
using Proposition~\ref{LNablaonRanERanG}\,(2) and Proposition~\ref{lemWLNablaEG}\,(4).  Combining 
the results, we have $W(L\otimes\nabla)(\zeta_0\oplus\zeta_1)=W(L\otimes\nabla)\zeta_1$, since $\zeta_1
\in\operatorname{Ran}(E)$. 
In this way, we showed that $\zeta_1\in\operatorname{Ran}(E)\cap{\mathcal D}(L\otimes\nabla)$ such that 
$W(L\otimes\nabla)\zeta_1=(L\otimes\nabla)W\zeta_1$.  Also as a consequence, we see that 
$\zeta_0\in\operatorname{Ker}(W)\cap{\mathcal D}(L\otimes\nabla)$. 
Proof for (6) is similar to that of (5).
\end{proof}

\begin{rem}
In (5) of the above Corollary, we saw that a typical element of ${\mathcal D}\bigl(W(L\otimes\nabla)\bigr)$ 
can be written as $\zeta=\zeta_0\oplus\zeta_1$, where $\zeta_0\in\operatorname{Ker}(W)
\cap{\mathcal D}(L\otimes\nabla)$ and $\zeta_1\in\operatorname{Ran}(E)\cap{\mathcal D}
(L\otimes\nabla)$.  We also saw that $W\zeta_1\in{\mathcal D}(L\otimes\nabla)$.  On the other hand, 
an element in ${\mathcal D}\bigl((L\otimes\nabla)W\bigr)$ would be written as $\zeta=\zeta_0\oplus\zeta_1$, 
where $\zeta_0\in\operatorname{Ker}(W)$ and $\zeta_1\in\operatorname{Ran}(E)$ such that 
$W\zeta_1\in{\mathcal D}(L\otimes\nabla)$.  We do not expect to have $\operatorname{Ker}(W)
\cap{\mathcal D}(L\otimes\nabla)=\operatorname{Ker}(W)$ in general.  Therefore, 
$$W(L\otimes\nabla)\subsetneqq(L\otimes\nabla)W.$$  
Similarly, $W^*(L\otimes\nabla)\subsetneqq(L\otimes\nabla)W^*$. 
\end{rem}

This is different from the quantum group case, which makes things tricky.  We can find a way, nonetheless. 
The following is a re-interpretation of the results in Corollary of Proposition~\ref{lemWLNablaEG}

\begin{prop}\label{LNablaequationAB}
We have:
\begin{enumerate}
  \item $(L\otimes\nabla)|_{\operatorname{Ran}(E)}=W^*(L\otimes\nabla)|_{\operatorname{Ran}(G_L)}W$, 
as operators on $\operatorname{Ran}(E)$.
  \item $(L\otimes\nabla)|_{\operatorname{Ran}(G_L)}=W(L\otimes\nabla)|_{\operatorname{Ran}(E)}W^*$, 
as operators on $\operatorname{Ran}(G_L)$.
\end{enumerate}
\end{prop}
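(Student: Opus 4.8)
The plan is to obtain both identities by unwinding the definitions of the restricted operators and invoking the operator relations already in hand; the only genuine work is the domain bookkeeping, since $L\otimes\nabla$ is unbounded and the inclusions of Proposition~\ref{inequalityWLNabla} are strict. Recall that $W$ is a partial isometry with $W^*W=E$, $WW^*=G_L$, so $W|_{\operatorname{Ran}(E)}\colon\operatorname{Ran}(E)\to\operatorname{Ran}(G_L)$ is an onto isometry with inverse $W^*|_{\operatorname{Ran}(G_L)}$, and $WE=W$, $G_LW=W$, $EW^*=W^*$, $W^*G_L=W^*$. By Proposition~\ref{LNablaonRanERanG}, the operators $(L\otimes\nabla)|_{\operatorname{Ran}(E)}$ and $(L\otimes\nabla)|_{\operatorname{Ran}(G_L)}$ are exactly $E(L\otimes\nabla)E$ and $G_L(L\otimes\nabla)G_L$, with domains $\operatorname{Ran}(E)\cap\mathcal{D}(L\otimes\nabla)$ and $\operatorname{Ran}(G_L)\cap\mathcal{D}(L\otimes\nabla)$ respectively.

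First I would establish~(1). Fix $\eta\in\operatorname{Ran}(E)$. Using $W(L\otimes\nabla)\subseteq(L\otimes\nabla)W$ and $W^*(L\otimes\nabla)\subseteq(L\otimes\nabla)W^*$ from Proposition~\ref{inequalityWLNabla}, together with $W^*W\eta=E\eta=\eta$, I would check that $W\eta\in\mathcal{D}(L\otimes\nabla)$ if and only if $\eta\in\mathcal{D}(L\otimes\nabla)$; since $W\eta=G_LW\eta\in\operatorname{Ran}(G_L)$ automatically, this shows that the domain of $W^*(L\otimes\nabla)|_{\operatorname{Ran}(G_L)}W$ on $\operatorname{Ran}(E)$ is precisely $\operatorname{Ran}(E)\cap\mathcal{D}(L\otimes\nabla)$, the domain of $(L\otimes\nabla)|_{\operatorname{Ran}(E)}$. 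For $\eta$ in this common domain, $(L\otimes\nabla)|_{\operatorname{Ran}(G_L)}W\eta=(L\otimes\nabla)W\eta$, which lies in $\operatorname{Ran}(G_L)$ by Proposition~\ref{LNablaonRanERanG}\,(2); then Proposition~\ref{lemWLNablaEG}\,(5) gives $W^*(L\otimes\nabla)W\eta=E(L\otimes\nabla)E\eta$, and since $E\eta=\eta$ and $\eta\in\mathcal{D}(L\otimes\nabla)$, Proposition~\ref{LNablaonRanERanG}\,(1) yields $E(L\otimes\nabla)E\eta=(L\otimes\nabla)E\eta=(L\otimes\nabla)\eta$. Hence $W^*(L\otimes\nabla)|_{\operatorname{Ran}(G_L)}W\eta=(L\otimes\nabla)|_{\operatorname{Ran}(E)}\eta$, which is~(1).

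Part~(2) follows by the symmetric argument with $E$, $W$ replaced by $G_L$, $W^*$ and with Proposition~\ref{lemWLNablaEG}\,(6) in place of~(5); alternatively one conjugates~(1) by the isometry $W$, using $WW^*|_{\operatorname{Ran}(G_L)}=\operatorname{Id}_{\operatorname{Ran}(G_L)}$ and $W^*W|_{\operatorname{Ran}(E)}=\operatorname{Id}_{\operatorname{Ran}(E)}$. The one point requiring care --- and the place where the quantum group proof cannot simply be copied --- is that $W(L\otimes\nabla)\subsetneqq(L\otimes\nabla)W$ strictly, so the inclusions of Proposition~\ref{inequalityWLNabla} cannot be handled as equalities; one must instead verify that the two domains coincide on the relevant subspace by the back-and-forth argument above, after which the value computation is immediate from Propositions~\ref{lemWLNablaEG} and~\ref{LNablaonRanERanG}.
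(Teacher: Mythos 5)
Your argument is correct and follows essentially the paper's own route: the paper obtains the statement by re-interpreting the Corollary to Proposition~\ref{lemWLNablaEG} (equivalently, parts (5) and (6) of that proposition) through the identifications $E(L\otimes\nabla)E=(L\otimes\nabla)E$, $G_L(L\otimes\nabla)G_L=(L\otimes\nabla)G_L$ of Proposition~\ref{LNablaonRanERanG} and the fact that $W|_{\operatorname{Ran}(E)}$ and $W^*|_{\operatorname{Ran}(G_L)}$ are mutually inverse isometries, which is exactly what you do. Your only addition is to spell out the domain bookkeeping (the equivalence $\eta\in{\mathcal D}(L\otimes\nabla)\Leftrightarrow W\eta\in{\mathcal D}(L\otimes\nabla)$ for $\eta\in\operatorname{Ran}(E)$, via Proposition~\ref{inequalityWLNabla}), which the paper leaves implicit.
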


\begin{proof}
When restricted to subspaces, note that $W|_{\operatorname{Ran}(E)}$ and $W^*|_{\operatorname{Ran}(G_L)}$ 
may be regarded as onto isometries between $\operatorname{Ran}(E)$ and $\operatorname{Ran}(G_L)$. Also 
$W^*|_{\operatorname{Ran}(G_L)}W|_{\operatorname{Ran}(E)}
=\operatorname{Id}_{\operatorname{Ran}(E)}$ and $W|_{\operatorname{Ran}(E)}W^*|_{\operatorname{Ran}(G_L)}
=\operatorname{Id}_{\operatorname{Ran}(G_L)}$.
In this way, the results in Corollary of Proposition~\ref{lemWLNablaEG} can be re-interpreted as above.
\end{proof}

Since $L$, $\nabla$ are self-adjoint operators, we can perform functional calculus.  
In particular, we can consider $L^z$, $\nabla^z$, for any $z\in\mathbb{C}$.  
We next wish to explore the behavior of the operator $L^z\otimes\nabla^z$. 
Note first that by Corollary of Proposition~\ref{LNablaonRanERanG}, it is clear that 
$(L^z\otimes\nabla^z)|_{\operatorname{Ran}(E)}$ is an operator on $\operatorname{Ran}(E)$ 
and $(L^z\otimes\nabla^z)|_{\operatorname{Ker}(W)}$ is an operator on $\operatorname{Ker}(W)$. 
Similar also for the operators $(L^z\otimes\nabla^z)|_{\operatorname{Ran}(G_L)}$ and 
$(L^z\otimes\nabla^z)|_{\operatorname{Ker}(W^*)}$.

As for $(L^z\otimes\nabla^z)|_{\operatorname{Ran}(E)}$ and $(L^z\otimes\nabla^z)|_{\operatorname
{Ran}(G_L)}$, we have the following:

\begin{prop}\label{LNablafunctcalc}
For $z\in\mathbb{C}$, we have:
\begin{enumerate}
  \item $(L^z\otimes\nabla^z)|_{\operatorname{Ran}(E)}
=W^*(L^z\otimes\nabla^z)|_{\operatorname{Ran}(G_L)}W$, as operators on $\operatorname{Ran}(E)$.
  \item $(L^z\otimes\nabla^z)|_{\operatorname{Ran}(G_L)}
=W(L^z\otimes\nabla^z)|_{\operatorname{Ran}(E)}W^*$, as operators on $\operatorname{Ran}(G_L)$.
  \item $W(L^z\otimes\nabla^z)|_{\operatorname{Ran}(E)}=(L^z\otimes\nabla^z)|_{\operatorname{Ran}(G_L)}W$,
as operators on $\operatorname{Ran}(E)$.
  \item $W^*(L^z\otimes\nabla^z)|_{\operatorname{Ran}(G_L)}=(L^z\otimes\nabla^z)|_{\operatorname{Ran}(E)}W^*$, 
as operators on $\operatorname{Ran}(G_L)$.
\end{enumerate}
\end{prop}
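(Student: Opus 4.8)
The plan is to reduce the statement for general $z\in\mathbb C$ to the case $z=1$, which is Proposition~\ref{LNablaequationAB}, by invoking the naturality of Borel functional calculus under unitary conjugation. First I would record the structural input already at hand. By Proposition~\ref{LNablaonRanERanG} together with its Corollary, the closed subspaces $\operatorname{Ran}(E)$ and $\operatorname{Ker}(W)=\operatorname{Ran}(1-E)$ are both invariant under the positive self-adjoint operator $L\otimes\nabla$, so $\operatorname{Ran}(E)$ \emph{reduces} $L\otimes\nabla$; likewise $\operatorname{Ran}(G_L)$ (with complement $\operatorname{Ker}(W^*)$) reduces it. Therefore $A_E:=(L\otimes\nabla)|_{\operatorname{Ran}(E)}$ and $A_G:=(L\otimes\nabla)|_{\operatorname{Ran}(G_L)}$ are positive self-adjoint operators on the Hilbert spaces $\operatorname{Ran}(E)$ and $\operatorname{Ran}(G_L)$, and since restriction to a reducing subspace commutes with functional calculus, $(L^z\otimes\nabla^z)|_{\operatorname{Ran}(E)}=(A_E)^z$ and $(L^z\otimes\nabla^z)|_{\operatorname{Ran}(G_L)}=(A_G)^z$ for every $z\in\mathbb C$; here I am also using the standard spectral-theoretic identity $(L\otimes\nabla)^z=L^z\otimes\nabla^z$ for positive self-adjoint operators.

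Next I would use the paragraph preceding Proposition~\ref{LNablaequationAB}: the restriction $u:=W|_{\operatorname{Ran}(E)}$ is a unitary from $\operatorname{Ran}(E)$ onto $\operatorname{Ran}(G_L)$, with $u^{-1}=u^*=W^*|_{\operatorname{Ran}(G_L)}$. Proposition~\ref{LNablaequationAB}\,(1) then reads $A_E=u^*A_Gu$, i.e.\ $A_E$ and $A_G$ are unitarily equivalent via $u$. Since Borel functional calculus is equivariant under unitary conjugation, $f(A_G)=u\,f(A_E)\,u^*$ for every Borel $f$; taking $f(t)=t^z$ gives $(A_G)^z=u\,(A_E)^z\,u^*$ and $(A_E)^z=u^*\,(A_G)^z\,u$. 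Rewriting $u$, $u^*$ in terms of $W$, $W^*$ and using the identifications of the first paragraph, this is precisely parts (1) and (2) of the proposition.

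Parts (3) and (4) are then purely formal. From (1), left-multiplying by $W$ and using $WW^*=G_L$ (Theorem~\ref{Wpartialisometry}) together with the fact that $G_L$ acts as the identity on $\operatorname{Ran}(G_L)$, where $(L^z\otimes\nabla^z)|_{\operatorname{Ran}(G_L)}$ is defined, one gets $W(L^z\otimes\nabla^z)|_{\operatorname{Ran}(E)}=(L^z\otimes\nabla^z)|_{\operatorname{Ran}(G_L)}W$ as operators on $\operatorname{Ran}(E)$; (4) is obtained symmetrically from (2), or by a parallel computation using $W^*W=E$.

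The main obstacle is not analytic but lies in the careful bookkeeping of domains and of the reducing-subspace structure: one must verify that $\operatorname{Ran}(E)$ and $\operatorname{Ran}(G_L)$ are reducing, and not merely invariant, subspaces for the unbounded operator $L\otimes\nabla$, so that $A_E$ and $A_G$ are genuinely self-adjoint on those subspaces and the identifications $(L^z\otimes\nabla^z)|_{\operatorname{Ran}(E)}=(A_E)^z$ make sense. As noted, this is exactly what Proposition~\ref{LNablaonRanERanG} and its Corollary provide (the complements $\operatorname{Ker}(W)$, $\operatorname{Ker}(W^*)$ being invariant as well), so once those are in place the argument is a routine application of spectral theory, with Propositions~\ref{inequalityWLNabla}--\ref{LNablaequationAB} supplying all the required input.
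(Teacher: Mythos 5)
Your proposal is correct and follows essentially the same route as the paper: reduce to the $z=1$ case of Proposition~\ref{LNablaequationAB} by noting that restriction to $\operatorname{Ran}(E)$ and $\operatorname{Ran}(G_L)$ commutes with the functional calculus and that $W|_{\operatorname{Ran}(E)}$ and $W^*|_{\operatorname{Ran}(G_L)}$ are mutually inverse unitaries, then deduce (3), (4) formally from $WW^*=G_L$ (resp.\ $W^*W=E$). Your explicit verification that $\operatorname{Ran}(E)$ and $\operatorname{Ran}(G_L)$ are reducing (not merely invariant) subspaces, via the Corollary to Proposition~\ref{LNablaonRanERanG}, spells out what the paper compresses into ``it is clear that''.
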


\begin{proof}
(1), (2). It is clear that
$$
\bigl((L\otimes\nabla)|_{\operatorname{Ran}(E)}\bigr)^z=(L^z\otimes\nabla^z)|_{\operatorname{Ran}(E)}
$$
$$
\bigl((L\otimes\nabla)|_{\operatorname{Ran}(G_L)}\bigr)^z=(L^z\otimes\nabla^z)|_{\operatorname{Ran}(G_L)}
$$
for $z\in\mathbb{C}$.  Combine this result with that of Proposition~\ref{LNablaequationAB}, using 
the fact that $W|_{\operatorname{Ran}(E)}$ and $W^*|_{\operatorname{Ran}(G_L)}$ are inverses 
of each other.

(3), (4). By (1), we have:
$$
W(L^z\otimes\nabla^z)|_{\operatorname{Ran}(E)}=WW^*(L^z\otimes\nabla^z)W|_{\operatorname{Ran}(E)}
=G_L(L^z\otimes\nabla^z)W|_{\operatorname{Ran}(E)},
$$
which is just $(L^z\otimes\nabla^z)W|_{\operatorname{Ran}(E)}$, because 
$(L^z\otimes\nabla^z)|_{\operatorname{Ran}(G_L)}$ is an operator on $\operatorname{Ran}(G_L)$. 
(4) is proved similarly, using (2).
\end{proof}

On the level of the whole space ${\mathcal H}\otimes{\mathcal H}_{\varphi}$, however, we do not 
in general expect a result like $W(L^z\otimes\nabla^z)=(L^z\otimes\nabla^z)W$.  The domains 
may not agree, as noted in the case when $z=1$.  The best we can expect is the following:

\begin{prop}\label{inequalityWLNabla^z}
For any $z\in\mathbb{C}$, we have:
$$
W(L^z\otimes\nabla^z)\subseteq(L^z\otimes\nabla^z)W
\quad{\text { and }}\quad
W^*(L^z\otimes\nabla^z)\subseteq(L^z\otimes\nabla^z)W^*.
$$
\end{prop}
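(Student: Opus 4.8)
The plan is to deduce the statement directly from the restricted identities recorded in Proposition~\ref{LNablafunctcalc}, by splitting ${\mathcal H}\otimes{\mathcal H}_{\varphi}$ along the orthogonal decompositions induced by the partial isometry $W$. Recall that $W^*W=E$ and $WW^*=G_L$, so that ${\mathcal H}\otimes{\mathcal H}_{\varphi}=\operatorname{Ker}(W)\oplus\operatorname{Ran}(E)=\operatorname{Ker}(W^*)\oplus\operatorname{Ran}(G_L)$, with $E$ and $G_L$ the orthogonal projections onto $\operatorname{Ran}(E)$ and $\operatorname{Ran}(G_L)$. By Proposition~\ref{lemWLNablaEG}\,(1) together with the spectral theorem — this is exactly what is packaged in the Corollary of Proposition~\ref{LNablaonRanERanG} and the remark preceding the present statement — the operator $L^z\otimes\nabla^z$ reduces along both decompositions: the projections $E$, $1-E$, $G_L$, $1-G_L$ all preserve ${\mathcal D}(L^z\otimes\nabla^z)$ and commute with $L^z\otimes\nabla^z$ on that domain.

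First I would treat $W$. Let $\eta\in{\mathcal D}(L^z\otimes\nabla^z)={\mathcal D}\bigl(W(L^z\otimes\nabla^z)\bigr)$ and write $\eta=(1-E)\eta\oplus E\eta$, with $(1-E)\eta\in\operatorname{Ker}(W)$ and $E\eta\in\operatorname{Ran}(E)$. Since $L^z\otimes\nabla^z$ reduces along this decomposition, both summands lie in ${\mathcal D}(L^z\otimes\nabla^z)$, and $(L^z\otimes\nabla^z)(1-E)\eta\in\operatorname{Ker}(W)$ while $(L^z\otimes\nabla^z)E\eta\in\operatorname{Ran}(E)$. Applying $W$, the first piece is killed, and on the second piece Proposition~\ref{LNablafunctcalc}\,(3) gives
\[
W(L^z\otimes\nabla^z)\eta=W(L^z\otimes\nabla^z)E\eta=(L^z\otimes\nabla^z)|_{\operatorname{Ran}(G_L)}\,WE\eta=(L^z\otimes\nabla^z)W\eta,
\]
where the last equality uses $WE=W$ and $W\eta\in\operatorname{Ran}(W)=\operatorname{Ran}(G_L)$ (Theorem~\ref{Wpartialisometry}\,(3)). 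In particular $W\eta\in{\mathcal D}(L^z\otimes\nabla^z)$, which is precisely the inclusion $W(L^z\otimes\nabla^z)\subseteq(L^z\otimes\nabla^z)W$.

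The inclusion for $W^*$ is obtained the same way, decomposing $\eta$ along $\operatorname{Ker}(W^*)\oplus\operatorname{Ran}(G_L)$ and invoking $W^*G_L=W^*$ together with Proposition~\ref{LNablafunctcalc}\,(4); alternatively one simply takes adjoints, noting that $(L^z\otimes\nabla^z)^*=L^{\bar z}\otimes\nabla^{\bar z}$ is again of the same form, so the inclusion for $W$ at parameter $\bar z$ dualizes to the one for $W^*$ at parameter $z$. The only genuinely delicate point is the legitimacy of the domain splitting ${\mathcal D}(L^z\otimes\nabla^z)=[\operatorname{Ker}(W)\cap{\mathcal D}]\oplus[\operatorname{Ran}(E)\cap{\mathcal D}]$ — that is, that $E$ commutes not merely with $L\otimes\nabla$ but with its whole functional calculus — and this is exactly what the Corollary of Proposition~\ref{LNablaonRanERanG} already secures; I expect this to be the main (and rather mild) obstacle, the remainder being routine bookkeeping with the partial-isometry relations $WE=W$, $G_LW=W$, $EW^*=W^*$, $W^*G_L=W^*$. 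Note that one should not expect equality here, since $\operatorname{Ker}(W)\cap{\mathcal D}(L\otimes\nabla)\ne\operatorname{Ker}(W)$ in general.
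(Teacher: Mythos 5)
Your proof is correct and takes essentially the same route as the paper's: decompose $\eta\in{\mathcal D}(L^z\otimes\nabla^z)$ along $\operatorname{Ker}(W)\oplus\operatorname{Ran}(E)$ (resp.\ $\operatorname{Ker}(W^*)\oplus\operatorname{Ran}(G_L)$), use the fact that $L^z\otimes\nabla^z$ is reduced by these subspaces to split the domain, note that $W$ annihilates the kernel component, and apply Proposition~\ref{LNablafunctcalc} on the $\operatorname{Ran}(E)$ (resp.\ $\operatorname{Ran}(G_L)$) component. The adjoint shortcut you mention for the $W^*$ inclusion is also legitimate and matches how the paper handles the $z=1$ case.
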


\begin{proof}
Suppose $\zeta\in{\mathcal D}(L^z\otimes\nabla^z)$ and write $\zeta=\zeta_0\oplus\zeta_1$, where 
$\zeta_0\in\operatorname{Ker}(W)$, $\zeta_1\in\operatorname{Ran}(E)$.  Then each of $\zeta_0$ 
and $\zeta_1$ must also be contained in the domain ${\mathcal D}(L^z\otimes\nabla^z)$.  That was 
the case when $z=1$ (see Corollary to Proposition~\ref{LNablaonRanERanG}), and it is because 
$(L^z\otimes\nabla^z)|_{\operatorname{Ker}(W)}$ and $(L^z\otimes\nabla^z)|_{\operatorname{Ran}(E)}$ 
are operators on $\operatorname{Ker}(W)$ and $\operatorname{Ran}(E)$ respectively.  Since 
$(L^z\otimes\nabla^z)|_{\operatorname{Ker}(W)}:\operatorname{Ker}(W)\cap{\mathcal D}(L^z\otimes\nabla^z)
\to\operatorname{Ker}(W)$, it is then easy to see that $W(L^z\otimes\nabla^z)\zeta_0$ is valid and vanishes. 
So we have:
$$
W(L^z\otimes\nabla^z)\zeta=W(L^z\otimes\nabla^z)(\zeta_0\oplus\zeta_1)
=W(L^z\otimes\nabla^z)\zeta_1=(L^z\otimes\nabla^z)W\zeta_1. 
$$
For the last equality, we used the fact that $\zeta_1\in\operatorname{Ran}(E)$, applying 
the result of the previous proposition.  It follows that for any $\zeta=\zeta_0\oplus\zeta_1$ in 
${\mathcal D}(L^z\otimes\nabla^z)$, 
$$
W(L^z\otimes\nabla^z)(\zeta_0\oplus\zeta_1)=(L^z\otimes\nabla^z)W\zeta_1
=(L^z\otimes\nabla^z)W(\zeta_0\oplus\zeta_1).
$$
This proves the first inclusion.  The second one is similar, using 
${\mathcal H}\otimes{\mathcal H}_{\varphi}=\operatorname{Ker}(W^*)\oplus\operatorname{Ran}(G_L)$.
\end{proof}

While we only have ``$\subseteq$'' in general, the situation is better if $z\in\mathbb{C}$ is purely 
imaginary, that is $z=it$, $t\in\mathbb{R}$.  If so, the operators $L^{it}$, $\nabla^{it}$ are bounded, 
so ${\mathcal D}(L^{it}\otimes\nabla^{it})$ would be the whole space ${\mathcal H}\otimes{\mathcal H}_{\varphi}$.  
Then we will have the equalities, as can be seen in the proposition below.

\begin{prop}\label{LNablaequality}
Let $t\in\mathbb{R}$.  Then we have the following equalities on the whole space 
${\mathcal H}\otimes{\mathcal H}_{\varphi}$.
\begin{enumerate}
  \item $W(L^{it}\otimes\nabla^{it})=(L^{it}\otimes\nabla^{it})W$
  \item $W^*(L^{it}\otimes\nabla^{it})=(L^{it}\otimes\nabla^{it})W^*$
\end{enumerate}
\end{prop}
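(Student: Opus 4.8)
The plan is to deduce both equalities directly from the inclusions already established in Proposition~\ref{inequalityWLNabla^z}, exploiting the special feature of the purely imaginary exponent $z=it$.

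First I would note that, since $L$ and $\nabla$ are positive self-adjoint operators, functional calculus gives that $L^{it}$ and $\nabla^{it}$ are \emph{bounded} (in fact unitary) operators on $\mathcal{H}$ and $\mathcal{H}_{\varphi}$ respectively, so that $L^{it}\otimes\nabla^{it}$ is a bounded operator on $\mathcal{H}\otimes\mathcal{H}_{\varphi}$ with $\mathcal{D}(L^{it}\otimes\nabla^{it})=\mathcal{H}\otimes\mathcal{H}_{\varphi}$. Consequently $W(L^{it}\otimes\nabla^{it})$ is everywhere defined, and so is $(L^{it}\otimes\nabla^{it})W$, because $(L^{it}\otimes\nabla^{it})W\eta$ makes sense for every $\eta\in\mathcal{H}\otimes\mathcal{H}_{\varphi}$ (the vector $W\eta$ automatically lying in the full domain of $L^{it}\otimes\nabla^{it}$). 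The same remarks apply verbatim with $W^*$ in place of $W$.

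Then I would apply Proposition~\ref{inequalityWLNabla^z} with $z=it$, which yields $W(L^{it}\otimes\nabla^{it})\subseteq(L^{it}\otimes\nabla^{it})W$ and $W^*(L^{it}\otimes\nabla^{it})\subseteq(L^{it}\otimes\nabla^{it})W^*$. An inclusion between two operators that are both defined on all of $\mathcal{H}\otimes\mathcal{H}_{\varphi}$ is automatically an equality, so the two stated identities follow at once. There is essentially no obstacle here, since all the substantive work---the commutation inequality coming from Proposition~\ref{inequalityWLNabla} and its propagation to arbitrary complex powers in Proposition~\ref{inequalityWLNabla^z}---has already been done; the only point requiring care is the domain bookkeeping, namely that the inclusion upgrades to an equality precisely because $t$ is real and hence $L^{it}\otimes\nabla^{it}$ is bounded with full domain, a step which (as the paragraph preceding the proposition stresses) would fail for non-real $z$.
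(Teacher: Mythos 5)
Your proposal is correct and follows essentially the same route as the paper: invoke Proposition~\ref{inequalityWLNabla^z} with $z=it$ and note that, $L^{it}\otimes\nabla^{it}$ being bounded with full domain, the inclusions between everywhere-defined operators are automatically equalities. The domain bookkeeping you spell out is exactly the point the paper's (terser) proof relies on.
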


\begin{proof}
Since $L^{it}$, $\nabla^{it}$ are bounded operators, there is no issue with their domains. 
We already know that $W(L^z\otimes\nabla^z)\subseteq(L^z\otimes\nabla^z)W$, 
in general.  So we indeed have the equality (1).  Similar for (2).
\end{proof}

Proposition~\ref{LNablaequality} plays a significant role as we define the antipode map later.
The next main result to show is Proposition~\ref{IJWequality} below.  
But first, we need some preparation.

\begin{lem}\label{lemIJW1}
Let $v\in{\mathcal D}(L^{\frac12})$ and $w\in{\mathcal D}(L^{-\frac12})$.  Then we have:
$$
(\omega_{v,w}\otimes\operatorname{id})(W)^*
=(\omega_{IL^{\frac12}v,IL^{-\frac12}w}\otimes\operatorname{id})(W).
$$
\end{lem}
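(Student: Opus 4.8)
The plan is to reduce the desired identity to the conjugate-linearity relation for $K$ together with the intertwining property of $K$ with one leg of $W$ established in Proposition~\ref{idomegaWK}. Recall $K=IL^{\frac12}$ with $I=I^*$, $I^2=1$, and $ILI=L^{-1}$, so $K^*=L^{\frac12}I$ and $K^{-1}=K$. The key observation is that for a functional of the form $\omega_{v,w}$ with $v\in{\mathcal D}(L^{\frac12})$, $w\in{\mathcal D}(L^{-\frac12})$, the adjoint $(\omega_{v,w}\otimes\operatorname{id})(W)^*$ can be computed by pairing against suitable vectors and using the definition of $W^*$, then re-expressing everything through the operator $K$ acting in the first leg.

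First I would fix right-bounded or GNS-type test vectors in the second leg — say $\Lambda_\varphi(r)$, $\Lambda_\varphi(s)$ with $r,s\in{\mathfrak N}_\varphi$ — and write out both sides of the claimed identity as sesquilinear forms. Using Proposition~\ref{idomegaW}, the leg $(\operatorname{id}\otimes\omega_{\Lambda_\varphi(s),\Lambda_\varphi(r)})(W)$ is $(\operatorname{id}\otimes\varphi)(\Delta(r^*)(1\otimes s))$, which converts the $W$-expressions into expressions involving $\Delta$ and $\varphi$; then the adjoint on the left-hand side becomes an expression of the same kind with $r,s$ and $v,w$ interchanged appropriately. The heart of the matter is then to recognize, via Definition~\ref{D(K)} and Proposition~\ref{Kdensedomain}, that the map sending $v\mapsto IL^{\frac12}v=Kv$ and $w\mapsto IL^{-\frac12}w$ is exactly the operation that $V$ (equivalently, through the pentagon relations, $W$) implements on the first leg, so that the adjoint operation on $(\omega_{v,w}\otimes\operatorname{id})(W)$ amounts to replacing $v$ by $Kv$ and $w$ by $Kw^{-\text{side}}$ — more precisely by $IL^{-\frac12}w$, which is the vector playing the dual role to $Kv=IL^{\frac12}v$ in the defining approximation of $K$.

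Concretely, I would proceed as follows. Take $v\in{\mathcal D}(L^{\frac12})={\mathcal D}(K)$ and $w\in{\mathcal D}(L^{-\frac12})={\mathcal D}(K^{-1})={\mathcal D}(K)$ (since $K$ is involutive, $IL^{-\frac12}w = K(L^{-1}w)$ is again a legitimate vector, and in fact $IL^{-\frac12}=K^{-1}\cdot$ up to the identification $K^{-1}=K$... careful: $K^{-1}=K$ gives $K^{-1}w = IL^{\frac12}w$, whereas $IL^{-\frac12}w$ corresponds to $(K^*)^{-1}w$). For $\eta,\eta'\in{\mathcal H}_\varphi$, expand $\langle (\omega_{v,w}\otimes\operatorname{id})(W)^*\eta,\eta'\rangle = \overline{\langle (\omega_{v,w}\otimes\operatorname{id})(W)\eta',\eta\rangle} = \overline{\langle W(v\otimes\eta'), w\otimes\eta\rangle}$. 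Now insert the partial-isometry relation $W=G_LW$ (Theorem~\ref{Wpartialisometry}) and use Proposition~\ref{idomegaWK} to move $K$ across the first leg: the relation $(\operatorname{id}\otimes\bar\omega)(W)K\subseteq K(\operatorname{id}\otimes\omega)(W)$, read with $\omega=\omega_{\eta,\eta'}$, gives the intertwining that converts $\langle W(v\otimes\eta'),w\otimes\eta\rangle$ into an expression of the form $\langle W(IL^{\frac12}v\otimes\eta),IL^{-\frac12}w\otimes\eta'\rangle = \langle (\omega_{IL^{\frac12}v,IL^{-\frac12}w}\otimes\operatorname{id})(W)\eta,\eta'\rangle$, which is precisely the right-hand side.

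I expect the main obstacle to be the bookkeeping of the conjugate-linearity of $K$ (and of $I$) together with the fact that $W$ is only a partial isometry, so one must carefully insert the projections $E$ and $G_L$ at the right places and verify that all vectors stay within the relevant domains ${\mathcal D}(L^{\pm\frac12})$. In particular, establishing that $v\in{\mathcal D}(L^{\frac12})$ and $w\in{\mathcal D}(L^{-\frac12})$ suffice for all the intermediate pairings to make sense — and that the approximation procedure in Definition~\ref{D(K)} can be transported through the second leg of $W$ uniformly — is the delicate point; the algebraic identity itself then falls out by comparing the two sesquilinear forms on a dense set of $\eta,\eta'$. One should also double-check the precise pairing of $IL^{\frac12}$ versus $IL^{-\frac12}$ against the two slots of $\omega_{v,w}$, using $K^*=L^{\frac12}I$ and $ILI=L^{-1}$ to see that the adjoint indeed swaps $L^{\frac12}\leftrightarrow L^{-\frac12}$ while keeping $I$ on the outside.
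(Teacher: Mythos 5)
Your proposal is correct and follows essentially the same route as the paper's proof: pair against vectors in the second leg (the paper uses $\Lambda_{\varphi}(a),\Lambda_{\varphi}(b)$, your $\eta,\eta'$), flip the functional onto the second leg, apply Proposition~\ref{idomegaWK} to move $K$ across the first leg, and use $K=IL^{\frac12}=L^{-\frac12}I$, $K^*=IL^{-\frac12}=L^{\frac12}I$ together with the anti-unitarity and involutivity of $I$ to cancel the $L^{\pm\frac12}$ and produce the adjoint. One small correction: ${\mathcal D}(L^{-\frac12})={\mathcal D}(K^*)$, not ${\mathcal D}(K)$, so $IL^{-\frac12}w=K^*w$; and no insertion of $G_L$ or return to the approximation procedure of Definition~\ref{D(K)} is needed, since Proposition~\ref{idomegaWK} already supplies all the required domain information.
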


\begin{proof}
For any $a,b\in{\mathfrak N}_{\varphi}$, we have:
$$
\bigl\langle(\omega_{IL^{\frac12}v,IL^{-\frac12}w}\otimes\operatorname{id})(W)
\Lambda_{\varphi}(a),\Lambda_{\varphi}(b)\big\rangle
=\bigl\langle(\operatorname{id}\otimes\omega_{\Lambda_{\varphi}(a),\Lambda_{\varphi}(b)})
(W)IL^{\frac12}v,IL^{-\frac12}w\bigr\rangle.
$$
Recall that $ILI=L^{-1}$, so $IL^{-\frac12}=L^{\frac12}I$, and $K=IL^{\frac12}=L^{-\frac12}I$. 
Also $(\operatorname{id}\otimes\omega_{\Lambda_{\varphi}(a),
\Lambda_{\varphi}(b)})(W)K\subseteq K(\operatorname{id}\otimes\omega_{\Lambda_{\varphi}(b),
\Lambda_{\varphi}(a)})(W)$, by Proposition~\ref{idomegaWK}.  Putting these all together, 
the above expression becomes
$$
=\bigl\langle L^{-\frac12}I(\operatorname{id}\otimes\omega_{\Lambda_{\varphi}(b),
\Lambda_{\varphi}(a)})(W)v,L^{\frac12}Iw\bigr\rangle
=\bigl\langle I(\operatorname{id}\otimes\omega_{\Lambda_{\varphi}(b),
\Lambda_{\varphi}(a)})(W)v,Iw\bigr\rangle.
$$
As $I^*=I$, $I^2=I$, and $I$ is conjugate linear, this becomes
\begin{align}
&=\overline{\bigl\langle(\operatorname{id}\otimes\omega_{\Lambda_{\varphi}(b),
\Lambda_{\varphi}(a)})(W)v,w\bigr\rangle}
=\bigl\langle w,(\operatorname{id}\otimes\omega_{\Lambda_{\varphi}(b),
\Lambda_{\varphi}(a)})(W)v\bigr\rangle   \notag \\
&=\bigl\langle(\operatorname{id}\otimes\omega_{\Lambda_{\varphi}(a),
\Lambda_{\varphi}(b)})(W^*)w,v\bigr\rangle
=\bigl\langle(\omega_{w,v}\otimes\operatorname{id})(W^*)
\Lambda_{\varphi}(a),\Lambda_{\varphi}(b)\big\rangle.
\notag
\end{align}
Since $a$, $b$ are arbitrary, we conclude that 
$$
(\omega_{IL^{\frac12}v,IL^{-\frac12}w}\otimes\operatorname{id})(W)
=(\omega_{w,v}\otimes\operatorname{id})(W^*)=(\omega_{v,w}\otimes\operatorname{id})(W)^*.
$$
\end{proof}

We are now ready to prove one remaining main result of the subsection:

\begin{prop}\label{IJWequality}
We have:
$$
(I\otimes J)W(I\otimes J)=W^*.
$$
\end{prop}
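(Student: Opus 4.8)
The plan is to prove the identity $(I\otimes J)W(I\otimes J)=W^*$ by testing it against a dense family of vectors, using the slice-map description of $W$ together with the intertwining relations established above. Since $I$ and $J$ are conjugate-linear involutions (so $I^{*}=I$, $I^{2}=1$, $J^{*}=J$, $J^{2}=1$), proving $(I\otimes J)W(I\otimes J)=W^{*}$ is equivalent to proving $W(I\otimes J)=(I\otimes J)W^{*}$, i.e. $\langle W(I\otimes J)(\xi\otimes\zeta),\eta\otimes\theta\rangle = \langle (I\otimes J)W^{*}(\xi\otimes\zeta),\eta\otimes\theta\rangle$ for $\xi,\eta$ in a core of $L^{\pm1/2}$ and $\zeta,\theta$ in a core of $\nabla^{\pm1/2}$. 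The natural test vectors are $\zeta=\Lambda_{\tilde\varphi}(a)$, $\theta=\Lambda_{\tilde\varphi}(b)$ with $a,b\in\mathcal{T}_{\tilde\varphi}$ (so that $J$, $\nabla^{z}$ act explicitly: $J\Lambda_{\tilde\varphi}(a)=\Lambda_{\tilde\varphi}(\sigma^{\tilde\varphi}_{-i/2}(a^{*}))$ up to the usual modular formulas), and $\xi,\eta$ drawn from $\mathcal{D}(L^{1/2})\cap\mathcal{D}(L^{-1/2})$.

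First I would reduce the two-sided identity to a statement about the first leg: by the conjugate-linearity of $I$ and $J$ and the relation $\langle W^{*}(\xi\otimes\zeta),\eta\otimes\theta\rangle = \overline{\langle W(\eta\otimes\theta),\xi\otimes\zeta\rangle}$, the claim $(I\otimes J)W(I\otimes J)=W^{*}$ is equivalent to the assertion that for all suitable $v,w$,
$$
(\omega_{v,w}\otimes\operatorname{id})(W)^{*} = (\omega_{IL^{1/2}v,\,IL^{-1/2}w}\otimes\operatorname{id})(W),
$$
which is precisely Lemma~\ref{lemIJW1}, combined with the companion fact that $J$ implements the conjugation on the second leg via the operator $T$. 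Concretely, the strategy is: compute $\langle (I\otimes J)W(I\otimes J)(v\otimes\Lambda_{\tilde\varphi}(a)),\,w\otimes\Lambda_{\tilde\varphi}(b)\rangle$; move the $J$'s to the other side using $J^{*}=J$, turning the second-leg entries into $T\Lambda_{\tilde\varphi}(b)$, $T\Lambda_{\tilde\varphi}(a)$ (up to conjugation of scalars); recognize the first leg as $(\omega_{Iv,Iw}\otimes\operatorname{id})(W)$ acting between the $J$-images; then invoke Lemma~\ref{lemIJW1} to rewrite this slice of $W$ as a slice of $W^{*}$; and finally use Proposition~\ref{omegaidW*T}\,(1), namely $(\bar\theta\otimes\operatorname{id})(W^{*})\,T\subseteq T\,(\theta\otimes\operatorname{id})(W^{*})$, to absorb the operator $T$ back and identify the result with $\langle W^{*}(v\otimes\Lambda_{\tilde\varphi}(a)),\,w\otimes\Lambda_{\tilde\varphi}(b)\rangle$. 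Since the vectors $\Lambda_{\tilde\varphi}(a)$, $\Lambda_{\tilde\varphi}(b)$ with $a,b\in\mathcal{T}_{\tilde\varphi}$ are dense, and the $v,w$ from the domains of $L^{\pm1/2}$ are dense, the identity extends to all of $\mathcal{H}\otimes\mathcal{H}_{\varphi}$ by boundedness of $W$, $W^{*}$, $I\otimes J$.

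The main obstacle I anticipate is bookkeeping the domains and the conjugate-linearity: because $W$ is only a partial isometry (not unitary), one cannot freely cancel $WW^{*}$ or $W^{*}W$, so the manipulations have to stay on $\operatorname{Ran}(E)$ and $\operatorname{Ran}(G_{L})$ where $W$ restricts to an isometry, and one must be careful that the vectors $IL^{1/2}v = Kv$, $IL^{-1/2}w$ produced along the way genuinely lie in the relevant cores. The other delicate point is matching the modular data: Lemma~\ref{lemIJW1} is phrased in terms of $L$ and $I$ on the first leg, while Proposition~\ref{omegaidW*T} involves $T$ (hence $J$ and $\nabla$) on the second leg, and one must verify that the polar-decomposition pieces $K=IL^{1/2}=L^{-1/2}I$ and $T=J\nabla^{1/2}$ interlock correctly under the slice maps. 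Once these compatibility checks are in place, the computation is a finite chain of substitutions with no essential difficulty, closely paralleling Proposition~2.20 of \cite{VDvN}, but with every step restricted to the appropriate ranges of the projections $E$ and $G_{L}$.
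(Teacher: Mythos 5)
Your overall strategy --- testing the identity on a dense family of vectors and using Lemma~\ref{lemIJW1} together with Proposition~\ref{omegaidW*T} --- is the same toolkit the paper uses, but as written your chain of substitutions does not close, because the one ingredient that actually couples the two legs is missing. Lemma~\ref{lemIJW1} only exchanges $W$ for $W^*$ at the cost of replacing the first-leg vectors by their $IL^{\pm\frac12}$-shifts, and Proposition~\ref{omegaidW*T} only moves $T$ through a first-leg slice of $W^*$ at the cost of $\theta\mapsto\bar{\theta}$; neither result trades an $L^{\pm\frac12}$ on the first leg against a $\nabla^{\pm\frac12}$ on the second. Concretely: after you convert $J\Lambda_{\tilde{\varphi}}(a)$, $J\Lambda_{\tilde{\varphi}}(b)$ into $T$-images and apply Proposition~\ref{omegaidW*T}, you are left with an extra $\nabla^{\frac12}$ on each second-leg vector, while after Lemma~\ref{lemIJW1} your first-leg vectors are $L^{\frac12}w$, $L^{-\frac12}v$ rather than $v$, $w$. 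Cancelling these two mismatches against each other is exactly the content of $W^*(L^{\frac12}\otimes\nabla^{\frac12})\subseteq(L^{\frac12}\otimes\nabla^{\frac12})W^*$, i.e.\ Proposition~\ref{inequalityWLNabla^z} at $z=\frac12$, which is where the paper's proof starts and which your outline never invokes (the phrase ``intertwining relations established above'' is not cashed out anywhere in your chain).

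Relatedly, the claim that the rest is ``a finite chain of substitutions with no essential difficulty'' underestimates the point where the paper works hardest: the relevant relations (Proposition~\ref{inequalityWLNabla^z} and Proposition~\ref{omegaidW*T}) are genuine inclusions of unbounded operators, not equalities, precisely because $W$ is not unitary. The paper handles this by producing two inclusions with the identical unbounded right-hand side $\nabla^{\frac12}(\omega_{v,w}\otimes\operatorname{id})(W^*)$ --- one from Proposition~\ref{inequalityWLNabla^z} combined with Lemma~\ref{lemIJW1}, the other from Proposition~\ref{omegaidW*T} combined with $T=J\nabla^{\frac12}$ --- and then uses that $\nabla^{\frac12}$ has dense range to equate the two bounded operators on the left, obtaining $(\omega_{Iw,Iv}\otimes\operatorname{id})(W)=J(\omega_{w,v}\otimes\operatorname{id})(W^*)J$ for all $v,w$, after which the identity follows in a two-line computation. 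A vector-by-vector version of your argument can be made to work, but only after adding Proposition~\ref{inequalityWLNabla^z} to your list and keeping every test vector in the appropriate domain at each substitution; also, the restriction to $\operatorname{Ran}(E)$ and $\operatorname{Ran}(G_L)$ that you anticipate as the main obstacle is not where the difficulty lies in this proposition --- that analysis was already absorbed into establishing Proposition~\ref{inequalityWLNabla^z} itself.
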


\begin{proof}
Let $v\in{\mathcal D}(L^{\frac12})$ and $w\in{\mathcal D}(L^{-\frac12})$.  From Proposition~\ref{inequalityWLNabla^z}, 
we know that $W^*(L^{\frac12}\otimes\nabla^{\frac12})\subseteq(L^{\frac12}\otimes\nabla^{\frac12})W^*$. 
So for any $p,q\in{\mathcal D}(\nabla^{\frac12})$, we have:
\begin{align}
\bigl\langle W^*(v\otimes p),w\otimes\nabla^{\frac12}q\bigr\rangle
&=\bigl\langle W^*(v\otimes p),L^{\frac12}L^{-\frac12}w\otimes\nabla^{\frac12}q\bigr\rangle  \notag \\
&=\bigl\langle(L^{\frac12}\otimes\nabla^{\frac12})W^*(v\otimes p),L^{-\frac12}w\otimes q\bigr\rangle
\notag \\
&=\bigl\langle W^*(L^{\frac12}v\otimes\nabla^{\frac12}p),L^{-\frac12}w\otimes q\bigr\rangle.
\notag
\end{align}
Re-writing, this becomes:
$$
\bigl\langle(\omega_{v,w}\otimes\operatorname{id})(W^*)p,\nabla^{\frac12}q\bigr\rangle
=\bigl\langle(\omega_{L^{\frac12}v,L^{-\frac12}w}\otimes\operatorname{id})(W^*)
\nabla^{\frac12}p,q\bigr\rangle,\quad\forall p,q\in{\mathcal D}(\nabla^{\frac12}).
$$
Or equivalently, $\bigl\langle\nabla^{\frac12}(\omega_{v,w}\otimes\operatorname{id})(W^*)
p,q\bigr\rangle=\bigl\langle(\omega_{L^{\frac12}v,L^{-\frac12}w}\otimes\operatorname{id})(W^*)
\nabla^{\frac12}p,q\bigr\rangle$.   This means that for any $p\in{\mathcal D}(\nabla^{\frac12})$, 
we have $(\omega_{v,w}\otimes\operatorname{id})(W^*)p\in{\mathcal D}(\nabla^{\frac12})$ and that 
$\nabla^{\frac12}(\omega_{v,w}\otimes\operatorname{id})(W^*)p=(\omega_{L^{\frac12}v,L^{-\frac12}w}
\otimes\operatorname{id})(W^*)\nabla^{\frac12}p$.  In other words, we have shown that 
\begin{equation}\label{(propIJWequalityeqn1)}
(\omega_{L^{\frac12}v,L^{-\frac12}w}\otimes\operatorname{id})(W^*)\nabla^{\frac12}
\subseteq\nabla^{\frac12}(\omega_{v,w}\otimes\operatorname{id})(W^*).
\end{equation}
By Lemma~\ref{lemIJW1}, we have
$$
(\omega_{L^{\frac12}v,L^{-\frac12}w}\otimes\operatorname{id})(W^*)
=(\omega_{L^{-\frac12}w,L^{\frac12}v}\otimes\operatorname{id})(W)^*
=(\omega_{Iw,Iv}\otimes\operatorname{id})(W).
$$
So Equation~\eqref{(propIJWequalityeqn1)} now becomes
\begin{equation}\label{(propIJWequalityeqn2)}
(\omega_{Iw,Iv}\otimes\operatorname{id})(W)\nabla^{\frac12}
\subseteq\nabla^{\frac12}(\omega_{v,w}\otimes\operatorname{id})(W^*).
\end{equation}

Meanwhile, since $T=J\nabla^{\frac12}$ (so $JT=\nabla^{\frac12}$), we have:
\begin{align}\label{(propIJWequalityeqn3)}
J(\omega_{w,v}\otimes\operatorname{id})(W^*)J\nabla^{\frac12}
&=J(\omega_{w,v}\otimes\operatorname{id})(W^*)T    \notag \\
&\subseteq JT(\omega_{v,w}\otimes\operatorname{id})(W^*)
=\nabla^{\frac12}(\omega_{v,w}\otimes\operatorname{id})(W^*),
\end{align}
where we used the result of Proposition~\ref{omegaidW*T}.

Compare the two inclusions obtained in \eqref{(propIJWequalityeqn2)} and 
\eqref{(propIJWequalityeqn3)}, where we observe that the right hand sides are exactly same. 
Since $\nabla^{\frac12}$ has dense range, this implies that the two bounded operators 
in the left sides of these inclusions should be same.  That is, 
$$
(\omega_{Iw,Iv}\otimes\operatorname{id})(W)=J(\omega_{w,v}\otimes\operatorname{id})(W^*)J,
$$
true for any $v\in{\mathcal D}(L^{\frac12})$ and $w\in{\mathcal D}(L^{-\frac12})$.  In fact, 
this equality will actually hold for any $v,w\in{\mathcal H}$, because the equation involves 
only the bounded operators and since the domains  ${\mathcal D}(L^{\frac12})$ and 
${\mathcal D}(L^{-\frac12})$ are dense in ${\mathcal H}$.  In other words, 
\begin{equation}\label{(propIJWequalityeqn4)}
(\omega_{Iw,Iv}\otimes\operatorname{id})(W)
=J(\omega_{w,v}\otimes\operatorname{id})(W^*)J,\quad\forall v,w\in{\mathcal H}.
\end{equation}

Finally, let $v,w\in{\mathcal H}$ and $p,q\in{\mathcal H}_{\varphi}$ be arbitrary.  We have: 
\begin{align}
\bigl\langle(I\otimes J)W^*(I\otimes J)(v\otimes p),w\otimes q\bigr\rangle
&=\overline{\bigl\langle W^*(I\otimes J)(v\otimes p),Iw\otimes Jq\bigr\rangle}  \notag \\
&=\overline{\bigl\langle(\omega_{Iv,Iw}\otimes\operatorname{id})(W^*)Jp,Jq\bigr\rangle},
\label{(propIJWequalityeqn5)}
\end{align}
because $I\otimes J$ is conjugate linear.  But, by Equation~\eqref{(propIJWequalityeqn4)}, 
we have:
$$
(\omega_{Iv,Iw}\otimes\operatorname{id})(W^*)Jp
=J(\omega_{v,w}\otimes\operatorname{id})(W)p.
$$
So Equation~\eqref{(propIJWequalityeqn5)} becomes:
\begin{align}
&\bigl\langle(I\otimes J)W^*(I\otimes J)(v\otimes p),w\otimes q\bigr\rangle  
=\overline{\bigl\langle J(\omega_{v,w}\otimes\operatorname{id})(W)p,Jq\bigr\rangle} 
\notag \\
&=\bigl\langle (\omega_{v,w}\otimes\operatorname{id})(W)p,q\bigr\rangle
=\bigl\langle W(v\otimes p),w\otimes q\bigr\rangle, 
\quad \forall v,w\in{\mathcal H}, \forall p,q\in{\mathcal H}_{\varphi}.
\notag
\end{align}
This proves that $(I\otimes J)W^*(I\otimes J)=W$.
\end{proof}

\subsection{Antipode map in terms of its polar decomposition}\label{subsec4.4} 

Now that we have gathered results about the operators $K$ and $T$, including their 
polar decompositions, we are now ready to define our antipode map $S$.  See the 
discussion given in the beginning paragraphs of \S\ref{subsec4.3}.

Analogous to the theory of locally compact quantum groups (\cite{KuVa}, \cite{MNW}, 
\cite{KuVavN}, \cite{VDvN}), the two main ingredients are the {\em unitary antipode\/} and 
the {\em scaling group\/}.  With the technical results obtained in the previous subsection,
which needed some approaches that are more general than the ones used for the 
quantum group theory, the rest of the construction becomes mostly similar to the 
quantum group case.

As a consequence of Proposition~\ref{IJWequality}, we have the following observation:

\begin{prop}\label{unitaryantipodeeqn}
Let $a,b\in{\mathcal T}_{\tilde{\varphi}}$ be arbitrary.  We have:
$$
I(\operatorname{id}\otimes\omega_{\Lambda_{\tilde{\varphi}}(a),\Lambda_{\tilde{\varphi}}(b)})(W)^*I
=(\operatorname{id}\otimes\omega_{\Lambda_{\tilde{\varphi}}
(\sigma^{\tilde{\varphi}}_{-\frac{i}{2}}(b^*)),\Lambda_{\tilde{\varphi}}
(\sigma^{\tilde{\varphi}}_{-\frac{i}{2}}(a^*))})(W).
$$
\end{prop}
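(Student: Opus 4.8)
The plan is to derive this from Proposition~\ref{IJWequality} together with the standard modular formula relating $J$ (coming from $T=J\nabla^{\frac12}$ on $\mathcal{H}_{\tilde{\varphi}}=\mathcal{H}_{\varphi}$) to the analytic generator $\sigma^{\tilde{\varphi}}_{-\frac{i}{2}}$. First I would record the identity
$$
J\Lambda_{\tilde{\varphi}}(x)=\Lambda_{\tilde{\varphi}}\bigl(\sigma^{\tilde{\varphi}}_{-\frac{i}{2}}(x^*)\bigr),\qquad x\in{\mathcal T}_{\tilde{\varphi}},
$$
which follows from $T\Lambda_{\tilde{\varphi}}(x)=\Lambda_{\tilde{\varphi}}(x^*)$, from $\nabla^{\frac12}\Lambda_{\tilde{\varphi}}(y)=\Lambda_{\tilde{\varphi}}(\sigma^{\tilde{\varphi}}_{-\frac{i}{2}}(y))$, and from $\sigma^{\tilde{\varphi}}_{\frac{i}{2}}(y)^*=\sigma^{\tilde{\varphi}}_{-\frac{i}{2}}(y^*)$ (see Lemma~1.2 of Part~I \cite{BJKVD_qgroupoid1}). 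Since ${\mathcal T}_{\tilde{\varphi}}$ is invariant under all $\sigma^{\tilde{\varphi}}_z$, every vector $\Lambda_{\tilde{\varphi}}(\sigma^{\tilde{\varphi}}_{-\frac{i}{2}}(a^*))$ etc. is a legitimate element of $\mathcal{H}_{\varphi}$, so no domain issues arise.

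Next I would establish the ``operator-slice'' version of Proposition~\ref{IJWequality}: for all $p,q\in\mathcal{H}_{\varphi}$,
$$
I\,(\operatorname{id}\otimes\omega_{p,q})(W)^*\,I=(\operatorname{id}\otimes\omega_{Jq,Jp})(W).
$$
To obtain this, write $W=(I\otimes J)W^*(I\otimes J)$ and pair $W(v\otimes p)$ with $w\otimes q$ for arbitrary $v,w\in\mathcal{H}_{\psi}$. Pushing the anti-unitary involution $I\otimes J$ across the inner product via $\langle U\zeta,\eta\rangle=\langle U\eta,\zeta\rangle$ turns $\langle W(v\otimes p),w\otimes q\rangle$ into $\langle W(Iw\otimes Jq),Iv\otimes Jp\rangle$; rewriting both sides as slices of $W$ and then transporting the remaining two copies of $I$ to the other side (again via anti-unitarity of $I$, which introduces one complex conjugation that is absorbed when the result is identified with the adjoint $(\operatorname{id}\otimes\omega_{p,q})(W)^*$) yields the displayed slice identity. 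This middle step is where one must be careful with the conjugate-linear bookkeeping, and I expect it to be the only (mild) obstacle; no weight theory beyond what was set up for the operator $T$ is needed here.

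Finally I would specialize $p=\Lambda_{\tilde{\varphi}}(a)$ and $q=\Lambda_{\tilde{\varphi}}(b)$ for $a,b\in{\mathcal T}_{\tilde{\varphi}}$. Then $\omega_{p,q}=\omega_{\Lambda_{\tilde{\varphi}}(a),\Lambda_{\tilde{\varphi}}(b)}$, while by the first step $Jp=\Lambda_{\tilde{\varphi}}(\sigma^{\tilde{\varphi}}_{-\frac{i}{2}}(a^*))$ and $Jq=\Lambda_{\tilde{\varphi}}(\sigma^{\tilde{\varphi}}_{-\frac{i}{2}}(b^*))$, so substituting into the slice identity gives precisely
$$
I(\operatorname{id}\otimes\omega_{\Lambda_{\tilde{\varphi}}(a),\Lambda_{\tilde{\varphi}}(b)})(W)^*I=(\operatorname{id}\otimes\omega_{\Lambda_{\tilde{\varphi}}(\sigma^{\tilde{\varphi}}_{-\frac{i}{2}}(b^*)),\Lambda_{\tilde{\varphi}}(\sigma^{\tilde{\varphi}}_{-\frac{i}{2}}(a^*))})(W),
$$
which is the assertion. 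All told, the proposition is essentially a repackaging of $(I\otimes J)W(I\otimes J)=W^*$ in terms of the right leg of $W$, with the modular formula for $J$ supplying the $\sigma^{\tilde{\varphi}}_{-\frac{i}{2}}$.
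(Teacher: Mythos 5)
Your proposal is correct and follows essentially the same route as the paper's own proof: both combine Proposition~\ref{IJWequality} with the modular identity $\Lambda_{\tilde{\varphi}}(a)=J\Lambda_{\tilde{\varphi}}\bigl(\sigma^{\tilde{\varphi}}_{-\frac{i}{2}}(a^*)\bigr)$ and carry out the conjugate-linear inner-product bookkeeping; your only variation is to first state the slice identity for arbitrary vectors $p,q$ and then specialize, whereas the paper works directly with $\Lambda_{\tilde{\varphi}}(a)$, $\Lambda_{\tilde{\varphi}}(b)$.
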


\begin{proof}
For any $v,w\in{\mathcal H}$, we have:
\begin{align}
\bigl\langle I(\operatorname{id}\otimes\omega_{\Lambda_{\tilde{\varphi}}(b),\Lambda_{\tilde{\varphi}}(a)})(W^*)Iv,w\bigr\rangle
&=\overline{\bigl\langle (\operatorname{id}\otimes\omega_{\Lambda_{\tilde{\varphi}}(b),\Lambda_{\tilde{\varphi}}(a)})(W^*)Iv,Iw\bigr\rangle}
\notag \\
&=\overline{\bigl\langle W^*(Iv\otimes\Lambda_{\tilde{\varphi}}(b)),Iw\otimes\Lambda_{\tilde{\varphi}}(a)\bigr\rangle}.
\notag
\end{align}
We may write $\Lambda_{\tilde{\varphi}}(a)=J\Lambda_{\tilde{\varphi}}\bigl(\sigma^{\tilde{\varphi}}_{-\frac{i}2}(a^*)\bigr)$, 
and also $\Lambda_{\tilde{\varphi}}(b)=J\Lambda_{\tilde{\varphi}}\bigl(\sigma^{\tilde{\varphi}}_{-\frac{i}2}(b^*)\bigr)$. Then by 
using the fact that $(I\otimes J)W^*(I\otimes J)=W$ (from Proposition~\ref{IJWequality}), the expression above becomes:
\begin{align}
&=\overline{\bigl\langle W^*(I\otimes J)(v\otimes\Lambda_{\tilde{\varphi}}(\sigma^{\tilde{\varphi}}_{-\frac{i}2}(b^*))),
(I\otimes J)(w\otimes\Lambda_{\tilde{\varphi}}(\sigma^{\tilde{\varphi}}_{-\frac{i}2}(a^*)))\bigr\rangle}
\notag \\
&=\bigl\langle (I\otimes J)W^*(I\otimes J)(v\otimes\Lambda_{\tilde{\varphi}}(\sigma^{\tilde{\varphi}}_{-\frac{i}2}(b^*))),
w\otimes\Lambda_{\tilde{\varphi}}(\sigma^{\tilde{\varphi}}_{-\frac{i}2}(a^*))\bigr\rangle
\notag \\
&=\bigl\langle W(v\otimes\Lambda_{\tilde{\varphi}}(\sigma^{\tilde{\varphi}}_{-\frac{i}2}(b^*))),
w\otimes\Lambda_{\tilde{\varphi}}(\sigma^{\tilde{\varphi}}_{-\frac{i}2}(a^*))\bigr\rangle
\notag \\
&=\bigr\langle(\operatorname{id}\otimes\omega_{\Lambda_{\tilde{\varphi}}
(\sigma^{\tilde{\varphi}}_{-\frac{i}{2}}(b^*)),\Lambda_{\tilde{\varphi}}
(\sigma^{\tilde{\varphi}}_{-\frac{i}{2}}(a^*))})(W)v,w\bigr\rangle.
\notag
\end{align}

As $v,w\in{\mathcal H}$ are arbitrary, this shows that 
$$
(\operatorname{id}\otimes\omega_{\Lambda_{\tilde{\varphi}}
(\sigma^{\tilde{\varphi}}_{-\frac{i}{2}}(b^*)),\Lambda_{\tilde{\varphi}}
(\sigma^{\tilde{\varphi}}_{-\frac{i}{2}}(a^*))})(W)=
I(\operatorname{id}\otimes\omega_{\Lambda_{\tilde{\varphi}}(b),\Lambda_{\tilde{\varphi}}(a)})(W^*)I,
$$
which is in turn equal to $I(\operatorname{id}\otimes\omega_{\Lambda_{\tilde{\varphi}}(a),\Lambda_{\tilde{\varphi}}(b)})(W)^*I$.
\end{proof}

Since $A=\pi(A)$ is generated by the elements of the form $(\operatorname{id}\otimes\omega)(W)$, 
Proposition~\ref{unitaryantipodeeqn} assures us that the map $R_A:x\mapsto Ix^*I$ is 
a map from $A$ onto itself.  For this map, it is clear that $R_A(x)^*=R_A(x^*)$, $\forall x\in A$, 
and that  $R_A(x_1x_2)=R_A(x_2)R_A(x_1)$, $\forall x_1,x_2\in A$.  We also have $R_A\circ R_A=R_A$. 
We summarize the result below.

\begin{defn}\label{unitaryantipode}
For $x\in A$, define $R_A(x)$ by $R_A(x):=Ix^*I$.  It is a ${}^*$-anti-isomorphism from $A$ 
onto $A$ and is involutive.  We call $R_A$ the {\em unitary antipode\/}.
\end{defn}

\begin{rem}
We may regard $R_A$ as a ${}^*$-isomorphism from $A$ into $A^{\operatorname{op}}$. 
So the map can be naturally extended to the level of the multiplier algebra $M(A)$. 
If we restrict this extended map to the $C^*$-subalgebras $B$ and $C$, the ``source 
algebra'' and the ``target algebra'', then we can show later that $R_A(B)=C$ and that 
the restriction map of $R_A$ onto $B$ coincides with the map $R_{BC}:B\to C$ that we 
considered as part of the defining data for the separability triple $(E,B,\nu)$.  See 
Proposition~\ref{SrestrictedtoBandC} in Section~\ref{sec5} below.  In this sense, 
we will from now on be a little lazy and denote simply by $R=R_A$ the unitary antipode 
on $A$. 
\end{rem}

Next, consider the result $(L^{it}\otimes\nabla^{it})W=W(L^{it}\otimes\nabla^{it})$, 
true for any $t\in\mathbb{R}$ (see Proposition~\ref{LNablaequality}).  By multiplying 
$(\operatorname{id}\otimes\nabla^{-it})$ from left and $(L^{-it}\otimes\operatorname{id})$ 
from right, we obtain:
$$
(L^{it}\otimes\operatorname{id})W(L^{-it}\otimes\operatorname{id})
=(\operatorname{id}\otimes\nabla^{-it})W(\operatorname{id}\otimes\nabla^{it}).
$$
So for any $\omega\in{\mathcal B}({\mathcal H}_{\varphi})_*$, we have:
\begin{equation}\label{(scalingeqn)}
L^{it}(\operatorname{id}\otimes\omega)(W)L^{-it}=\bigl(\operatorname{id}\otimes
\omega(\nabla^{-it}\,\cdot\,\nabla^{it})\bigr)(W).
\end{equation}

In \S\ref{subsec4.3}, we considered the (norm-continuous) ``scaling group'' $(\tau_t)_{t\in\mathbb{R}}$ 
on ${\mathcal B}({\mathcal H})$, defined by $\tau_t(\,\cdot\,)=L^{it}\,\cdot\,L^{-it}$.  Since 
we know that $A=\pi(A)$ is generated by the elements $(\operatorname{id}\otimes\omega)(W)$, 
Equation~\eqref{(scalingeqn)} suggests the following:

\begin{defn}\label{scalinggroup}
For $x\in A$, define $\tau_t(x):=L^{it}xL^{-it}$, for $t\in\mathbb{R}$.  Then $\tau_t(A)=A$, 
and $(\tau_t)_{t\in\mathbb{R}}$ is a norm-continuous one-parameter group of 
automorphisms on $A$.  It is referred to as the {\em scaling group\/} on $A$.
\end{defn}

\begin{prop}\label{Rtaucommute}
$R$ and $\tau_t$ commute, for all $t\in\mathbb{R}$.  That is, 
$R\circ\tau_t=\tau_t\circ R$. 
\end{prop}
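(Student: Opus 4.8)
The plan is to reduce the statement to the single operator-theoretic fact that the conjugate-linear involution $I$ commutes with the one-parameter unitary group $(L^{it})_{t\in\mathbb{R}}$, where $K=IL^{\frac12}$ is the polar decomposition recorded at the beginning of \S4.3. Once this commutation is available, the identity $R\circ\tau_t=\tau_t\circ R$ is obtained by a short direct computation using only the definitions of $R$ (Definition~\ref{unitaryantipode}) and $\tau_t$ (Definition~\ref{scalinggroup}).

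First I would prove that $IL^{it}=L^{it}I$ for all $t\in\mathbb{R}$. Recall from \S4.3 that $I^*=I$, $I^2=1$, and $ILI=L^{-1}$. Since $L>0$ is self-adjoint, write its spectral resolution $L=\int_0^\infty\lambda\,dE(\lambda)$; because $I$ is a conjugate-linear involution, $\Gamma\mapsto IE(\Gamma)I$ is again a projection-valued measure, and the relation $ILI=L^{-1}$ forces $IE(\Gamma)I=E(\Gamma^{-1})$ for Borel $\Gamma\subseteq(0,\infty)$. Conjugate-linearity of $I$ then gives, for any bounded Borel $g$ on $(0,\infty)$, the formula $Ig(L)I=\int\overline{g(u^{-1})}\,dE(u)$. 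Applying this with $g(u)=u^{it}=e^{it\log u}$ and using that $\log u$ is real, one gets $\overline{g(u^{-1})}=\overline{e^{-it\log u}}=e^{it\log u}=u^{it}$, hence
\[
IL^{it}I=L^{it},\qquad\text{i.e.}\qquad IL^{it}=L^{it}I.
\]
(Equivalently, $I(\log L)I=-\log L$, and conjugate-linearity turns the factor $it$ into $\overline{it}=-it$, leaving $L^{it}$ fixed.) This is the precise analogue of the familiar Tomita--Takesaki relation $J\nabla^{it}=\nabla^{it}J$, and it is the only step that is not purely formal.

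With this in hand, let $x\in A$. Since $L^{it}$ is unitary we have $(L^{it}xL^{-it})^*=L^{it}x^*L^{-it}$, so
\[
R\bigl(\tau_t(x)\bigr)=I\,(L^{it}xL^{-it})^*\,I=I L^{it}\,x^*\,L^{-it}I
=L^{it}\,(Ix^*I)\,L^{-it}=\tau_t\bigl(R(x)\bigr),
\]
where the third equality uses $IL^{it}=L^{it}I$ and $L^{-it}I=IL^{-it}$, and the final equality is the definitions of $R$ and $\tau_t$. As $x\in A$ was arbitrary, $R\circ\tau_t=\tau_t\circ R$ for all $t\in\mathbb{R}$. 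I should note that one could instead argue on the generators $(\operatorname{id}\otimes\omega)(W)$ of $A$, combining $W(L^{it}\otimes\nabla^{it})=(L^{it}\otimes\nabla^{it})W$ from Proposition~\ref{LNablaequality} with $(I\otimes J)W(I\otimes J)=W^*$ from Proposition~\ref{IJWequality}; but the computation above is shorter, and in either route the only delicate point is the commutation $IL^{it}=L^{it}I$. As in the quantum group case (cf.\ \cite{VDvN}), the fact that $W$ is merely a partial isometry causes no trouble here.
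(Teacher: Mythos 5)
Your proposal is correct and follows essentially the same route as the paper: the proof there also rests on the single fact that $ILI=L^{-1}$ together with conjugate-linearity of $I$ gives $IL^{it}I=L^{it}$, and then concludes by the identical computation $R(\tau_t(x))=IL^{it}x^*L^{-it}I=L^{it}Ix^*IL^{-it}=\tau_t(R(x))$. Your spectral-calculus justification of $IL^{it}I=L^{it}$ simply spells out in detail the step the paper states without elaboration.
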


\begin{proof}
Recall that $ILI=L^{-1}$.  Furthermore, $I$ is conjugate linear, so we have $IL^{it}I=L^{it}$. 
Therefore, we have: 
$$
R\bigl(\tau_t(x)\bigr)=I\tau_t(x)^*I=IL^{it}x^*L^{-it}I=L^{it}Ix^*IL^{-it}=\tau_t\bigl(R(x)\bigr).
$$
\end{proof}

Just as in the modular theory, let us consider $\tau_{-\frac{i}{2}}$, which is the analytic generator 
of $(\tau_t)$ at $t=-\frac{i}{2}$.  From Equation~\eqref{(scalingeqn)}, it is easy to conclude that 
the elements of the form $(\operatorname{id}\otimes\omega)(W)$, $\omega\in{\mathcal B}
({\mathcal H}_{\varphi})_*$, are contained in ${\mathcal D}(\tau_{-\frac{i}{2}})$. So $\tau_{-\frac{i}{2}}$ 
is densely defined.  Using this, we can give our definition of the antipode, in terms of its polar 
decomposition.  Observe the resemblance with the quantum group case.

\begin{defn}\label{theantipode}
Define the {\em antipode map\/} $S$ on $A$, as follows:
$$
S:=R\circ\tau_{-\frac{i}{2}}.
$$

Furthermore, with the characterization of $M=\pi_{\varphi}(A)''$ given in Equation~\eqref{(idomegaWvNclosureA)} 
and Proposition~\ref{vNqgroupoid}, we may also define $\tilde{S}$, an extension of $S$ to $M$, 
by $\tilde{S}:=\tilde{R}\circ\tilde{\tau}_{-\frac{i}{2}}$,
where $\tilde{R}:x\mapsto Ix^*I$ is a ${}^*$-anti-automorphism of $M$ such that $\tilde{R}\circ\pi_{\varphi}
=\pi_{\varphi}\circ R$, whereas $(\tilde{\tau}_t)_{t\in\mathbb{R}}$ is a (strongly continuous) one-parameter 
group of automorphisms  on $M$ given by $\tilde{\tau}_t:x\mapsto L^{it}xL^{-it}$, such that $\tilde{\tau}_t
\circ\pi_{\varphi}=\pi_{\varphi}\circ\tau_t$.  It is clear that $\tilde{S}\circ\pi_{\varphi}=\pi_{\varphi}
\circ S$.
\end{defn}

Here is an immediate consequence of the definition (compare with Proposition~5.22 of \cite{KuVa}):

\begin{prop}\label{antipodeprop}
Let $S$ be the antipode on $A$, as defined in Definition~\ref{theantipode}.  Then $S$ satisfies 
the following properties:
\begin{enumerate}
  \item $S=R\circ\tau_{-\frac{i}{2}}=\tau_{-\frac{i}{2}}\circ R$.
  \item $S$ is a closed densely-defined map having a dense range.
  \item $S$ is injective and $S^{-1}=R\circ\tau_{\frac{i}{2}}=\tau_{\frac{i}{2}}\circ R$.  We also have 
$S^{-1}$ injective with dense range.
  \item $S$ is antimultiplicative: For $x,y\in{\mathcal D}(S)$, we have $xy\in{\mathcal D}(S)$ and 
$S(xy)=S(y)S(x)$.
  \item For all $x\in{\mathcal D}(S)$, we have $S(x)^*\in{\mathcal D}(S)$ and $S\bigl(S(x)^*\bigr)^*=x$.
  \item $S^2=\tau_{-i}$.  In particular, $S^2\ne\operatorname{Id}$ in general.
  \item $RS=SR$.
  \item $S\circ\tau_t=\tau_t\circ S$ for all $t\in\mathbb{R}$. 
\end{enumerate}
Similar results hold true at the von Neumann algebra level, 
for $\tilde{S}=\tilde{R}\circ\tilde{\tau}_{-\frac{i}{2}}$.
\end{prop}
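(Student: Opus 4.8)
The plan is to derive all eight properties essentially formally from the definition $S=R\circ\tau_{-i/2}$, the already-established facts about $R$ (Definition~\ref{unitaryantipode}: $R$ is an involutive ${}^*$-anti-automorphism) and $\tau$ (Definition~\ref{scalinggroup}: $(\tau_t)$ is a norm-continuous one-parameter automorphism group on $A$), together with the commutation $R\circ\tau_t=\tau_t\circ R$ from Proposition~\ref{Rtaucommute}, and standard facts from the theory of analytic generators of one-parameter groups. The key analytic input, which I would state up front as a lemma or simply invoke as ``standard'' (cf. the appendix of \cite{KuVa}), is that for a norm-continuous one-parameter group $(\tau_t)$ the analytic generator $\tau_z$ at a complex point $z$ is a closed, densely-defined, injective map with dense range, that $\tau_z^{-1}=\tau_{-z}$, that $\tau_z\circ\tau_w\subseteq\tau_{z+w}$ with equality on suitable domains, that $\tau_z$ is an (unbounded) anti/homomorphism-compatible map in the sense $\tau_z(x)\tau_z(y)=\tau_z(xy)$ for $x,y\in\mathcal D(\tau_z)$ since each $\tau_t$ is multiplicative, and that $\tau_z(x)^*=\tau_{\bar z}(x^*)$.

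With these tools in hand, the eight items fall out in order. For (1), I would show $R$ commutes with $\tau_{-i/2}$ by analytic continuation: for fixed $x$ in a dense set of analytic vectors, both $t\mapsto R(\tau_t(x))$ and $t\mapsto\tau_t(R(x))$ extend analytically and agree on $\mathbb R$ by Proposition~\ref{Rtaucommute}, hence agree at $-i/2$; this gives $S=R\circ\tau_{-i/2}=\tau_{-i/2}\circ R$. Item (2) is then immediate: $R$ is a bounded bijection and $\tau_{-i/2}$ is closed, densely-defined with dense range, so the composition inherits these. For (3), $S^{-1}=(\tau_{-i/2})^{-1}\circ R^{-1}=\tau_{i/2}\circ R=R\circ\tau_{i/2}$, again using $R^{-1}=R$ and Proposition~\ref{Rtaucommute}; injectivity and dense range of $S^{-1}$ follow as in (2). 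Item (4): for $x,y\in\mathcal D(S)=\mathcal D(\tau_{-i/2})$ we have $\tau_{-i/2}(x)\tau_{-i/2}(y)=\tau_{-i/2}(xy)$ (so $xy\in\mathcal D(S)$) and applying the anti-multiplicative $R$ gives $S(xy)=R(\tau_{-i/2}(xy))=R(\tau_{-i/2}(y))R(\tau_{-i/2}(x))=S(y)S(x)$. Item (5): using $\tau_{-i/2}(x)^*=\tau_{i/2}(x^*)$ and $R(x)^*=R(x^*)$, one computes $S(x)^*=\tau_{i/2}(R(x^*))$, which lies in $\mathcal D(S)$, and then $S(S(x)^*)=R(\tau_{-i/2}\tau_{i/2}(R(x^*)))=R(R(x^*))=x^*$, so $S(S(x)^*)^*=x$. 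Item (6): $S^2=R\tau_{-i/2}R\tau_{-i/2}=R^2\tau_{-i/2}\tau_{-i/2}=\tau_{-i}$, using Proposition~\ref{Rtaucommute} to move $R$ past $\tau_{-i/2}$ and $R^2=\mathrm{id}$; genericity of $S^2\ne\mathrm{Id}$ follows since $\tau$ need not be trivial. Items (7), (8): $RS=R R\tau_{-i/2}=\tau_{-i/2}=\ldots$ wait—more carefully, $RS=R(R\circ\tau_{-i/2})=\tau_{-i/2}$ and $SR=(R\circ\tau_{-i/2})R=R(\tau_{-i/2}R)=R(R\tau_{-i/2})=\tau_{-i/2}$, so $RS=SR$; and $S\circ\tau_t=R\tau_{-i/2}\tau_t=R\tau_t\tau_{-i/2}=\tau_t R\tau_{-i/2}=\tau_t\circ S$ since the $\tau$'s commute among themselves and with $R$.

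Finally, for the von Neumann algebra statement I would note that everything above used only the algebraic relations and the abstract properties of analytic generators, and that Definition~\ref{theantipode} already records that $\tilde R$ is an involutive ${}^*$-anti-automorphism of $M$ with $\tilde R\circ\pi_\varphi=\pi_\varphi\circ R$ and that $(\tilde\tau_t)$ is a strongly continuous one-parameter automorphism group of $M$ with $\tilde\tau_t\circ\pi_\varphi=\pi_\varphi\circ\tau_t$; the commutation $\tilde R\circ\tilde\tau_t=\tilde\tau_t\circ\tilde R$ follows from the corresponding relation on $A$ by $\sigma$-weak density of $\pi_\varphi(A)$ in $M$. Hence the identical chain of arguments, with the only change being that $(\tilde\tau_t)$ is merely strongly (not norm) continuous—which still suffices for the standard analytic-generator calculus—yields all eight properties for $\tilde S=\tilde R\circ\tilde\tau_{-i/2}$.

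The main obstacle I anticipate is the careful bookkeeping of domains for the unbounded composition, specifically justifying that $\tau_{-i/2}(xy)=\tau_{-i/2}(x)\tau_{-i/2}(y)$ (and the analogous adjoint identity) on the natural domains, and the analytic-continuation argument establishing $R\circ\tau_{-i/2}=\tau_{-i/2}\circ R$ from $R\circ\tau_t=\tau_t\circ R$. Both are standard in the locally compact quantum group literature (see the appendix of \cite{KuVa} and Proposition~5.22 there), so I would cite that and supply only the short continuation argument and the domain identities explicitly, leaving the rest as routine.
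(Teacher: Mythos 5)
Your proposal is correct and follows essentially the same route as the paper: all eight items are deduced from the polar decomposition $S=R\circ\tau_{-\frac{i}{2}}$, the commutation $R\circ\tau_t=\tau_t\circ R$ of Proposition~\ref{Rtaucommute} (extended to the analytic point), and standard analytic-generator facts, with the von Neumann case handled by the same formal argument. The only cosmetic difference is in item (5), where the paper computes directly with the concrete operators $I$ and $L^{\frac12}$ while you argue abstractly via $\tau_z(x)^*=\tau_{\bar z}(x^*)$ and $\tau_{\frac{i}{2}}=\tau_{-\frac{i}{2}}^{-1}$; both verifications are equivalent.
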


\begin{proof}
(1). This is a consequence of Proposition~\ref{Rtaucommute}.

(2). True because that is the case for $\tau_{-\frac{i}{2}}$.

(3). $S$ is injective because $R$ and $\tau_{-\frac{i}{2}}$ are injective.  The characterization 
for $S^{-1}$ is easy to see.

(4). True because $R$ is an anti-isomorphism.

(5). If $x\in{\mathcal D}(S)$, consider $S(x)^*=(L^{\frac12}Ix^*IL^{-\frac12})^*=L^{-\frac12}IxIL^{\frac12}$, 
which is contained in $D(\tau_{-\frac{i}{2}})={\mathcal D}(S)$.  Moreover, 
$$
S\bigl(S(x)^*\bigr)^*=S(L^{-\frac12}IxIL^{\frac12})^*
=\bigl(I(L^{\frac12}L^{-\frac12}IxIL^{\frac12}L^{-\frac12})^*I\bigr)^*=(x^*)^*=x.
$$

(6), (7), (8). These are easy to verify, using (1).
\end{proof}

Here is a useful characterization of the antipode, again analogous to the quantum group case:

\begin{prop}\label{antipodeidW}
For any $\omega\in{\mathcal B}({\mathcal H}_{\varphi})_*$, we have: $(\operatorname{id}
\otimes\omega)(W)\in{\mathcal D}(S)$, and
$$
S\bigl((\operatorname{id}\otimes\omega)(W)\bigr)=(\operatorname{id}\otimes\omega)(W^*).
$$
The space $\bigl\{(\operatorname{id}\otimes\omega)(W):\omega\in{\mathcal B}
({\mathcal H}_{\varphi})_*\bigr\}$ is invariant under the scaling automorphisms $\tau_t$, 
and it forms a core for $S$.
\end{prop}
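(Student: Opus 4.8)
\emph{Proof sketch.}
The plan is to read off the statement from the polar decomposition $S=R\circ\tau_{-\frac{i}{2}}$ of Definition~\ref{theantipode}, computing the effect of $\tau_{-\frac{i}{2}}$ and of $R$ on the slices $(\operatorname{id}\otimes\omega)(W)$ one at a time. The scaling‑invariance claim is immediate from Equation~\eqref{(scalingeqn)}: for real $t$ and any $\omega\in{\mathcal B}({\mathcal H}_{\varphi})_*$ we have $\tau_t\bigl((\operatorname{id}\otimes\omega)(W)\bigr)=\bigl(\operatorname{id}\otimes\omega(\nabla^{-it}\,\cdot\,\nabla^{it})\bigr)(W)$, so $\{(\operatorname{id}\otimes\omega)(W):\omega\in{\mathcal B}({\mathcal H}_{\varphi})_*\}$ is $\tau_t$‑invariant. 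For the rest, since $\omega\mapsto(\operatorname{id}\otimes\omega)(W)$ and $\omega\mapsto(\operatorname{id}\otimes\omega)(W^*)$ are norm‑continuous and $S$ is closed (Proposition~\ref{antipodeprop}), it suffices to prove the identity $S\bigl((\operatorname{id}\otimes\omega)(W)\bigr)=(\operatorname{id}\otimes\omega)(W^*)$ for the norm‑dense supply of functionals $\omega=\omega_{\Lambda_{\tilde{\varphi}}(a),\Lambda_{\tilde{\varphi}}(b)}$ with $a,b\in{\mathcal T}_{\tilde{\varphi}}$ (these are dense because $\Lambda_{\tilde{\varphi}}(a)$, $a\in{\mathcal T}_{\tilde{\varphi}}$, is dense in ${\mathcal H}_{\varphi}={\mathcal H}_{\tilde{\varphi}}$).

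Fix such an $\omega$. By Equation~\eqref{(scalingeqn)}, $\tau_t\bigl((\operatorname{id}\otimes\omega)(W)\bigr)=(\operatorname{id}\otimes\omega_{\nabla^{it}\Lambda_{\tilde{\varphi}}(a),\,\nabla^{it}\Lambda_{\tilde{\varphi}}(b)})(W)$. Since $a,b$ are analytic for $\sigma^{\tilde{\varphi}}$, the vector functions $t\mapsto\nabla^{it}\Lambda_{\tilde{\varphi}}(a)$ and $t\mapsto\nabla^{it}\Lambda_{\tilde{\varphi}}(b)$ extend to entire functions; the usual observation that a map which is holomorphic in $z$ in the first slot and anti‑holomorphic in $z$ in the second slot becomes holomorphic after restriction to the ``diagonal'' $w=\bar z$ shows that $z\mapsto(\operatorname{id}\otimes\omega_{\nabla^{iz}\Lambda_{\tilde{\varphi}}(a),\,\nabla^{i\bar z}\Lambda_{\tilde{\varphi}}(b)})(W)$ is an entire extension of $t\mapsto\tau_t\bigl((\operatorname{id}\otimes\omega)(W)\bigr)$. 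Hence $(\operatorname{id}\otimes\omega)(W)\in{\mathcal D}(\tau_{-\frac{i}{2}})$, and evaluating at $z=-\frac{i}{2}$ together with $\nabla^{\frac12}\Lambda_{\tilde{\varphi}}(x)=\Lambda_{\tilde{\varphi}}(\sigma^{\tilde{\varphi}}_{-\frac{i}{2}}(x))$ and $\nabla^{-\frac12}\Lambda_{\tilde{\varphi}}(x)=\Lambda_{\tilde{\varphi}}(\sigma^{\tilde{\varphi}}_{\frac{i}{2}}(x))$ gives
$$\tau_{-\frac{i}{2}}\bigl((\operatorname{id}\otimes\omega_{\Lambda_{\tilde{\varphi}}(a),\Lambda_{\tilde{\varphi}}(b)})(W)\bigr)=(\operatorname{id}\otimes\omega_{\Lambda_{\tilde{\varphi}}(\sigma^{\tilde{\varphi}}_{-\frac{i}{2}}(a)),\,\Lambda_{\tilde{\varphi}}(\sigma^{\tilde{\varphi}}_{\frac{i}{2}}(b))})(W).$$

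Next I apply $R$ to this (so $S=R\circ\tau_{-\frac{i}{2}}$), using Proposition~\ref{unitaryantipodeeqn} with $a':=\sigma^{\tilde{\varphi}}_{-\frac{i}{2}}(a)$ and $b':=\sigma^{\tilde{\varphi}}_{\frac{i}{2}}(b)$ (both in ${\mathcal T}_{\tilde{\varphi}}$), and then simplify the resulting powers of $\sigma^{\tilde{\varphi}}$ by means of $\sigma^{\tilde{\varphi}}_z(x)^*=\sigma^{\tilde{\varphi}}_{\bar z}(x^*)$ and the group law; the first slot collapses to $\Lambda_{\tilde{\varphi}}(\sigma^{\tilde{\varphi}}_{-i}(b^*))$ and the second to $\Lambda_{\tilde{\varphi}}(a^*)$, i.e. $S\bigl((\operatorname{id}\otimes\omega_{\Lambda_{\tilde{\varphi}}(a),\Lambda_{\tilde{\varphi}}(b)})(W)\bigr)=(\operatorname{id}\otimes\omega_{\Lambda_{\tilde{\varphi}}(\sigma^{\tilde{\varphi}}_{-i}(b^*)),\Lambda_{\tilde{\varphi}}(a^*)})(W)$. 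On the other hand, combining Lemma~\ref{aplemma2} with Lemma~\ref{omega_xizetaLem}\,(2) yields exactly $(\operatorname{id}\otimes\omega_{\Lambda_{\tilde{\varphi}}(a),\Lambda_{\tilde{\varphi}}(b)})(W^*)=(\operatorname{id}\otimes\omega_{\Lambda_{\tilde{\varphi}}(\sigma^{\tilde{\varphi}}_{-i}(b^*)),\Lambda_{\tilde{\varphi}}(a^*)})(W)$, so the two agree. Passing to a general $\omega$ by norm approximation and the closedness of $S$ as above gives $(\operatorname{id}\otimes\omega)(W)\in{\mathcal D}(S)$ and $S\bigl((\operatorname{id}\otimes\omega)(W)\bigr)=(\operatorname{id}\otimes\omega)(W^*)$ for all $\omega$. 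For the core assertion, note the span of the slices then lies in ${\mathcal D}(S)$, is $\tau_t$‑invariant for every real $t$, and (running $S=R\circ\tau_{-\frac{i}{2}}$ backwards, $R$ being a bounded bijection) consists of elements analytic for $\tau$; being norm‑dense in $A$ by Proposition~\ref{idomegaWclosureA}, it is a core for $\tau_{-\frac{i}{2}}$ by the standard fact that a dense $\tau$‑invariant space of analytic vectors is such a core, hence a core for $S$.

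The main obstacle is the analytic‑continuation bookkeeping in the second and third steps: one must be careful which half‑power of $\nabla$ acts in which leg of $\omega_{\xi,\zeta}$ (the second slot is conjugate‑linear), and careful about the exact placement of $\sigma^{\tilde{\varphi}}_{\pm\frac{i}{2}}$ and $\sigma^{\tilde{\varphi}}_{-i}$ after $R$ is applied, since a single sign slip would spoil the match with Lemma~\ref{aplemma2}. Once these are pinned down, the remaining manipulations are routine one‑parameter‑group and Tomita--Takesaki computations, and in particular no use is made of $W$ being unitary—only that the slices $(\operatorname{id}\otimes\omega)(W)$ generate $A$ and satisfy the identities already established. $\square$
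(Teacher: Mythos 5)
Your argument is correct, but it reaches the identity by a different route than the paper. The paper proves $S\bigl((\operatorname{id}\otimes\omega)(W)\bigr)=(\operatorname{id}\otimes\omega)(W^*)$ for \emph{all} $\omega$ at once by a matrix-element computation: writing $x=(\operatorname{id}\otimes\omega)(W)$, $\tilde{x}=(\operatorname{id}\otimes\bar{\omega})(W)$, it evaluates $\bigl\langle S(x)\zeta,\xi\bigr\rangle$ for $\xi\in{\mathcal D}(K)$, $\zeta\in{\mathcal D}(K^*)$ using $K=IL^{\frac12}$, $K^*=IL^{-\frac12}$ and the intertwining $K\tilde{x}\xi=xK\xi$ of Proposition~\ref{idomegaWK}, obtaining $S(x)=\tilde{x}^*$ directly, with the domain and core statements read off from Equation~\eqref{(scalingeqn)}. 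You instead compute $\tau_{-\frac{i}{2}}$ explicitly on slices with Tomita-algebra vector functionals by analytically continuing Equation~\eqref{(scalingeqn)} (your half-power bookkeeping $\nabla^{\frac12}\Lambda_{\tilde{\varphi}}(a)=\Lambda_{\tilde{\varphi}}(\sigma^{\tilde{\varphi}}_{-\frac{i}{2}}(a))$, $\nabla^{-\frac12}\Lambda_{\tilde{\varphi}}(b)=\Lambda_{\tilde{\varphi}}(\sigma^{\tilde{\varphi}}_{\frac{i}{2}}(b))$ is right), apply $R$ via Proposition~\ref{unitaryantipodeeqn}, match the outcome $(\operatorname{id}\otimes\omega_{\Lambda_{\tilde{\varphi}}(\sigma^{\tilde{\varphi}}_{-i}(b^*)),\Lambda_{\tilde{\varphi}}(a^*)})(W)$ with $(\operatorname{id}\otimes\omega)(W^*)$ via Lemma~\ref{aplemma2} and Lemma~\ref{omega_xizetaLem}\,(2), and then pass to general $\omega$ by norm approximation and closedness of $S$. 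What each buys: the paper's $K$-based argument avoids all analytic-continuation bookkeeping and treats arbitrary $\omega$ in one stroke (the hard work having been buried in Proposition~\ref{idomegaWK}); your route instead leans on Proposition~\ref{unitaryantipodeeqn} and Lemma~\ref{aplemma2}, gives an explicit formula for $\tau_{-\frac{i}{2}}$ on the analytic slices as a by-product, but needs the extra approximation step at the end. One small overstatement: for general $\omega$ you only know $(\operatorname{id}\otimes\omega)(W)\in{\mathcal D}(\tau_{-\frac{i}{2}})$, not that it is $\tau$-analytic; this does not affect the core claim, since the span of the special slices $(\operatorname{id}\otimes\omega_{\Lambda_{\tilde{\varphi}}(a),\Lambda_{\tilde{\varphi}}(b)})(W)$, $a,b\in{\mathcal T}_{\tilde{\varphi}}$, is already dense, $\tau_t$-invariant and consists of analytic elements, hence is a core, and any subspace of ${\mathcal D}(S)$ containing a core is a core — which is the same standard fact about analytic generators the paper itself invokes.
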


\begin{proof}
(1). We know from Equation~\eqref{(scalingeqn)} that the space $\bigl\{(\operatorname{id}
\otimes\omega)(W):\omega\in{\mathcal B}({\mathcal H}_{\varphi})_*\bigr\}$, which is dense 
in all of $A$, is contained in ${\mathcal D}(\tau_{-\frac{i}{2}})={\mathcal D}(S)$ and is invariant 
under the $\tau_t$.  It thus forms a core for $S$.

(2). Let $\omega\in{\mathcal B}({\mathcal H}_{\varphi})_*$.  Write 
$x=(\operatorname{id}\otimes\omega)(W)$ and $\tilde{x}
=(\operatorname{id}\otimes\bar{\omega})(W)$.  By Proposition~\ref{idomegaWK}, 
we know that for $\xi\in{\mathcal D}(K)$, we have $\tilde{x}\xi\in{\mathcal D}(K)$ 
and $K\tilde{x}\xi=xK\xi$. 

Let $\xi\in{\mathcal D}(K)={\mathcal D}(L^{\frac12})$ and $\zeta\in{\mathcal D}(K^*)={\mathcal D}(L^{-\frac12})$ 
be arbitrary.  Then 
$\bigl\langle\tau_{-\frac{i}{2}}(x)\zeta,\xi\bigr\rangle
=\langle L^{\frac12}xL^{-\frac12}\zeta,\xi\rangle
=\langle xL^{-\frac12}\zeta,L^{\frac12}\xi\rangle$. 
Using this result, since $I$ is conjugate linear, we have:
$$
\bigl\langle S(x)\zeta,\xi\bigr\rangle=\bigl\langle\tau_{-\frac{i}{2}}(Ix^*I)\zeta,\xi\bigr\rangle
=\langle Ix^*IL^{-\frac12}\zeta,L^{\frac12}\xi\rangle
=\overline{\langle x^*IL^{-\frac12}\zeta,IL^{\frac12}\xi\rangle}.
$$
But $K=IL^{\frac12}$ and $K^*=L^{\frac12}I=IL^{-\frac12}$.  So the above equation becomes:
$$
\bigl\langle S(x)\zeta,\xi\bigr\rangle
=\overline{\langle x^*K^*\zeta,K\xi\rangle}=\overline{\langle K^*\zeta,xK\xi\rangle}
=\overline{\langle K^*\zeta,K\tilde{x}\xi\rangle}=\langle\zeta,\tilde{x}\xi\rangle
=\langle\tilde{x}^*\zeta,\xi\rangle,
$$
where we are using $K\tilde{x}\xi=xK\xi$, and the fact that $K$ is involutive and conjugate linear. 

Since this is true for any $\xi\in{\mathcal D}(K)$, $\zeta\in{\mathcal D}(K^*)$, both being dense 
in ${\mathcal H}$, it follows that $S(x)=\tilde{x}^*$, as elements in $A=\pi(A)\,\bigl(\subseteq{\mathcal B}
({\mathcal H})\bigr)$.  Or,  
$$
S\bigl((\operatorname{id}\otimes\omega)(W)\bigr)
=\bigl((\operatorname{id}\otimes\bar{\omega})(W)\bigr)^*
=(\operatorname{id}\otimes\omega)(W^*).
$$
\end{proof}

The following result is the {\em strong left invariance\/} of the weight $\varphi$, which looks 
familiar from the Kac algebra theory and the locally compact quantum group theory.

\begin{prop}\label{strongleftinvariance}
For $a,b\in{\mathfrak N}_{\varphi}$, we have $(\operatorname{id}\otimes\varphi)
\bigl(\Delta(a^*)(1\otimes b)\bigr)\in{\mathcal D}(S)$, and
$$
S\bigl((\operatorname{id}\otimes\varphi)(\Delta(a^*)(1\otimes b))\bigr)
=(\operatorname{id}\otimes\varphi)\bigl((1\otimes a^*)(\Delta b)\bigr).
$$
Also, $\operatorname{span}\bigl\{(\operatorname{id}\otimes\varphi)
(\Delta(a^*)(1\otimes b)):a,b\in{\mathfrak N}_{\varphi}\bigr\}$ forms a core for $S$.
\end{prop}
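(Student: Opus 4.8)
The plan is to reduce the strong left invariance statement to the already-established identity $S\bigl((\operatorname{id}\otimes\omega)(W)\bigr)=(\operatorname{id}\otimes\omega)(W^*)$ from Proposition~\ref{antipodeidW}, by recognizing both $(\operatorname{id}\otimes\varphi)(\Delta(a^*)(1\otimes b))$ and $(\operatorname{id}\otimes\varphi)((1\otimes a^*)(\Delta b))$ as slices of $W$ and $W^*$ against a suitable normal functional. First I would recall Proposition~\ref{idomegaW}, which says that for $r,s\in{\mathfrak N}_{\varphi}$ we have $(\operatorname{id}\otimes\omega_{\Lambda_{\varphi}(s),\Lambda_{\varphi}(r)})(W)=(\operatorname{id}\otimes\varphi)\bigl(\Delta(r^*)(1\otimes s)\bigr)$. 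Taking $r=a$, $s=b$ gives $(\operatorname{id}\otimes\varphi)\bigl(\Delta(a^*)(1\otimes b)\bigr)=(\operatorname{id}\otimes\omega_{\Lambda_{\varphi}(b),\Lambda_{\varphi}(a)})(W)$. Since Proposition~\ref{antipodeidW} tells us every element $(\operatorname{id}\otimes\omega)(W)$ lies in ${\mathcal D}(S)$ and $S$ sends it to $(\operatorname{id}\otimes\omega)(W^*)$, we immediately get that $(\operatorname{id}\otimes\varphi)(\Delta(a^*)(1\otimes b))\in{\mathcal D}(S)$ and $S$ of it equals $(\operatorname{id}\otimes\omega_{\Lambda_{\varphi}(b),\Lambda_{\varphi}(a)})(W^*)$.

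The remaining step is to identify $(\operatorname{id}\otimes\omega_{\Lambda_{\varphi}(b),\Lambda_{\varphi}(a)})(W^*)$ with $(\operatorname{id}\otimes\varphi)\bigl((1\otimes a^*)(\Delta b)\bigr)$. This should follow from the defining property of $W^*$ in Proposition~\ref{Wdefn} together with the GNS data, essentially by the adjoint computation dual to the proof of Proposition~\ref{idomegaW}: for $c,d\in{\mathfrak N}_{\eta}$ (or more directly, against vectors $\Lambda(c),\Lambda(d)$), one computes
\begin{align}
\bigl\langle(\operatorname{id}\otimes\omega_{\Lambda_{\varphi}(b),\Lambda_{\varphi}(a)})(W^*)\Lambda(c),\Lambda(d)\bigr\rangle
&=\bigl\langle\Lambda(c)\otimes\Lambda_{\varphi}(b),W(\Lambda(d)\otimes\Lambda_{\varphi}(a))\bigr\rangle,
\notag
\end{align}
and then using $W=(W^*)^*$ and the characterization of $W^*$, i.e. $W^*(\Lambda(d)\otimes\Lambda_{\varphi}(a))=(\Lambda\otimes\Lambda_{\varphi})((\Delta a)(d\otimes1))$, unwinds to $(\eta\otimes\varphi)\bigl((c^*\otimes1)(\Delta a^*)\cdots\bigr)$-type expressions; after taking adjoints inside and rearranging one lands on $\eta\bigl(d^*(\operatorname{id}\otimes\varphi)[(1\otimes a^*)(\Delta b)]c\bigr)$, giving the claimed identity as an element of $\pi(A)=A$. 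Here I would be slightly careful about which GNS space is in play, but since all elements involved are in $A$ and the calculation only uses the defining relations of $W$ and $W^*$ plus invariance of $\varphi$, this is routine. One should also note, for the domain claim, that $(1\otimes a^*)(\Delta b)\in{\mathfrak M}_{\operatorname{id}\otimes\varphi}$ by left invariance of $\varphi$ (since $a,b\in{\mathfrak N}_{\varphi}$, so $a^*\in{\mathfrak N}_{\varphi}^*$ and $\Delta b\in\overline{\mathfrak N}_{\operatorname{id}\otimes\varphi}$), so the right-hand side is a legitimate element of $A$.

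For the core statement, I would invoke Lemma~\ref{densesubsetA}, which shows $\operatorname{span}\bigl\{(\operatorname{id}\otimes\varphi)(\Delta(a^*)(1\otimes b)):a,b\in{\mathfrak N}_{\varphi}\bigr\}$ is norm-dense in $A$, combined with the identification above expressing these as $(\operatorname{id}\otimes\omega_{\Lambda_{\varphi}(b),\Lambda_{\varphi}(a)})(W)$. Since Proposition~\ref{antipodeidW} already asserts that $\bigl\{(\operatorname{id}\otimes\omega)(W):\omega\in{\mathcal B}({\mathcal H}_{\varphi})_*\bigr\}$ is a core for $S$ and is $\tau_t$-invariant, and since the $\omega_{\Lambda_{\varphi}(b),\Lambda_{\varphi}(a)}$ with $a,b\in{\mathfrak N}_{\varphi}$ (equivalently $a,b\in{\mathcal T}_{\varphi}$, which is dense) approximate an arbitrary $\omega\in{\mathcal B}({\mathcal H}_{\varphi})_*$ in norm while $\omega\mapsto(\operatorname{id}\otimes\omega)(W)$ is bounded, the span in question is dense in that core in the graph norm of $S$; hence it is itself a core for $S$. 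I do not anticipate a serious obstacle here: the real content is entirely carried by Propositions~\ref{idomegaW} and \ref{antipodeidW}, and the only thing requiring genuine (though short) computation is the $W^*$-slice identity, which is the mild obstacle in that one must track the conjugations and the GNS identifications correctly.
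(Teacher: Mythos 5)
Your proposal is correct and follows essentially the paper's own argument: identify $(\operatorname{id}\otimes\varphi)\bigl(\Delta(a^*)(1\otimes b)\bigr)$ with $(\operatorname{id}\otimes\omega_{\Lambda_{\varphi}(b),\Lambda_{\varphi}(a)})(W)$ via Proposition~\ref{idomegaW}, apply Proposition~\ref{antipodeidW} to get the $W^*$-slice, identify that slice with $(\operatorname{id}\otimes\varphi)\bigl((1\otimes a^*)(\Delta b)\bigr)$, and obtain the core statement from norm-density of the relevant functionals together with boundedness of $\omega\mapsto(\operatorname{id}\otimes\omega)(W)$ and $\omega\mapsto(\operatorname{id}\otimes\omega)(W^*)$. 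The only (cosmetic) difference is in the last identification: the paper gets it at once from Lemma~\ref{omega_xizetaLem}\,(2) and a second use of Proposition~\ref{idomegaW}, namely $(\operatorname{id}\otimes\omega_{\Lambda_{\varphi}(b),\Lambda_{\varphi}(a)})(W^*)=\bigl[(\operatorname{id}\otimes\varphi)(\Delta(b^*)(1\otimes a))\bigr]^*$, whereas you redo the pairing computation against $\Lambda(c)$, $\Lambda(d)$ by hand (and in doing so you should apply the $W^*$-formula to $\Lambda(c)\otimes\Lambda_{\varphi}(b)$ rather than to $\Lambda(d)\otimes\Lambda_{\varphi}(a)$), which is routine and lands on the same identity.
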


\begin{proof}
Recall the result of Proposition~\ref{idomegaW}.  We have: 
$$
(\operatorname{id}\otimes\varphi)\bigl(\Delta(a^*)(1\otimes b)\bigr)
=(\operatorname{id}\otimes\omega_{\Lambda_{\varphi}(b),\Lambda_{\varphi}(a)})(W).
$$
Such elements span a dense core for $S$.  By Proposition~\ref{antipodeidW}, we have: 
\begin{align}
S\bigl((\operatorname{id}\otimes\varphi)(\Delta(a^*)(1\otimes b))\bigr)
%=S\bigl((\operatorname{id}\otimes\omega_{\Lambda_{\varphi}(b),\Lambda_{\varphi}(a)})(W)\bigr)
%\notag \\
&=(\operatorname{id}\otimes\omega_{\Lambda_{\varphi}(b),\Lambda_{\varphi}(a)})(W^*)
=\bigl((\operatorname{id}\otimes\omega_{\Lambda_{\varphi}(a),\Lambda_{\varphi}(b)})(W)\bigr)^*
\notag \\
&=\bigl[(\operatorname{id}\otimes\varphi)(\Delta(b^*)(1\otimes a))\bigr]^*
=(\operatorname{id}\otimes\varphi)\bigl((1\otimes a^*)(\Delta b)\bigr).
\notag
\end{align}
\end{proof}

\section{Properties of the antipode map}\label{sec5}

In this section, we will gather various properties involving the antipode map. 
We will first consider some results relating the modular group $(\sigma^{\varphi}_t)$ 
and the scaling group $(\tau_t)$, with the maps $R$, $S$, $\Delta$.  For convenience, 
let us from now on denote $\sigma^{\varphi}_t$ by $\sigma_t$.

\subsection{Some relations between $(\sigma_t)$, $(\tau_t)$ and the maps $R$, $S$, $\Delta$}

\begin{prop}\label{Deltasigma_t}
\begin{enumerate}
  \item For all $x\in M=\pi_{\varphi}(A)''$ and $t\in\mathbb{R}$, we have:
$\tilde{\Delta}\bigl(\sigma^{\tilde{\varphi}}_t(x)\bigr)=(\tilde{\tau}_t\otimes
\sigma^{\tilde{\varphi}}_t)(\tilde{\Delta}x)$.
  \item Similarly, for $x\in A$ and $t\in\mathbb{R}$, we have:
$\Delta\bigl(\sigma_t(x)\bigr)=(\tau_t\otimes\sigma_t)(\Delta x)$, where 
$\sigma_t$ and $\tau_t$ are naturally extended to $M(A)$.
\end{enumerate}
\end{prop}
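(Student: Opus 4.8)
\textbf{Proof plan for Proposition~\ref{Deltasigma_t}.}

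The plan is to deduce both statements from the behavior of $W$ under the one-parameter groups $(L^{it})$ and $(\nabla^{it})$, exactly as in the quantum group case, but being careful with the extra factor $E$ since $W$ is only a partial isometry. The two halves of the statement are equivalent: part (2) is the restriction of part (1) to the $C^*$-algebra $A=\pi_\varphi(A)$, using that $\sigma^{\tilde\varphi}_t$ restricts to $\sigma_t$ on $A$, that $\tilde\tau_t$ restricts to $\tau_t$ on $A$, and that $\tilde\Delta|_A=\Delta$ (Proposition~\ref{vNqgroupoid}); so I would prove (1) and state (2) as an immediate corollary. Recall $\sigma^{\tilde\varphi}_t(x)=\nabla^{it}x\nabla^{-it}$ by the modular theory of \S4.2, $\tilde\tau_t(x)=L^{it}xL^{-it}$, and $\tilde\Delta(x)=W^*(1\otimes x)W$ by Definition~\ref{theantipode} and Proposition~\ref{vNqgroupoid}.

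The key computation: for $x\in M$,
\begin{align}
\tilde\Delta\bigl(\sigma^{\tilde\varphi}_t(x)\bigr)
&=W^*\bigl(1\otimes\nabla^{it}x\nabla^{-it}\bigr)W   \notag \\
&=W^*(1\otimes\nabla^{it})(1\otimes x)(1\otimes\nabla^{-it})W.   \notag
\end{align}
Now I would insert $L^{it}L^{-it}=1$ in the first slot to write $1\otimes\nabla^{it}=(L^{-it}\otimes1)(L^{it}\otimes\nabla^{it})$, and likewise $1\otimes\nabla^{-it}=(L^{-it}\otimes\nabla^{-it})(L^{it}\otimes1)$. Using Proposition~\ref{LNablaequality}, namely $W(L^{it}\otimes\nabla^{it})=(L^{it}\otimes\nabla^{it})W$ and its adjoint form $W^*(L^{it}\otimes\nabla^{it})=(L^{it}\otimes\nabla^{it})W^*$, together with $(L^{it}\otimes\nabla^{it})W^*=W^*(L^{it}\otimes\nabla^{it})$ again, the factors $(L^{it}\otimes\nabla^{it})$ slide through $W^*$ on the left and $(L^{-it}\otimes\nabla^{-it})$ through $W$ on the right. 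After regrouping, the expression becomes
\begin{align}
&(L^{-it}\otimes1)(L^{it}\otimes\nabla^{it})W^*(1\otimes x)W(L^{-it}\otimes\nabla^{-it})(L^{it}\otimes1)  \notag \\
&=(L^{it}\otimes1)(L^{-it}\otimes\nabla^{it})\,W^*(1\otimes x)W\,(L^{it}\otimes\nabla^{-it})(L^{-it}\otimes1),  \notag
\end{align}
and collecting the middle factor $W^*(1\otimes x)W=\tilde\Delta(x)$ one should land on $(L^{it}\otimes\nabla^{it})\tilde\Delta(x)(L^{-it}\otimes\nabla^{-it})=(\tilde\tau_t\otimes\sigma^{\tilde\varphi}_t)(\tilde\Delta x)$. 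I expect this to require writing it more carefully than above — the cleanest route is probably to verify directly that $(L^{it}\otimes\nabla^{it})W^*(1\otimes x)W(L^{-it}\otimes\nabla^{-it})=W^*(L^{it}\otimes\nabla^{it})(1\otimes x)(L^{-it}\otimes\nabla^{-it})W$ by sliding $(L^{it}\otimes\nabla^{it})$ through $W^*$ using Proposition~\ref{LNablaequality}, then noting $(L^{it}\otimes\nabla^{it})(1\otimes x)(L^{-it}\otimes\nabla^{-it})=(1\otimes\nabla^{it}x\nabla^{-it})=(1\otimes\sigma^{\tilde\varphi}_t(x))$, so the right side is $\tilde\Delta(\sigma^{\tilde\varphi}_t(x))$, while the left side is by definition $(\tilde\tau_t\otimes\sigma^{\tilde\varphi}_t)(\tilde\Delta x)$.

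The main obstacle is purely bookkeeping: making sure the commutation $W(L^{it}\otimes\nabla^{it})=(L^{it}\otimes\nabla^{it})W$ from Proposition~\ref{LNablaequality} is applied in the right order and that the factor $E=W^*W$ causes no trouble — here it does not, because $E$ never appears once we keep $W^*(1\otimes x)W$ intact and only push the bounded unitaries $L^{it}\otimes\nabla^{it}$ through. An alternative, even shorter, route I would consider is to first establish the identity at the level of the leg of $W$: for $\omega\in\mathcal B(\mathcal H_\varphi)_*$, Equation~\eqref{(scalingeqn)} already gives $L^{it}(\operatorname{id}\otimes\omega)(W)L^{-it}=(\operatorname{id}\otimes\omega(\nabla^{-it}\cdot\nabla^{it}))(W)$, i.e. $\tau_t((\operatorname{id}\otimes\omega)(W))=(\operatorname{id}\otimes\omega\circ\sigma^{\tilde\varphi}_{-t})(W)$; combining this with $\tilde\Delta((\operatorname{id}\otimes\omega)(W))=(\operatorname{id}\otimes\operatorname{id}\otimes\omega)(W_{13}W_{23})$ (the analogue of Proposition~\ref{Deltaidomega}) and the density of such elements in $M$, one reads off (1). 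Either way the content is the same; I would present the direct $W^*(1\otimes x)W$ computation as the cleanest.
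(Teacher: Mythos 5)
Your proposal is correct and, in its final "cleanest route" form, is exactly the paper's proof: write $1\otimes\sigma^{\tilde{\varphi}}_t(x)=(L^{it}\otimes\nabla^{it})(1\otimes x)(L^{-it}\otimes\nabla^{-it})$, slide $L^{it}\otimes\nabla^{it}$ through $W^*$ and $W$ via Proposition~\ref{LNablaequality} to get $(L^{it}\otimes\nabla^{it})\tilde{\Delta}(x)(L^{-it}\otimes\nabla^{-it})$, and deduce part (2) by restricting along $\pi_{\varphi}$, which intertwines $\sigma_t$, $\tau_t$, $\Delta$ with their von Neumann algebra extensions. The only blemish is the intermediate regrouping with the inserted factors $(L^{-it}\otimes 1)$, $(L^{it}\otimes 1)$, which as written does not commute past $W^*$ and $W$ and is unnecessary; the direct computation you settle on avoids it entirely.
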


\begin{proof}
By definition, we know that $\sigma^{\tilde{\varphi}}_t(x)=\nabla^{it}x\nabla^{-it}$, 
$x\in M$.  So we have:
\begin{align}
\tilde{\Delta}\bigl(\sigma^{\tilde{\varphi}}_t(x)\bigr)
&=\tilde{\Delta}(\nabla^{it}x\nabla^{-it})=W^*(1\otimes\nabla^{it}x\nabla^{-it})W
\notag \\
&=W^*(L^{it}\otimes\nabla^{it})(1\otimes x)(L^{-it}\otimes\nabla^{-it})W   \notag \\
&=(L^{it}\otimes\nabla^{it})W^*(1\otimes x)W(L^{-it}\otimes\nabla^{-it})
=(\tilde{\tau}_t\otimes\sigma^{\tilde{\varphi}}_t)(\tilde{\Delta}x),
\notag
\end{align}
by Proposition~\ref{LNablaequality} and by definition of $\tilde{\tau}_t$.  This proves 
the statement at the von Neumann algebra level.  

Meanwhile, since $\sigma^{\tilde{\varphi}}_t\circ\pi_{\varphi}=\pi_{\varphi}\circ\sigma_t$, 
$\tilde{\tau}_t\circ\pi_{\varphi}=\pi_{\varphi}\circ\tau_t$, and $\tilde{\Delta}
\circ\pi_{\varphi}=(\pi_{\varphi}\otimes\pi_{\varphi})\circ\Delta$, it follows 
immediately that $\Delta\bigl(\sigma_t(x)\bigr)=(\tau_t\otimes\sigma_t)(\Delta x)$, 
for $x\in A$.
\end{proof}

In addition to giving a useful relation, the above proposition actually characterizes the 
scaling group $(\tau_t)$, in the sense below:

\begin{cor}
The equation $\Delta\bigl(\sigma_t(x)\bigr)=(\tau_t\otimes\sigma_t)(\Delta x)$, $\forall x\in A$, 
$\forall t\in\mathbb{R}$, uniquely characterizes $(\tau_t)$.
\end{cor}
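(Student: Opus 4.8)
The plan is to show that any one-parameter group $(\tau'_t)_{t\in\mathbb{R}}$ of $*$-automorphisms of $A$ (extended, as in Proposition~\ref{Deltasigma_t}, to $M(A)$, so that $\tau'_t\otimes\sigma_t$ is defined on $M(A\otimes A)$) satisfying
$$
\Delta\bigl(\sigma_t(x)\bigr)=(\tau'_t\otimes\sigma_t)(\Delta x),\qquad\forall x\in A,\ \forall t\in\mathbb{R},
$$
must coincide with $(\tau_t)$. Comparing this hypothesis with Proposition~\ref{Deltasigma_t}\,(2), which states $\Delta\bigl(\sigma_t(x)\bigr)=(\tau_t\otimes\sigma_t)(\Delta x)$, we immediately get $(\tau_t\otimes\sigma_t)(\Delta x)=(\tau'_t\otimes\sigma_t)(\Delta x)$ for all $x\in A$ and all $t$.

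Next I would strip off the modular group acting on the second leg. Since $\sigma_t$ is a $*$-automorphism of $M(A)$ with inverse $\sigma_{-t}$, and $\operatorname{id}\otimes\sigma_t$ commutes with $\tau_t\otimes\operatorname{id}$ and with $\tau'_t\otimes\operatorname{id}$ as automorphisms of $M(A\otimes A)$, applying $\operatorname{id}\otimes\sigma_{-t}$ to the equality above yields
$$
(\tau_t\otimes\operatorname{id})(\Delta x)=(\tau'_t\otimes\operatorname{id})(\Delta x),\qquad\forall x\in A,\ \forall t\in\mathbb{R}.
$$
Now apply a slice $\operatorname{id}\otimes\omega$, $\omega\in A^*$, to the second leg. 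For any $*$-automorphism $\alpha$ of $M(A)$ and any $z\in M(A\otimes A)$ one has $(\operatorname{id}\otimes\omega)\bigl((\alpha\otimes\operatorname{id})(z)\bigr)=\alpha\bigl((\operatorname{id}\otimes\omega)(z)\bigr)$ — this is immediate on elementary tensors in $A\otimes A$, extends by boundedness and density, and then by strict continuity to all of $M(A\otimes A)$. Hence
$$
\tau_t\bigl((\operatorname{id}\otimes\omega)(\Delta x)\bigr)=\tau'_t\bigl((\operatorname{id}\otimes\omega)(\Delta x)\bigr),\qquad\forall x\in A,\ \forall\omega\in A^*.
$$

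Finally I would invoke the fullness of $\Delta$. As recalled after Definition~\ref{definitionlcqgroupoid} (see Definition~3.1 and Lemma~3.2 of Part~I, and the way fullness is used in the proof of Lemma~\ref{densesubsetA}), the elements $(\operatorname{id}\otimes\omega)(\Delta x)$ with $x\in A$, $\omega\in A^*$, span a norm-dense subspace of $A$. Since $\tau_t$ and $\tau'_t$ are isometric (being $*$-automorphisms), the equality just obtained on this dense set forces $\tau_t=\tau'_t$ on all of $A$, for every $t$; that is, $(\tau_t)$ is the unique one-parameter automorphism group satisfying the stated equation. The only point requiring real care is the middle step: justifying the slice-map manipulations and the cancellation of the second-leg automorphism $\sigma_t$ at the multiplier-algebra level (well-definedness of $\tau'_t\otimes\sigma_t$ and of $\operatorname{id}\otimes\omega$ on $M(A\otimes A)$, together with the relevant commutation and slicing identities), since $\Delta x$ lies in $M(A\otimes A)$ rather than in $A\otimes A$; these are routine but should be spelled out, and everything else follows directly from Proposition~\ref{Deltasigma_t} and the fullness of $\Delta$.
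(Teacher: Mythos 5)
Your proposal is correct and follows essentially the same route as the paper: the paper also uses fullness of $\Delta$ together with the identity $\tau_t\bigl((\operatorname{id}\otimes\omega)(\Delta x)\bigr)=(\operatorname{id}\otimes\omega\circ\sigma_{-t})\bigl(\Delta(\sigma_t(x))\bigr)$ on the dense span of the $(\operatorname{id}\otimes\omega)(\Delta x)$, which is your cancellation-and-slicing step combined into one line. Phrasing it as agreement of two candidate groups $\tau_t$ and $\tau'_t$ rather than as an explicit formula determining $\tau_t$ is only a cosmetic difference.
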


\begin{proof}
By the fullness of $\Delta$, the elements of the form $(\operatorname{id}\otimes\omega)
(\Delta x)$, for $x\in A$, $\omega\in A^*$, span a dense subspace in $A$.  But note that 
$$
\tau_t\bigl((\operatorname{id}\otimes\omega)(\Delta x)\bigr)
=(\operatorname{id}\otimes\omega\circ\sigma_{-t})\bigl((\tau_t\otimes\sigma_t)(\Delta x)\bigr)
=(\operatorname{id}\otimes\omega\circ\sigma_{-t})\bigl(\Delta(\sigma_t(x))\bigr).
$$
This shows that $\tau_t$ is completely determined by $(\sigma_t)$ and $\Delta$.
\end{proof}

In particular, we observe from the Corollary that $(\tau_t)$ does not explicitly depend on 
the choice of $\psi$.  In the below is another result concerning $(\tau_t)$.

\begin{prop}\label{Deltatau_t}
\begin{enumerate}
  \item For all $x\in A$ and $t\in\mathbb{R}$, we have: $$\Delta\bigl(\tau_t(x)\bigr)
=(\tau_t\otimes\tau_t)(\Delta x).$$
  \item For $x\in M=\pi_{\varphi}(A)''$ and $t\in\mathbb{R}$, we have: $\tilde{\Delta}
\bigl(\tilde{\tau}_t(x)\bigr)=(\tilde{\tau}_t\otimes\tilde{\tau}_t)(\tilde{\Delta} x)$.
\end{enumerate}
\end{prop}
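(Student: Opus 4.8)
The plan is to prove the von Neumann algebra statement (2) first, using the description $\tilde\Delta(x)=W^*(1\otimes x)W$ from Proposition~\ref{DeltaW} and Proposition~\ref{vNqgroupoid}, and then to deduce (1) by restriction along $\pi_{\varphi}$, exactly as in the proof of Proposition~\ref{Deltasigma_t}. The two ingredients are the ``leg-$13$--$23$'' formula
$$
\tilde\Delta\bigl((\operatorname{id}\otimes\omega)(W)\bigr)=(\operatorname{id}\otimes\operatorname{id}\otimes\omega)(W_{13}W_{23}),\qquad\omega\in{\mathcal B}({\mathcal H}_{\varphi})_*,
$$
which follows from $\tilde\Delta(z)=W^*(1\otimes z)W$ together with $W_{12}^*W_{23}W_{12}=W_{13}W_{23}$ (Proposition~\ref{Wpentagon_alt}\,(2)); and the operator form of Equation~\eqref{(scalingeqn)}, that is, the rearrangement $(L^{it}\otimes\operatorname{id})W(L^{-it}\otimes\operatorname{id})=(\operatorname{id}\otimes\nabla^{-it})W(\operatorname{id}\otimes\nabla^{it})$ of Proposition~\ref{LNablaequality}.

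Since both $x\mapsto\tilde\Delta(\tilde\tau_t(x))$ and $x\mapsto(\tilde\tau_t\otimes\tilde\tau_t)(\tilde\Delta x)$ are normal ${}^*$-homomorphisms of $M=\pi_{\varphi}(A)''$ into $M\otimes M$, and since the elements $y=(\operatorname{id}\otimes\omega)(W)$ generate $M$ by Equation~\eqref{(idomegaWvNclosureA)}, it suffices to check the identity on such $y$. By Equation~\eqref{(scalingeqn)} one has $\tilde\tau_t(y)=L^{it}yL^{-it}=\bigl(\operatorname{id}\otimes\omega(\nabla^{-it}\,\cdot\,\nabla^{it})\bigr)(W)$, so the leg-$13$--$23$ formula gives
$$
\tilde\Delta\bigl(\tilde\tau_t(y)\bigr)=\bigl(\operatorname{id}\otimes\operatorname{id}\otimes\omega(\nabla^{-it}\,\cdot\,\nabla^{it})\bigr)(W_{13}W_{23}).
$$
For the other side, $(\tilde\tau_t\otimes\tilde\tau_t)(\tilde\Delta y)=(L^{it}\otimes L^{it})\,(\operatorname{id}\otimes\operatorname{id}\otimes\omega)(W_{13}W_{23})\,(L^{-it}\otimes L^{-it})$, and I would commute the operators $L^{\pm it}$ in legs $1$ and $2$ past $W_{13}W_{23}$: applying the operator form of Equation~\eqref{(scalingeqn)} in legs $(1,3)$ turns $(L^{it})_1W_{13}(L^{-it})_1$ into $(\nabla^{-it})_3W_{13}(\nabla^{it})_3$, and in legs $(2,3)$ turns $(L^{it})_2W_{23}(L^{-it})_2$ into $(\nabla^{-it})_3W_{23}(\nabla^{it})_3$; the interior factors $(\nabla^{it})_3(\nabla^{-it})_3$ cancel, leaving $(\nabla^{-it})_3\,W_{13}W_{23}\,(\nabla^{it})_3$. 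Slicing by $\operatorname{id}\otimes\operatorname{id}\otimes\omega$ then yields $\bigl(\operatorname{id}\otimes\operatorname{id}\otimes\omega(\nabla^{-it}\,\cdot\,\nabla^{it})\bigr)(W_{13}W_{23})$, which coincides with $\tilde\Delta(\tilde\tau_t(y))$; this proves (2). Restricting (2) to $x\in\pi_{\varphi}(A)$ and using $\tilde\Delta\circ\pi_{\varphi}=(\pi_{\varphi}\otimes\pi_{\varphi})\circ\Delta$, $\tilde\tau_t\circ\pi_{\varphi}=\pi_{\varphi}\circ\tau_t$, with $\pi_{\varphi}$ faithful, gives (1).

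I do not expect a genuine obstacle here: the substantive work --- establishing $W(L^{it}\otimes\nabla^{it})=(L^{it}\otimes\nabla^{it})W$ for the partial isometry $W$ --- was already carried out in \S4.3, and the present statement is an assembly argument. The only care required is in the three-fold leg bookkeeping above (which uses only that $L^{it}$ in one leg commutes with a copy of $W$ in disjoint legs) and in the density/normality reduction to the generators $(\operatorname{id}\otimes\omega)(W)$; in particular, unlike several earlier results in this paper, the fact that $W$ is merely a partial isometry plays no role, since the identities invoked are equalities of bounded (indeed unitary-conjugation) operators on all of ${\mathcal H}_{\varphi}\otimes{\mathcal H}_{\varphi}$.
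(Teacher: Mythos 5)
Your argument is correct, but it is not the route the paper takes. The paper's proof is purely formal: it applies $(\Delta\otimes\operatorname{id})$ and $(\operatorname{id}\otimes\Delta)$ to $\Delta(\sigma_t(a))$, invokes coassociativity together with the already-established relation $\Delta\circ\sigma_t=(\tau_t\otimes\sigma_t)\circ\Delta$ (Proposition~\ref{Deltasigma_t}) twice, cancels the $\sigma_t$ in the third leg by applying $\operatorname{id}\otimes\operatorname{id}\otimes\sigma_{-t}$, and then slices with $\operatorname{id}\otimes\operatorname{id}\otimes\omega$, using fullness of $\Delta$; the statement is obtained first on $A$ and the von Neumann version is noted at the end. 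You instead redo, for $\tau_t\otimes\tau_t$, the kind of operator computation the paper used to prove Proposition~\ref{Deltasigma_t} itself: implement $\tilde\Delta$ by $W$, reduce to generators $(\operatorname{id}\otimes\omega)(W)$ via the leg formula $\tilde\Delta\bigl((\operatorname{id}\otimes\omega)(W)\bigr)=(\operatorname{id}\otimes\operatorname{id}\otimes\omega)(W_{13}W_{23})$ (this is exactly Proposition~\ref{Deltaidomega}, reused in Lemma~\ref{lemma_SDelta}), and commute $L^{it}$ in legs $1,2$ through $W_{13}W_{23}$ using Proposition~\ref{LNablaequality}/Equation~\eqref{(scalingeqn)}; the interior $\nabla^{\pm it}$ factors cancel and both sides agree, after which you restrict along $\pi_\varphi$ to get (1). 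Your leg computation and the normality/density reduction are sound (agreement on the generators plus both maps being normal ${}^*$-homomorphisms, using that $\tilde\tau_t(M)=M$), and the restriction to $A$ works because $\pi_\varphi\otimes\pi_\varphi$ extends faithfully to $M(A\otimes A)$ — the same step the paper performs in Proposition~\ref{Deltasigma_t}. What the paper's route buys is brevity and independence from any further Hilbert-space bookkeeping: once Proposition~\ref{Deltasigma_t} is known, no new operator identities are needed. What your route buys is a direct, self-contained operator-level proof that does not pass through the modular group $\sigma_t$ at all; the price is the ${\mathcal H}$ versus ${\mathcal H}_\varphi$ bookkeeping (the ${}_\varphi W$ device of Proposition~\ref{Deltaidomega}), which you should acknowledge just as the paper does, since $L$ and $W$ a priori live on ${\mathcal H}_\psi\otimes{\mathcal H}_\varphi$ while $\tilde\tau_t$ acts on $M\subseteq{\mathcal B}({\mathcal H}_\varphi)$.
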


\begin{proof}
For $a\in A$, by the coassociativity of $\Delta$ and by the result of 
Proposition~\ref{Deltasigma_t}, we have:
$$
(\Delta\otimes\operatorname{id})\Delta\bigl(\sigma_t(a)\bigr)
=(\operatorname{id}\otimes\Delta)\Delta\bigl(\sigma_t(a)\bigr)
=(\tau_t\otimes\tau_t\otimes\sigma_t)\bigl((\operatorname{id}\otimes\Delta)(\Delta a)\bigr).
$$
Again using Proposition~\ref{Deltasigma_t} (in the left) and the coassociativity of $\Delta$ 
(in the right), this can be written as follows:
$$
\bigl((\Delta\circ\tau_t)\otimes\sigma_t)(\Delta a)
=(\tau_t\otimes\tau_t\otimes\sigma_t)\bigl((\Delta\otimes\operatorname{id})(\Delta a)\bigr).
$$
Apply here $\operatorname{id}\otimes\operatorname{id}\otimes\sigma_{-t}$, to both sides. 
Then it becomes:
$$
\bigl((\Delta\circ\tau_t)\otimes\operatorname{id}\bigr)(\Delta a)
=\bigl((\tau_t\otimes\tau_t)\circ\Delta\otimes\operatorname{id}\bigr)(\Delta a).
$$
Apply now $\operatorname{id}\otimes\operatorname{id}\otimes\omega$, for an arbitrary 
$\omega\in A^*$.  Then we have:
$$
\Delta\bigl(\tau_t((\operatorname{id}\otimes\omega)(\Delta a))\bigr)
=(\tau_t\otimes\tau_t)\bigl(\Delta((\operatorname{id}\otimes\omega)(\Delta a))\bigr).
$$
Recall that the elements of the form $(\operatorname{id}\otimes\omega)(\Delta a)$, 
for $a\in A$, $\omega\in A^*$, generate $A$.  So we have: $\Delta\bigl(\tau_t(x)\bigr)
=(\tau_t\otimes\tau_t)(\Delta x)$, for any $x\in A$. 

At the same time, since the elements $(\operatorname{id}\otimes\omega)(\Delta a)$, 
$a\in A$, $\omega\in A^*$, also generate $M$, we see that 
$\tilde{\Delta}\bigl(\tilde{\tau}_t(x)\bigr)=(\tilde{\tau}_t\otimes\tilde{\tau}_t)
(\tilde{\Delta} x)$, for $x\in M$.
\end{proof}

We next wish to explore the relationship concerning the antipode $S$ and the comultiplication 
$\Delta$.  This is somewhat tricky, as $S$ is an unbounded map.  Nevertheless, we may 
use an argument similar to the one in the quantum group case (for instance, see Lemma~5.25 
and Proposition~5.26 of \cite{KuVa}).

\begin{lem}\label{lemma_SDelta}
Let $\rho,\theta\in A^*$ be such that $\rho\circ S$ and $\theta\circ S$ are bounded, which 
can be naturally extended to the maps on $M(A)$.  Then for all $x\in{\mathcal D}(S)$, 
we have:
$$
\left(\rho\circ S\,\otimes\,\theta\circ S\right)(\Delta x)
=(\theta\otimes\rho)\bigl(\Delta(S(x))\bigr).
$$
\end{lem}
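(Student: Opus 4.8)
\textbf{Proof plan for Lemma~\ref{lemma_SDelta}.}
The plan is to reduce everything to the characterization of $S$ on the core $\{(\operatorname{id}\otimes\omega)(W):\omega\in{\mathcal B}({\mathcal H}_{\varphi})_*\}$ obtained in Proposition~\ref{antipodeidW}, namely $S\bigl((\operatorname{id}\otimes\omega)(W)\bigr)=(\operatorname{id}\otimes\omega)(W^*)$. First I would take $x=(\operatorname{id}\otimes\omega)(W)$ for an arbitrary $\omega\in{\mathcal B}({\mathcal H}_{\varphi})_*$, since such $x$ form a core for $S$. For these, $\Delta x=(\pi\otimes\pi_{\varphi})(\Delta x)=(\operatorname{id}\otimes\operatorname{id}\otimes\omega)(W_{13}W_{23})$ (this is the algebraic version of Proposition~\ref{DeltaW}/Proposition~\ref{Deltaidomega}, using the pentagon-type relation; one works with ${}_{\varphi}\!W$ in the third leg as in the proof of Proposition~\ref{Deltaidomega} to handle the ${\mathcal H}$ versus ${\mathcal H}_{\varphi}$ bookkeeping). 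The key point is then that both sides of the claimed identity can be written purely in terms of slices of $W$ and $W^*$: applying $\rho\circ S$ to the first leg and $\theta\circ S$ to the second leg of $W_{13}W_{23}$, and on the other side computing $\Delta(S(x))=\Delta\bigl((\operatorname{id}\otimes\omega)(W^*)\bigr)$, which by the analog of Proposition~\ref{DeltaW} for $W^*$ (equivalently, by applying $\Delta$ and Proposition~\ref{Wpentagon_alt}) equals $(\operatorname{id}\otimes\operatorname{id}\otimes\omega)(W_{13}^*W_{23}^*)$, up to the flip coming from antimultiplicativity.

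Next I would carry out the slicing carefully. Write $y:=(\operatorname{id}\otimes\operatorname{id}\otimes\omega)(W_{13}W_{23})=\Delta x$. Since $\rho\circ S$ and $\theta\circ S$ are bounded functionals on $A$ (extended to $M(A)$), the left side $(\rho\circ S\otimes\theta\circ S)(\Delta x)$ makes sense. Using that $S$ applied to a slice of $W$ in a single leg returns the same slice of $W^*$ (Proposition~\ref{antipodeidW}, applied leg-by-leg via $\rho\circ S$ and $\theta\circ S$), together with antimultiplicativity of $S$ (so that the order of the two legs gets reversed, producing $W_{23}^*W_{13}^*$ rather than $W_{13}^*W_{23}^*$), one gets
\begin{align}
(\rho\circ S\otimes\theta\circ S)(\Delta x)
&=(\rho\otimes\theta\otimes\omega)(W_{23}^*W_{13}^*)  \notag \\
&=(\theta\otimes\rho\otimes\omega)(W_{13}^*W_{23}^*),  \notag
\end{align}
where the second line just reindexes the first two legs. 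On the other hand, $\Delta(S(x))=(\operatorname{id}\otimes\operatorname{id}\otimes\omega)(W_{13}^*W_{23}^*)$ by the $W^*$-version of Proposition~\ref{DeltaW} (or directly: $(\pi\otimes\pi_{\varphi})(\Delta(S(x)))=W^*(1\otimes\pi_{\varphi}(S(x)))W=W^*(1\otimes(\operatorname{id}\otimes\omega)({}_{\varphi}\!W^*))W$, then apply Proposition~\ref{Wpentagon_alt}). Hence $(\theta\otimes\rho)(\Delta(S(x)))=(\theta\otimes\rho\otimes\omega)(W_{13}^*W_{23}^*)$, which matches the left side, proving the identity for $x$ in the core.

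Finally I would pass from the core to all of ${\mathcal D}(S)$ by a standard closedness argument: if $x\in{\mathcal D}(S)$, pick $x_n$ in the core with $x_n\to x$ and $S(x_n)\to S(x)$ (possible since the core is a core for the closed map $S$); then $\Delta x_n\to\Delta x$ and $\Delta(S(x_n))\to\Delta(S(x))$ in $M(A\otimes A)$ in an appropriate topology (strict, or simply apply the bounded functionals $\rho\circ S\otimes\theta\circ S$ and $\theta\otimes\rho$ which are continuous on the relevant bounded sets), and the equality survives the limit because the functionals $\rho\circ S$ and $\theta\circ S$ are bounded. The main obstacle I anticipate is the leg-by-leg application of $S$ to $\Delta x=(\operatorname{id}\otimes\operatorname{id}\otimes\omega)(W_{13}W_{23})$: one must justify that ``$\rho\circ S$ applied to the first leg of a product $W_{13}W_{23}$'' behaves correctly, i.e. that $S$ interacts with the two-legged slice as expected given its antimultiplicativity and the fact that $W_{13}$ and $W_{23}$ involve $W$ in different legs — this is where the care about boundedness of $\rho\circ S$, $\theta\circ S$ and the precise meaning of applying an unbounded antimultiplicative map to one tensor leg must be spelled out, presumably by first reducing to elementary tensors $(\operatorname{id}\otimes\omega_1)(W)\otimes(\operatorname{id}\otimes\omega_2)(W)$ via Lemma~\ref{omega_xizetaLem} and only then invoking Proposition~\ref{antipodeidW} and antimultiplicativity.
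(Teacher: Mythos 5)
Your overall route is the same as the paper's: restrict to the core $x=(\operatorname{id}\otimes\omega)(W)$, use $\Delta x=(\operatorname{id}\otimes\operatorname{id}\otimes\omega)(W_{13}W_{23})$, apply Proposition~\ref{antipodeidW} leg by leg, compute $\Delta\bigl(S(x)\bigr)$ via Proposition~\ref{Wpentagon_alt}, compare, and then extend from the core to ${\mathcal D}(S)$ by closedness (that last step, and your plan to make the leg-wise application rigorous by reducing to elementary slices via Lemma~\ref{omega_xizetaLem}, are fine). The problem is that both of your key displayed evaluations have the factors in the wrong order, and the justification you give for the first is not valid. If you carry out your own suggestion and write $\Delta x=\sum_j(\operatorname{id}\otimes\omega_{e_j,\zeta})(W)\otimes(\operatorname{id}\otimes\omega_{\xi,e_j})(W)$, then applying $\rho\circ S$ and $\theta\circ S$ sends each slice of $W$ to the \emph{same} slice of $W^*$, and resumming gives $(\rho\otimes\theta\otimes\omega)(W_{13}^{*}W_{23}^{*})$ --- there is no reversal. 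Antimultiplicativity of $S$ plays no role here: $S$ is applied separately in each tensor leg, and no product inside a single copy of $A$ is ever formed; the only genuine operator product sits in the third leg, which is sliced by $\omega$ and untouched by $S$. Since $W_{13}^{*}$ and $W_{23}^{*}$ do not commute, your value $(\rho\otimes\theta\otimes\omega)(W_{23}^{*}W_{13}^{*})$ is in general a different number.

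The second error is on the other side: $\Delta\bigl(S(x)\bigr)=(\operatorname{id}\otimes\operatorname{id}\otimes\omega)(W_{12}^{*}W_{23}^{*}W_{12})=(\operatorname{id}\otimes\operatorname{id}\otimes\omega)(W_{23}^{*}W_{13}^{*})$, by the adjoint of Proposition~\ref{Wpentagon_alt}\,(2) (namely $W_{12}^{*}W_{23}W_{12}=W_{13}W_{23}$), not $(\operatorname{id}\otimes\operatorname{id}\otimes\omega)(W_{13}^{*}W_{23}^{*})$ as you claim; the reversal of the two factors is exactly where the ``flip'' in the lemma comes from. Your two mistakes compensate at the level of the asserted equality, so the identity you end with is the correct statement, but each intermediate identity is false in general, and even the common value you assign to the two sides, $(\theta\otimes\rho\otimes\omega)(W_{13}^{*}W_{23}^{*})$, differs from the correct one, $(\rho\otimes\theta\otimes\omega)(W_{13}^{*}W_{23}^{*})$. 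So the argument as written does not prove the lemma; correcting both order statements, i.e. $(\rho\circ S\otimes\theta\circ S)(\Delta x)=(\rho\otimes\theta\otimes\omega)(W_{13}^{*}W_{23}^{*})=(\theta\otimes\rho\otimes\omega)(W_{23}^{*}W_{13}^{*})=(\theta\otimes\rho)\bigl(\Delta(S(x))\bigr)$, recovers exactly the paper's proof.
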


\begin{proof}
It suffices to consider $x=(\operatorname{id}\otimes\omega)(W)$ for $\omega
\in{\mathcal B}({\mathcal H}_{\varphi})_*$, as such elements form a core for $S$. 
By the same argument given in Proposition~\ref{Deltaidomega}, we have: 
$\Delta x=(\operatorname{id}\otimes\operatorname{id}\otimes\omega)(W_{13}W_{23})$.
So we have:
$$
\left(\rho\circ S\,\otimes\,\theta\circ S\right)(\Delta x)
=\left(\rho\circ S\,\otimes\,\theta\circ S\otimes\omega\right)
(W_{13}W_{23})=(\rho\otimes\theta\otimes\omega)(W_{13}^*W_{23}^*),
$$
since $S\bigl((\operatorname{id}\otimes\omega)(W)\bigr)
=(\operatorname{id}\otimes\omega)(W^*)$, by Proposition~\ref{antipodeidW}. 

Meanwhile, again using an argument as in Proposition~\ref{Deltaidomega}, 
together with $W_{12}^*W_{23}^*W_{12}=W_{23}^*W_{13}^*$ from 
Proposition~\ref{Wpentagon_alt}\,(2), we can show that
$$
\Delta\bigl(S(x)\bigr)=\Delta\bigl((\operatorname{id}\otimes\omega)(W^*)\bigr)
=(\operatorname{id}\otimes\operatorname{id}\otimes\omega)(W_{23}^*W_{13}^*).
$$

Combining the two results, we have:
\begin{align}
\left(\rho\circ S\,\otimes\,\theta\circ S\right)(\Delta x)
&=(\rho\otimes\theta\otimes\omega)(W_{13}^*W_{23}^*)  \notag \\
&=(\theta\otimes\rho\otimes\omega)(W_{23}^*W_{13}^*) 
=(\theta\otimes\rho)\bigl(\Delta(S(x))\bigr),
\notag
\end{align}
where we used the flip operation (between leg~1 and leg~2) in the 
second equality.  This proves our claim.
\end{proof}

\begin{prop}\label{DeltaR}
For all $x\in A$, we have: 
$$
(R\otimes R)(\Delta x)=\Delta^{\operatorname{cop}}\bigl(R(x)\bigr).
$$

Similarly, for $x\in M=\pi_{\varphi}(A)''$, we have: $(\tilde{R}\otimes\tilde{R})
(\tilde{\Delta}x)=\tilde{\Delta}^{\operatorname{cop}}(\tilde{R}(x))$.
\end{prop}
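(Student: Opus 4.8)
The plan is to prove the $C^*$-algebra statement $(R\otimes R)(\Delta x)=\Delta^{\operatorname{cop}}(R(x))$ first, and then lift it to $M$ in essentially the same way that Propositions~\ref{Deltasigma_t} and \ref{Deltatau_t} were upgraded from $A$ to $M$, namely by using that $\tilde R\circ\pi_\varphi=\pi_\varphi\circ R$ and $\tilde\Delta\circ\pi_\varphi=(\pi_\varphi\otimes\pi_\varphi)\circ\Delta$, together with the fact that the elements $(\operatorname{id}\otimes\omega)(W)$ generate $M$. So the real content is the $A$-level identity, and for that I would follow the standard quantum group strategy (compare Proposition~5.24 of \cite{KuVa}), combining the two halves of the polar decomposition: the relation $(R\otimes R)(\Delta x)=\Delta^{\operatorname{cop}}(R(x))$ should fall out of the analogous relations for $S=R\circ\tau_{-\frac i2}$ and for the scaling group, since $R=S\circ\tau_{i/2}$ and $(\tau_t\otimes\tau_t)(\Delta x)=\Delta(\tau_t(x))$ is already available (Proposition~\ref{Deltatau_t}).

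Concretely, first I would take $x=(\operatorname{id}\otimes\omega)(W)$ for $\omega\in{\mathcal B}({\mathcal H}_\varphi)_*$; these form a core and by the argument of Proposition~\ref{Deltaidomega} one has $\Delta x=(\operatorname{id}\otimes\operatorname{id}\otimes\omega)(W_{13}W_{23})$ while $\Delta(S(x))=\Delta((\operatorname{id}\otimes\omega)(W^*))=(\operatorname{id}\otimes\operatorname{id}\otimes\omega)(W_{23}^*W_{13}^*)$, using $W_{12}^*W_{23}^*W_{12}=W_{23}^*W_{13}^*$ from Proposition~\ref{Wpentagon_alt}\,(2). Applying $S\otimes S$ to $\Delta x$ and using $S((\operatorname{id}\otimes\omega)(W))=(\operatorname{id}\otimes\omega)(W^*)$ (Proposition~\ref{antipodeidW}) gives $(S\otimes S)(\Delta x)=(\operatorname{id}\otimes\operatorname{id}\otimes\omega)(W_{13}^*W_{23}^*)$, which is exactly $\sigma_{12}$ applied to $\Delta(S(x))$; hence $(S\otimes S)(\Delta x)=\Delta^{\operatorname{cop}}(S(x))$ on this core, where $\sigma_{12}$ denotes the flip between the first two legs. (This is precisely the bounded-functional version recorded in Lemma~\ref{lemma_SDelta}, now read as an operator identity on the core.) Then, since $S=R\circ\tau_{-i/2}$ and $\Delta\circ\tau_t=(\tau_t\otimes\tau_t)\circ\Delta$, I would substitute $x\mapsto\tau_{i/2}(x)$ — valid on the core, as the core is $\tau_t$-invariant and lies in ${\mathcal D}(\tau_{i/2})$ by Proposition~\ref{antipodeidW} — to strip off the scaling group: $(R\otimes R)(\Delta x)=(S\otimes S)\bigl((\tau_{i/2}\otimes\tau_{i/2})(\Delta x)\bigr)=(S\otimes S)\bigl(\Delta(\tau_{i/2}(x))\bigr)=\Delta^{\operatorname{cop}}\bigl(S(\tau_{i/2}(x))\bigr)=\Delta^{\operatorname{cop}}(R(x))$, where I use $S\circ\tau_{i/2}=R$. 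Since both sides are bounded and norm-continuous in $x$ and agree on a dense subset of $A$, the identity extends to all $x\in A$.

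Finally, to pass to $M$: for $x\in M$ write $x$ as a strong limit of elements $\pi_\varphi(a)$ with $a\in A$ of the above form, or argue directly that $(\tilde R\otimes\tilde R)$ and $\tilde\Delta^{\operatorname{cop}}$ are both $\sigma$-weakly continuous (the former because $\tilde R$ is a $*$-anti-automorphism of $M$, the latter because $\tilde\Delta(y)=W^*(1\otimes y)W$ and $\tilde R(y)=IyI^{-1}$ composed with the flip), so the $A$-level identity, valid for $\pi_\varphi(a)$, extends by continuity and the density of $\pi_\varphi(A)$ in $M$ to all $x\in M$, giving $(\tilde R\otimes\tilde R)(\tilde\Delta x)=\tilde\Delta^{\operatorname{cop}}(\tilde R(x))$.

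The main obstacle I anticipate is bookkeeping around the unboundedness of $\tau_{i/2}$ and $S$: one must be careful that $x=(\operatorname{id}\otimes\omega)(W)$ really lies in ${\mathcal D}(S\otimes S)$ applied to $\Delta x$ and in ${\mathcal D}(\tau_{i/2})$, and that the core property is preserved under the substitution $x\mapsto\tau_{i/2}(x)$; here Equation~\eqref{(scalingeqn)} and Proposition~\ref{antipodeidW} (invariance of the span of $(\operatorname{id}\otimes\omega)(W)$ under $\tau_t$, hence under the analytic continuation) are what make it work. A secondary subtlety, absent in the quantum group case, is the usual ${\mathcal H}_\psi$ versus ${\mathcal H}_\varphi$ issue when manipulating $W_{13}W_{23}$; as in the proof of Proposition~\ref{Deltaidomega}, this is handled by temporarily working with ${}_\varphi\!W$ and invoking the pentagon-type identities of Proposition~\ref{Wpentagon_alt}, and it does not affect the final statement since the identity involves only $\Delta$ and $W$ on the same Hilbert space.
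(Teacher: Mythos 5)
Your overall strategy---establish $(S\otimes S)(\Delta x)=\Delta^{\operatorname{cop}}\bigl(S(x)\bigr)$ on the core $\bigl\{(\operatorname{id}\otimes\omega)(W)\bigr\}$ and then strip off the scaling group by the substitution $x\mapsto\tau_{i/2}(x)$---is the same idea that drives the paper's proof, but as written it has a genuine gap: the objects $(S\otimes S)(\Delta x)$ and $(\tau_{i/2}\otimes\tau_{i/2})(\Delta x)$ are never defined. Both $S$ and $\tau_{i/2}$ are unbounded, and $\Delta x=(\operatorname{id}\otimes\operatorname{id}\otimes\omega)(W_{13}W_{23})$ is an element of $M(A\otimes A)$ that is not a finite sum of elementary tensors with entries in ${\mathcal D}(S)$; at best it is a strict limit of such sums (as in Proposition~\ref{D_0dense}), and to push $S\otimes S$ (or $\tau_{i/2}\otimes\tau_{i/2}$) through such a limit you would need a closedness statement for the tensor-product map with respect to the strict topology, which is established nowhere---the paper only proves strict closedness of $S$ itself (Corollary to Proposition~\ref{antipodestrictA}). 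The same issue affects the composition step $(S\otimes S)\circ(\tau_{i/2}\otimes\tau_{i/2})=(R\otimes R)$ evaluated at $\Delta x$: it is formally clear on elementary tensors of analytic elements, but needs domain control for $\Delta x$. The ``bookkeeping'' you flag is exactly this, and the tools you cite, Equation~\eqref{(scalingeqn)} and Proposition~\ref{antipodeidW}, only control one leg at a time (they show $x\in{\mathcal D}(\tau_{i/2})$ and $\tau_t$-invariance of the core), not that the two-leg unbounded maps may be applied to $\Delta x$.

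This is precisely the difficulty the paper's argument is built to avoid: it never applies $S\otimes S$ or $\tau_z\otimes\tau_z$ to $\Delta x$. Instead it pairs $\Delta x$ with bounded functionals $\rho\circ\tau_z\otimes\theta\circ\tau_z$ for $\rho,\theta\in A^*$ analytic with respect to $\tau$, so that all analytic continuation happens at the level of scalar-valued functions of $z$; combining Lemma~\ref{lemma_SDelta} (which is the legitimate, functional form of your step on the core) with the identity $\bigl(\rho\circ\tau_{-i/2}\otimes\theta\circ\tau_{-i/2}\bigr)(\Delta a)=(\rho\otimes\theta)\bigl(\Delta(\tau_{-i/2}(a))\bigr)$, obtained from Proposition~\ref{Deltatau_t} by continuation of scalar functions, yields $(\theta\otimes\rho)\bigl((R\otimes R)(\Delta x)\bigr)=(\rho\otimes\theta)\bigl(\Delta(R(x))\bigr)$ for analytic $\rho,\theta$; finally a norm-bounded approximation of an arbitrary functional by analytic ones, $\rho(n)=\frac{n}{\sqrt{\pi}}\int\exp(-n^2t^2)\,\rho\circ\tau_t\,dt$, removes the analyticity restriction. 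If you recast your argument in this functional form it goes through; as it stands, the operator-level manipulations on $M(A\otimes A)$ are unjustified. (Your reduction of the statement to the $A$-level identity and the passage to $M$ by normality of $\tilde R$ and $\tilde\Delta$ are fine.)
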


\begin{proof}
Suppose $\rho,\theta\in A^*$ are analytic with respect to $\tau$.  This means 
that the map $\mathbb{R}\ni t\mapsto\rho\circ\tau_t$ has an analytic extension 
to a function from $\mathbb{C}$ to $A^*$.  Then $\rho\circ\tau_z$ is bounded 
for any $z\in\mathbb{C}$, and the function $z\mapsto\rho\circ\tau_z$ 
is analytic.  Similar for $z\mapsto\theta\circ\tau_z$ and 
$z\mapsto\rho\circ\tau_z\,\otimes\,\theta\circ\tau_z$.

For $a\in{\mathcal D}(S)$, being analytic with respect to $\tau$, we will have: 
$z\mapsto\left(\rho\circ\tau_z\,\otimes\,\theta\circ\tau_z\right)(\Delta a)$ and 
$z\mapsto(\rho\otimes\theta)\Delta\bigl(\tau_z(a)\bigr)$ both analytic as 
functions from $\mathbb{C}$ to $\mathbb{C}$.  They actually coincide, 
by Proposition~\ref{Deltatau_t}.  Moreover, by Lemma~\ref{lemma_SDelta}, 
we have:
\begin{align}
(\theta\otimes\rho)\bigl((R\otimes R)\Delta(S(a))\bigr)
&=\left(\rho RS\,\otimes\,\theta RS\right)(\Delta a)
=\bigl(\rho\circ\tau_{-\frac{i}{2}}\,\otimes\,\theta\circ\tau_{-\frac{i}{2}}\bigr)
(\Delta a)  \notag \\
&=(\rho\otimes\theta)\Delta\bigl(\tau_{-\frac{i}{2}}(a)\bigr) 
=(\rho\otimes\theta)\bigl(\Delta(R(S(a)))\bigr).
\notag
\end{align}
Since $S$ has a dense range, and by the boundedness of the other maps involved, 
this means that 
\begin{equation}\label{(DeltaReqn1)}
(\theta\otimes\rho)\bigl((R\otimes R)(\Delta x)\bigr)
=(\rho\otimes\theta)\bigl(\Delta(R(x))\bigr),\quad{\text { for all $x\in A$.}}
\end{equation}

So far, Equation~\eqref{(DeltaReqn1)} holds true only for $\rho,\theta\in A^*$ 
that are analytic with respect to $\tau$.  But for any $\rho\in A^*$, one can 
find a norm-bounded sequence $\bigl\{\rho(n)\bigr\}$ of functionals that are analytic 
with respect to $\tau$ and $\rho(n)(a)\to\rho(a)$, for all $a\in A$.  For instance, 
see proof of Proposition~5.26 in \cite{KuVa}, where $\rho(n)(a)
=\frac{n}{\sqrt{\pi}}\int\exp(-n^2t^2)\rho\bigl(\tau_t(a)\bigr)\,dt$.  
By Equation~\eqref{(DeltaReqn1)}, we will have 
$$
\bigl(\theta(n)\otimes\rho(n)\bigr)\bigl((R\otimes R)(\Delta x)\bigr)
=\bigl(\rho(n)\otimes\theta(n)\bigr)\bigl(\Delta(R(x))\bigr).
$$
Letting $n\to\infty$, we have, now for any $\rho,\theta\in A^*$ and $x\in A$,
$$
(\theta\otimes\rho)\bigl((R\otimes R)(\Delta x)\bigr)
=(\rho\otimes\theta)\bigl(\Delta(R(x))\bigr)
=(\theta\otimes\rho)\bigl(\Delta^{\operatorname{cop}}(R(x))\bigr).
$$
Therefore, we have: $(R\otimes R)(\Delta x)
=\Delta^{\operatorname{cop}}(R(x))$, $\forall x\in A$.

Same argument will hold at the von Neumann algebra level.
\end{proof}

\begin{prop}\label{tausigmaR_corollaryE}
As an immediate consequence of Propositions~\ref{Deltasigma_t}, \ref{Deltatau_t}, 
\ref{DeltaR}, we have, for all $t\in\mathbb{R}$, the following results: 
\begin{enumerate}
  \item $(\tau_t\otimes\sigma_t)(E)=E$
  \item $(\tau_t\otimes\tau_t)(E)=E$
  \item $(R\otimes R)(E)=\varsigma E$
\end{enumerate}
\end{prop}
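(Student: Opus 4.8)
The statement is flagged as an ``immediate consequence'' of Propositions~\ref{Deltasigma_t}, \ref{Deltatau_t}, \ref{DeltaR}, so the plan is to feed $E = \Delta(1_{M(A)})$ through each of those three identities after extending the relevant automorphisms and anti-automorphism to the multiplier algebra. Recall from the discussion following Definition~\ref{definitionlcqgroupoid} that $E = \Delta(1_{M(A)})$, the image of the unit under the extended comultiplication. For (1), the plan is to take $x = 1 \in M(A)$ in Proposition~\ref{Deltasigma_t}\,(2): since $\sigma_t$ is an automorphism of $A$ extending naturally to a unital automorphism of $M(A)$, we have $\sigma_t(1) = 1$, hence
\[
(\tau_t \otimes \sigma_t)(E) = (\tau_t \otimes \sigma_t)(\Delta 1) = \Delta(\sigma_t(1)) = \Delta(1) = E.
\]
The same substitution $x=1$ in Proposition~\ref{Deltatau_t}\,(1) gives $(\tau_t\otimes\tau_t)(E) = (\tau_t\otimes\tau_t)(\Delta 1) = \Delta(\tau_t(1)) = \Delta(1) = E$, which is (2). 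For (3), substituting $x = 1$ into Proposition~\ref{DeltaR} gives $(R\otimes R)(E) = (R\otimes R)(\Delta 1) = \Delta^{\operatorname{cop}}(R(1)) = \Delta^{\operatorname{cop}}(1) = \sigma\bigl(\Delta(1)\bigr) = \sigma E$, since $R$ (regarded as a ${}^*$-anti-isomorphism $A \to A^{\operatorname{op}}$, extended to $M(A)$) is unital and $\Delta^{\operatorname{cop}} = \sigma\circ\Delta$ by definition.

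The one point requiring a little care — and the only place I expect any friction — is the passage from ``identity on $A$'' to ``identity applied to $1_{M(A)}$.'' The three source propositions are stated for $x \in A$ (with $\sigma_t$, $\tau_t$ extended to $M(A)$), so one should justify that the equations $\Delta(\sigma_t(x)) = (\tau_t\otimes\sigma_t)(\Delta x)$ etc.\ remain valid for $x \in M(A)$, in particular for $x = 1$. This is routine: the extended comultiplication $\Delta: M(A)\to M(A\otimes A)$ is obtained via strict continuity and non-degeneracy arguments (Theorem~3.5 in Part~I), the automorphisms $\sigma_t,\tau_t$ and the anti-isomorphism $R$ extend strictly to $M(A)$, and both sides of each identity are strictly continuous in $x$; since $A$ is strictly dense in $M(A)$ and the identities hold on $A$, they hold on $M(A)$. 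Alternatively, and perhaps more cleanly, one can avoid the extension issue entirely by using the relation $E(\Delta a) = \Delta a = (\Delta a)E$ from Equation~\eqref{(EDelta)} together with approximate-unit arguments, but the strict-continuity route is the most direct.

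I would write the proof as three short displayed computations, one per item, each a one-line chain of equalities citing the appropriate proposition and the unitality of the extended map, preceded by one sentence recalling $E = \Delta(1_{M(A)})$ and noting that the three source identities extend to $x \in M(A)$ by strict continuity. No genuine obstacle is anticipated; the whole argument is bookkeeping around the fact that $E$ is the image of a unit.
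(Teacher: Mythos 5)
Your proposal is correct and matches the paper's proof, which simply notes that the results follow from Propositions~\ref{Deltasigma_t}, \ref{Deltatau_t}, \ref{DeltaR} with $E=\Delta(1)$. Your extra remark about extending the three identities from $A$ to $M(A)$ by strict continuity is a reasonable piece of diligence that the paper leaves implicit, but it is the same argument.
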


\begin{proof}
The results follow from the earlier propositions, with $E=\Delta(1)$.
\end{proof}

\subsection{An alternative characterization of the antipode map}

While we do have a valid definition of the antipode (Definition~\ref{theantipode}), 
it is true that this definition is based on specific Hilbert space operators $K$ and $L$, 
and so depends on the choice of the weights $\varphi$, $\psi$.  The goal in this section 
is to give an alternative characterization of the antipode that do not explicitly rely on 
the Haar weights.  The formulation presented below (Definition~\ref{D_0} and 
Proposition~\ref{D_0prop}) is analogous to the quantum group case, as given 
in section~1 of \cite{VDvN}.

We need some technical preparation:
Let $B$ be a $C^*$-algebra and consider an index set $I$.  Borrowing from Kustermans--Vaes 
(see Definition~5.27 of \cite{KuVa}), define $MC_I(B)$ as the set of $I$-tuples $(x_i)_{i\in I}$ 
in $M(B)$ such that $(x_i^*x_i)_{i\in I}$ is strictly summable in $M(B)$; and similarly define 
$MR_I(B)$ as the set of $I$-tuples $(x_i)_{i\in I}$ in $M(B)$ such that $(x_ix_i^*)_{i\in I}$ is 
strictly summable in $M(B)$. 

In a sense, elements of $MC_I(B)$ may be thought of as infinite columns, elements of 
$MR_I(B)$ as infinite rows.  It is clear that the ${}^*$-operation provides a bijection between 
the two sets.  These notions will be useful in what follows, as we will occasionally deal with 
strict convergence.  Here are some results:

\begin{lem}\label{ColumnRow}
Consider a $C^*$-algebra $B$ and an index set $I$.  
\begin{enumerate}
 \item Let $(x_i),(y_i)\in MC_I(B)$.  Then 
$(x_i^*y_i)_{i\in I}$ is strictly summable.  Also the net $\bigl\{\sum_{i\in\Gamma}x_i^*y_i\bigr\}_{\Gamma}$ 
over the finite subsets $\Gamma\subset I$ is bounded.
 \item Let $(x_i)\in MR_I(B)$ and $(y_i)\in MC_I(B)$.  Then $(x_iy_i)_{i\in I}$ is strictly summable.  
Also the net $\bigl\{\sum_{i\in\Gamma}x_iy_i\bigr\}_{\Gamma}$ over the finite subsets $\Gamma\subset I$ 
is bounded.
 \item $MR_I(B)$ and $MC_I(B)$ are vector spaces for the componentwise addition and scalar 
multiplication.
\end{enumerate}
\end{lem}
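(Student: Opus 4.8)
The plan is to reduce all three statements to a single Cauchy--Schwarz--type estimate in the $C^*$-algebra $M(B)$. For a finite subset $\Gamma\subset I$ and families $(a_i),(b_i)$ in $M(B)$, regard $R=(a_i^*)_{i\in\Gamma}$ as a $1\times|\Gamma|$ row over $M(B)$ and $C=(b_i)_{i\in\Gamma}$ as a column; from $R^*R\le\|RR^*\|\,1_{|\Gamma|}$ in $M_{|\Gamma|}\bigl(M(B)\bigr)$ one obtains
$$\Bigl(\sum_{i\in\Gamma}a_i^*b_i\Bigr)^*\Bigl(\sum_{i\in\Gamma}a_i^*b_i\Bigr)=C^*(R^*R)C\le\Bigl\|\sum_{i\in\Gamma}a_i^*a_i\Bigr\|\sum_{i\in\Gamma}b_i^*b_i,$$
and therefore $\bigl\|\sum_{i\in\Gamma}a_i^*b_i\bigr\|\le\bigl\|\sum_{i\in\Gamma}a_i^*a_i\bigr\|^{1/2}\bigl\|\sum_{i\in\Gamma}b_i^*b_i\bigr\|^{1/2}$. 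I would also record the elementary fact that the finite partial sums of a strictly summable family of \emph{positive} elements form an increasing net dominated, in the order of $M(B)$, by its strict limit, and in particular are uniformly norm-bounded.

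For (1), taking $(a_i)=(x_i)$ and $(b_i)=(y_i)$ in the estimate shows at once that the net $\bigl\{\sum_{i\in\Gamma}x_i^*y_i\bigr\}_\Gamma$ is bounded by $\bigl\|\sum_{i\in I}x_i^*x_i\bigr\|^{1/2}\bigl\|\sum_{i\in I}y_i^*y_i\bigr\|^{1/2}$. For strict summability I would fix $b\in B$ and prove that the nets $\bigl(\sum_{i\in\Gamma}x_i^*y_i\bigr)b$ and $b\bigl(\sum_{i\in\Gamma}x_i^*y_i\bigr)$, indexed by the finite subsets of $I$, are norm-Cauchy. For the former, on a disjoint increment $\Delta$ the estimate applied to $(x_i)$ and $(y_ib)$ gives $\bigl\|\sum_{i\in\Delta}x_i^*y_ib\bigr\|\le\bigl\|\sum_{i\in\Delta}x_i^*x_i\bigr\|^{1/2}\bigl\|b^*\bigl(\sum_{i\in\Delta}y_i^*y_i\bigr)b\bigr\|^{1/2}$; the first factor stays bounded by $\bigl\|\sum_{i\in I}x_i^*x_i\bigr\|^{1/2}$, and the second tends to $0$ since strict summability of $\sum_{i\in I}y_i^*y_i$ forces $b^*\bigl(\sum_{i\in\Gamma}y_i^*y_i\bigr)b$ to be norm-Cauchy. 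The net $b\bigl(\sum_{i\in\Gamma}x_i^*y_i\bigr)$ is handled symmetrically after passing to adjoints, using this time the strict summability of $\sum_{i\in I}x_i^*x_i$. Part (2) then follows immediately: the $*$-operation is a bijection of $MR_I(B)$ onto $MC_I(B)$, so $(x_i^*)\in MC_I(B)$, and applying (1) to $(x_i^*)$ and $(y_i)$ gives both the strict summability and the boundedness of $\sum_i x_iy_i$. For (3), closure under scalar multiplication is clear from $\sum(\lambda x_i)^*(\lambda x_i)=|\lambda|^2\sum x_i^*x_i$, and for sums one expands
$$\sum_{i\in\Gamma}(x_i+y_i)^*(x_i+y_i)=\sum_{i\in\Gamma}x_i^*x_i+\sum_{i\in\Gamma}x_i^*y_i+\sum_{i\in\Gamma}y_i^*x_i+\sum_{i\in\Gamma}y_i^*y_i,$$
where the two outer terms converge strictly by hypothesis and the two middle ones by (1); a finite sum of strictly convergent nets being strictly convergent, $(x_i+y_i)\in MC_I(B)$, and the statement for $MR_I(B)$ follows once more by the $*$-bijection.

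The only point requiring care is that all convergence here is strict, not norm: the partial sums of $\sum x_i^*y_i$ are in general merely norm-bounded and need not converge in norm, so one genuinely has to test against $b\in B$ on both sides, and keeping track of which majorant family---$\sum x_i^*x_i$ or $\sum y_i^*y_i$---controls which of the two one-sided limits is the one place demanding attention. Everything else is routine $C^*$-algebra.
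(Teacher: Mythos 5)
Your argument is correct: the matrix Cauchy--Schwarz estimate, the order-domination of partial sums by the strict limit (which is what gives boundedness for nets, where uniform boundedness cannot be invoked), testing strict Cauchyness against $b\in B$ on both sides with the roles of $(x_i)$ and $(y_i)$ exchanged, and the reduction of (2) and (3) to (1) via the ${}^*$-bijection all hold up. The paper itself offers no proof but simply cites Lemma~5.28 and Results~5.29, 5.30 of \cite{KuVa}, and your write-up is essentially a self-contained reconstruction of that standard argument, so there is nothing to flag.
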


\begin{proof}
See Lemma~5.28, Results~5.29, 5.30 in \cite{KuVa}.
\end{proof}

Also, here is another lemma that will be useful below:

\begin{lem}\label{e_lnet}
We can find a net $(e_l)_{l\in L}$ in $A$ such that 
\begin{itemize}
  \item each $e_l$, $l\in L$, is contained in ${\mathcal D}(\tau_{-\frac{i}{2}})$;
  \item $(e_l)_{l\in L}$ is bounded and converges strictly to 1;
  \item $\bigl(\tau_{-\frac{i}{2}}(e_l)\bigr)_{l\in L}$ is bounded 
and converges strictly to 1.
\end{itemize}
As a consequence, we see that $(e_l)_{l\in L}$ is a net contained in ${\mathcal D}(S)$ 
such that $\bigl(S(e_l)\bigr)_{l\in L}$ converges strictly to 1.
\end{lem}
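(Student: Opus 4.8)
\textbf{Proof plan for Lemma~\ref{e_lnet}.}
The plan is to mimic the standard ``smoothing'' construction that produces an approximate unit inside the domain of an analytic generator. First I would choose any bounded net $(u_l)_{l\in L}$ in $A$ that converges strictly to $1$ in $M(A)$; since $A$ is a $C^*$-algebra this exists (take an approximate unit, or, if one wants a net indexed by something canonical, the net of elements of the form $a^*a$ with $\|a\|\le1$ ordered suitably). Then I would define, for each $l$,
$$
e_l:=\frac{n_l}{\sqrt{\pi}}\int_{\mathbb{R}}\exp(-n_l^2t^2)\,\tau_t(u_l)\,dt,
$$
where $(n_l)$ is an increasing net of positive reals tending to $\infty$, cofinal with $L$ (for instance one may index by pairs $(u,n)$). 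Because $(\tau_t)_{t\in\mathbb{R}}$ is a norm-continuous one-parameter group of automorphisms of $A$ (Definition~\ref{scalinggroup}), this Bochner integral converges in norm and lies in $A$, and by the usual argument the Gaussian weight makes $e_l$ analytic for $\tau$; in particular $e_l\in{\mathcal D}(\tau_{-\frac{i}{2}})={\mathcal D}(S)$, with
$$
\tau_{-\frac{i}{2}}(e_l)=\frac{n_l}{\sqrt{\pi}}\int_{\mathbb{R}}\exp\bigl(-n_l^2(t-\tfrac{i}{2})^2\bigr)\,\tau_t(u_l)\,dt.
$$

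Next I would verify the three bulleted properties. Boundedness of $(e_l)$ is immediate: $\|e_l\|\le\|u_l\|\cdot\frac{n_l}{\sqrt\pi}\int\exp(-n_l^2t^2)\,dt=\|u_l\|$, which is uniformly bounded. Boundedness of $\bigl(\tau_{-\frac{i}{2}}(e_l)\bigr)$ follows the same way after shifting the contour: $\exp\bigl(-n_l^2(t-\tfrac{i}{2})^2\bigr)=\exp(n_l^2/4)\exp(-n_l^2t^2)\exp(\,\cdot\, it)$ has modulus $\exp(n_l^2/4)\exp(-n_l^2t^2)$, whose integral against $\frac{n_l}{\sqrt\pi}$ is $\exp(n_l^2/4)$ — which is \emph{not} bounded as $n_l\to\infty$, so a naive estimate fails. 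The correct route is to write, for fixed $l$, $\tau_{-\frac{i}{2}}(e_l)=\frac{n_l}{\sqrt\pi}\int\exp(-n_l^2t^2)\,\tau_{t-\frac{i}{2}}(u_l)\,dt$ only when $u_l$ itself is analytic; so I would first replace $(u_l)$ by a net already lying in a common core of analytic elements — e.g. apply the same Gaussian smoothing once to a fixed approximate unit to land inside $\bigcap_{z}{\mathcal D}(\tau_z)$ — and then the bound on $\tau_{-\frac{i}{2}}(e_l)$ becomes $\|\tau_{-\frac{i}{2}}(u_l)\|$ times a constant, which is controlled. This is the delicate point of the argument and is exactly where one borrows from the proofs of Lemma~5.7 and Proposition~5.26 of \cite{KuVa} (cf. also Proposition~1.14 of \cite{KuVaweightC*}): one arranges a two-step smoothing so that both $(e_l)$ and $(\tau_{-\frac{i}{2}}(e_l))$ are norm-bounded.

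For strict convergence to $1$: fix $a\in A$; then $\|e_la-a\|\le\frac{n_l}{\sqrt\pi}\int\exp(-n_l^2t^2)\,\|\tau_t(u_l)a-a\|\,dt$, and splitting the integral into $|t|<\delta$ and $|t|\ge\delta$ and using norm-continuity of $t\mapsto\tau_t(a)$ together with $\|u_la-a\|\to0$ (strict convergence of $(u_l)$) and the concentration of the Gaussian mass near $0$ as $n_l\to\infty$, one gets $\|e_la-a\|\to0$; the estimate for $\|ae_l-a\|$ is symmetric, using $\|\tau_t$ applied on the right. The same computation with the shifted Gaussian, after the preliminary smoothing that makes everything analytic, gives $\tau_{-\frac{i}{2}}(e_l)\to1$ strictly. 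Finally, since $\tau_{-\frac{i}{2}}={\mathcal D}(S)$-domain and $S=R\circ\tau_{-\frac{i}{2}}$ with $R$ a $*$-anti-automorphism of $A$ extending strictly continuously to $M(A)$ (Definition~\ref{unitaryantipode} and the Remark after it), $e_l\in{\mathcal D}(S)$ and $S(e_l)=R\bigl(\tau_{-\frac{i}{2}}(e_l)\bigr)\to R(1)=1$ strictly, which is the stated consequence. The only genuine obstacle is the boundedness of $\bigl(S(e_l)\bigr)$ — equivalently of $\bigl(\tau_{-\frac{i}{2}}(e_l)\bigr)$ — and it is handled by the standard double-smoothing trick rather than a single Gaussian.
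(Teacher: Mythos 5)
Your final construction works, but it takes a detour the paper does not need, and the detour is self-inflicted: the variable Gaussian widths $n_l\to\infty$ are both the source of the $\exp(n_l^2/4)$ blow-up you correctly flag and entirely unnecessary. The paper's proof is the first half of your "fix" and nothing more: take any bounded net $(u_l)_{l\in L}$ converging strictly to $1$ and set $e_l:=\frac{1}{\sqrt{\pi}}\int\exp(-t^2)\,\tau_t(u_l)\,dt$ with a \emph{fixed} Gaussian (exactly as in the proof of Lemma~\ref{aplemma2}, following Lemma~5.7 of \cite{KuVa}). The point you miss is that the strict convergence $e_l\to1$ does not come from concentrating the Gaussian at $t=0$; it is inherited from $u_l\to1$ by dominated convergence, since $\tau_t(u_l)a-a\to0$ in norm for each $t$ with a uniform bound, and likewise on the right. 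With the width fixed, analytic continuation gives $\tau_{-\frac{i}{2}}(e_l)=\frac{1}{\sqrt{\pi}}\int\exp\bigl(-(t+\frac{i}{2})^2\bigr)\,\tau_t(u_l)\,dt$, whose kernel has modulus $e^{1/4}\exp(-t^2)$, so $\bigl\|\tau_{-\frac{i}{2}}(e_l)\bigr\|\le e^{1/4}\sup_l\|u_l\|$ and the same dominated-convergence argument (using $\frac{1}{\sqrt{\pi}}\int\exp\bigl(-(t+\frac{i}{2})^2\bigr)\,dt=1$ by a contour shift) gives $\tau_{-\frac{i}{2}}(e_l)\to1$ strictly — no pre-smoothing to make the $u_l$ analytic is required. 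In your two-step scheme, note also that the output of the preliminary fixed-width smoothing already satisfies all three bullets, so your second, variable-width smoothing is redundant; it is not wrong, but it buys nothing and obliges you to re-verify strict convergence of $\tau_{-\frac{i}{2}}(e_l)$ a second time. The concluding step ($S(e_l)=R(\tau_{-\frac{i}{2}}(e_l))\to1$ strictly since $R$ extends to a strictly continuous ${}^*$-anti-automorphism of $M(A)$) agrees with the paper.
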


\begin{proof}
This is similar to the case considered in the proof of Lemma~\ref{aplemma2}
(See also Remark~5.32 in \cite{KuVa}.).  Consider a bounded net 
$(u_l)_{l\in L}$ such that $(u_l)_{l\in L}$ converges strictly to 1.  For each $l\in L$, 
define $e_l:=\frac{1}{\sqrt{\pi}}\int\exp(-t^2)\tau_t(u_l)\,dt$. Then the three properties 
are easy to verify. The last statement is straightforward, because ${\mathcal D}(S)=
{\mathcal D}(\tau_{-\frac{i}{2}})$ and $S=R\circ\tau_{-\frac{i}{2}}$.
\end{proof}

Let us now gather some results concerning the antipode map $S$ and 
the strict topology on the multiplier algebra $M(A)$. 

\begin{prop}\label{antipodestrictA}
\begin{enumerate}
  \item Suppose $a,b\in A$ are such that for every $z\in{\mathcal D}(S)$ we have 
$za\in{\mathcal D}(S)$ and $S(za)=bS(z)$.  Then we have $a\in{\mathcal D}(S)$ 
and $S(a)=b$.
  \item Similarly, if for every $z\in{\mathcal D}(S)$ we have $az\in{\mathcal D}(S)$ and 
$S(az)=S(z)b$, then $a\in{\mathcal D}(S)$ and $S(a)=b$.
\end{enumerate}
\end{prop}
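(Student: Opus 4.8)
The plan is to reduce everything to the closedness of $S$ (established in Proposition~\ref{antipodeprop}(2)) together with the existence of a good approximate unit adapted to $S$. Concretely, I would invoke the net $(e_l)_{l\in L}$ furnished by Lemma~\ref{e_lnet}: it lies inside $\mathcal{D}(S)$, is bounded, converges strictly to $1$, and has $\bigl(S(e_l)\bigr)_{l\in L}$ converging strictly to $1$. The strategy is then to test the hypothesis against $z=e_l$ and pass to the limit, exploiting the fact that the graph of a closed operator is closed under net convergence.

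For part~(1), fix $l\in L$. Since $e_l\in\mathcal{D}(S)$, the assumption gives $e_l a\in\mathcal{D}(S)$ and $S(e_l a)=bS(e_l)$. As $e_l\to 1$ strictly and $a\in A$, we have $e_l a\to a$ in norm; as $S(e_l)\to 1$ strictly and $b\in A$, we have $bS(e_l)\to b$ in norm. Thus $(e_l a)_{l\in L}$ is a net in $\mathcal{D}(S)$ with $e_l a\to a$ and $S(e_l a)\to b$, so closedness of $S$ forces $a\in\mathcal{D}(S)$ and $S(a)=b$. Part~(2) is identical with the sides interchanged: testing the hypothesis $S(az)=S(z)b$ against $z=e_l$ gives $a e_l\in\mathcal{D}(S)$ and $S(a e_l)=S(e_l)b$, and since $a e_l\to a$ and $S(e_l)b\to b$ in norm, closedness of $S$ again yields $a\in\mathcal{D}(S)$ and $S(a)=b$.

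I do not expect a genuine obstacle here; the proof is a routine approximation argument, and the analogue in the quantum group case (Result~5.33 of \cite{KuVa}) proceeds the same way. The only points meriting care are bookkeeping ones: that closedness of $S$ may be used with \emph{nets} (not merely sequences), and the strict-continuity facts underlying Lemma~\ref{e_lnet} — in particular that the extension of $R$ to $M(A)$ is strictly continuous on bounded sets, so that $S(e_l)=R\bigl(\tau_{-\frac{i}{2}}(e_l)\bigr)\to R(1)=1$ strictly follows from $\tau_{-\frac{i}{2}}(e_l)\to 1$ strictly. In fact even less is needed: it suffices that $bS(e_l)\to b$ and $S(e_l)b\to b$ in norm, which are immediate from the strict convergence $S(e_l)\to 1$ since $b\in A$.
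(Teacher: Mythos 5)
Your argument is exactly the paper's proof: both test the hypothesis against the net $(e_l)_{l\in L}$ from Lemma~\ref{e_lnet} and invoke the closedness of $S$ to conclude $a\in{\mathcal D}(S)$ and $S(a)=b$, with part~(2) handled symmetrically. The additional remarks on net-closedness and strict continuity are fine bookkeeping but add nothing beyond what the paper already uses.
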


\begin{proof}
By the previous lemma, we can find a net $(e_l)_{l\in L}$ contained in ${\mathcal D}(S)$ 
such that $\bigl(S(e_l)\bigr)_{l\in L}$ converges strictly to 1.  Suppose $a,b\in A$ are 
such that for every $z\in{\mathcal D}(S)$ we have $za\in{\mathcal D}(S)$ and $S(za)=bS(z)$. 
Then $e_la\in{\mathcal D}(S)$, $\forall l\in L$, and $e_la\to a$.  Since $S$ is a closed map 
on $A$, we see that $a\in{\mathcal D}(S)$ and that $S(a)=\lim_{l\in L}S(e_la)=\lim_{l\in L}bS(e_l)=b$. 
The second statement is proved in a similar way. 
\end{proof}

\begin{cor}
$S$ is closed with respect to the strict topology on $A$.
\end{cor}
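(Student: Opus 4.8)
The statement to prove is the Corollary immediately following Proposition~\ref{antipodestrictA}: that $S$ is closed with respect to the strict topology on $A$. The plan is to derive this directly from Proposition~\ref{antipodestrictA} together with the fact, already established, that $S$ is norm-closed and densely defined with the algebra of multiplier-column/row type elements available through Lemma~\ref{ColumnRow} and the approximate unit of Lemma~\ref{e_lnet}.

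\begin{proof}
Suppose $(a_k)_{k}$ is a net in ${\mathcal D}(S)$ converging strictly to some $a\in A$, such that $\bigl(S(a_k)\bigr)_k$ converges strictly to some $b\in A$. We must show $a\in{\mathcal D}(S)$ and $S(a)=b$. Fix an arbitrary $z\in{\mathcal D}(S)$. Since $S$ is antimultiplicative (Proposition~\ref{antipodeprop}\,(4)) and each $a_k\in{\mathcal D}(S)$, we have $za_k\in{\mathcal D}(S)$ and $S(za_k)=S(a_k)S(z)$ for every $k$. Now $za_k\to za$ in norm, because $z\in A$ and $a_k\to a$ strictly; likewise $S(a_k)S(z)\to bS(z)$ in norm, because $S(z)\in A$ and $S(a_k)\to b$ strictly. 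Since $S$ is norm-closed (Proposition~\ref{antipodeprop}\,(2)), it follows that $za\in{\mathcal D}(S)$ and $S(za)=bS(z)$. As $z\in{\mathcal D}(S)$ was arbitrary, Proposition~\ref{antipodestrictA}\,(1) applies and yields $a\in{\mathcal D}(S)$ with $S(a)=b$, as required.
\end{proof}

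Two small points deserve attention in the writeup. First, one should note explicitly that the net version of Proposition~\ref{antipodestrictA} is what is needed here, but the proof of that proposition as given already works verbatim with nets (the argument there uses a net $(e_l)_{l\in L}$ from Lemma~\ref{e_lnet} and the norm-closedness of $S$), so no new input is required. Second, multiplication by a fixed element of $A$ is strictly-to-norm continuous on bounded sets only, so one should either restrict to bounded nets (which is the standard convention for strict closedness of unbounded maps, and is the relevant notion since strict convergence of $(a_k)$ and $(S(a_k))$ already forces the obvious local boundedness through the defining seminorms), or simply observe that for the fixed $z$ the products $za_k$ converge in norm because $z\in A$ pairs against the strictly convergent net $a_k$ to give norm convergence — this is exactly the definition of strict topology, $\|z a_k - za\| = \|z(a_k-a)\|\to 0$. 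The latter phrasing is cleanest and avoids any boundedness hypothesis altogether.

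I expect no real obstacle here: the Corollary is a one-line consequence of the preceding proposition once one unwinds the definition of strict convergence and invokes antimultiplicativity of $S$ and its norm-closedness. The only thing to be careful about is matching the left-multiplication case to part~(1) of Proposition~\ref{antipodestrictA} (using $S(za)=S(a)S(z)$, so that the role of ``$b$'' in that proposition is played by our strict limit $b$ of $S(a_k)$); a symmetric argument with right multiplication and part~(2) would work equally well. Since the proof is so short, I would present it in full rather than as a sketch.
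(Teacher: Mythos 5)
Your proof is correct and follows essentially the same route as the paper: multiply by a fixed $z\in{\mathcal D}(S)$, use antimultiplicativity of $S$, note that strict convergence of $a_k$ and of $S(a_k)$ gives norm convergence of $za_k$ and $S(a_k)S(z)$, invoke norm-closedness of $S$, and finish with Proposition~\ref{antipodestrictA}. Your side remarks (nets versus sequences, no boundedness needed since fixed-element multiplication is exactly what strict convergence controls) are accurate but do not change the argument.
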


\begin{proof}
Suppose $a_n\to a$ strictly, where each $a_n\in{\mathcal D}(S)$, and also $S(a_n)\to b$ 
strictly.  Then for any $z\in{\mathcal D}(S)$, we would have: $za_n\to za$, in norm.  Moreover, 
by the strict convergence $S(a_n)\to b$, we also know that $S(za_n)=S(a_n)S(z)
\to bS(z)$, in norm.  Since $S$ is a closed map, it follows that $za\in{\mathcal D}(S)$ 
and that $S(za)=bS(z)$.

In other words, we see that $za\in{\mathcal D}(S)$ for any $z\in{\mathcal D}(S)$ and that 
$S(za)=bS(z)$.  By the Proposition~\ref{antipodestrictA}, we conclude that $a\in{\mathcal D}(S)$ 
and $S(a)=b$, showing that $S$ is closed under the strict topology.
\end{proof}

We now turn to giving an alternative description of the antipode.  First, we define 
a subspace ${\mathcal D}_0$ of $A$.  

\begin{defn}\label{D_0}
Let $x\in A$.  We will say $x\in{\mathcal D}_0$, if there exists 
an element $\tilde{x}\in A$ and an index set $I$ with elements 
$\{p_1,p_2,\dots\}_{i\in I}$, $\{q_1,q_2,\dots\}_{i\in I}$ contained 
in $MR_I(A)$, such that 
\begin{equation}\label{(D0eqn1)}
\sum_{i\in I}\Delta(p_i)(1\otimes q_i^*) {\text { converges strictly to }} 
E(x\otimes1),
\end{equation}
\begin{equation}\label{(D0eqn2)}
\sum_{i\in I}\Delta(q_i)(1\otimes p_i^*) {\text { converges strictly to }} 
E(\tilde{x}\otimes1).
\end{equation}
\end{defn}

One may notice its resemblance to the definition at the Hilbert space level for the domain of 
the operator $K$ (Definition~\ref{D(K)}).  Here is an important observation:

\begin{prop}\label{D_0prop}
Let $x\in{\mathcal D}_0$, and let $\tilde{x}\in A$ be as in Definition~\ref{D_0}. 
Then $x\in{\mathcal D}(S)$ and $S(x)=\tilde{x}^*$.
\end{prop}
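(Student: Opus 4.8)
The plan is to deduce $x\in{\mathcal D}(S)$ and $S(x)=\tilde{x}^*$ from Proposition~\ref{antipodestrictA}(1): it suffices to show that $zx\in{\mathcal D}(S)$ and $S(zx)=\tilde{x}^*S(z)$ for every $z$ in the subspace $\operatorname{span}\bigl\{(\operatorname{id}\otimes\varphi)(\Delta(a^*)(1\otimes b)):a,b\in{\mathfrak N}_{\varphi}\bigr\}$, which is a core for $S$ by Proposition~\ref{strongleftinvariance}. Once this is known for such $z$, it passes to all $z\in{\mathcal D}(S)$ by the closedness of $S$ (approximate $z$ in graph norm by core elements $z_n$; then $z_nx\to zx$ and $S(z_nx)=\tilde{x}^*S(z_n)\to\tilde{x}^*S(z)$), and Proposition~\ref{antipodestrictA}(1) with $a=x$, $b=\tilde{x}^*$ concludes. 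The argument follows the quantum group template of \cite{VDvN}, Section~1, the new features being the bookkeeping imposed by $E\ne1\otimes1$ (through the identity $\Delta(a^*)E=\Delta(a^*)$ of \eqref{(EDelta)}) and by the fact that the convergence in Definition~\ref{D_0} is strict rather than norm.

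So fix $z=(\operatorname{id}\otimes\varphi)(\Delta(a^*)(1\otimes b))$ with $a,b\in{\mathfrak N}_{\varphi}$. After checking $\Delta(a^*)(x\otimes b)\in{\mathfrak M}_{\operatorname{id}\otimes\varphi}$ one has $zx=(\operatorname{id}\otimes\varphi)\bigl(\Delta(a^*)(x\otimes b)\bigr)$. Write $P:=\sum_ip_ip_i^*$ and $Q:=\sum_iq_iq_i^*$ in $M(A)$, which converge strictly since $(p_i),(q_i)\in MR_I(A)$. Multiplying \eqref{(D0eqn1)} on the right by $1\otimes b$ and on the left by $\Delta(a^*)$ and using $\Delta(a^*)E=\Delta(a^*)$ gives $\Delta(a^*)(x\otimes b)=\sum_i\Delta(a^*p_i)(1\otimes q_i^*b)$ in norm. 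The row bounds (Lemma~\ref{ColumnRow}) give $p_i^*a,q_i^*a\in{\mathfrak N}_{\varphi}$ — e.g. $\varphi(a^*p_ip_i^*a)\le\|P\|\,\varphi(a^*a)<\infty$ — while $q_i^*b\in{\mathfrak N}_{\varphi}$ because ${\mathfrak N}_{\varphi}$ is a left ideal; hence every term $\Delta(a^*p_i)(1\otimes q_i^*b)$ is of the type treated by strong left invariance (Proposition~\ref{strongleftinvariance}), and by Proposition~\ref{idomegaW} one has $(\operatorname{id}\otimes\varphi)\bigl(\Delta(a^*p_i)(1\otimes q_i^*b)\bigr)=(\operatorname{id}\otimes\omega_{\Lambda_{\varphi}(q_i^*b),\Lambda_{\varphi}(p_i^*a)})(W)$. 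Summing over finite $\Gamma$ and letting $\Gamma\uparrow I$, the left sides converge in norm to $zx$ (using that $Y\mapsto(\operatorname{id}\otimes\varphi)(\Delta(a^*)\,Y\,(1\otimes b))$ is norm-continuous, by a Cauchy--Schwarz estimate bounded by a multiple of $\|Y\|$), so $zx=(\operatorname{id}\otimes\theta)(W)$ with $\theta:=\sum_i\omega_{\Lambda_{\varphi}(q_i^*b),\Lambda_{\varphi}(p_i^*a)}$; this series converges in ${\mathcal B}({\mathcal H}_{\varphi})_*$ since $\sum_i\|\Lambda_{\varphi}(q_i^*b)\|\,\|\Lambda_{\varphi}(p_i^*a)\|\le\bigl(\sum_i\varphi(b^*q_iq_i^*b)\bigr)^{1/2}\bigl(\sum_i\varphi(a^*p_ip_i^*a)\bigr)^{1/2}\le\bigl(\|Q\|\varphi(b^*b)\bigr)^{1/2}\bigl(\|P\|\varphi(a^*a)\bigr)^{1/2}<\infty$. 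By Proposition~\ref{antipodeidW} we conclude $zx\in{\mathcal D}(S)$ and $S(zx)=(\operatorname{id}\otimes\theta)(W^*)$.

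It remains to identify $(\operatorname{id}\otimes\theta)(W^*)$ with $\tilde{x}^*S(z)$. Expanding again by Proposition~\ref{antipodeidW} and strong left invariance, $(\operatorname{id}\otimes\theta)(W^*)=\sum_i(\operatorname{id}\otimes\varphi)\bigl((1\otimes a^*p_i)\Delta(q_i^*b)\bigr)$; on the other hand $S(z)=(\operatorname{id}\otimes\varphi)\bigl((1\otimes a^*)\Delta(b)\bigr)$, so $\tilde{x}^*S(z)=(\operatorname{id}\otimes\varphi)\bigl((1\otimes a^*)(\tilde{x}^*\otimes1)\Delta(b)\bigr)$, and taking adjoints in \eqref{(D0eqn2)} together with $E\Delta(b)=\Delta(b)$ yields $(\tilde{x}^*\otimes1)\Delta(b)=\sum_i(1\otimes p_i)\Delta(q_i^*b)$, strictly and with partial sums uniformly bounded by $\|P\|^{1/2}\|Q\|^{1/2}$ (Lemma~\ref{ColumnRow}). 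Thus $S(zx)=\tilde{x}^*S(z)$ reduces to commuting $\operatorname{id}\otimes\varphi$ past this norm-bounded, strictly convergent sum; this is the technical heart, and I would handle it using the lower semicontinuity of the operator-valued weight $\operatorname{id}\otimes\varphi$ together with the uniform bound above (which keeps the GNS images of the partial sums bounded) and the norm-continuity of $Z\mapsto(\operatorname{id}\otimes\varphi)\bigl((1\otimes a^*)\,Z\,\Delta(b)\bigr)$ on bounded subsets. Granting this, $S(zx)=\tilde{x}^*S(z)$ on the core, and the reduction of the first paragraph completes the proof. The main obstacle is exactly this weight/strict-sum interchange and the accompanying integrability checks; beyond these, no essentially new idea is required.
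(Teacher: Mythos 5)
Your skeleton is the same as the paper's: reduce via Proposition~\ref{antipodestrictA} to core elements $z=(\operatorname{id}\otimes\varphi)(\Delta(a^*)(1\otimes b))$, apply strong left invariance (Proposition~\ref{strongleftinvariance}) to each term $\Delta(a^*p_i)(1\otimes q_i^*b)$, pass to the limit in the two convergences of Definition~\ref{D_0} (after multiplying and using $\Delta(a^*)E=\Delta(a^*)$, $E\Delta b=\Delta b$), and conclude; your explicit graph-norm extension from the core to all of ${\mathcal D}(S)$ is a detail the paper leaves implicit, and your integrability checks ($p_i^*a,q_i^*b\in{\mathfrak N}_{\varphi}$, the predual bound $\sum_i\|\Lambda_{\varphi}(q_i^*b)\|\,\|\Lambda_{\varphi}(p_i^*a)\|<\infty$) are correct and give, via Proposition~\ref{idomegaW}, norm convergence of the sliced partial sums to $(\operatorname{id}\otimes\theta)(W)$, resp.\ $(\operatorname{id}\otimes\theta)(W^*)$. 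Where you genuinely diverge is the limit passage: you want to invoke Proposition~\ref{antipodeidW} for the single functional $\theta$, whereas the paper shows that the two sliced nets converge \emph{strictly} to $zx$ and to $\tilde{x}^*S(z)$ and then applies the Corollary to Proposition~\ref{antipodestrictA} ($S$ is closed for the strict topology) to the partial sums, which lie in ${\mathcal D}(S)$ by strong left invariance; you never use that strict-closedness tool.

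The gap is at your admitted ``technical heart,'' and it is not closed by the ingredients you cite. First, multiplying \eqref{(D0eqn1)} by the multipliers $\Delta(a^*)$ and $1\otimes b$ yields only strict convergence of $\sum_i\Delta(a^*p_i)(1\otimes q_i^*b)$, not the norm convergence you assert; norm convergence is in fact true, but it needs a row--column Cauchy--Schwarz tail estimate, $\bigl\|\sum_{i\notin\Gamma}\Delta(a^*p_i)(1\otimes q_i^*b)\bigr\|\le\bigl\|\sum_{i\notin\Gamma}a^*p_ip_i^*a\bigr\|^{1/2}\bigl\|\sum_{i\notin\Gamma}b^*q_iq_i^*b\bigr\|^{1/2}$, whose tails vanish because $a^*P_{\Gamma}a\to a^*Pa$ and $b^*Q_{\Gamma}b\to b^*Qb$ in norm ($a,b\in A$). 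Second, and more seriously, the identifications $zx=(\operatorname{id}\otimes\theta)(W)$ and $\tilde{x}^*S(z)=(\operatorname{id}\otimes\theta)(W^*)$ amount to commuting $\operatorname{id}\otimes\varphi$ with these limits, and your two cited facts do not combine to give this: norm continuity of $Y\mapsto(\operatorname{id}\otimes\varphi)(\Delta(a^*)\,Y\,(1\otimes b))$ would require norm convergence of the \emph{middle} factor (Definition~\ref{D_0} gives only strict), while norm convergence of the full bookended products cannot be fed to the bare slice map, which is merely lower semicontinuous. This identification is exactly the strict convergence statement the paper asserts for the sliced nets before invoking strict closedness of $S$; to finish your version you must either prove it (e.g.\ test against $\rho\in A^*$ and use the same square-summability bounds $\sum_{i\notin\Gamma}\varphi(a^*p_ip_i^*a)\to0$, $\sum_{i\notin\Gamma}\varphi(b^*q_iq_i^*b)\to0$, noting that a net cannot have different norm and strict limits) or switch at that point to the paper's mechanism: partial sums in ${\mathcal D}(S)$, both nets strictly convergent, and $S$ strictly closed.
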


\begin{proof}
Suppose $x\in{\mathcal D}_0$ and $\tilde{x}\in A$ be as in the definition, with 
elements $(p_i)_{i\in I}$, $(q_i)_{i\in I}$ in $MR_I(A)$, satisfying Equation~\eqref{(D0eqn1)} 
and Equation~\eqref{(D0eqn2)}. Note that Equation~\eqref{(D0eqn2)} is equivalent to saying
\begin{equation}\label{(D0eqn3)}
\sum_{i\in I}(1\otimes p_i)\Delta(q_i^*) {\text { converges strictly to }} 
(\tilde{x}^*\otimes1)E.
\end{equation}

Let $c,d\in{\mathfrak N}_{\varphi}$.  Then, by Proposition~\ref{strongleftinvariance}, 
for any finite subset $\Gamma\subseteq I$, we will have
$\sum_{i\in\Gamma}(\operatorname{id}\otimes\varphi)\bigl(\Delta(c^*p_i)(1\otimes q_i^*d)\bigr)
\in{\mathcal D}(S)$, 
and
\begin{equation}\label{(D0eqn4)}
S\left(\sum_{i\in\Gamma}(\operatorname{id}\otimes\varphi)
\bigl(\Delta(c^*p_i)(1\otimes q_i^*d)\bigr)\right)
=\sum_{i\in\Gamma}(\operatorname{id}\otimes\varphi)
\bigl((1\otimes c^*p_i)\Delta(q_i^*d)\bigr).
\end{equation}

Meanwhile, by Lemma~\ref{ColumnRow}, the net $\bigl\{\sum_{i\in\Gamma}p_iq_i^*\bigr\}_{\Gamma}$ 
over the finite subsets $\Gamma\subset I$ is bounded.  Since $\Delta$ is a ${}^*$-homomorphism, 
this in turn means that the net $\bigl\{\sum_{i\in\Gamma}\Delta(p_i)(1\otimes q_i^*)\bigr\}_{\Gamma}$ 
over the finite subsets of $I$ is bounded.  Because of the boundedness of the net, 
Equation~\eqref{(D0eqn1)} leads us to the strict convergence:
\begin{align}
\sum_{i\in I}(\operatorname{id}\otimes\varphi)\bigl(\Delta(c^*p_i)(1\otimes q_i^*d)\bigr)
\longrightarrow
&\,(\operatorname{id}\otimes\varphi)\bigl(\Delta(c^*)[E(x\otimes1)](1\otimes d)\bigr) 
\notag \\
&=(\operatorname{id}\otimes\varphi)\bigl(\Delta(c^*)(1\otimes d)\bigr)\,x,
\notag 
\end{align} 
where we used the fact that $\Delta(c^*)E=\Delta(c^*)$.  Similarly, by Equation~\eqref{(D0eqn3)} 
and the boundedness of the net, we have the following strict convergence: 
\begin{align}
\sum_{i\in I}(\operatorname{id}\otimes\varphi)\bigl((1\otimes c^*p_i)\Delta(q_i^*d)\bigr)
\longrightarrow
&\,(\operatorname{id}\otimes\varphi)\bigl((1\otimes c^*)[(\tilde{x}^*\otimes1)E](\Delta d)\bigr)
\notag \\
&=\tilde{x}^*\,(\operatorname{id}\otimes\varphi)\bigl((1\otimes c^*)(\Delta d)\bigr).
\notag
\end{align}

These observations, together with the fact that $S$ is closed under the strict topology 
(recall Proposition~\ref{antipodestrictA}), mean that at the limit, Equation~\eqref{(D0eqn4)} 
becomes:
$$
S\bigl((\operatorname{id}\otimes\varphi)(\Delta(c^*)(1\otimes d))\,x\bigr)
=\tilde{x}^*\,(\operatorname{id}\otimes\varphi)\bigl((1\otimes c^*)(\Delta d)\bigr).
$$

It follows that for all elements $z=(\operatorname{id}\otimes\varphi)
\bigl(\Delta(c^*)(1\otimes d)\bigr)$, which form a core for $S$, by Proposition~\ref{strongleftinvariance} 
we have: 
$$
S(zx)=\tilde{x}^*S(z).
$$
Applying Proposition~\ref{antipodestrictA}, 
we conclude that $x\in{\mathcal D}(S)$ and $S(x)=\tilde{x}^*$.
\end{proof}

From Proposition~\ref{D_0prop}, we see that ${\mathcal D}_0\subseteq{\mathcal D}(S)$ and 
$S_0=S|_{{\mathcal D}_0}$, where $S_0$ is the map given by ${\mathcal D}_0
\ni x\mapsto\tilde{x}^*$.  Meanwhile, by imitating the proof of 
Proposition~\ref{Kdensedomain}, we can also show the following result:

\begin{prop}\label{D_0dense}
For $\omega\in{\mathcal B}({\mathcal H}_{\varphi})_*$, let $x=(\operatorname{id}
\otimes\omega)(W)$ and $\tilde{x}=(\operatorname{id}\otimes\bar{\omega})(W)$. 
Then $x\in{\mathcal D}_0$, with $\tilde{x}$ is as in Definition~\ref{D_0}. 
\end{prop}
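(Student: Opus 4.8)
The plan is to replay the argument of Proposition~\ref{Kdensedomain} at the $C^*$-algebra level, replacing Hilbert space convergence of $V(\sum \Lambda_\psi(p_j)\otimes q_j^*\zeta)$ by strict convergence of $\sum \Delta(p_i)(1\otimes q_i^*)$, and using the pentagon-type identities for $W$ rather than those for $V$. Fix $\omega\in{\mathcal B}({\mathcal H}_\varphi)_*$ and, without loss of generality, take $\omega=\omega_{\xi',\zeta'}$ for $\xi',\zeta'\in{\mathcal H}_\varphi$; let $(e_j)_{j\in J}$ be an orthonormal basis of ${\mathcal H}_\varphi$. Following the pattern of Proposition~\ref{Kdensedomain}, set
\[
p_j:=(\operatorname{id}\otimes\omega_{e_j,\zeta'})(W),\qquad
q_j:=(\operatorname{id}\otimes\omega_{e_j,\xi'})(W).
\]
By Proposition~\ref{idomegaW} and the discussion preceding Proposition~\ref{idomegaWclosureA}, these lie in $A$; the first step is to check that $(p_j)_{j\in J}$ and $(q_j)_{j\in J}$ define elements of $MR_J(A)$, i.e.\ that $\sum_j p_jp_j^*$ and $\sum_j q_jq_j^*$ are strictly summable in $M(A)$. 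This follows from Lemma~\ref{omega_xizetaLem}\,(4): $\sum_j p_jp_j^*=(\operatorname{id}\otimes\omega_{\zeta',\zeta'})(WW^*)=(\operatorname{id}\otimes\omega_{\zeta',\zeta'})(G_L)$, with the finite partial sums bounded and strictly convergent, and similarly for the $q_j$'s.

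The second step is to identify the two strict limits in Definition~\ref{D_0}. Using $\Delta(p_j)=(\operatorname{id}\otimes\operatorname{id}\otimes\omega_{e_j,\zeta'})(W_{13}W_{23})$ (Proposition~\ref{Deltaidomega}, or rather its $W$-analogue via Proposition~\ref{DeltaW} and the pentagon identity, managing the ${\mathcal H}$ versus ${\mathcal H}_\varphi$ bookkeeping with ${}_\varphi\!W$ as in the proof of Proposition~\ref{Deltaidomega}) and $q_j^*=(\operatorname{id}\otimes\omega_{\xi',e_j})(W^*)$ from Lemma~\ref{omega_xizetaLem}\,(2), one computes
\[
\sum_{j\in J}\Delta(p_j)(1\otimes q_j^*)
=(\operatorname{id}\otimes\operatorname{id}\otimes\omega_{\xi',\zeta'})(W_{13}W_{23}W_{23}^*),
\]
the sum converging strictly with bounded partial sums by Lemma~\ref{omega_xizetaLem}\,(4). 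Now apply Proposition~\ref{Wpentagon_alt}\,(1), $W_{13}W_{23}W_{23}^*=W_{12}^*W_{12}W_{13}=E_{12}W_{13}$, to rewrite the right-hand side as $E\bigl(x\otimes1\bigr)$ with $x=(\operatorname{id}\otimes\omega_{\xi',\zeta'})(W)=(\operatorname{id}\otimes\omega)(W)$. This is exactly Equation~\eqref{(D0eqn1)}. The symmetric computation with the roles of $p_j,q_j$ and of $\xi',\zeta'$ interchanged gives $\sum_j\Delta(q_j)(1\otimes p_j^*)=(\operatorname{id}\otimes\operatorname{id}\otimes\omega_{\zeta',\xi'})(W_{13}W_{23}W_{23}^*)=E(\tilde x\otimes1)$, where $\tilde x=(\operatorname{id}\otimes\omega_{\zeta',\xi'})(W)=(\operatorname{id}\otimes\bar\omega)(W)$; this is Equation~\eqref{(D0eqn2)}. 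Hence $x\in{\mathcal D}_0$ with the stated $\tilde x$, and Proposition~\ref{D_0prop} then yields $x\in{\mathcal D}(S)$ and $S(x)=\tilde x^*=(\operatorname{id}\otimes\bar\omega)(W)^*=(\operatorname{id}\otimes\omega)(W^*)$, consistent with Proposition~\ref{antipodeidW}.

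The main obstacle is the passage from the Hilbert space estimates underlying Proposition~\ref{Kdensedomain} to strict convergence in $M(A)$: one must verify that the nets $\bigl\{\sum_{j\in\Gamma}p_jp_j^*\bigr\}_\Gamma$, $\bigl\{\sum_{j\in\Gamma}q_jq_j^*\bigr\}_\Gamma$, and the corresponding $\Delta$-transformed and $(1\otimes q_j^*)$-multiplied nets, are bounded and strictly Cauchy in the multiplier algebra, so that Definition~\ref{D_0} actually applies. This is where Lemma~\ref{ColumnRow} and Lemma~\ref{omega_xizetaLem}\,(4) do the real work: the former guarantees that products of a row with a column are strictly summable with bounded partial sums, and the latter provides the precise convergence $\sum_j(\operatorname{id}\otimes\omega_{e_j,\zeta})(S)(\operatorname{id}\otimes\omega_{\xi,e_j})(T)\to(\operatorname{id}\otimes\omega_{\xi,\zeta})(ST)$ in operator norm, which upgrades to the strict topology on $M(A)$ because each factor lies in $A$. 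Once these convergence bookkeeping points are settled, the algebraic manipulation via the pentagon relations of Proposition~\ref{Wpentagon_alt} is routine and mirrors Proposition~\ref{Kdensedomain} line for line.
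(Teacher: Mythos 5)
Your proposal is correct and follows essentially the same route as the paper: the same choice of $p_j=(\operatorname{id}\otimes\omega_{e_j,\zeta'})(W)$, $q_j=(\operatorname{id}\otimes\omega_{e_j,\xi'})(W)$, membership in $MR_J(A)$ via Lemma~\ref{omega_xizetaLem}\,(4), the identification $\Delta(p_j)=(\operatorname{id}\otimes\operatorname{id}\otimes\omega_{e_j,\zeta'})(W_{13}W_{23})$ from Proposition~\ref{Deltaidomega}, and the reduction $W_{13}W_{23}W_{23}^*=E_{12}W_{13}$ from Proposition~\ref{Wpentagon_alt}\,(1), with the same caveat about upgrading Lemma~\ref{omega_xizetaLem}\,(4) to the strict topology. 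No gaps to report.
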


\begin{proof}
Without loss of generality, we may assume $\omega=\omega_{\xi,\zeta}$, where 
$\xi,\zeta\in{\mathcal H}_{\varphi}$.  Also let $(e_j)_{j\in J}$ be an orthonormal 
basis for ${\mathcal H}_{\varphi}$.  Define:
$$
p_j:=(\operatorname{id}\otimes\omega_{e_j,\zeta})(W),
\quad {\text { and }}\quad
q_j:=(\operatorname{id}\otimes\omega_{e_j,\xi})(W).
$$
Note that by Lemma~\ref{omega_xizetaLem}, we have: $\sum_{j\in J}p_jp_j^*
=(\operatorname{id}\otimes\omega_{\zeta,\zeta})(WW^*)$, where the net of finite sums 
is bounded and convergent.  So $(p_j)_{j\in J}\in MR_J(A)$.  Similarly, we have 
$(q_j)_{j\in J}\in MR_J(A)$.  

Meanwhile by Proposition~\ref{Deltaidomega}, we have: $\Delta(p_j)
=(\operatorname{id}\otimes\operatorname{id}\otimes\omega_{e_j,\zeta})(W_{13}W_{23})$, 
while $(1\otimes q_j^*)=(\operatorname{id}\otimes\operatorname{id}\otimes\omega_{\xi,e_j})
(W_{23}^*)$.  So by a similar argument as in the proof of Proposition~\ref{Kdensedomain}, 
we have the convergences:
\begin{equation}\label{(D_0denseeqn1)}
\sum_{j\in J}\Delta(p_j)(1\otimes q_j^*)\longrightarrow
E\,(\operatorname{id}\otimes\operatorname{id}\otimes\omega_{\xi,\zeta})(W_{13})
=E(x\otimes1),
\end{equation}
\begin{equation}\label{(D_0denseeqn2)}
\sum_{j\in J}\Delta(q_j)(1\otimes p_j^*)\longrightarrow
E\,(\operatorname{id}\otimes\operatorname{id}\otimes\omega_{\zeta,\xi})(W_{13})
=E(\tilde{x}\otimes1).
\end{equation}
The two convergences are under the strict topology.  To be precise, a modified 
version of Lemma~\ref{omega_xizetaLem}\,(4) is needed, regarding the strict 
topology.  But, that is not fundamentally different.

Comparing Equations~\eqref{(D_0denseeqn1)} and \eqref{(D_0denseeqn2)} with 
the defining equations in Definition~\ref{D_0}, we see that $x\in{\mathcal D}_0$.
\end{proof}

\begin{theorem}
Let $x\in{\mathcal D}_0$, with $\tilde{x}$ be as in Definition~\ref{D_0}. Let 
$S_0$ be the closure of the map given by ${\mathcal D}_0\ni x\mapsto\tilde{x}^*$. 
Then $S_0=S$.
\end{theorem}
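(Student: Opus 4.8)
The plan is to prove the two inclusions $S_0\subseteq S$ and $S\subseteq S_0$ separately, using the results already assembled above. The first inclusion is essentially done: Proposition~\ref{D_0prop} shows that for $x\in{\mathcal D}_0$ we have $x\in{\mathcal D}(S)$ and $S(x)=\tilde{x}^*$, so that the map $x\mapsto\tilde{x}^*$ on ${\mathcal D}_0$ is the restriction $S|_{{\mathcal D}_0}$. Since $S$ is a closed map (Proposition~\ref{antipodeprop}\,(2); in fact closed under the strict topology by the Corollary to Proposition~\ref{antipodestrictA}), the closure $S_0$ of $x\mapsto\tilde{x}^*$ satisfies $S_0\subseteq S$. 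So the entire content of the theorem is the reverse inclusion $S\subseteq S_0$, i.e.\ that ${\mathcal D}_0$ is a core for $S$.

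To prove $S\subseteq S_0$, first I would invoke Proposition~\ref{D_0dense}: for every $\omega\in{\mathcal B}({\mathcal H}_{\varphi})_*$, the element $x=(\operatorname{id}\otimes\omega)(W)$ lies in ${\mathcal D}_0$, and the associated $\tilde{x}$ of Definition~\ref{D_0} is $\tilde{x}=(\operatorname{id}\otimes\bar{\omega})(W)$, whence $S_0(x)=\tilde{x}^*=(\operatorname{id}\otimes\bar{\omega})(W)^*=(\operatorname{id}\otimes\omega)(W^*)$. On the other hand, Proposition~\ref{antipodeidW} tells us precisely that $x=(\operatorname{id}\otimes\omega)(W)\in{\mathcal D}(S)$, that $S(x)=(\operatorname{id}\otimes\omega)(W^*)$, and --- crucially --- that the set $\bigl\{(\operatorname{id}\otimes\omega)(W):\omega\in{\mathcal B}({\mathcal H}_{\varphi})_*\bigr\}$ forms a core for $S$. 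Thus $S$ and $S_0$ agree on this set of elements, and since it is a core for $S$, for any $x\in{\mathcal D}(S)$ we can choose a net (or sequence) $x_n$ of such elements with $x_n\to x$ and $S(x_n)\to S(x)$ in norm. But $x_n\in{\mathcal D}_0$ with $S_0(x_n)=S(x_n)$, and $S_0$ is closed by construction, so $x\in{\mathcal D}(S_0)$ and $S_0(x)=S(x)$. This gives $S\subseteq S_0$.

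Combining the two inclusions yields $S_0=S$, completing the proof. The only point requiring a little care is the verification in Proposition~\ref{D_0dense} that the convergence in Definition~\ref{D_0} really is strict (not merely weak on a Hilbert space), which uses a strict-topology version of Lemma~\ref{omega_xizetaLem}\,(4) together with the bounded-net argument from Lemma~\ref{ColumnRow}; but this is already handled in the statement and proof of Proposition~\ref{D_0dense}, so here it may simply be cited. I do not anticipate a genuine obstacle: the heavy lifting --- establishing that $(\operatorname{id}\otimes\omega)(W)$-elements lie in ${\mathcal D}_0$, that $S_0$ agrees with $S$ on them, and that they form a core for $S$ --- has all been done in the preceding propositions, so the theorem is essentially a packaging statement. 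If one wanted to be more self-contained, the core property could alternatively be derived from Proposition~\ref{strongleftinvariance} (the elements $(\operatorname{id}\otimes\varphi)(\Delta(a^*)(1\otimes b))$, $a,b\in{\mathfrak N}_{\varphi}$, also form a core for $S$ and lie in ${\mathcal D}_0$ by an argument parallel to Proposition~\ref{D_0dense}), but invoking Proposition~\ref{antipodeidW} is the most economical route.
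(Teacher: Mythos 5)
Your proof is correct and follows essentially the same route as the paper: Proposition~\ref{D_0prop} gives $S_0\subseteq S$ (via closedness of $S$), while Proposition~\ref{D_0dense} together with the core statement in Proposition~\ref{antipodeidW} yields the reverse inclusion. The extra detail you supply (the explicit approximation argument using closedness of $S_0$, and the remark on the alternative core from Proposition~\ref{strongleftinvariance}) merely spells out what the paper leaves implicit.
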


\begin{proof}
Following the proof of Proposition~\ref{D_0prop}, we already observed that 
${\mathcal D}_0\subseteq{\mathcal D}(S)$ and $S_0=S|_{{\mathcal D}_0}$. 
On the other hand, from Proposition~\ref{D_0dense}, we saw that ${\mathcal D}_0$ 
contains the elements of the form $(\operatorname{id}\otimes\omega)(W)$, 
for $\omega\in{\mathcal B}({\mathcal H}_{\varphi})_*$.  But we know such elements 
form a core for $S$ (Proposition~\ref{antipodeidW}).  Therefore, we conclude that 
$S_0=S$.
\end{proof}

\begin{rem}
The significance of this latest observation is that the antipode map $S$ can be 
characterized as the map $S_0$ (defined on ${\mathcal D}_0$), which does 
not explicitly refer to the weights $\varphi$ or $\psi$.  Moreover, the pair $R$ 
and $\tau$ are completely determined by the following properties:
\begin{itemize}
  \item $R$ is a ${}^*$-anti-automorphism on $A$;
  \item $\tau=(\tau_t)_{t\in\mathbb{R}}$ is a norm-continuous one-parameter group 
on $A$;
  \item $R$ and $\tau$ commute;
  \item $S=R\circ\tau_{-\frac{i}{2}}$.
\end{itemize}
Indeed, if we know $S$ (like the characterization $S_0$), then from the above 
properties, we will have $S^2=\tau_{-i}$, from which we can determine $\tau$. 
We can then determine $R$ using $R=S\circ\tau_{\frac{i}{2}}$.  What this means 
is that $R$ and $\tau$ also do not explicitly depend on the choice of the weights 
$\varphi$ and $\psi$.
\end{rem}

Since $M=\pi_{\varphi}(A)''$ is generated by the $(\operatorname{id}\otimes\omega)(W)$, 
$\omega\in{\mathcal B}({\mathcal H}_{\varphi})_*$, the same argument as above 
will provide an analogous characterization of $\tilde{S}$.  As above, the pair 
$\tilde{R}$, $\tilde{\tau}$ are completely determined by $\tilde{S}$. 

The result of Proposition~\ref{antipodestrictA} can be further extended 
to the level of $M(A)$.  First, write $\bar{S}=\bar{R}\circ\bar{\tau}_{-\frac{i}{2}}$ to be 
the strict closure of $S$, where $\bar{R}$ is the ${}^*$-anti-automorphism on $M(A)$ 
extending $R$, and $\bar{\tau}_{-\frac{i}{2}}$ is the strict closure of $\tau_{-\frac{i}{2}}$. 
Using the net $(e_l)_{l\in L}$ from Lemma~\ref{e_lnet}, we have the following result.  
The proof is essentially same as in Proposition~\ref{antipodestrictA}.

\begin{prop}\label{antipodestrictMA}
\begin{enumerate}
  \item Suppose $a,b\in M(A)$ are such that for every $z\in{\mathcal D}(S)$ we have 
$za\in{\mathcal D}(S)$ and $S(za)=bS(z)$.  Then we have $a\in{\mathcal D}(\bar{S})$ 
and $S(a)=b$.
  \item Similarly, if for every $z\in{\mathcal D}(S)$ we have $az\in{\mathcal D}(S)$ 
and $S(az)=S(z)b$, then $a\in{\mathcal D}(\bar{S})$ and $S(a)=b$.
\end{enumerate}
\end{prop}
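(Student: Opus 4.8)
The plan is to mimic the proof of Proposition~\ref{antipodestrictA}, using the approximating net $(e_l)_{l\in L}$ from Lemma~\ref{e_lnet} together with the strict closedness of $S$, but now carrying the argument out at the multiplier algebra level. Recall that by Lemma~\ref{e_lnet}, the net $(e_l)_{l\in L}$ lies in ${\mathcal D}(S)$, is bounded, converges strictly to $1$, and is such that $\bigl(S(e_l)\bigr)_{l\in L}$ converges strictly to $1$. Recall also, from the Corollary to Proposition~\ref{antipodestrictA} and the discussion preceding Proposition~\ref{antipodestrictMA}, that $\bar{S}=\bar{R}\circ\bar{\tau}_{-\frac{i}{2}}$ denotes the strict closure of $S$, which is a closed map with respect to the strict topology on $M(A)$.

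\medskip

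For part~(1): suppose $a,b\in M(A)$ satisfy $za\in{\mathcal D}(S)$ and $S(za)=bS(z)$ for every $z\in{\mathcal D}(S)$. Apply this with $z=e_l$. Then $e_l a\in{\mathcal D}(S)\subseteq{\mathcal D}(\bar{S})$ for each $l$, and $\bar{S}(e_l a)=S(e_l a)=bS(e_l)$. Now, since $a\in M(A)$ and $e_l\to1$ strictly with $(e_l)$ bounded, we have $e_l a\to a$ strictly. Similarly, since $b\in M(A)$ and $S(e_l)\to1$ strictly with $\bigl(S(e_l)\bigr)$ bounded, we have $bS(e_l)\to b$ strictly. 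Because $\bar{S}$ is closed under the strict topology, it follows that $a\in{\mathcal D}(\bar{S})$ and $\bar{S}(a)=b$; this is the assertion ``$a\in{\mathcal D}(\bar S)$ and $S(a)=b$'' in the statement (with the mild abuse of writing $S$ for $\bar S$).

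\medskip

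Part~(2) is handled symmetrically: apply the hypothesis $S(az)=S(z)b$ with $z=e_l$ to get $a e_l\in{\mathcal D}(\bar S)$ with $\bar S(a e_l)=S(e_l)b$, then take strict limits using $a e_l\to a$ and $S(e_l)b\to b$ strictly, and invoke the strict closedness of $\bar S$ once more. The one point requiring a word of justification — and the only place where anything beyond Proposition~\ref{antipodestrictA}'s argument is needed — is the boundedness of the nets, so that strict convergence $e_l\to1$ upgrades to $e_la\to a$ (and $bS(e_l)\to b$): this is exactly why Lemma~\ref{e_lnet} was stated with the boundedness clauses, and multiplication by a fixed multiplier, combined with that boundedness, is strictly continuous on bounded sets. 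So there is no real obstacle here; the proof is a routine transcription of Proposition~\ref{antipodestrictA} with ``$A$'' replaced by ``$M(A)$'' and ``$S$ closed strictly on $A$'' replaced by ``$\bar S$ closed strictly on $M(A)$''. I would simply write: ``The proof is identical to that of Proposition~\ref{antipodestrictA}, replacing the norm topology on $A$ by the strict topology on $M(A)$ and using that $\bar S$ is strictly closed; the boundedness of $(e_l)$ and of $\bigl(S(e_l)\bigr)$ from Lemma~\ref{e_lnet} ensures $e_la\to a$ and $bS(e_l)\to b$ strictly.''
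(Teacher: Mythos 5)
Your proof is correct and is essentially the paper's own argument: the paper simply says the proof is the same as that of Proposition~\ref{antipodestrictA}, using the net $(e_l)_{l\in L}$ from Lemma~\ref{e_lnet} and the strict closedness of $\bar{S}$ on $M(A)$, exactly as you do. The points you flag (strict convergence $e_la\to a$, $bS(e_l)\to b$, and reading the conclusion as $\bar{S}(a)=b$) are handled correctly.
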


Also for $\bar{S}$, at the level of $M(A)$, we do have a similar characterization 
for its domain, $\overline{\mathcal D}_0$, like in Definition~\ref{D_0}.  Then 
$\bar{S}:x\mapsto\tilde{x}^*$ for $x\in\overline{\mathcal D}_0$.  For the proof, we can 
use Proposition~\ref{antipodestrictMA}.  As above, the maps $\bar{R}$, $\bar{\tau}$ are 
completely determined by $\bar{S}$.

We end this subsection with a result symmetric to Definition~\ref{D_0} 
and Proposition~\ref{D_0prop}:

\begin{prop}\label{D_0symmetric}
Let $y\in A$ be such that there exists an element $\tilde{y}\in A$ 
and an index set $I$ with elements $(p_i)_{i\in I},(q_i)_{i\in I}
\in MR_I(A)$, satisfying
$$
\sum_{i\in I}(p_i\otimes1)\Delta(q_i^*) {\text { converges strictly to }} 
(1\otimes y)E,
$$
$$
\sum_{i\in I}(q_i\otimes1)\Delta(p_i^*) {\text { converges strictly to }} 
(1\otimes\tilde{y})E.
$$
Then $y\in{\mathcal D}(S)$ and $S(y)=\tilde{y}^*$.
\end{prop}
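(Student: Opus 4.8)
The plan is to reduce the statement to Proposition~\ref{D_0prop} by transporting the hypotheses through the unitary antipode and the flip. Set $\Phi:=\sigma\circ(R\otimes R)$, where $R$ is the unitary antipode (Definition~\ref{unitaryantipode}), extended to $M(A)$ as in the Remark following it, and $\sigma$ is the flip on $M(A\otimes A)$. Since $R$ is an involutive ${}^*$-anti-automorphism and $\sigma$ is a ${}^*$-automorphism, $\Phi$ is a ${}^*$-anti-automorphism of $A\otimes A$ whose (unique) extension to $M(A\otimes A)$ is strictly continuous on norm-bounded subsets. First I would record the effect of $\Phi$ on the relevant multipliers: using anti-multiplicativity and the ${}^*$-property of $R\otimes R$, the identity $R(1)=1$, the relation $(R\otimes R)(E)=\sigma E$ (Proposition~\ref{tausigmaR_corollaryE}(3)), and $(R\otimes R)(\Delta z)=\Delta^{\operatorname{cop}}(R(z))=\sigma(\Delta(R(z)))$ (Proposition~\ref{DeltaR}), one computes
\begin{equation*}
\Phi\bigl((1\otimes y)E\bigr)=E\bigl(R(y)\otimes1\bigr),\qquad
\Phi\bigl((p_i\otimes1)\Delta(q_i^*)\bigr)=\Delta\bigl(R(q_i)^*\bigr)\bigl(1\otimes R(p_i)\bigr),
\end{equation*}
and likewise with $(p_i,q_i,y)$ replaced by $(q_i,p_i,\tilde y)$.

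Next I would check that applying $\Phi$ to the two hypothesized strict limits is legitimate. As in the proof of Proposition~\ref{D_0prop}, Lemma~\ref{ColumnRow} shows that the nets of partial sums $\bigl\{\sum_{i\in\Gamma}(p_i\otimes1)\Delta(q_i^*)\bigr\}_\Gamma$ and $\bigl\{\sum_{i\in\Gamma}(q_i\otimes1)\Delta(p_i^*)\bigr\}_\Gamma$ over finite $\Gamma\subset I$ are bounded; since $\Phi$ is strictly continuous on bounded sets, I obtain the strict convergences
\begin{equation*}
\sum_{i\in I}\Delta\bigl(R(q_i)^*\bigr)\bigl(1\otimes R(p_i)\bigr)\longrightarrow E\bigl(R(y)\otimes1\bigr),\qquad
\sum_{i\in I}\Delta\bigl(R(p_i)^*\bigr)\bigl(1\otimes R(q_i)\bigr)\longrightarrow E\bigl(R(\tilde y)\otimes1\bigr).
\end{equation*}
Putting $u_i:=R(q_i)^*$ and $v_i:=R(p_i)^*$ (so $v_i^*=R(p_i)$, $u_i^*=R(q_i)$), these are exactly Equations~\eqref{(D0eqn1)} and \eqref{(D0eqn2)} of Definition~\ref{D_0} for the element $R(y)\in A$, with $R(\tilde y)\in A$ playing the role of the auxiliary element. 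It remains to verify $(u_i),(v_i)\in MR_I(A)$: this follows from $u_iu_i^*=R(q_i)^*R(q_i)=R(q_iq_i^*)$ together with the positivity and strict continuity (on bounded sets) of the extension of $R$ to $M(A)$, which turn the strict summability of $(q_iq_i^*)$ into that of $(R(q_iq_i^*))$; the argument for $(v_i)$ is the same.

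With these verifications in hand, Proposition~\ref{D_0prop} gives $R(y)\in\mathcal{D}(S)$ and $S\bigl(R(y)\bigr)=R(\tilde y)^*=R(\tilde y^*)$. Finally I would invoke $RS=SR$ (Proposition~\ref{antipodeprop}(7)): since $R$ is everywhere defined, bijective and involutive, $\mathcal{D}(S)$ is invariant under $R$, so from $R(y)\in\mathcal{D}(S)$ we get $y\in\mathcal{D}(S)$, and then $R\bigl(S(y)\bigr)=S\bigl(R(y)\bigr)=R(\tilde y^*)$; applying $R$ once more yields $S(y)=\tilde y^*$, as claimed.

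The step I expect to require the most care is the multiplier-level bookkeeping for $\Phi=\sigma\circ(R\otimes R)$ — in particular, confirming that anti-multiplicativity and the compatibilities with $\Delta$ and with $E$ remain valid when applied to products involving the multipliers $(p_i\otimes1)$ and $\Delta(q_i^*)$, and that the extension of $\Phi$ to $M(A\otimes A)$ genuinely preserves the strict limits of the bounded nets at hand; once this is granted, the rest is a direct translation of the hypotheses and an appeal to the already-established Proposition~\ref{D_0prop}. (Alternatively, one could prove the statement from scratch by repeating the argument of Proposition~\ref{D_0prop} verbatim with $\varphi$ replaced by the right Haar weight $\psi$ and ``left'' replaced by ``right'' throughout, using the strong right invariance of $\psi$ and the $V$-analogue of Proposition~\ref{antipodeidW}; the reduction via $R$ seems shorter and uses only results already stated above.)
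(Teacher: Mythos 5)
Your proposal is correct and follows essentially the same route as the paper: apply $\sigma\circ(R\otimes R)$ to the two strict convergences, use $(R\otimes R)\circ\Delta=\Delta^{\operatorname{cop}}\circ R$ and $\sigma\circ(R\otimes R)(E)=E$ to land in the setting of Definition~\ref{D_0}, invoke Proposition~\ref{D_0prop} for $R(y)$, and conclude via $SR=RS$ and injectivity of $R$. Your extra checks (boundedness of the partial sums, strict continuity of the extended map on bounded nets, and that the transported families lie in $MR_I(A)$ via $R(q_i)^*R(q_i)=R(q_iq_i^*)$) are exactly the bookkeeping the paper leaves implicit.
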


\begin{proof}
Apply $\varsigma\circ(R\otimes R)$ to both sides of the two convergences above, 
where $\varsigma$ is the flip.  Using $\varsigma\circ(R\otimes R)(\Delta x)
=\Delta\bigl(R(x)\bigr)$ and $\varsigma\circ(R\otimes R)(E)=E$ 
(see Proposition~\ref{DeltaR} and Proposition~\ref{tausigmaR_corollaryE}), 
we will have:
$$
\sum_{i\in I}\Delta\bigl(R(q_i^*)\bigr)\bigl(1\otimes R(p_i)\bigr) 
{\text { converges strictly to }} E\bigl(R(y)\otimes1\bigr),
$$
$$
\sum_{i\in I}\Delta\bigl(R(p_i^*)\bigr)\bigl(1\otimes R(q_i)\bigr) 
{\text { converges strictly to }} E\bigl(R(\tilde{y})\otimes1\bigr).
$$
Note here that as $R$ is a ${}^*$-anti-automorphism,we have $\bigl(R(p_i)\bigr)_{i\in I}
\in MC_I(A)$ and $\bigl(R(q_i^*)\bigr)_{i\in I}\in MR_I(A)$.

Comparing these observations with Equations~\eqref{(D0eqn1)} and \eqref{(D0eqn2)}, 
and knowing Proposition~\ref{D_0prop}, we see that $R(y)\in{\mathcal D}(S)$ and 
$S\bigl(R(y)\bigr)=R(\tilde{y})^*$.  Since $SR=RS$ and since $R$ preserves 
the ${}^*$-operation, this becomes $R\bigl(S(y)\bigr)=R(\tilde{y}^*)$.  Since 
$R$ is injective, it follows that $y\in{\mathcal D}(S)$ and that $S(y)=\tilde{y}^*$.
\end{proof}

The result will hold true if we let $y,\tilde{y}\in M(A)$ and extend $S$ to $\bar{S}$. 
Similar also at the von Neumann algebra level.  In addition, all the comments 
earlier have analogous results, providing us with another characterization of 
the antipode map, which again does not explicitly involve the weights $\varphi$, 
$\psi$.

\subsection{Strong right invariance of $\psi$ and the operator $V$}

In this subsection, we will obtain results corresponding to 
Proposition~\ref{strongleftinvariance} and 
Proposition~\ref{antipodeidW}, but in terms of the weight $\psi$ 
and the operator $V$ (instead of $\varphi$ and $W$ earlier).

We begin with a result that is a consequence of the right invariance 
of $\psi$. 

\begin{prop}\label{Q_R'rightinvariance_Alt}
Let $c\in{\mathfrak N}_{\psi}$ and $w\in{\mathfrak N}_{\psi}$.  Then
$$
(\psi\otimes\operatorname{id}\otimes\operatorname{id})
\bigl((\Delta\otimes\operatorname{id})(\Delta(c^*)(w\otimes1))\bigr)
=E\,\bigl(1\otimes(\psi\otimes\operatorname{id})(\Delta(c^*)(w\otimes1))\bigr).
$$
\end{prop}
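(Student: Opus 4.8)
The plan is to imitate the argument that established the four invariance results in \S2.2 (Propositions~\ref{Q_L'leftinvariance}--\ref{Q_Rrightinvariance}), but now for the operator-valued weight $(\psi\otimes\operatorname{id})$ applied to $(\Delta\otimes\operatorname{id})(\Delta(c^*)(w\otimes1))$. The key input is the ``right-invariance version'' of the weak comultiplicativity of the unit. First I would recall that, for $c\in{\mathfrak N}_{\psi}$, the right invariance of $\psi$ gives $\Delta(c^*)\in\overline{\mathfrak N}_{\psi\otimes\operatorname{id}}^{\,*}$ and $(\psi\otimes\operatorname{id})(\Delta(c^*c))\in M(B)$, so all expressions in sight make sense, and $(\psi\otimes\operatorname{id})(\Delta(c^*)(w\otimes1))\in M(A)$ for $w\in{\mathfrak N}_{\psi}$.

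Next I would apply $\Delta$ to the element $(\psi\otimes\operatorname{id})(\Delta(c^*))$, which lies in $M(B)$ by right invariance. By Equation~\eqref{(DeltaonB)}, $\Delta y = (1\otimes y)E = E(1\otimes y)$ for $y\in M(B)$, so
\begin{align}
\Delta\bigl((\psi\otimes\operatorname{id})(\Delta(c^*))\bigr)
&=(1\otimes 1)E\cdot\bigl(1\otimes(\psi\otimes\operatorname{id})(\Delta(c^*))\bigr)  \notag \\
&=(\psi\otimes\operatorname{id}\otimes\operatorname{id})\bigl((1\otimes E)\,\Delta_{13}(c^*)\,(\text{appropriate legs})\bigr),
\notag
\end{align}
where the middle manipulation pushes $\psi$ out to the first leg. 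At the same time, by the coassociativity of $\Delta$ extended to $M(A)$ (Equation~\eqref{(coassociativity)} together with the fact that $\Delta$ commutes with $\psi\otimes\operatorname{id}$ on the appropriate legs), one has
\[
\Delta\bigl((\psi\otimes\operatorname{id})(\Delta(c^*))\bigr)
=(\psi\otimes\operatorname{id}\otimes\operatorname{id})\bigl((\Delta\otimes\operatorname{id})(\Delta(c^*))\bigr)
=(\psi\otimes\operatorname{id}\otimes\operatorname{id})\bigl((\operatorname{id}\otimes\Delta)(\Delta(c^*))\bigr).
\]
Equating the two expressions yields the desired identity at the level of $c^*$ alone (i.e.\ without the $w\otimes1$); then I would multiply by $w\otimes1\otimes1$ on the right, use that $\psi$ acts only on leg~1 so $w\otimes1$ slides through $(\psi\otimes\operatorname{id}\otimes\operatorname{id})$ in the first slot as needed, and use $(\Delta a)E=\Delta a$ (Equation~\eqref{(EDelta)}) to absorb or produce the correct copy of $E$. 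This is essentially the same bookkeeping as in the displayed computation right before Proposition~\ref{Q_L'leftinvariance}, with the roles of left/right and of $\varphi$/$\psi$ interchanged.

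The main obstacle I expect is the careful tracking of where $E$ sits and of which legs carry $\psi$ versus $\operatorname{id}$: on the right-hand side $E$ occupies legs~1 and~2 (since $\Delta$ is applied to the first tensor factor of $\Delta(c^*)$, which after $\psi$ is integrated out becomes leg~1 paired with the new leg), whereas $(\psi\otimes\operatorname{id})(\Delta(c^*)(w\otimes1))$ is an element of $M(A)$ sitting in leg~2, so the right-hand side is $E(1\otimes(\cdots))$ living in $M(A\otimes A)$. One must check that the application of $\psi$ to leg~1 is legitimate at each stage, which follows from right invariance of $\psi$ (giving $\Delta(c^*)\in\overline{\mathfrak N}_{\psi\otimes\operatorname{id}}^{\,*}$ and hence $(\Delta\otimes\operatorname{id})(\Delta(c^*))\in\overline{\mathfrak M}_{\psi\otimes\operatorname{id}\otimes\operatorname{id}}$ after multiplying by suitable elements of ${\mathfrak N}_\psi$), together with the lower semicontinuity of $\psi$, exactly as in the proof of Proposition~\ref{Vdefn}. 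Once the leg-placement is pinned down, the identity follows by equating the two computations of $\Delta\bigl((\psi\otimes\operatorname{id})(\Delta(c^*))\bigr)$ and right-multiplying by $w\otimes1$; no genuinely new idea beyond the \S2.2 template is needed.
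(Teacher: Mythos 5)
There is a genuine gap, and it sits exactly at the two places where your plan departs from the \S2.2 template. First, the intermediate object $(\psi\otimes\operatorname{id})(\Delta(c^*))$ on which your whole argument pivots is not defined: $c\in{\mathfrak N}_{\psi}$ only gives $\Delta(c^*)\in\overline{\mathfrak N}_{\psi\otimes\operatorname{id}}^{\,*}$ (as you yourself note at the start), so $\psi\otimes\operatorname{id}$ can be applied to products $\Delta(c^*)\,N$ with $N\in\overline{\mathfrak N}_{\psi\otimes\operatorname{id}}$, but not to $\Delta(c^*)$ alone; in particular it is not an element of $M(B)$ to which Equation~\eqref{(DeltaonB)} could be applied. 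The factor $w\otimes1$ is precisely what makes the slice exist, and once it is there the sliced element $(\psi\otimes\operatorname{id})\bigl(\Delta(c^*)(w\otimes1)\bigr)$ is a generic element of $A$ (by Lemma~\ref{densesubsetA} such elements are dense in $A$), not an element of $M(B)$, so the ``$\Delta y=E(1\otimes y)$'' step is unavailable for the element that actually occurs in the statement. Second, and more decisively, the reduction ``prove it without $w$, then right-multiply by $w\otimes1\otimes1$ and slide through'' cannot work: since $\Delta$ is a homomorphism, $(\Delta\otimes\operatorname{id})\bigl(\Delta(c^*)(w\otimes1)\bigr)=(\Delta\otimes\operatorname{id})(\Delta(c^*))\,\bigl(\Delta(w)\otimes1\bigr)$, so on the left-hand side the $w$ gets comultiplied rather than staying as $w\otimes1\otimes1$; and on the right-hand side the $w$ must sit \emph{inside} the $\psi$-slice (in the leg that $\psi$ consumes), which no external multiplication of the sliced identity can produce, since $\psi$ is a weight and not multiplicative. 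This mismatch between ``$\Delta(w)$ inside'' and ``$w\otimes1$ inside'' is exactly the phenomenon the $Q$-maps of Section~2 were built to control, and it is why the proposition carries the extra $E$ at all.

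The paper's proof goes through that machinery rather than around it: using density one takes $c=r^*y$ with $r\in{\mathfrak N}_{\varphi}$, $y\in{\mathfrak N}_{\psi}$, pairs leg~3 against $\varphi(s^*\,\cdot\,)$ with $s\in{\mathfrak N}_{\varphi}$ and leg~2 against an arbitrary $x\in A$, and recognizes the paired left-hand side as $(\psi\otimes\operatorname{id})\bigl((1\otimes x)\Delta(qw)\bigr)$ with $q=(\operatorname{id}\otimes\varphi)\bigl((1\otimes s^*)\Delta(y^*r)\bigr)$. Then Proposition~\ref{Q_R'rightinvariance} (the $Q_{\rho}$-form of right invariance) together with the definition of $Q_{\rho}$ in Proposition~\ref{Q_R'} inserts the factor $1\otimes E$, and the faithfulness of $\varphi$ plus the arbitrariness of $x$ and $s$ strips the pairing to give the stated identity. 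If you want to salvage your outline, the coassociativity-plus-\eqref{(DeltaonB)} computation you describe is essentially how the $Q$-map invariance results were obtained in \S2.2, but for the present statement you must route the argument through $Q_{\rho}$ (or reproduce the equivalent pairing argument); the shortcut of multiplying the ``$w$-free'' identity by $w\otimes1\otimes1$ does not reach the claimed equality.
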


\begin{proof}
Since ${\mathfrak N}_{\varphi}^*{\mathfrak N}_{\psi}$ is dense in 
${\mathfrak N}_{\psi}$, we can assume without loss of generality that 
$c$ is of the form $c=r^*y$, where $r\in{\mathfrak N}_{\varphi}$, 
$y\in{\mathfrak N}_{\psi}$.  Also consider $x\in A$ arbitrary and 
let $s\in{\mathfrak N}_{\varphi}$ be arbitrary. 
$$
(\psi\otimes\operatorname{id}\otimes\varphi)
\bigl((1\otimes x\otimes s^*)(\Delta\otimes\operatorname{id})
(\Delta(c^*)(w\otimes1))\bigr)
=(\psi\otimes\operatorname{id})\bigl((1\otimes x)\Delta(qw)\bigr),
$$
where $q=(\operatorname{id}\otimes\varphi)\bigl((1\otimes s^*)\Delta(c^*)\bigr)
=(\operatorname{id}\otimes\varphi)\bigl((1\otimes s^*)\Delta(y^*r)\bigr)$. 

Here, by Proposition~\ref{Q_R'rightinvariance}, and by the property/definition 
of the $Q_{\rho}$ map given in Proposition~\ref{Q_R'}, the right side becomes:
\begin{align}
&=(\psi\otimes\operatorname{id})\bigl(Q_{\rho}(qw\otimes x)\bigr)
=(\psi\otimes\operatorname{id})\bigl(Q_{\rho}(q\otimes x)(w\otimes1)\bigr)
\notag \\
&=(\psi\otimes\operatorname{id})\bigl((\operatorname{id}\otimes\operatorname{id}
\otimes\varphi)((1\otimes x\otimes s^*)(1\otimes E)\Delta_{13}(y^*r))
(w\otimes1)\bigr)  \notag \\
&=(\psi\otimes\operatorname{id}\otimes\varphi)
\bigl((1\otimes x\otimes s^*)(1\otimes E)\Delta_{13}(c^*)(w\otimes1\otimes1)\bigr).
\notag
\end{align}

With $x\in A$, $s\in{\mathfrak N}_{\varphi}$ being arbitrary, and as $\varphi$ 
is faithful, it follows that
$$
(\psi\otimes\operatorname{id}\otimes\operatorname{id})
\bigl((\Delta\otimes\operatorname{id})(\Delta(c^*)(w\otimes1))\bigr)
=(\psi\otimes\operatorname{id}\otimes\operatorname{id})
\bigl((1\otimes E)\Delta_{13}(c^*)(w\otimes1\otimes1)\bigr),
$$
which is none other than 
$$
(\psi\otimes\operatorname{id}\otimes\operatorname{id})
\bigl((\Delta\otimes\operatorname{id})(\Delta(c^*)(w\otimes1))\bigr)
=E\,\bigl(1\otimes(\psi\otimes\operatorname{id})(\Delta(c^*)(w\otimes1))\bigr).
$$
\end{proof}

The following is the {\em strong right invariance\/} of the weight $\psi$, which is 
analogous to Proposition~\ref{strongleftinvariance} (``strong left invariance'') 
for $\varphi$.  The first part of the proof uses the approach analogous to the 
proof of Proposition~5.5 in \cite{KuVa}, and we also make use of the characterization 
of $S$ we obtained earlier in Proposition~\ref{D_0symmetric}.

\begin{prop}\label{strongrightinvariance}
For $a,b\in{\mathfrak N}_{\psi}$, we have: $(\psi\otimes\operatorname{id})
\bigl((a^*\otimes1)(\Delta b)\bigr)\in{\mathcal D}(S)$, with such elements forming 
a core for $S$, and
$$
S\bigl((\psi\otimes\operatorname{id})((a^*\otimes1)(\Delta b))\bigr)
=(\psi\otimes\operatorname{id})\bigl(\Delta(a^*)(b\otimes1)\bigr).
$$
\end{prop}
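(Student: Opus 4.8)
The plan is to derive both the membership $(\psi\otimes\operatorname{id})\bigl((a^*\otimes1)(\Delta b)\bigr)\in{\mathcal D}(S)$ and the displayed formula from the Haar-weight-free characterization of the antipode in Proposition~\ref{D_0symmetric}, and then to obtain the core statement by the argument symmetric to the one used for $W$ in Proposition~\ref{strongleftinvariance}. Put
$$
y:=(\psi\otimes\operatorname{id})\bigl((a^*\otimes1)(\Delta b)\bigr),\qquad
\tilde{y}:=(\psi\otimes\operatorname{id})\bigl((b^*\otimes1)(\Delta a)\bigr);
$$
both belong to $A$ by Lemma~\ref{densesubsetA}, and $\tilde{y}^{\,*}=(\psi\otimes\operatorname{id})\bigl(\Delta(a^*)(b\otimes1)\bigr)$ is exactly the element we want to identify with $S(y)$. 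So it will suffice to exhibit an index set $I$ and families $(p_i)_{i\in I},(q_i)_{i\in I}\in MR_I(A)$ such that
$$
\sum_{i\in I}(p_i\otimes1)\,\Delta(q_i^{*})\longrightarrow(1\otimes y)E
\qquad\text{and}\qquad
\sum_{i\in I}(q_i\otimes1)\,\Delta(p_i^{*})\longrightarrow(1\otimes\tilde{y})E
$$
strictly; Proposition~\ref{D_0symmetric} then yields $y\in{\mathcal D}(S)$ and $S(y)=\tilde{y}^{\,*}$.

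To build the families I would slice the operator $V$, taking the GNS Hilbert space to be ${\mathcal H}={\mathcal H}_{\psi}$, so that $V\in{\mathcal B}({\mathcal H}_{\psi}\otimes{\mathcal H}_{\psi})$. Slicing the first leg of $V$ via Proposition~\ref{Vdefn} gives the $\psi$-counterpart of Proposition~\ref{idomegaW}, namely $(\omega_{\Lambda_{\psi}(p),\Lambda_{\psi}(q)}\otimes\operatorname{id})(V)=(\psi\otimes\operatorname{id})\bigl((q^*\otimes1)(\Delta p)\bigr)$ for $p,q\in{\mathfrak N}_{\psi}$; in particular $y=(\omega_{\Lambda_{\psi}(b),\Lambda_{\psi}(a)}\otimes\operatorname{id})(V)$, $\tilde{y}=(\omega_{\Lambda_{\psi}(a),\Lambda_{\psi}(b)}\otimes\operatorname{id})(V)$, and $\tilde{y}^{\,*}=(\omega_{\Lambda_{\psi}(b),\Lambda_{\psi}(a)}\otimes\operatorname{id})(V^{*})$. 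Fixing an orthonormal basis $(e_i)_{i\in I}$ of ${\mathcal H}_{\psi}$, I would then set
$$
p_i:=(\omega_{e_i,\Lambda_{\psi}(a)}\otimes\operatorname{id})(V),\qquad
q_i:=(\omega_{e_i,\Lambda_{\psi}(b)}\otimes\operatorname{id})(V),
$$
in exact analogy with the choice made in the proof of Proposition~\ref{D_0dense}. Each $p_i,q_i$ lies in $A$, since $\xi\mapsto(\omega_{\xi,\zeta}\otimes\operatorname{id})(V)$ is norm-bounded and agrees with an $A$-valued map on the dense set $\xi\in\Lambda_{\psi}({\mathfrak N}_{\psi})$; and the first-leg version of Lemma~\ref{omega_xizetaLem}\,(4), together with $VV^{*}=E$ from Theorem~\ref{K_psi=H_psi}, shows that $\sum_i p_ip_i^{*}=(\omega_{\Lambda_{\psi}(a),\Lambda_{\psi}(a)}\otimes\operatorname{id})(E)$ and $\sum_i q_iq_i^{*}=(\omega_{\Lambda_{\psi}(b),\Lambda_{\psi}(b)}\otimes\operatorname{id})(E)$ with the partial sums converging strictly, so $(p_i)_{i\in I},(q_i)_{i\in I}\in MR_I(A)$.

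The substantive step is to verify the two strict convergences. For this I would rewrite $\Delta(q_i^{*})=V(q_i^{*}\otimes1)V^{*}$ — the $V$-counterpart of Proposition~\ref{DeltaW}, which follows from Proposition~\ref{Vprop}\,(1),(2) together with $(\Delta x)E=\Delta x$ — express $\Delta(q_i^{*})$ as a first-leg slice of a threefold product of copies of $V$ and $V^{*}$, multiply by $(p_i\otimes1)$, and then collapse the sum over $i$ using the strict-topology refinement of Lemma~\ref{omega_xizetaLem}\,(4) already invoked in the proof of Proposition~\ref{D_0dense}. What then remains is a computation with the pentagon-type relations for $V$ (the $V$-analogues of Propositions~\ref{Wpentagon} and \ref{Wpentagon_alt}) and with $VV^{*}=E$, paralleling the identity $W_{13}W_{23}W_{23}^{*}=E_{12}W_{13}$ used in Proposition~\ref{D_0dense}. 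The fact that $V$ is only a partial isometry (so the computation terminates at $(1\otimes y)E$, not at $1\otimes y$) is precisely why Proposition~\ref{D_0symmetric}, which carries the factor $E$ in its hypotheses, is the correct vehicle, and it is where the argument departs from the unitary (quantum group) proof in \cite{KuVa}; Proposition~\ref{Q_R'rightinvariance_Alt}, a consequence of right invariance of $\psi$, is the natural tool for matching the end of the computation with $(1\otimes y)E$. I expect this pentagon computation for $V$ to be the main obstacle; once the convergences are in hand, Proposition~\ref{D_0symmetric} delivers $y\in{\mathcal D}(S)$ and $S(y)=(\psi\otimes\operatorname{id})\bigl(\Delta(a^*)(b\otimes1)\bigr)$.

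For the core assertion, the elements $(\psi\otimes\operatorname{id})\bigl((a^*\otimes1)(\Delta b)\bigr)=(\omega_{\Lambda_{\psi}(b),\Lambda_{\psi}(a)}\otimes\operatorname{id})(V)$, $a,b\in{\mathfrak N}_{\psi}$, are parametrized by the functionals $\omega_{\Lambda_{\psi}(b),\Lambda_{\psi}(a)}$, which are norm-dense in ${\mathcal B}({\mathcal H}_{\psi})_{*}$; since $\omega\mapsto(\omega\otimes\operatorname{id})(V)$ and $\omega\mapsto(\omega\otimes\operatorname{id})(V^{*})$ are norm-continuous and $S$ is closed, the relation $S\bigl((\omega\otimes\operatorname{id})(V)\bigr)=(\omega\otimes\operatorname{id})(V^{*})$ extends to all $\omega\in{\mathcal B}({\mathcal H}_{\psi})_{*}$, and one then concludes exactly as in Proposition~\ref{strongleftinvariance} using the $V$-counterparts of Propositions~\ref{idomegaWclosureA} and \ref{antipodeidW} (the span of the $(\omega\otimes\operatorname{id})(V)$ is norm-dense in $A$, invariant under the scaling group, and a core for $S$; cf.\ Proposition~\ref{omegaidV}). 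Apart from the pentagon computation, every step above is the symmetric transcription of an argument already carried out for $W$.
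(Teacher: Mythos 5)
Your overall strategy is the paper's: reduce everything to Proposition~\ref{D_0symmetric} by exhibiting families indexed by an orthonormal basis of ${\mathcal H}_{\psi}$, and then get the core statement from density and closedness of $S$. But the concrete families you feed into Proposition~\ref{D_0symmetric} are the wrong ones, and the two strict convergences fail for them. Your ``exact analogy'' with Proposition~\ref{D_0dense} transplants the recipe for Definition~\ref{D_0} (where the element sits in the \emph{first} leg and slices of $W$ itself do the job) to the mirrored criterion of Proposition~\ref{D_0symmetric}; there the correct families are slices of $V^{*}$, not of $V$: one needs $p_i=(\omega_{e_i,\Lambda_{\psi}(a)}\otimes\operatorname{id})(V^{*})$ and $q_i=(\omega_{e_i,\Lambda_{\psi}(b)}\otimes\operatorname{id})(V^{*})$, which are exactly the adjoints of the ``columns'' $(\theta_j\otimes1)(\Lambda_{\psi}\otimes\operatorname{id})(\Delta a)$, $(\theta_j\otimes1)(\Lambda_{\psi}\otimes\operatorname{id})(\Delta b)$ used in the paper. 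Structurally, your $p_i$ equals $(\psi\otimes\operatorname{id})\bigl((a^{*}\otimes1)\Delta u_i\bigr)$ (basis element inside the coproduct), whereas the correct one is $(\psi\otimes\operatorname{id})\bigl(\Delta(a^{*})(u_i\otimes1)\bigr)$ ($a$ inside the coproduct); only the latter recombines under the Parseval collapse into $(\psi\otimes\operatorname{id}\otimes\operatorname{id})\bigl((\Delta\otimes\operatorname{id})(\Delta(b^{*})(a\otimes1))\bigr)$. With your choice the first convergence is simply false: already for $A=C(G)$, $G$ a finite group with $\psi$ the Haar weight (so $E=1\otimes1$ and $V$ unitary), one computes that $p_g$ is multiplication by $t\mapsto\overline{a(gt^{-1})}$ and that $\sum_g(p_g\otimes1)\Delta(q_g^{*})$ is the function $(s,t)\mapsto y(st^{-1}s^{-1})$, not $(s,t)\mapsto y(t)=(1\otimes y)(s,t)$. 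At the operator level your sum collapses to the first-leg slice of $V_{12}V_{23}V_{12}^{*}V_{23}^{*}$, and this product does not reduce to $V_{13}E_{23}$, so the pentagon computation you defer cannot terminate where you need it to.

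Even after correcting the family, the ``symmetric transcription'' of Proposition~\ref{D_0dense} is not routine, because for $V$ the initial and final projections are interchanged relative to $W$: $VV^{*}=E$ but $V^{*}V=G_R$ (Theorem~\ref{K_psi=H_psi}, Proposition~\ref{G_R}), whereas the collapse $W_{13}W_{23}W_{23}^{*}=E_{12}W_{13}$ used $W^{*}W=E$. The corrected sum collapses, via the $V$-pentagon, to a slice of $G_{R,12}V_{13}E_{23}$, and identifying that slice with $(1\otimes y)E$ is precisely where the right invariance of $\psi$ must enter; so Proposition~\ref{Q_R'rightinvariance_Alt} is not a cosmetic ``matching'' device at the end but the substantive input. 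The paper's proof sidesteps the $V$-pentagon entirely: with $p_j=(\theta_j\otimes1)(\Lambda_{\psi}\otimes\operatorname{id})(\Delta a)$ it writes the finite partial sums as $(\Lambda_{\psi}\otimes\operatorname{id}\otimes\operatorname{id})\bigl((\Delta\otimes\operatorname{id})(\Delta(b^{*}))\bigr)(P_{\Gamma}\otimes1\otimes1)(\Lambda_{\psi}\otimes\operatorname{id}\otimes\operatorname{id})(\Delta a\otimes1)$ with $P_{\Gamma}\nearrow1$, identifies the strict limit with $E\bigl(1\otimes(\psi\otimes\operatorname{id})(\Delta(b^{*})(a\otimes1))\bigr)$ by Proposition~\ref{Q_R'rightinvariance_Alt}, and then invokes Proposition~\ref{D_0symmetric}. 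Your final paragraph on the core property is fine in outline, but the central step of your argument needs the corrections above.
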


\begin{proof}
Let $({\mathcal H}_{\psi},\pi_{\psi},\Lambda_{\psi})$ be the GNS-representation 
for $\psi$, and let $(e_i)_{i\in I}$ be an orthonormal basis for ${\mathcal H}_{\psi}$. 
For $a,b\in{\mathfrak N}_{\psi}$, define:
$$
p_i:=(\theta_i\otimes1)(\Lambda_{\psi}\otimes\operatorname{id})(\Delta a),
\quad {\text { and }}\quad
q_i:=(\theta_i\otimes1)(\Lambda_{\psi}\otimes\operatorname{id})(\Delta b),
$$
where $\theta_i:=\langle\,\cdot\,,e_i\rangle$.  

Then for any finite subset $\Gamma\subset I$, we will have:
\begin{align}
&\sum_{i\in\Gamma}\Delta(q_i^*)(p_i\otimes1)  \notag \\
&=\sum_{i\in\Gamma}(\Lambda_{\psi}
\otimes\operatorname{id}\otimes\operatorname{id})\bigl((\operatorname{id}
\otimes\Delta)(\Delta(b^*))\bigr)(\theta_i^*\theta_i\otimes1\otimes1)(\Lambda_{\psi}
\otimes\operatorname{id}\otimes\operatorname{id})(\Delta a\otimes1)
\notag \\
&=(\Lambda_{\psi}\otimes\operatorname{id}\otimes\operatorname{id})
\bigl((\Delta\otimes\operatorname{id})(\Delta(b^*))\bigr)(P_{\Gamma}\otimes1\otimes1)
(\Lambda_{\psi}\otimes\operatorname{id}\otimes\operatorname{id})(\Delta a\otimes1),
\notag
\end{align}
where $P_{\Gamma}$ denotes the orthogonal projection onto the subspace generated by 
the $(e_i)_{i\in{\Gamma}}$.  From this we see that the net $\bigl\{\sum_{i\in\Gamma}
\Delta(q_i^*)(p_i\otimes1)\bigr\}_{i\in\Gamma}$ over the finite subsets $\Gamma\subset I$ 
is bounded and converges strictly to 
$$
(\psi\otimes\operatorname{id}\otimes\operatorname{id})\bigl((\Delta\otimes\operatorname{id})
(\Delta(b^*)(a\otimes1))\bigr),
$$
which is same as 
$E\,\bigl(1\otimes(\psi\otimes\operatorname{id})(\Delta(b^*)(a\otimes1))\bigr)$, by 
Proposition~\ref{Q_R'rightinvariance_Alt}.

In other words, we have established that 
$$
\sum_{i\in I}\Delta(q_i^*)(p_i\otimes1)
{\text { converges strictly to }}
E\,\bigl(1\otimes(\psi\otimes\operatorname{id})(\Delta(b^*)(a\otimes1))\bigr),
$$
which is equivalent to saying 
\begin{equation}\label{(strongrightinvarianceeqn1)}
\sum_{i\in I}(p_i^*\otimes1)\Delta(q_i)
{\text { converges strictly to }} 
\bigl(1\otimes(\psi\otimes\operatorname{id})((a^*\otimes1)(\Delta b))\bigr)\,E.
\end{equation}
By a similar argument, we can also show the following:
\begin{equation}\label{(strongrightinvarianceeqn2)}
\sum_{i\in I}(q_i^*\otimes1)\Delta(p_i)
{\text { converges strictly to }} 
\bigl(1\otimes(\psi\otimes\operatorname{id})((b^*\otimes1)(\Delta a))\bigr)\,E.
\end{equation}

Write $y=(\psi\otimes\operatorname{id})\bigl((a^*\otimes1)(\Delta b)\bigr)$ 
and $\tilde{y}=(\psi\otimes\operatorname{id})\bigl((b^*\otimes1)(\Delta a)\bigr)$. 
By Proposition~\ref{D_0symmetric}, the convergences given in 
Equations~\eqref{(strongrightinvarianceeqn1)} and \eqref{(strongrightinvarianceeqn2)} 
show that we have $y=(\psi\otimes\operatorname{id})\bigl((a^*\otimes1)(\Delta b)\bigr)
\in{\mathcal D}(S)$ and that $S(y)=\tilde{y}^*$.  Or, 
$$
S\bigl((\psi\otimes\operatorname{id})((a^*\otimes1)(\Delta b))\bigr)
=(\psi\otimes\operatorname{id})\bigl((b^*\otimes1)(\Delta a)\bigr)^*
=(\psi\otimes\operatorname{id})\bigl(\Delta(a^*)(b\otimes1)\bigr).
$$
\end{proof}

Now that we have shown the ``right'' analogue of Proposition~\ref{strongleftinvariance}, 
we will next show the analogue of Proposition~\ref{antipodeidW}.  We first observe the 
following proposition, which is itself providing analogues of Proposition~\ref{idomegaW} 
and Propositions~\ref{idomegaWclosureA}, \ref{DeltaW}.

\begin{prop}\label{omegaidV}
\begin{enumerate}
  \item For $a,b\in{\mathfrak N}_{\psi}$, we have:
$$
(\omega_{\Lambda_{\psi}(b),\Lambda_{\psi}(a)}\otimes\operatorname{id})(V)
=(\psi\otimes\operatorname{id})\bigl((a^*\otimes1)(\Delta b)\bigr),
$$
regarded as elements in $A=\pi(A)$.
  \item We have the following characterization of our $C^*$-algebra $A$:
$$
A=\overline{\bigl\{(\omega\otimes\operatorname{id})(V):\omega
\in{\mathcal B}({\mathcal H}_{\psi})_*\bigr\}}^{\|\ \|}.
$$
  \item We also have: $(\pi_{\psi}\otimes\pi)(\Delta x)
=V\bigl(\pi_{\psi}(x)\otimes1\bigr)V^*$, for all $x\in A$.
\end{enumerate}
\end{prop}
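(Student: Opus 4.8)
The plan is to prove the three statements of Proposition~\ref{omegaidV} by mimicking, with $V$ and $\psi$ in place of $W$ and $\varphi$, the arguments used earlier for $W$ (Propositions~\ref{idomegaW}, \ref{idomegaWclosureA}, \ref{DeltaW}), invoking the ``right'' results now available, namely Proposition~\ref{strongrightinvariance} and the definition of $V$ in Proposition~\ref{Vdefn}. For part~(1), I would take arbitrary $c,d\in{\mathfrak N}_{\eta}$ and compute the matrix coefficient $\langle(\omega_{\Lambda_{\psi}(b),\Lambda_{\psi}(a)}\otimes\operatorname{id})(V)\Lambda(c),\Lambda(d)\rangle$ in two ways. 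On one side, this equals $\langle V(\Lambda_{\psi}(b)\otimes\Lambda(c)),\Lambda_{\psi}(a)\otimes\Lambda(d)\rangle$, which by Equation~\eqref{(VT_1)} is $(\psi\otimes\eta)\bigl((a^*\otimes d^*)(\Delta b)(1\otimes c)\bigr)$. Rewriting this as $\eta\bigl(d^*\,(\psi\otimes\operatorname{id})[(a^*\otimes1)(\Delta b)]\,c\bigr)$ and comparing with $\langle\pi((\psi\otimes\operatorname{id})[(a^*\otimes1)(\Delta b)])\Lambda(c),\Lambda(d)\rangle$, since $c,d\in{\mathfrak N}_{\eta}$ are arbitrary (and ${\mathfrak N}_{\eta}$ is dense in ${\mathcal H}$), we conclude the stated equality in $A=\pi(A)$. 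Here I should first note that $(\psi\otimes\operatorname{id})\bigl((a^*\otimes1)(\Delta b)\bigr)\in A$, which follows from the right invariance of $\psi$ (it is even in ${\mathcal D}(S)$ by Proposition~\ref{strongrightinvariance}).

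For part~(2), I would combine part~(1) with the density result already available: by Lemma~\ref{densesubsetA}, the elements $(\psi\otimes\operatorname{id})\bigl((a^*\otimes1)(\Delta b)\bigr)$, $a,b\in{\mathfrak N}_{\psi}$, span a norm-dense subspace of $A$. Since each such element equals $(\omega_{\Lambda_{\psi}(b),\Lambda_{\psi}(a)}\otimes\operatorname{id})(V)$ by part~(1), and since any $\omega\in{\mathcal B}({\mathcal H}_{\psi})_*$ can be approximated in norm by linear combinations of the $\omega_{\xi,\zeta}$ (so that $(\omega\otimes\operatorname{id})(V)$ lies in the closed span of the $(\omega_{\xi,\zeta}\otimes\operatorname{id})(V)$, using $\|V\|\le1$), the claimed characterization of $A$ follows immediately, exactly as in Proposition~\ref{idomegaWclosureA}.

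For part~(3), the strategy is the same as in Proposition~\ref{DeltaW}. From Proposition~\ref{Vprop}\,(1) we have $V\bigl(\pi_{\psi}(x)\otimes1\bigr)=(\Delta x)V$ (with the usual bookkeeping that $\Delta x=(\pi_{\psi}\otimes\pi)(\Delta x)$), hence $V\bigl(\pi_{\psi}(x)\otimes1\bigr)V^*=(\Delta x)VV^*=(\Delta x)E$ by Theorem~\ref{K_psi=H_psi}\,(2), and then $(\Delta x)E=\Delta x$ by Equation~\eqref{(EDelta)}. One subtlety to acknowledge: Theorem~\ref{K_psi=H_psi} gives $VV^*=E$ for $V\in{\mathcal B}({\mathcal H}_{\psi}\otimes{\mathcal H}_{\psi})$, i.e.\ with $\eta=\psi$; to state part~(3) with a generic $\eta$-representation on the second leg one either repeats the partial isometry argument in that setting or simply notes, as the paper does elsewhere, that eventually ${\mathcal H}_{\psi}={\mathcal H}_{\varphi}={\mathcal H}$, so the identification is harmless. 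I expect no genuine obstacle here: all the needed ingredients — the formula for $V$, the partial isometry property $VV^*=E$, $(\Delta x)E=\Delta x$, the right invariance of $\psi$, and the density lemmas — are already in hand, and the only care required is the routine one of keeping track of which GNS space sits on which leg, precisely the ``casual bookkeeping'' the authors flagged after Proposition~\ref{Vprop}.
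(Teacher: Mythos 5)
Your proposal is correct and follows essentially the same route as the paper, which simply instructs the reader to imitate Propositions~\ref{idomegaW}, \ref{idomegaWclosureA} and \ref{DeltaW} with $V$, $\psi$ in place of $W$, $\varphi$ — exactly the matrix-coefficient computation, density argument via Lemma~\ref{densesubsetA}, and the chain $V(\pi_{\psi}(x)\otimes1)V^*=(\Delta x)VV^*=(\Delta x)E=\Delta x$ that you give. The only small quibble is that your caution in part~(3) is unnecessary: Theorem~\ref{K_psi=H_psi} is already stated with $VV^*=E=(\pi_{\psi}\otimes\pi)(E)$ for the generic GNS representation $\pi$ on the second leg, so no re-run of the partial isometry argument or identification of Hilbert spaces is needed.
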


\begin{proof}
(1). The proof is done in essentially the same way as in Proposition~\ref{idomegaW}. 
Use the fact that $V\bigl(\Lambda_{\psi}(c)\otimes\Lambda(d)\bigr)
=(\Lambda_{\psi}\otimes\Lambda)\bigl((\Delta c)(1\otimes d)\bigr)$.

(2), (3). Again, imitate the proofs of Propositions~\ref{idomegaWclosureA} 
and \ref{DeltaW}.
\end{proof}

\begin{prop}\label{antipodeidV}
For any $\omega\in{\mathcal B}({\mathcal H}_{\psi})_*$, we have: 
$(\omega\otimes\operatorname{id})(V)\in{\mathcal D}(S)$, and 
$$
S\bigl((\omega\otimes\operatorname{id})(V)\bigr)=(\omega\otimes\operatorname{id})(V^*).
$$
\end{prop}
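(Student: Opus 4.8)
The plan is to mirror the proof of Proposition~\ref{antipodeidW}, replacing the operator $W$ and the left Haar weight $\varphi$ by $V$ and the right Haar weight $\psi$, using the ``right-handed'' versions of the tools developed in \S4.1--\S4.4. First I would invoke Proposition~\ref{omegaidV}\,(1) to identify a dense (with respect to the relevant topology) family inside $\{(\omega\otimes\operatorname{id})(V):\omega\in{\mathcal B}({\mathcal H}_{\psi})_*\}$, namely the elements of the form $(\omega_{\Lambda_{\psi}(b),\Lambda_{\psi}(a)}\otimes\operatorname{id})(V)=(\psi\otimes\operatorname{id})\bigl((a^*\otimes1)(\Delta b)\bigr)$ for $a,b\in{\mathfrak N}_{\psi}$. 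By the strong right invariance of $\psi$ (Proposition~\ref{strongrightinvariance}), such elements lie in ${\mathcal D}(S)$, they form a core for $S$, and
$$
S\bigl((\psi\otimes\operatorname{id})((a^*\otimes1)(\Delta b))\bigr)
=(\psi\otimes\operatorname{id})\bigl(\Delta(a^*)(b\otimes1)\bigr).
$$
Again by Proposition~\ref{omegaidV}\,(1), the right-hand side equals $\bigl((\omega_{\Lambda_{\psi}(a),\Lambda_{\psi}(b)}\otimes\operatorname{id})(V)\bigr)^*=(\omega_{\Lambda_{\psi}(b),\Lambda_{\psi}(a)}\otimes\operatorname{id})(V^*)$, using that $(\omega_{\xi,\zeta}\otimes\operatorname{id})(V)^*=(\omega_{\zeta,\xi}\otimes\operatorname{id})(V^*)$ (the leg-swapped analogue of Lemma~\ref{omega_xizetaLem}\,(2)). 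This establishes the identity $S\bigl((\omega\otimes\operatorname{id})(V)\bigr)=(\omega\otimes\operatorname{id})(V^*)$ on the dense core corresponding to $\omega=\omega_{\Lambda_{\psi}(b),\Lambda_{\psi}(a)}$.

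Next I would extend from this core to all $\omega\in{\mathcal B}({\mathcal H}_{\psi})_*$. The point is that the map $\omega\mapsto(\omega\otimes\operatorname{id})(V)$ is norm-continuous from ${\mathcal B}({\mathcal H}_{\psi})_*$ into $A$, since $\|(\omega\otimes\operatorname{id})(V)\|\le\|\omega\|\,\|V\|\le\|\omega\|$, and likewise $\omega\mapsto(\omega\otimes\operatorname{id})(V^*)$ is norm-continuous. The functionals $\omega_{\Lambda_{\psi}(b),\Lambda_{\psi}(a)}$, with $a,b\in{\mathfrak N}_{\psi}$, are norm-dense in ${\mathcal B}({\mathcal H}_{\psi})_*$. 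For a general $\omega$, pick a sequence $\omega_n\to\omega$ of such functionals; then $(\omega_n\otimes\operatorname{id})(V)\to(\omega\otimes\operatorname{id})(V)$ in norm, while $S\bigl((\omega_n\otimes\operatorname{id})(V)\bigr)=(\omega_n\otimes\operatorname{id})(V^*)\to(\omega\otimes\operatorname{id})(V^*)$ in norm. Since $S$ is a closed map (Proposition~\ref{antipodeprop}\,(2)), it follows that $(\omega\otimes\operatorname{id})(V)\in{\mathcal D}(S)$ and $S\bigl((\omega\otimes\operatorname{id})(V)\bigr)=(\omega\otimes\operatorname{id})(V^*)$, as desired.

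The main obstacle — or really the only subtlety — is making sure the ``right-handed'' ingredients are genuinely available in the form needed. Specifically, I am relying on: the analogue of Lemma~\ref{omega_xizetaLem}\,(2) for the first leg of $V$ (which is the elementary computation $\langle(\omega_{\xi,\zeta}\otimes\operatorname{id})(V)^*\eta,\eta'\rangle=\overline{\langle V(\xi\otimes\eta'),\zeta\otimes\eta\rangle}=\langle(\omega_{\zeta,\xi}\otimes\operatorname{id})(V^*)\eta,\eta'\rangle$), the fact from Proposition~\ref{strongrightinvariance} that the stated elements form a core for $S$, and the identification in Proposition~\ref{omegaidV}\,(1) applied to both $V$ and $V^*$ (for the latter, one takes adjoints of the former identity). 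Each of these is either elementary or already established in the excerpt, so the proof is short; the only care required is bookkeeping between ${\mathcal H}_{\psi}$ and the auxiliary Hilbert space $\eta$-side, which — exactly as in the earlier $W$-arguments — causes no real difficulty because the relevant identities involve only bounded operators and dense subspaces. I would close by remarking that $\{(\omega\otimes\operatorname{id})(V):\omega\in{\mathcal B}({\mathcal H}_{\psi})_*\}$ is invariant under $\tau_t$ and forms a core for $S$, parallel to Proposition~\ref{antipodeidW}, which follows from the scaling-group relation (the $V$-analogue of Equation~\eqref{(scalingeqn)}).
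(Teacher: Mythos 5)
Your proof is correct and follows essentially the same route as the paper: reduce to $\omega=\omega_{\Lambda_{\psi}(b),\Lambda_{\psi}(a)}$ via Proposition~\ref{omegaidV}\,(1), apply the strong right invariance of $\psi$ (Proposition~\ref{strongrightinvariance}), and match the result with $(\omega\otimes\operatorname{id})(V^*)$ by the adjoint identity. Your explicit limiting step (norm-continuity of $\omega\mapsto(\omega\otimes\operatorname{id})(V)$ plus closedness of $S$) simply spells out what the paper compresses into its ``without loss of generality'' approximation remark.
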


\begin{proof}
Without loss of generality, we may assume that $\omega=\omega_{\Lambda_{\psi}(b),
\Lambda_{\psi}(a)}$, for $a,b\in{\mathfrak N}_{\psi}$.  Any $\omega\in{\mathcal B}
({\mathcal H}_{\psi})_*$ can be approximated by the $\omega_{\Lambda_{\psi}(b),
\Lambda_{\psi}(a)}$.

By Proposition~\ref{omegaidV}\,(1) and the strong right invariance property of $\psi$ 
(Proposition~\ref{strongrightinvariance}), we have:
$$
S\bigl((\omega_{\Lambda_{\psi}(b),\Lambda_{\psi}(a)}\otimes\operatorname{id})(V)\bigr)
=S\bigl((\psi\otimes\operatorname{id})((a^*\otimes1)(\Delta b))\bigr)  \notag \\
=(\psi\otimes\operatorname{id})\bigl(\Delta(a^*)(b\otimes1)\bigr). 
$$
Meanwhile, we have:
$$
(\omega_{\Lambda_{\psi}(b),\Lambda_{\psi}(a)}\otimes\operatorname{id})(V^*)
=(\omega_{\Lambda_{\psi}(a),\Lambda_{\psi}(b)}\otimes\operatorname{id})(V)^*
=(\psi\otimes\operatorname{id})\bigl((b^*\otimes1)\Delta(a)\bigr)^*,
$$
which is same as $(\psi\otimes\operatorname{id})\bigl(\Delta(a^*)(b\otimes1)\bigr)$. 
This proves the result.
\end{proof}

Since the space $\bigl\{(\omega\otimes\operatorname{id})(V):\omega\in{\mathcal B}
({\mathcal H}_{\psi})_*\bigr\}$ is dense in $A$ (Proposition~\ref{omegaidV}), it would 
form a core for $S$.

\subsection{The operator $\nabla_{\psi}$ and the one-parameter group $(\sigma^{\psi}_t)$}

Using the right Haar weight $\psi$, we can construct the ``right'' analogues of $\nabla$ 
and $(\sigma_t)$, satisfying the properties that are similar to the results obtained earlier.

Here is a result that corresponds to Lemma~\ref{lemIJW1}.

\begin{lem}\label{lemIJV}
Let $\xi\in{\mathcal D}(L^{\frac12})$ and $\zeta\in{\mathcal D}(L^{-\frac12})$.  Then we have:
$$
(\operatorname{id}\otimes\omega_{\xi,\zeta})(V)^*
=(\operatorname{id}\otimes\omega_{IL^{\frac12}\xi,IL^{-\frac12}\zeta})(V).
$$
\end{lem}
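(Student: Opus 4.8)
The statement \ref{lemIJV} is the $V$-analogue of Lemma~\ref{lemIJW1}, so the plan is to mirror that proof, using the operator $K$ (with its polar decomposition $K = IL^{\frac12} = L^{-\frac12}I$ and the relations $I^* = I$, $I^2 = 1$, $ILI = L^{-1}$) together with the intertwining properties of $K$ with one leg of $V$. The key input is the $V$-version of Proposition~\ref{idomegaWK}: namely that for $\xi \in {\mathcal D}(K)$ and $\omega \in {\mathcal B}({\mathcal H}_\psi)_*$, we have $(\omega \otimes \operatorname{id})(V)\xi \in {\mathcal D}(K)$ and $K\bigl((\omega \otimes \operatorname{id})(V)\xi\bigr) = (\bar\omega \otimes \operatorname{id})(V)(K\xi)$, equivalently $(\bar\omega \otimes \operatorname{id})(V)\,K \subseteq K\,(\omega \otimes \operatorname{id})(V)$. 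This was not proved in the excerpt for $V$, but it is asserted in the text immediately after Proposition~\ref{antipodeidV} and in the closing paragraphs that ``all the earlier results concerning the operator $V$ will have analogous results''; I would either invoke this directly or re-derive it quickly from Proposition~\ref{idomegaWK} via Proposition~\ref{DeltaW}/Proposition~\ref{omegaidV}(3), noting $V$ and $W$ implement $\Delta$ on opposite legs.

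First I would fix $a,b \in {\mathfrak N}_\psi$ and compute the matrix coefficient
\[
\bigl\langle (\operatorname{id}\otimes\omega_{IL^{\frac12}\xi,IL^{-\frac12}\zeta})(V)\Lambda_\psi(a),\Lambda_\psi(b)\bigr\rangle
= \bigl\langle (\omega_{\Lambda_\psi(a),\Lambda_\psi(b)}\otimes\operatorname{id})(V)\,IL^{\frac12}\xi,\;IL^{-\frac12}\zeta\bigr\rangle,
\]
using the swap between the two legs of $V$. Then, since $IL^{\frac12} = K$ and $(\omega_{\Lambda_\psi(a),\Lambda_\psi(b)}\otimes\operatorname{id})(V)\,K \subseteq K\,(\omega_{\Lambda_\psi(b),\Lambda_\psi(a)}\otimes\operatorname{id})(V)$ (the swapped-functional version of the $V$-analogue of Proposition~\ref{idomegaWK}), I substitute and use $IL^{-\frac12} = L^{\frac12}I$, $K = L^{-\frac12}I$ to rewrite the right-hand side as $\bigl\langle L^{-\frac12}I(\omega_{\Lambda_\psi(b),\Lambda_\psi(a)}\otimes\operatorname{id})(V)\zeta,\,L^{\frac12}I\zeta'\bigr\rangle$-type expressions, collapsing the $L^{\pm\frac12}$ factors against each other and then stripping the (self-adjoint, conjugate-linear, involutive) $I$ via $\langle I\eta, I\eta'\rangle = \overline{\langle\eta,\eta'\rangle} = \langle\eta',\eta\rangle$. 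This turns the coefficient into $\bigl\langle (\omega_{\Lambda_\psi(a),\Lambda_\psi(b)}\otimes\operatorname{id})(V^*)\zeta,\xi\bigr\rangle = \bigl\langle (\omega_{\zeta,\xi}\otimes\operatorname{id})(V^*)\Lambda_\psi(a),\Lambda_\psi(b)\bigr\rangle$, and since $(\omega_{\zeta,\xi}\otimes\operatorname{id})(V^*) = (\omega_{\xi,\zeta}\otimes\operatorname{id})(V)^*$, arbitrariness of $a,b$ (with $\Lambda_\psi({\mathfrak N}_\psi)$ dense in ${\mathcal H}_\psi$) gives the claimed identity.

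The main obstacle is bookkeeping rather than conceptual: one must be careful about which leg of $V$ carries the $\operatorname{id}$ and which carries $\omega$, since $V \in {\mathcal B}({\mathcal H}_\psi \otimes {\mathcal H})$ acts differently on its two factors, and the intertwining of $K$ is with the \emph{first} leg of $V$ (the ${\mathcal H}_\psi$ side), matching how $K$ lives on ${\mathcal H}_\psi$. In Lemma~\ref{lemIJW1} the relevant leg of $W$ is the first one too (the ${\mathcal H}$ side) and $K$ intertwines there, so the pattern is the same; but the vectors $\xi,\zeta$ here sit in the second-leg slot via $\omega_{\xi,\zeta}$, whereas in Lemma~\ref{lemIJW1} they sit in $\omega_{v,w}$ on $W$'s first leg. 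I would double-check that the swap in the first displayed line is legitimate — it is, because $\bigl\langle(\operatorname{id}\otimes\omega_{\mu,\nu})(T)\alpha,\beta\bigr\rangle = \bigl\langle(\omega_{\alpha,\beta}\otimes\operatorname{id})(T)\mu,\nu\bigr\rangle$ for any $T \in {\mathcal B}({\mathcal H}_1\otimes{\mathcal H}_2)$ — and that the domain conditions $\xi \in {\mathcal D}(L^{\frac12}) = {\mathcal D}(K)$, $\zeta \in {\mathcal D}(L^{-\frac12}) = {\mathcal D}(K^*)$ are exactly what is needed for every application of $K$ and $L^{\pm\frac12}$ above to make sense. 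One minor extra point: I should confirm the $V$-analogue of Proposition~\ref{idomegaWK} is available with the functional order swapped, i.e. that $K$ intertwines $(\omega\otimes\operatorname{id})(V)$ with $(\bar\omega\otimes\operatorname{id})(V)$; this follows from the bar/swap symmetry of $\omega_{\mu,\nu}$ exactly as in the $W$ case, so no new work is needed there.
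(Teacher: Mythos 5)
Your computational skeleton — the leg swap $\bigl\langle(\operatorname{id}\otimes\omega_{IL^{1/2}\xi,IL^{-1/2}\zeta})(V)\Lambda_{\psi}(a),\Lambda_{\psi}(b)\bigr\rangle=\bigl\langle(\omega_{\Lambda_{\psi}(a),\Lambda_{\psi}(b)}\otimes\operatorname{id})(V)IL^{1/2}\xi,IL^{-1/2}\zeta\bigr\rangle$, the relations $IL^{1/2}=L^{-1/2}I$, $IL^{-1/2}=L^{1/2}I$, the collapsing of the $L^{\pm 1/2}$ factors, the stripping of $I$, and the final density argument over $\Lambda_{\psi}({\mathfrak N}_{\psi})$ — is exactly the paper's. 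The problem is the pivotal middle step: the ``$V$-analogue of Proposition~\ref{idomegaWK}'', i.e.\ $(\bar\omega\otimes\operatorname{id})(V)\,K\subseteq K\,(\omega\otimes\operatorname{id})(V)$, is nowhere asserted in the paper, and the provenance you give for it is wrong. The sentence after Proposition~\ref{antipodeidV} only says those slices form a core for $S$, and the remark closing \S3.4 (``similar results exist for $V$'') refers to the Section~3 material (partial isometry, pentagon-type identities, implementation of $\Delta$), not to the \S4.1 interplay of $K$ with slices of the regular representation. Nor does your claim follow ``quickly'' from Proposition~\ref{idomegaWK} via Propositions~\ref{DeltaW} and \ref{omegaidV}\,(3): the proof of Proposition~\ref{idomegaWK} is a genuine computation resting on the commutation identities of Proposition~\ref{Wpentagon_alt} for $W$, and transporting it to the slices $(\omega\otimes\operatorname{id})(V)$ would require the corresponding (unstated) identities for $V$, essentially redoing \S4.1 in parallel. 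As written, this key input is a gap.

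The statement you want is true, but at this point of the paper the economical way to obtain it is from Proposition~\ref{antipodeidV} (just established via the strong right invariance of $\psi$, Proposition~\ref{strongrightinvariance}) together with the polar decomposition $S=R\circ\tau_{-\frac{i}{2}}$, $R=I(\,\cdot\,)^*I$, $\tau_{-\frac{i}{2}}\sim L^{\frac12}\,\cdot\,L^{-\frac12}$ on matrix elements over ${\mathcal D}(L^{\pm\frac12})$ — and that is precisely what the paper's proof does instead of invoking any $K$-intertwining: it substitutes $L^{\frac12}(\omega_{\Lambda_{\psi}(a),\Lambda_{\psi}(b)}\otimes\operatorname{id})(V)L^{-\frac12}=RS\bigl((\omega_{\Lambda_{\psi}(a),\Lambda_{\psi}(b)}\otimes\operatorname{id})(V)\bigr)=I(\omega_{\Lambda_{\psi}(a),\Lambda_{\psi}(b)}\otimes\operatorname{id})(V^*)^*I$, and then strips $I$ exactly as you do. If you justify your intertwining this way (or simply make this substitution), your argument becomes the paper's proof; without it, the step is unsupported. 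One minor slip at the end: the slice you need is on the second leg, $(\operatorname{id}\otimes\omega_{\zeta,\xi})(V^*)=(\operatorname{id}\otimes\omega_{\xi,\zeta})(V)^*$, not $(\omega_{\zeta,\xi}\otimes\operatorname{id})(V^*)$.
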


\begin{proof}
Let $a,b\in{\mathfrak N}_{\psi}$ be arbitrary.  Then we have:
$$
\bigl\langle(\operatorname{id}\otimes\omega_{IL^{\frac12}\xi,IL^{-\frac12}\zeta})(V)
\Lambda_{\psi}(a),\Lambda_{\psi}(b)\bigr\rangle
=\bigl\langle(\omega_{\Lambda_{\psi}(a),\Lambda_{\psi}(b)}\otimes\operatorname{id})(V)
IL^{\frac12}\xi,IL^{-\frac12}\zeta\bigr\rangle.
$$
Since $IL^{\frac12}=L^{-\frac12}I$ and $IL^{-\frac12}=L^{\frac12}I$, the right side 
becomes
$$
=\bigl\langle(\omega_{\Lambda_{\psi}(a),\Lambda_{\psi}(b)}\otimes\operatorname{id})(V)
L^{-\frac12}I\xi,L^{\frac12}I\zeta\bigr\rangle
=\bigl\langle L^{\frac12}(\omega_{\Lambda_{\psi}(a),\Lambda_{\psi}(b)}\otimes\operatorname{id})(V)
L^{-\frac12}I\xi,I\zeta\bigr\rangle.
$$
But observe that
\begin{align}
&L^{\frac12}(\omega_{\Lambda_{\psi}(a),\Lambda_{\psi}(b)}\otimes\operatorname{id})(V)
L^{-\frac12}  \notag \\
&=\tau_{-\frac{i}{2}}\bigl((\omega_{\Lambda_{\psi}(a),\Lambda_{\psi}(b)}\otimes
\operatorname{id})(V)\bigr) 
=RS\bigl((\omega_{\Lambda_{\psi}(a),\Lambda_{\psi}(b)}\otimes
\operatorname{id})(V)\bigr)   \notag \\
&=R\bigl((\omega_{\Lambda_{\psi}(a),\Lambda_{\psi}(b)}\otimes
\operatorname{id})(V^*)\bigr) 
=I(\omega_{\Lambda_{\psi}(a),\Lambda_{\psi}(b)}\otimes\operatorname{id})(V^*)^*I,
\notag
\end{align}
where we used Proposition~\ref{antipodeidV} in the third equality.  

Putting all these together, knowing  that $I^2=1$, $I^*=I$, and $I$ is conjugate linear, 
we thus have: 
\begin{align}
&\bigl\langle(\operatorname{id}\otimes\omega_{IL^{\frac12}\xi,IL^{-\frac12}\zeta})(V)
\Lambda_{\psi}(a),\Lambda_{\psi}(b)\bigr\rangle  \notag \\
&=\bigl\langle I(\omega_{\Lambda_{\psi}(a),\Lambda_{\psi}(b)}\otimes\operatorname{id})(V^*)^*
\xi,I\zeta\bigr\rangle=\overline{\bigl\langle\xi,
(\omega_{\Lambda_{\psi}(a),\Lambda_{\psi}(b)}\otimes\operatorname{id})(V^*)\zeta\bigr\rangle}
\notag \\
&=\bigl\langle(\omega_{\Lambda_{\psi}(a),\Lambda_{\psi}(b)}\otimes\operatorname{id})(V^*)
\zeta,\xi\bigr\rangle
=\bigl\langle(\operatorname{id}\otimes\omega_{\zeta,\xi})(V^*)\Lambda_{\psi}(a),
\Lambda_{\psi}(b)\bigr\rangle.
\notag
\end{align}
Since $a,b\in{\mathfrak N}_{\psi}$ are arbitrary, this means that 
$$
(\operatorname{id}\otimes\omega_{IL^{\frac12}\xi,IL^{-\frac12}\zeta})(V)
=(\operatorname{id}\otimes\omega_{\zeta,\xi})(V^*)
=(\operatorname{id}\otimes\omega_{\xi,\zeta})(V)^*.
$$
\end{proof}

We may imitate the discussion given in \S\ref{subsec4.2}, and write $T_R$ to be the closed operator on 
${\mathcal H}_{\psi}$ such that $\Lambda_{\psi}({\mathfrak N}_{\psi}\cap{\mathfrak N}_{\psi}^*)$ 
is a core for $T_R$ and $T_R\Lambda_{\psi}(y)=\Lambda_{\psi}(y^*)$, for 
$y\in{\mathfrak N}_{\psi}\cap{\mathfrak N}_{\psi}^*$.  Its polar decomposition 
is $T_R=J_{\psi}\nabla_{\psi}^{\frac12}$, where $\nabla_{\psi}=T_R^*T_R$.  As before, 
we have $J_{\psi}=J_{\psi}^*$, $J_{\psi}^2=1$, and $J_{\psi}\nabla_{\psi}J_{\psi}=\nabla_{\psi}^{-1}$. 
Since $\psi$ is a KMS weight, its lift $\tilde{\psi}$ to the von Neumann algebra $\pi_{\psi}(A)''$ 
is an n.s.f. weight, with the same GNS-Hilbert space ${\mathcal H}_{\tilde{\psi}}={\mathcal H}_{\psi}$. 
We have the modular automorphism group $(\sigma^{\tilde{\psi}}_t)_{t\in\mathbb{R}}$ at the 
von Neumann algebra level, given by $\sigma^{\tilde{\psi}}_t:x\mapsto\nabla_{\psi}^{it}x
\nabla_{\psi}^{-it}$.  Note that $(\sigma^{\tilde{\psi}}_t)$ leaves the $C^*$-algebra $A$ invariant, 
and the restriction of $(\sigma^{\tilde{\psi}}_t)$ to $A$ is exactly the one-parameter group 
$(\sigma^{\psi}_t)$ associated with the KMS weight $\psi$, or  $\sigma^{\psi}_t
=\sigma^{\tilde{\psi}}_t\circ\pi_{\psi}$.  

With the notation above, we are able to prove the following, which correspond to 
Propositions~\ref{inequalityWLNabla}, \ref{inequalityWLNabla^z}, \ref{IJWequality}.

\begin{prop}
The following results hold:
\begin{enumerate}
  \item $V(\nabla_{\psi}\otimes L^{-1})\subseteq(\nabla_{\psi}\otimes L^{-1})V$
  \item $V(\nabla_{\psi}^z\otimes L^{-z})\subseteq(\nabla_{\psi}^z\otimes L^{-z})V$, for all 
$z\in\mathbb{C}$
  \item $(J_{\psi}\otimes I)V(J_{\psi}\otimes I)=V^*$
\end{enumerate}
\end{prop}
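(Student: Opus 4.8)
The strategy mirrors the treatment of the $W$-operator in \S4.3, transported to $V$ via the known duality between $\varphi$, $W$, and $\psi$, $V$. The three parts should be proved in order (1) $\Rightarrow$ (2) $\Rightarrow$ (3), since each feeds into the next, exactly as in Propositions~\ref{inequalityWLNabla}, \ref{inequalityWLNabla^z}, \ref{IJWequality}.

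For part (1), I would imitate the proof of Proposition~\ref{inequalityWLNabla}. The key input is the pair of relations that control how the legs of $V$ interact with the operators $K$ (equivalently $L=K^*K$) and $T_R$ (equivalently $\nabla_\psi=T_R^*T_R$). On the $K$-side, the needed statement is the analogue of Proposition~\ref{idomegaWK} for $V$: for $\xi\in{\mathcal D}(K)$ and $\omega\in{\mathcal B}({\mathcal H}_\psi)_*$, one has $(\omega\otimes\operatorname{id})(V)\xi\in{\mathcal D}(K)$ with $K\bigl((\omega\otimes\operatorname{id})(V)\xi\bigr)=(\bar\omega\otimes\operatorname{id})(V)(K\xi)$; this follows from Definition~\ref{D(K)} itself, since $V$ (not $W$) is the operator used there, so the argument is a rerun of Proposition~\ref{idomegaWK} with the roles of the two legs swapped. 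On the $T_R$-side, I need the analogue of Proposition~\ref{omegaidW*T}: for $\zeta\in{\mathcal D}(T_R)$ and $\theta\in{\mathcal B}({\mathcal H}_\psi)_*$, $(\operatorname{id}\otimes\theta)(V^*)\zeta\in{\mathcal D}(T_R)$ with $T_R\bigl((\operatorname{id}\otimes\theta)(V^*)\zeta\bigr)=(\operatorname{id}\otimes\bar\theta)(V^*)(T_R\zeta)$; this comes directly from the defining formula for $V$ in Proposition~\ref{Vdefn}\,(1) together with the right invariance of $\psi$, exactly as Proposition~\ref{omegaidW*T} comes from the defining formula for $W$. One also needs the $V$-analogue of Lemma~\ref{aplemma2} (expressing $\bigl((\omega_{\Lambda_{\tilde\psi}(x),\Lambda_{\tilde\psi}(a)}\otimes\operatorname{id})(V^*)\bigr)^*$ in terms of $\sigma^{\tilde\psi}$), provable by the same approximation trick with a bounded net $(b_k)$ in ${\mathcal T}_{\tilde\psi}$. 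Armed with these, the bilinear-form computation of Proposition~\ref{inequalityWLNabla} --- evaluating $\langle V(\nabla_\psi\xi\otimes L^{-1}\zeta),\xi'\otimes\zeta'\rangle$ on vectors $\xi=\Lambda_{\tilde\psi}(a)$, $\xi'=\Lambda_{\tilde\psi}(b)$ with $a,b\in{\mathcal T}_{\tilde\psi}$, and $\zeta,\zeta'\in{\mathcal D}(L^{-1})$, shuttling $\nabla_\psi$ and $L^{-1}$ from one side to the other using the two lemmas --- goes through essentially verbatim, yielding $V(\nabla_\psi\otimes L^{-1})\subseteq(\nabla_\psi\otimes L^{-1})V$.

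Part (2) is immediate from part (1) by the same argument as in Propositions~\ref{inequalityWLNabla^z} and (more structurally) the development after Proposition~\ref{lemWLNablaEG}: from $V^*V=G_R$ and $VV^*=E$ (Theorem~\ref{K_psi=H_psi}, Proposition~\ref{G_R}) one deduces $G_R(\nabla_\psi\otimes L^{-1})\subseteq(\nabla_\psi\otimes L^{-1})G_R$ and $E(\nabla_\psi\otimes L^{-1})\subseteq(\nabla_\psi\otimes L^{-1})E$, so $\nabla_\psi\otimes L^{-1}$ restricts to operators on $\operatorname{Ran}(E)$, $\operatorname{Ran}(G_R)$, $\operatorname{Ker}(V)$, $\operatorname{Ker}(V^*)$; functional calculus then produces $\nabla_\psi^z\otimes L^{-z}$ with the same decomposition property, and the inclusion $V(\nabla_\psi^z\otimes L^{-z})\subseteq(\nabla_\psi^z\otimes L^{-z})V$ follows by splitting a vector $\eta=\eta_0\oplus\eta_1\in\operatorname{Ker}(V)\oplus\operatorname{Ran}(G_R)$, exactly as in Proposition~\ref{inequalityWLNabla^z}. (Note the $G_R$/$E$ roles for $V$ are reversed relative to the $E$/$G_L$ roles for $W$, because $V^*V=G_R$ whereas $W^*W=E$; this only affects bookkeeping.)

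Part (3) follows the template of Proposition~\ref{IJWequality}, using part~(2) at $z=\tfrac12$ together with Lemma~\ref{lemIJV} (the $V$-analogue of Lemma~\ref{lemIJW1}, already proved in the excerpt) and the $V$-analogue of Proposition~\ref{omegaidW*T}. The one genuine subtlety --- and where I expect the main obstacle --- is the same one flagged in the Remark after Proposition~\ref{inequalityWLNabla}: because $V$ is only a partial isometry, the inclusion in part~(1) is \emph{strict}, not an equality, so one cannot invert $V$ to promote inclusions to equalities. The way around this, as in Proposition~\ref{IJWequality}, is to work entirely with the \emph{bounded} slices $(\omega_{\xi,\zeta}\otimes\operatorname{id})(V)$ and $(\operatorname{id}\otimes\omega_{\xi,\zeta})(V^*)$, establish the two inclusions
\begin{align}
(\omega_{J_\psi\zeta,J_\psi\xi}\otimes\operatorname{id})(V)\,L^{\frac12}
&\subseteq L^{\frac12}\,(\omega_{\xi,\zeta}\otimes\operatorname{id})(V^*),
\notag \\
I(\omega_{\zeta,\xi}\otimes\operatorname{id})(V^*)I\,L^{\frac12}
&\subseteq L^{\frac12}\,(\omega_{\xi,\zeta}\otimes\operatorname{id})(V^*),
\notag
\end{align}
observe that the right-hand sides coincide, and conclude equality of the bounded left-hand sides because $L^{\frac12}$ (here playing the role $\nabla^{\frac12}$ played for $W$) has dense range. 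That gives $(\omega_{J_\psi\zeta,J_\psi\xi}\otimes\operatorname{id})(V)=I(\omega_{\zeta,\xi}\otimes\operatorname{id})(V^*)I$ for $\xi,\zeta$ in dense domains, hence for all $\xi,\zeta\in{\mathcal H}_\psi$ by boundedness, and finally, pairing against $(v\otimes p)$ and $(w\otimes q)$ and using that $J_\psi\otimes I$ is conjugate linear, one reads off $(J_\psi\otimes I)V^*(J_\psi\otimes I)=V$, i.e. $(J_\psi\otimes I)V(J_\psi\otimes I)=V^*$. Throughout, the density statements I rely on --- that $\Lambda_{\tilde\psi}({\mathcal T}_{\tilde\psi})$ is dense, that $\operatorname{Ran}(\nabla_\psi^{\frac12})$ and $\operatorname{Ran}(L^{\frac12})$ are dense, and that $\operatorname{Ran}(V)=\operatorname{Ran}(E)$ --- are all available from the modular theory of $\tilde\psi$ and from Theorem~\ref{K_psi=H_psi}, so no new tools are needed beyond faithfully dualizing the $W$-arguments.
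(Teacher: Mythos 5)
Your overall architecture is the paper's: part (2) is proved there exactly as you describe (from $VV^*=E$, $V^*V=G_R$ one restricts $\nabla_{\psi}\otimes L^{-1}$ to $\operatorname{Ran}(E)$, $\operatorname{Ran}(G_R)$, $\operatorname{Ker}(V)$, $\operatorname{Ker}(V^*)$, applies functional calculus, and splits $\eta=\eta_0\oplus\eta_1$), and part (3) is proved there by mirroring Proposition~\ref{IJWequality} using (2) at $z=\tfrac12$ together with Lemma~\ref{lemIJV}; your transposed formulation (slicing the first leg and comparing against $L^{\frac12}$, rather than slicing the second leg and comparing against $\nabla_{\psi}^{\frac12}$) is an equivalent bookkeeping. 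Your $T_R$-ingredient for (1) is also the paper's first step: the relation $(\operatorname{id}\otimes\bar{\omega})(V)\,T_R\subseteq T_R\,(\operatorname{id}\otimes\omega)(V)$ is derived directly from Proposition~\ref{Vdefn} and the right invariance of $\psi$, exactly as you say.

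The genuine weak point is your ingredient (a), the claimed $V$-analogue of Proposition~\ref{idomegaWK}, namely $K\bigl((\omega\otimes\operatorname{id})(V)\xi\bigr)=(\bar{\omega}\otimes\operatorname{id})(V)(K\xi)$, which you assert ``follows from Definition~\ref{D(K)} itself'' by a rerun of Proposition~\ref{idomegaWK} with the legs swapped. That rerun is not available: the proof of Proposition~\ref{idomegaWK} is not formal from the definition of ${\mathcal D}(K)$; it uses the specific core of ${\mathcal D}(K)$ consisting of vectors $\Lambda_{\psi}\bigl((\operatorname{id}\otimes\omega(c\,\cdot\,d^*))(W)\bigr)$ (Proposition~\ref{Kdensedomain}) and the exchange relations of Proposition~\ref{Wpentagon_alt} for products of $W$-slices. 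Multiplying such a core vector by a first-leg slice $(\omega\otimes\operatorname{id})(V)$ produces mixed products of $V$-slices with $W$-slices, and controlling the approximating nets would require mixed $V$--$W$ commutation relations that the paper never establishes; note also that the naive substitution $p_j\mapsto ap_j$ fails, since $V(a\otimes1)=(\Delta a)V$ by Proposition~\ref{Vprop} produces $\Delta(a)E(\xi\otimes\zeta)$ rather than $E(a\xi\otimes\zeta)$. The statement you want is true, but at this stage of the paper it should be obtained the other way round, through the antipode: Proposition~\ref{antipodeidV} (i.e.\ strong right invariance, Proposition~\ref{strongrightinvariance}, via Proposition~\ref{omegaidV}) gives $S\bigl((\omega\otimes\operatorname{id})(V)\bigr)=(\omega\otimes\operatorname{id})(V^*)$, and with $S=R\circ\tau_{-\frac{i}{2}}$, $R=I(\,\cdot\,)^*I$, $\tau_{-\frac{i}{2}}=L^{\frac12}(\,\cdot\,)L^{-\frac12}$, $K=IL^{\frac12}$, this is precisely the $K$/$I$/$L$-commutation you need; it is the content of Lemma~\ref{lemIJV}, proved immediately before this proposition. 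This is in fact how the paper proves part (1): it combines the $T_R$-relation with Lemma~\ref{lemIJV} (no analogue of Lemma~\ref{aplemma2} is needed), rather than redoing the Hilbert-space computations of \S4.3 as in Proposition~\ref{inequalityWLNabla}. If you replace your justification of ingredient (a) by an appeal to Proposition~\ref{antipodeidV}/Lemma~\ref{lemIJV}, your proof of (1) collapses to the paper's and the rest of your outline stands.
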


\begin{proof}
(1). Let $y\in{\mathfrak N}_{\psi}\cap{\mathfrak N}_{\psi}^*$.  For any 
$\omega\in{\mathcal B}({\mathcal H})_*$, we have:
$$
(\operatorname{id}\otimes\omega)(V)\Lambda_{\psi}(y)=\Lambda_{\psi}
\bigl((\operatorname{id}\otimes\omega)(\Delta y)\bigr),
$$
by definition of $V$ (Proposition~\ref{Vdefn}).  By the right invariance of $\psi$, we know 
$(\operatorname{id}\otimes\omega)(\Delta y)\in{\mathfrak N}_{\psi}\cap{\mathfrak N}_{\psi}^*$, 
and so we have $(\operatorname{id}\otimes\omega)(V)\Lambda_{\psi}(y)
\in{\mathcal D}(T_R)$.  Then: 
\begin{align}
T_R\bigl((\operatorname{id}\otimes\omega)(V)\Lambda_{\psi}(y)\bigr)
&=T_R\bigl(\Lambda_{\psi}((\operatorname{id}\otimes\omega)(\Delta y))\bigr) 
=\Lambda_{\psi}\bigl((\operatorname{id}\otimes\bar{\omega})(\Delta(y^*))\bigr)  \notag \\
&=(\operatorname{id}\otimes\bar{\omega})(V)\Lambda_{\psi}(y^*)
=(\operatorname{id}\otimes\bar{\omega})(V)T_R\Lambda_{\psi}(y).
\notag
\end{align}
Since $\Lambda_{\psi}({\mathfrak N}_{\psi}\cap{\mathfrak N}_{\psi}^*)$ is a core 
for $T_R$, we just showed that 
\begin{equation}\label{(idomegaVT_R)}
(\operatorname{id}\otimes\bar{\omega})(V)T_R\subseteq
T_R(\operatorname{id}\otimes\omega)(V).
\end{equation}
This is analogous to Proposition~\ref{omegaidW*T} earlier.  Taking adjoints, we also have: 
\begin{equation}\label{(idomegaVT_R*)}
(\operatorname{id}\otimes\bar{\omega})(V^*)T_R^*\subseteq
T_R^*(\operatorname{id}\otimes\omega)(V^*),\quad\forall\omega\in{\mathcal B}({\mathcal H})_*.
\end{equation}

Now let $\xi\in{\mathcal D}(L)$ and $\zeta\in{\mathcal D}(L^{-1})$.  By the inclusion \eqref{(idomegaVT_R*)}, 
we have:
$$
(\operatorname{id}\otimes\omega_{\xi,\zeta})(V^*)\nabla_{\psi}
=(\operatorname{id}\otimes\omega_{\xi,\zeta})(V^*)T_R^*T_R
\subseteq T_R^*(\operatorname{id}\otimes\omega_{\zeta,\xi})(V^*)T_R.
$$
Look at the right hand side.  By Lemma~\ref{lemIJV}, we have
$$
(RHS)=T_R^*(\operatorname{id}\otimes\omega_{\xi,\zeta})(V)^*T_R
=T_R^*(\operatorname{id}\otimes\omega_{IL^{\frac12}\xi,IL^{-\frac12}\zeta})(V)T_R.
$$
Using inclusion \eqref{(idomegaVT_R)}, and again using Lemma~\ref{lemIJV}, this becomes:
\begin{align}
(RHS)&\subseteq
T_R^*T_R(\operatorname{id}\otimes\omega_{IL^{-\frac12}\zeta,IL^{\frac12}\xi})(V)
=\nabla_{\psi}(\operatorname{id}\otimes\omega_{L^{\frac12}I\zeta,L^{-\frac12}I\xi})(V)
\notag \\
&=\nabla_{\psi}(\operatorname{id}\otimes\omega_{IL^{\frac12}L^{\frac12}I\zeta,
IL^{-\frac12}L^{-\frac12}I\xi})(V)^*   \notag \\
&=\nabla_{\psi}(\operatorname{id}\otimes\omega_{L^{-1}\zeta,L\xi})(V)^* 
=\nabla_{\psi}(\operatorname{id}\otimes\omega_{L\xi,L^{-1}\zeta})(V^*).
\notag
\end{align}

Combining the results, for any $\xi\in{\mathcal D}(L)$ and any $\zeta\in{\mathcal D}(L^{-1})$, 
we have:
\begin{equation}\label{(inequalityV*NablaLeqn)}
(\operatorname{id}\otimes\omega_{\xi,\zeta})(V^*)\nabla_{\psi}
\subseteq
\nabla_{\psi}(\operatorname{id}\otimes\omega_{L\xi,L^{-1}\zeta})(V^*).
\end{equation}
This means that if $p\in{\mathcal D}(\nabla_{\psi})$, we have 
$(\operatorname{id}\otimes\omega_{L\xi,L^{-1}\zeta})(V^*)p\in{\mathcal D}
(\nabla_{\psi})$ and that $\nabla_{\psi}(\operatorname{id}\otimes\omega_{L\xi,L^{-1}\zeta})(V^*)p
=(\operatorname{id}\otimes\omega_{\xi,\zeta})(V^*)\nabla_{\psi}p$.

Let $p,q\in{\mathcal D}(\nabla_{\psi})$ and $v,w\in{\mathcal D}(L^{-1})$.  Then, 
by Equation~\eqref{(inequalityV*NablaLeqn)}, we have: 
\begin{align}
\bigl\langle V^*(p\otimes v),\nabla_{\psi}q\otimes L^{-1}w\bigr\rangle
&=\bigl\langle(\operatorname{id}\otimes\omega_{v,L^{-1}w})(V^*)p,\nabla_{\psi}q\bigr\rangle
\notag \\
&=\bigl\langle\nabla_{\psi}(\operatorname{id}\otimes\omega_{LL^{-1}v,L^{-1}w})(V^*)p,q\bigr\rangle
\notag \\
&=\bigl\langle(\operatorname{id}\otimes\omega_{L^{-1}v,w})(V^*)\nabla_{\psi}p,q\bigr\rangle.
\notag
\end{align}

This result can be re-written as:
$$
\bigl\langle(\nabla_{\psi}\otimes L^{-1})V^*(p\otimes v),q\otimes w\bigr\rangle
=\bigl\langle V^*(\nabla_{\psi}\otimes L^{-1})(p\otimes v),q\otimes w\bigr\rangle,
$$
true for any $p,q\in{\mathcal D}(\nabla_{\psi})$ and $v,w\in{\mathcal D}(L^{-1})$.  We thus 
have the inclusion:
$V^*(\nabla_{\psi}\otimes L^{-1})\subseteq(\nabla_{\psi}\otimes L^{-1})V^*$.  Equivalently, 
$V(\nabla_{\psi}\otimes L^{-1})\subseteq(\nabla_{\psi}\otimes L^{-1})V$.

(2). Recall that $V$ is a partial isometry with the projections $VV^*=E$ and $V^*V=G_R$. 
Similar to the case involving $W$ (see Proposition~\ref{LNablaequationAB}, 
Proposition~\ref{LNablafunctcalc}), for all $z\in\mathbb{C}$ we can show the following:
$$
V(\nabla_{\psi}^z\otimes L^{-z})|_{\operatorname{Ran}(G_R)}
=(\nabla_{\psi}^z\otimes L^{-z})|_{\operatorname{Ran}(E)}V,
$$
as operators on $\operatorname{Ran}(G_R)$, and 
$$
V^*(\nabla_{\psi}^z\otimes L^{-z})|_{\operatorname{Ran}(E)}
=(\nabla_{\psi}^z\otimes L^{-z})|_{\operatorname{Ran}(G_R)}V^*,
$$
as operators on $\operatorname{Ran}(E)$.

As a consequence (similar to  the proof of Proposition~\ref{inequalityWLNabla^z}), 
we obtain the following result:
$$
V(\nabla_{\psi}^z\otimes L^{-z})\subseteq(\nabla_{\psi}^z\otimes L^{-z})V,\quad\forall z\in\mathbb{C}.
$$
This is in general not an equality, but if $z=it$ ($t\in\mathbb{R}$), the operators are bounded and we 
do have the equality:
\begin{equation}\label{(VNablaLequality)}
V(\nabla_{\psi}^{it}\otimes L^{-it})=(\nabla_{\psi}^{it}\otimes L^{-it})V,\qquad t\in\mathbb{R}.
\end{equation}

(3). Proof for $(J_{\psi}\otimes I)V(J_{\psi}\otimes I)=V^*$ is similar to the proof of Proposition~\ref{IJWequality}. 
We use $V(\nabla_{\psi}^{\frac12}\otimes L^{-\frac12})\subseteq
(\nabla_{\psi}^{\frac12}\otimes L^{-\frac12})V$, as well as Lemma~\ref{lemIJV}.
\end{proof}

Here is a result that corresponds to Proposition~\ref{Deltasigma_t}

\begin{prop}\label{Deltasigma'_t}
For all $x\in A$ and $t\in\mathbb{R}$, we have:
$$
\Delta\bigl(\sigma^{\psi}_t(x)\bigr)=(\sigma^{\psi}_t\otimes\tau_{-t})(\Delta x).
$$
This uniquely characterizes $(\tau_t)$.
\end{prop}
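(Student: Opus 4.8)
The plan is to mirror, on the right-hand side, the argument given for Proposition~\ref{Deltasigma_t}, using the operator $V$ and the operators $\nabla_{\psi}$, $L$ in place of $W$, $\nabla$, $L$. First I would recall that we have just established the equality $V(\nabla_{\psi}^{it}\otimes L^{-it})=(\nabla_{\psi}^{it}\otimes L^{-it})V$ for all $t\in\mathbb{R}$, Equation~\eqref{(VNablaLequality)}, and also the characterization $(\pi_{\psi}\otimes\pi)(\Delta x)=V\bigl(\pi_{\psi}(x)\otimes1\bigr)V^*$ from Proposition~\ref{omegaidV}\,(3). The point is that $\sigma^{\psi}_t$ is implemented by conjugation with $\nabla_{\psi}^{it}$ on the first leg, while $\tau_{-t}$ is implemented by conjugation with $L^{-it}$; these are exactly the two factors appearing in the intertwining relation for $V$.

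Concretely, the main computation is:
\begin{align}
\Delta\bigl(\sigma^{\psi}_t(x)\bigr)
&=V\bigl(\sigma^{\psi}_t(x)\otimes1\bigr)V^*
=V(\nabla_{\psi}^{it}x\nabla_{\psi}^{-it}\otimes1)V^*    \notag \\
&=V(\nabla_{\psi}^{it}\otimes L^{-it})(x\otimes1)(\nabla_{\psi}^{-it}\otimes L^{it})V^*  \notag \\
&=(\nabla_{\psi}^{it}\otimes L^{-it})V(x\otimes1)V^*(\nabla_{\psi}^{-it}\otimes L^{it})
=(\sigma^{\psi}_t\otimes\tau_{-t})(\Delta x),  \notag
\end{align}
where the third equality inserts $L^{-it}L^{it}=1$ in the second leg, the fourth uses Equation~\eqref{(VNablaLequality)} on both sides, and the last uses Proposition~\ref{omegaidV}\,(3) together with the definitions $\sigma^{\psi}_t(\,\cdot\,)=\nabla_{\psi}^{it}\,\cdot\,\nabla_{\psi}^{-it}$ and $\tau_{-t}(\,\cdot\,)=L^{-it}\,\cdot\,L^{it}$. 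One must read this initially at the von Neumann algebra level (so that the unbounded $x$ issue does not arise—$\sigma^{\tilde\psi}_t$ and $\tilde\tau_{-t}$ are genuine automorphisms of $M$) and then restrict to $A$ via $\sigma^{\tilde\psi}_t\circ\pi_{\psi}=\pi_{\psi}\circ\sigma^{\psi}_t$, $\tilde\tau_t\circ\pi_{\psi}=\pi_{\psi}\circ\tau_t$, and $\tilde\Delta\circ\pi_{\psi}=(\pi_{\psi}\otimes\pi_{\psi})\circ\Delta$, exactly as in the proof of Proposition~\ref{Deltasigma_t}. I would also want a small remark that $\tau_{-t}$ may be meant with the naturally-extended $\tau$ on $M(A)$, since $\Delta$ lands in $M(A\otimes A)$; this is handled as in Proposition~\ref{Deltasigma_t}\,(2).

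For the uniqueness assertion, I would copy the argument from the Corollary to Proposition~\ref{Deltasigma_t}: by fullness of $\Delta$ the elements $(\omega\otimes\operatorname{id})(\Delta x)$, $x\in A$, $\omega\in A^*$, are norm-dense in $A$, and from $\Delta(\sigma^{\psi}_t(x))=(\sigma^{\psi}_t\otimes\tau_{-t})(\Delta x)$ one reads off
$$
\tau_{-t}\bigl((\omega\otimes\operatorname{id})(\Delta x)\bigr)
=(\omega\circ\sigma^{\psi}_{-t}\otimes\operatorname{id})\bigl((\sigma^{\psi}_t\otimes\tau_{-t})(\Delta x)\bigr)
=(\omega\circ\sigma^{\psi}_{-t}\otimes\operatorname{id})\bigl(\Delta(\sigma^{\psi}_t(x))\bigr),
$$
so $\tau_{-t}$, hence $(\tau_t)$, is completely determined by $(\sigma^{\psi}_t)$ and $\Delta$. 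I do not anticipate a serious obstacle here; the only point needing a little care is the bookkeeping between the Hilbert spaces ${\mathcal H}_{\psi}$ and ${\mathcal H}$ (which at this stage of the paper are being identified, ${\mathcal H}={\mathcal H}_{\psi}$) and the parallel bookkeeping ${\mathcal H}={\mathcal H}_{\varphi}$ used when $L$ was constructed—one should make sure that the $L$ appearing in $\tau_t$ and the $L$ appearing in the intertwining relation for $V$ are literally the same operator, which they are since $L=K^*K$ was defined once and for all on ${\mathcal H}={\mathcal H}_{\psi}$ in \S4.1.
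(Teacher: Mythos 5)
Your proposal is correct and follows essentially the same route as the paper: the paper's proof is exactly the computation $\Delta(\sigma^{\psi}_t(x))=V(\nabla_{\psi}^{it}x\nabla_{\psi}^{-it}\otimes1)V^*=(\nabla_{\psi}^{it}\otimes L^{-it})V(x\otimes1)V^*(\nabla_{\psi}^{-it}\otimes L^{it})$ using Equation~\eqref{(VNablaLequality)} and Proposition~\ref{omegaidV}\,(3), with the uniqueness handled by the same fullness-of-$\Delta$ slicing argument as in the Corollary to Proposition~\ref{Deltasigma_t}. Your added remarks on the von Neumann algebra level and the identification of the operator $L$ simply fill in details the paper leaves implicit.
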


\begin{proof}
We have, for any $t\in\mathbb{R}$, 
\begin{align}
\Delta\bigl(\sigma^{\psi}_t(x)\bigr)&=\Delta\bigl(\nabla_{\psi}^{it}x\nabla^{-it})
=V(\nabla_{\psi}^{it}x\nabla_{\psi}^{-it}\otimes1)V^*  \notag \\
&=V(\nabla_{\psi}^{it}\otimes L^{-it})(x\otimes1)(\nabla_{\psi}^{-it}\otimes L^{it})V^*  \notag \\
&=(\nabla_{\psi}^{it}\otimes L^{-it})V(x\otimes1)V^*(\nabla_{\psi}^{-it}\otimes L^{it})
=(\sigma^{\psi}_t\otimes\tau_{-t})(\Delta x),
\notag
\end{align}
where we used Equation~\eqref{(VNablaLequality)}

As in Corollary of Proposition~\ref{Deltasigma_t}, this result provides an alternative
characterization of $(\tau_t)$.  Proof is similar.
\end{proof}

\subsection{Antipode map restricted to the base algebra}

We often indicated in Part~I that the $\gamma_B$, $\gamma_C$ maps at the level of 
the subalgebras $B$ and $C$, associated with the canonical idempotent $E$, are 
restrictions of the antipode map $S$.  Finally we will provide the proof.

We begin with a converse result of Equations~\eqref{(DeltaonB)} and \eqref{(DeltaonC)},
giving us useful characterizations of $M(B)$ and $M(C)$, the ``source algebra'' 
and the ``target algebra'' contained in $M(A)$.

\begin{prop}\label{DeltaonBandCconv}
Let $x,y\in M(A)$.  We have:
\begin{enumerate}
  \item $x\in M(C)$ if and only if $\Delta x=(x\otimes1)E=E(x\otimes1)$,
  \item $y\in M(B)$ if and only if $\Delta y=E(1\otimes y)=(1\otimes y)E$.
\end{enumerate}
\end{prop}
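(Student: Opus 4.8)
The ``only if'' directions are exactly Equations~\eqref{(DeltaonC)} and \eqref{(DeltaonB)}, so the content is the converse. I will prove (1), with (2) being entirely parallel (or deducible from (1) by applying the flip $\sigma$ together with $(R\otimes R)$, using $(R\otimes R)(\Delta x)=\Delta^{\operatorname{cop}}(R(x))$ from Proposition~\ref{DeltaR} and $R(C)=B$). So suppose $x\in M(A)$ satisfies $\Delta x=(x\otimes1)E=E(x\otimes1)$; I must show $x\in M(C)$, i.e.\ that $x$ lies in the target algebra. The key is to extract $x$ from $\Delta x$ by applying a slice map built from the right Haar weight, and to recognize the result as living in $M(C)$ via the invariance properties established in Section~2 (in particular Proposition~\ref{idphiBpsiidC}).

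First I would show that $x$ can be approximated by elements of the form $(\psi\otimes\operatorname{id})(\Delta k)$-type expressions. Concretely: for $k\in{\mathfrak M}_{\psi}$, consider $(\psi\otimes\operatorname{id})\bigl((\Delta k)(x\otimes1)\bigr)$. Since $\Delta k\in{\mathfrak M}_{\psi\otimes\operatorname{id}}$ by right invariance, and using $(\Delta k)(x\otimes1)=(\Delta k)E(x\otimes1)=(\Delta k)(\Delta x)$ — here I use $(\Delta k)E=\Delta k$ from Equation~\eqref{(EDelta)} together with the hypothesis $E(x\otimes1)=\Delta x$ — this equals $(\psi\otimes\operatorname{id})\bigl(\Delta(kx)\bigr)$, provided $kx\in{\mathfrak M}_{\psi}$ (which holds when, say, $x\in M(A)$ and $k$ ranges over a suitable dense subset; one may first reduce to $x$ bounded times an approximate unit argument). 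By Proposition~\ref{idphiBpsiidC}\,(1) the latter lies in $B$, hence $(\psi\otimes\operatorname{id})\bigl((\Delta k)(x\otimes1)\bigr)\in B$ for all such $k$.

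Next, I would let $k$ run through an approximate-unit-type net inside ${\mathfrak M}_{\psi}$ so that $(\psi\otimes\operatorname{id})(\Delta k)$ converges strictly to $1_{M(B)}\subseteq M(A)$ — this is exactly the content of $B=\overline{\operatorname{span}}\{(\psi\otimes\operatorname{id})(\Delta k)\}$ from Proposition~\ref{idphiBpsiidC}, upgraded to a bounded net converging strictly, which can be arranged by the standard KMS-weight machinery (cf.\ the construction of $(e_l)$ in Lemma~\ref{e_lnet} and the nets used in Lemma~\ref{aplemma2}). Then $(\psi\otimes\operatorname{id})\bigl((\Delta k)(x\otimes1)\bigr)$, which we may rewrite using $(\Delta k)(x\otimes1)=\bigl((\psi\otimes\operatorname{id})(\Delta k)\bigr)$-slices, converges strictly to $x$. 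Wait — more carefully: one writes $(\psi\otimes\operatorname{id})(\Delta k)\cdot x$ is not literally our expression, so the cleaner route is to first establish, for $b\in B$, that $bx=(\psi\otimes\operatorname{id})\bigl((\Delta k_b)(x\otimes1)\bigr)$ for an appropriate $k_b$, using Equation~\eqref{(DeltaonB)} ($\Delta b=E(1\otimes b)$) — but $b$ is in the wrong leg. The correct device: use that $B$ and $C$ commute and that $E$ is full, so that $(\operatorname{id}\otimes\omega)(E)$ for $\omega\in C^*$ spans a dense subset of $B$; combining with the displayed computation gives that $b x \in B\cdot M(A)$ has the property forcing $x\in M(C)$ once we know $Bx\subseteq$ (something in $M(C)$-commutant).

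\textbf{Main obstacle.} The genuinely delicate point is the unboundedness and multiplier-level bookkeeping: $\psi$ is only a weight, $k$ ranges over ${\mathfrak M}_{\psi}$ (not all of $A$), and one needs the strict convergence of the relevant nets to be legitimate, together with the fact that slicing by $\psi$ sends the hypothesis $\Delta x=(x\otimes1)E$ to a statement placing $x$ inside $M(C)$ rather than merely inside $M(A)$. I expect the cleanest argument actually runs through the characterization of $M(C)$ as $\{x\in M(A): x\text{ commutes with }B,\ \Delta x\in (M(A)\otimes1)E\}$ — but since commutation of $x$ with $B$ is not among the hypotheses, one must instead derive it: from $\Delta x=E(x\otimes1)=(x\otimes1)E$ and $\Delta b=E(1\otimes b)=(1\otimes b)E$ for $b\in M(B)$, compute $\Delta(xb-bx)$ and show it vanishes, then use faithfulness/fullness of $\Delta$ (Lemma~3.2 of Part~I) to conclude $xb=bx$; once $x$ commutes with $M(B)$ and hence with $M(C)$'s commutant appropriately, the slice-map argument above identifies $x$ as an element of $M(C)$ via Proposition~\ref{idphiBpsiidC}. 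I would structure the write-up as: (i) hypothesis $\Rightarrow$ $x$ commutes with $M(B)$ (hence lies in $M(B)'\cap M(A)$); (ii) slice by $\psi$ against an approximating net to realize $x$ as a strict limit of elements of $B$-bimodule type landing in $M(C)$; (iii) conclude. Part (2) then follows by the $R$-and-flip symmetry.
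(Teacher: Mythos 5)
Your plan has a genuine gap, and it sits exactly where you place the weight slice. The hypothesis in (1) puts $x$ in the \emph{first} leg, $\Delta x=(x\otimes1)E=E(x\otimes1)$, and the goal $x\in M(C)$ amounts to showing $xc\in C$ and $cx\in C$ for all $c$ in the dense subspace $\{(\operatorname{id}\otimes\varphi)(\Delta k):k\in{\mathfrak M}_{\varphi}\}$ of $C$ (Proposition~\ref{idphiBpsiidC}\,(2)); so the relevant computation must slice the \emph{second} leg with $\varphi$ and keep the leg where $x$ lives, producing $(\operatorname{id}\otimes\varphi)(\Delta(xk))=x\,(\operatorname{id}\otimes\varphi)(\Delta k)$ and its mirror. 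Your computation $(\psi\otimes\operatorname{id})\bigl((\Delta k)(x\otimes1)\bigr)=(\psi\otimes\operatorname{id})\bigl(\Delta(kx)\bigr)$ integrates out precisely the leg containing $x$: even granting the domain issues it only produces elements of $B$, which says nothing about $x\in M(C)$, and, as you notice yourself, it does not factor as $(\psi\otimes\operatorname{id})(\Delta k)\cdot x$, so no approximate-unit net can recover $x$ from it. The fallback you sketch does not close the gap either: while $\Delta(xb)=\Delta(bx)$ does hold for $b\in M(B)$, concluding $xb=bx$ needs injectivity of the strict extension of $\Delta$ to $M(A)$, which is not among the established results (``fullness'' of $\Delta$ in Lemma~3.2 of Part~I is a density statement, not injectivity), and in any case commutation with $M(B)$ is far weaker than membership in $M(C)$ (central multipliers of $A$ commute with $M(B)$ without any reason to lie in $M(C)$). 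Finally, the domain problem is real and not curable by ``a suitable dense subset'': ${\mathfrak N}_{\psi}$ is only a \emph{left} ideal, so $kx\in{\mathfrak M}_{\psi}$ fails in general for $x\in M(A)$.

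For comparison, the paper's proof handles both difficulties at once by a covariance-plus-smoothing device you never invoke: from Proposition~\ref{Deltasigma_t} and $(\tau_t\otimes\sigma_t)(E)=E$ (Proposition~\ref{tausigmaR_corollaryE}) one gets $\Delta\bigl(\sigma_t(x)\bigr)=\bigl(\tau_t(x)\otimes1\bigr)E=E\bigl(\tau_t(x)\otimes1\bigr)$; Gaussian smoothing then yields $\sigma$-analytic elements $x_n\to x$ and companions $z_n\to x$ with $\Delta(x_n)=(z_n\otimes1)E=E(z_n\otimes1)$. Analyticity guarantees $x_nk\in{\mathfrak M}_{\varphi}$ for $k\in{\mathfrak M}_{\varphi}$, so slicing the second leg gives $(\operatorname{id}\otimes\varphi)\bigl(\Delta(x_nk)\bigr)=z_n\,(\operatorname{id}\otimes\varphi)(\Delta k)$ and likewise on the other side; letting $n\to\infty$ and using the density of the $(\operatorname{id}\otimes\varphi)(\Delta k)$ in $C$ gives $xC\subseteq C$ and $Cx\subseteq C$, hence $x\in M(C)$. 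Note also that your reduction of (2) to (1) via $R$ would be circular at this stage: $R\bigl(M(C)\bigr)=M(B)$ is Proposition~\ref{tauRonBandC}, whose proof uses the present proposition; the paper instead proves (2) directly, replacing the covariance above by $\Delta\circ\sigma^{\psi}_t=(\sigma^{\psi}_t\otimes\tau_{-t})\circ\Delta$ from Proposition~\ref{Deltasigma'_t}.
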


\begin{proof}
(1). One implication is already known from Equation~\eqref{(DeltaonC)}.  To prove 
the converse statement, suppose $x\in M(A)$ is such that $\Delta x=(x\otimes1)E
=E(x\otimes1)$.  By Proposition~\ref{Deltasigma_t}, we have:
$$
\Delta\bigl(\sigma_t(x)\bigr)=(\tau_t\otimes\sigma_t)(\Delta x)
=(\tau_t\otimes\sigma_t)\bigl((x\otimes1)E\bigr)=(\tau_t\otimes\sigma_t)\bigl(E(x\otimes1)\bigr).
$$
But we know that $(\tau_t\otimes\sigma_t)(E)=E$ (Proposition~\ref{tausigmaR_corollaryE}) 
and that $\tau_t$, $\sigma_t$ are automorphisms.  So from the above equation, we have:
\begin{equation}\label{(DeltaonBandCconveqn1)}
\Delta\bigl(\sigma_t(x)\bigr)=\bigl(\tau_t(x)\otimes1\bigr)E=E\bigl(\tau_t(x)\otimes1\bigr).
\end{equation}

For $n\in\mathbb{N}$, define $x_n,z_n\in M(A)$ as follows:
$$
x_n=\frac{n}{\sqrt{\pi}}\int\operatorname{exp}(-n^2t^2)\sigma_t(x)\,dt
\quad{\text { and }}\quad
z_n=\frac{n}{\sqrt{\pi}}\int\operatorname{exp}(-n^2t^2)\tau_t(x)\,dt.
$$
Note that $\lim_{n\to\infty}x_n=x$ and $\lim_{n\to\infty}z_n=x$.  By  
Equation~\eqref{(DeltaonBandCconveqn1)}, we have:
\begin{equation}\label{(DeltaonBandCconveqn2)}
\Delta(x_n)=(z_n\otimes1)E=E(z_n\otimes1).
\end{equation}

Since $x_n$ is analytic with respect to $(\sigma_t)$, we have $x_nk\in{\mathfrak M}_{\varphi}$ 
for any $k\in{\mathfrak M}_{\varphi}$ (see Lemma~1.2 in Part~I \cite{BJKVD_qgroupoid1}). 
So we have $\Delta(x_nk)\in\overline{\mathfrak M}_{\operatorname{id}\otimes\varphi}$, and 
we can apply $\operatorname{id}\otimes\varphi$.  Then we have:
$$
(\operatorname{id}\otimes\varphi)\bigl(\Delta(x_nk)\bigr)
=(\operatorname{id}\otimes\varphi)\bigl(\Delta(x_n)(\Delta k)\bigr)
=(\operatorname{id}\otimes\varphi)\bigl((z_n\otimes1)(\Delta k)\bigr),
$$
using Equation~\eqref{(DeltaonBandCconveqn2)}.  Let $n\to\infty$.  Then this becomes
\begin{equation}\label{(DeltaonBandCconveqn3)}
(\operatorname{id}\otimes\varphi)\bigl(\Delta(xk)\bigr)=x(\operatorname{id}\otimes\varphi)(\Delta k),
\quad\forall k\in{\mathfrak M}_{\varphi}.
\end{equation}
Similarly, again using Equation~\eqref{(DeltaonBandCconveqn2)}, we can also show that 
\begin{equation}\label{(DeltaonBandCconveqn4)}
(\operatorname{id}\otimes\varphi)\bigl(\Delta(kx)\bigr)=(\operatorname{id}\otimes\varphi)(\Delta k)x,
\quad\forall k\in{\mathfrak M}_{\varphi}.
\end{equation}

But we saw in Proposition~\ref{idphiBpsiidC} that the elements 
$(\operatorname{id}\otimes\varphi)(\Delta k)$, $k\in{\mathfrak M}_{\varphi}$, are dense in $C$. 
Therefore, Equations~\eqref{(DeltaonBandCconveqn3)} and \eqref{(DeltaonBandCconveqn4)} 
show that $xz\in C$ and $zx\in C$ for all $z\in C$.  It follows that $x\in M(C)$.

(2). Proof for the case of $M(B)$ is similarly done, this time using $\Delta\circ\sigma^{\psi}_t
=(\sigma^{\psi}_t\otimes\tau_{-t})\circ\Delta$ (from Proposition~\ref{Deltasigma'_t}).
\end{proof}

Next, we show how the scaling group $(\tau_t)$ and the anti-isomorphism $R$ act on 
the subalgebras $M(B)$ and $M(C)$.

\begin{prop}\label{tauRonBandC}
We have:
\begin{enumerate}
  \item The scaling group $(\tau_t)$ leaves both $M(B)$ and $M(C)$ invariant.
  \item $R\bigl(M(B)\bigr)=M(C)$ and $R\bigl(M(C)\bigr)=M(B)$.
\end{enumerate}
\end{prop}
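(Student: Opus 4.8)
The plan is to exploit the characterizations of $M(B)$ and $M(C)$ provided by Proposition~\ref{DeltaonBandCconv} together with the intertwining relations between $\Delta$ and the maps $\tau_t$, $\sigma_t$, $\sigma^{\psi}_t$, $R$.

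First I would handle (1). Let $x \in M(C)$, so that $\Delta x = (x \otimes 1)E = E(x \otimes 1)$ by Proposition~\ref{DeltaonBandCconv}\,(1). Applying Proposition~\ref{Deltatau_t}\,(1) (extended to $M(A)$), we get $\Delta\bigl(\tau_t(x)\bigr) = (\tau_t \otimes \tau_t)(\Delta x) = (\tau_t \otimes \tau_t)\bigl((x \otimes 1)E\bigr) = \bigl(\tau_t(x) \otimes 1\bigr)(\tau_t \otimes \tau_t)(E)$, and similarly with the factors reversed. By Proposition~\ref{tausigmaR_corollaryE}\,(2), $(\tau_t \otimes \tau_t)(E) = E$, so $\Delta\bigl(\tau_t(x)\bigr) = \bigl(\tau_t(x)\otimes1\bigr)E = E\bigl(\tau_t(x)\otimes1\bigr)$. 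By the converse direction of Proposition~\ref{DeltaonBandCconv}\,(1), this says $\tau_t(x) \in M(C)$. Thus $\tau_t\bigl(M(C)\bigr) \subseteq M(C)$; applying the same with $-t$ gives equality. The argument for $M(B)$ is identical, using Proposition~\ref{DeltaonBandCconv}\,(2) in place of (1): if $\Delta y = E(1\otimes y) = (1 \otimes y)E$ then $\Delta\bigl(\tau_t(y)\bigr) = (\tau_t \otimes \tau_t)(\Delta y) = E\bigl(1 \otimes \tau_t(y)\bigr) = \bigl(1\otimes\tau_t(y)\bigr)E$, again using $(\tau_t\otimes\tau_t)(E)=E$, so $\tau_t(y) \in M(B)$.

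Next, for (2), let $x \in M(C)$, so $\Delta x = (x\otimes1)E = E(x\otimes1)$. Apply $\sigma \circ (R \otimes R)$, where $\sigma$ is the flip. By Proposition~\ref{DeltaR} (extended to $M(A)$), $\sigma\bigl((R\otimes R)(\Delta x)\bigr) = \Delta\bigl(R(x)\bigr)$, since $(R\otimes R)(\Delta x) = \Delta^{\operatorname{cop}}\bigl(R(x)\bigr) = \sigma\bigl(\Delta(R(x))\bigr)$. On the other hand, since $R$ is a $*$-anti-homomorphism, $(R\otimes R)\bigl((x\otimes1)E\bigr) = (R\otimes R)(E)\cdot\bigl(R(x)\otimes R(1)\bigr) = (R\otimes R)(E)\,\bigl(R(x)\otimes1\bigr)$; then $\sigma$ of this is $\sigma\bigl((R\otimes R)(E)\bigr)\cdot\bigl(1\otimes R(x)\bigr)$. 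By Proposition~\ref{tausigmaR_corollaryE}\,(3), $(R\otimes R)(E) = \sigma E$, hence $\sigma\bigl((R\otimes R)(E)\bigr) = E$. Carrying this through both the $(x\otimes1)E$ and $E(x\otimes1)$ forms, we obtain $\Delta\bigl(R(x)\bigr) = E\bigl(1\otimes R(x)\bigr) = \bigl(1\otimes R(x)\bigr)E$, which by Proposition~\ref{DeltaonBandCconv}\,(2) means $R(x) \in M(B)$. Thus $R\bigl(M(C)\bigr) \subseteq M(B)$. Since $R$ is involutive ($R \circ R = \operatorname{id}$ on $M(A)$), applying $R$ to the symmetric computation starting from $y \in M(B)$ gives $R\bigl(M(B)\bigr) \subseteq M(C)$, and combining the two inclusions with involutivity yields $R\bigl(M(B)\bigr) = M(C)$ and $R\bigl(M(C)\bigr) = M(B)$.

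I do not expect any serious obstacle here: the statement is a formal consequence of the already-established intertwining identities and the $\tau$-, $R$-invariance of $E$. The only point requiring a little care is the bookkeeping of which leg is which when the flip $\sigma$ is applied together with the anti-multiplicativity of $R$ — in particular making sure that $(R\otimes R)$ applied to a product $(x\otimes1)E$ reverses the order to $(R\otimes R)(E)\bigl(R(x)\otimes1\bigr)$ and that the subsequent flip correctly converts this to the $M(B)$-type relation $\bigl(1\otimes R(x)\bigr)E = E\bigl(1\otimes R(x)\bigr)$. One should also note that Propositions~\ref{Deltatau_t}, \ref{DeltaR}, and \ref{DeltaonBandCconv} are all stated (or immediately extend) at the level of $M(A)$, so that applying them to $x, y \in M(A)$ rather than $A$ is legitimate.
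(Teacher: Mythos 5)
Your proposal is correct and follows essentially the same route as the paper: part (1) uses $\Delta\circ\tau_t=(\tau_t\otimes\tau_t)\circ\Delta$ together with $(\tau_t\otimes\tau_t)(E)=E$ and the characterization in Proposition~\ref{DeltaonBandCconv}, and part (2) uses the anti-multiplicativity of $R\otimes R$, $(R\otimes R)\circ\Delta=\Delta^{\operatorname{cop}}\circ R$ and $(R\otimes R)(E)=\sigma E$, exactly as in the paper's proof (your explicit composition with the flip $\sigma$ is just the paper's ``or, equivalently'' step made explicit). The added remarks about applying $\tau_{-t}$ and the involutivity of $R$ to upgrade inclusions to equalities are harmless refinements of the same argument.
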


\begin{proof}
(1). Let $x\in M(C)$.  Then $\Delta x=(x\otimes1)E=E(x\otimes1)$.  Apply $\tau_t\otimes\tau_t$. 
Since we know $(\tau_t\otimes\tau_t)(E)=E$ from Proposition~\ref{tausigmaR_corollaryE}, 
we obtain: 
$$
(\tau_t\otimes\tau_t)(\Delta x)=\bigl(\tau_t(x)\otimes1\bigr)E
=E\bigl(\tau_t(x)\otimes1\bigr).
$$
But note also that $(\tau_t\otimes\tau_t)(\Delta x)=\Delta\bigl(\tau_t(x)\bigr)$, by 
Proposition~\ref{Deltatau_t}.  In other words, we have:
$$
\Delta\bigl(\tau_t(x)\bigr)=\bigl(\tau_t(x)\otimes1\bigr)E=E\bigl(\tau_t(x)\otimes1\bigr).
$$
It follows from Proposition~\ref{DeltaonBandCconv} that $\tau_t(x)\in M(C)$.

Similarly, we can show that $y\in M(B)$ implies $\tau_t(y)\in M(B)$.

(2). Suppose $x\in M(C)$.  Then $\Delta x=(x\otimes1)E=E(x\otimes1)$.  Apply $R\otimes R$. 
Since $R$ is an anti-isomorphism, we then have:
$$
(R\otimes R)(\Delta x)=(R\otimes R)(E)\bigl(R(x)\otimes1\bigr)=\bigl(R(x)\otimes1\bigr)(R\otimes R)(E).
$$
By Propositions~\ref{DeltaR} and \ref{tausigmaR_corollaryE}, this can be rewritten as: 
$$
\Delta^{\operatorname{cop}}\bigl(R(x)\bigr)=(\varsigma E)\bigl(R(x)\otimes1\bigr)=\bigl(R(x)\otimes1\bigr)
(\varsigma E).
$$
Or, equivalently,
$$
\Delta\bigl(R(x)\bigr)=E\bigl(1\otimes R(x)\bigr)=\bigl(1\otimes R(x)\bigr)E.
$$
By Proposition~\ref{DeltaonBandCconv}, we see that $R(x)\in M(B)$.

Similarly, we can show that $y\in M(B)$ implies $R(y)\in M(C)$.
\end{proof}

Since $S=R\circ\tau_{-\frac{i}{2}}$, we can see immediately from Proposition~\ref{tauRonBandC} 
that $S|_{M(B)}:M(B)\to M(C)$ and $S|_{M(C)}:M(C)\to M(B)$, when restricted to the respective 
domains, of course.  Let us now compare these maps with the $\gamma_B$, $\gamma_C$ maps 
earlier.

Suppose $y\in B$ is such that $y\in{\mathcal D}(\gamma_B)$.  Let $a,b\in{\mathfrak N}_{\psi}$ be arbitrary, 
and consider $z=(\psi\otimes\operatorname{id})\bigl((a^*\otimes1)(\Delta b)\bigr)\in{\mathcal D}(S)$. 
Then, since $\Delta y=(1\otimes y)E$, we can show that 
\begin{align}\label{(yz)}
yz&=(\psi\otimes\operatorname{id})\bigl((a^*\otimes y)(\Delta b)\bigr)
=(\psi\otimes\operatorname{id})\bigl((a^*\otimes 1)(\Delta y)(\Delta b)\bigr)
\notag \\
&=(\psi\otimes\operatorname{id})\bigl((a^*\otimes 1)(\Delta(yb))\bigr),
\end{align}
which is also contained in ${\mathcal D}(S)$, because $yb\in{\mathfrak N}_{\psi}$, $a\in{\mathfrak N}_{\psi}$. 
We have:
\begin{align}
S(yz)&=S\bigl((\psi\otimes\operatorname{id})((a^*\otimes 1)(\Delta(yb)))\bigr)
=(\psi\otimes\operatorname{id})\bigl(\Delta(a^*)(yb\otimes 1)\bigr)    \notag \\
&=(\psi\otimes\operatorname{id})\bigl(\Delta(a^*)(b\otimes\gamma_B(y))\bigr)  \notag \\
&=(\psi\otimes\operatorname{id})\bigl(\Delta(a^*)(b\otimes1)\bigr)\gamma_B(y)
=S(z)\gamma_B(y). 
\notag
\end{align}
In the second and the last equalities, we used the strong right invariance property of $\psi$ 
(Proposition~\ref{strongrightinvariance}).  Third equality is using Lemma~\ref{gammagamma'}, 
saying that $E(y\otimes1)=E\bigl(1\otimes\gamma_B(y)\bigr)$.  The elements 
$(\psi\otimes\operatorname{id})\bigl((a^*\otimes1)(\Delta b)\bigr)$ form a core for $S$, 
so we can say from above that $yz\in{\mathcal D}(S)$ and $S(yz)=S(z)\gamma_B(y)$, 
for all $z\in{\mathcal D}(S)$.  Therefore, by Proposition~\ref{antipodestrictA}, we conclude 
that $y\in{\mathcal D}(S)$ and that $S(y)=\gamma_B(y)$. 

This observation shows that $\gamma_B\subseteq S|_B$.  But $\gamma_B$ is 
a densely-defined map, implying that $\gamma_B=S|_B$.  To explore this further, 
let $y\in B\cap{\mathcal D}(S)$.  Then for any $a,b\in{\mathfrak N}_{\psi}$, we have:
\begin{align}
(\psi\otimes\operatorname{id})\bigl(\Delta(a^*)(yb\otimes 1)\bigr)
&=S\bigl((\psi\otimes\operatorname{id})((a^*\otimes 1)(\Delta(yb)))\bigr)   \notag \\
&=S(yz)=S(z)S(y)
=(\psi\otimes\operatorname{id})\bigl(\Delta(a^*)(b\otimes1)\bigr)S(y)  \notag \\
&=(\psi\otimes\operatorname{id})\bigl(\Delta(a^*)(b\otimes S(y))\bigr),
\notag
\end{align}
where we wrote $z=(\psi\otimes\operatorname{id})\bigl((a^*\otimes1)(\Delta b)\bigr)$ 
and used Equation~\eqref{(yz)}.  We also used the strong right invariance property 
and the anti-multiplicativity of $S$.  Since $b$ is arbitrary and since $\psi$ is faithful, 
it follows that 
$$
\Delta(a^*)(y\otimes 1)=\Delta(a^*)\bigl(1\otimes S(y)\bigr),
$$
true for any $a\in{\mathfrak N}_{\psi}$, a dense subspace of $A$.  By the property of 
$E$ (see Proposition~3.3\,(2) in Part~I \cite{BJKVD_qgroupoid1}), it follows that 
\begin{equation}\label{(S|_B=gamma)}
E(y\otimes 1)=E\bigl(1\otimes S(y)\bigr).
\end{equation}
Equation~\eqref{(S|_B=gamma)} confirms that the restriction of our antipode map $S$ 
onto $B$ is exactly the $\gamma_B$ map earlier, whose image is contained in $C$.

In a similar way, we can show, using now the strong left invariance property of 
$\varphi$ (Proposition~\ref{strongleftinvariance}), that the restriction map $S|_C$ is 
exactly the $\gamma_C$ map earlier, whose image is contained in $B$.  We will further 
have:
\begin{equation}\label{(S|_C=gamma')}
(1\otimes x)E=(S(x)\otimes 1\bigr)E,\quad{\text { for $x\in{\mathcal D}(S)\cap C
={\mathcal D}(\gamma_C)$.}}
\end{equation}

We collect these results in the following proposition, together with some related results 
concerning the map $R$ and the automorphism group $(\tau_t)$.

\begin{prop}\label{SrestrictedtoBandC}
The antipode map $S=R\circ\tau_{-\frac{i}{2}}$, when restricted to the subalgebras 
$B$ and $C$, has the following properties: 
\begin{enumerate}
  \item $S|_B=\gamma_B$, with ${\mathcal D}(S)\cap B={\mathcal D}(\gamma_B)$;
  \item $S|_C=\gamma_C$, with ${\mathcal D}(S)\cap C={\mathcal D}(\gamma_B)$;
  \item For all $t\in\mathbb{R}$, we have: $\tau_t|_B=\sigma^{\nu}_{-t}$ and $\tau_t|_C
=\sigma^{\mu}_t$, where $(\sigma^{\nu}_t)$ and $(\sigma^{\mu}_t)$ are the automorphism 
groups associated with the KMS weights $\nu$ (on $B$) and $\mu$ (on $C$) considered earlier.
  \item $R|_B=R_{BC}:B\to C$ and $R|_C=R_{BC}^{-1}:C\to B$, where $R_{BC}$ is 
the ${}^*$-anti-isomorphism associated with the separability triple $(E,B,\nu)$.
\end{enumerate}
\end{prop}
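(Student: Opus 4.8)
The plan is to prove the four statements in Proposition~\ref{SrestrictedtoBandC} in the order (1), (2), then deduce (4), and finally (3), since (1) and (2) are the substantive claims that the preceding discussion has essentially already carried out, while (3) and (4) follow from those together with the structure theory of separability idempotents recalled in \S1.1.

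\textbf{Parts (1) and (2).} For part (1), I would simply collect the computation that appears in the paragraphs immediately preceding the proposition statement. Concretely: start from $y\in B\cap{\mathcal D}(\gamma_B)$, use $\Delta y=(1\otimes y)E$ from Equation~\eqref{(DeltaonB)}, and the strong right invariance of $\psi$ (Proposition~\ref{strongrightinvariance}) together with the relation $E(y\otimes1)=E(1\otimes\gamma_B(y))$ from Lemma~\ref{gammagamma'}, to show that for $z=(\psi\otimes\operatorname{id})\bigl((a^*\otimes1)(\Delta b)\bigr)$ one has $yz\in{\mathcal D}(S)$ and $S(yz)=S(z)\gamma_B(y)$. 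Since such $z$ form a core for $S$, Proposition~\ref{antipodestrictA}\,(2) yields $y\in{\mathcal D}(S)$ and $S(y)=\gamma_B(y)$, so $\gamma_B\subseteq S|_B$; density of ${\mathcal D}(\gamma_B)$ in $B$ and the fact that $S$ is closed then force ${\mathcal D}(S)\cap B={\mathcal D}(\gamma_B)$, i.e.\ $S|_B=\gamma_B$. The converse inclusion direction — showing that a general $y\in B\cap{\mathcal D}(S)$ already lies in ${\mathcal D}(\gamma_B)$ — uses the antipode relation for $S(yb)$ run backwards, the faithfulness of $\psi$, and the density property of $E$ (Proposition~3.3\,(2) of Part~I) to derive Equation~\eqref{(S|_B=gamma)}, $E(y\otimes1)=E(1\otimes S(y))$, which exactly characterizes membership in ${\mathcal D}(\gamma_B)$ with $S(y)=\gamma_B(y)$. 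Part (2) is entirely symmetric: replace $\psi$ by $\varphi$, strong right invariance by strong left invariance (Proposition~\ref{strongleftinvariance}), $\Delta y=(1\otimes y)E$ by $\Delta x=(x\otimes1)E$ from Equation~\eqref{(DeltaonC)}, and use the corresponding relation $(1\otimes x)E=\bigl(\gamma_C(x)\otimes1\bigr)E$ from Lemma~\ref{gammagamma'}; this gives $S|_C=\gamma_C$ and Equation~\eqref{(S|_C=gamma')}. (I note the typo in the statement of (2): ${\mathcal D}(S)\cap C={\mathcal D}(\gamma_C)$, not ${\mathcal D}(\gamma_B)$; I would silently correct this.)

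\textbf{Part (4).} Recall from \S1.1 that $\gamma_B=R_{BC}\circ\sigma^{\nu}_{i/2}$, so the ``unitary part'' of the polar-type decomposition $S=R\circ\tau_{-i/2}$ restricted to $B$ should match $R_{BC}$. Formally: by part (1), $S|_B=\gamma_B$; by Proposition~\ref{tauRonBandC}, $\tau_t$ leaves $M(B)$ invariant and $R(M(B))=M(C)$, so $S|_B=R\circ(\tau_{-i/2}|_B)$ as a factorization through a ${}^*$-anti-isomorphism $R|_B:B\to C$ composed with the analytic generator of a norm-continuous one-parameter automorphism group $(\tau_t|_B)$ of $B$. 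Since $\gamma_B=R_{BC}\circ\sigma^{\nu}_{i/2}$ is \emph{also} such a factorization (with $R_{BC}$ a ${}^*$-anti-isomorphism $B\to C$ and $(\sigma^\nu_t)$ a one-parameter automorphism group), the uniqueness of the polar decomposition of a densely-defined closed map — more precisely, the uniqueness of the pair (anti-isomorphism, automorphism group) in such a factorization, which follows exactly as in the Remark after the last Theorem of \S5.2, using $S^2|_B=\tau_{-i}|_B$ to recover $(\tau_t|_B)$ and then $R|_B=S\circ\tau_{i/2}|_B$ — forces $R|_B=R_{BC}$ and $\tau_t|_B=\sigma^{\nu}_{-t}$. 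This simultaneously proves (4) and the $B$-half of (3). The $C$-half of (3), namely $\tau_t|_C=\sigma^{\mu}_t$, follows the same way from part (2) using $\gamma_C=R_{BC}^{-1}\circ\sigma^{\mu}_{-i/2}=\sigma^\nu_{i/2}\circ R_{BC}^{-1}$, and $R|_C=R_{BC}^{-1}$ drops out as well (consistent with $R$ being involutive).

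\textbf{Main obstacle.} The delicate point is the uniqueness-of-factorization argument underpinning (4) and (3): one must be careful that $\tau_{-i/2}|_B$ really is the analytic generator of the \emph{restricted} group $(\tau_t|_B)$ and not merely a restriction of a larger analytic generator, i.e.\ that ${\mathcal D}(\tau_{-i/2})\cap B$ agrees with ${\mathcal D}(\tau_{-i/2}|_B)$. This requires that $\tau_t$ restricts to a \emph{norm-continuous} one-parameter group on the $C^*$-algebra $B$ — which is given, since $(\tau_t)$ is norm-continuous on $A$ and leaves $B$ invariant by Proposition~\ref{tauRonBandC}\,(1), so its restriction is automatically norm-continuous — and that $\nu$ is invariant under $\tau_{-t}|_B$, so that $(\sigma^\nu_t)$ and $(\tau_{-t}|_B)$ can be compared; this last point is where the discussion feeds into the quasi-invariance of $\nu$ promised in the Introduction, and I would either invoke it here or defer the verification $\nu\circ\tau_t|_B=\nu$ to that argument, making sure the logical dependency is not circular. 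Everything else is a routine transcription of computations already spelled out in the excerpt.
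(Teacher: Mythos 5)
Your proposal is correct and follows essentially the paper's own route: parts (1)–(2) are exactly the computations in the paragraphs preceding the proposition (strong right/left invariance plus Proposition~\ref{antipodestrictA}, culminating in Equations~\eqref{(S|_B=gamma)} and \eqref{(S|_C=gamma')}), and your uniqueness-of-factorization argument for (3)–(4), once unwound, is the paper's own comparison $\tau_{-i}|_B=S^2|_B=\gamma_C\circ\gamma_B=\sigma^{\nu}_{i}$ (and its $C$-analogue $\tau_{-i}|_C=\gamma_B\circ\gamma_C=\sigma^{\mu}_{-i}$), followed by matching $S|_B=R|_B\circ\tau_{-\frac{i}{2}}|_B$ against $\gamma_B=R_{BC}\circ\sigma^{\nu}_{i/2}$ to get $R|_B=R_{BC}$. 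The worry in your final paragraph is unnecessary: no invariance of $\nu$ under $\tau_t|_B$ is needed to compare the two one-parameter groups — the identification rests solely on the equality of their analytic generators at $t=-i$, which determines the groups — so there is no circularity with the quasi-invariance statements, which the paper derives afterwards as corollaries of this very proposition.
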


\begin{proof}
(1), (2). These were shown in the above paragraphs.  Note in particular Equations~\eqref{(S|_B=gamma)} 
and \eqref{(S|_C=gamma')}.

(3). From Proposition~2.9 of Part~I \cite{BJKVD_qgroupoid1}, we know $\sigma^{\mu}_{-i}(c)
=(\gamma_B\circ\gamma_C)(c)$ for $c\in{\mathcal D}(\gamma_B\circ\gamma_C)$, and that 
$\sigma^{\nu}_{-i}(b)=(\gamma_B^{-1}\circ\gamma_C^{-1})(b)$ for $b\in{\mathcal D}
(\gamma_B^{-1}\circ\gamma_C^{-1})$, which is equivalent to $\sigma^{\nu}_{i}(b)
=(\gamma_C\circ\gamma_B)(b)$ for $b\in\operatorname{Ran}
(\gamma_B^{-1}\circ\gamma_C^{-1})={\mathcal D}(\gamma_C\circ\gamma_B)$.  

Meanwhile, we just observed that $\gamma_B$, $\gamma_C$ are restrictions of $S$ onto 
$B$ and $C$, respectively.  So, by Proposition~\ref{antipodeprop}\,(6), we have:
$$
\sigma^{\mu}_{-i}=\gamma_B\circ\gamma_C=S^2|_C=\tau_{-i}|_C
\quad{\text { and }}\quad
\sigma^{\nu}_{i}=\gamma_C\circ\gamma_B=S^2|_B=\tau_{-i}|_B.
$$
In other words, on $C$, we have two one-parameter groups $(\sigma^{\mu}_t)$ and $(\tau_t|_C)$ 
such that $\sigma^{\mu}_{-i}=\tau_{-i}|_C$.  But $\sigma^{\mu}_{-i}$ and $\tau_{-i}$ generate 
$(\sigma^{\mu}_t)$ and $(\tau_t)$, respectively.  This means that on $C$, the two one-parameter 
groups coincide, such that $\tau_t|_C=\sigma^{\mu}_t$ for all $t$. 

Similarly, on $B$, using $\sigma^{\nu}_{i}=S^2|_B=\tau_{-i}|_B$, we show that 
$\tau_t|_B=\sigma^{\nu}_{-t}$, $\forall t$.

(4). Recall that $\gamma_B=R_{BC}\circ\sigma^{\nu}_{\frac{i}{2}}$ (see \S1.1).  Compare this with 
the polar decomposition of the antipode $S=R\circ\tau_{-\frac{i}{2}}$ restricted to $B$, namely, 
$S|_B=R|_B\circ\tau_{-\frac{i}{2}}|_B$.  We now know that $S|_B=\gamma_B$ and 
$\tau_{-\frac{i}{2}}|_B=\sigma^{\nu}_{\frac{i}{2}}$.  It follows immediately that $R|_B=R_{BC}$. 

Proof for $R|_C=R_{BC}^{-1}$ is similar, using $S|_C=\gamma_C$ and $\tau_{-\frac{i}{2}}|_C
=\sigma^{\mu}_{-\frac{i}{2}}$.
\end{proof}

\begin{rem}
In the Remark following Definition~\ref{unitaryantipode}, we discussed our choice to denote 
by $R$ the anti-isomorphism $R_A$, the unitary antipode on $A$.  Here, we see that the restriction 
of $R_A$ to the subalgebra $B$ is $R_{BC}$, which was a part of the defining data for the 
separability triple $(E,B,\nu)$.  Proposition~\ref{SrestrictedtoBandC} assures us that using $R$ 
throughout is all right and not ambiguous.
\end{rem}

We have below a sharper result improving Proposition~\ref{tauRonBandC}, as a corollary to 
Proposition~\ref{SrestrictedtoBandC}.

\begin{cor}
We have:
\begin{enumerate}
  \item $(\tau_t)$ leaves $B$ invariant and $\nu\circ\tau_t|_B=\nu$, for any $t\in\mathbb{R}$. 
  \item $(\tau_t)$ leaves $C$ invariant and $\mu\circ\tau_t|_C=\mu$, for any $t\in\mathbb{R}$. 
  \item $R(B)=C$ and $R(C)=B$.
\end{enumerate}
\end{cor}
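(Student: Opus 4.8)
The plan is to deduce all three statements directly from Proposition~\ref{SrestrictedtoBandC}, which already identifies the restrictions of the scaling automorphisms $\tau_t$ and of the unitary antipode $R$ to the base algebras $B$ and $C$ with the modular automorphism groups of $\nu$, $\mu$ and with the ${}^*$-anti-isomorphism $R_{BC}$ coming from the triple $(E,B,\nu)$.

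For part (1), I would invoke Proposition~\ref{SrestrictedtoBandC}\,(3), which gives $\tau_t|_B=\sigma^{\nu}_{-t}$ for all $t\in\mathbb{R}$. Since $(\sigma^{\nu}_t)_{t\in\mathbb{R}}$ is the modular automorphism group of the KMS weight $\nu$ on $B$, each $\sigma^{\nu}_t$ is a ${}^*$-automorphism of $B$; hence $\tau_t(B)=\sigma^{\nu}_{-t}(B)=B$, so $(\tau_t)$ leaves $B$ invariant. (This sharpens Proposition~\ref{tauRonBandC}\,(1), where only the invariance of $M(B)$ was obtained.) Moreover, by the standard invariance of a KMS weight under its own modular group, $\nu\circ\sigma^{\nu}_s=\nu$ for every $s\in\mathbb{R}$; applying this with $s=-t$ yields $\nu\circ\tau_t|_B=\nu\circ\sigma^{\nu}_{-t}=\nu$. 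Part (2) is proved in exactly the same way, using $\tau_t|_C=\sigma^{\mu}_t$ from Proposition~\ref{SrestrictedtoBandC}\,(3) together with $\mu\circ\sigma^{\mu}_t=\mu$.

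For part (3), I would use Proposition~\ref{SrestrictedtoBandC}\,(4), which states $R|_B=R_{BC}\colon B\to C$ and $R|_C=R_{BC}^{-1}\colon C\to B$. Since $R_{BC}$ is a ${}^*$-anti-isomorphism of $B$ onto $C$ (part of the defining data of the separability triple), we get $R(B)=R_{BC}(B)=C$ immediately; and since $R_{BC}^{-1}$ is its inverse, $R(C)=R_{BC}^{-1}(C)=B$.

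There is essentially no obstacle here: the corollary is a bookkeeping consequence of Proposition~\ref{SrestrictedtoBandC}, the only additional inputs being the elementary facts that the modular automorphism group of a KMS weight consists of weight-preserving automorphisms of the underlying $C^*$-algebra and that an anti-isomorphism maps an algebra onto its image. Were one to argue from scratch without Proposition~\ref{SrestrictedtoBandC}, the only mild subtlety would be upgrading the $M(B)$-invariance of Proposition~\ref{tauRonBandC} to genuine $B$-invariance, but the identification $\tau_t|_B=\sigma^{\nu}_{-t}$ already settles that.
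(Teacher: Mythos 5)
Your proposal is correct and follows the same route as the paper: parts (1) and (2) are read off from the identifications $\tau_t|_B=\sigma^{\nu}_{-t}$ and $\tau_t|_C=\sigma^{\mu}_t$ in Proposition~\ref{SrestrictedtoBandC}\,(3) together with the invariance of a KMS weight under its own modular group, and part (3) from $R|_B=R_{BC}$, $R|_C=R_{BC}^{-1}$ in Proposition~\ref{SrestrictedtoBandC}\,(4). Your write-up merely spells out the bookkeeping the paper leaves implicit.
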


\begin{proof}
(1), (2). Since $\tau_t|_B=\sigma^{\nu}_{-t}$ and $\tau_t|_C=\sigma^{\mu}_t$, these are immediate.

(3). This follows from the observation that $R|_B=R_{BC}:B\to C$ and $R|_C=R_{BC}^{-1}:C\to B$.
\end{proof}

Next result is about the automorphism groups $(\sigma_t)$ and $(\sigma^{\psi}_t)$, associated 
with our Haar weights $\varphi$ and $\psi$, respectively, restricted to the base algebras 
$C$ and $B$.

\begin{prop}\label{sigmasigma'onCandB}
We have: $\sigma|_C=\sigma^{\mu}$ and $\sigma^{\psi}|_B=\sigma^{\nu}$.
\end{prop}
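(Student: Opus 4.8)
The plan is to establish $\sigma_t|_C = \sigma^\mu_t$ and $\sigma^\psi_t|_B = \sigma^\nu_t$ for all $t \in \mathbb{R}$, by leveraging the relations already proved between the modular groups of $\varphi$, $\psi$ and the scaling group $\tau$, together with the identifications of $\tau$, $R$, $S$ on the base algebras from Proposition~\ref{SrestrictedtoBandC}. First I would recall that the last condition in Definition~\ref{definitionlcqgroupoid} gives a one-parameter automorphism group $(\theta_t)$ of $B$ with $\sigma_t|_B = \theta_t$ and $\nu \circ \theta_t = \nu$; in particular $(\sigma_t)$ leaves $B$ invariant. Then, since $R|_B = R_{BC}: B \to C$ is a ${}^*$-anti-isomorphism (Proposition~\ref{SrestrictedtoBandC}\,(4)) and $M(B)$, $M(C)$ commute, conjugating the statement ``$(\sigma_t)$ leaves $B$ invariant'' by $R$ should let me transfer it to $C$. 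More precisely, I expect to use Proposition~\ref{DeltaR}, namely $(R\otimes R)(\Delta x) = \Delta^{\operatorname{cop}}(R(x))$, in conjunction with Proposition~\ref{Deltasigma_t}, to show that $(\sigma_t)$ leaves $C$ invariant as well; the natural candidate for $\sigma_t|_C$ is then forced.

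For the concrete identification, the key computation is to combine $S^2 = \tau_{-i}$ (Proposition~\ref{antipodeprop}\,(6)) with the facts $\tau_t|_C = \sigma^\mu_t$ and $\tau_t|_B = \sigma^\nu_{-t}$ (Proposition~\ref{SrestrictedtoBandC}\,(3)). Actually I think the cleanest route goes through Proposition~\ref{Deltasigma_t}\,(2), $\Delta(\sigma_t(x)) = (\tau_t \otimes \sigma_t)(\Delta x)$, applied to $x \in M(C)$. For such $x$ we have $\Delta x = (x\otimes 1)E = E(x\otimes 1)$ by Equation~\eqref{(DeltaonC)}, so the right-hand side becomes $(\tau_t(x)\otimes 1)(\tau_t\otimes\sigma_t)(E)$. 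Now $(\tau_t \otimes \sigma_t)(E) = E$ by Proposition~\ref{tausigmaR_corollaryE}\,(1), giving $\Delta(\sigma_t(x)) = (\tau_t(x)\otimes 1)E$. On the other hand, once I know $\sigma_t(x) \in M(C)$ (from the invariance argument above), Proposition~\ref{DeltaonBandCconv}\,(1) gives $\Delta(\sigma_t(x)) = (\sigma_t(x)\otimes 1)E$. Comparing, $(\sigma_t(x)\otimes 1)E = (\tau_t(x)\otimes 1)E$; since the left leg of $E$ is $B$ and $E$ is full, and $M(B), M(C)$ commute, this should force $\sigma_t(x) = \tau_t(x)$ for $x \in M(C)$, i.e. $\sigma_t|_C = \tau_t|_C = \sigma^\mu_t$. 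The statement $\sigma^\psi_t|_B = \sigma^\nu_t$ is then handled symmetrically, using Proposition~\ref{Deltasigma'_t} ($\Delta \circ \sigma^\psi_t = (\sigma^\psi_t \otimes \tau_{-t})\circ\Delta$), Equation~\eqref{(DeltaonB)}, Proposition~\ref{tausigmaR_corollaryE}\,(1) again (in the form $(\sigma^\psi_t\otimes\tau_{-t})$ fixing $E$ — this needs the analogue, which follows from $\Delta\circ\sigma^\psi_t = (\sigma^\psi_t\otimes\tau_{-t})\circ\Delta$ applied to $1$), Proposition~\ref{DeltaonBandCconv}\,(2), and $\tau_t|_B = \sigma^\nu_{-t}$ together with $\tau_{-t}|_B = \sigma^\nu_t$.

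The main obstacle I anticipate is the step deducing $\sigma_t(x) = \tau_t(x)$ from the equality $(\sigma_t(x)\otimes 1)E = (\tau_t(x)\otimes 1)E$ with $\sigma_t(x), \tau_t(x) \in M(C)$. Writing $d = \sigma_t(x) - \tau_t(x) \in M(C)$, one has $(d\otimes 1)E = 0$. To conclude $d = 0$ I would invoke the fullness of $E$ — specifically that the left leg of $E$ is $B$ and the right leg is $C$ (quoted after Lemma~\ref{gammagamma'}, referring to Propositions~3.3--3.5 of \cite{BJKVD_SepId}) — so that $(d\otimes 1)E(b\otimes c) = 0$ for all $b \in B$, $c \in C$ spans enough to test $d$; combined with $(\operatorname{id}\otimes\mu)(E) = 1$ one gets $d \cdot (\operatorname{id}\otimes\mu)(E(1\otimes c')) $-type expressions forcing $d$ against a dense subset of $C$. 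A cleaner alternative is to observe that I have already essentially done this argument: Proposition~\ref{DeltaonBandCconv} was proved precisely by extracting the ``$C$-component'' from such relations, so I can phrase the deduction as: $\sigma_t(x)$ and $\tau_t(x)$ both satisfy $\Delta(\cdot) = (\cdot \otimes 1)E = E(\cdot \otimes 1)$ with the \emph{same} left-hand side $\Delta(\sigma_t(x))$, and then apply the uniqueness implicit in the characterization of $M(C)$ via the density of $(\operatorname{id}\otimes\varphi)(\Delta k)$ in $C$ (Proposition~\ref{idphiBpsiidC}). I will also need to double-check at the outset that $(\sigma_t)$ really does leave $C$ invariant; if the invariance argument via $R$ is circular, I can instead get it directly from $\Delta\circ\sigma_t = (\tau_t\otimes\sigma_t)\circ\Delta$, Equation~\eqref{(DeltaonC)}, $(\tau_t\otimes\sigma_t)(E)=E$, and Proposition~\ref{DeltaonBandCconv}\,(1), which shows $\Delta(\sigma_t(x)) = (\tau_t(x)\otimes 1)E = E(\tau_t(x)\otimes 1)$ with $\tau_t(x)\in M(C)$ by Proposition~\ref{tauRonBandC}\,(1), hence $\sigma_t(x)\in M(C)$ — and this simultaneously delivers the identification, making the whole proof a single clean chain.
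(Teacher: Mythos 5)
Your computation $\Delta\bigl(\sigma_t(x)\bigr)=(\tau_t\otimes\sigma_t)\bigl((x\otimes1)E\bigr)=\bigl(\tau_t(x)\otimes1\bigr)E=E\bigl(\tau_t(x)\otimes1\bigr)$ for $x\in M(C)$ is correct and uses only results available at this point (Propositions~\ref{Deltasigma_t}, \ref{tausigmaR_corollaryE}, \ref{tauRonBandC}). The genuine gap is the very next step: you cannot conclude from this that $\sigma_t(x)\in M(C)$, nor that $\sigma_t(x)=\tau_t(x)$. Proposition~\ref{DeltaonBandCconv}\,(1) characterizes membership in $M(C)$ by $\Delta y=(y\otimes1)E=E(y\otimes1)$ with the \emph{same} element $y$ in both slots; knowing $\Delta y=(c\otimes1)E$ for some \emph{other} element $c=\tau_t(x)\in M(C)$ does not fit its hypothesis. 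Your fallback argument, writing $d=\sigma_t(x)-\tau_t(x)$ and deducing $(d\otimes1)E=0$, is circular: to write $\Delta(\sigma_t(x))=(\sigma_t(x)\otimes1)E$ you must already know $\sigma_t(x)\in M(C)$, which is part of what is being proved. Likewise, conjugating the invariance of $B$ under $(\sigma_t)$ by $R$ transfers it to invariance of $C$ under $R\circ\sigma_{-t}\circ R=\sigma^{\varphi\circ R}_t$, not under $\sigma_t$, so that route does not help either. (Note the asymmetry: for $B$ the element sits in the second leg of $E(1\otimes y)$, where the automorphism in Proposition~\ref{Deltasigma_t} is $\sigma_t$ itself, which is why Proposition~\ref{sigma_tnuinvariance} goes through; for $C$ the element sits in the first leg, where the automorphism is $\tau_t$, and the matching breaks down.) What your argument really needs is injectivity of the extended comultiplication, so that $\Delta(\sigma_t(x))=\Delta(\tau_t(x))$ forces $\sigma_t(x)=\tau_t(x)$; this is nowhere stated in the paper and is not supplied in your proposal. (It can in fact be derived --- from Proposition~\ref{nuphipsi}, faithfulness of $\varphi$ and $\mu$, and the homomorphism property of the extension --- and with that ingredient your chain would close and would even give $\sigma_t|_{M(C)}=\tau_t|_{M(C)}$, which together with Proposition~\ref{SrestrictedtoBandC}\,(3) yields the claim; but as written the proposal does not establish it.) The same gap recurs verbatim in your treatment of $\sigma^{\psi}_t|_B$.

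For comparison, the paper avoids this issue entirely by working with the weights rather than with $\Delta$: for $x\in{\mathfrak M}_{\mu}$ analytic and $s=\sigma_i(a)b^*$ with $a,b\in{\mathcal T}_{\varphi}$, it computes $\varphi\bigl(s\,\sigma^{\mu}_{-i}(x)\bigr)=(\mu\otimes\varphi)\bigl(\Delta(s\,\sigma^{\mu}_{-i}(x))\bigr)=(\mu\otimes\varphi)\bigl((x\otimes1)(\Delta s)\bigr)=\varphi(xs)$, using Proposition~\ref{nuphipsi}, Equation~\eqref{(DeltaonC)} and the KMS property of $\mu$ in the first leg; comparing with the KMS characterization $\varphi(xs)=\varphi\bigl(s\,\sigma_{-i}(x)\bigr)$ and using faithfulness identifies $\sigma_{-i}$ with $\sigma^{\mu}_{-i}$ on a dense subset of $C$, hence $\sigma_t|_C=\sigma^{\mu}_t$; the statement for $\sigma^{\psi}|_B$ is proved symmetrically with $\psi$ and $\nu$. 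You should either adopt that weight-theoretic route or explicitly prove the injectivity of the extended $\Delta$ before running your argument.
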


\begin{proof}
Consider $s=\sigma_{i}(a)b^*$, for $a,b\in{\mathcal T}_{\varphi}$, the Tomita subalgebra. 
Such elements are dense in ${\mathfrak M}_{\varphi}$, and for any $z\in M(A)$, we have 
$sz\in\overline{\mathfrak M}_{\varphi}$ with $\varphi(sz)=\varphi\bigl(\sigma_{i}(a)b^*z\bigr)
=\varphi\bigl(b^*za\bigr)$.  Similarly, we also have $zs\in\overline{\mathfrak M}_{\varphi}$.

Suppose $x\in{\mathfrak M}_{\mu}\,(\subseteq C)$ is analytic with respect to $\mu$, and 
consider $\sigma^{\mu}_{-i}(x)$, which is also an element in $C\,\bigl(\subseteq M(A)\bigr)$. 
As noted above, we will have $s\sigma^{\mu}_{-i}(x)\in\overline{\mathfrak M}_{\varphi}$. 
By Proposition~\ref{nuphipsi}, we have:
$$
\varphi\bigl(s\sigma^{\mu}_{-i}(x)\bigr)
=\mu\bigl((\operatorname{id}\otimes\varphi)(\Delta(s\sigma^{\mu}_{-i}(x)))\bigr)
=(\mu\otimes\varphi)\bigl(\Delta(s\sigma^{\mu}_{-i}(x))\bigr).
$$
Since $\Delta\bigl(\sigma^{\mu}_{-i}(x)\bigr)=E\bigl(\sigma^{\mu}_{-i}(x)\otimes1\bigr)$ 
and $\Delta x=(x\otimes1)E$, this becomes:
\begin{align}
\varphi\bigl(s\sigma^{\mu}_{-i}(x)\bigr)&=(\mu\otimes\varphi)
\bigl((\Delta s)(\sigma^{\mu}_{-i}(x)\otimes1))\bigr)
=(\mu\otimes\varphi)\bigl((x\otimes1)(\Delta s)\bigr)  \notag \\
&=(\mu\otimes\varphi)\bigl(\Delta(xs)\bigr)=\varphi(xs),
\notag
\end{align}
again by Proposition~\ref{nuphipsi}.

Compare this with the usual characterization of the automorphism group $(\sigma_t)$ for 
$\varphi$, given by $\varphi(xs)=\varphi\bigl(s\sigma_{-i}(x)\bigr)$.  As elements 
$s=\sigma_{i}(a)b^*$ are dense in ${\mathfrak M}_{\varphi}$ and $\varphi$ is faithful, 
we conclude that $x\in{\mathcal D}(\sigma_{-i})$ and that $\sigma_{-i}(x)=\sigma^{\mu}_{-i}(x)$, 
which is true on a dense subset of $C$.  Since $\sigma_{-i}$ and $\sigma^{\mu}_{-i}$ 
generate $(\sigma_t)$ and $(\sigma^{\mu}_t)$ respectively, we see that on $C$, 
we have $\sigma_t|_C=\sigma^{\mu}_t$ for all $t$.

Similarly, for $y\in{\mathfrak M}_{\nu}\,(\subseteq B)$, that is analytic with respect to $\nu$, 
we have, for any $s\in{\mathcal T}_{\psi}$, that 
\begin{align}
\psi\bigl(s\sigma^{\nu}_{-i}(y)\bigr)
&=(\psi\otimes\nu)\bigl((\Delta s)(1\otimes\sigma^{\nu}_{-i}(y))\bigr)
=(\psi\otimes\nu)\bigl((1\otimes y)(\Delta s)\bigr)  \notag \\
&=(\psi\otimes\nu)\bigl(\Delta(ys)\bigr)=\psi(ys).
\notag
\end{align}
Comparing with the characterization $\psi(ys)=\psi\bigl(s\sigma^{\psi}_{-i}(y)\bigr)$, 
we can conclude that on $B$, we have $\sigma^{\psi}_t|_B=\sigma^{\nu}_t$ for all $t$.
\end{proof}

\begin{cor}
As a consequence of the previous proposition, we have:
\begin{enumerate}
  \item $\sigma_t$ leaves $C$ invariant and $\mu\circ\sigma_t|_C=\mu$, 
for any $t\in\mathbb{R}$.
  \item $\sigma^{\psi}_t$ leaves $B$ invariant and $\nu\circ\sigma^{\psi}_t|_B=\nu$, 
for any $t\in\mathbb{R}$.
\end{enumerate}
\end{cor}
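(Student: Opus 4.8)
The statement to prove is the corollary asserting that $\sigma_t$ leaves $C$ invariant with $\mu\circ\sigma_t|_C=\mu$, and that $\sigma^{\psi}_t$ leaves $B$ invariant with $\nu\circ\sigma^{\psi}_t|_B=\nu$. The plan is to derive everything directly from Proposition~\ref{sigmasigma'onCandB}, which states $\sigma|_C=\sigma^{\mu}$ and $\sigma^{\psi}|_B=\sigma^{\nu}$, together with the standard invariance property of a modular automorphism group for its own KMS weight.

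First I would observe that since $\sigma_t|_C=\sigma^{\mu}_t$ for all $t\in\mathbb{R}$ (this is the content of Proposition~\ref{sigmasigma'onCandB}), and since $(\sigma^{\mu}_t)_{t\in\mathbb{R}}$ is by definition an automorphism group of $C$, it follows immediately that $\sigma_t$ maps $C$ into $C$, i.e. $\sigma_t(C)=C$. Next, the defining KMS property of the modular automorphism group $(\sigma^{\mu}_t)$ for the weight $\mu$ on $C$ gives $\mu\circ\sigma^{\mu}_t=\mu$ for all $t$ (recall from \S1.1 that $\mu=\nu\circ R^{-1}$ is a KMS weight on $C$, so this is just the standard modular invariance). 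Combining, $\mu\circ\sigma_t|_C=\mu\circ\sigma^{\mu}_t=\mu$, which proves part~(1).

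Part~(2) is entirely symmetric: Proposition~\ref{sigmasigma'onCandB} gives $\sigma^{\psi}_t|_B=\sigma^{\nu}_t$ for all $t$, and $(\sigma^{\nu}_t)$ is an automorphism group of $B$, so $\sigma^{\psi}_t(B)=B$; then the KMS invariance $\nu\circ\sigma^{\nu}_t=\nu$ yields $\nu\circ\sigma^{\psi}_t|_B=\nu$. There is essentially no obstacle here — the corollary is a direct read-off from the preceding proposition plus the elementary fact that a KMS weight is invariant under its own modular group, so the proof is a couple of lines. If one wanted to be careful about one subtle point, it is merely noting that the restriction $\sigma_t|_C$ is a well-defined automorphism of $C$ (not just a map into $M(A)$), which is exactly what the equality $\sigma_t|_C=\sigma^{\mu}_t$ already guarantees since the right-hand side is an automorphism of $C$. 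I would write the proof simply as: ``(1), (2). Since $\sigma_t|_C=\sigma^{\mu}_t$ and $\sigma^{\psi}_t|_B=\sigma^{\nu}_t$ by Proposition~\ref{sigmasigma'onCandB}, and since a KMS weight is invariant under its own modular automorphism group, both statements follow at once.''
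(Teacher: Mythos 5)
Your proposal is correct and matches the paper's own argument: the paper likewise treats both parts as immediate from Proposition~\ref{sigmasigma'onCandB}, the point being exactly the one you spell out, namely that $\sigma^{\mu}$ and $\sigma^{\nu}$ are automorphism groups of $C$ and $B$ leaving $\mu$ and $\nu$ invariant. Your write-up just makes explicit the KMS-invariance step that the paper leaves tacit.
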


\begin{proof}
Since $\sigma_t|_C=\sigma^{\mu}$ and $\sigma'_t|_B=\sigma^{\nu}$, these results 
are immediate.
\end{proof}

Now that we considered the restrictions $\sigma|_C$ and $\sigma^{\psi}|_B$, which were 
respectively shown to coincide with $\sigma^{\mu}$ and $\sigma^{\nu}$, it is natural to 
consider the restriction $\sigma|_B$.  While it does not have to coincide with $\sigma^{\nu}$, 
we still have the following invariance result:  

\begin{prop}\label{sigma_tnuinvariance}
$\sigma_t$ leaves $B$ invariant and $\nu\circ\sigma_t|_B=\nu$, 
for any $t\in\mathbb{R}$.
\end{prop}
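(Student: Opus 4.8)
The plan is to read off both assertions essentially from the defining data of a locally compact quantum groupoid, while using the structural identities for $\Delta$, $\sigma_t$, $\tau_t$, $E$ established earlier in this section to make the $B$-invariance part self-contained. Recall that Definition~\ref{definitionlcqgroupoid} already postulates a one-parameter automorphism group $(\theta_t)_{t\in\mathbb{R}}$ of $B$ with $\nu\circ\theta_t=\nu$ and $\sigma^{\varphi}_t|_B=\theta_t$ for all $t$. Hence, once it is confirmed that $\sigma_t=\sigma^{\varphi}_t$ really does restrict to $B$, the identity $\sigma_t|_B=\theta_t$ simultaneously gives that $\sigma_t$ leaves $B$ invariant and that $\nu\circ\sigma_t|_B=\nu\circ\theta_t=\nu$, which is the whole statement.

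So the one point worth verifying independently is $\sigma_t(B)=B$. First I would check $\sigma_t\bigl(M(B)\bigr)=M(B)$: for $y\in M(B)$ we have $\Delta y=E(1\otimes y)=(1\otimes y)E$ by Equation~\eqref{(DeltaonB)}; applying the automorphism $\tau_t\otimes\sigma_t$ of $M(A\otimes A)$ and using $(\tau_t\otimes\sigma_t)(E)=E$ from Proposition~\ref{tausigmaR_corollaryE}\,(1) together with $\Delta\circ\sigma_t=(\tau_t\otimes\sigma_t)\circ\Delta$ from Proposition~\ref{Deltasigma_t}, one obtains $\Delta\bigl(\sigma_t(y)\bigr)=E\bigl(1\otimes\sigma_t(y)\bigr)=\bigl(1\otimes\sigma_t(y)\bigr)E$. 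By the converse characterization of $M(B)$ in Proposition~\ref{DeltaonBandCconv}\,(2), this forces $\sigma_t(y)\in M(B)$; running the same argument with $-t$ in place of $t$ then yields $\sigma_t(M(B))=M(B)$. To come down from $M(B)$ to $B$ I would invoke the last clause of Definition~\ref{definitionlcqgroupoid} once more: since $\sigma^{\varphi}_t|_B=\theta_t$ is an automorphism of $B$, in particular $\sigma_t(B)=\theta_t(B)=B$. Combining this with $\nu\circ\theta_t=\nu$ completes both halves of the proposition.

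The step I expect to be the main obstacle is conceptual rather than computational: there is no modular-theoretic shortcut here. Unlike $\sigma|_C$ and $\sigma^{\psi}|_B$, which were identified with $\sigma^{\mu}$ and $\sigma^{\nu}$ in Proposition~\ref{sigmasigma'onCandB} and hence automatically preserve $\mu$ and $\nu$, the restriction $\sigma_t|_B$ need not equal $\sigma^{\nu}_t$, so the invariance $\nu\circ\sigma_t|_B=\nu$ cannot be extracted from the coassociativity and left/right invariance axioms alone; it has to be built in, which is precisely why Definition~\ref{definitionlcqgroupoid} carries the extra hypothesis on $\nu$. The proof is therefore short, but it deserves to be recorded explicitly since this invariance is exactly what will be needed when we prove the quasi-invariance of $\nu$.
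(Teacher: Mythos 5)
Your proposal is correct and follows essentially the same route as the paper: the identity $\Delta(\sigma_t(y))=(\tau_t\otimes\sigma_t)(E(1\otimes y))=E(1\otimes\sigma_t(y))$ via Propositions~\ref{Deltasigma_t} and \ref{tausigmaR_corollaryE}, then Proposition~\ref{DeltaonBandCconv} for invariance of $M(B)$, and finally the last axiom of Definition~\ref{definitionlcqgroupoid} (the group $(\theta_t)$ with $\sigma^{\varphi}_t|_B=\theta_t$ and $\nu\circ\theta_t=\nu$) to descend to $B$ and get the $\nu$-invariance. Your closing remark that the $\nu$-invariance cannot be derived from the other axioms and is precisely why the extra hypothesis was built into the definition matches the paper's own commentary.
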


\begin{proof}
Suppose $y\in M(B)$.  Then by Proposition~\ref{Deltasigma_t}, we have: 
$$
\Delta\bigl(\sigma_t(y)\bigr)=(\tau_t\otimes\sigma_t)(\Delta y)
=(\tau_t\otimes\sigma_t)\bigl(E(1\otimes y)\bigr)
=E\bigl(1\otimes\sigma_t(y)\bigr),
$$
where we also used $(\tau_t\otimes\sigma_t)(E)=E$ (Proposition~\ref{tausigmaR_corollaryE}). 
But then, by Proposition~\ref{DeltaonBandCconv}, we see that $\sigma_t(y)\in M(B)$. 
It follows that $\sigma_t$ leaves $M(B)$ invariant.

At this stage, recall the last condition given in Definition~\ref{definitionlcqgroupoid},  
where we required the existence of a one-parameter group $(\theta_t)$ of $B$
such that $\theta_t(b)=\sigma_t(b)$, $\forall b\in B$, $\forall t\in\mathbb{R}$, 
and that $\nu\circ\theta_t=\nu$.  We can see immediately that $\sigma|_B\equiv\theta$, 
which in particular means that $(\sigma_t|_B)$ is a one-parameter group of $B$, and 
that $\nu\bigl(\sigma_t(b)\bigr)=\nu\bigl(\theta(b)\bigr)=\nu(b)$, for any $b\in B$ 
and $t\in\mathbb{R}$.
\end{proof}

The above proposition was the first occasion where the last condition of our main definition 
(Definition~\ref{definitionlcqgroupoid}) was used.  We will be using this result to gradually 
make our case that $\nu$ is ``quasi-invariant''.  First, we show the commutativity of $\sigma$ 
and $\tau$:

\begin{prop}\label{sigmataucommute}
We have:
\begin{enumerate}
  \item $\psi$ is invariant under $(\sigma_t\circ\tau_{-t})_{t\in\mathbb{R}}$.
  \item $\sigma$ and $\tau$ commute.  That is, $\sigma_t\circ\tau_s=\tau_s\circ\sigma_t$, $\forall t,s$.
\end{enumerate}
\end{prop}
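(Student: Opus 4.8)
The idea is to feed the covariance of $\Delta$ under the modular and scaling groups into the identity $\psi(x)=\nu\bigl((\psi\otimes\operatorname{id})(\Delta x)\bigr)$ from Proposition~\ref{nuphipsi}. For $x\in\mathfrak{M}_{\psi}^{+}$, Propositions~\ref{Deltasigma_t} and \ref{Deltatau_t} combine to give
$$
\Delta\bigl(\sigma_t(\tau_{-t}(x))\bigr)=(\tau_t\otimes\sigma_t)\bigl((\tau_{-t}\otimes\tau_{-t})(\Delta x)\bigr)=(\operatorname{id}\otimes\sigma_t\circ\tau_{-t})(\Delta x).
$$
Applying $\psi\otimes\operatorname{id}$ and pulling the automorphism $\sigma_t\circ\tau_{-t}$, which acts only on the second leg, out past the slice map, I obtain $(\psi\otimes\operatorname{id})\bigl(\Delta(\sigma_t\tau_{-t}(x))\bigr)=\sigma_t\tau_{-t}\bigl((\psi\otimes\operatorname{id})(\Delta x)\bigr)$, an element of $M(B)^{+}$ by the right invariance of $\psi$. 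On $B$ one has $\tau_{-t}|_B=\sigma^{\nu}_{t}$ (Proposition~\ref{SrestrictedtoBandC}(3)) with $\nu\circ\sigma^{\nu}_{t}=\nu$, while $\sigma_t$ leaves $B$ invariant and $\nu\circ\sigma_t|_B=\nu$ (Proposition~\ref{sigma_tnuinvariance}); so $\nu$ is unchanged by $\sigma_t\circ\tau_{-t}$ on $M(B)$. Applying $\nu$ and invoking Proposition~\ref{nuphipsi} once more yields $\psi\bigl(\sigma_t\tau_{-t}(x)\bigr)=\nu\bigl((\psi\otimes\operatorname{id})(\Delta x)\bigr)=\psi(x)$; the same computation shows $\sigma_t\tau_{-t}(x)\in\mathfrak{M}_{\psi}^{+}$, so nothing is ill-defined (the lower-semicontinuity and strict-density checks for membership in $\overline{\mathfrak{M}}_{\psi\otimes\operatorname{id}}$ are routine since $\sigma_t\circ\tau_{-t}$ is a $*$-automorphism). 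Linearizing over $\mathfrak{M}_{\psi}$ gives $\psi\circ(\sigma_t\circ\tau_{-t})=\psi$.

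\textbf{Part (2).} Granting (1), $\sigma_t\circ\tau_{-t}$ is a $*$-automorphism of $A$ fixing the KMS weight $\psi$; implementing it by the canonical unitary $\Lambda_{\psi}(x)\mapsto\Lambda_{\psi}(\sigma_t\tau_{-t}(x))$ on $\mathcal{H}_{\psi}$ it extends to a normal $*$-automorphism $\alpha$ of $\pi_{\psi}(A)''$ preserving the lift $\tilde{\psi}$, and the standard fact $\sigma^{\tilde{\psi}\circ\alpha}_s=\alpha^{-1}\sigma^{\tilde{\psi}}_s\alpha$ together with $\tilde{\psi}\circ\alpha=\tilde{\psi}$ forces $\sigma_t\circ\tau_{-t}$ to commute with $\sigma^{\psi}_s$ for all $s,t$. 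Running the mirror-image argument on the other leg --- using $\varphi(x)=\mu\bigl((\operatorname{id}\otimes\varphi)(\Delta x)\bigr)$, Propositions~\ref{Deltasigma'_t} and \ref{Deltatau_t}, the relation $\tau_s|_C=\sigma^{\mu}_s$, and the fact (proved as in Proposition~\ref{sigma_tnuinvariance}, using $(\sigma^{\psi}_s\otimes\tau_{-s})(E)=\Delta(1)=E$ and Proposition~\ref{DeltaonBandCconv}) that $\sigma^{\psi}_s$ leaves $C$ invariant and preserves $\mu$ --- I get the companion statement that $\tau_s\circ\sigma^{\psi}_s$ fixes $\varphi$, hence commutes with $\sigma_t=\sigma^{\varphi}_t$ for all $s,t$.

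It then remains to disentangle $\sigma$ and $\tau$ from these two commutation relations. Writing $\sigma=\sigma^{\varphi}$ and $\rho=\sigma^{\psi}$, one has $[\sigma_t\circ\tau_{-t},\,\rho_s]=0$ and $[\tau_s\circ\rho_s,\,\sigma_t]=0$, and the plan is to feed these, together with the covariance identities of Propositions~\ref{Deltasigma_t} and \ref{Deltasigma'_t} and the uniqueness characterization of $(\tau_t)$ by $\Delta$ and the modular group, back into the comultiplication: one first extracts $[\tau,\rho]=0$, which upgrades the first relation to $[\sigma,\rho]=0$, and then comparing the two ways to compute $\Delta\bigl(\sigma_t\rho_s(x)\bigr)$, cancelling the now-matching first legs by slicing with functionals and using fullness of $\Delta$, reads off $\sigma_t\circ\tau_{-s}=\tau_{-s}\circ\sigma_t$, i.e.\ $\sigma$ and $\tau$ commute. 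The von Neumann-algebra statements follow by the same argument applied to $\tilde{\sigma}^{\varphi}$ and $\tilde{\tau}$, or directly from their implementations by $\nabla$ and $L$. I expect this last disentangling step to be the real obstacle: making the uniqueness/characterization arguments airtight for unbounded one-parameter groups, keeping careful track of domains, and handling the a priori distinct GNS Hilbert spaces $\mathcal{H}_{\psi}$ and $\mathcal{H}_{\varphi}$ when moving between the $C^{*}$- and von Neumann-algebra pictures.
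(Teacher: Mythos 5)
Your part (1) is correct and follows the paper's own route: combine $\Delta\circ(\sigma_t\circ\tau_{-t})=(\operatorname{id}\otimes\sigma_t\circ\tau_{-t})\circ\Delta$ (from Propositions~\ref{Deltasigma_t} and \ref{Deltatau_t}) with Proposition~\ref{nuphipsi}, then use that $\nu$ is preserved by $\sigma_t|_B$ (Proposition~\ref{sigma_tnuinvariance}) and by $\tau_{-t}|_B=\sigma^{\nu}_t$. The opening move of your part (2) --- that invariance of the KMS weight $\psi$ under $\sigma_t\circ\tau_{-t}$ forces commutation with the modular group $(\sigma^{\psi}_s)$ --- is also exactly the paper's first step.

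The rest of part (2) has a genuine gap, in two places. First, your mirror-image argument needs $\mu\circ\sigma^{\psi}_s|_C=\mu$, which you say can be ``proved as in Proposition~\ref{sigma_tnuinvariance}''; but in that proposition the $\nu$-invariance is not derived --- it is precisely the last axiom of Definition~\ref{definitionlcqgroupoid} (the group $(\theta_t)$ with $\sigma^{\varphi}_t|_B=\theta_t$ and $\nu\circ\theta_t=\nu$), and the definition contains no counterpart for $\psi$, $C$, $\mu$. What the identity $(\sigma^{\psi}_s\otimes\tau_{-s})(E)=E$ together with Proposition~\ref{DeltaonBandCconv} gives is only that $\sigma^{\psi}_s$ leaves $M(C)$ invariant; the $\mu$-invariance of a right-modular group on $C$ appears in the paper only later (Proposition~\ref{sigma'_tmuinvariance}) and only after $\psi$ has been replaced by $\varphi\circ R$, via $\sigma^{\varphi\circ R}_t=R\circ\sigma_{-t}\circ R$ --- it is not available for the original $\psi$ at this stage. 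Second, and more seriously, your final ``disentangling'' of $[\sigma,\tau]=0$ from the two relations $[\sigma_t\tau_{-t},\sigma^{\psi}_s]=0$ and $[\tau_s\sigma^{\psi}_s,\sigma_t]=0$ is only announced, not carried out, and it is not a formal consequence of those two relations alone. The paper's proof needs neither the mirror invariance nor any disentangling: from $[\sigma^{\psi}_s,\sigma_t\tau_{-t}]=0$ one writes $(\operatorname{id}\otimes\sigma_t\tau_{-t})\circ\Delta=\Delta\circ(\sigma_t\tau_{-t})=\Delta\circ\bigl(\sigma^{\psi}_{-s}\circ(\sigma_t\tau_{-t})\circ\sigma^{\psi}_s\bigr)$, pushes $\sigma^{\psi}_{-s}$ and $\sigma^{\psi}_s$ through $\Delta$ with Proposition~\ref{Deltasigma'_t}, uses $\Delta\circ(\sigma_t\tau_{-t})=(\operatorname{id}\otimes\sigma_t\tau_{-t})\circ\Delta$ once more, and obtains $\bigl(\operatorname{id}\otimes\sigma_t\tau_{-t}\bigr)\circ\Delta=\bigl(\operatorname{id}\otimes\tau_s\circ(\sigma_t\tau_{-t})\circ\tau_{-s}\bigr)\circ\Delta$; slicing the first leg and using fullness of $\Delta$ gives $\sigma_t\tau_{-t}=\tau_s\sigma_t\tau_{-t}\tau_{-s}=\tau_s\sigma_t\tau_{-s}\tau_{-t}$, hence $\sigma_t=\tau_s\circ\sigma_t\circ\tau_{-s}$. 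You should replace your final step by this computation.
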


\begin{proof}
(1). By Proposition~\ref{nuphipsi}, we have:
$$
\psi(\sigma_t\circ\tau_{-t})
=\nu\bigl((\psi\otimes\operatorname{id})(\Delta(\sigma_t\circ\tau_{-t}))\bigr).
$$
But by Propositions~\ref{Deltasigma_t} and \ref{Deltatau_t}, we know that 
\begin{equation}\label{(sigmasigma'commute_eqn1)}
\Delta\bigl(\sigma_t\circ\tau_{-t}\bigr)=(\tau_t\otimes\sigma_t)\bigl(\Delta(\tau_{-t})\bigr)
=(\operatorname{id}\otimes(\sigma_t\circ\tau_{-t}))\circ \Delta.
\end{equation}
Combining, we have:
$$
\psi(\sigma_t\circ\tau_{-t})
=\nu\bigl((\sigma_t\circ\tau_{-t})((\psi\otimes\operatorname{id})\Delta)\bigr)
=\nu\bigl((\psi\otimes\operatorname{id})\Delta\bigr)=\psi,
$$
where we used the fact that $(\psi\otimes\operatorname{id})(\Delta a)\in M(B)$, $\forall a\in{\mathfrak M}_{\psi}$, 
together with the knowledge that $\nu\circ\sigma_t|_B=\nu$ (Proposition~\ref{sigma_tnuinvariance}) 
and $\nu\circ\tau_{-t}|_B=\nu$ (Corollary of Proposition~\ref{SrestrictedtoBandC}).  The 
last equality is using Proposition~\ref{nuphipsi}.

(2). By (1), it follows from the general weight theory (see \cite{Tk2}) that $(\sigma^{\psi}_s)$ and 
$(\sigma_t\circ\tau_{-t})_{t\in\mathbb{R}}$ commute.  In other words, 
\begin{equation}\label{(sigmasigma'commute_eqn2)}
\sigma^{\psi}_s\circ(\sigma_t\circ\tau_{-t})=(\sigma_t\circ\tau_{-t})\circ\sigma^{\psi}_s,\quad
\forall s,\forall t.
\end{equation}
Then we have: 
\begin{align}
\bigl(\operatorname{id}\otimes(\sigma_t\circ\tau_{-t})\bigr)\circ\Delta
&=\Delta(\sigma_t\circ\tau_{-t})=\Delta\bigl(\sigma^{\psi}_{-s}\circ(\sigma_t\circ\tau_{-t})
\circ\sigma^{\psi}_s\bigr)
\notag \\
&=(\sigma^{\psi}_{-s}\otimes\tau_s)\bigl(\Delta((\sigma_t\circ\tau_{-t})\circ\sigma^{\psi}_s)\bigr) 
\notag \\
&=\bigl(\sigma^{\psi}_{-s}\otimes\tau_s\circ(\sigma_t\circ\tau_{-t})\bigr)\bigl(\Delta(\sigma^{\psi}_s)\bigr)
\notag \\
&=\bigl(\operatorname{id}\otimes\tau_s\circ(\sigma_t\circ\tau_{-t})\circ\tau_{-s}\bigr)\circ\Delta.
\notag 
\end{align}
The first and fourth equalities used Equation~\eqref{(sigmasigma'commute_eqn1)}; 
second equality used Equation~\eqref{(sigmasigma'commute_eqn2)}; third and fifth 
equalities used Proposition~\ref{Deltasigma'_t}.  

Since the elements of the form $(\omega\otimes\operatorname{id})(\Delta x)$, $x\in A$, 
$\omega\in A^*$, are dense in $A$, the above result implies that 
$$
\sigma_t\circ\tau_{-t}=\tau_s\circ(\sigma_t\circ\tau_{-t})\circ\tau_{-s}
=\tau_s\circ\sigma_t\circ\tau_{-s}\circ\tau_{-t},
$$
true for all $s$ and $t$.  From this, we have $\sigma_t=\tau_s\circ\sigma_t\circ\tau_{-s}$, 
or equivalently, $\sigma_t\tau_s=\tau_s\sigma_t$, $\forall s,t$.
\end{proof}

Results similar to Propositions~\ref{sigma_tnuinvariance} and \ref{sigmataucommute} 
do exist for $\sigma^{\psi}$ (the invariance of $\mu$ under $\sigma^{\psi}|_C$, 
and the commutativity of $\sigma^{\psi}$ and $\tau$).  But we will consider them later.

\subsection{The right Haar weight $\varphi\circ R$}

Consider our left Haar weight $\varphi$.  Using the involutive ${}^*$-anti-isomorphism 
$R:A\to A$, we can define a faithful weight $\varphi\circ R$.  It is not difficult to see that 
$\varphi\circ R$ is also KMS, with its one-parameter group of automorphisms 
$(\sigma^{\varphi\circ R}_t)_{t\in\mathbb{R}}$ given by 
$$
\sigma^{\varphi\circ R}_t=R\circ\sigma_{-t}\circ R.
$$
Let us further explore this KMS weight $\varphi\circ R$.

\begin{prop}\label{phiR_rightinvariant}
$\varphi\circ R$ is right invariant.
\end{prop}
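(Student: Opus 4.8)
The goal is to show that $\varphi\circ R$ is right invariant, where right invariance means: for $a\in{\mathfrak M}_{\varphi\circ R}$, we have $\Delta a\in\overline{\mathfrak M}_{(\varphi\circ R)\otimes\operatorname{id}}$ and $((\varphi\circ R)\otimes\operatorname{id})(\Delta a)\in M(B)$. The natural strategy is to transport the \emph{left} invariance of $\varphi$ through the unitary antipode $R$, using the key relation $(R\otimes R)(\Delta x)=\Delta^{\operatorname{cop}}(R(x))$ from Proposition~\ref{DeltaR}. The plan is: first, observe that $a\in{\mathfrak M}_{\varphi\circ R}$ if and only if $R(a)\in{\mathfrak M}_{\varphi}$, and similarly ${\mathfrak N}_{\varphi\circ R}=R({\mathfrak N}_{\varphi})$ (since $R$ is an involutive ${}^*$-anti-isomorphism of $A$, it sends ${\mathfrak N}_{\varphi}$ onto ${\mathfrak N}_{\varphi\circ R}$). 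Second, for $a\in{\mathfrak M}_{\varphi\circ R}$, write $b=R(a)\in{\mathfrak M}_{\varphi}$. By left invariance of $\varphi$, we know $\Delta b\in\overline{\mathfrak M}_{\operatorname{id}\otimes\varphi}$ and $(\operatorname{id}\otimes\varphi)(\Delta b)\in M(C)$. Applying $R\otimes R$ and using Proposition~\ref{DeltaR}, we get $\Delta^{\operatorname{cop}}(R(b))=(R\otimes R)(\Delta b)$, i.e.\ $\sigma(\Delta a)=(R\otimes R)(\Delta b)$, where $\sigma$ is the flip.

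\textbf{Key steps.} First I would make precise the behavior of operator-valued weights under $R$: for a positive operator-valued weight like $\operatorname{id}\otimes\varphi$, composing with $R\otimes R$ and flipping turns it into $(\varphi\circ R)\otimes\operatorname{id}$ in the appropriate sense. Concretely, for $b\in{\mathfrak M}_{\varphi}$ with $\Delta b\in\overline{\mathfrak M}_{\operatorname{id}\otimes\varphi}$, one shows $\sigma\bigl((R\otimes R)(\Delta b)\bigr)\in\overline{\mathfrak M}_{(\varphi\circ R)\otimes\operatorname{id}}$ and
\begin{equation}\label{(phiR_eqn)}
\bigl((\varphi\circ R)\otimes\operatorname{id}\bigr)\bigl(\sigma((R\otimes R)(\Delta b))\bigr)
=R\bigl((\operatorname{id}\otimes\varphi)(\Delta b)\bigr).
\end{equation}
Then, substituting $\sigma((R\otimes R)(\Delta b))=\Delta(R(b))=\Delta a$ via Proposition~\ref{DeltaR} (noting $R$ is involutive, so $R(b)=a$), the left side of \eqref{(phiR_eqn)} becomes $((\varphi\circ R)\otimes\operatorname{id})(\Delta a)$, while the right side is $R\bigl((\operatorname{id}\otimes\varphi)(\Delta b)\bigr)$, which lies in $R(M(C))=M(B)$ by Proposition~\ref{tauRonBandC}\,(2). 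This establishes both conditions for right invariance of $\varphi\circ R$ simultaneously. To verify \eqref{(phiR_eqn)}, I would approximate with $\omega_{\xi,\zeta}$-type functionals and the monotone-limit definition of $\operatorname{id}\otimes\varphi$, reducing everything to the scalar identity $(\varphi\circ R)(R(y)^*R(y))=\varphi(y^*y)$ together with the anti-multiplicativity and ${}^*$-preservation of $R$; this is routine bookkeeping once the correspondence of the GNS maps under $R$ is set up.

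\textbf{Main obstacle.} The technically delicate point is handling the operator-valued weight $\operatorname{id}\otimes\varphi$ correctly under the flip and the anti-automorphism $R\otimes R$. Since $R\otimes R$ is an anti-homomorphism, $(R\otimes R)\circ(\operatorname{id}\otimes\varphi)$ does not directly give a well-behaved operator-valued weight unless one composes with the flip $\sigma$ to restore the correct ordering of the legs; keeping track of which leg carries the weight and which carries the identity — and confirming that the normality/lower-semicontinuity needed for the $\overline{\mathfrak M}$-domains survives the transformation — is where care is required. I expect this to parallel closely the corresponding argument in the locally compact quantum group literature (the passage from left to right invariant weights via the unitary antipode), so the structure is standard, but the partial-isometry subtleties that complicated earlier sections do not reappear here because $R$ is a genuine ${}^*$-anti-automorphism of $A$. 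Once \eqref{(phiR_eqn)} is in hand, the conclusion is immediate.
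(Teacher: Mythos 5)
Your proposal is correct and follows essentially the same route as the paper's own proof: transport the left invariance of $\varphi$ through $R$ using $R(a)\in{\mathfrak M}_{\varphi}$, the relation $(R\otimes R)(\Delta x)=\Delta^{\operatorname{cop}}(R(x))$ of Proposition~\ref{DeltaR}, and $R\bigl(M(C)\bigr)=M(B)$ from Proposition~\ref{tauRonBandC}, so that $(\varphi\circ R\otimes\operatorname{id})(\Delta a)=R\bigl((\operatorname{id}\otimes\varphi)(\Delta(R(a)))\bigr)\in M(B)$. Your extra attention to how the operator-valued weight behaves under $R\otimes R$ and the flip is the same bookkeeping the paper performs implicitly; there is no substantive difference in approach.
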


\begin{proof}
Let $a\in{\mathfrak M}_{\varphi\circ R}$.  Then $R(a)\in{\mathfrak M}_{\varphi}$. 
By the left invariance of $\varphi$, we thus know that $\Delta\bigl(R(a)\bigr)
\in\overline{\mathfrak M}_{\operatorname{id}\otimes\varphi}$, and that 
$(\operatorname{id}\otimes\varphi)\bigl(\Delta(R(a))\bigr)\in M(C)$. 
With $\Delta^{\operatorname{cop}}=\varsigma\circ\Delta$, we have 
$(\operatorname{id}\otimes\varphi)\bigl(\Delta(R(a))\bigr)
=(\varphi\otimes\operatorname{id})\bigl(\Delta^{\operatorname{cop}}(R(a))\bigr)$. 
So by Proposition~\ref{DeltaR}, we have:
$$
(\operatorname{id}\otimes\varphi)\bigl(\Delta(R(a))\bigr)
=(\varphi\otimes\operatorname{id})\bigl((R\otimes R)(\Delta a)\bigr)
=R\bigl((\varphi\circ R\otimes\operatorname{id})(\Delta a)\bigr).
$$
From this, we can see that $\Delta a\in\overline{\mathfrak M}_{\varphi\circ R\otimes
\operatorname{id}}$, and that 
$$(\varphi\circ R\otimes\operatorname{id})(\Delta a)
=R\bigl((\operatorname{id}\otimes\varphi)[\Delta(R(a))]\bigr)\in M(B),
$$
because $R\bigl(M(C)\bigr)=M(B)$ by Proposition~\ref{tauRonBandC}.  It follows that 
$\varphi\circ R$ is right invariant.
\end{proof}

Unlike in the quantum group case, with our base algebra $B$ being nontrivial, 
we cannot expect any uniqueness result for the right Haar weight.  Specifically, 
while both $\psi$ and $\varphi\circ R$ are right invariant, it is not true in general 
that $\varphi\circ R$ is a scalar multiple of $\psi$.  Having said this, it turns out that 
for all practical purposes, replacing $\psi$ by $\varphi\circ R$ gives us essentially 
an equivalent structure for our quantum groupoid.  The remainder of this subsection 
discusses this point.

\begin{prop}\label{DeltasigmaphiR_t}
For all $x\in A$ and $t\in\mathbb{R}$, we have:
$$
\Delta\bigl(\sigma^{\varphi\circ R}_t(x)\bigr)=(\sigma^{\varphi\circ R}_t
\otimes\tau_{-t})(\Delta x).
$$
\end{prop}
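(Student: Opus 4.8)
The plan is to mimic the proof of Proposition~\ref{Deltasigma_t}, but now using the operator $V$ (together with the right Haar weight $\psi$) rather than the operator $W$. The key point is to realize the modular automorphism group $(\sigma^{\varphi\circ R}_t)$ at the Hilbert space level as implementation by a suitable modular operator, and then to transport the commutation relation $V(\nabla_{\psi}^{it}\otimes L^{-it})=(\nabla_{\psi}^{it}\otimes L^{-it})V$ (Equation~\eqref{(VNablaLequality)}) across the formula $(\pi_{\psi}\otimes\pi)(\Delta x)=V(\pi_{\psi}(x)\otimes1)V^*$ from Proposition~\ref{omegaidV}\,(3).

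First I would observe that since $\varphi\circ R$ is a KMS weight (as noted just before the statement) with $\sigma^{\varphi\circ R}_t=R\circ\sigma_{-t}\circ R$, and since $R(x)=Ix^*I$ while $\sigma_{-t}(y)=\nabla^{-it}y\nabla^{it}$ on $M=\pi_{\varphi}(A)''$, one can compute $\sigma^{\varphi\circ R}_t(x)=I\nabla^{it}Ix I\nabla^{-it}I$. Using $I^2=1$ and the conjugate-linearity of $I$ one rewrites this in terms of a genuine (linear) positive operator; the cleanest route, however, is to note that $\varphi\circ R$ lifts to the n.s.f. weight $\tilde\varphi\circ\tilde R$ on $M$, whose GNS Hilbert space agrees with ${\mathcal H}_{\varphi}$, and whose modular operator $\nabla_{\varphi\circ R}$ satisfies $\nabla_{\varphi\circ R}^{it}=I\nabla^{it}I=L^{-it}\nabla^{it}L^{it}\cdot(\text{something bounded})$—but rather than pin down the exact scalar/operator relation, I would simply record that $\sigma^{\varphi\circ R}_t$ is implemented by $\nabla_{\varphi\circ R}^{it}$ and that the relevant commutation with $W$ or $V$ follows from Proposition~\ref{LNablaequality} or Equation~\eqref{(VNablaLequality)} together with the relations $ILI=L^{-1}$, $IL^{it}I=L^{it}$, $(I\otimes J)W(I\otimes J)=W^*$ (Proposition~\ref{IJWequality}).

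Concretely, the most direct argument I would give runs as follows. Apply $R\otimes R$ to the identity of Proposition~\ref{Deltasigma_t}\,(2), namely $\Delta(\sigma_{-t}(y))=(\tau_{-t}\otimes\sigma_{-t})(\Delta y)$; using $(R\otimes R)\circ\Delta=\Delta^{\operatorname{cop}}\circ R=\sigma\circ\Delta\circ R$ from Proposition~\ref{DeltaR}, together with the commutativity $R\tau_t=\tau_t R$ (Proposition~\ref{Rtaucommute}) and $R\sigma_{-t}=\sigma^{\varphi\circ R}_t R$, one gets
\begin{align}
\sigma\bigl(\Delta(\sigma^{\varphi\circ R}_t(R(y)))\bigr)
&=(\tau_{-t}\otimes\sigma^{\varphi\circ R}_t)\bigl(\sigma(\Delta(R(y)))\bigr) \notag \\
&=\sigma\bigl((\sigma^{\varphi\circ R}_t\otimes\tau_{-t})(\Delta(R(y)))\bigr). \notag
\end{align}
Applying the flip $\sigma$ to both ends and using surjectivity of $R$ on $A$ yields $\Delta(\sigma^{\varphi\circ R}_t(x))=(\sigma^{\varphi\circ R}_t\otimes\tau_{-t})(\Delta x)$ for all $x\in A$, which is exactly the claim. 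The main obstacle—really the only subtle point—is making sure that the flip $\sigma$ interacts correctly with $\tau_{-t}$ versus $\sigma_{-t}$ in the two legs and that no sign on $t$ is dropped; I would handle this by carrying the indices through carefully and cross-checking against the analogous Propositions~\ref{Deltasigma_t} and \ref{Deltasigma'_t}, and against the special case $x=1$ which must reproduce $E=\Delta(1)$ being fixed (consistent with Proposition~\ref{tausigmaR_corollaryE}).
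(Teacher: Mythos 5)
Your ``most direct argument'' is correct and is essentially the paper's own proof: both rest on writing $\sigma^{\varphi\circ R}_t=R\circ\sigma_{-t}\circ R$ and combining Proposition~\ref{DeltaR}, Proposition~\ref{Deltasigma_t} and the commutation $R\circ\tau_t=\tau_t\circ R$, just with the flip applied in a slightly different order. The preliminary Hilbert-space discussion via $V$, $\nabla_{\psi}$ and $L$ in your first two paragraphs is not needed for (and is not completed into) the proof, but the algebraic argument that follows stands on its own.
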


\begin{proof}
Since $\sigma^{\varphi\circ R}_t=R\circ\sigma_{-t}\circ R$, we have:
\begin{align}
\Delta\bigl(\sigma^{\varphi\circ R}_t(x)\bigr)
&=\Delta\bigl((R\sigma_{-t}R)(x)\bigr)
=\varsigma\circ(R\otimes R)\bigl(\Delta(\sigma_{-t}(R(x)))\bigr) \notag \\
&=\varsigma\circ(R\tau_{-t}\otimes R\sigma_{-t})\bigl(\Delta(R(x))\bigr) 
=(R\sigma_{-t}\otimes R\tau_{-t})\bigl(\Delta^{\operatorname{cop}}(R(x))\bigr)
\notag \\
&=(R\sigma_{-t}R\otimes R\tau_{-t}R)(\Delta x)
=(\sigma^{\varphi\circ R}_t\otimes\tau_{-t})(\Delta x).
\notag
\end{align}
We used the results of Propositions~\ref{Deltasigma_t} and \ref{DeltaR}, as well as 
the commutativity of $\tau$ and $R$.
\end{proof}

Observe that this result is essentially the same as Proposition~\ref{Deltasigma'_t}. 
See the following proposition for our main claim:

\begin{theorem}\label{propphiR}
The data $(A,\Delta,E,B,\nu,\varphi,\varphi\circ R)$ determines a locally compact 
quantum groupoid of separable type, in the sense of Definition~\ref{definitionlcqgroupoid}.  
Moreover, $R$ and $(\tau_t)$ are still the unitary antipode and the scaling group 
for this new quantum groupoid.
\end{theorem}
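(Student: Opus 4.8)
The plan is to verify the eight bullet conditions of Definition~\ref{definitionlcqgroupoid} for the data $(A,\Delta,E,B,\nu,\varphi,\varphi\circ R)$. The first five conditions ($A$ a $C^*$-algebra; $\Delta$ a comultiplication; $B$ a non-degenerate $C^*$-subalgebra of $M(A)$; $\nu$ a KMS weight on $B$; $E$ the canonical idempotent of $(A,\Delta)$ together with Conditions (1), (2), (3)) are unchanged from the original quantum groupoid, so nothing needs to be redone there. The sixth condition, that $\varphi$ is a left-invariant KMS weight, is also inherited unchanged. Thus the real content is to check: (a) $\varphi\circ R$ is a right-invariant KMS weight, and (b) the last condition of the definition holds, namely there is a one-parameter automorphism group $(\theta_t)$ of $B$ with $\nu\circ\theta_t=\nu$ and $\sigma^{\varphi}_t|_B=\theta_t$.

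For (a): I would first recall that $\varphi\circ R$ is a faithful lower semicontinuous semifinite weight because $R$ is an involutive ${}^*$-anti-automorphism of $A$, and that it is KMS with modular group $\sigma^{\varphi\circ R}_t=R\circ\sigma_{-t}\circ R$ — this is stated in the paragraph preceding Proposition~\ref{phiR_rightinvariant} and needs at most a one-line justification. Right invariance of $\varphi\circ R$ is exactly Proposition~\ref{phiR_rightinvariant}, which is already proved. So this bullet is essentially a citation.

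For (b): note that the \emph{left} invariant weight has not changed — it is still $\varphi$ — so the automorphism group $(\sigma^{\varphi}_t)$ and its behavior on $B$ are exactly as before. In the original quantum groupoid, Definition~\ref{definitionlcqgroupoid} guaranteed (and Proposition~\ref{sigma_tnuinvariance} exploited) the existence of $(\theta_t)$ on $B$ with $\nu\circ\theta_t=\nu$ and $\sigma^{\varphi}_t|_B=\theta_t$. Since $\varphi$ and $\nu$ are literally the same weights in the new data, the identical $(\theta_t)$ works. Hence this last bullet transfers verbatim. Finally, for the ``moreover'' clause: the unitary antipode $R$ and the scaling group $(\tau_t)$ were characterized in \S4.5 (see the Remark after Definition~\ref{theantipode}, Proposition~\ref{D_0prop}, and the final Remark of \S5.2) purely in terms of $A$, $\Delta$, $E$ — via the property that $S=R\circ\tau_{-i/2}$ with $R$ a ${}^*$-anti-automorphism, $(\tau_t)$ a norm-continuous one-parameter group commuting with $R$, and $S$ given by the weight-free characterization $S_0$ on $\mathcal D_0$ (Definition~\ref{D_0}). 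Since $\mathcal D_0$, $S_0$, and hence the pair $(R,\tau)$ do not depend on the choice of right Haar weight, they remain the unitary antipode and scaling group for the new structure. One should also observe, for consistency, that the strong right invariance of $\varphi\circ R$ matches the formula $\Delta\circ\sigma^{\varphi\circ R}_t=(\sigma^{\varphi\circ R}_t\otimes\tau_{-t})\circ\Delta$ of Proposition~\ref{DeltasigmaphiR_t}, which has the same form as Proposition~\ref{Deltasigma'_t}, so the antipode constructed from $\varphi\circ R$ via the ``$V$-side'' reasoning of \S4.3--\S4.4 coincides with the one built from $\psi$.

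I expect the main (though modest) obstacle to be bookkeeping: making sure that every axiom really is phrased in a way that is insensitive to swapping $\psi$ for $\varphi\circ R$, in particular confirming that nothing in Conditions (1)--(3) on $E$ or in the left-invariance requirement secretly referenced $\psi$, and checking the KMS property of $\varphi\circ R$ carefully (that $\sigma^{\varphi\circ R}_t$ as defined is indeed a norm-continuous one-parameter automorphism group satisfying the KMS condition for $\varphi\circ R$). All of this is routine given Proposition~\ref{phiR_rightinvariant} and Proposition~\ref{DeltasigmaphiR_t}, so the proof is short.
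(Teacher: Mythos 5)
Your proposal is correct, and its core is the same as the paper's: the only axioms that need attention are the right-invariant KMS weight (handled by the paragraph before Proposition~\ref{phiR_rightinvariant} and by Proposition~\ref{phiR_rightinvariant} itself) and the last bullet about $(\theta_t)$, which transfers verbatim because $\varphi$, $\nu$, $B$ are unchanged; the paper's own proof of the definitional part is just as brief, adding only a direct verification of the analogue of Proposition~\ref{nuphipsi} (namely $\nu\bigl((\varphi\circ R\otimes\operatorname{id})(\Delta a)\bigr)=\varphi(R(a))$) so that the whole machinery of Sections 3--4 is seen to run with $\varphi\circ R$ in place of $\psi$. Where you diverge mildly is the ``moreover'' clause: the paper argues directly that $R$ is unchanged because its defining formula (Definition~\ref{unitaryantipode} together with Proposition~\ref{unitaryantipodeeqn}) is expressed through $W$ and $\tilde{\varphi}$ only, hence never sees the right Haar weight, and that $\tau$ is unchanged by comparing Proposition~\ref{DeltasigmaphiR_t} with the uniqueness statement in Proposition~\ref{Deltasigma'_t}; you instead invoke the weight-free characterization $S_0=S$ on ${\mathcal D}_0$ from \S5.2 and the fact that $S$ determines the pair $(R,\tau)$ via $S^2=\tau_{-i}$ and $R=S\circ\tau_{\frac{i}{2}}$, with the Proposition~\ref{DeltasigmaphiR_t} comparison kept as a consistency check. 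Both routes are valid: yours leans on the Remark following the $S_0=S$ theorem (whose determination of $\tau$ from $S^2=\tau_{-i}$ the paper asserts rather than proves in detail, though it is standard), while the paper's is slightly more self-contained since it cites only the explicit $W$-formula for $R$ and the already-proved uniqueness characterization of the scaling group. No gap either way; your version is a legitimate, marginally less direct packaging of the same facts.
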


\begin{proof}
Since $\varphi\circ R$ is shown to be right invariant, we can see immediately 
that $(A,\Delta,E,B,\nu,\varphi,\varphi\circ R)$ is a locally compact quantum 
groupoid separable type.  We also have the following result analogous to 
Proposition~\ref{nuphipsi}.  Namely, for $a\in{\mathfrak M}_{\varphi\circ R}$, we have: 
\begin{equation}\label{(nuphiphiR)}
\nu\bigl((\varphi\circ R\otimes\operatorname{id})(\Delta a)\bigr)
=(\nu\circ R)\bigl((\operatorname{id}\otimes\varphi)(\Delta(R(a)))\bigr)
=\varphi\bigl(R(a)\bigr).
\end{equation}
First equality is true because $(\varphi\circ R\otimes\operatorname{id})(\Delta a)
=R\bigl((\operatorname{id}\otimes\varphi)(\Delta(R(a)))\bigr)$, observed in the 
proof of Proposition~\ref{phiR_rightinvariant}, and the second equality is using 
$\nu\circ R=\mu$ and Proposition~\ref{nuphipsi}.

Therefore, all the same discussion and construction will go through, including 
the construction of the unitary antipode and the scaling group, with $\psi$ 
replaced by $\varphi\circ R$.

From Definition~\ref{unitaryantipode} and Proposition~\eqref{unitaryantipodeeqn}, 
we know that $R:A\to A$ is defined by
$$
R:(\operatorname{id}\otimes\omega_{\Lambda_{\tilde{\varphi}}(a),
\Lambda_{\tilde{\varphi}}(b)})(W)
\mapsto (\operatorname{id}\otimes\omega_{\Lambda_{\tilde{\varphi}}
(\sigma^{\tilde{\varphi}}_{-\frac{i}{2}}(b^*)),\Lambda_{\tilde{\varphi}}
(\sigma^{\tilde{\varphi}}_{-\frac{i}{2}}(a^*))})(W),
$$
for $a,b\in{\mathcal T}_{\tilde{\varphi}}$.  Since the elements $(\operatorname{id}
\otimes\omega_{\Lambda_{\tilde{\varphi}}(a),\Lambda_{\tilde{\varphi}}(b)})(W)$ 
generates all of $A$, this characterizes the map $R$.  Observe that the definition 
does not explicitly refer to the right Haar weight.

As for the scaling group, instead of Definition~\ref{scalinggroup}, which does 
depend on the right Haar weight, consider an alternative characterization 
noted in Proposition~\ref{Deltasigma'_t}. Comparing it with Proposition~\ref{DeltasigmaphiR_t}, 
we can see that the scaling group determined by having $\varphi\circ R$ in place of $\psi$ 
is exactly $\tau$.
\end{proof}

\begin{rem}
By Theorem~\ref{propphiR}, we see that the data 
$(A,\Delta,E,B,\nu,\varphi,\psi)$ and $(A,\Delta,E,B,\nu,\varphi,\varphi\circ R)$ 
represent essentially equivalent quantum groupoid structure, with same $R$ 
(unitary antipode), same $\tau$ (scaling group), so the same antipode 
map $S$.
\end{rem}

\subsection{Quasi-invariance of $\nu$}

From now on, building on the insight we gained from the previous subsection, 
we will take the right Haar weight for our quantum groupoid to be 
$\psi=\varphi\circ R$.  For convenience, write 
$\sigma'=\sigma^{\varphi\circ R}$.

Let us begin with an invariance result, concerning the restriction of $(\sigma'_t)$ 
to $C$ and the weight $\mu$ on $C$.

\begin{prop}\label{sigma'_tmuinvariance}
$\sigma'_t$ leaves $C$ invariant and $\mu\circ\sigma'_t|_C=\mu$, 
for any $t\in\mathbb{R}$.
\end{prop}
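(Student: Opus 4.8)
The plan is to obtain this as the image under the unitary antipode of Proposition~\ref{sigma_tnuinvariance}, mirroring the way that proposition was proved. Recall that, with $\psi=\varphi\circ R$, the modular automorphism group is $\sigma'_t=\sigma^{\varphi\circ R}_t=R\circ\sigma_{-t}\circ R$ (see \S5.6), where $\sigma_t=\sigma^{\varphi}_t$, and that $R$, extended to $M(A)$, is involutive, so $R\circ R=\operatorname{id}$ (Definition~\ref{unitaryantipode}). I would also recall that $R$ interchanges the two base algebras, $R(B)=C$ and $R(C)=B$ (Corollary to Proposition~\ref{SrestrictedtoBandC}), that $R|_C=R_{BC}^{-1}$ by Proposition~\ref{SrestrictedtoBandC}\,(4), and hence that the defining relation $\mu=\nu\circ R^{-1}$ reads $\mu(x)=\nu(R(x))$ for $x\in C$.

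First I would establish the $C$-invariance. Since $\sigma_{-t}$ leaves $B$ invariant (Proposition~\ref{sigma_tnuinvariance}) and $R$ interchanges $B$ and $C$, for every $t\in\mathbb{R}$ we get
\[
\sigma'_t(C)=R\bigl(\sigma_{-t}(R(C))\bigr)=R\bigl(\sigma_{-t}(B)\bigr)=R(B)=C,
\]
so $\sigma'_t$ restricts to a ${}^*$-automorphism of $C$. Next, for the invariance of $\mu$, I would compute, for $x\in C$, using $R\circ R=\operatorname{id}$,
\[
R\bigl(\sigma'_t(x)\bigr)=R\bigl(R\sigma_{-t}R(x)\bigr)=\sigma_{-t}\bigl(R(x)\bigr),
\]
which lies in $B$ because $R(x)\in B$; then, using $\mu(\,\cdot\,)=\nu(R(\,\cdot\,))$ on $C$ together with $\nu\circ\sigma_{-t}|_B=\nu$ (Proposition~\ref{sigma_tnuinvariance}),
\[
\mu\bigl(\sigma'_t(x)\bigr)=\nu\bigl(R(\sigma'_t(x))\bigr)=\nu\bigl(\sigma_{-t}(R(x))\bigr)=\nu\bigl(R(x)\bigr)=\mu(x),
\]
which is the assertion.

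I do not expect any real obstacle: the whole statement is just Proposition~\ref{sigma_tnuinvariance} transported through the unitary antipode, and the only thing to be careful about is keeping track of which restriction of $R$ is in play ($R|_B=R_{BC}$ versus $R|_C=R_{BC}^{-1}$) and of the compatibility of the identification $\mu=\nu\circ R^{-1}$ with these restrictions, all of which is already recorded in Proposition~\ref{SrestrictedtoBandC}. If one prefers a proof that does not invoke the $\gamma_B,\gamma_C$-identification, one can instead parallel the proof of Proposition~\ref{sigma_tnuinvariance} directly: apply $\sigma'_t\otimes\tau_{-t}$ to the identities $\Delta x=(x\otimes1)E=E(x\otimes1)$ characterizing $M(C)$ (Proposition~\ref{DeltaonBandCconv}), use $(\sigma'_t\otimes\tau_{-t})(E)=E$ — which follows from Proposition~\ref{DeltasigmaphiR_t} applied to $x=1$ since $E=\Delta(1)$ — to deduce $\sigma'_t(M(C))=M(C)$, and then derive $\mu\circ\sigma'_t|_C=\mu$ from Equation~\eqref{(nuphiphiR)}; but the antipode argument above is shorter and suffices.
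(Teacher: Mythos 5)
Your argument is correct and is essentially the paper's own proof: the paper likewise writes $\sigma'_t=R\circ\sigma_{-t}\circ R$, deduces $\sigma'_t(c)=R(\sigma_{-t}(R(c)))\in C$ from Proposition~\ref{sigma_tnuinvariance}, and computes $\mu\circ\sigma'_t|_C=(\nu\circ R)\circ(R\circ\sigma_{-t}\circ R)|_C=\nu\circ\sigma_{-t}|_B\circ R|_C=\nu\circ R|_C=\mu$. Your extra care about $R|_C=R_{BC}^{-1}$ and the reading of $\mu=\nu\circ R^{-1}$ is a fine clarification but not a different route.
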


\begin{proof}
Let $c\in C$.  Then $R(c)\in B$.  By Proposition~\ref{sigma_tnuinvariance}, 
we thus have $\sigma_t\bigl(R(c)\bigr)\in B$, $\forall t$.  Since $\sigma'_t
=\sigma^{\varphi\circ R}=R\circ\sigma_{-t}\circ R$, we have, 
$$
\sigma'_t(c)=R\bigl(\sigma_{-t}(R(c))\bigr)\in C,
$$
showing that $\sigma'_t$ leaves $C$ invariant.

Meanwhile, we have (again by Proposition~\ref{sigma_tnuinvariance}), 
$$
\mu\circ\sigma'_t|_C=(\nu\circ R)\circ(R\circ\sigma_{-t}\circ R)|_C
=\nu\circ\sigma_{-t}|_B\circ R|_C=\nu\circ R|_C=\mu.
$$
\end{proof}

The next proposition shows the mutual commutativity of the automorphism groups 
$\sigma$, $\sigma'$, $\tau$.

\begin{prop}\label{sigmasigma'commute}
We have:
\begin{enumerate}
  \item $\sigma$ and $\tau$ commute.
  \item $\sigma'$ and $\tau$ commute.
  \item $\sigma$ and $\sigma'$ commute.
\end{enumerate}
\end{prop}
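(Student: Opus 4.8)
The plan is to reduce all three commutativity claims to the single identity in Proposition~\ref{sigmataucommute}, namely that $\sigma$ and $\tau$ commute, by exploiting the fact that $\psi=\varphi\circ R$ in the present subsection and that $\sigma'=\sigma^{\varphi\circ R}=R\circ\sigma_{-t}\circ R$ by definition.

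\textbf{Part (1).} This is nothing but Proposition~\ref{sigmataucommute}\,(2), so there is nothing to prove; I would simply cite it. (One should note that Proposition~\ref{sigmataucommute} was proved with the \emph{original} $\psi$, but its statement $\sigma_t\tau_s=\tau_s\sigma_t$ refers only to $\sigma=\sigma^{\varphi}$ and $\tau$, neither of which changes when we switch the right Haar weight; indeed $\tau$ is intrinsic by the Corollary to Proposition~\ref{Deltasigma_t} and Theorem~\ref{propphiR}.)

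\textbf{Part (2).} Here I would use $\sigma'_t=R\circ\sigma_{-t}\circ R$ together with the commutativity of $R$ and $\tau$ (Proposition~\ref{Rtaucommute}). Explicitly,
\[
\tau_s\circ\sigma'_t=\tau_s\circ R\circ\sigma_{-t}\circ R
=R\circ\tau_s\circ\sigma_{-t}\circ R
=R\circ\sigma_{-t}\circ\tau_s\circ R
=R\circ\sigma_{-t}\circ R\circ\tau_s=\sigma'_t\circ\tau_s,
\]
where the middle equality is part (1) and the outer equalities are $R\tau=\tau R$. So part (2) is immediate from parts (1) and Proposition~\ref{Rtaucommute}.

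\textbf{Part (3).} This is the substantive one, and the argument should mirror the proof of Proposition~\ref{sigmataucommute}\,(2). First, I would establish that $\varphi\circ R$ is invariant under the one-parameter group $(\sigma_t\circ\tau_t)_{t\in\mathbb R}$; equivalently, applying $R$ and using $\sigma'_t=R\sigma_{-t}R$, $\tau_t|=R\tau_{-t}R$ (which follows from $R\tau=\tau R$), this is the same as saying $\varphi$ is invariant under $(\sigma'_{-t}\circ\tau_{-t})$ suitably interpreted — but it is cleaner to argue directly with $\psi=\varphi\circ R$ via Equation~\eqref{(nuphiphiR)}: $\psi(\sigma_s\tau_s\,\cdot\,)=\nu\bigl((\psi\otimes\operatorname{id})\Delta(\sigma_s\tau_s\,\cdot\,)\bigr)$, and by Propositions~\ref{Deltasigma_t} and \ref{Deltatau_t} one computes $\Delta(\sigma_s\circ\tau_s)=(\tau_s\otimes\sigma_s)\circ(\tau_s\otimes\tau_s)\circ\Delta=(\tau_s\circ\tau_s\otimes\sigma_s\circ\tau_s)\circ\Delta$; hmm, that is not quite of the form $(\operatorname{id}\otimes\,\cdot\,)$, so instead I expect to need the \emph{correct} combination: by Proposition~\ref{DeltasigmaphiR_t}, $\Delta\circ\sigma'_t=(\sigma'_t\otimes\tau_{-t})\circ\Delta$, and combining with Proposition~\ref{Deltatau_t} gives $\Delta\circ(\sigma'_t\circ\tau_t)=(\sigma'_t\otimes\tau_{-t})(\tau_t\otimes\tau_t)\circ\Delta=(\sigma'_t\circ\tau_t\otimes\operatorname{id})\circ\Delta$. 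Then $\psi(\sigma'_t\tau_t\,\cdot\,)=\nu\bigl((\sigma'_t\tau_t)[(\psi\otimes\operatorname{id})\Delta]\bigr)=\nu\bigl((\psi\otimes\operatorname{id})\Delta\bigr)=\psi$, using that $(\psi\otimes\operatorname{id})(\Delta a)\in M(B)$, $\nu\circ\sigma_t|_B=\nu$ (Proposition~\ref{sigma_tnuinvariance}), $\nu\circ\tau_t|_B=\nu$ (Corollary to Proposition~\ref{SrestrictedtoBandC}), and $\sigma'_t|_B=\sigma_t|_B$ in fact need not hold — so I must be careful to write $\sigma'_t$ on $B$ explicitly and invoke that it too preserves $\nu$, which follows since $\sigma'_t|_B=(R\sigma_{-t}R)|_B=R_{BC}^{-1}\circ\sigma_{-t}|_B\circ R_{BC}$ and each factor preserves the relevant weight. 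Having shown $\psi$ is $(\sigma'_t\tau_t)$-invariant, standard weight theory (\cite{Tk2}) gives that $(\sigma^\psi_s)=(\sigma'_s)$ commutes with $(\sigma'_t\tau_t)_t$; since $\sigma'$ and $\tau$ commute (part (2)), this forces $\sigma'$ to commute with $\sigma'$ — vacuous — so I instead apply the same invariance/commutation with the pair $\psi$ and $(\sigma_t\circ\tau_{-t})_t$, already known from Proposition~\ref{sigmataucommute}\,(1) to preserve $\psi$, to conclude $(\sigma'_s)$ commutes with $(\sigma_t\tau_{-t})_t$; combined with part (2) this yields $\sigma'_s\sigma_t=\sigma_t\sigma'_s$ after cancelling the $\tau$'s. \textbf{The main obstacle} is exactly this bookkeeping: identifying the \emph{right} auxiliary one-parameter group whose $\psi$-invariance is available (from Proposition~\ref{sigmataucommute}\,(1), which concerns $\sigma_t\circ\tau_{-t}$) and then peeling off the $\tau$-part using part (2) without circularity. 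Once the invariance statement is pinned down, the conclusion is a routine application of the commutation theorem for modular automorphism groups of invariant weights, exactly as in the proof of Proposition~\ref{sigmataucommute}\,(2).
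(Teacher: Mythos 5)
Parts (1) and (2) of your proposal coincide with the paper's proof: (1) is just a citation of Proposition~\ref{sigmataucommute}\,(2), and your conjugation argument for (2) is precisely the ``alternative'' proof the paper indicates, namely $\sigma'_t=R\circ\sigma_{-t}\circ R$ combined with $R\circ\tau_s=\tau_s\circ R$ and part (1).

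For (3) your \emph{final} argument is correct but follows a genuinely different route. The paper computes $\Delta(\sigma_t\circ\sigma'_s)$ in two ways, using $\Delta\circ\sigma_t=(\tau_t\otimes\sigma_t)\circ\Delta$ (Proposition~\ref{Deltasigma_t}), $\Delta\circ\sigma'_s=(\sigma'_s\otimes\tau_{-s})\circ\Delta$ (Proposition~\ref{DeltasigmaphiR_t}), and parts (1), (2), and then concludes $\sigma_t\circ\sigma'_s=\sigma'_s\circ\sigma_t$ from the fullness of $\Delta$. You instead use the weight-theoretic commutation theorem: $\psi=\varphi\circ R$ is invariant under $(\sigma_t\circ\tau_{-t})$ (Proposition~\ref{sigmataucommute}\,(1), which applies to the new right Haar weight by Theorem~\ref{propphiR}), hence its modular group $\sigma'$ commutes with $(\sigma_t\circ\tau_{-t})$, and then (2) lets you cancel the $\tau$'s. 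This is legitimate; in fact the needed commutation $\sigma^{\psi}_s\circ(\sigma_t\circ\tau_{-t})=(\sigma_t\circ\tau_{-t})\circ\sigma^{\psi}_s$ is exactly Equation~\eqref{(sigmasigma'commute_eqn2)} from the paper's proof of Proposition~\ref{sigmataucommute}\,(2), so you could cite it directly. Your route avoids any new comultiplication computation at the cost of invoking modular theory once more; the paper's route stays entirely at the level of $\Delta$.

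One genuine error in the intermediate discussion, though it is not load-bearing: the asserted chain $\psi(\sigma'_t\tau_t\,\cdot\,)=\nu\bigl((\sigma'_t\tau_t)[(\psi\otimes\operatorname{id})\Delta]\bigr)=\psi$ is wrong. Since $\Delta\circ(\sigma'_t\circ\tau_t)=\bigl((\sigma'_t\circ\tau_t)\otimes\operatorname{id}\bigr)\circ\Delta$ places the automorphism in the \emph{first} leg, it cannot be pulled through the slice $(\psi\otimes\operatorname{id})$; one only gets $\bigl((\psi\circ\sigma'_t\circ\tau_t)\otimes\operatorname{id}\bigr)(\Delta a)$, and $\psi$-invariance under $\sigma'_t\circ\tau_t$ would amount to $\psi\circ\tau_t=\psi$, which is not available. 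The correct pairing for a first-leg automorphism is $(\operatorname{id}\otimes\varphi)$ together with $\mu$, which yields invariance of $\varphi$ (not $\psi$) under $(\sigma'_t\circ\tau_t)$, using $\mu\circ\sigma'_t|_C=\mu$ (Proposition~\ref{sigma'_tmuinvariance}) and $\mu\circ\tau_t|_C=\mu$; this is exactly the invariance statement the paper uses in its proof of (2). Since you abandon this detour before your final argument, deleting it (or replacing it by the $\varphi$/$\mu$ version) leaves your proof of (3) intact.
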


\begin{proof}
(1). This is none other than Proposition~\ref{sigmataucommute}\,(2).

(2). By modifying the proof of Proposition~\ref{sigmataucommute}\,(1), we can 
show that $\varphi$ is invariant under $(\sigma'_s\circ\tau_s)_{s\in\mathbb{R}}$.  
Then using the result, we can show that $\sigma'$ and $\tau$ commute.  Use 
Propositions~\ref{nuphipsi}, \ref{Deltasigma_t}, \ref{Deltasigma'_t}, \ref{Deltatau_t}, 
and imitate the proof of Proposition~\ref{sigmataucommute}\,(2). Alternatively, 
it can be obtained as a consequence of (1), by making use of the fact that 
$\sigma'=R\circ\sigma^{-1}\circ R$ and that $\tau$ and $R$ commute.

(3). By Propositions~\ref{Deltasigma_t} and \ref{DeltasigmaphiR_t}, and by using the 
results from (1), (2) above, we have:
\begin{align}
\Delta(\sigma_t\circ\sigma'_s)&=(\tau_t\otimes\sigma_t)\bigl(\Delta(\sigma'_s)\bigr)
=\bigl((\tau_t\circ\sigma'_s)\otimes(\sigma_t\circ\tau_{-s})\bigr)\circ\Delta \notag \\
&=\bigl((\sigma'_s\circ\tau_t)\otimes(\tau_{-s}\circ\sigma_t)\bigr)\circ\Delta
=(\sigma'_s\otimes\tau_{-s})\bigl(\Delta(\sigma_t)\bigr)
=\Delta(\sigma'_s\circ\sigma_t).
\notag
\end{align}
It follows that $\sigma_t\circ\sigma'_s=\sigma'_s\circ\sigma_t$, which holds true for all $t$, $s$.
\end{proof}

Commutativity of the automorphism groups $\sigma$ and $\sigma'$ corresponds to the 
{\em quasi-invariance condition\/} that is typically required of the ordinary locally compact 
groupoid theory \cite{Renbook}, \cite{Patbook}.  This is assumed also in the framework of 
measured quantum groupoids \cite{LesSMF}, \cite{EnSMF}. Some discussion on this was 
given in Introduction and section~4 of Part~I \cite{BJKVD_qgroupoid1}.  See also Remark 
given below.

\begin{rem}
From the commutativity of $\sigma$ and $\sigma'$, and by using a result by Vaes \cite{VaRN}, 
one can define the Radon--Nikodym derivative, and this in turn will allow us to define the 
{\em modular element\/} $\delta$.  It would be a positive unbounded operator affiliated to $A$ 
such that $\psi=\varphi\circ R=\varphi(\delta^{\frac12}\,\cdot\,\delta^{\frac12})$.  In addition to 
providing us with some useful results relating $\varphi$ and $\psi$, or $\sigma$ and $\sigma'$, 
the modular element allows a natural way of identifying the Hilbert spaces ${\mathcal H}_{\psi}$ 
and ${\mathcal H}_{\varphi}$.  This is similar to the quantum group case.  In this way, we will 
be able to work conveniently within the setting of one single Hilbert space ${\mathcal H}$. 
These discussions will be made more precise in Part~III \cite{BJKVD_qgroupoid3}, 
when we explore the duality issues.
\end{rem}

\bigskip

%\bibliography{refqgroupoid}

\begin{thebibliography}{10}

\bibitem{BS}
S.~Baaj and G.~Skandalis, \emph{Unitaires multiplicatifs et dualit\'e pour les
  produits crois\'es de {$C^*$}-alg\`ebres}, Ann. Scient. \'Ec. Norm. Sup.,
  $4^e$ s\'erie \textbf{t. 26} (1993), 425--488 (French).

\bibitem{BNSwha1}
G.~B{\" o}hm, F.~Nill, and K.~Szlach{\' a}nyi, \emph{Weak {H}opf algebras {I}.
  {I}ntegral theory and {$C^*$}-structure}, J. Algebra \textbf{221} (1999),
  385--438.

\bibitem{BSwha2}
G.~B{\" o}hm and K.~Szlach{\' a}nyi, \emph{Weak {H}opf algebras {II}.
  {R}epresentation, dimensions, and {M}arkov trace}, J. Algebra \textbf{233}
  (2000), 156--212.

\bibitem{BSzMultIso}
\bysame, \emph{Weak {$C^*$-Hopf} algebras and multiplicative isometries}, J.
  Operator Theory \textbf{45} (2001), 357--376.

\bibitem{DeComm_galois}
K.~{De Commer}, \emph{Galois coactions and cocycle twisting for locally compact
  quantum groups}, J. Operator Theory \textbf{66} (2011), 59--106.

\bibitem{DeComm_LinkingQG}
\bysame, \emph{On a correspondence between {$SU_q(2)$}, {$\tilde{E}_q(2)$}, and
  {$\widetilde{SU}_q(1,1)$}}, Comm. Math. Phys. \textbf{304} (2011),
  187--228.

\bibitem{DC_C*partialCQG}
\bysame, \emph{{$C^*$}-algebraic partial compact quantum groups}, J. Funct.
  Anal. \textbf{270} (2016), 3962--3995.

\bibitem{DCTT_partialCQG}
K.~{De Commer} and T.~Timmermann, \emph{Partial compact quantum groups}, J.
  Algebra \textbf{438} (2015), 283--324.

\bibitem{EnSMF}
M.~Enock, \emph{{M}easured {Q}uantum {G}roupoids in {A}ction}, M{\'e}moires de
  la SMF, no. 114, Soc. Math. Fr., 2008.

\bibitem{Hsh_fa}
T. Hayashi, \emph{A brief introduction to face algebras}, in: New trends in Hopf algebra 
theory (La Falda, 1999), Contemp. Math. vol.~267, Amer. Math. Soc., 2000, pp.~161--176.

\bibitem{BJK_mpi}
B.J. Kahng, \emph{Multiplicative partial isometries}, in preparation.

\bibitem{BJKVD_LSthm}
B.J. Kahng and A.~{Van Daele}, \emph{The {Larson--Sweedler} theorem for weak multiplier {H}opf
  algebras}, Comm. Algebra \textbf{46} (2018), 1--27.

\bibitem{BJKVD_SepId}
\bysame, \emph{Separability idempotents in {$C^*$}-algebras}, 
J. Noncommut. Geom. \textbf{12} (2018), 997--1040.

\bibitem{BJKVD_qgroupoid1}
\bysame, \emph{A class of {$C^*$-algebraic} locally
  compact quantum groupoids {Part I}: Motivation and definition}, 
Internat. J. Math. \textbf{29} (2018), 1850029.

\bibitem{BJKVD_qgroupoid3}
\bysame, \emph{A class of {$C^*$-algebraic} locally compact quantum groupoids
  {Part III}: Duality}, in preparation.

\bibitem{KuVaweightC*}
J.~Kustermans and S.~Vaes, \emph{Weight theory for {$C^*$}-algebraic quantum
  groups}, preprint ({\texttt{arXiv:math/9901063}}).

\bibitem{KuVa}
\bysame, \emph{Locally compact quantum groups}, Ann. Scient. \'Ec. Norm. Sup.,
  $4^e$ s\'erie \textbf{t. 33} (2000), 837--934.

\bibitem{KuVavN}
\bysame, \emph{Locally compact quantum groups in the von {N}eumann algebraic
  setting}, Math. Scand. \textbf{92} (2003), no.~1, 68--92.

\bibitem{LesSMF}
F.~Lesieur, \emph{{M}easured {Q}uantum {G}roupoids}, M{\'e}moires de la SMF,
  no. 109, Soc. Math. Fr., 2007.

\bibitem{MNW}
T.~Masuda, Y.~Nakagami, and S.~Woronowicz, \emph{A {$C^*$}-algebraic framework
  for quantum groups}, Internat. J. Math. \textbf{14} (2003), no.~9, 903--1001.

\bibitem{NVfqg}
D.~Nikshych and L.~Vainerman, \emph{Finite quantum groupoids and their
  applications}, New {D}irections in {H}opf {A}lgebras, vol.~43, MSRI
  Publications, 2002, pp.~211--262.

\bibitem{Patbook}
A.L.T. Paterson, \emph{{G}roupoids, {I}nverse {S}emigroups, and their
  {O}perator {A}lgebras}, Progress in Mathematics, no. 170, Birkh{\" a}user,
  1998.

\bibitem{Renbook}
J.N. Renault, \emph{{A} {G}roupoid {A}pproach to {$C^*$}-algebras}, Lecture
  Notes in Mathematics, no. 793, Springer-Verlag, 1980.

\bibitem{Str}
S.~Str{\u a}til{\u a}, \emph{{M}odular {T}heory in {O}perator {A}lgebras},
  Abacus Press, 1981.

\bibitem{Tk2}
M.~Takesaki, \emph{{T}heory of {O}perator {A}lgebras {II}}, Encyclopaedia of
  Mathematical Sciences, vol. 125, Springer-Verlag, 2002.

\bibitem{Timmbook}
T.~Timmermann, \emph{{A}n {I}nvitation to {Q}uantum {G}roups and
  {D}uality---{F}rom {H}opf {A}lgebras to {M}ultiplicative {U}nitaries and
  {B}eyond}, EMS Textbooks in Mathematics, Euro. Math. Soc., 2008.
  
\bibitem{TTVD}
T.~Timmermann and A.~{Van Daele}, \emph{Multiplier Hopf algebroids. 
 Basic theory and examples}, Comm. Algebra \textbf{46} (2018), 1926--1958.

\bibitem{TTVD2}
\bysame, \emph{Multiplier Hopf algebroids arising 
 from weak multiplier Hopf algebras}, preprint ({\texttt{arXiv:1406.3509}}).

\bibitem{VaRN}
S. Vaes, \emph{A Radon--Nikodym theorem for von Neumann algebras}, J. Operator Theory \textbf{46}
  (2001), 477--489.

\bibitem{Valfqg}
J.M. Vallin, \emph{Groupo{\"i}des quantiques finis}, J. Algebra \textbf{239}
  (2001), 215--261 (French).

\bibitem{VDvN}
A.~{Van Daele}, \emph{Locally compact quantum groups. {A} von {N}eumann algebra
  approach}, SIGMA \textbf{10} (2014), no.~082.

\bibitem{VDWangwha0}
A.~{Van Daele} and S.~Wang, \emph{Weak multiplier {H}opf algebras.
  {P}reliminaries, motivation and basic examples}, Operator algebras and
  quantum groups, vol.~98, Banach Center Publications, 2012, pp.~367--415.

\bibitem{VDWangwha1}
\bysame, \emph{Weak multiplier {H}opf algebras. {T}he main theory}, J. Reine
  Angew. Math. (Crelles Journal) \textbf{705} (2015), 155--209.

\bibitem{Wr7}
S.~L. Woronowicz, \emph{From multiplicative unitaries to quantum groups},
  Internat. J. Math. \textbf{7} (1996), no.~1, 127--149.

\bibitem{Yagroupoid}
T.~ Yamanouchi, \emph{Duality for generalized Kac algebras and a characterization 
  of finite groupoid algebras}, J. Algebra \textbf{163} (1994), 9--50.



\end{thebibliography}

%\bibliographystyle{amsplain}

\providecommand{\bysame}{\leavevmode\hbox to3em{\hrulefill}\thinspace}
\providecommand{\MR}{\relax\ifhmode\unskip\space\fi MR }
% \MRhref is called by the amsart/book/proc definition of \MR.
\providecommand{\MRhref}[2]{%
  \href{http://www.ams.org/mathscinet-getitem?mr=#1}{#2}
}
\providecommand{\href}[2]{#2}

\end{document}